\documentclass[10pt]{amsart}

\usepackage{amsmath}
\usepackage{amssymb}
\usepackage{amscd}
\usepackage{mathrsfs}
\usepackage{amsthm}
\usepackage{verbatim}
\usepackage{bbm}
\usepackage[mathscr]{eucal}
\usepackage[all]{xy}
\usepackage{tocvsec2}
\usepackage{xr-hyper}
\usepackage[hmargin=3cm,vmargin=3.5cm]{geometry}
\usepackage{hyperref}
\usepackage{color}

\usepackage{enumitem}
\setlist[enumerate]{leftmargin=*}
\setlist[itemize]{labelindent=\parindent, leftmargin=*}

\setlength{\parskip}{5pt}

\numberwithin{equation}{section}

\setcounter{tocdepth}{1}

\setcounter{secnumdepth}{4}

\theoremstyle{plain}
\newtheorem{thm}{Theorem}[section]
\newtheorem{lem}[thm]{Lemma}
\newtheorem{prop}[thm]{Proposition}

\theoremstyle{definition}
\newtheorem{defn}[thm]{Definition}
\theoremstyle{remark}
\newtheorem{rem}[thm]{Remark}
\newtheorem{conjecture}[thm]{Conjecture}
\newtheorem{example}[thm]{Example}

\theoremstyle{plain}
\newtheorem{mconjecture}{Conjecture}

\theoremstyle{plain}
\newtheorem{mtheorem}{Theorem}

\theoremstyle{definition}
\newtheorem{mremark}{Remark}

\makeatletter
\def\varddots{\mathinner{\mkern1mu
    \raise\p@\hbox{.}\mkern2mu\raise4\p@\hbox{.}\mkern2mu
    \raise7\p@\vbox{\kern7\p@\hbox{.}}\mkern1mu}}
\makeatother

\def\Gal{\operatorname{Gal}}
\def\Ind{\operatorname{Ind}}
\def\Im{\operatorname{Im}}
\def\pr{\operatorname{pr}}

\def\Res{\operatorname{Res}}
\def\vol{\operatorname{vol}}
\def\tr{\operatorname{tr}}
\def\Hom{\operatorname{Hom}}

\def\ord{\operatorname{ord}}
\def\Aut{\operatorname{Aut}}
\def\End{\operatorname{End}}

\def\disc{\mathrm{disc}}

\def\Filt{\operatorname{Filt}}
\def\Rep{\operatorname{Rep}}
\def\Gr{\operatorname{Gr}}
\def\Triv{\operatorname{Triv}}
\def\Frob{\operatorname{Frob}}
\def\Eis{\operatorname{Eis}}
\def\eval{\operatorname{eval}}
\def\rank{\operatorname{rank}}
\def\im{\operatorname{im}}

\def\Lift{\operatorname{Lift}}
\def\Ad{\operatorname{Ad}}
\def\Lie{\operatorname{Lie}}

\def\sgn{\operatorname{sgn}}

\def\Kudla{\operatorname{Kudla}}

\def\GL{\mathrm{GL}}
\def\GU{\mathrm{GU}}

\def\U{\mathrm{U}}

\def\G{\mathrm{G}}
\def\Sp{\mathrm{Sp}}
\def\GSp{\mathrm{GSp}}
\def\Mp{\mathrm{Mp}}

\def\SO{\mathrm{SO}}
\def\GO{\mathrm{GO}}
\def\SL{\mathrm{SL}}

\def\GMp{\mathrm{GMp}}
\def\Sym{\mathrm{Sym}}

\def\Tam{\mathrm{Tam}}
\def\Sh{\mathrm{Sh}}
\def\fin{\mathrm{fin}}
\def\M{\mathrm{M}}
\def\N{\mathrm{N}}
\def\et{\mathrm{et}}
\def\id{\mathrm{id}}
\def\ad{\mathrm{ad}}
\def\DS{\mathrm{DS}}
\def\St{\mathrm{St}}

\def\s{\mathrm{s}}
\def\ps{\mathrm{ps}}

\def\t{\mathbf{t}}

\def\fd{\mathfrak{d}}
\def\fD{\mathfrak{D}}
\def\fg{\mathfrak{g}}

\def\ii{\mathfrak{i}}

\def\fh{\mathfrak{h}}

\def\jj{\mathfrak{j}}
\def\ff{\mathfrak{f}}
\def\fk{\mathfrak{k}}
\def\fl{\mathfrak{l}}
\def\fm{\mathfrak{m}}

\def\fN{\mathfrak{N}}

\def\o{\mathfrak{o}}
\def\p{\mathfrak{p}}
\def\fp{\mathfrak{p}}

\def\fq{\mathfrak{q}}
\def\fQ{\mathfrak{Q}}
\def\fR{\mathfrak{R}}
\def\fS{\mathfrak{S}}

\def\fX{\mathfrak{X}}

\def\cA{\mathcal{A}}

\def\Gc{\mathcal{G}}
\def\cG{\mathcal{G}}
\def\cH{\mathcal{H}}

\def\II{\mathcal{I}}

\def\cK{\mathcal{K}}
\def\cKt{\tilde{\mathcal{K}}}
\def\K{\mathcal{K}}

\def\cL{\mathcal{L}}
\def\cM{\mathcal{M}}

\def\cO{\mathcal{O}}
\def\cP{\mathcal{P}}

\def\SS{\mathcal{S}}
\def\cS{\mathcal{S}}

\def\cV{\mathcal{V}}
\def\Wc{\mathcal{W}}

\def\A{\mathbb{A}}
\def\C{\mathbb{C}}
\def\Ff{\mathbb{F}}
\def\H{\mathbb{H}}
\def\I{\mathbb{I}}
\def\P{\mathbb{P}}
\def\Q{\mathbb{Q}}
\def\Qbar{\overline{\Q}}

\def\R{\mathbb{R}}
\def\Ss{\mathbb{S}}

\def\X{\mathbb{X}}
\def\Y{\mathbb{Y}}
\def\Z{\mathbb{Z}}

\def\V{\mathbb{V}}

\def\1{\mathbf{1}}

\def\a{\mathbf{a}}
\def\b{\mathbf{b}}
\def\d{\mathbf{d}}
\def\e{\mathbf{e}}

\def\g{\mathbf{g}}
\def\h{\mathbf{h}}
\def\i{\mathbf{i}}
\def\j{\mathbf{j}}

\def\m{\mathbf{m}}
\def\n{\mathbf{n}}
\def\v{\mathbf{v}}
\def\w{\mathbf{w}}
\def\x{\mathbf{x}}
\def\y{\mathbf{y}}
\def\0{\mathbf{0}}

\def\F{\mathbf{F}}

\def\GG{\mathbf{G}}
\def\J{\mathbf{J}}
\def\KK{\mathbf{K}}

\def\VV{\mathbf{V}}
\def\WW{\mathbf{W}}

\def\As{\mathscr{A}}

\def\uo{\underline{\omega}}
\def\uk{\underline{k}}

\def\ba{\boldsymbol{\alpha}}
\def\bb{\boldsymbol{\beta}}
\def\bv{\boldsymbol{\varphi}}

\def\ve{\varepsilon}

\def\Mu{\boldsymbol{\mu}}

\def\lambdat{\tilde{\lambda}}
\def\Rt{\tilde{R}}
\def\rhob{\bar{\rho}}
\def\rar{\rightarrow}

\def\(({( \! (}
\def\)){) \! )}
\def\llangle{\langle \hspace{-1mm} \langle}
\def\rrangle{\rangle \hspace{-1mm} \rangle}

\SelectTips{cm}{}
\newdir^{ (}{{}*!/-5pt/@^{(}}

\newcommand{\mat}[4]{\begin{pmatrix} #1 & #2 \\ #3 & #4 \end{pmatrix}}
\newcommand{\smat}[4]{\left( \begin{smallmatrix} #1 & #2 \\ #3 & #4 \end{smallmatrix} \right)}

\makeatletter
\def\pmodx#1{\allowbreak\ifinner\mkern8mu\else\mkern18mu\fi
 ({\fam\z@ mod}^\times\,#1)}
\makeatother

\newcounter{defcounter}
\setcounter{defcounter}{0}

\newenvironment{intro-equation}{%
\addtocounter{equation}{-1}
\refstepcounter{defcounter}

\begin{equation}}
{\end{equation}}


\begin{document}

\title[]{Periods of quaternionic Shimura varieties. I.}
\author{Atsushi Ichino}
\author{Kartik Prasanna}

\begin{abstract}
We study ``quadratic periods'' on quaternionic Shimura varieties and formulate an integral refinement of Shimura's conjecture regarding Petersson inner products of automorphic forms that are related by the Jacquet-Langlands correspondence. The main result is that this integral refinement is implied by another conjecture (Conjecture D below) regarding integrality of theta lifts between certain quaternionic unitary groups.
\end{abstract}

\maketitle

\tableofcontents

\section*{Introduction}
\label{sec:intro}

In this paper and its sequels \cite{periods2}, \cite{periods3}, we 
 study periods of automorphic forms on quaternionic Shimura varieties.
Specifically, the periods that we focus on are the Petersson inner products of Hilbert modular forms 
and of their Jacquet--Langlands lifts to quaternionic Shimura varieties. 
This subject was pioneered by Shimura who proved many 
results on algebraicity of ratios of Petersson inner products and made a precise general 
conjecture (\cite{shimura-amj} Conjecture 5.10) that predicts a large number of algebraic relations in the $\Qbar$-algebra generated 
by such periods. Shimura's conjecture was proved by Harris \cite{harris-jams}
under a technical hypothesis on the local components of the corresponding 
automorphic representation. 
This hypothesis was relaxed partly by Yoshida \cite{yoshida-amj}, 
who also used these period relations to prove a refined conjecture of Shimura (\cite{shimura-amj} Conj. 5.12, \cite{shimura-inven88} Conj 9.3) on the factorization of 
Petersson inner products into {\it fundamental periods} up to algebraic factors. 
In later papers \cite{harris-coh1} \cite{harris-coh2}, Harris has considered
the question of generalizing such period relations to the setting of unitary Shimura 
varieties. Specialized to the case of hermitian spaces of dimension two, these latter results provide more precise
information about the fields of rationality of quadratic period ratios of quaternionic modular forms.

In this series of papers, we will study the corresponding {\it integrality} questions. 
The simplest interesting case is the period ratio 
\[
\frac{\langle f,f \rangle }{\langle g,g \rangle }
\]
where $f$ 
is a usual modular form of (even) weight $2k$ (for $\GL_{2}$ over $\Q$) and trivial 
central character, and $g$ is its lift to a Shimura curve 
corresponding to an indefinite quaternion algebra also over $\Q$. The forms $f$ and $g$ 
 here are assumed to be newforms and to be suitably integrally normalized. 
 In this case, there is a very precise rationality result due to Harris and Kudla \cite{hk-triple} which asserts that
the ratio above lies in the field generated by the Hecke eigenvalues of $f$. 
As for the more refined integrality question, what is known is the following:

\begin{enumerate}

\item In the special case when the weight $2k$ equals $2$ and 
$f$ corresponds to an isogeny class of elliptic curves,  it can be shown (see \cite{p-a}, \S 2.2.1) that the period ratio above equals (up to Eisenstein primes) an explicit product of Tamagawa numbers,
  which in turn are related 
  to {\it level-lowering} congruences satisfied by the form $f$. This 
  suggests that such period ratios contain rather deep arithmetic information. The 
  proof in this case follows from combining {\it three} geometric ingredients: 
\begin{itemize} 
\item The work of Ribet on level-lowering 
 \cite{ribet} and its extension due to Ribet and Takahashi \cite{r-t}
  which depend on a study of the geometry of Shimura curves, especially a 
  description of their bad reduction and of the component groups of the 
  N\'{e}ron models of their Jacobians. 
    
  \item The Tate 
  conjecture for products of curves over number fields, which was proved by Faltings \cite{faltings},
  and which implies that modular elliptic curves 
are equipped with a uniformization map $X \rightarrow E$, with $X$ being a Shimura curve.  
  
 \item A study of the {\it Manin constant} for the map $X \rightarrow E$, following 
 \cite{mazur-rat-iso}, \cite{edixhoven}, \cite{abbes-ullmo}.
    
\end{itemize}
  
\item In the more general case of weight $2k >2$, such geometric arguments 
  are not available. The main obstruction is that the Tate conjecture 
  is unknown for products of Kuga--Sato varieties. 
  Instead, one may try to use purely automorphic techniques. 
  This is the strategy employed in 
 \cite{p-a}, where we showed (using the theta correspondence and results from Iwasawa theory) that for $f$ and $g$ of arbitrary even weight, the ratio $\langle f,f \rangle/ \langle g,g \rangle$ is 
 integral outside of an explicit finite set of small primes, and further 
 that it is always divisible by primes at which the form $f$ satisfies 
 certain level-lowering  congruences. The converse divisibility and the 
 more precise relation to Tamagawa numbers is also 
 expected to hold in general, but seems harder to prove. This is 
 one problem that we hope to eventually address by the methods of this paper.

\end{enumerate} 

 Let us now elaborate a bit on the relation of this problem to the Tate conjecture. 
 As described above, 
the case of weight two forms for $\GL_2$ over $\Q$  is
 relatively simple since one knows by Faltings that the Tate conjecture holds for a product of curves.
  This implies that there exists an algebraic cycle on the product
  $X_1 \times X_2$, where $X_1$ and $X_2$ are modular and Shimura curves respectively, 
  that at the level of cohomology, identifies the $f$ and $g$-isotypic components of the 
  ``motives'' $H^1(X_1)$ and $H^1(X_2)$ respectively. The rationality of the period ratio $\langle f,f \rangle/\langle g,g \rangle$ 
  is then a simple consequence of the fact that such a cycle induces an isomorphism of the {\it Hodge--de Rham} structures
  \cite{harris-motives}
  attached to $f$ and $g$. 
 For forms of higher weight, the Jacquet--Langlands correspondence
 can similarly be used to produce Tate classes on a product $W_1 \times W_2$ where 
 $W_1$ and $W_2$ are Kuga--Sato varieties fibered over $X_1$ and $X_2$ respectively. 
 However, we are very far from understanding the Tate (or even Hodge) conjecture in 
 this case. The case of Hilbert modular forms considered in this paper is even harder: 
 in the simplest setting, namely for forms of {\it parallel weight two} and trivial central character, 
the Tate conjecture predicts the existence of algebraic cycles on products of the form $X \times (X_1 \times X_2)$,
 where $X$, $X_1$ and $X_2$ are suitably chosen quaternionic Shimura varieties such that 
 $\dim (X) = \dim (X_1) + \dim (X_2)$. Again, these cycles should induce isomorphisms of Hodge--de Rham structures
 $H^* (X)_\Pi \simeq H^*(X_1)_\Pi \otimes H^*(X_2)_\Pi$ that in turn should imply the predicted period relations up to rationality. (Here the subscript $\Pi$ denotes the 
 $\Pi_f$-isotypic component for a fixed automorphic representation $\Pi=\Pi_\infty \otimes \Pi_f$.)
 This 
 point of view - at least the factorization of Hodge structures - 
 occurs explicitly in the work of Oda (\cite{oda}, \cite{oda-nagoya}).
  It is worth remarking here that the Tate and Hodge conjectures are only expected to hold 
rationally in general and not integrally, and thus by themselves do not predict any statements about
integrality of period ratios. Nevertheless, the discussion above suggests that 
in the setting of arithmetic automorphic forms on Shimura varieties, such integral relations do hold 
and that their proofs lie much deeper than those of the corresponding rational relations.

With this background, we will outline the main results of this paper. 
Let $F$ be a totally real number field with $[F:\Q]=d$, ring of integers $\cO_F$,
class number $h_F$ and discriminant $D_F$.
Let 
 $\Pi=\otimes_v \Pi_v$ be
an irreducible cuspidal automorphic representation of $\GL_2 (\A_F)$ 
of weight 
$(\uk,r)=(k_1, \ldots, k_d,r)$, conductor $\fN$ and central character $\xi_\Pi$. We assume that $k_1 \equiv k_2 \equiv \cdots \equiv k_d \equiv r \pmod 2$ and all the $k_i \ge 1$. These 
are thus the automorphic representations associated with classical Hilbert modular forms. (Note that we allow 
forms of partial or parallel weight one.)

For simplicity we will assume that 
at all finite places $v$ where $\Pi_v$ is ramified, it is 
either a special representation with square-free conductor (i.e., an unramified twist of the Steinberg representation)
or a ramified principal series representation $\Ind (\chi_1 \otimes \chi_2)$ with 
$\chi_1$ unramified and $\chi_2$ ramified. 
We can thus factor the conductor $\fN$ of $\Pi$ as
\[
\fN= \fN_{\s} \cdot \fN_{\ps}
\]
where $\fN_\s$ is the (square-free) product of the conductors at places where $\Pi_v$ is special 
and $\fN_{\ps}$ is the product of the conductors at places where $\Pi_v$ is ramified principal series.  

Let $K_\Pi$ be the number field generated by the Hecke eigenvalues of $\Pi$ and $\cO_{K_\Pi}$ the ring of integers of $K_\Pi$.
We set $N_\Pi :=\N \fN$, $k_\Pi  := \max k_i$ and 
\[
 N(\Pi)  := 2\cdot h_F \cdot D_F \cdot N_\Pi \cdot k_\Pi!, \qquad  R  := \cO_{\Qbar} [1/N(\Pi)].
\]

Let $\Sigma_F$ denote the set of all places of $F$ and $\Sigma_\infty$ and $\Sigma_\fin$ the subsets of infinite and finite places respectively. 
Let $\Sigma_\Pi$  be the set of places $v$ of $F$ at which $\Pi_v$ is discrete series. 
Thus, $\Sigma_\Pi$ equals the union of $\Sigma_{\Pi,\infty}$ and $\Sigma_{\Pi,\fin}$, where  
\begin{align*}
\Sigma_{\Pi,\infty} &:= \Sigma_\Pi \cap \Sigma_\infty = \{ v \in \Sigma_\infty: \ k_v \ge 2\},  \\
\Sigma_{\Pi,\fin} &:= \Sigma_\Pi \cap \Sigma_\fin = \{ v \in \Sigma_\fin:  \  \ord_v(\fN_s) >0 \}.
\end{align*}
For any quaternion algebra $B$ over $F$, let $\Sigma_B$ denote the set of places of $F$ at which $B$ is ramified. Also set
\begin{align*}
 \Sigma_{B,\infty} &:=\Sigma_B \cap \Sigma_\infty, \\
 \Sigma_{B,\fin} &:= \Sigma_B \cap \Sigma_\fin.
\end{align*}

Henceforth we suppose that $\Sigma_B \subset \Sigma_\Pi$, so that by Jacquet--Langlands \cite{jl}, $\Pi$ transfers  to an automorphic representation $\Pi_B$ of $B^\times (\A)$. 
To such a pair $(B,\Pi)$, we will attach in Sec. \ref{ssec:defn-period-invariant} below a canonical quadratic period invariant 
\[
q_B(\Pi) \in \C^\times / R^\times.
\]
This period invariant is essentially 
(i.e., up to some factors coming from normalizations of measures) equal to 
the Petersson inner product of a normalized eigenform $f_B$ in $\Pi_B$. 
Here we use the assumption that $\fN_\s$ is square-free to first fix $f_B$ up to a scalar. 
The scalar is then fixed by requiring that $f_B$ correspond to an integrally normalized section of 
a suitable automorphic vector bundle on the Shimura variety associated with the algebraic group $B^\times$.

The goal of this 
paper and its sequels is to study the relations between the invariants $q_B(\Pi)$ for fixed $\Pi$ as $B$ varies 
over all quaternion algebras in $\Sigma_\Pi$. 
The following conjecture is a more precise version of \cite{p-AIM} Conjecture 4.2
and provides an integral refinement of Shimura's conjecture on algebraic period relations.
 The reader may
consult Sec. 4 of \cite{p-AIM} for a discussion of the motivation behind this formulation. 
To state the conjecture, 
let $L(s,\Pi,\ad)$ denote the adjoint (finite) $L$-function attached to $\Pi$ 
and let $\Lambda (s,\Pi, \ad)$ denote the corresponding 
completed $L$-function that includes the $\Gamma$-factors at the infinite places. 
Let us recall the following invariant of $\Pi$, which 
has played a crucial role in the study of congruences of modular forms (see \cite{hida-cong1}, \cite{hida-cong2}, \cite{wiles-fermat}):

\begin{intro-equation}
\label{eqn:defn-of-Lambda-Pi}
\Lambda (\Pi) := \Lambda(1, \Pi, \ad).
\end{intro-equation}

\begin{mconjecture}
\label{conj:oldintro}
There exists a function 
\[
c(\Pi): \Sigma_\Pi \rar \C^\times/R^\times, \quad v\mapsto c_v(\Pi),
\]
such that:
\begin{enumerate}
\item $c_v (\Pi)$ lies in $R$ (mod $R^\times$) if $v$ is a finite place, and

\item for all $B$ with $\Sigma_B \subseteq \Sigma_\Pi$, we have 
\[
q_B (\Pi) =  \frac{\Lambda (\Pi) }{\prod_{v \in \Sigma_B} c_v (\Pi) } \quad (\text{in } \C^\times/R^\times).
\]
\end{enumerate}
\end{mconjecture}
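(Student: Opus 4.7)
The plan is to deduce Conjecture A from the local-integrality conjecture (Conjecture D of the abstract) via the theta correspondence between quaternionic unitary groups. For any two quaternion algebras $B, B'$ over $F$ with $\Sigma_B, \Sigma_{B'} \subset \Sigma_\Pi$, I will realize the Jacquet--Langlands transfer $\Pi_B \rightsquigarrow \Pi_{B'}$ by a concrete theta lift $\theta \colon f_B \mapsto f_{B'}$ (up to a nonzero scalar) attached to a suitable seesaw dual pair built from a quaternionic hermitian or skew-hermitian space constructed out of $B$ and $B'$. The key input is the Rallis inner product formula, which expresses $\langle \theta(f_B), \theta(f_B) \rangle$ as a global $L$-value times a product of local zeta integrals; schematically,
\[
\langle \theta(f_B), \theta(f_B) \rangle \equiv \Lambda(\Pi) \cdot \prod_v Z_v \pmod{R^\times},
\]
where only places $v$ in the symmetric difference $\Sigma_B \triangle \Sigma_{B'}$ contribute nontrivially. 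After normalizing measures and choosing integrally normalized local Schwartz functions, this becomes a multiplicative identity for the ratio $q_{B'}(\Pi)/q_B(\Pi)$ in terms of local invariants $c_v(\Pi)$.

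To define $c_v(\Pi)$ unambiguously I anchor the construction at $B = M_2(F)$, where one expects $q_{M_2(F)}(\Pi) \equiv \Lambda(\Pi) \pmod{R^\times}$; this integral refinement of Shimura's conjecture for classical Hilbert modular forms is accessible via prior work on adjoint $L$-values and congruence ideals. For general $B$, I build up $\Sigma_B$ one or two places at a time (respecting the parity of $|\Sigma_B|$), each step pinning down a single factor $c_v(\Pi)$ from the seesaw identity above applied to the pair of algebras that differ exactly at the swapped places. The fact that $c_v(\Pi)$ depends only on $\Pi_v$, and not on the ambient $B$ nor on the chain chosen to reach it, should follow from Howe duality and the multiplicativity of the local Rallis zeta integrals, once one verifies a cocycle condition on triples of algebras whose ramification sets are related by toggling two independent places.

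The main obstacle is showing $c_v(\Pi) \in R \pmod{R^\times}$ at finite $v$, which is precisely the content of Conjecture D. Here one must match two distinct integral structures at $v$: the structure inherited from an integral model of the automorphic vector bundle used to normalize $f_B$ on the Shimura variety for $B^\times$, and the natural integral structure on the local Weil representation coming from a self-dual lattice model. At archimedean $v \in \Sigma_{\Pi,\infty} \cap \Sigma_B$ the analogous matching is classical and pins $c_v$ down as an explicit product of powers of $2 \pi i$ and Gamma values, with no integrality required. Thus the global Conjecture A reduces to the purely finite-place Conjecture D together with this archimedean computation; the remaining global bookkeeping--tracking Tamagawa measure factors, handling the class number $h_F$ that appears in $N(\Pi)$, and ensuring that the $L$-value produced by the Rallis formula is exactly $\Lambda(\Pi) = \Lambda(1, \Pi, \ad)$ rather than a twisted variant--is routine but must be carried out carefully to preserve well-definedness modulo $R^\times$.
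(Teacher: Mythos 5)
Your overall strategy matches the paper's: Conjecture~\ref{conj:oldintro} is not proved outright but rather deduced from Conjecture~\ref{conj:newintro} (what you call ``Conjecture~D'') via the Rallis inner product formula and the known split case $q_{\M_2(F)}(\Pi)=\Lambda(\Pi)$, and the paper (in \S\ref{ssec:mainconjecture}, Theorem~\ref{thm:conjA-equiv-conjD}) does indeed work by defining $c_\Sigma(\Pi):=\Lambda(\Pi)/q_{B_\Sigma}(\Pi)$ for even $\Sigma\subseteq\Sigma_\Pi$, extracting $c_v(\Pi)$ as a square root of a ratio of three such quantities, and then verifying well-definedness and the product identity from the multiplicative relation $c_{\Sigma_1}c_{\Sigma_2}=c_{\Sigma_1\sqcup\Sigma_2}$ for disjoint $\Sigma_1,\Sigma_2$, which is what Conjecture~D(iii) yields through~\eqref{eqn:me-ell}. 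Your ``cocycle condition on triples of algebras'' is essentially this, and the anchoring at $\M_2(F)$ is the same.

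However, there are three genuine misconceptions in your proposal. First, the theta lift of the paper is not a transfer $f_B\mapsto f_{B'}$ between two quaternion algebras: it goes from $\GU_B(W)\cong B^\times$ to $\GU_B(V)^0\cong (B_1^\times\times B_2^\times)/F^\times$, where $B=B_1\cdot B_2$ in the Brauer group, so the Rallis formula~\eqref{eqn:me-ell} is a \emph{three}-term relation $|\ba(B_1,B_2)|^2\,q_{B_1}q_{B_2}=\Lambda(\Pi)\,q_B$, not a two-term one for a single symmetric difference. The correct inductive step therefore toggles ramification in pairs via a triple $(B,B_1,B_2)$, and the bookkeeping you sketch (``places in $\Sigma_B\triangle\Sigma_{B'}$'') does not literally apply. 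Second, your claim that $c_v(\Pi)$ ``depends only on $\Pi_v$'' is directly contradicted by Remark~\ref{rem:nature-of-cv}: the $c_v(\Pi)$ are invariants of the \emph{global} representation $\Pi$ and cannot be computed from $\Pi_v$ alone; Howe duality and multiplicativity of local zeta integrals give you invariance of the theta-lift normalization, not locality of $c_v$. Third, your assertion that the archimedean $c_v(\Pi)$ are ``explicit products of powers of $2\pi i$ and Gamma values'' conflates the local Rallis zeta integrals $Z_v$ (which are indeed such explicit constants, computed in \S\ref{ssec:computation_of_Zv} and absorbed into the normalization) with the invariants $c_v(\Pi)$ themselves, which at infinite $v$ are expected to be transcendental periods of abelian varieties (Conjecture~\ref{conj:cv-infinite-intro} and the remark following it). The Gamma factors are already accounted for in $\Lambda(\Pi)$; the residual transcendence sits in the $c_v(\Pi)$ for $v\mid\infty$ and cannot be removed.
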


\begin{mremark}
\label{rem:nature-of-cv}
It is easy to see that if it exists, the function $c(\Pi)$ is uniquely determined as long as $|\Sigma_\Pi|\ge 3$. 
Also, as the notation suggests, the invariants $c_v (\Pi)$ are {\it not} invariants of the local representation $\Pi_v$ but rather 
are really invariants of the global representation $\Pi$. 
\end{mremark}

\begin{mremark}
The conjecture should be viewed as describing {\it period relations} between the 
quadratic periods $q_B (\Pi)$ as $B$ varies. Indeed, the number of $B$ 
with $\Sigma_B \subseteq \Sigma_\Pi$ is $2^{|\Sigma_\Pi|-1}$ but the conjecture predicts that the corresponding invariants $q_B(\Pi)$
can all be described using only $|\Sigma_\Pi|+1$ invariants, 
namely the $L$-value $\Lambda (\Pi)$ and 
the additional invariants $c_v (\Pi)$, which are $|\Sigma_\Pi|$ in number. 
\end{mremark}

\begin{mremark}
\label{rem:conj-in-split-case}
For $B=\M_2(F)$, the conjecture simply predicts that
\[
q_{\M_2(F)} (\Pi) = \Lambda(\Pi) \quad \text{in} \ \C^\times/R^\times.
\]
This piece of the conjecture is known to be true. Indeed, it 
follows from the fact that the integral normalization of $f_B$ in the 
split case coincides with the $q$-expansion normalization (see \cite{deligne-ribet}, \S 5), 
combined with the well known relation between the Petersson inner product 
of a Whittaker normalized form $f\in \Pi$ and the value of the 
adjoint $L$-function at $s=1$. (See Prop. \ref{prop:<f,f>} for instance.)
\end{mremark}

It is natural to ask for an independent description of the invariants $c_v (\Pi)$. 
Before discussing this, we recall the notion of {\it Eisenstein primes} for $\Pi$. 
To any finite place $\lambda$ of $K_\Pi$, one can associate (by 
\cite{shimura-book}, \cite{deligne-galois}, \cite{deligne-serre}, \cite{rog-tun},
\cite{carayol-ens}, \cite{wiles-inv}, \cite{taylor-inv}, \cite{jarvis-crelle}; see also \cite{bl-rog})
an irreducible two dimensional Galois representation
\[
\rho_{\Pi,\lambda}: \Gal (\Qbar /F) \rightarrow \GL_2 (K_{\Pi,\lambda})
\]
that is characterized up to isomorphism by the requirement that
\[
\tr \rho_{\Pi,\lambda} (\Frob_v) = a_v(\Pi)
\]
for any finite place $v$ of $F$ that is prime to $\fN \cdot \N \lambda$, with 
$a_v (\Pi)$ being the eigenvalue of the Hecke operator $T_v$ acting on a new-vector
in $\Pi_v$. 
Choose a model for $\rho_{\Pi,\lambda}$ that takes values in $\GL_2 (\cO_{K_\Pi,\lambda})$ and denote by $\rhob_{\Pi,\lambda}$ the semisimplification of the mod $\lambda$ reduction of $\rho_{\Pi,\lambda}$.
The isomorphism class of $\rhob_{\Pi,\lambda}$ is independent of the choice of model of $\rho_{\Pi,\lambda}$. 
Let $\F_\lambda = \cO_{K_\Pi}/\lambda$ be the residue field at $\lambda$. 
The prime $\lambda$ is said to be Eisenstein for $\Pi$ if 
\[
\rhob_{\Pi,\lambda}: \Gal (\Qbar/F) \rightarrow \GL_2 (\F_\lambda)
\]
is (absolutely) reducible, and non-Eisenstein otherwise. 

Let $N(\Pi)_{\Eis}$ be the product of the $\N \lambda$ as $\lambda$ varies over 
all the Eisenstein primes for $\Pi$. (There are only finitely many such.) Let $\Rt$ denote the ring
\[
\Rt := R [1/N(\Pi)_{\Eis}] = \cO_{\Qbar} [1/ N(\Pi) N(\Pi)_{\Eis}].
\]
The following conjecture characterizes the invariants $c_v(\Pi)$ for finite places $v$ up to Eisenstein primes, 
relating them to {\it level-lowering congruences} for $\Pi$. 
(It is obviously conditional on the truth of Conjecture \ref{conj:oldintro}.)

\begin{mconjecture}
\label{conj:cv-finite-intro}
Suppose that $v$ belongs to $\Sigma_{\Pi,\fin}$.  Let $L \supseteq K_\Pi $ be a number field containing (a representative of) $c_v (\Pi)$ and let $\lambdat$ be a 
finite place of $L$ such that $(\lambdat, N(\Pi))=1$. 
Let $\lambda$ be the place of $K_\Pi$ under $\lambdat$ and suppose that $\Pi$ is not Eisenstein at $\lambda$. Then $v_{\lambdat}(c_v(\Pi))$ equals the largest integer $n$ such that 
$\rho_{\Pi, \lambda} \bmod \lambda^{n}$ is {\it unramified} at $v$.
\end{mconjecture}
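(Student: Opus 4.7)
The plan is to deduce the conjecture from Conjecture~A by matching $v_\lambdat(c_v(\Pi))$ with the asserted level-lowering invariant through a three-term period identity. Assume $|\Sigma_\Pi|\ge 3$ (the boundary cases need a separate and easier treatment). I would pick auxiliary places $v',v''\in\Sigma_\Pi\setminus\{v\}$ and let $B_1,B_2,B_3$ be the quaternion algebras over $F$ with ramification sets $\{v,v'\}$, $\{v,v''\}$, $\{v',v''\}$ respectively. A direct computation from Conjecture~A then gives
\[
c_v(\Pi)^2 \ \equiv \ \Lambda(\Pi) \cdot \frac{q_{B_3}(\Pi)}{q_{B_1}(\Pi) \, q_{B_2}(\Pi)} \pmod{R^\times},
\]
and crucially the (possibly archimedean) invariants $c_{v'}(\Pi), c_{v''}(\Pi)$ cancel in this combination, leaving $v_\lambdat(c_v(\Pi))$ expressed in terms of three quadratic periods and $\Lambda(\Pi)$.

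Next I would relate each $q_B(\Pi)$ to a congruence ideal. Since $q_B(\Pi)$ equals $\langle f_B, f_B \rangle$ up to explicit algebraic factors (where $f_B$ is an integrally normalized newvector in $\Pi_B$), the non-Eisenstein hypothesis on $\lambda$ allows one to invoke the analogue of Hida's formula for quaternionic Shimura varieties, which should give
\[
v_\lambdat(q_B(\Pi)) \ = \ v_\lambdat(\Lambda(\Pi)) - v_\lambdat(\eta_B(\Pi)),
\]
where $\eta_B(\Pi)$ is the congruence ideal of the $\fm_\lambda$-localization of the Hecke algebra on $B$ at level $\fN$ attached to $\rhob_{\Pi,\lambda}$. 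Substituting into the identity of the first paragraph, the $\Lambda(\Pi)$ contributions cancel, leaving
\[
2\,v_\lambdat(c_v(\Pi)) \ = \ v_\lambdat(\eta_{B_1}(\Pi)) + v_\lambdat(\eta_{B_2}(\Pi)) - v_\lambdat(\eta_{B_3}(\Pi)).
\]

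The heart of the argument is then a local factorization of $\eta_B(\Pi)$,
\[
v_\lambdat(\eta_B(\Pi)) \ = \ \sum_{w\in\Sigma_B} n_w(\Pi),
\]
where $n_w(\Pi)$ depends only on $\rho_{\Pi,\lambda}|_{G_{F_w}}$, and for $w\in\Sigma_{\Pi,\fin}$ equals the largest integer $n$ such that $\rho_{\Pi,\lambda}\bmod\lambda^n$ is unramified at $w$. In the elliptic, weight-$2$, $F=\Q$ case this factorization is the Ribet--Takahashi theorem on Tamagawa numbers of Jacobians of Shimura curves \cite{r-t}. In general I would derive it from a numerical $R=T$ theorem for the deformation functor of $\rhob_{\Pi,\lambda}$ with prescribed unramified (or minimally ramified) behavior at each $w\in\Sigma_B$, the non-Eisenstein hypothesis being essential to upgrade mere divisibility to sharp equality via Taylor--Wiles--Kisin patching. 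Archimedean contributions are irrelevant since they cancel in the symmetric combination of the second paragraph, and substituting the factorization yields
\[
2\,v_\lambdat(c_v(\Pi)) \ = \ (n_v+n_{v'})+(n_v+n_{v''})-(n_{v'}+n_{v''}) \ = \ 2n_v,
\]
whence $v_\lambdat(c_v(\Pi))=n_v=n$.

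The main obstacle is clearly the local factorization of $\eta_B(\Pi)$ in the previous paragraph: in the full generality of Hilbert modular forms of arbitrary weight this goes substantially beyond the classical mod-$\lambda$ level-lowering results of Ribet, Jarvis, Fujiwara, Rajaei, and demands both a refined integral Mazur's principle for quaternionic Shimura varieties and a numerical $R=T$ theorem matching the congruence ideal with that of a suitable local deformation ring, which in turn will require the integral comparison of cohomologies controlled by the integrality of theta lifts embodied in Conjecture~D of the present paper. The Petersson-norm/congruence-ideal comparison of the second paragraph is itself a nontrivial technical input, requiring careful control of integral normalizations on quaternionic Shimura varieties in weight $>2$ and over general totally real~$F$.
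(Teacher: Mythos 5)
This statement is, as its label says, a \emph{conjecture}; the paper does not prove it and presents it explicitly as ``obviously conditional on the truth of Conjecture~\ref{conj:oldintro}.'' The authors' actual strategy for getting at statements of this kind runs through the theta-lift integrality Conjecture~\ref{conj:newintro}, not through Hecke-algebra congruence modules. Your first paragraph is correct: the identity $c_v(\Pi)^2 = \Lambda(\Pi)\cdot q_{B_3}(\Pi)/\bigl(q_{B_1}(\Pi)\,q_{B_2}(\Pi)\bigr)$ in $\C^\times/R^\times$ follows formally from Conjecture~\ref{conj:oldintro} and is precisely the definition of $c_v$ used in the proof of Theorem~\ref{thm:conjA-equiv-conjD} via~\eqref{eqn:cv-defn}. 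But the remaining two steps are not proofs: they are reductions to other open problems, as you yourself acknowledge.

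The genuine gap is that both inputs you invoke are unestablished in the required generality, so the argument does not close. The ``quaternionic Hida formula'' $v_{\lambdat}(q_B(\Pi)) = v_{\lambdat}(\Lambda(\Pi)) - v_{\lambdat}(\eta_B(\Pi))$, with $\eta_B$ the congruence module of a localized quaternionic Hecke algebra, is exactly the kind of integral period comparison that the paper regards as deep; note in particular that Remark~\ref{rem:conj-in-split-case} gives $q_{\M_2(F)}(\Pi) = \Lambda(\Pi)$ in $\C^\times/R^\times$ only because the integral normalization of $s_B$ in the split case coincides with $q$-expansion normalization, which absorbs the congruence module---there is no analogous coincidence for non-split $B$, and establishing what the integral normalization of §\ref{ssec:rational-integral-structures} actually picks up is part of what Conjecture~\ref{conj:newintro} is supposed to control. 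The local factorization of $\eta_B$ into invariants $n_w$ depending only on $\rho_{\Pi,\lambda}|_{G_{F_w}}$ is the Ribet--Takahashi theorem only for $F=\Q$, parallel weight~$2$, and an elliptic newform, where Faltings' Tate conjecture for products of curves and explicit component-group computations are available; the numerical $R=T$ argument you gesture at for the general case is not carried out and is, as the introduction of the paper explains (item~(2)), exactly the step that fails for higher weight because the geometric tools are absent. So what you have written is a plausibility argument that translates Conjecture~\ref{conj:cv-finite-intro} into a quaternionic Hida formula plus a local decomposition of congruence modules; it is not a proof, and the paper does not offer one either.
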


At the infinite places $v$, one might hope to have similarly a description of the 
invariants $c_v (\Pi)$ purely in terms of the compatible system $\rho_{\Pi,\lambda}$ of two-dimensional Galois 
representations attached to $\Pi$. In principle,
to such a system one should be able to attach a {\it motive} defined over $F$, 
and the $c_v (\Pi)$ should be related to periods of this motive 
taken with respect to suitable integral structures 
on the de Rham and Betti realizations. In practice, the only case 
in which one can make an unconditional definition is when 
$\Pi$ satisfies the following conditions:
\begin{enumerate}[label=(\alph*),ref=\alph*]
\item \label{cond:pi-of-weight-2} $\Pi$ is of {\it parallel} weight $2$, that is $\uk = (2, \ldots, 2)$.
\item \label{cond:d-odd-or-ds} Either $d(=[F:\Q])$ is odd or $\Sigma_{\Pi,\fin}$ is nonempty.
\end{enumerate}
If $\Pi$ satisfies both \eqref{cond:pi-of-weight-2} and \eqref{cond:d-odd-or-ds} above, it is known (using \cite{carayol-ens}) that one can associate 
to $\Pi$ an abelian variety $A$ over $F$ (or more precisely, an isogeny class 
of abelian varieties) 
such that 
\begin{itemize}
\item $\dim (A) = [K_\Pi:\Q]$;
\item $\End_F (A) \otimes \Q \supset K_\Pi$;
\item $A$ has good reduction outside $\fN$;
\item For any prime $\lambda$ of $K_\Pi$ lying over a rational prime $\ell$, the representation $\rho_{\Pi,\lambda}$ is 
isomorphic to the representation of $\Gal(\Qbar/F)$ on $H^1_{\et} (A_{\Qbar}, \Q_\ell) \otimes_{K_{\Pi} \otimes \Q_\ell} K_{\Pi,\lambda}$. 
\end{itemize}
We may pick in the isogeny class above an abelian variety $A$ such that $\End_F (A) \supset \cO_{K_\Pi}$. 
Then one can make a precise conjecture for $c_v (\Pi)$ for $v\in \Sigma_\infty $ in terms of the periods of $A$. 
Here, we will be content to state this conjecture in the case $K_\Pi = \Q$, 
namely when $A$ is an elliptic curve over $F$. Let $\cA$ denote the N\'{e}ron model of $A$ over 
$R_F:=\cO_F [1/N(\Pi)]$. Then $\cL:=H^0 (\cA, \Omega^1_{\cA/R_F})$ is an invertible $R_F$-module. 
This module can be trivialized by 
picking a large enough number field $K\supseteq F $ and 
extending scalars to the ring $R_K:= \cO_K [1/N(\Pi)]$. 
Pick a generator $\omega$ for $\cL \otimes_{R_F} R_K$ viewed as an $R_K$-module. 
Let $v'$ be any archimedean place of $K$ 
extending $v$, and denote by $\sigma_v: F \rightarrow \R$ the real
embedding of $F$ corresponding to $v$. 
The class of the integral
\[
\frac{1}{(2\pi i)^2} \int_{A\otimes_{\sigma_{v}} \C} \omega_{v'} \wedge \bar{\omega}_{v'}
 \]
 in $\C^\times /\Rt^\times$ can be checked to be independent of the choices above.

\begin{mconjecture}
\label{conj:cv-infinite-intro}
Suppose that $K_\Pi = \Q$ so that $A$ is an elliptic curve. Then
\[
c_v (\Pi) =  \frac{1}{(2\pi i)^2} \int_{A\otimes_{\sigma_{v}} \C} \omega_{v'} \wedge \bar{\omega}_{v'}   \quad \text{ in } \ \C^\times / \Rt^\times.
\]
\end{mconjecture}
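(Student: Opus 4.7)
The plan is to derive Conjecture~\ref{conj:cv-infinite-intro} from Conjecture~\ref{conj:oldintro}, by showing that the periods
\[
P_v(\Pi) := \frac{1}{(2\pi i)^2}\int_{A\otimes_{\sigma_v}\C}\omega_{v'}\wedge\overline{\omega_{v'}}
\]
agree with the $c_v(\Pi)$ at every archimedean place. A key preliminary observation is that $\C^\times/R^\times$ is $2$-torsion-free: if $z\in\C^\times$ satisfies $z^2\in R^\times$, then $z$ is algebraic, and $v_w(z)=\tfrac12 v_w(z^2)=0$ at every finite place $w$ of $\Qbar$ not above a rational prime of $N(\Pi)$, so $z\in R^\times$. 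Together with the uniqueness statement of Remark~\ref{rem:nature-of-cv} (which applies since hypothesis~\eqref{cond:d-odd-or-ds} gives $|\Sigma_\Pi|\ge 3$), this reduces everything to verifying the identity
\[
q_B(\Pi)\cdot\prod_{v\in\Sigma_{B,\infty}}P_v(\Pi)\cdot\prod_{v\in\Sigma_{B,\fin}}c_v(\Pi)\equiv\Lambda(\Pi)\pmod{R^\times}
\]
for sufficiently many admissible quaternion algebras $B$.

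For two admissible algebras $B,B'$ sharing the same archimedean ramification, Conjecture~\ref{conj:oldintro} makes the above identity for $B$ and $B'$ equivalent, so it suffices to treat $B$ with $\Sigma_{B,\fin}$ of minimal cardinality allowed by the parity constraint $|\Sigma_{B,\infty}|+|\Sigma_{B,\fin}|\equiv 0\pmod 2$; this smallest cardinality is $0$ or $1$, and in the latter case a place of $\Sigma_{\Pi,\fin}$ is available by \eqref{cond:d-odd-or-ds}. The base case $\Sigma_{B,\infty}=\emptyset$ is the split identity $q_{\M_2(F)}(\Pi)\equiv\Lambda(\Pi)$ of Remark~\ref{rem:conj-in-split-case}. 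The archimedean information is then extracted by varying $\Sigma_{B,\infty}$ over small subsets of $\Sigma_{\Pi,\infty}$; by $2$-torsion-freeness and $|\Sigma_\Pi|\ge 3$, checking pairs $|\Sigma_{B,\infty}|=2$ and taking ratios already disentangles the individual $c_v$'s.

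For the geometric input, $X_B$ is a quaternionic Shimura variety of dimension $d-|\Sigma_{B,\infty}|$. In parallel weight~$2$, the normalized form $f_B$ is a holomorphic top-degree form, and Section~\ref{ssec:defn-period-invariant} gives $q_B(\Pi)\equiv\int_{X_B(\C)}f_B\wedge\overline{f_B}\pmod{R^\times}$. Via integral Jacquet--Langlands combined with the Tate cycle on $X_B\times\Res_{F/\Q}A^{d-|\Sigma_{B,\infty}|}$ supplied by Faltings' isogeny theorem, the $\Pi$-isotypic part of $H^*(X_B)$ is identified with $\bigotimes_{v\notin\Sigma_{B,\infty}}H^1(A^{\sigma_v}_\C)$ up to an isogeny of degree dividing a power of $N(\Pi)N(\Pi)_{\Eis}$. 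The pullback of the corresponding tensor product of N\'eron differentials should agree with $f_B$ up to a unit in $\Rt$, converting the period integral into $\prod_{v\notin\Sigma_{B,\infty}}P_v(\Pi)$ times factors absorbed into $\Rt^\times$, which combined with Remark~\ref{rem:conj-in-split-case} gives the sought identity.

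The hard part is this integral Manin-constant statement: the automorphic normalization of $f_B$ should match the pullback of the tensor product of N\'eron differentials up to a unit in $\Rt$. This is precisely why Conjecture~\ref{conj:cv-infinite-intro} is stated with $\Rt$-coefficients: the Eisenstein-prime exclusion makes available integral multiplicity one in the cohomology of $X_B$. For Shimura curves ($|\Sigma_{B,\infty}|=d-1$) this should be accessible by adapting \cite{mazur-rat-iso}, \cite{edixhoven}, \cite{abbes-ullmo} together with the analysis of bad reduction and component groups of \cite{ribet}, \cite{r-t}. In higher dimensions the Tate cycle is produced by Faltings, but controlling it \emph{integrally} --- as opposed to up to isogeny and $\Q$-coefficients --- is substantially more delicate and is the principal barrier to an unconditional proof.
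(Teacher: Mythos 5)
This statement is labeled Conjecture~C in the paper, and the paper offers no proof of it; it is an open conjecture. So there is no ``paper's proof'' to compare against, and your proposal must be evaluated on its own merits — where, to your credit, you have already flagged the main obstruction yourself, but there is also a more basic error in the proposed geometric input.

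You correctly identify that an ``integral Manin constant'' comparison between the automorphic normalization of $f_B$ and the tensor product of N\'eron differentials is the crux, and that it is out of reach. That honest acknowledgment already means this is a sketch, not a proof. But there is a further gap that is logically prior: your invocation of Faltings to produce the Tate cycle is only valid when $X_B$ is a curve. Faltings' isogeny theorem yields Tate classes in $H^1 \otimes H^1$ of products of abelian varieties, hence the Tate conjecture for products of curves; it says nothing about the middle-degree cohomology $H^n(X_B)_\Pi$ of a quaternionic Shimura variety of dimension $n\ge 2$. For $d\ge 4$ (or more generally whenever $|\Sigma_{B,\infty}|\le d-2$), the variety $X_B$ you need is higher-dimensional, and the factorization $H^n(X_B)_\Pi \simeq \bigotimes_{v\notin\Sigma_{B,\infty}}H^1(A^{\sigma_v}_\C)$ is not supplied by Faltings — as the introduction of the paper emphasizes, in that range even the \emph{Hodge} conjecture for the predicted factorization is open. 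Your phrase ``In higher dimensions the Tate cycle is produced by Faltings, but controlling it integrally \ldots is substantially more delicate'' understates the problem: the cycle is not known to exist rationally, let alone integrally. Finally, a smaller technical remark: your appeal to the uniqueness of $c(\Pi)$ (Remark~\ref{rem:nature-of-cv}) requires $|\Sigma_\Pi|\ge 3$, which hypothesis~\eqref{cond:d-odd-or-ds} does not always guarantee (e.g.\ $F=\Q$, $\Sigma_{\Pi,\fin}$ a single place gives $|\Sigma_\Pi|=2$), so even the formal reduction to period identities is not unconditional without extra hypotheses.

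In short: the strategy is the one the authors themselves sketch for motivation in the introduction, and it is the right heuristic, but it does not constitute a proof. Both the existence of the required (integral) algebraic cycles in dimension $\ge 2$ and the Manin-constant comparison in all dimensions are genuinely open.
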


\begin{mremark}
One expects that the invariants $c_v(\Pi)$ are transcendental for any infinite place $v$. 
Note that if $A$ is the base change of an elliptic curve defined over a smaller 
totally real field $F'$ (in which case $\Pi$ is 
the base change of a Hilbert modular form for $F'$), then there are obvious algebraic relations between the $c_v (\Pi)$. 
It would be interesting to formulate a converse to this: namely,
can one give a criterion for $\Pi$ to be a base change purely in terms of the $\Qbar$-algebra generated by the invariants $c_v (\Pi)$?
 \end{mremark}
 
 \begin{mremark}
 It would also be interesting to formulate the conjectures above without inverting $N(\Pi)$. 
 There are lots of obvious difficulties with primes 
 that are small with respect to the weight as well as with integral models at primes of bad reduction.  
 In \cite{periods3}, we will extend the conjectures above in the case $F=\Q$ to include primes of bad reduction
 at which the local component of the automorphic representation $\Pi$ is {\it ramified principal series}.
 The only Shimura varieties 
 that occur then are Shimura curves and those associated with definite quaternion algebras over $\Q$.
 The geometric difficulties with primes of bad reduction can be dealt with in this case ``by hand". 
 \end{mremark}

The goal of this first paper is to reformulate Conjecture \ref{conj:oldintro} in terms of a new conjecture (Conjecture \ref{conj:newintro} below)
on the arithmetic properties of a theta lift between quaternionic Shimura varieties. 
This reformulation has many advantages 
since the arithmetic of theta lifts can be studied via a range of automorphic techniques 
including the Rallis inner product formula and period integrals along tori. 
Moreover, the constructions involved seem 
to be useful in attacking several other related problems involving algebraic cycles. We 
will briefly discuss two such applications below. 

Now we outline the main construction. Let $B_1$, $B_2$ and $B$ be 
three quaternion algebras in $\Sigma_\Pi$ such that $B=B_1\cdot B_2$ in the Brauer group of $F$. 
There is then, up to isometry, a unique skew-hermitian $B$-space $(V, \langle \cdot, \cdot \rangle)$ such that
\[
\GU_B (V)^0 \simeq (B_1^\times \times B_2^\times)/F^\times.
\]
Here $\GU_B (V)^0$ denotes the identity component of the group of 
quaternionic unitary similitudes of $V$. For computational purposes, we will need an explicit construction 
of such a space $V$. For this, we pick 
a CM extension $E/F$ with 
\[
E=F+F\i, \quad \i^2 = u \in F^\times,
\]
such that $E$ embeds in $B_1$ and $B_2$. Fix 
embeddings 
\[
E \hookrightarrow B_1, \quad E \hookrightarrow B_2
\]
and write
\[
B_1 = E+ E\j_1, \quad B_2 = E+ E\j_2,
\]
where $\j_1^2 = J_1$ and $\j_2^2 = J_2$ lie in $F$. Then there is an embedding of $E$ in $B$ such that 
\[
B= E+E\j, \quad \j^2= J,
\]
where $J=J_1 J_2$. Let $V=B_1 \otimes_E B_2$, viewed as a right $E$-vector space. In Chapter \ref{sec:uqu} below, we show that 
$V$ can naturally be equipped with a right $B$-action extending the action of $E$ as well as a 
$B$-skew Hermitian form  $\langle \cdot, \cdot \rangle$ such that the quaternionic unitary similitude group $\GU_B (V)^0$
has the form above. 

Let $W$ be a one-dimensional $B$-space equipped with the standard $B$-hermitian form so that 
\[
\GU_B (W) = B^\times.
\] 
We wish to study the theta lift
\[
\Theta: \As(\GU_B(W)) \longrightarrow \As (\GU_B(V)^0 ),
\]
where $\As$ denotes the space of cuspidal automorphic forms. 
The pair $(\U_B(W), \U_B (V))$ is an example of a classical reductive dual pair. For our applications
we need to work with the corresponding similitude groups. In order to construct 
the theta lift, one needs to first construct (local) splittings of the metaplectic cover over the subgroup
\[
\{ (g,h) \in \GU_B(V)^0  \times \GU_B (W): \nu(g) = \nu (h) \},
\]
that satisfy the product formula. (Here $\nu$ denotes the similitude character.) 
For quaternionic unitary similitude groups, this does not seem to be covered in the existing literature. This problem is handled in the appendices under the assumption that $u$, $J_1$ and $J_2$ are chosen such that 
for every finite place $v$ of $F$, at least one of $u$, $J_1$, $J_2$ an $J$ is locally a square. (See Remark \ref{rem:splittings} below.)

The splittings being chosen, the 
correspondence $\Theta$ above can be defined and studied. 
For any quaternion algebra $B'$ with $\Sigma_{B'} \subseteq \Sigma_\Pi$, we let $\pi_{B'}$ denote the {\it unitary} representation associated with $\Pi_{B'}$.
Thus
\[
\pi_{B'}=  \Pi_{B'} \otimes \| \nu_{B'} \|^{-r/2},
\]
where $\nu_{B'}$ denotes the reduced norm on $B'$. 
In Chapter \ref{sec:rallis-general}, we  prove the following theorem regarding $\Theta$ (in the case $B\neq \M_2(F)$) which gives an explicit realization of the Jacquet--Langlands correspondence, extending the work of Shimizu \cite{shimizu-theta}.
\begin{mtheorem}
\label{thm:JLintro}
\[
\Theta (\pi_B) = \pi_{B_1} \boxtimes \pi_{B_2}.
\]
\end{mtheorem}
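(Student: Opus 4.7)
The plan is to prove the theorem by a Rallis-type argument for global non-vanishing combined with local matching at unramified places, following and extending the classical strategy of Shimizu \cite{shimizu-theta}, who treated the split case $B = \M_2(F)$.

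First, I would establish global non-vanishing of $\Theta(\pi_B)$ using the Rallis inner product formula for the dual pair $(\U_B(W), \U_B(V))$. Applied to factorizable data, the formula expresses $\langle \Theta(\phi), \Theta(\phi) \rangle$ for $\phi \in \pi_B$ as a product of explicit local zeta integrals times a special value of the adjoint $L$-function of $\pi_B$, which up to archimedean $\Gamma$-factors is $\Lambda(\Pi) = \Lambda(1, \Pi, \ad)$. Since $\Lambda(\Pi) \ne 0$ and one can choose the Schwartz function so that every local zeta integral is nonzero, this forces $\Theta(\pi_B) \ne 0$. Cuspidality of $\Theta(\pi_B)$ then follows from the standard Rallis tower argument: because $\pi_B$ is cuspidal and its theta lift to the smaller member of the Witt tower below $V$ can be shown to vanish, the lift to $\U_B(V)$ is automatically cuspidal.

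Second, I would identify $\Theta(\pi_B)$ with $\pi_{B_1} \boxtimes \pi_{B_2}$. Since $\Theta(\pi_B)$ is a nonzero cuspidal representation of $\GU_B(V)^0 \simeq (B_1^\times \times B_2^\times)/F^\times$, and since strong multiplicity one holds for cuspidal representations of $B_i^\times(\A_F)$ (which are of $\GL_2$-type), it suffices to match unramified Satake parameters at almost all finite places $v$. At any place $v$ at which all of $B$, $B_1$, $B_2$ split and $\Pi_v$ is unramified, the dual pair $(\U_B(W), \U_B(V))$ becomes, via Morita equivalence, the classical Shimizu dual pair $(\Sp_2, \mathrm{O}(4))$ in similitude form. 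The unramified theta correspondence at such $v$ is known to send the unramified principal series $\Pi_v$ to $\pi_{B_1,v} \boxtimes \pi_{B_2,v} = \Pi_v \boxtimes \Pi_v$ (since both $\pi_{B_i,v}$ equal $\Pi_v$ at such $v$), exactly as in Shimizu \cite{shimizu-theta}. Combined with the global non-vanishing from step one and the matching of central characters along the diagonal $F^\times$, this forces the equality $\Theta(\pi_B) = \pi_{B_1} \boxtimes \pi_{B_2}$.

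The main obstacle I anticipate is the careful passage from isometry to similitude groups in the Rallis formula and the local theta correspondence. The local splittings of the metaplectic cover over $\{(g,h) \in \GU_B(V)^0 \times \GU_B(W) : \nu(g) = \nu(h)\}$ constructed in the appendices must be shown to be compatible with Shimizu's formulas after Morita equivalence at split places, and one must track central characters consistently along the quotient by the diagonal $F^\times$. A secondary difficulty occurs at archimedean places where $B_v$ is ramified: there $B_v^\times$ is compact and $\pi_{B,v}$ is finite dimensional, and the archimedean theta correspondence must be explicitly computed to match the minimal $K$-type of $\pi_{B,v}$ with the expected discrete series on $(B_{1,v}^\times \times B_{2,v}^\times)/F_v^\times$, generalizing Shimizu's archimedean computation to the quaternionic unitary setting.
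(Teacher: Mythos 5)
Your proposal follows essentially the same route as the paper: non-vanishing via the Rallis inner product formula together with non-vanishing of local doubling zeta integrals, cuspidality from cuspidality of $\pi_B$ (your Rallis tower formulation and the paper's direct constant-term computation along $\n(b)$ are the same mechanism, since the lower Witt tower member is trivial and the constant term reduces to $\tilde{\varphi}(0)\int_{G_1(F)\backslash G_1(\A)} f$), and then unramified local matching plus strong multiplicity one to pin down $\Theta(\pi_B) = \pi_{B_1} \boxtimes \pi_{B_2}$. The technical issues you flag (similitude splittings compatible with the doubling method, archimedean matching) are exactly what the paper's Appendices C--D and Chapters 6--7 are devoted to resolving.
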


\medskip

\begin{mremark}
Up to this point in the paper, we make no restrictions on $F$ or $\Pi$. 
However from Chapter \ref{sec:schwartz} onwards (and thus in the rest of the introduction), we assume for simplicity the following:
\begin{itemize}
\item $\fN$ is prime to $2\fD_{F/\Q}$, where $\fD_{F/\Q}$ denotes the different of $F/\Q$.
\end{itemize}
These assumptions simplify some of the local computations in Chapters \ref{sec:schwartz} and \ref{sec:rallis-explicit},
and could be relaxed with more work. 
\end{mremark}

While Theorem \ref{thm:JLintro} is an abstract representation theoretic statement, for our purposes we need to study a 
more explicit theta lift. 
The Weil representation used to define the theta lift above is realized on a certain Schwartz space $\SS (\X)$.  
In Chapter \ref{sec:schwartz}, we pick an explicit canonical Schwartz function $\varphi \in \SS (\X) $ with the property that 
$\theta_\varphi (f_B) $ is a scalar multiple of $f_{B_1} \boxtimes f_{B_2}$. Thus 
\begin{intro-equation}
\label{eqn:alphaintro}
\theta_\varphi (f_B) = \alpha(B_1,B_2) \cdot (f_{B_1} \times f_{B_2}),
\end{intro-equation}
for some scalar $\alpha(B_1,B_2) \in \C^\times$.
The scalar $\alpha (B_1,B_2)$ depends not just on $B_1$ and $B_2$ but also on the other choices made above. However,
we will omit these other dependencies in the notation.

That $\alpha(B_1,B_2)$ is nonzero follows from the following explicit version of the Rallis inner product formula, proved
in Chapter \ref{sec:rallis-explicit}. (The assumption $B\neq \M_2(F)$ in the statement below is 
made since the proof in the case $B=\M_2(F)$ would be somewhat different. See for 
instance \S 6 of \cite{gi}. Note that this case corresponds to the original setting of Shimuzu \cite{shimizu-theta}, and 
is not needed in this paper.)

\begin{mtheorem}
\label{thm:rallisintro} 
Suppose $B_1\neq B_2$, or equivalently, $B\neq \M_2(F)$. Then
\[
|\alpha (B_1,B_2) |^2 \cdot \langle f_{B_1}, f_{B_1} \rangle \cdot \langle f_{B_2}, f_{B_2} \rangle = C \cdot \langle f,f \rangle \cdot \langle f_B, f_B \rangle,
\]
where $C$ is an explicit constant (see Thm. \ref{thm:rallis-B-explicit}) and 
$f$ is a Whittaker normalized form in $\Pi$ (as in Remark \ref{rem:conj-in-split-case}).
\end{mtheorem}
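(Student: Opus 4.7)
The plan is to compute $\langle \theta_\varphi(f_B), \theta_\varphi(f_B)\rangle$ in two different ways and equate the results. On the one hand, \eqref{eqn:alphaintro} together with the isomorphism $\GU_B(V)^0 \simeq (B_1^\times \times B_2^\times)/F^\times$ directly identifies this inner product with a constant multiple of $|\alpha(B_1,B_2)|^2 \cdot \langle f_{B_1}, f_{B_1}\rangle \cdot \langle f_{B_2}, f_{B_2}\rangle$, once measures on the two sides are matched. On the other hand, the Rallis inner product formula applied to the similitude dual pair $(\GU_B(W), \GU_B(V))$ expresses the same quantity as an Eulerian integral on $\GU_B(W) = B^\times$ pairing the matrix coefficient $\langle \pi_B(h) f_B, f_B\rangle$ against a matrix coefficient of the doubled Weil representation $\omega_\psi$ acting on $\varphi \otimes \bar\varphi$ (with the similitude factor handled via the splittings constructed in the appendices).

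Unfolding this integral by the doubling method factors it as $\Lambda(1, \Pi, \ad) \cdot \prod_v Z_v(\varphi_v, f_{B,v})$. At places $v$ unramified for both $\Pi$ and $B$, the classical unramified theta lift computation (Rallis, Li) evaluates $Z_v$ as a ratio of local $L$-factors cancelling the standard Euler factors. At the remaining places --- the archimedean ones, the places in $\Sigma_{B,\fin}$, and the finite places where $\Pi_v$ is special or ramified principal series --- the Schwartz function $\varphi_v$ constructed in Chapter \ref{sec:schwartz} and the normalized choice of test vector $f_{B,v}$ are designed precisely so that $Z_v$ can be evaluated in closed form. Collecting all local contributions yields
$$\langle \theta_\varphi(f_B), \theta_\varphi(f_B)\rangle = C' \cdot \Lambda(1, \Pi, \ad) \cdot \langle f_B, f_B\rangle$$
for an explicit $C'$. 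The standard formula relating $\langle f, f\rangle$ to $\Lambda(1, \Pi, \ad)$ for the Whittaker-normalized newform $f$ (cf.\ Prop.\ \ref{prop:<f,f>}) then eliminates the $L$-value and produces the identity in the stated form, with $C$ equal to the assembled product of local factors.

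The principal obstacle is the explicit evaluation of the local zeta integrals $Z_v$ at the ramified and archimedean places. These depend delicately on all of the global choices: the CM extension $E = F + F\i$, the parameters $u, J_1, J_2, J$, the compatible splittings of the metaplectic cover that rely on the local squareness hypothesis (see Remark \ref{rem:splittings}), and the precise form of the test vector and Schwartz function at each place. The exact constant $C$ in Theorem \ref{thm:rallis-B-explicit} emerges only after assembling these local calculations and tracking the measure normalizations (including the center of $\GU_B(V)^0$ in the identification with $(B_1^\times \times B_2^\times)/F^\times$) carefully throughout. The hypothesis $B \neq \M_2(F)$ is used to avoid the degenerate behavior of the doubling integral in the split case, where the unfolding produces additional Eisenstein contributions.
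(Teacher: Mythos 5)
Your proposal takes essentially the same approach as the paper: the identity is obtained by unfolding $\langle \theta_\varphi(f_B), \theta_\varphi(f_B)\rangle$ via the Siegel--Weil formula and the doubling method (Prop.\ \ref{prop:rallis-B}), converting the adjoint $L$-value to $\langle f,f\rangle$ via Prop.\ \ref{prop:<f,f>}, and then carrying out the local zeta integral and measure-comparison computations of Chapter \ref{sec:rallis-explicit}. The only minor imprecision is the role of $B \neq \M_2(F)$: in the paper the point is that $B$ is then division (so $H^0 = \GU(V)^0$ is anisotropic or quasi-compact enough for the theta integral and regularization to proceed as in Chapter \ref{sec:rallis-general}), rather than a degeneracy in the unfolding per se, but this does not affect the overall correctness of the sketch.
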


The arithmetic properties of $\alpha(B_1,B_2)$ are of key importance. 
As such, the choice of measures needed to define the invariants $q_B(\Pi)$ 
requires us to work with a slight modification of $\alpha(B_1,B_2)$, denoted 
$\ba (B_1,B_2)$, as described in \S \ref{ssec:mainconjecture}.
We are especially interested in questions of integrality of $\ba (B_1,B_2)$ for which we may work one prime at a time. 
Thus we fix a prime $\ell$ not dividing $N(\Pi)$ and then choose all the data (for example, $E$, $J_1$, $J_2$, $\varphi$) to be suitably adapted to $\ell$. The choices are described in detail in Sec. \ref{ssec:choices}. 
Finally, we come to main conjecture of this paper, which
is motivated by combining Theorem \ref{thm:rallisintro} with Conj. \ref{conj:oldintro}.

\begin{mconjecture}
\label{conj:newintro}
Suppose that $B_1\neq B_2$ and $\Sigma_{B_1} \cap \Sigma_{B_2} \cap \Sigma_\infty = \varnothing$, that is $B_1$ and $B_2$ have no infinite places of ramification in common. Then
\begin{enumerate}
\item $\ba(B_1,B_2)$ lies in $\Qbar^\times$.
\item $\ba(B_1,B_2)$ is integral at all primes above $\ell$. 
\item If in addition $B_1$ and $B_2$ have no finite places of ramification in common, then $\ba(B_1,B_2)$ is a unit at 
all primes above $\ell$. 
\end{enumerate}
\end{mconjecture}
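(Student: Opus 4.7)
The strategy is to combine the explicit defining equation \eqref{eqn:alphaintro} for $\alpha(B_1,B_2)$ with an analysis of the arithmetic structures on the quaternionic Shimura varieties involved, attacking each of the three claims in turn. After the measure corrections that convert $\alpha$ to $\ba$, the identity $\theta_\varphi(f_B) = \ba(B_1,B_2)\cdot (f_{B_1}\times f_{B_2})$ (up to explicit normalizing constants) expresses $\ba(B_1,B_2)$ as the scalar by which the explicit theta kernel $\theta_\varphi$ carries the integrally normalized section $f_B$ into the integrally normalized section $f_{B_1}\times f_{B_2}$ on $\Sh_{\GU_B(V)^0}$. Each clause of the conjecture then becomes a statement about how well this kernel respects the chosen integral structures.

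For part (i), I would establish $\ba(B_1,B_2)\in\Qbar^\times$ by Galois equivariance. Each of $f_B$, $f_{B_1}$, $f_{B_2}$ is $\Qbar$-rational as an integrally normalized section of an automorphic vector bundle on a canonical model of the relevant Shimura variety. With the explicit, algebraic Schwartz datum $\varphi$ and the splittings constructed in the appendices (cf.\ Remark \ref{rem:splittings}), the kernel $\theta_\varphi$ is a $\Qbar$-rational correspondence between these automorphic vector bundles, so the ratio $\ba(B_1,B_2)$ in \eqref{eqn:alphaintro} must lie in $\Qbar^\times$. A consistency check is provided by the Rallis formula (Theorem \ref{thm:rallisintro}): combined with Shimura's algebraicity results for adjoint $L$-values and quaternionic Petersson inner products, it already forces $|\ba(B_1,B_2)|^2\in\Qbar^\times$, which is compatible with (and weaker than) the Galois-equivariance argument.

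For parts (ii) and (iii) I would work one prime at a time above $\ell$. With the choices of $E$, $J_1$, $J_2$, and $\varphi$ made in Sec.\ \ref{ssec:choices}, all local data at $\ell$ are unramified and $\varphi$ is fixed by hyperspecial subgroups at $\ell$. The plan is to model $\theta_\varphi$ as a morphism on a good $\ell$-integral model of $\Sh_{\GU_B(V)^0}$, realized through a Kudla-style generating series on an ambient orthogonal Shimura variety. Integrality of $f_B$ would then propagate through the kernel to integrality of $\theta_\varphi(f_B)$, yielding (ii). For (iii), when $B_1$ and $B_2$ share no finite place of ramification, at every finite $v$ the local theta correspondence is either split unramified or of unramified first occurrence; an explicit unramified local computation should show that $\varphi_v$ is an optimal test vector and the $v$-factor of $\alpha$ is a unit, giving the global unit property.

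The hard step is the geometric integrality in (ii) and (iii): producing an $\ell$-integral model of $\Sh_{\GU_B(V)^0}$ together with a theta kernel that visibly preserves integrality of sections of the relevant automorphic vector bundles, in weights more general than $(2,\ldots,2)$. The required integral theory for quaternionic unitary Shimura varieties — good integral models, integral comparison of de Rham and Betti realizations, and integrality of Kudla-type generating series — is largely unavailable at present, which is presumably why Conjecture D is posited here rather than proved; the body of this paper instead extracts its consequences, showing it implies the period relations of Conjecture A.
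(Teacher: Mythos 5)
This statement is Conjecture~D of the paper and is deliberately left unproved: the paper's contribution (Theorem~\ref{thm:conjA-equiv-conjD}) is to show that it \emph{implies} Conjecture~A, and its study is deferred to the sequels \cite{periods2}, \cite{periods3}. There is therefore no proof in the paper to compare against, and your closing paragraph correctly recognizes this.

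A few remarks on your sketch, since it is offered as a program rather than a proof. For (i), the assertion that $f_B$, $f_{B_1}$, $f_{B_2}$ are ``$\Qbar$-rational'' is imprecise: what is $\ell$-normalized and defined over a number field is the geometric section $s_{B'}$, whereas the ad\`elic function $f_{B'}$ appearing in \eqref{eqn:alphaintro} is obtained from $s_{B'}$ via $\Lift_h$, contraction against the vectors $v^{B'}_{\uk_{B'}}$, and multiplication by $\nu_{B'}(\cdot)^{-r/2}$, which is not an algebraic function on the ad\`elic group. Likewise the theta kernel depends on $\psi$ and the metaplectic splittings of the appendices and is not naively Galois-equivariant; a genuine proof of (i) would need a carefully formulated rationality statement for the similitude theta correspondence in the style of Shimura and Harris, keeping track of how $\mathrm{Aut}(\C)$ twists $\psi$, the Schwartz function, and the archimedean contractions simultaneously. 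Your ``consistency check'' via the Rallis formula is not independent evidence: from \eqref{eqn:me}, deducing $|\ba(B_1,B_2)|^2 \in \Qbar^{\times}$ amounts to knowing that $\Lambda(\Pi)\, q_B(\Pi)\,/\,(q_{B_1}(\Pi)\, q_{B_2}(\Pi))$ is algebraic, which is essentially Shimura's period conjecture (the $\Qbar$-shadow of Conjecture~A itself), so one must be careful not to argue in a circle. For (ii)--(iii), the dual pair is quaternionic unitary, not orthogonal-symplectic; your ``ambient orthogonal Shimura variety'' picture only applies after Morita equivalence at split places, and globally the geometric object in play is the one built from $\GU_B(V)^0$. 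The genuine gap is the one you name: in weights other than parallel weight two there is presently no adequate integral theory of these automorphic vector bundles and no integral theory of the theta kernel, which is precisely why the paper posits Conjecture~D rather than proving it. Finally, you do not comment on the hypothesis $\Sigma_{B_1}\cap\Sigma_{B_2}\cap\Sigma_\infty=\varnothing$; if $B_1$ and $B_2$ share an archimedean place of ramification, $\ba(B_1,B_2)$ is expected to involve archimedean period invariants $c_v(\Pi)$ and thus to be genuinely transcendental, so this hypothesis is essential to the rationality claim, not merely convenient.
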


While not immediately apparent, 
Conjecture \ref{conj:newintro} implies Conjecture \ref{conj:oldintro}.
 Indeed, in Sec. \ref{ssec:mainconjecture}, we show the following.

\begin{mtheorem}
Suppose that Conjecture \ref{conj:newintro} is true for all $\ell$ in some set of primes $\Xi$. 
Then  Conjecture \ref{conj:oldintro} holds with $R$ replaced by $R[1/\ell : \ell \not \in \Xi]$.
Consequently, if Conjecture \ref{conj:newintro} is true for all $\ell \nmid N(\Pi)$, then 
Conjecture \ref{conj:oldintro} is true. 
\end{mtheorem}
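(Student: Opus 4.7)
The plan is to use the Rallis inner product formula to translate Conjecture \ref{conj:newintro} into the factorization asserted by Conjecture \ref{conj:oldintro}. Throughout, set $R' := R[1/\ell : \ell \not\in \Xi]$. Combining Theorem \ref{thm:rallisintro} with the definition of $\ba(B_1,B_2)$ as a renormalization of $\alpha(B_1,B_2)$ absorbing the measure factors appearing in $q_B(\Pi)$ (see \S\ref{ssec:mainconjecture}), and with Remark \ref{rem:conj-in-split-case} (which identifies $q_{\M_2(F)}(\Pi)$ with $\Lambda(\Pi)$), and noting that the explicit constant $C$ of Theorem \ref{thm:rallisintro} together with $h_F$, $D_F$, $N_\Pi$, and the weight factorials are all units in $R$ by the definition of $N(\Pi)$, one extracts the identity
\[
|\ba(B_1,B_2)|^2 \equiv \frac{\Lambda(\Pi) \cdot q_B(\Pi)}{q_{B_1}(\Pi) \cdot q_{B_2}(\Pi)} \pmod{R^\times}
\]
for all $B_1, B_2$ with $\Sigma_{B_i} \subseteq \Sigma_\Pi$ and $B = B_1 \cdot B_2$.

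Assume next that $|\Sigma_\Pi| \geq 3$ (the cases $|\Sigma_\Pi| \leq 2$ reduce directly to Remark \ref{rem:conj-in-split-case}). For each $v \in \Sigma_\Pi$, choose auxiliary places $u_1, u_2 \in \Sigma_\Pi \setminus \{v\}$ and let $B_i^{(v)}$ be the quaternion algebra with $\Sigma_{B_i^{(v)}} = \{v, u_i\}$, so $\Sigma_{B_1^{(v)}} \cap \Sigma_{B_2^{(v)}} = \{v\}$. Define
\[
c_v(\Pi) := |\ba(B_1^{(v)}, B_2^{(v)})| \in \C^\times / (R')^\times.
\]
If $v$ is finite, the intersection $\{v\}$ is disjoint from $\Sigma_\infty$, so Conjecture \ref{conj:newintro} yields $\ba(B_1^{(v)}, B_2^{(v)}) \in \Qbar^\times$, integral at all primes above $\ell$ for every $\ell \in \Xi$; hence $c_v(\Pi) \in R'$. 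For infinite $v$, Conjecture \ref{conj:newintro} does not apply, but Conjecture \ref{conj:oldintro} imposes no algebraicity condition at infinite places.

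The factorization $q_B(\Pi) \equiv \Lambda(\Pi)/\prod_{v \in \Sigma_B} c_v(\Pi)$ modulo units of $R'$ is then verified by induction on $|\Sigma_B|$. The base case $|\Sigma_B| = 0$ is Remark \ref{rem:conj-in-split-case}. For $|\Sigma_B| \geq 4$, write $\Sigma_B = S_1 \sqcup S_2$ as a disjoint union of even subsets of size $\geq 2$: Conjecture \ref{conj:newintro}(iii) makes $\ba(B(S_1), B(S_2))$ a unit, and the identity above gives $q_B \equiv q_{B(S_1)} q_{B(S_2)}/\Lambda(\Pi)$, reducing to $|\Sigma_B| = 2$. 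For $\Sigma_B = \{v_1, v_2\}$, choose a third place $v_3 \in \Sigma_\Pi \setminus \{v_1, v_2\}$ and apply the Rallis identity to compute
\[
c_{v_1}^2 \equiv \frac{\Lambda(\Pi) \cdot q_{B(\{v_2,v_3\})}}{q_{B(\{v_1,v_2\})} \cdot q_{B(\{v_1,v_3\})}}, \qquad c_{v_2}^2 \equiv \frac{\Lambda(\Pi) \cdot q_{B(\{v_1,v_3\})}}{q_{B(\{v_1,v_2\})} \cdot q_{B(\{v_2,v_3\})}};
\]
multiplying these collapses the auxiliary terms and yields $(c_{v_1} c_{v_2})^2 \equiv \Lambda(\Pi)^2 / q_{B(\{v_1,v_2\})}^2$, whence $q_{B(\{v_1,v_2\})} \equiv \Lambda(\Pi)/(c_{v_1} c_{v_2})$ modulo units of $R'$.

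The main obstacle is the well-definedness of $c_v(\Pi)$: one must show that $|\ba(B_1,B_2)|^2$ modulo units of $R'$ depends only on $T = \Sigma_{B_1} \cap \Sigma_{B_2}$, and not on the specific pair chosen. This is to be handled by a sequence of elementary moves modifying $(B_1, B_2)$ while preserving $T$, each contributing a unit factor via Conjecture \ref{conj:newintro}(iii) applied to a suitable auxiliary pair of empty intersection. Once well-definedness is established, the uniqueness from Remark \ref{rem:nature-of-cv} makes $c(\Pi)$ canonical, and the inductive verification above completes the proof.
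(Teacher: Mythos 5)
Your overall strategy matches the paper's: combine the Rallis inner product formula (in the form $|\ba(B_1,B_2)|^2 \cdot q_{B_1} q_{B_2} \equiv \Lambda(\Pi) q_B$) with Conjecture~\ref{conj:newintro} to manufacture the $c_v(\Pi)$'s and then verify the factorization by a bootstrapping argument. However, the step you explicitly flag as ``the main obstacle'' --- well-definedness of $c_v(\Pi)$, i.e.\ that $|\ba(B_1,B_2)|^2$ modulo units depends only on $\Sigma_{B_1}\cap\Sigma_{B_2}$ --- is precisely the crux of the theorem, and you do not carry it out; you only remark that it ``is to be handled by a sequence of elementary moves.'' This is a genuine gap, because everything else in your proof relies on it: in your $|\Sigma_B|=2$ verification you use formulas for $c_{v_1}^2$ and $c_{v_2}^2$ with auxiliary places $\{v_2,v_3\}$ and $\{v_1,v_3\}$, whereas the definition of $c_{v_i}(\Pi)$ used a fixed but potentially different pair of auxiliary places; without independence of those choices the displayed identities do not refer to the $c_{v_i}$ you defined.

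The paper closes this gap cleanly and at the outset. It introduces $c_{\Sigma}(\Pi) := \Lambda(\Pi)/q_{B_\Sigma}(\Pi)$ for every even subset $\Sigma\subseteq\Sigma_\Pi$, observes $c_{\varnothing}=1$ from the split case, and proves the multiplicative relation $c_{\Sigma_1}(\Pi)\,c_{\Sigma_2}(\Pi)\equiv c_{\Sigma_1\sqcup\Sigma_2}(\Pi)$ for disjoint $\Sigma_1,\Sigma_2$ directly from Conjecture~\ref{conj:newintro}(iii) and the Rallis identity (no induction needed). Independence of $c_v$ from the auxiliary places then drops out immediately: to swap $u$ for $u'$ one compares two decompositions $c_{\{v,u\}}c_{\{u',w\}} = c_{\{v,u,u',w\}} = c_{\{v,u'\}}c_{\{u,w\}}$ of the same four-element set. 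If you make this multiplicative relation your Lemma~0, your argument closes; as written it does not. One smaller point worth being precise about: $\ba(B_1,B_2)$ in the paper is constructed from data \emph{adapted to the prime $\ell$} (\S\ref{ssec:choices}), so identity~\eqref{eqn:me-ell} holds a priori only in $\C^\times/R_{(\ell)}^\times$; the paper therefore argues one $\ell$ at a time and passes to $R^\times=\bigcap_\ell R_{(\ell)}^\times$ at the very end, whereas your statement ``$\equiv \pmod{R^\times}$'' with a single $\ba$ elides this $\ell$-dependence.
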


At this point, the reader may feel a bit underwhelmed since all we seem to have done is 
reformulate Conjecture \ref{conj:oldintro} in terms of another conjecture that is not visibly easier. 
However, we believe that Conjecture \ref{conj:newintro} provides the correct perspective to attack 
these fine integrality questions about period ratios, for several reasons. 
Firstly, it does not require an a priori definition of the invariants $c_v$. 
Second, it fits into the philosophy that {\it theta lifts have excellent arithmetic properties}
and is amenable to attack by automorphic methods of various kinds. Lastly, 
it is usually a very hard problem (in Iwasawa theory, say) to prove {\it divisibilities}; on 
the other hand, if a quantity is expected to be a unit, then this might be easier to show, for instance using congruences.
Part (iii) of Conjecture \ref{conj:newintro}, which states that $\ba (B_1,B_2)$ is often a unit, has hidden in it a large number of divisibilities
that would be very hard to show directly, but that might be more accessible 
when approached in this way. This is the approach 
taken in the sequels \cite{periods2} and \cite{periods3} where we 
study Conjecture \ref{conj:newintro} and give various applications to periods.

As mentioned earlier, the constructions discussed above also have 
concrete applications to problems about algebraic cycles. 
We mention two articles in progress 
that rely crucially on this paper:

\begin{itemize} 
\item In \cite{gz}, we study the Bloch--Beilinson conjecture for Rankin--Selberg $L$-functions $L(f_E, \chi,s)$,
where $f$ is a modular form of weight $k$ and $\chi$ is a Hecke character of an imaginary quadratic field $E$ of infinity 
type $(k',0)$ with $k' \ge k$. The simplest case is when $(k,k')=(2,2)$. In this case we give 
an explicit construction of cycles corresponding to the vanishing of the $L$-function
and prove a relation between the $p$-adic logarithms of such cycles and values of $p$-adic $L$-functions. 
(All previous constructions of cycles for such $L$-functions (\cite{gross-zagier},
\cite{nekovar}, \cite{bdp1}) only work in the case $k>k'$.) The key input from this paper is 
the embedding 
\[
\GU_B(V)^0 \rightarrow \GU_E(V)
\]
which provides a morphism of Shimura varieties that can be used to construct the relevant cycle. 

\smallskip

\item In \cite{hodge-classes} we consider the Tate conjecture for products 
$X_{1} \times X_{2}$ where $X_1$ and $X_2$ are the Shimura varieties 
associated with two quaternion algebras $B_1$ and $B_2$ over 
a totally real field $F$ that have identical ramification at the infinite places of $F$. 
As explained earlier, the Jacquet--Langlands correspondence gives rise to natural Tate classes 
on $X_1 \times X_2$ and the Tate conjecture predicts the existence of algebraic cycles on the product
giving rise to these Tate classes. While we cannot as yet show the existence of such cycles, we are able to 
at least give an unconditional construction of the corresponding Hodge classes. Moreover, these Hodge classes 
are constructed not by comparing periods but rather by finding a morphism
\[
X_1 \times X_2 \rightarrow X
\]
into an auxiliary Shimura variety $X$ and constructing Hodge classes on $X$ that 
restrict nontrivially to $X_1 \times X_2$. Thus we reduce the Tate conjecture on
$X_1 \times X_2$ to the Hodge conjecture on $X$ which should in principle be an easier problem.
The relation with the current paper is that $X_1\times X_2$ and $X$ may be viewed as the Shimura varieties associated with 
certain skew-hermitian $B$ spaces, with $B=B_1\cdot B_2$. 
\end{itemize}

Finally, we give a brief outline of the contents of each chapter. 
In Chapter \ref{sec:qvs} we recall the theory of automorphic vector bundles on quaternionic
Shimura varieties and define the canonical quadratic period invariants $q_B (\Pi)$. 
In Chapter \ref{sec:uqu}, we give the key constructions involving quaternionic skew-hermitian 
forms. Chapter \ref{sec:theta} discusses the general theory of the theta correspondence 
as well as the special case of quaternionic dual pairs, while 
Chapter \ref{sec:rallis-general} establishes the general form of the
Rallis inner product formula in our situation and proves that the 
theta lift we are considering agrees with the Jacquet--Langlands correspondence. 
In Chapter \ref{sec:schwartz}, we pick explicit Schwartz functions, which 
are then used in Chapter \ref{sec:rallis-explicit} to compute the precise form of the 
Rallis inner product formula in our setting. In Chapter \ref{sec:main-conj} we 
first discuss all the choices involved in 
formulating the main conjecture, Conjecture \ref{conj:newintro} above, and then 
show that it implies Conjecture \ref{conj:oldintro}. 
Appendix \ref{sec:hodge} is strictly not necessary but is useful in motivating 
some constructions in Chapter \ref{sec:qvs}. 
The results from Appendix  \ref{sec:weil-gsp} on metaplectic covers of symplectic {\it similitude} groups
are used in the computations in Appendix \ref{sec:spl-B}.
Appendices 
\ref{sec:spl-B} and \ref{sec:spl-double-B} are invoked in Chapters \ref{sec:theta},
\ref{sec:rallis-general} and \ref{sec:schwartz}, and contain the construction
of the relevant splittings, on which more is said in the remark below. 

\begin{mremark}
\label{rem:splittings}
The problem of constructing the required splittings and checking various compatibilities involving them turns out to be rather nontrivial and 
occupies the lengthy Appendices \ref{sec:spl-B} and \ref{sec:spl-double-B}. For {\it 
isometry groups}, these can be handled using the doubling method as in Kudla \cite[\S 4]{kudla-splitting}.
This gives a collection of splittings (one for each place $v$)  
\[
s_{\Kudla,v}: \U_B(V) (F_v) \times \U_B(W) (F_v) \rightarrow \C^{(1)}
\] 
that satisfy the {\it product formula}:
\[
\prod_v s_{\Kudla,v} (\gamma) =1
\]
for $\gamma \in \U_B(V) (F) \times \U_B (W) (F)$. 
The problem is really to extend these splittings to the groups
\[
\{ (g,h) \in \GU_B(V)^0 (F_v) \times \GU_B(W)(F_v): \nu(g) = \nu (h) \}
\]
in such a way that they still satisfy the product formula. 
A similar problem for the dual pairs consisting of the unitary similitude groups of a hermitian $E$-space $\VV$ and 
a skew-hermitian $E$-space $\WW$ can be solved using the fact that
$\VV \otimes_E \WW $ can be considered as 
a {\it skew-hermitian} $E$-space,
and the group
\[
\{ (g,h) \in \GU_E(\VV)  \times \GU_E (\WW): \nu(g) = \nu (h) \}
\]
(almost) embeds in $\U_E (\VV \otimes_E \WW)$.  This fails when working with $B$-spaces 
since $B$ is non-commutative and the tensor product construction is not available. 
To circumvent this problem, we first construct by hand, splittings
\[
s_v: \{ (g,h) \in \GU_B(V)^0 (F_v) \times \GU_B(W)(F_v): \nu(g) = \nu (h) \} \rightarrow \C^{(1)}
\]
in Appendix \ref{sec:spl-B} and 
check that they satisfy several natural properties including the product formula (Proposition \ref{prop:spl-B-prod}).
This suffices to construct the theta lift $\Theta$. 
In order to prove the Rallis inner product formula, we need 
to check a further compatibility between our   
 splittings $s_v$  and the splittings $s_{\Kudla,v}$, in the 
 context of the doubling method. 
This is accomplished in Lemma \ref{lem:spl-double-B-compare} in Appendix \ref{sec:spl-double-B}. 
\end{mremark}

\noindent {\bf Acknowledgments:} During the preparation 
of this article, A.I. was partially supported by JSPS KAKENHI Grant Numbers 22740021, 26287003,
and K.P. was partially supported by NSF grants DMS 0854900, DMS 1160720, a grant from the 
Simons Foundation (\# 305784) and by a Margaret and Herman Sokol Faculty Award from the University of Michigan.

\section{Quaternionic Shimura varieties}
\label{sec:qvs}

\subsection{Shimura varieties}
\label{ssec:shim-var}

\subsubsection{Shimura varieties and canonical models}

We recall quickly the general theory of Shimura varieties and their canonical models \cite{deligne-shimura}. Let $\Ss:=\Res_{\C/\R} \GG_m$ denote the Deligne torus. There is an equivalence of categories 
\[
\R \text{-Hodge structures } \leftrightarrow \R  \text{-vector spaces with an algebraic action of } \Ss,
\]
described as follows. Suppose that $V$ is an $\R$-vector space equipped with a pure Hodge structure of weight $n$. Thus we have a decomposition of $V_\C:=V\otimes_\R \C$:
$$V_\C = \bigoplus_{p+q=n} V^{pq},$$
where $V^{pq}= \overline{V^{qp}}$. Define an action $h$ of $\C^\times$ on $V_\C$ by 
$$ h(z)v = z^{-p} \bar{z}^{-q} v \quad \text{for} \quad v \in V^{pq}.$$ 
Since $h(z)$ commutes with complex conjugation, it is obtained by extension of scalars from an automorphism of $V$ defined over $\R$. This gives a map on real points $h: \Ss (\R) =\C^\times \rightarrow \GL(V) (\R)$, that comes from an algebraic map $\Ss \rightarrow \GL(V)$. 

A Shimura datum is a pair $(G,X)$ consisting of a reductive algebraic group $G$ over $\Q$ 
and a $G(\R)$-conjugacy class $X$ of homomorphisms $h:\Ss \rar G_\R$ satisfying the following conditions:
\begin{enumerate}
\item For $h$ in $X$, the Hodge structure on the Lie algebra $\fg$ of $G_\R$ given by $\Ad \circ h$ is of type $(0,0) + (-1,1) + (1,-1)$. (In particular, the restriction of such an $h$ to $\GG_{m,\R} \subset \Ss$ is trivial.)
\item For $h$ in $X$, $\Ad \ h(i)$ is a Cartan involution on $G_{\R}^{\ad}$, where $G^{\ad}$ is the adjoint group of $G$.  
\item $G^\ad$ has no factor defined over $\Q$ whose real points form a compact group. 
\end{enumerate}

These conditions imply that $X$ has the natural structure of a disjoint union of Hermitian 
symmetric domains. 
The group $G(\R)$ acts on $X$ on the left by
\[
(g \cdot h)(z) = g\cdot h(z) \cdot g^{-1}.
\]
To agree with our geometric intuition, we will sometimes write $\tau_h$ (or simply $\tau$) 
for $h$ in $X$. 

Let $\A$ and $\A_f$ denote respectively the ring of ad\`{e}les and finite ad\`{e}les of $\Q$.
Let $\K$ be an open compact subgroup of $G(\A_f)$. The Shimura variety associated to $(G,X,\K)$ is the quotient
\[
\Sh_\K (G,X) =  G(\Q) \backslash X \times G(\A_f)/ \K.
\]
For $\cK$ small enough, this has the natural structure of a smooth variety over $\C$. The inverse limit
\[
\Sh (G,X) = \underleftarrow{\lim}_\K \ \Sh_\K (G,X)
\]
is a pro-algebraic variety that has a canonical model over a number field $E(G,X)$, the reflex field of the Shimura datum $(G,X)$. In particular, each $\Sh_\K (G,X)$ has a canonical model over $E(G,X)$. 

We recall the definition of $E(G,X)$.
This field is defined to be the field of definition of the conjugacy class of co-characters
\[
\mu_h: \GG_{m,\C} \rightarrow \Ss_\C \rar G_\C,
\]
where the first map is $z\mapsto (z,1)$ and the second is the one induced by $h$. 
Let us say more precisely what this means.
For any subfield $k$ of $\C$ , let 
$\cM (k)$ denote the set of $G(k)$-conjugacy classes of 
homomorphisms $\GG_{m,k} \rightarrow G_k$. Then 
the inclusion $\Qbar \hookrightarrow \C$ gives a bijection 
between $\cM(\Qbar)$ and $\cM (\C)$. This gives a 
natural action of $\Gal(\Qbar/\Q)$ on $\cM(\C)$. The reflex 
field $E(G,X)$ is then the fixed field of the subgroup
\[
\{ \sigma\in \Gal(\Qbar / \Q): \ \sigma M_X = M_X\}
\]
where $M_X$ is the conjugacy class of $\mu_h$ for any $h\in X$.

\subsubsection{Automorphic vector bundles} 
We recall the basics of the theory of automorphic vector bundles following \cite{harris-avb1}, \cite{harris-avb2},
\cite{gar-har}. 
First, to any $\mu : \GG_{m, \C} \rar G_\C$ as above one can associate 
a filtration $\Filt(\mu)$ of $\Rep_\C (G_\C)$. This is the functor which 
assigns to every complex representation $(V,\rho)$ of $G_\C$ the 
filtered vector space $(V,F_\mu^{\cdot})$ where $F_\mu^{\cdot}$ is the filtration on $V$
corresponding to $\rho \circ \mu$; that is, $F_\mu^{p} V =  \oplus_{i\ge p} V_\mu^{i}$, where 
$V_\mu^{i}$ is the subspace of $V$ on which $\GG_m (\C)$ acts via $z \mapsto z^i $.
In particular, one obtains a filtration on $\fg_\C$ via the adjoint representation of $G(\C)$. 
Let $P_\mu$ be the subgroup of $G_\C$ that preserves the filtration $F_\mu^\cdot$ 
in every representation $(V,\rho)$. Then $P_\mu$ is a 
parabolic subgroup of $G_\C$ that contains the image of $\mu$ and has Lie algebra $F_\mu^0 \fg_\C$. 
The unipotent radical $R_u P_\mu$ of $P_\mu$ has Lie algebra $F_\mu^1 \fg_\C$ and is the subgroup that 
acts as the identity on $\Gr_\mu^\cdot (V)$ in every representation $(V,\rho)$. 
The centralizer $Z(\mu)$ of $\mu$ in $G_\C$ is a Levi subgroup of $P_\mu$, isomorphic to $P_\mu/ R_u P_\mu$.
Thus the composite map
\[
\bar{\mu}: \GG_{m,\C} \rightarrow P_\mu \rightarrow P_\mu/ R_u P_\mu
\]
is a central homomorphism.
Then $\Filt (\mu)$ equals $\Filt (\mu')$ if and only if $P_\mu=P_{\mu'}$ and $\bar{\mu}=\bar{\mu'}$. 

Let $\check{X}$ denote the compact dual Hermitian symmetric space to $X$. 
As a set, it may be defined as the set of filtrations of 
$\Rep_\C (G_\C)$  that are $G(\C)$-conjugate to $\Filt(\mu_h)$. Equivalently, it may be described 
as the set of equivalence classes $[(P,\mu)]$ of pairs where 
$P$ is a parabolic in $G_\C$ and $\mu: \GG_{m, \C} \rightarrow P$ is a 
co-character such that $(P,\mu)$ is $G(\C)$-conjugate to $(P_{\mu_h},\mu_h)$ for 
some (and therefore every) $h\in X$. Here we say that $(P,\mu)$ is equivalent to 
$(P',\mu')$ if $P=P'$ and $\bar{\mu} = \bar{\mu'}$. Note that 
if $(P,\mu)$ is conjugate to $(P,\mu')$, then $\bar{\mu} = \bar{\mu'}$. 
Indeed, if $g^{-1} (P,\mu) g = (P,\mu')$, then $g\in N_{G_\C} (P)=P$. Write $g=\ell u $,
with $\ell \in Z(\mu)$ and $u\in R_u P$, we see that
\[
\mu'= g^{-1} \mu g = u^{-1} \mu u,
\]
so that $\bar{\mu'}=\bar{\mu}$ as claimed. Thus in a given 
conjugacy class of pairs $(P,\mu)$, the homomorphism $\bar{\mu}$
is determined entirely by $P$. Conversely, for any pair $(P,\mu)$ in the conjugacy 
class of $(P_{\mu_h},\mu_h)$, the parabolic $P$ must equal $P_\mu$ so that 
$\mu$ determines $P$. It follows from this discussion that the natural map
\[
G(\C) \times \check{X} \rightarrow \check{X}, \quad (g, [(P,\mu)]) \mapsto [g(P,\mu)g^{-1}]
\]
makes $\check{X}$ into a homogeneous space for $G(\C)$ and the choice of any basepoint 
$[(P,\mu)]$ gives a bijection
$G(\C)/P \simeq \check{X}$. Further, there is 
a unique way to make $\check{X}$ into a complex algebraic variety such that this map is an isomorphism of complex varieties for any choice of base point. Moreover, the map
\[ 
\xi: M_X \rightarrow \check{X}, \quad \mu \mapsto [(P_\mu, \mu)]
\]
is surjective and $\check{X}$ has the natural structure of 
a variety over $E(G,X)$ such that the map $\xi$ is $\Aut(\C/ E(G,X))$-equivariant. 
When we wish to emphasize the rational structure of $\check{X}$, 
we will write $\check{X}_\C$ instead of $\check{X}$.

There is a natural embedding (the Borel embedding)
\[
\beta: X \hookrightarrow \check{X}, \quad h \mapsto [(P_h, \mu_h)],
\]
where henceforth we write $P_h$ for $P_{\mu_h}$. 
Let $\check{\cV}$ be a $G_\C$-vector bundle on $\check{X}$. The action of $G(\C)$ on $\check{X}$ extends the $G(\R)$ action on $X$. Thus $\cV:= \check{\cV} |_{X}$
is a $G(\R)$-vector bundle on $X$. 
For an open compact subgroup $\cK$ of $G(\A_f)$, define 
\[
\cV_\cK  = G(\Q) \backslash \cV \times G(\A_f)/\cK,
\]
which we view as fibered over $\Sh_{\cK} (G,X)$. 
In order that this define a vector bundle on $\Sh_{\cK} (G,X)$, we need to assume that $\check{\cV}$ satisfies the following condition:
\begin{equation}
\label{eqn:avb-condition}
\text{The action of $G_\C$ on $\check{\cV}$ factors through $G^c_\C$.}
\end{equation}
Here $G^c = G/Z_s$, where $Z_s$ is the largest subtorus of the center $Z_G$ of $G$ that is split over $\R$ but that 
has no subtorus split over $\Q$. Assuming \eqref{eqn:avb-condition}, for sufficiently small $\cK$, 
$\cV_\cK$ is a vector bundle on $\Sh_{\cK}(G,X)$. If $\check{\cV}$ is defined over $E\supseteq E(G,X)$, then $\cV_\cK $ has a canonical model 
over $E$ as well.

\begin{rem}
The reader may keep in mind the following example which occurs in this paper. Let $G=\Res_{F/\Q} \GL_2$,
with $F$ a totally real field. Then $Z_G=\Res_{F/\Q} \GG_m$ and $Z_s= \ker(\N_{F/\Q} : Z_G \rar \GG_m).$
\end{rem}

We now recall the relation between sections of the bundle $\cV_\cK$ and 
automorphic forms on $G(\A)$. This requires the choice of a base point 
 $h\in X$. 
Let $K_h$ be the stabilizer in $G(\R)$ of $h$. Let $\fk_h$ denote the Lie algebra of $K_h$ and consider the decomposition of $\fg_\C$ with respect to the action of $\Ad \circ h$:
\[
\fg_\C = \fp_h^+ \oplus \fk_{h,\C} \oplus \fp_h^-.
\]
Here $\fp_h^+ =\fg_\C^{-1,1}$, $\fp_h^- =\fg_\C^{1,-1}$ and $\fk_{h,\C} = \fg_\C^{0,0}$ for the Hodge decomposition on $\fg_\C$ induced by
$\Ad \circ h$. 
Thus $\fp_h^\pm$ correspond to the holomorphic and antiholomorphic tangent spaces of $X$ at $h$. 
Then $P_h$ is the parabolic subgroup of $G(\C)$ with Lie algebra $\fk_{h,\C} \oplus \fp_h^-$. The
choice of $h$ gives identifications $X = G(\R)/K_h$,  $\check{X} = G(\C)/P_h$ and the Borel embedding is given by the natural map
\[
G(\R)/K_h \hookrightarrow G(\C)/ P_h.
\]

Let $\cV_h$ denote the fiber of $\cV$ at $h$; equivalently this is the fiber of the bundle
$\check{\cV}$ at $\beta(h) \in \check{X}$. 
This comes equipped with a natural action of $K_h$, denoted $\rho_{\cV_h}$. 
Let 
$\ve_h$ denote the map
\[
G(\A) \rar \Sh_\cK (G,X) = G(\Q) \backslash X \times G(\A_f) / \cK, \quad g =(g_\infty, g_f) \mapsto [(g_\infty (h), g_f)].
\]
Then there is a canonical isomorphism
\[
\ve_h^* (\cV_\cK) \simeq G(\A) \times \cV_h,
\]
via which sections of $\cV_\cK$ can be identified with suitable functions from $G(\A)$ into $\cV_h$. 
This gives a canonical injective map
\[
\Lift_h:   \Gamma \left(\Sh_{\cK} (G,X), \cV\right) \rar C^\infty (G(\Q)\backslash G(\A)/\cK, \cV_h)
\]
whose image is the subspace $A(G,\cK, \cV, h)$ consisting of 
$\varphi \in C^\infty (G(\Q)\backslash G(\A)/\cK, \cV_h) $ satisfying:
\begin{enumerate}
\item $\varphi(gk)= \rho_{\cV_h} (k)^{-1} \varphi (g)$, for $g\in G(\A)$ and $k \in K_h$;
\item $Y\cdot \varphi=0$ for all $Y\in \fp_h^-$;
\item $\varphi$ is slowly increasing, $K_h$-finite and $Z(\fg_\C)$-finite, where $Z(\fg_\C)$ is the 
center of the universal enveloping algebra of $\fg_\C$. 
\end{enumerate}

Let us make explicit the map $\Lift_h$. 
Fix some $\tau =\tau_h \in X$ and let $s$ be a section of 
$\cV_\cK$. 
For any $g_f \in G(\A_f)$, there is a canonical identification
\[
\cV_{\tau} \simeq \cV_{\cK, [\tau,g_f]}
\]
where $[\tau,g_f]$ denotes the class of $(\tau,g_f)$ in $\Sh_{\cK} (G,X)$. 
Let $g=(g_\infty,g_f) \in G(\A)=G(\R) \times G(\A_f)$.
The section $s$ gives 
an element $s([g_\infty \tau, g_f]) \in  \cV_{g_\infty \tau}$. However,
the element $g_\infty$ induces an isomorphism
\[
t_{g_\infty}: \cV_\tau \simeq \cV _{g_\infty \tau}.
\]
The map $\Lift_h (s): G(\A) \rightarrow \cV_\tau$ is then defined by sending 
\[
g \mapsto  t_{g_\infty}^{-1} s([g_\infty \tau, g_f]).
\]

\begin{rem}
The subgroup $P_h$ of $G_\C$ acts on the fiber $\check{\cV}_{\beta(h)}$ at the point 
$\beta(h)$. This gives an equivalence of categories
\[
\text{ $G_\C$-vector bundles on $\check{X}$} \longleftrightarrow \text{complex representations of $P_h$}.
\]
The functor in the opposite direction sends a representation $(V,\rho)$ of $P_h$ to the 
vector bundle 
\[
G_\C \times_\rho V = (G_\C \times V )/\{ (g p,v) \sim (g, \rho(p) v), \ p\in P_h \},
\]
which fibers over $G_\C/P_h$ in the obvious way. Sections of this vector bundle can 
be identified with functions 
\[
f: G(\C) \rightarrow V, \quad f(gp) = \rho(p)^{-1} f(g).
\]

\end{rem}

\begin{example} 
\label{eg:gl2}
This example will serve to normalize our conventions. Let $G=\GL_{2,\Q}$ and $X$ the $G(\R)$-conjugacy class containing
\[
h_0: \Ss \rightarrow G_\R, \quad a+bi \mapsto \begin{pmatrix} a & b \\ -b & a \end{pmatrix}.
\]
Identify $X$ with $\fh^\pm$, the union of the upper and lower half planes; $h_0$ is identified with the point $i$. Then $E(G,X)=\Q$ and $\check{X} \simeq \P^1_\Q = G/P$, where 
$P$
is the Borel subgroup (of upper triangular matrices) stabilizing $\infty$, for the standard action of $G$ on $\P^1$. 
We will fix the isomorphism $\check{X} \simeq \P^1_\Q$ such that the map 
\[
\fh^\pm = X \stackrel{\beta}{\hookrightarrow} \check{X}_\C
\]
is the identity map. 
For $k\equiv r \bmod 2$, let $\check{\cV}_{k,r}$ be the homogeneous $G_\C$-bundle on $\check{X}_\C$ corresponding to the character
\[
\chi_{k,r}: P_\C \rightarrow \C^\times,  \quad 
\begin{pmatrix} a & * \\ & d \end{pmatrix} \mapsto a^k \det(\cdot)^{\frac{r-k}{2}} \]
of $P_\C$. Note that abstractly $\check{\cV}_{k,r} \simeq \cO(-k)$ though the $G_\C$-action depends 
on $r$ as well. 
For any $h\in X$, we write $\rho_{k,r}$ for the corresponding representation of $K_h$.
The representation $\rho_{k,r}$ of $K_{h_0}= \R^\times \cdot \SO_2 (\R)$ is the character given by 
\[
z \cdot \kappa_\theta  \mapsto z^{-r} e^{-ik\theta}, \quad \kappa_\theta=\begin{pmatrix} \cos \theta & \sin \theta \\ 
- \sin \theta & \cos \theta \end{pmatrix}.
\]
For more general $h$, the character $\rho_{k,r}$ is given by composing the above character with 
the isomorphism $K_h \simeq K_{h_0}$ given by $x \mapsto \alpha^{-1} x \alpha$ for any $\alpha \in G(\R)$ such that $\alpha h_0 \alpha^{-1} = h$. 
The corresponding automorphic line bundle $\cV_{k,r,\cK}$ is defined over $\Q$ and is the usual bundle of modular forms of weight $k$ and level $\cK$. 
We can make this more explicit as follows. 

The {\it connected} hermitian space $\fh^+$ carries a natural family of (polarized) elliptic curves, 
the fiber over $\tau \in \fh^+$ being the elliptic curve $A_\tau = \C/( \Z \tau + \Z)$. Let 
$\uo$ be the sheaf of relative one-forms; it is a line bundle on $\fh^+$ and there is 
a canonical isomorphism $\beta^{*} \check{\cV}_{k,r}|_{\fh^+} \simeq \uo^k$. 
This gives a canonical trivialization $\Triv_h: \cV_{h} \simeq \C$ for all $h\in \fh^+$, 
namely the map sending $dz^{\otimes k}$ to $1$, where $z$ is the coordinate on $\C= \Lie(A_\tau)$. 
Thus any section $\varphi $ of $\cV_{k,r,\cK}$ on $\Sh_{\cK} (G,X)$ gives rise (via $\Triv_h \circ \Lift_h$) to a function 
\[
\varphi_h: \GL_2 (\A) \rightarrow \C, \quad h\in \fh^+,
\]
such that $\varphi_h ( g \kappa ) = \rho_{k,r} (\kappa)^{-1} \varphi_h (g)$ for all $\kappa \in K_h$. In 
particular, for $z \cdot \kappa_\theta \in K_{h_0}$, we have 
\[
\varphi_{h_0} (g \cdot z \cdot \kappa_\theta) = \varphi_{h_0} (g) \cdot z^r \cdot e^{i k \theta}.
\]
Finally, there is a unique modular form $f$ of weight $k$ on $\fh^+$ such that for all $h\in \fh^+$, we have 
\[
\varphi_h (g) = f(g_\infty (\tau_h)) j(g_\infty,\tau_h)^{-k} \det(g)^{\frac{r-k}{2}},
\]
where $g= g_\Q ( g_\cK g_\infty)$ with $g_\Q\in G(\Q)$, $g_\cK \in \cK$ and $g_\infty \in G (\R)^+$. (Here 
$G(\R)^+$ denotes the topological identity component of $G(\R)$.)

\end{example}

\begin{example}
Let $G=B^\times$, where $B$ is a non-split indefinite quaternion algebra over $\Q$. 
Then $E(G,X)=\Q$ and $\check{X}$ is a form of $\P^1$; in fact it is a Severi--Brauer variety 
associated to the class of $B$ in the Brauer group of $\Q$. The variety $\check{X}$ (over $\C$) carries the  line bundles
$\cO(k)$ but only for $k$ even do these descend to line bundles over $\Q$. 
Indeed, the canonical bundle on $\check{X}$ has degree $-2$, so $\cO(-2)$ descends. On the other hand,
$\cO(1)$ does not descend since if it did,  by Riemann--Roch it would admit a section whose zero locus is a rational point. 
Nevertheless, for any $\sigma \in \Aut(\C/\Q)$, the line bundle $\cL:=\cO(1)$ on $\check{X}_\C$ satisfies 
$\sigma^* \cL \simeq \cL$, so its {\it field of definition} is $\Q$. 
\end{example}

\subsubsection{Integral models}
\label{sec:int-models}
We assume in this section that the Shimura variety $(G,X)$ 
is of abelian type. 
Let $\cO$ denote the ring of integers of $E(G,X)$ and 
$\lambda \mid \ell$ a prime of $\cO$. 
We assume that we are given a reductive group $\cG_0$ over 
$\Z_{(\ell)}$ such that $\cG_{\Q} = G$. 
Let $\cG= \cG_{0,\Z_\ell}$ and $\cK_\ell = \cG (\Z_\ell)$. 
Then $\cK_\ell$ is a hyperspecial (maximal compact) subgroup of $G(\Q_\ell)$. 
Suppose that $\cK$ is an 
open compact subgroup of $G(\A_f)$ 
of the form $\cK_\ell \cdot \cK^\ell$, with  $\cK_\ell$   
 as above and $\cK^\ell $ a subgroup of  $G(\A_f^\ell)$, where $\A_f^\ell$ denotes 
the finite ideles whose component at $\ell$ is $1$. Then $\Sh_{\cK} (G,X)$ 
admits a natural integral model $\cS_{\cK,\lambda}(G,X)$ over $\cO_{(\lambda)}$. More precisely, 
if one fixes $\cK_\ell$ and allows $\cK^\ell$ to vary, then 
Kisin \cite{kisin} shows that 
the projective system $\varprojlim \Sh_{\cK_\ell \cK^\ell} (G,X)$ admits a
{\it canonical model} $\cS_{\cK_\ell,\lambda}(G,X)$ over $\cO_{(\lambda)}$, which is characterized by 
a certain extension property. 
We will also need integral models of automorphic vector bundles on 
$\Sh_{\cK} (G,X)$. In the abelian case, these will be constructed in the thesis of 
Lovering \cite{lovering}, and we now summarize the relevant results. 

 Recall that the compact dual $\check{X}$ is naturally defined 
over $E(G,X)$. In addition, $\check{X}$ has a natural model $\check{\fX}$ over 
$\cO_{(\lambda)}$ whose $A$-valued points for any $\cO_{(\lambda)}$-algebra $A$
are in bijection with equivalence classes of pairs $(P,\mu)$ consisting of a parabolic subgroup 
$P$ of $\cG_{0,A}$ and a cocharacter $\mu:\GG_{m,A} \rightarrow P$, where 
$(P,\mu)\sim (P',\mu')$ if $P=P'$ and $\bar{\mu}=\bar{\mu'}$. The data needed to 
define integral models of automorphic vector bundles consists of the following:
\begin{itemize}
\item A finite extension $L$ of $E(G,X)$ and a $G_L$-equivariant vector bundle $\check{\cV}$ on $\check{X}_L$.
The corresponding automorphic vector bundle $\cV_{\cK}$ on $\Sh_{\cK} (G,X)$ has 
a canonical model over $L$. 

\item A prime $\lambda$  of $\cO_L$;  we 
write $\lambda$ for the induced prime of $\cO$ as well. 

\item A $\cG_{0}$-equivariant vector bundle 
$\check{\cV}_\lambda$ on $\check{\fX}_{\cO_{L,(\lambda)}}$ 
which extends the $G_L$-equivariant vector bundle $\check{\cV}$ on $\check{X}_L$.

\end{itemize}
 
To this data, one can associate (by the results of \cite{lovering}) in a functorial way a vector bundle $\cV_{\cK,\lambda}$ 
over $\cS_{\cK,\lambda} (G,X) \otimes_{\cO_{(\lambda)}} \cO_{L,(\lambda)}$ which extends $\cV_{\cK}$. 
Likewise, if one fixes $\cK_\ell$ and varies $\cK^{\ell}$, one gets 
a vector bundle $\cV_{\cK_\ell,\lambda}$ 
over $\cS_{\cK_\ell,\lambda} (G,X) \otimes_{\cO_{(\lambda)}} \cO_{L,(\lambda)}$.
If $f:\check{\cV}^1_{\lambda} \rightarrow \check{\cV}^2_{\lambda}$ is
a map of $\cG_0$-equivariant vector bundles over $\check{\fX}_{\cO_{L,(\lambda)}}$, there are 
natural associated maps 
$f_\cK :  \cV_{\cK,\lambda}^1 \rightarrow \cV_{\cK,\lambda}^2$ and 
 $f_{\cK_\ell}:  \cV_{\cK_\ell,\lambda}^1 \rightarrow \cV_{\cK_\ell,\lambda}^2$.

\paragraph{{\it Models over $\cO_L[\frac{1}{N}]$}}
\label{par:patching}
Suppose now that 
we are given a reductive group $\cG_0$ over 
$\Z[\frac{1}{N}]$ such that $\cG_{0,\Q}=G$ and that
$\cK $ is of the form $\prod_\ell \cK_\ell$, where 
$\cK_\ell = \cG_0 (\Z_\ell)$ for all $\ell$ not dividing $N$, so that
$\cK_\ell$ is hyperspecial for such $\ell$. 
Then the integral models of 
$\Sh_\cK (G,X)$ for varying $\ell$ (not dividing $N$) patch together 
to give a canonical model 
$\cS_{\cK,\cO[\frac{1}{N}]} (G,X)$ over 
$ \cO [\frac{1}{N}]$. 

The compact dual 
$\check{X}$ has a natural model 
$\check{\fX}$ over $\cO[\frac{1}{N}]$ as well. If 
we are given moreover:
\begin{itemize}
\item A finite extension $L$ of $E(G,X)$ and a $G_L$-equivariant vector bundle $\check{\cV}$ on $\check{X}_L$.
\item  A $\cG_{0}$-equivariant vector bundle 
$\check{\cV}$ on $\check{\fX}_{\cO_{L}[\frac{1}{N}]}$ 
which extends the $G_L$-equivariant vector bundle $\check{\cV}$ on $\check{X}_L$.
\end{itemize}
Then the integral models $\cV_{\cK,\lambda}$ 
(as $\lambda$ varies over the primes of $\cO_L$ 
not dividing $N$) patch together to give 
an integral model $\cV_{\cK,\cO_L[\frac{1}{N}]}$ over 
$\cO_L[\frac{1}{N}]$.

\subsection{Automorphic vector bundles on quaternionic Shimura varieties}
\label{ssec:avs-on-qvs}

In this section, we 
review the connection between automorphic forms on the multiplicative group of a 
 quaternion algebra over a totally real field and sections of automorphic vector bundles on 
the corresponding Shimura variety.
We will also define canonical metrics on such bundles. 

\begin{rem}
Everything in this section goes through verbatim even in the 
case that the quaternion algebra $B$ is {\it totally definite}, even though 
this does not strictly speaking give a Shimura variety in the sense of 
\S \ref{ssec:shim-var}.
\end{rem}

Let $F$ be  
a totally real field and $B$ a quaternion algebra over $F$. 
 Let 
$G_B$ denote the $\Q$-algebraic group $\Res_{F/\Q} (B^\times)$.
Thus for any $\Q$-algebra $R$, the $R$-valued points of $G_B$
are given by 
\[
G_B(R) =(B\otimes_{\Q} R)^\times.
\]
Let $\Sigma_B$ denote the set of places of $F$ at which $B$ is ramified.

We fix for the moment some choice of isomorphisms
\begin{align}
\label{eqn:isom1}
B\otimes_{F,\sigma} \R & \simeq \M_2( \R),  & & \hspace{-20mm} \text{for }\sigma \in \Sigma_\infty \smallsetminus \Sigma_B; \\
\label{eqn:isom2}
B\otimes_{F,\sigma} \R & \simeq \H,  & & \hspace{-20mm} \text{for }\sigma \in \Sigma_{B,\infty},
\end{align}
where $\H$ is the subalgebra 
\[
\left\{ \begin{pmatrix} \alpha & \beta \\ -\bar{\beta} & \bar{\alpha} \end{pmatrix} : \ \ \alpha,\beta \in \C \right\}
\]
of $\M_2(\C)$. (Later we will fix these isomorphisms more carefully.)
The choice of isomorphisms above gives us identifications
\[
G_B (\R) \simeq \prod_{\sigma \in \Sigma_\infty \smallsetminus \Sigma_B} \GL_2 (\R) \times \prod_{\sigma \in \Sigma_{B,\infty}} \H^\times
\]
and 
\[
G_B (\C) \simeq \prod_{\sigma \in \Sigma_\infty} \GL_2 (\C) .
\]

Let $X_B$ be the $G_B(\R)$-conjugacy class of homomorphisms $\Ss \rightarrow G_{B,\R}$ containing
\[
h_0 : \Ss \rightarrow G_{B,\R}, \quad h_0:=\prod_\sigma h_{0,\sigma}, \quad h_{0,\sigma} (z) = 
\begin{cases}
 z, & \text{ if } \sigma \in \Sigma_\infty \smallsetminus \Sigma_B; \\
 1, & \text{ if }  \sigma \in \Sigma_{B,\infty},
\end{cases}
\]
where we identify $\C$ with a subring of $\M_2(\R)$ (see remark below.)
Denote by $\check{X}_B$ the corresponding compact dual hermitian symmetric space.
The choice of isomorphisms \eqref{eqn:isom1} and \eqref{eqn:isom2} above gives rise to an 
identification $\check{X}_B =(\P^1_\C)^{d_B}$ and $X_B= (\fh^\pm)^{d_B}$, with $d_B$ being the number of infinite places of $F$
where $B$ is split.  

\begin{rem}
(Choices) We embed $\C $ in $ \M_2(\R)$ by identifying
$a+bi$ with the matrix 
\[
\begin{pmatrix} a & b \\ -b & a \end{pmatrix}
\]
In addition, we identify the homomorphism
\[
 \Ss \rightarrow \GL_{2,\R}, \quad a+bi \mapsto a+bi = \begin{pmatrix} a & b \\ -b & a \end{pmatrix}.
\]
with the element $i\in \fh$. Note that this is {\it opposite} to the usual choice made by Shimura. 
Shimura would identify $i \in \fh$ with the map 
\[
 a+bi \mapsto  \begin{pmatrix} a & - b \\ b & a \end{pmatrix}.
\]

\end{rem}

\subsubsection{Hermitian forms}

For $\sigma \in \Sigma_\infty \smallsetminus \Sigma_B$, let $V_{\sigma,\R}$ denote the 
vector space $\R^2$ of column vectors viewed as a left $\M_2(\R)$-module. 
Let $h:\C^\times = \Ss (\R) \rightarrow (B\otimes_{F,\sigma} \R)^\times =\GL_2(\R) $ 
be any homomorphism that is $\GL_2(\R)$-conjugate to $h_{0,\sigma}$. Then 
we can write 
\begin{equation}
\label{eqn:hodgedec-0}
V_{\sigma,\C}= V_{\sigma,\R} \otimes_{\R} \C = V_{\sigma,h}^{-1,0} \oplus V_{\sigma,h}^{0,-1},
\end{equation}
where the decomposition on the right corresponds to the $\C$-subspaces on which 
$h(z) \otimes 1$ acts as $1\otimes z$ and $1\otimes \bar{z}$ respectively. 
The bilinear form 
\begin{equation}
\label{eqn:riemannform}
(x,y) \mapsto {}^t x \begin{pmatrix} 0 & -1 \\  1 & 0 \end{pmatrix} y
\end{equation}
on $V_{\sigma,\R}$ is almost $\GL_2(\R)$-invariant:
\[
(gx,gy) = \det(g) \cdot (x,y).
\]
Further, it satisfies the following conditions:
\begin{enumerate}
\item $(x,y)=-(y,x)$.
\item $(h(i) x, h(i) y) = (x,y)$. 
\item The form $ (x,h(i)y)$ is symmetric. (This follows formally from (i) and (ii).) Further, it is positive definite if $h$ is $\GL_2(\R)^+$-conjugate to $h_0$. (Otherwise it is negative definite.)
\end{enumerate} 

\begin{rem}
Let $\tau$ be the unique point on the complex upper half plane fixed by $K_h$. 
The bilinear form above equals $\frac{1}{2\pi i} \lambda_\tau$ where $\lambda_\tau$ is the Weil pairing on $H_1(E_\tau)$
given in the ordered basis $\{ \tau, 1 \}$. 
\end{rem}

 The composite map 
\[
V_{\sigma,\R} \rightarrow V_\sigma \otimes_{\R} \C \rightarrow V_{\sigma,h}^{-1,0} 
\]
is an $\R$-linear isomorphism; via this isomorphism one gets a skew-smmetric 
bilinear form on $V_{\sigma,h}^{-1,0}$, which is the {\it negative } of the  imaginary part of a (necessarily unique) hermitian form
$H_h$ on $V_{\sigma,h}^{-1,0}$ 
defined by identifying $V_{\sigma,h}^{-1,0}$ with $V_{\sigma,\R}$ and setting 
\[
H_h(x,y) = (x,h(i)y) - i(x,y) = (x,iy) - i (x,y).
\]

\smallskip

\begin{rem}
The form $H_h$ is linear in the first variable and conjugate linear in the second variable. 
If we denote the form \eqref{eqn:riemannform} above by $\tilde{B}$, then $H_h$ agrees with the positive definite form $2\cdot \tilde{B}_{h(i)}$ 
of Appendix \ref{sec:hodge}, where:
\begin{equation}
\label{eqn:Bh}
\tilde{B}_{h(i)} ( v,w) = \tilde{B}_\C (v, h(i) \bar{w}).
\end{equation}
\end{rem}

\smallskip

If $h$ is $\GL_2(\R)^+$-conjugate to $h_0$, the form $H_h$ is positive definite on account of condition (iii) above. 
Note that 
\begin{equation}
\label{eqn:herm-metric1}
H_h(x,x) = (x,h(i)x). 
\end{equation}
The subgroup $K_h$ preserves the decomposition 
\eqref{eqn:hodgedec-0} and the form $H_h$ is $K_h$-invariant up to a scalar. In fact,
for $\kappa \in K_h$, we have 
\[
H_h (\kappa x, \kappa y) = \det(\kappa) H_h (x,y).
\]
Moreover, the natural action of $\GL_2(\R)$ on $V_{\sigma,\C}$ takes 
$V_{\sigma,h}^{-1,0}$ isomorphically onto $V_{\sigma,g\cdot h}^{-1,0}$ (recall $g\cdot h = ghg^{-1}$) and we have 
\[
H_{g\cdot h} (gx,gy) = (gx, (gh(i)g^{-1})gy) = \det(g) (x,h(i)y)= \det(g) H_h (x,y). 
\] 
We note also that $\det(V_{\sigma,\C})$ carries a natural 
bilinear form induced from the $\C$-linear extension of $(\cdot,\cdot)$. 
We equip $\det(V_{\sigma,\C})$ with the positive definite Hermitian form 
\begin{equation}
\label{eqn:herm-metric1'}
H_{\det}(x,y) = (x,\bar{y}),
\end{equation}
where the complex conjugation is with respect to the natural real structure coming from $\det(V_{\sigma,\R})$.
This hermitian form satisfies
\[
H_{\det} (gx,gy) = \det(g)^2 \cdot H_{\det}(x,y)
\]
for all $g\in \GL_2(\R)$.

For $\sigma \in \Sigma_{B,\infty}$, let $V_{\sigma,\C}$ 
denote the $\C$-vector space $\C^2$ of column vectors viewed as 
a left $\M_2(\C)$-module. The form 
\[
(x,y) \mapsto {}^t x  \begin{pmatrix} 0 & -1 \\  1 & 0 \end{pmatrix} y 
\]
is almost $\GL_2(\C)$-invariant:
\[
(g x, g y) =\det(g) \cdot (x,y).
\]
Let $L$ be the $\R$-linear operator
\[
L(x):=\begin{pmatrix} 0 & 1 \\  -1 & 0 \end{pmatrix} \bar{x}.
\]
The operator $L$ is the analog in this case of the operator $w\mapsto h(i)\bar{w}$ 
in \eqref{eqn:Bh} and the operator $x \mapsto h(i)x$ in \eqref{eqn:herm-metric1} above.
Define a hermitian form $H$ on $V_{\sigma,\C}$ by
\begin{equation}
\label{eqn:herm-metric2}
H(x,y) = (x, Ly) = {}^t x \bar{y}.
\end{equation}
Note that $L$ commutes with the left action of $\H$, hence the form $H$ is $\H^\times$-invariant up to a scalar, 
which is also obvious from the formula above. More precisely, for $g\in \H^\times$, we have 
\[
H(gx,gy) = \nu(g) H(x,y)
\]
where $\nu$ is the reduced norm. 

Let $\rho_{\sigma,k,r}$ denote the representation\[
V_{\sigma,k,r} = \Sym^{k} (V_{\sigma,\C}) \otimes {\det(V_{\sigma,\C})}^{\frac{r-k}{2}}
\]
of $\GL_2(\C)$. Note that the central character of
$\rho_{\sigma,k,r}$ is $z\mapsto z^r$. 

\subsubsection{Hermitian metrics on automorphic vector bundles}
Let $(\uk,r)$ be a multi-index of integers with $\uk=(k_\sigma)_{\sigma \in \Sigma_\infty}$ such that 
\[
k_{\sigma} \equiv r \pmod 2 \quad \text{ for all }\sigma \in \Sigma_\infty.
\]
We assume that $k_{\sigma} \ge 1$ if $B$ is split at $\sigma$ and 
that $k_{\sigma} \ge 0$ if $B$ is ramified at $\sigma$. 

Let $\rho_{\uk,r}= \otimes_\sigma \rho_{\sigma,k_\sigma,r} $ be the representation of $G_B(\C)$ on 
\[
V_{\uk,r} = \bigotimes_\sigma V_{\sigma,k_\sigma,r}.
\]
This gives rise to 
a $G_B(\C)$-homogeneous vector bundle $\check{\cV}_{\rho_{\uk,r}}$ on 
$\check{X}_B$:
\[
\check{\cV}_{\rho_{\uk,r}}= \check{X}_B \times V_{\uk,r},
\]
where the $G_B(\C)$ action is:
\[
g \cdot (x,v) = (gx, gv).
\]
 By restriction one gets a 
$G_B(\R)$-homogeneous vector bundle $\cV_{\rho_{\uk,r}}$ on $X_B$.  
Further, the latter admits a 
unique $G_B(\R)$-equivariant sub-bundle $\cV_{\uk,r}$
corresponding to the $K_h$-subrepresentation $\rho_{\uk,r,h}$ on 
\[
\cV_{\uk,r,h} = \bigotimes_{\sigma \in \Sigma_\infty \smallsetminus \Sigma_B} \left( (V_{\sigma,h}^{-1,0})^{\otimes k_\sigma} \otimes \det(V_{\sigma,\C})^{\otimes \frac{r-k_\sigma}{2}} \right) \bigotimes_{\sigma \in \Sigma_{B,\infty}} V_{\sigma,k_\sigma,r}
\]
Let $X^+_B$ denote the connected component of $X_B$ containing $h_0$. 
Note that for $h\in X^+_B$, the $K_h$-representation above carries a natural {\it positive definite} hermitian metric 
$\langle \cdot , \cdot \rangle_h$
obtained from the hermitian metrics in \eqref{eqn:herm-metric1}, \eqref{eqn:herm-metric1'} and \eqref{eqn:herm-metric2} above. 
This gives a metric on $\cV_{\uk,r}$ that is almost $G_B(\R)$-equivariant; in fact, one has 
\[
\langle gx,gy \rangle_{g\cdot h} = \nu(g)^r \langle x,y \rangle_h
\]
for $g\in G_B(\R)$ and $x,y \in \cV_{\uk,r,h}$. 
Now consider the vector bundle ${\cV_{\uk,r}}_{|X^+_B} \times G_B(\A_f) $ on $X^+_B \times G_B(\A_f)$. 
We equip this with the hermitian metric that assigns to the fiber 
$\cV_{\uk,r,h} \times \{g_f\} $ over $(h,g_f) \in X^+_B \times G_B(\A_f)$ 
the metric defined above on $\cV_{\uk,r,h}$ multiplied by the factor $\|\nu(g_f)\|^{r}$. (Here 
$\nu(g_f)\in \A^\times_F$ and $\|\cdot \|$ denotes the idelic norm.)
Recall that 
\[
\Sh_\cK (G_B,X_B) = G_B(\Q) \backslash X_B \times G_B(\A_f) / \cK = G_B(\Q)^+ \backslash X^+_B \times G_B(\A_f) / \cK 
\]
and 
\[
\cV_{\uk,r,\cK} = G_B(\Q) \backslash \cV_{\uk,r} \times G_B(\A_f) / \cK = G_B(\Q)^+ \backslash {\cV_{\uk,r}}_{|X^+_B} \times G_B(\A_f) / \cK,
\]
where $G_B(\Q)^+ = G_B(\R)^+ \cap G_B(\Q)$.

\begin{prop}
\label{prop:hermitian-metric}
The metric on ${\cV_{\uk,r}}_{|X^+_B} \times G_B(\A_f)$ above descends to a (positive definite hermitian) metric 
on the vector bundle $\cV_{\uk,r,\cK}$ over $\Sh_{\cK}(G_B,X_B)$.
\end{prop}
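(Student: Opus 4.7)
\smallskip

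The plan is to verify that the pre-quotient hermitian metric on ${\cV_{\uk,r}}_{|X^+_B} \times G_B(\A_f)$ is invariant under the action of $G_B(\Q)^+ \times \cK$, so that it descends to a metric on the quotient $\cV_{\uk,r,\cK}$. Positive-definiteness and the Hermitian property of the resulting metric will follow immediately from the corresponding fiberwise properties of $\langle \cdot,\cdot \rangle_h$, which have already been established, so the content is purely the invariance check.

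First, I would unpack the action: an element $(\gamma,\kappa)\in G_B(\Q)^+ \times \cK$ sends $(h,g_f,v)$ to $(\gamma\cdot h,\, \gamma g_f \kappa,\, \gamma v)$. What must be shown is the identity
\[
\langle \gamma v, \gamma v' \rangle_{\gamma\cdot h} \cdot \|\nu(\gamma g_f \kappa)\|^r \;=\; \langle v, v' \rangle_h \cdot \|\nu(g_f)\|^r.
\]
For the left-hand factor, the transformation laws already recorded place-by-place give, for a split $\sigma$, a factor of $\det(\gamma_\sigma)^{k_\sigma}\cdot \det(\gamma_\sigma)^{r-k_\sigma} = \sigma(\nu(\gamma))^r$ (from the symmetric power of $H_h$ together with the determinant twist), and for a ramified $\sigma$ a factor of $\nu(\gamma_\sigma)^{k_\sigma+r-k_\sigma}=\sigma(\nu(\gamma))^r$. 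Taking the product over all infinite places yields
\[
\langle \gamma v, \gamma v' \rangle_{\gamma\cdot h} \;=\; N_{F/\Q}(\nu(\gamma))^{r}\, \langle v, v' \rangle_h.
\]

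For the idelic factor, $\cK$ being a compact open subgroup of $G_B(\A_f)$ forces $\nu(\cK)$ to be a compact subgroup of $\A_{F,f}^\times$, hence to lie in $\widehat{\cO}_F^\times$, so $\|\nu(\kappa)\|=1$. Thus $\|\nu(\gamma g_f \kappa)\| = \|\nu(\gamma)\|\cdot \|\nu(g_f)\|$. Since $\gamma \in G_B(\Q)^+$ implies $\nu(\gamma)\in F^\times$ is totally positive, the product formula gives $\|\nu(\gamma)\|_f = \prod_{\sigma\in\Sigma_\infty} \sigma(\nu(\gamma))^{-1} = N_{F/\Q}(\nu(\gamma))^{-1}$. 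Combining the two computations produces the required cancellation
\[
N_{F/\Q}(\nu(\gamma))^r \cdot N_{F/\Q}(\nu(\gamma))^{-r} \;=\; 1,
\]
which is exactly what is needed. The only real subtlety is bookkeeping: making sure that the power of $\nu(\gamma)$ coming out of the tensor product of local metrics matches precisely the power coming out of the product formula for the idele norm, which is why the weighting factor $\|\nu(g_f)\|^r$ was inserted in the first place.
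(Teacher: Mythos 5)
Your proof is correct and follows essentially the same route as the paper's: check invariance of the pre-quotient metric under the $G_B(\Q)^+\times\cK$-action by combining the archimedean scaling $\langle \gamma v,\gamma v'\rangle_{\gamma\cdot h}=\prod_\sigma\sigma(\nu(\gamma))^r\,\langle v,v'\rangle_h$ with $\|\nu(\kappa)\|=1$ and the product formula applied to $\nu(\gamma)\in F^\times$. The only difference is presentational — you unpack the place-by-place exponent bookkeeping that the paper packages into the single identity $\langle gx,gy\rangle_{g\cdot h}=\nu(g)^r\langle x,y\rangle_h$ established just before the proposition.
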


\begin{proof}
Let $(h,g_f)$ and $(h',g'_f)$ be two elements of $X^+_B \times G_B(\A_f)$ 
whose classes in $\Sh_{\cK}(G_B,X_B)$ are equal.
Then there exist elements $\gamma \in G_B(\Q)^+$ and $\kappa \in \cK$ such that 
\[
(h',g'_f) = \gamma (h,g_f) \kappa = (\gamma\cdot h, \gamma_f g_f \kappa).
\]
Here $\gamma_f$ is $\gamma$ viewed as an element of $G_B(\A_f)$.
We need to check that the bijection
\[
\cV_{\uk,r,h} \times \{ g_f \} \rightarrow \cV_{\uk,r,h'} \times \{ g'_f \} =  \cV_{\uk,r,\gamma\cdot h} \times \{ \gamma_f g_f \kappa\} 
\]
given by $(v,g_f) \mapsto (\gamma  v, \gamma g_f \kappa)$ is metric preserving. 
But
\begin{align*}
\langle \gamma v_1, \gamma v_2 \rangle_{\gamma \cdot h} \cdot \| \nu(\gamma_f g_f \kappa) \|^{r} &= 
\prod_{\sigma \in \Sigma_\infty} \sigma(\nu(\gamma))^r \cdot \langle v_1,v_2 \rangle_h \cdot \| \nu(\gamma)_f \|^r \| \nu (g_f) \|^r \\
&=\langle v_1,v_2 \rangle_h \cdot \| \nu(g_f) \|^r,
\end{align*}
using the product formula and the fact that $\| \nu (\kappa) \| =1$. 
\end{proof}

We will need to work with the 
dual vector bundle $\cV_{\uk,r}^\vee$. 
This is motivated by observing that 
in the case of $\GL_2(\Q)$, the bundle $\cV_{\rho_{\uk,r}}$ corresponds to the relative homology 
of the universal elliptic curve and the sub-bundle $\cV_{\uk,r}$ corresponds to its relative 
Lie algebra. The line bundle of usual modular forms corresponds to the bundle of relative differentials, which is why
we need to replace $\cV_{\uk,r}$ by its dual.
We begin by making the following completely elementary remark, 
which we nevertheless state carefully to avoid any confusion.

\smallskip

\begin{rem}
If $\rho$ is a representation of a group $G$ on a finite-dimensional complex vector space $V$, then $\rho^\vee$ is defined by 
\[
\rho^\vee(g) (L) = L\circ \rho(g^{-1})
\]
for $L\in V^\vee =\Hom (V,\C)$ and $g\in G$. Thus for the tautological pairing
\[
(\cdot,\cdot): V^\vee \times V \rightarrow \C, \quad (L,v)= L(v),
\]
we have 
\[
( \rho^\vee (g^{-1}) L , v) = (L, \rho(g) v).
\]
Suppose $V$ is equipped with a non-degenerate hermitian pairing $\langle \cdot, \cdot \rangle$ that is linear 
in the first variable and conjugate linear in the second variable, and such that
\[
\langle g v, g w \rangle = \chi(g) \langle v,w \rangle
\]
for some character $\chi: G \rightarrow \C^\times$. 
Since $\langle \cdot, \cdot \rangle$ is non-degenerate, it induces a conjugate linear isomorphism
\[
 V \simeq V^\vee, \quad w \mapsto L_w, \quad L_w (v) = \langle v,w \rangle.
 \]
 Composing the inverse of this isomorphism with the canonical isomorphism $V \simeq V^{\vee \vee}$
 gives a conjugate linear isomorphism $V^\vee \simeq (V^\vee)^\vee$, which one may view 
 as a hermitian form on $V^\vee$. Explicitly this isomorphism sends $L_w$ to the linear functional
 $\eval_w \in (V^{\vee})^\vee$, so that 
 for any $L\in V^\vee$, we have
 \[
 \langle L,L_w \rangle = L(w).
 \]
  Note that
 \[
 g L_w ( v) = L_w (g^{-1} v) = \langle g^{-1} v ,w \rangle = \chi(g)^{-1} \langle v, gw \rangle = \chi(g)^{-1} L_{gw} (v),
 \]
 so that $g L_w = \chi(g)^{-1} L_{gw}$. 
 For any $L\in V^\vee$, we have 
 \begin{align*}
 \langle g L, g L_w \rangle &= \langle g L, \chi(g)^{-1} L_w \rangle = \overline{\chi(g)}^{-1} \langle gL, L_{gw} \rangle  \\ &=\overline{\chi(g)}^{-1}(gL )(gw) = \overline{\chi(g)}^{-1} L(w) \\ &=\overline{\chi(g)}^{-1} \langle L, L_w \rangle,
 \end{align*}
so for any $L_1, L_2 \in V^\vee$, we have $\langle g L_1 , g L_2 \rangle =\overline{\chi(g)}^{-1} \langle L_1, L_2 \rangle$. 
\end{rem}

\smallskip

From the remark above, it is clear that for $x,y \in \cV_{\uk,r,h}^\vee$ and $g\in G_B(\R)$, we have 
\[
\langle gx,gy \rangle = \nu(g)^{-r} \langle x,y \rangle.
\] 
Thus we take on ${\cV_{\uk,r}^\vee}_{|X^+_B} \times G_B(\A_f)$ the metric which on $\cV_{\uk,r,h}^\vee \times \{ g_f \}$ is 
$\| \nu(g_f) \|^{-r}$ times the induced metric on  $\cV_{\uk,r,h}^\vee$. 
This descends to a positive definite hermitian metric $\llangle \cdot, \cdot \rrangle$ on $\cV_{\uk,r,\cK}^\vee$. (See Prop. \ref{prop:hermitian-metric} above.)

\begin{defn}
\label{defn:pet-norm}
A holomorphic automorphic form of weight $(\uk,r)$ and level $\cK$ on $G_B$ is 
a {\it holomorphic} section $s$ of the bundle $\cV_{\uk,r,\cK}^\vee$ on $\Sh_{\cK}(G_B,X_B)$. 
Let $\cKt \supseteq \cK$ be any open compact subgroup of 
$G_B (\A_f)$ such that $\llangle s(x),s (x) \rrangle$ descends to 
a function on $\Sh_{\cKt} (G_B,X_B)$. Then 
the Petersson norm of the section $s$ (normalized with respect to $\cKt$) is defined to be the integral
\[
\llangle s,s \rrangle_{\cKt} := \int_{\Sh_{\cKt}(G_B,X_B)} \llangle s(x), s(x) \rrangle \,d\mu_x
\]
where $d\mu_x$ is the measure on $\Sh_{\cKt}(G_B, X_B)$ defined in Sec. \ref{sssec:measures-B}.  
\end{defn}

\begin{rem}
Defn. \ref{defn:pet-norm}
above has the advantage that it does not depend on any choice of base point. 
In practice though, one usually needs to pick a base point to 
make any computation at all, and so we shall now discuss the translation
between these two points of view.
\end{rem}

\smallskip

Pick a base point $h \in X_B^+$. 
Via the isomorphism $\Lift_h$, the space of holomorphic automorphic forms $s$ as above is 
identified 
with the space of functions $\cA (G_B, \cK, \cV_{\uk,r}^\vee, h)$. An element 
\[
F: G_B(\Q) \backslash G_B(\A) / \cK \rightarrow \cV_{\uk,r,h}^\vee
\]
in $\cA (G_B, \cK, \cV_{\uk,r}^\vee, h)$ satisfies in particular the condition
\begin{equation}
\label{eqn:Fgkappa}
F(g \kappa_h) = \rho_{\uk,r,h}^\vee (\kappa_h)^{-1} F(g), \quad \text{ for all } \kappa_h\in K_h.
\end{equation}

Henceforth we will fix a character $\xi$ of $F^\times \backslash \A^\times_F$ 
which satisfies
\[
\xi (z \cdot z_\infty) = \N (z_\infty)^r \cdot \xi (z)
\]
for $z\in \A^\times_F$, $z_\infty \in \A^\times_{F,\infty}$,
and 
assume
that the section $s$ satisfies the following invariance under the 
center $Z_{G_B} (\A_f) = \A_{F,f}^\times$:
\begin{equation}
\label{eqn:s-invariant-by-center}
s (x \cdot \alpha ) = \xi (\alpha) \cdot s(x). 
\end{equation}
This enables us to take $\cKt$ containing the maximal open compact subgroup of $Z_{G_B}(\A_f)$, and 
implies that the corresponding function $F$ above satisfies the following 
invariance property: for $\alpha \in \A_F^\times  = Z_{G_B} (\A)$, we have
\[
F (g \cdot \alpha) = \xi (\alpha) \cdot F(g)
\]
and 
\[
\langle F(g\cdot \alpha), F(g\cdot \alpha) \rangle = \| \alpha \|^{2r} \cdot \langle F(g), F(g) \rangle.
\]

\begin{prop}
\label{prop:s-to-F}
Suppose $\Lift_h(s)=F$. 
Let $\cK_0$ denote any maximal compact subgroup of 
$G_B(\A_f)$ containing $\cKt$.
Then 
\[
\llangle s,s \rrangle_{\cKt} = 2^{|\Sigma_\infty \smallsetminus \Sigma_B|} \cdot h_F \cdot [\cK_0:\cKt] \cdot \langle F, F \rangle_h,
\]
where 
\[
\langle F,F \rangle_h = \int_{[G_B]} \langle F(g), F(g)\rangle_h \cdot \| \nu(g) \|^{-r} \,dg.
\]
\end{prop}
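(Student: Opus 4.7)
The plan is to unfold the definition of $\llangle s, s\rrangle_{\cKt}$ into an adelic integral via the identification
\[
\Sh_{\cKt}(G_B, X_B) \;=\; G_B(\Q)\backslash G_B(\A)\big/\,K_h\cdot\cKt, \qquad (g_\infty, g_f) \mapsto [g_\infty h, g_f],
\]
and to keep careful track of the combinatorial factors that arise when comparing $d\mu_x$ with the Haar measure on $G_B(\A)$.

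The first step is a pointwise identity. By the definition of $\Lift_h$, $s([g_\infty h, g_f]) = t_{g_\infty}F(g)$, where $t_{g_\infty}:\cV_{\uk,r,h}^\vee \to \cV_{\uk,r,g_\infty h}^\vee$ is the canonical isomorphism. The metric on ${\cV_{\uk,r}^\vee}_{|X_B^+}\times G_B(\A_f)$ is, by construction, the canonical hermitian metric on the fiber multiplied by $\|\nu(g_f)\|^{-r}$. Combined with the $G_B(\R)^+$-transformation rule
\[
\llangle t_{g_\infty}v,\, t_{g_\infty}w\rrangle_{g_\infty h} \;=\; \nu(g_\infty)^{-r}\,\llangle v, w\rrangle_h,
\]
which is the dual of the rule $\langle g\cdot v, g\cdot w\rangle_{g\cdot h} = \nu(g)^{r}\langle v,w\rangle_{h}$ for $\cV_{\uk,r}$ established earlier in this section (via the general remark preceding the proposition on dual metrics), this yields
\[
\llangle s([g_\infty h, g_f]), s([g_\infty h, g_f])\rrangle \;=\; \|\nu(g)\|^{-r}\,\llangle F(g), F(g)\rrangle_h.
\]

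The second step is a change of variables and counting. Using strong approximation for the derived group of $G_B$ (together with the reduced-norm map), decompose $G_B(\A) = \bigsqcup_i G_B(\Q)\,g_i\, G_B(\R)^+\,\cKt$ for a finite set of representatives $g_i \in G_B(\A_f)$, which identifies $\Sh_{\cKt}$ with a disjoint union $\bigsqcup_i \Gamma_i\backslash X_B^+$ for suitable congruence subgroups $\Gamma_i$. Unfolding $\llangle s, s\rrangle_{\cKt}$ through this decomposition and the pointwise formula above, and using the measure conventions of Section \ref{sssec:measures-B} to match the archimedean measure on $X_B^+$ with the corresponding Haar factor on $G_B(\R)^+$, produces the three combinatorial factors in the claim:
\begin{itemize}
\item $2^{|\Sigma_\infty\smallsetminus\Sigma_B|}$: the number of connected components of $X_B$, arising from the passage $X_B^+\hookrightarrow X_B$, since the sign quotient $G_B(\R)/G_B(\R)^+$ permutes them simply transitively;
\item $h_F$: the class number of $F$, obtained as the number of double cosets $G_B(\Q)\backslash G_B(\A_f)/\cK_0$ \emph{after} the sign factor has been absorbed (using $\nu(\cK_0)=\widehat{\cO}_F^\times$ and the surjectivity of $\nu:G_B(\Q)^+\to F^\times_+$);
\item $[\cK_0:\cKt]$: the degree of the forgetful covering $\Sh_{\cKt}\to\Sh_{\cK_0}$, reflecting the renormalization of $d\mu_x$ under level change.
\end{itemize}

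The main obstacle lies in the second step, specifically in the bookkeeping required to extract the \emph{ordinary} class number $h_F$ rather than the narrow class number; this distinction is controlled by exactly how $G_B(\R)/G_B(\R)^+$ interacts with $G_B(\Q)$ via $\nu$, and it is what allows the sign contribution to combine with a narrow-class count to yield the cleaner product $2^{|\Sigma_\infty\smallsetminus\Sigma_B|}\cdot h_F$. Granting the measure normalization of Section \ref{sssec:measures-B}, the pointwise formula in the first step is mechanical and all subtlety is concentrated in this counting analysis.
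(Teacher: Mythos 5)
Your first step — the pointwise identity
\[
\llangle s([g_\infty h, g_f]), s([g_\infty h, g_f])\rrangle \;=\; \|\nu(g)\|^{-r}\,\langle F(g), F(g)\rangle_h
\]
— is exactly the paper's computation and is correct, and the overall two-step strategy (pointwise identity, then measure comparison producing $2^{|\Sigma_\infty\smallsetminus\Sigma_B|}\cdot h_F\cdot[\cK_0:\cKt]$) is the right one; the paper isolates the second step as Lemma \ref{lem:meas-shim-st} and simply cites it.

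There is, however, a genuine gap in your sketch of that second step, concentrated exactly where you say the "main obstacle" lies. You attribute $h_F$ to "the number of double cosets $G_B(\Q)\backslash G_B(\A_f)/\cK_0$," with the sign factor $2^{|\Sigma_\infty\smallsetminus\Sigma_B|}$ coming from $G_B(\R)/G_B(\R)^+$, the two combining to $2^{|\Sigma_\infty\smallsetminus\Sigma_B|}\cdot h_F$. But by strong approximation, $G_B(\Q)\backslash G_B(\A_f)/\cK_0 \cong \nu(G_B(\Q))\backslash \A_{F,f}^\times/\hat{\cO}_F^\times$, and by Hasse--Schilling--Maass $\nu(G_B(\Q))$ consists of the elements of $F^\times$ positive at $\Sigma_{B,\infty}$ only, so this count is a ray class number for the modulus $\prod_{v\in\Sigma_{B,\infty}}v$, not $h_F$; similarly the component count $|\pi_0(\Sh_{\cK_0})|$ is the narrow class number $h_F^+$. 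The product of a component count and $[\R^\times:\R^\times_+]^{d_B}$ does \emph{not} simplify to $2^{d_B}\cdot h_F$ without also tracking the covolumes of the arithmetic subgroups $\Gamma_i$, which involve the index $[\cO_F^\times:\cO_F^{\times,+}]$; that unit contribution is what your bookkeeping omits. The paper avoids the narrow-vs-ordinary issue entirely: in the proof of Lemma \ref{lem:meas-shim-st}, the $\A^\times$-invariance of the integrand (ensured by the hypothesis \eqref{eqn:s-invariant-by-center}) lets one further quotient by $F_\infty^\times\hat{\cO}_F^\times$, and the factor that emerges is the volume $\vol(F^\times\backslash\A^\times/F_\infty^\times\hat{\cO}_F^\times)$, which equals $h_F$ outright because the \emph{standard} measure on $\A_{F,f}^\times/\hat{\cO}_F^\times$ is the counting measure. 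So the fix is to keep the computation adelic (push forward to $\A^\times B^\times(F)\backslash B^\times(\A)$ with the standard measure) rather than decomposing into connected components of the Shimura variety; as your sketch stands, the narrow/ordinary discrepancy and the unit-group correction are not resolved.
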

Here and henceforth we write $[G_B]$ for $G_B(\Q) Z_{G_B}(\A) \backslash G_B(\A)$. 
Also, $dg$ denotes the {\it standard measure} on $[G_B]$ which is defined in 
\S  \ref{sssec:measures-B}.

\begin{proof} Recall that if $g=(g_\infty,g_f)$, we have 
\[
F(g) = g_\infty^{-1} s[(g_\infty \cdot h, g_f)],
\]
where we view $s[(g_\infty \cdot h, g_f)]$ as an element in $\cV_{\uk,r,g_\infty\cdot h}^\vee$. 
Now
\begin{align*}
\langle F(g), F(g) \rangle_h &= \nu(g_\infty) ^r \langle s[(g_\infty \cdot h, g_f)], s[(g_\infty \cdot h, g_f)] \rangle_{g_\infty \cdot h}\\
&=  \nu(g_\infty) ^r \| \nu(g_f) \|^r \cdot  \llangle s[(g_\infty \cdot h, g_f)], s[(g_\infty \cdot h, g_f)] \rrangle\\
&= \| \nu(g) \|^r  \llangle s[(g_\infty \cdot h, g_f)], s[(g_\infty \cdot h, g_f)] \rrangle.
\end{align*}
The proposition follows from this and the comparison of measures in Lemma \ref{lem:meas-shim-st}. 
\end{proof}

Next, we simplify further to scalar valued forms. 
For $\kappa = (z_\sigma e^{i\theta_\sigma})_{\sigma \in \Sigma_\infty} \in (\C^\times)^d$, let $\kappa_h$ be the 
element of $K_h\subset G_B(\R)$ defined by:
\[
\kappa_{h,\sigma} = \begin{cases}  h_\sigma(z_\sigma e^{i \theta_\sigma}), & \text{if } \sigma \in \Sigma_\infty 
\smallsetminus \Sigma_B; \\
z_\sigma e^{i \theta_\sigma}, & \text{if } \sigma \in \Sigma_{B,\infty},
\end{cases}
\]
where for $\sigma\in \Sigma_{B,\infty}$, we view $z_\sigma e^{i \theta_\sigma}$ as an element in $\C^\times \subset \H^\times \simeq (B\otimes_{F,\sigma} \R)^\times$ via \eqref{eqn:isom2}.
The equation \eqref{eqn:Fgkappa} can be rewritten as 
\[
F(g \kappa_h) = \prod_{\sigma\in \Sigma_\infty} z_\sigma^r \cdot  \prod_{\sigma \in \Sigma_\infty \smallsetminus \Sigma_B} e^{i k_\sigma \theta_\sigma} \cdot 
\bigotimes_{\sigma \in \Sigma_{B,\infty}} \rho_{\sigma,k_\sigma,r}^\vee (e^{-i\theta_\sigma}) F(g).
\]
For $\sigma\in \Sigma_\infty \smallsetminus \Sigma_B$, let $v_{\sigma,k_\sigma}$ be any nonzero vector in 
the one-dimensional $\C$-vector space
\[
(V_{\sigma,h}^{-1,0})^{\otimes{k_\sigma}} \otimes \det(V_{\sigma,\C})  ^{\otimes \frac{r-k_\sigma}{2}},
\]
so that 
\begin{equation}
\label{eqn:choice-of-vksigma}
\rho_{\sigma,k_\sigma,r} (\kappa_{h,\sigma}) \cdot v_{\sigma,k_\sigma} = z_\sigma^r e^{ik_\sigma \theta_\sigma}  \cdot v_{\sigma,k_\sigma}.
\end{equation}
For $\sigma \in \Sigma_{B,\infty}$, let 
$v_{\sigma,k_\sigma} \in V_{\sigma, k_\sigma,r}$ be any nonzero vector such that
the condition \eqref{eqn:choice-of-vksigma} is satisfied for all $\kappa\in (\C^\times)^d$. Such 
a vector is well-defined up to scaling.

Set $v_{\uk} = \otimes_{\sigma \in \Sigma_{\infty}}  v_{\sigma, k_\sigma} \in \cV_{\uk,r,h}$.
Define
\[
\phi_{F} (g) = (F(g), v_{\uk}).
\]
Then $\phi_F (g)$ satisfies 
\begin{equation}
\label{eqn:kappatr}
\phi_F (g \kappa_h) = \prod_{\sigma \in \Sigma_\infty} z_\sigma^r  e^{i k_\sigma \theta_\sigma} \cdot \phi_F (g)
\end{equation}
and 
\begin{equation}
\label{eqn:alphatr}
\phi_F (\alpha g) = \xi(\alpha) \phi_F(g), \quad \text{for } \alpha \in Z_{G_B}(\A) = \A_F^\times.
\end{equation}

\begin{prop}
The map $F \mapsto \phi_F$ is injective.
\end{prop}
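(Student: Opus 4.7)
The plan is to exploit the $K_h$-equivariance of $F$ together with the irreducibility of the relevant representations at the ramified infinite places. Suppose $\phi_F \equiv 0$. Then for every $g \in G_B(\A)$ and every $\kappa_h \in K_h$ we have $\phi_F(g\kappa_h) = 0$. Using the defining transformation rule \eqref{eqn:Fgkappa} together with the identity $(\rho^\vee_{\uk,r,h}(\kappa_h)^{-1} L)(v) = L(\rho_{\uk,r,h}(\kappa_h) v)$ for the dual representation, this translates into
\[
F(g)\bigl(\rho_{\uk,r,h}(\kappa_h)\, v_{\uk}\bigr) = 0 \qquad \text{for all } g \in G_B(\A),\ \kappa_h \in K_h.
\]
So it is enough to show that the $K_h$-orbit of $v_{\uk}$ spans $\cV_{\uk,r,h}$ over $\C$; pairing with this span then forces $F(g) = 0$ for every $g$.

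To verify the spanning statement, decompose according to infinite places. At a split place $\sigma \in \Sigma_\infty \smallsetminus \Sigma_B$ the corresponding tensor factor $(V_{\sigma,h}^{-1,0})^{\otimes k_\sigma} \otimes \det(V_{\sigma,\C})^{\otimes (r-k_\sigma)/2}$ is one-dimensional, so $v_{\sigma,k_\sigma}$ already spans it. At a ramified infinite place $\sigma \in \Sigma_{B,\infty}$ we have $h_{0,\sigma}=1$, hence the full $(B\otimes_{F,\sigma}\R)^\times \cong \H^\times$ stabilizes $h$ and sits inside $K_h$; the tensor factor $V_{\sigma,k_\sigma,r}$ is an irreducible representation of $\H^\times$ and $v_{\sigma,k_\sigma}$ is a nonzero vector in it, so its $\H^\times$-orbit spans. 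Taking $\kappa_h$ of the form $\prod_\sigma \kappa_{h,\sigma}$, the tensor products $\rho_{\uk,r,h}(\kappa_h)\, v_{\uk}$ therefore span $\cV_{\uk,r,h}$.

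The main (minor) subtlety is to check carefully that $K_h$ acts irreducibly on each $V_{\sigma,k_\sigma,r}$ at the ramified archimedean places; this reduces to the standard fact that $\Sym^{k_\sigma}$ of the standard representation of $\H^\times$, twisted by a power of the reduced norm, is irreducible. Once this is in place, the argument above is essentially formal, and the whole proof occupies only a few lines.
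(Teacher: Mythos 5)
Your proposal is correct and takes essentially the same route as the paper: both reduce the problem to the irreducibility of $\cV_{\uk,r,h}$ as a module over $\prod_{\sigma\in\Sigma_{B,\infty}}(B\otimes_{F,\sigma}\R)^\times$ (which sits inside $K_h$ because $h_{0,\sigma}=1$ at ramified places), decompose by archimedean places into one-dimensional factors at split places and irreducible $\H^\times$-factors at ramified places, and then use the equivariance relation \eqref{eqn:Fgkappa} to rewrite $(F(g),\rho(\kappa)v_{\uk})$ as $\phi_F(g\kappa)$. The paper phrases this by directly expressing an arbitrary $w$ as a linear combination of $\rho(\kappa_i)v_{\uk}$, while you phrase it contrapositively as ``the orbit of $v_{\uk}$ spans,'' but this is the same argument.
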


\begin{proof} This follows immediately from the fact that $\cV_{\uk,r,h}$ is 
irreducible as a module over $G=\prod_{\sigma \in \Sigma_{B,\infty}} (B\otimes_{F,\sigma} \R)^\times$. 
Indeed, given any $w \in \cV_{\uk,r,h}$, there exist elements $\kappa_i \in G$ and $\alpha_i \in \C$ such that 
\[
w= \sum_{i} \alpha_i \rho(\kappa_i) v_{\uk},
\]
where $\rho$ denotes the natural action of $G$ on $\cV_{\uk,r,h}$. 
Then 
\begin{align*}
(F(g), w) &= \sum_i \alpha_i (F(g), \rho(\kappa_i) v_{\uk}) =\sum_i \alpha_i (\rho^\vee (\kappa_i)^{-1} F(g), v_{\uk}) \\ &= \sum_i \alpha_i (F(g\kappa_i), v_{\uk})  = \sum_i \alpha_i \phi_F (g\kappa_i). 
\end{align*}
Thus if $\phi_F$ is identically zero, then so is $F$. \end{proof}

We will now compare $\langle F,F \rangle $ to $\langle \phi_F, \phi_F \rangle$,
where 
\[
\langle \phi_F, \phi_F \rangle = \int_{[G_B]} \phi_F (g) \overline{\phi_F(g)} \cdot \| \nu(g) \|^{-r} \,dg.
\]

We use the following well known lemma.
\begin{lem} 
\label{lem:schur-orth}
Let $K$ be a compact Lie group and $V$ a (finite dimensional) irreducible complex representation of $K$. 
Let $\langle \cdot,\cdot \rangle$ be a nonzero $K$-invariant hermitian form on $V$ 
(such a form is unique up to scalar multiples)
and denote also by
$\langle \cdot,\cdot \rangle$ the induced hermitian form on $V^\vee$. Then for all $v_1,v_2 \in V$ and $L_1, L_2 \in V^\vee$, we have 
\[
\int_{K} ( \rho^\vee(k) L_1, v_1 ) \overline{( \rho^\vee (k) L_2, v_2 )} \,dk
 = \frac{1}{\dim(V)} \cdot \langle v_1,v_2 \rangle \langle L_1,L_2 \rangle,
\]
where $dk$ is Haar measure normalized to have total volume $1$. 
\end{lem}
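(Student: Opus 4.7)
The plan is to reduce the assertion to the classical Schur orthogonality for matrix coefficients of the hermitian space $(V, \langle \cdot, \cdot \rangle)$, and then to prove that by the standard averaging-plus-Schur's-lemma argument.

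First I would translate everything into the hermitian picture. Using the conjugate-linear isomorphism $V \to V^\vee$, $w \mapsto L_w$, $L_w(v) = \langle v,w\rangle$ recalled in the remark preceding the lemma, write $L_i = L_{w_i}$ for $w_i \in V$. From the remark one has $\langle L_{w_1}, L_{w_2}\rangle = \langle w_2, w_1\rangle$. Also, using the identity $(\rho^\vee(g^{-1})L, v) = (L,\rho(g)v)$ noted in the same remark, one gets
\[
(\rho^\vee(k) L_i, v_i) = (L_i, \rho(k^{-1}) v_i) = \langle \rho(k^{-1}) v_i, w_i\rangle.
\]
The substitution $k \mapsto k^{-1}$ (preserving the normalized Haar measure) then rewrites the integral as
\[
\int_K \langle \rho(k) v_1, w_1\rangle\, \overline{\langle \rho(k) v_2, w_2\rangle}\, dk,
\]
and the desired identity becomes the classical Schur orthogonality
\[
\int_K \langle \rho(k) v_1, w_1\rangle\, \overline{\langle \rho(k) v_2, w_2\rangle}\, dk = \frac{1}{\dim V}\langle v_1, v_2\rangle\,\langle w_2, w_1\rangle.
\]

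Next I would prove this identity via an averaging operator. For fixed $w_1,w_2 \in V$, define $T \colon V \to V$ by
\[
T(x) := \int_K \langle \rho(k) x, w_1\rangle\, \rho(k^{-1}) w_2\, dk.
\]
A one-line change of variables $k \mapsto k k_0^{-1}$ shows $T \circ \rho(k_0) = \rho(k_0) \circ T$ for all $k_0 \in K$, so by Schur's lemma $T = c\cdot \mathrm{id}_V$ for a scalar $c$ depending on $w_1, w_2$. To compute $c$, take the trace: the integrand is the rank-one operator $x \mapsto \langle x, \rho(k^{-1}) w_1 \rangle\, \rho(k^{-1}) w_2$ (using unitarity of $\rho(k)$), whose trace equals $\langle \rho(k^{-1}) w_2, \rho(k^{-1}) w_1\rangle = \langle w_2, w_1\rangle$. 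Therefore $\mathrm{tr}(T) = \langle w_2, w_1\rangle$ and $c = \langle w_2, w_1\rangle / \dim V$.

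Finally, pair $T(v_1)$ with $v_2$: using $\langle \rho(k^{-1}) w_2, v_2\rangle = \overline{\langle v_2, \rho(k^{-1}) w_2\rangle} = \overline{\langle \rho(k)v_2, w_2\rangle}$, one obtains
\[
\langle T(v_1), v_2\rangle = \int_K \langle \rho(k) v_1, w_1\rangle\, \overline{\langle \rho(k) v_2, w_2\rangle}\, dk,
\]
while on the other hand $\langle T(v_1), v_2\rangle = c\,\langle v_1, v_2\rangle$. Combining these gives the Schur orthogonality above, which in turn yields the lemma after the translation of the first paragraph. The only delicate point is keeping the conjugate-linear identification $V \simeq V^\vee$ and the resulting swap $\langle L_1, L_2\rangle = \langle w_2, w_1\rangle$ straight; the representation-theoretic content is the routine Schur-lemma argument and presents no real obstacle.
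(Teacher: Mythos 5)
Your proof is correct. The paper actually gives no proof of this lemma at all (it is stated as "the following well known lemma" and used immediately), so there is no argument in the source to compare against; you are simply supplying the standard Schur orthogonality proof.

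Two small clarifications worth recording. First, the step you label "unitarity of $\rho(k)$" is really $K$-invariance of the form, i.e.\ $\langle \rho(k)x, y\rangle = \langle x, \rho(k^{-1})y\rangle$; this holds for any nondegenerate $K$-invariant hermitian form, not just a positive definite one (as the paper's remark points out, the form is only determined up to a nonzero real scalar and could be negative definite). Relatedly, the trace of the rank-one operator $x \mapsto \langle x,u\rangle z$ equals $\langle z,u\rangle$ intrinsically — it is the evaluation of the functional $\langle \cdot, u\rangle$ at $z$ — so you do not actually need to choose an orthonormal basis there, and the computation is valid without assuming the form is positive. Second, when you pass from the stated integral $\int_K (\rho^\vee(k)L_1,v_1)\overline{(\rho^\vee(k)L_2,v_2)}\,dk$ to $\int_K \langle \rho(k)v_1,w_1\rangle\overline{\langle \rho(k)v_2,w_2\rangle}\,dk$ via $k\mapsto k^{-1}$, the normalized Haar measure on a compact group is inversion-invariant, so this is legitimate; it is worth saying explicitly since it is the only place you touch the measure. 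With those cosmetic adjustments the argument is complete: the averaging operator $T$ intertwines $\rho$, Schur's lemma forces $T = c\cdot\mathrm{id}$, the trace computation gives $c = \langle w_2,w_1\rangle/\dim V$, and pairing $T(v_1)$ against $v_2$ recovers the orthogonality relation, which translated back through $L_i = L_{w_i}$ and $\langle L_1,L_2\rangle = \langle w_2,w_1\rangle$ is exactly the statement of the lemma.
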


\begin{rem}
It is immediate to check that if the form $\langle \cdot, \cdot \rangle$ on $V$ is scaled by $\alpha\in \C^\times$, then the form $\langle \cdot,\cdot\rangle$ on $V^\vee$ is scaled by $\bar{\alpha}^{-1}$, so the right hand side is independent of the 
choice of $\langle \cdot ,\cdot \rangle$. 
\end{rem}

\begin{prop}
\label{prop:F-to-phi}
\[
\langle F,F \rangle_h = \frac{\rank  \cV_{\uk,r}}{\langle v_{\uk},v_{\uk} \rangle_h } \cdot \langle \phi_F,\phi_F \rangle.
\]
\end{prop}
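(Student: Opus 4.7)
The plan is to combine Schur orthogonality (Lemma~\ref{lem:schur-orth}) with the equivariance $F(gk)=\rho_{\uk,r,h}^\vee(k)^{-1}F(g)$ to recover $\langle F(g),F(g)\rangle_h$ from the pointwise squared modulus of $\phi_F$ by integration over a suitable compact subgroup.

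First I would choose a maximal compact subgroup of $K_h$ complementary to the non-compact central part. Concretely, set
\[
K := \prod_{\sigma\in \Sigma_\infty\smallsetminus \Sigma_B}\SO_2(\R)\ \times\ \prod_{\sigma \in \Sigma_{B,\infty}}\SU(2),
\]
equipped with Haar measure of total mass $1$. Two features are crucial: (a) $|\nu(k)|=1$ for all $k\in K$, so the factor $\|\nu(g)\|^{-r}$ is right-$K$-invariant; and (b) $\rho_{\uk,r,h}|_K$ is a tensor product of irreducibles and hence irreducible, of dimension $\rank\cV_{\uk,r}$.

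For fixed $g\in G_B(\A)$, apply Lemma~\ref{lem:schur-orth} with $L_1=L_2=F(g)$ and $v_1=v_2=v_{\uk}$:
\[
\int_K \bigl|(\rho_{\uk,r,h}^\vee(k)F(g),\,v_{\uk})\bigr|^2\,dk \;=\; \frac{\langle F(g),F(g)\rangle_h\cdot \langle v_{\uk},v_{\uk}\rangle_h}{\rank\cV_{\uk,r}}.
\]
Using $(\rho_{\uk,r,h}^\vee(k)F(g),v_{\uk})=(F(gk^{-1}),v_{\uk})=\phi_F(gk^{-1})$, integrate both sides over $[G_B]$ against $\|\nu(g)\|^{-r}\,dg$ and interchange integrals by Fubini. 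Since $dg$ is right-invariant and $\|\nu(gk^{-1})\|=\|\nu(g)\|$ for $k\in K$, the inner integral on the left equals $\langle\phi_F,\phi_F\rangle$ for every $k$, so the left-hand side collapses to $\vol(K)\cdot\langle\phi_F,\phi_F\rangle=\langle\phi_F,\phi_F\rangle$. The right-hand side equals $\frac{\langle v_{\uk},v_{\uk}\rangle_h}{\rank\cV_{\uk,r}}\langle F,F\rangle_h$, and rearranging yields the claimed identity.

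There is no deep obstacle; the only care needed is in the choice of $K$ so that (i) it is genuinely compact with Haar mass $1$, (ii) $\rho_{\uk,r,h}|_K$ is irreducible, and (iii) $|\nu|\equiv 1$ on $K$ so that the weight in $\langle\cdot,\cdot\rangle_h$ is compatible with the substitution $g\mapsto gk^{-1}$. The consistency of the formula under rescaling $v_{\uk}$ is automatic, since $\phi_F$ scales linearly with $v_{\uk}$ while $\langle v_{\uk},v_{\uk}\rangle_h$ scales by the same $|\cdot|^2$.
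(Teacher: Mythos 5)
Your proposal is correct and follows essentially the same route as the paper's proof: the paper also applies Lemma~\ref{lem:schur-orth} to the maximal compact subgroup $K_h^0$ of $K_h$ (which, as you note, is a product of $\SO_2$'s and compact unit quaternion groups), uses the equivariance $F(g\kappa^{-1})=\rho^\vee(\kappa)F(g)$ together with $\|\nu(\kappa)\|=1$, and swaps the order of integration. The only cosmetic difference is that the paper inserts a trivial integral $\int_{K_h^0}\,d\kappa$ into $\langle\phi_F,\phi_F\rangle$ and substitutes $g\mapsto g\kappa^{-1}$ before invoking Schur orthogonality, whereas you apply Schur orthogonality pointwise in $g$ first and then integrate over $[G_B]$; these are the same computation read in opposite directions.
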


\begin{proof}
Let $K_h^0$ denote the maximal compact subgroup of $K_h$. 
Since $\cV_{\uk,r,h}$ is an irreducible representation of $K_h^0$, using Lemma \ref{lem:schur-orth} we get
\begin{align*}
\langle \phi_F, \phi_F \rangle &= \int_{[G_B]} \phi_F (g) \overline{\phi_F(g)} \cdot \| \nu(g) \|^{-r} \,dg \\
&= \int_{K_h^0} \int_{[G_B]} \phi_F (g) \overline{\phi_F(g)} \cdot \| \nu(g) \|^{-r} \,dg \,d\kappa \\
&= \int_{K_h^0} \int_{[G_B]} \phi_F (g \kappa^{-1}) \overline{\phi_F(g\kappa^{-1})} \cdot \| \nu(g\kappa^{-1}) \|^{-r} \,dg \,d\kappa \\
&= \int_{K_h^0} \int_{[G_B]} (F (g \kappa^{-1}),v_{\uk}) \overline{(F(g\kappa^{-1}),v_{\uk})} \cdot \| \nu(g) \|^{-r} \,dg \,d\kappa \\
&= \int_{[G_B]}   \int_{K_h^0} (\rho^\vee(\kappa)F(g),v_{\uk}) \overline{(\rho^\vee(\kappa)F(g),v_{\uk})} \cdot \| \nu(g) \|^{-r}  \,d\kappa \,dg \\
&= \frac{1 }{\rank  \cV_{\uk,r}} \langle v_{\uk},v_{\uk} \rangle_h \int_{[G_B]} \langle F(g),F(g) \rangle_h \| \nu(g) \|^{-r} \,dg.
\end{align*} 
\end{proof}

\subsection{Rational and integral structures}
\label{ssec:rational-integral-structures}

Let $\Pi=\otimes_v \Pi_v$ be 
an irreducible cuspidal automorphic representation of 
$\GL_2 (\A_F )$ 
corresponding to a Hilbert modular form 
of weight $(\uk,r)$, character $\xi_\Pi$ and conductor $\fN=\fN_\s \cdot \fN_\ps$, 
as in the introduction. 
Thus the character $\xi_\Pi$ satisfies
\[
\xi_\Pi (z \cdot z_\infty) = \N (z_\infty)^r \cdot \xi_\Pi (z)
\]
for $z\in \A^\times_F$ and $z_\infty \in \A^\times_{F,\infty}$. 
We also let $\pi=\otimes_v \pi_v $ denote the corresponding 
{\it unitary } representation:
\[
\pi :=  \Pi \otimes \| \det(\cdot) \|^{-r/2}.
\]

Recall that $\Sigma_\Pi$ denotes the set of all places $v$ of $F$
at which $\Pi_v$ is discrete series. Thus $\Sigma_\Pi$ contains 
$\Sigma_\infty$ but will typically be larger. Let 
$B$ be any quaternion algebra over $F$ such that $\Sigma_B \subseteq \Sigma_\Pi$, where
$\Sigma_B$ denotes the set of places 
$v$ of $F$ where $B$ is ramified. By the Jacquet--Langlands correspondence, there exists (up to isomorphism) a unique 
irreducible (cuspidal) automorphic representation $\Pi_B\simeq \otimes_{v} \Pi_{B,v}$ of $G_B (\A)$ 
such that $\Pi_{B,v} \simeq \Pi_v$ for all $v \not \in \Sigma_B$. 
Let $\uk_B=(k_{B,\sigma})_{\sigma \in \Sigma_\infty}$ be defined by:
\begin{equation}
\label{eqn:kB-defn}
k_{B,\sigma} = \begin{cases} k_\sigma, \quad & \text{if $B$ is split at $\sigma$,} \\
k_\sigma-2, \quad & \text{if $B$ is ramified at $\sigma$.}
\end{cases}
\end{equation}
Then $\Pi_B$ has {\it weight} $(\uk_B,r)$ at infinity.

Choose a maximal order $\cO_B$ in $B$. 
Recall that 
we have assumed that the conductor $\fN$ of $\Pi$ satisfies
\[
\fN= \fN_{\s} \cdot \fN_{\ps}
\]
where $\fN_\s$ is divisible exactly by those primes at which $\Pi_v$ is discrete series 
and $\fN_{\ps}$ is divisible exactly by those primes 
at which $\Pi_v$ is ramified principal series.  
Let $\fd_B$ be the (finite part of the) discriminant of $B$, so that $\fd_B$ divides $\fN_\s$. Then there is a
unique integral ideal $\fN_B$ in $\cO_F$ such that 
\[
\fN = \fN_B \cdot \fd_B,
\]
 and we may choose and fix an Eichler order $\cO_B(\fN_B)$ in 
$\cO_B$ of level $\fN_B$. 
We will also fix an {\it orientation} of this order at the places dividing $\fN_{\ps}$. By this,
we mean 
a ring homomorphism 
\[
o: \cO_B(\fN_B) \rightarrow \cO_F/ \fN_{\ps}.
\] 
This choice determines an open compact subgroup $\cK =\prod \cK_\ell$ of
$G_B (\A_f)$, namely $\cK_\ell = \prod_{v\mid \ell} \cK_v$ where for any finite place 
$v$ of $F$, we have
\[
\cK_v = \ker \left[ o_v : (\cO_B (\fN_B) \otimes_{\cO_F} \cO_{F,v})^\times \rightarrow (\cO_{F,v}/\fN_{\ps} \cO_{F,v})^\times \right].
\]
Here $o_v$ is the natural map induced by the orientation $o$. 
For all rational primes $\ell$ such that $(\ell,N(\Pi))=1$, the subgroup $\cK_\ell$ is a hyperspecial maximal compact subgroup of $G_B (\Q_\ell)$.

Now, we will assume that $B$ is not {\it totally definite}, relegating the 
case of totally definite $B$ to Remark \ref{rem:tot-def-rat-int} at the end of this section. 
Let $\ell$ be such that $(\ell,N(\Pi))=1$. 
Then for each prime $\lambda$ of $E(G_B,X_B)$ dividing such an $\ell$, one has (see \S \ref{sec:int-models}) an associated canonical integral model $\cS_{\cK,\lambda}= \cS_{\cK,\lambda}(G_B,X_B)$ of $\Sh_{\cK} (G_B, X_B)$ defined over $\cO_{E(G_B,X_B),(\lambda)}$.

We will now fix more carefully the isomorphism
\begin{equation}
\label{eqn:isomBinfty}
\phi_B: B \otimes \R \simeq \prod_{\sigma \in \Sigma_\infty \smallsetminus \Sigma_B} \M_2 (\R) \times \prod_{\sigma \in \Sigma_{B,\infty}} \H.
\end{equation}
Note that the vector bundles previously denoted by $\cV_{\rho_{\uk_B},r,\cK}$ and 
$\cV_{\uk_B,r,\cK}$ actually depend on the choice of $\phi_B$. In this section alone,  
we will be pedantic and write $\cV^{\phi_B}_{\rho_{\uk_B},r,\cK}$ and 
$\cV_{\uk_B,r,\cK}^{\phi_B}$ to indicate the dependence on $\phi_B$. 
Let $L\supset F$ be a number field such that $L$ splits $B$. We may assume by 
enlarging $L$ if necessary 
that it is Galois over $\Q$. Then $L$ contains $E(G_B,X_B)$. 
We pick the isomorphism $\phi_B$ above such that $B$ maps into $\prod_{\sigma \in \Sigma_\infty} \M_2 (L)$. 
This data defines an $L$-rational structure (\cite{harris-avb1}, \cite{milne}) on the automorphic 
vector bundle $\cV^{\phi_B}_{\rho_{\uk_B},r,\cK}$ on $\Sh_{\cK}(G_B, X_B)$ associated to the $G_B(\R)$-homogeneous vector bundles $\cV_{\rho_{\uk_B},r}^{\phi_B}$ 
as well as the sub-bundles $\cV_{\uk_B,r,\cK}^{\phi_B}$. To define integral models of these vector bundles, we 
first pick a rational prime $\ell$ prime to $N (\Pi)$  and insist that 
the isomorphism $\phi_B$ satisfy
\begin{equation}
\label{eqn:isomBinfty-integral1}
\phi_B (\cO_B) \subset \prod_\sigma \M_2 (\cO_{L,(\ell)}),
\end{equation}
so that $\phi_B$ gives an isomorphism
\begin{equation}
\label{eqn:isomBinfty-integral2}
\cO_B \otimes \cO_{L,(\ell)} \simeq \prod_\sigma \M_2 (\cO_{L,(\ell)}).
\end{equation}
By the discussion in Sec. \ref{sec:int-models}, this data defines for all primes $\lambda'$ of $L$ with $\lambda' \mid \lambda \mid \ell$,  natural 
integral models for 
the bundles
$\cV^{\phi_B}_{\rho_{\uk_B},r,\cK}$ and  $\cV^{\phi_B}_{\uk_B,r,\cK}$ 
over $\cS_{\cK,\lambda} \otimes_{\cO_{E(G_B,X_B),(\lambda)}} \cO_{L,(\lambda')}$.
Indeed, the choice of the Eichler order $\cO_B (\fN_B)$ determines a 
reductive group $\cG_0$ over 
$\Z_{(\ell)}$ such that $\cG_{0,\Q}=G_B$; namely for any
$\Z_{(\ell)}$-algebra $A$, we have 
\[
\cG_0 (A) = (\cO_B (\fN_B) \otimes A)^\times.
\]
Further, the map $\phi_B$ induces an isomorphism
\begin{equation}
\label{eqn:intmod-group}
\cG_0 \otimes \cO_{L,(\ell)} \simeq \prod_{\sigma \in \Sigma_\infty} \GL_{2/ \cO_{L,(\ell)}}.
\end{equation}
This gives an integral model over $\cO_{L,(\ell)}$ for the compact dual symmetric space and the 
vector bundle $\check{\cV}_{\uk_B,r}$. Via the identification \eqref{eqn:intmod-group} above, the integral model
 $\check{\fX}_{\cO_{L,(\ell)}}$ for the compact dual is simply the  
conjugacy class of the parabolic subgroup
\[
\cP:= \prod_{\sigma \in \Sigma_\infty \smallsetminus \Sigma_B} \mathcal{B} \times \prod_{\sigma \in \Sigma_{B,\infty}} \GL_{2/\cO_{L,(\ell)}}
\]
of $\cG_0 \otimes \cO_{L,(\ell)}$,
where 
\[
\mathcal{B} = \left\{  \begin{pmatrix} * & * \\ 0 & * \end{pmatrix} \right\} \subset \GL_{2/\cO_{L,(\ell)}}.
\]
Thus $\check{\fX}_{\cO_{L,(\ell)}}$ is isomorphic to $\prod_{\sigma \in \Sigma_\infty \smallsetminus \Sigma_B} \P^1_{\cO_{L,(\ell)}}$, the isomorphism 
depending on the choice of $\phi_B$. 
Let $\cL= \cO_{L,(\ell)}^2$ with the obvious left action of $\GL_2 (\cO_{L,(\ell)})$. Then the integral model  of the vector bundle 
$\check{\cV}_{\uk_B,r}$ over $\check{\fX}_{\cO_{L,(\ell)}}$ is the vector bundle  $\check{\cV}_{\cO_{L,(\ell)}}^{\phi_B}$ 
corresponding to the representation 
\[
 \prod_{\sigma \in \Sigma_\infty \smallsetminus \Sigma_B} \chi_{k_{B,\sigma},r}  \cdot \bigotimes_{\sigma \in \Sigma_{B,\infty}} \Sym^{k_{B,\sigma}}  (\cL) \otimes \det (\cL)^{\frac{r-k_{B,\sigma}}{2}}
\]
of $\cP$. (Recall that $\chi_{k,r}$ has been defined in Eg. \ref{eg:gl2}.)

For all finite places $v$ of $F$ at which $B$ is split, we will fix an isomorphism 
\begin{equation}
\label{eqn:Bfinite-isom}
i_v: B \otimes F_v \simeq \M_2 (F_v)
\end{equation}
such that for all but finitely many $v$, we have 
\begin{equation}
\label{eqn:iv-order}
i_v: \cO_B (\fN_B)  \otimes \cO_{F,v} \simeq \M_2 (\cO_{F,v}).
\end{equation}
Let $\Delta$ be a large enough finite set of places of $F$ such that 
\begin{itemize}
\item $\Delta$ contains all the infinite places, and all the finite places $v$ at which $\Pi_v$ is ramified.
\item For all $v\not \in \Delta$, the condition \eqref{eqn:iv-order} holds.
\end{itemize}

For all finite places $v$ not in $\Delta$, we get (using $i_v$) an identification 
\begin{equation}
\label{eqn:hecke-iden}
\cK_v \simeq \GL_2 (\cO_{F,v}), \quad \cH'_v \simeq \cH_v
\end{equation}
where $\cH'_v$ and $\cH_v$ denote the spherical Hecke algebras 
on $B_v^\times$ and $\GL_2 (F_v)$ constructed using the maximal compact subgroups $\cK_v$ and $\GL_2 (\cO_{F,v})$ respectively. 
Let 
\[
\cH'_\Delta = \bigotimes_{v\not \in \Delta} \cH'_v, \quad \cH_\Delta = \bigotimes_{v\not \in \Delta} \cH_v. 
\]
Note that $\cH'_\Delta$ acts naturally on the space of sections of $\cV_{\uk_B,r,\cK}$ and we have 
an identification $\cH'_\Delta \simeq \cH_\Delta$. Also 
$\cH_\Delta$ acts on $\otimes_{v \not \in \Delta} \Pi_v$. Let $\varphi= \otimes_{v\not \in \Delta} \varphi_v$ 
be a new-vector in the space $\otimes_{v \not \in \Delta} \Pi_v$, so that 
$\varphi$ is an eigenvector for the action of $\cH_\Delta$.  
Let $\Lambda_\Pi$ denote the corresponding character of 
$\cH_\Delta$.

\begin{prop}
\label{prop:uniquesection}
There exists up to scaling a unique non-zero section $s_B$ of the bundle $\cV_{\uk_B,r,\cK}^{\phi_B}$ 
 which satisfies the following conditions:
\begin{itemize}
\item $s$ is an eigenvector for the action of $\cH'_\Delta$ and 
 $\cH'_\Delta$ acts on it by $\Lambda_\Pi$,
 via the identification $\cH'_\Delta \simeq \cH_\Delta$ above.
\item $s$ satisfies \eqref{eqn:s-invariant-by-center} for $\xi = \xi_\Pi$. 
\end{itemize} 
 \end{prop}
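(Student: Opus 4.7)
The strategy is to translate the conditions into a statement about vectors in the Jacquet--Langlands transfer $\Pi_B$, then invoke strong multiplicity one together with local newform theory.

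First, fix a base point $h\in X_B^+$ and apply $\Lift_h$ to convert a section $s$ of $\cV_{\uk_B,r,\cK}^{\phi_B}$ into a smooth $\cV_{\uk_B,r,h}$-valued function $F$ on $G_B(\Q)\backslash G_B(\A)/\cK$ of the appropriate infinity type (transforming under $K_h$ by $\rho_{\uk_B,r,h}$, and for the holomorphicity one imposes $\fp_h^-\cdot F=0$). Condition \eqref{eqn:s-invariant-by-center} with $\xi=\xi_\Pi$ pins down the central character. The space of such $F$ decomposes under the action of $G_B(\A_f)$ as a direct sum over irreducible cuspidal automorphic representations $\sigma=\sigma_\infty\otimes\sigma_f$ of $G_B(\A)$ with central character $\xi_\Pi$ and with $\sigma_\infty$ of the infinity type prescribed by $(\uk_B,r)$.

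Next, the Hecke eigenvalue condition restricts the sum to those $\sigma$ with $\sigma_v\cong\Pi_v$ for all $v\notin\Delta$. Transferring $\sigma$ back to $\GL_2(\A_F)$ via Jacquet--Langlands \cite{jl} and applying strong multiplicity one for $\GL_2$, we conclude $\sigma\cong\Pi_B$, the JL transfer of $\Pi$ to $G_B$. Hence any admissible $s$ lies in the $\Pi_B$-isotypic component.

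It remains to show that, within this isotypic component, the subspace cut out by our conditions is one-dimensional. Since $\Pi_B=\bigotimes_v\Pi_{B,v}$ and the conditions factor over places, this reduces to a product of local one-dimensionality statements. At $\sigma\in\Sigma_\infty\smallsetminus\Sigma_B$, the holomorphic minimal $K_h$-type of the (limit of) discrete series $\Pi_{B,\sigma}$ of weight $k_\sigma$ is one-dimensional; at $\sigma\in\Sigma_{B,\infty}$, the representation $\Pi_{B,\sigma}$ of $\H^\times$ is irreducible and finite-dimensional, and the $K_h$-transformation law picks a one-dimensional line. At each finite $v\nmid \fN$, $\cK_v$ is hyperspecial and $\Pi_{B,v}^{\cK_v}$ is one-dimensional by the unramified theory. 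At $v\mid \fd_B$, $\Pi_{B,v}$ is already one-dimensional (by JL applied to the Steinberg $\Pi_v$). At $v\mid\fN_\s$ with $v\notin\Sigma_B$, the Eichler order of level $\fN_B$ cuts out exactly the newvector line in the special representation $\Pi_{B,v}\cong\Pi_v$. At $v\mid\fN_\ps$, this is where the orientation $o$ enters: the square-freeness assumption on $\fN_\ps$ (combined with $\chi_1$ being unramified and $\chi_2$ ramified of conductor $v$) ensures that $\ker(o_v)$ is precisely the subgroup for which $\Pi_v^{\cK_v}$ is one-dimensional in the ramified principal series newform theory.

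The main obstacle lies in this last step: one must match, at each $v\mid\fN_\ps$, the local subgroup $\ker(o_v)$ built from the Eichler order and orientation with the classical ``newvector'' subgroup of $\GL_2(F_v)$, so as to guarantee that the local fixed space is one-dimensional. Once this local compatibility is verified, the global one-dimensionality (and hence the uniqueness up to scaling asserted in the proposition) follows immediately from the tensor product decomposition of $\Pi_B$.
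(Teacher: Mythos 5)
Your proposal is correct and follows essentially the same route as the paper: lift a section via $\Lift_h$, pin down the automorphic representation by strong multiplicity one and Jacquet--Langlands, and reduce the global uniqueness to local one-dimensionality of new-vector spaces (the paper cites \cite{casselman-newforms} for the finite-place newform theory, including the ramified principal series case you flag as an obstacle, and handles the ramified archimedean places by contracting the vector-valued $F$ against the highest-weight vector $v_{\uk}$ to get the scalar $\phi_F$, which is the precise version of your ``picks out a line'' step).
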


\begin{proof}
 Let 
 $s$ be any section of $\cV_{\uk_B,r,\cK}^{\phi_B}$. Pick some point $h\in X_B$. Let $F_{s,h}=\Lift_h (s)$ 
 and set $\phi_{s,h}= \phi_{F_{s,h}}$, notations as in the previous section. 
 By strong multiplicity one,
 the assignment 
 $s\mapsto \phi_{s,h}$ gives a bijection of the space of sections 
 of $\cV_{\uk_B,r,\cK}^{\phi_B}$ 
 on which $\cH'_\Delta$ acts by $\Lambda_\Pi$
 with the space of functions 
 \[
 \phi: G_B(\Q)\backslash G_B(\A) / \cK \rightarrow \C
 \]
 that satisfy \eqref{eqn:kappatr} and \eqref{eqn:alphatr} and 
  on which $\cH'_\Delta$ acts by $\Lambda_\Pi$.
 By 
 the Jacquet--Langlands correspondence and
 the uniqueness of newforms \cite{casselman-newforms},
 this latter space is one-dimensional, generated by 
 a nonzero element $\phi$.
 If $s_B$ is such that $\phi_{s_B,h}=\phi$, then $s_B$ is our required section. 
\end{proof} 
 
 Let us 
enlarge $L$ if necessary so that $E_\Pi \subset L$ where $E_\Pi$ is the field generated by the Hecke eigenvalues of $\Pi$. 
By \cite{harris-jams} Prop. 2.2.4, the section $s_B$ of Prop. \ref{prop:uniquesection} can be chosen 
to be $L$-rational. 
Further, for $\lambda' \mid \lambda \mid \ell$ as above,
the integral model of $\cV_{\uk_B,r,\cK}^{\phi_B}$ 
over $\cS_{\cK,\lambda} \otimes_{\cO_{E(G_B,X_B),(\lambda)}} \cO_{L,(\lambda')}$ defines an
$\cO_{L,(\lambda')}$-lattice $\mathscr{M}_{\lambda'}$ in 
\[
H^0 ( \Sh_{\cK} (G_B,X_B)_{/L} ,\cV_{\uk_B,r,\cK}^{\phi_B}).
\]

Fixing $\ell$, choose $s_B$ (by suitably scaling) such that for all $\lambda' \mid \lambda \mid \ell$, 
it is a generator for the rank one $\cO_{L,(\lambda')}$-lattice $\mathscr{M}_{\lambda'} \cap Ls_B$.
We will say that the section $s_B$ is $\ell$-normalized.

\begin{rem}
\label{rem:tot-def-rat-int}
In this remark we deal with the case of totally definite $B$. 
Pick $\phi_B$ satisfying \eqref{eqn:isomBinfty-integral1}, \eqref{eqn:isomBinfty-integral2} above 
with an appropriate choice of $L$. 
Then $X_B=\{ h_0 \}$, and sections $s$ of 
$\cV_{\uk_B,r,\cK}^{\phi_B}$ are identified with  functions
\[
F: G_B(\Q)\backslash G_B(\A) / \cK \rightarrow \cV_{\uk_B,r} =\bigotimes_{\sigma \in \Sigma_\infty} V_{\sigma,k_{B,\sigma},r}
\]
satisfying the appropriate invariance property under the right action of $G_B (\R)$. 
Then $\cV_{\uk_B,r}$ admits a natural $L$-rational structure as well as a natural
$\cO_{L,(\ell)}$-submodule:
\begin{align*}
 \cV_{\uk_B,r} &\supset \cV_{\uk_B,r}(L) =\bigotimes_{\sigma \in \Sigma_\infty} \Sym^{k_{B,\sigma}} L^2 
 \otimes \det(L^2)^{\frac{r-k_{B,\sigma}}{2}} \\
 &\supset \cV_{\uk_B,r} (\cO_{L,(\ell)})=\bigotimes_{\sigma \in \Sigma_\infty} \Sym^{k_{B,\sigma}} \cO_{L,(\ell)} ^2
 \otimes \det(\cO_{L,(\ell)}^2)^{\frac{r-k_{B,\sigma}}{2}}.
 \end{align*}
 This gives an $L$-rational structure and an $\cO_{L,(\ell)}$-integral structure on the space of sections of 
 $\cV_{\uk_B,r,\cK}^{\phi_B}$, namely we take sections $s$ which on $G_B(\A_f)$ take values in $\cV_{\uk_B,r}(L)$ and $\cV_{\uk_B,r} (\cO_{L,(\ell)})$ respectively. 
 We pick isomorphisms $i_v$ as above and Prop. \ref{prop:uniquesection} continues to hold. Finally, 
 we pick $s_B$ to be $\ell$-normalized with respect to the integral structure provided by $\cV_{\uk_B,r} (\cO_{L,(\ell)})$. 
\end{rem}

\subsection{Canonical quadratic period invariants} 
\label{ssec:defn-period-invariant}

We can now define the canonical quadratic period invariants attached
to $\Pi$ and 
state the main conjecture relating these invariants. 
Let $B$ be a quaternion algebra such that $\Sigma_B \subseteq \Sigma_\Pi$. 
As in the introduction, let $R$ be the ring $\cO_{\Qbar} [1/N (\Pi)]$. For any rational prime $\ell$ prime to $N(\Pi)$, we define 
 an invariant $q_B (\Pi,\ell) \in \C^\times / R_{(\ell)}^\times$ as follows. 
 Let $\cKt \supseteq \cK$ be the open compact subgroup of $G_B(\A_f)$ defined by 
$\cKt=\prod_v \cKt_v$ with
\[
\cKt_v = (\cO_B (\fN_B) \otimes _{\cO_F} \cO_{F,v})^\times.
\]
 Choosing a section 
 $s_B$ as above that is $\ell$-normalized, define 
\[
q_B (\Pi, \ell):= \llangle s_B, s_B \rrangle_{\cKt}  \in \C^\times / R_{(\ell)}^\times,
\]
to be the Petersson norm of the section $s_B$ as in Defn. \ref{defn:pet-norm}.

\begin{prop}
The invariant $q_B (\Pi, \ell)$ is well defined, in that as an element of $\C^\times/ R_{(\ell)}^\times$, it 
does not depend on the choices of the number field $L$, the pair $(\cO_B (\fN_B),o)$ consisting of the Eichler order $\cO_B(\fN_B)$ and the orientation $o:\cO_B(\fN_B) \rightarrow \cO_F/\fN_{\ps}$, the isomorphism 
\eqref{eqn:isomBinfty} satisfying \eqref{eqn:isomBinfty-integral1},  \eqref{eqn:isomBinfty-integral2} above and the collection of isomorphisms
\eqref{eqn:Bfinite-isom}. 
\end{prop}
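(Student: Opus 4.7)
The plan is to show that each of the four choices in the definition can only alter $s_B$ by a scalar that is a unit at all primes of $\overline{\Q}$ above $\ell$, which (together with its complex conjugate) gives a unit of $R_{(\ell)}$ when one takes the Petersson inner product. The starting observation is that by Proposition \ref{prop:uniquesection}, $s_B$ is pinned down up to a scalar in $L^\times$, and the $\ell$-normalization condition pins it down further up to $u \in \cO_{L,(\ell)}^\times = \prod_{\lambda' \mid \ell} \cO_{L,(\lambda')}^\times$. Rescaling $s_B$ by $u$ multiplies $\llangle s_B, s_B \rrangle_{\widetilde{\cK}}$ by $u\bar u$; since complex conjugation acts on the finite set of primes of $\overline{\Q}$ above $\ell$, $u \bar u$ is again a unit at each such prime, hence lies in $R_{(\ell)}^\times$. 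So once the data determining the integral model $\mathscr{M}_{\lambda'}$ is fixed, the Petersson norm is well-defined modulo $R_{(\ell)}^\times$; the task is to show independence of that data.

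For the choice of $L$: if $L \subset L'$ is a further Galois extension and $\lambda'' \mid \lambda' \mid \ell$ in $L'/L$, then by the functoriality of the integral models $\cV_{\cK,\lambda}$ of Sec.\ \ref{sec:int-models}, the lattice $\mathscr{M}_{\lambda''}$ is obtained from $\mathscr{M}_{\lambda'}$ by base change along $\cO_{L,(\lambda')} \to \cO_{L',(\lambda'')}$; hence an $\ell$-normalized section over $L$ remains $\ell$-normalized over $L'$. For the Eichler order and orientation: any two Eichler orders of level $\fN_B$ with orientations are conjugate inside $B \otimes \A_f$ by an element $g_f \in G_B(\A_f)$, and since $B$ splits at every place $v \mid \ell$ (as $v \nmid N(\Pi)$ forces $v \notin \Sigma_{B,\fin}$), we can arrange $g_v \in \GL_2(\cO_{F,v})$ at such $v$; right translation by $g_f$ induces an isomorphism of the corresponding Shimura varieties and an isomorphism of the associated integral automorphic vector bundles over $\cO_{L,(\lambda')}$ that carries one $\ell$-normalized $s_B$ to another.

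For the choice of $\phi_B$: any two isomorphisms satisfying \eqref{eqn:isomBinfty-integral1}--\eqref{eqn:isomBinfty-integral2} differ by conjugation by some $\gamma \in (B \otimes \cO_{L,(\ell)})^\times$, which induces an isomorphism of the integral models $\check{\cV}_{\cO_{L,(\ell)}}^{\phi_B} \simeq \check{\cV}_{\cO_{L,(\ell)}}^{\phi_B'}$ compatible with the $\cG_0$-action. Via Propositions \ref{prop:s-to-F} and \ref{prop:F-to-phi}, $\llangle s_B, s_B \rrangle$ is expressed in terms of the automorphic function $\phi_F$, which is independent of $\phi_B$ once the basepoint vectors $v_{\uk}$ are matched; the factor $\langle v_{\uk}, v_{\uk}\rangle_h$ arising in Proposition \ref{prop:F-to-phi} is itself acted on by $\gamma$ in a way compatible with the new integral structure. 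Finally, for the choice of $i_v$ at $v \notin \Delta$: two such isomorphisms differ by an element of $\GL_2(\cO_{F,v})$, which induces the identity on the spherical Hecke algebra $\cH_v$, so the Hecke eigensystem $\Lambda_\Pi$ and the integral structure at $v$ are both preserved.

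The main obstacle is step (iii), independence of $\phi_B$: one must verify carefully that changing $\phi_B$ by $\gamma$ simultaneously (a) preserves the integral model $\mathscr{M}_{\lambda'}$ up to an $\cO_{L,(\lambda')}^\times$-unit and (b) preserves the hermitian metrics built from \eqref{eqn:herm-metric1}, \eqref{eqn:herm-metric1'}, \eqref{eqn:herm-metric2}, which are defined using $\phi_B$ in an essential way. This reduces, via the product formula used in Proposition \ref{prop:hermitian-metric}, to checking that $\nu(\gamma)$ contributes trivially modulo $R_{(\ell)}^\times$, which follows because $\gamma$ is a unit at all primes above $\ell$.
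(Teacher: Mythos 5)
Your proposal follows the paper's overall strategy — treat each choice separately and show that each changes the Petersson norm only by a unit at primes above $\ell$ — and your preliminary remark (that the $\ell$-normalization pins $s_B$ down up to a unit $u$, whose contribution $u\bar u$ lies in $R_{(\ell)}^\times$) is exactly the paper's starting point. The dispositions of $L$ and of the $i_v$ match the paper's.

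For the Eichler order and orientation, however, you take a genuinely different route from the paper: you conjugate purely adelically by $g_f$ with $g_\ell$ in the maximal compact, rather than (as the paper does) approximating the conjugating idele $\beta_f$ at $\ell$ by a global $b \in B^\times$, modifying $\phi_B$ to $\phi_B \circ \mathrm{Ad}(b)$, and appealing to the product formula to compare $\|\nu(b_\infty)\|^{-r}\|\nu(\beta_f)\|^{-r}$. Your route is potentially cleaner, but as written it has a gap: you only assert that the induced isomorphism ``carries one $\ell$-normalized $s_B$ to another,'' which controls the integral structure but says nothing about the Petersson norm. The hermitian metric on $\cV^\vee_{\uk,r,\cK}$ carries the factor $\|\nu(g_f)\|^{-r}$ (cf.\ Proposition \ref{prop:hermitian-metric}), so the isomorphism is not an isometry: it scales $\llangle s_B, s_B \rrangle$ by $\|\nu(g_f)\|^{-r}$. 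You need to observe that this is a unit in $R_{(\ell)}$ because $g_v \in \GL_2(\cO_{F,v})$ for $v \mid \ell$ forces the product over finite $v$ to miss primes over $\ell$; that observation is what makes your finite-adelic route close, and it is exactly the point the paper is replacing by its product-formula argument. A related unstated hypothesis is that keeping the same $\phi_B$ on both sides requires $\cO_B' \otimes \Z_{(\ell)} = \cO_B \otimes \Z_{(\ell)}$, which again is what $g_\ell \in \cK_\ell$ buys you; it is worth saying.

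For the $\phi_B$ case your account is also imprecise. The paper's proof does not invoke a product formula here: after noting that the normalizer of $\prod_\sigma \M_2(\cO_{L,(\ell)})$ in $\prod_\sigma \GL_2(L)$ is $\prod_\sigma L^\times \cdot \GL_2(\cO_{L,(\ell)})$, so that the conjugating element $t$ may be taken in $\prod_\sigma \GL_2(\cO_{L,(\ell)})$, it concludes directly that $\llangle s_B, s_B\rrangle$ and $\llangle s'_B,s'_B\rrangle$ differ by a power of $\|\nu(t)\|$, which is a unit at $\ell$. Your phrasing that the factor $\langle v_{\uk}, v_{\uk}\rangle_h$ is ``acted on by $\gamma$ in a way compatible with the new integral structure'' does not actually establish what changes in the metric, and the mention of the product formula in this case appears to be a mixing-up with the Eichler-order case. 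The correct assertion — that $\nu(t)$ lands in $\prod_\sigma \cO_{L,(\ell)}^\times$ once $t$ has been adjusted by a central scalar, and therefore contributes a unit to the norm — is simple but needs to be stated.
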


\begin{proof}
We will give the argument in the case when $B$ is not totally definite. 
In the case of a totally definite $B$, a similar (but simpler) argument can be given which 
we leave to the reader. 

Independence of the choice of $L$ is clear since we can always replace $L$ by a larger field
without changing the choice of $s_B$. 
Implicitly in the arguments below we may need to make such a field extension and we do this without comment. 
Let us first show that fixed choices of other data, there is no dependence on the choice of 
isomorphisms \eqref{eqn:Bfinite-isom}. Indeed, for all but finitely many $v$, the isomorphisms
$i_v$ must satisfy \eqref{eqn:iv-order}. Let $\{ i'_v\}$ be a different set of choices. Then for all but finitely 
many $v$, the isomorphisms $i_v$ and $i'_v$ must differ by conjugation by an element of $\cK_v$. 
For such $v$, the identifications $\cH'_v \simeq \cH_v$ given by $i_v$ and $i'_v$
are the same.
This implies that
the same choice of $s_B$ can be used 
if $\{ i_v\}$ is replaced by $\{ i'_v\}$ and the norm $\llangle s_B, s_B \rrangle_{\cKt} $ is unchanged. 

Next let us look at the dependence on the choice of isomorphism $ \phi_B$ in \eqref{eqn:isomBinfty},
for fixed choices of other data. 
Let $\phi'_B$ be a different choice of isomorphism satisfying \eqref{eqn:isomBinfty-integral1}. Then 
$\phi_B$ and $\phi'_B$ differ by conjugation by an element 
\[
t \in \prod_\sigma \GL_2(L) \cap 
\left( \prod_{\sigma \in \Sigma_\infty \smallsetminus \Sigma_B} \GL_2 (\R) \times \prod_{\sigma \in \Sigma_{B,\infty}} \H^\times \right)
\]
 that 
normalizes $\prod_\sigma \M_2 (\cO_{L,(\ell)})$.
The normalizer of $\M_2 (\cO_{L,(\ell)})$ in $\GL_2 (L)$ is 
$L^\times \cdot \GL_2(\cO_{L,(\ell)})$, so we may assume that 
$t$ lies in $\prod_\sigma \GL_2 (\cO_{L,(\ell)})$. 
Then there is a natural morphism of integral models
\[
\check{\cV}_{\cO_{L,(\ell)}}^{\phi_B} \simeq \check{\cV}_{\cO_{L,(\ell)}}^{\phi'_B}
\]
which is just given by the (left) action of $t$ on the fibers. This induces an 
 isomorphism 
between the integral models  of the corresponding automorphic vector bundles
that is also given by the action of $t$ on the fibers. 
(Keep in mind that the $G(\Q)$-action on the fibers are different, and differ by 
conjugation by $t$, so the left action of $t$ on the fibers is indeed a map of bundles.)
Thus if $s_B$ is an $\ell$-normalized
section of $\cV_{\uk_B,r,\cK}^{\phi_B}$, then $t\cdot s_B$ is an $\ell$-normalized section 
of $\cV_{\uk_B,r,\cK}^{\phi'_B}$. Then the 
inner products $\llangle s_B, s_B \rrangle_{\cKt}$ and $\llangle s'_B, s'_B \rrangle_{\cKt}$ differ by
a power of $\| \nu (t) \|$, which is a unit at $\ell$. 

Finally,  we consider dependence on the choice of the pair $(\cO_B (\fN_B),o)$. 
Let 
$(\cO_B (\fN_B)',o')$
be another such pair and let $\phi'_B$ (respectively $i'_v$) denote our choices of isomorphism
\eqref{eqn:isomBinfty} satisfying \eqref{eqn:isomBinfty-integral1} (respectively the isomorphisms 
\eqref{eqn:Bfinite-isom} satisfying \eqref{eqn:iv-order} for all but finitely many $v$). Let us suppose first that the pair $(\cO_B (\fN_B)' ,o')$
 is conjugate to $(\cO_B (\fN_B),o)$ by an element in $B^\times$, say $\cO_B (\fN_B)' = b^{-1} \cO_B(\fN_B)b$ and 
 $o'(x) = o(bxb^{-1})$.  
By what we have shown so far we may assume that  
\[
\phi'_B (x) = \phi_B (b x b^{-1}), \quad i'_v (x) = i_v ( b x b^{-1}).
\]
 The open compact subgroup $\cK'$ of $G_B(\A_f)$ determined by the pair $(\cO_B(\fN_B)',o')$ satisfies $\cK' = b^{-1} \cK b$. 
 Let us write $b = b_\infty \cdot b_f$ where $b_\infty$ and 
 $b_f$ denote the infinite and finite components of $b$ respectively, viewed as elements in $G_B (\A)$. 
 There is a natural isomorphism 
 of Shimura varieties 
 \[
 \Sh_{\cK} (G_B,X_B) =G_B(\Q) \backslash X_B \times G_B (\A_f) / \cK \ \ \stackrel{\xi_b}{\simeq} \ \ G_B(\Q) \backslash X_B \times G_B (\A) / \cK' = \Sh_{\cK'} (G_B,X_B), 
 \]
 given by 
 \[
 (h,g_f) \mapsto (h, g_f b_f).
 \]
 Further, there is a natural isomorphism 
 \[
 \cV_{\uk_B,r,\cK}^{\phi_B} = G_B (\Q) \backslash \cV_{\uk_B,r}^{\phi_B} \times G_B(\A_f)/ \cK \ \ \stackrel{\tilde{\xi}_b}{\simeq}  \ \
 G_B (\Q) \backslash \cV_{\uk_B,r}^{\phi'_B} \times G_B(\A_f)/ \cK' = \cV_{\uk_B,r,\cK'}^{\phi'_B}
\] 
covering $\xi_b$, given by 
\begin{equation}
\label{eqn:maponvb}
(v,g_f) \mapsto  (\phi_B (b) \cdot v, g_f b_f).
\end{equation}
Note that if $\gamma $ is an element in $G_B (\Q)$ then 
\begin{align*}
\gamma \cdot (v,g_f) = (\phi_B (\gamma) \cdot v, \gamma g_f) & \mapsto (\phi_B(b) \phi_B (\gamma) \cdot v, \gamma g_f b_f ) \\ &= (\phi'_B (\gamma) 
\phi_B (b)\cdot v, \gamma g_f b_f ) = \gamma \cdot (\phi_B (b)\cdot v, g_f b_f),
\end{align*}
so that the assignment in \eqref{eqn:maponvb} does descend to equivalence classes for the $G_B(\Q)$-action. 
Note also that $\tilde{\xi}_B$ is the map on automorphic vector bundles corresponding to a
morphism of vector bundles that extend to the integral models, since these integral models are defined using 
the triples $(\cO_B(\fN_B), o,\phi_B)$ and $(\cO_B(\fN_B)',o', \phi'_B)$ respectively. Thus 
$\tilde{\xi}_b$ is an isomorphism at the level of integral models, and so 
we may assume that $s'_B = \tilde{\xi}_b (s_B)$. But then we see from the definition 
of the metrics on the vector bundles $\cV_{\uk_B,r,\cK}^{\phi_B}$ and $\cV_{\uk_B,r,\cK'}^{\phi'_B}$ 
and the product formula that 
\[
\llangle s'_B, s'_B \rrangle_{\cKt'} = \| \nu(b_\infty) \|^{-r} \cdot \| \nu(b_f) \|^{-r} \cdot \llangle s_B, s_B \rrangle_{\cKt} = \llangle s_B, s_B \rrangle_{\cKt}.
\]
In general, it may not be true that the pairs $(\cO_B(\fN_B),o)$ and $(\cO_B(\fN_B)',o')$ are conjugate by an element of $B^\times$. 
Nevertheless, we can always find an element $\beta_f \in B^\times (\A_f)$ such that 
\[
\cO_B(\fN_B)'=\beta_f^{-1} \cO_B(\fN_B) \beta_f, \quad o'(x)=o(\beta_f x \beta_f^{-1}).
\]
Let $b$ be an element of $B^\times$ approximating $\beta_f=(\beta_v)$ at $\ell$ so that 
\[
 \cO_B(\fN_B)' \otimes \Z_{(\ell)}=(b^{-1} \cO_B(\fN_B) b) \otimes \Z_{(\ell)}.
\]
Then we may assume that 
\[
\phi'_B (x) = \phi_B (b x b^{-1}), \quad i'_v (x) = i_v ( \beta_v x \beta_v^{-1}).
\]
 The open compact subgroup $\cK'$ satisfies $\cK' = \beta_f^{-1} \cK \beta_f$. We now run through the same argument as above, defining 
 \[
  \xi_b [(h,g_f)]= [(h, g_f \beta_f)], \quad \tilde{\xi}_b [(v,g_f)]= [(\phi_B (b) \cdot v, g_f \beta_f)].
  \]
The result follows from observing that $\| \nu(b_\infty) \| \cdot \|\nu (\beta_f) \|$, while not necesarily $1$, is still an 
element in $R_{(\ell)}^\times$.   
\end{proof}

We can also define an invariant $q_B (\Pi) \in \C^\times /R^\times$ such that the class of 
$q_B (\Pi)$ in $\C^\times/ R_{(\ell)}^\times$ equals $q_B (\Pi,\ell)$. Indeed, 
pick the isomorphism $\phi_B$, the number field $L$ and the maximal order $\cO_B$ in $B$ 
such that 
\[
\phi_B (\cO_B) \subset \prod_\sigma \M_2 (\cO_{L}).
\]
Choose a pair $(\cO_B (\fN_B),o)$ consisting of an Eichler order and an orientation.
The constructions in \S \ref{ssec:rational-integral-structures} can be 
copied verbatim to give integral models for everything in sight over 
$\cO_L [\frac{1}{N(\Pi)}]$. (See Sec. \ref{par:patching}.)
By enlarging $L$ if need be, we can pick a section $s_B$ that is $\ell$-normalized 
at {\it all} rational primes that are prime to $N(\Pi)$ and then
define $q_B (\Pi)$ to equal $\llangle s_B, s_B \rrangle_{\cKt}$ for such a choice of $s_B$. 
This is an element of $\C^\times$ that maps to $q_B(\Pi, \ell)$ 
under the natural map $\C^\times \rightarrow \C^\times / R_{(\ell)}^\times$ for 
all $\ell$ such that $(\ell, N (\Pi))=1$. Since the map 
\[
\C^\times / R^\times \rightarrow \prod_{(\ell, N (\Pi)) =1} \C^\times / R_{(\ell)}^\times
\]
is injective, the class of $q_B (\Pi)$ in $\C^\times/ R^\times$ is well defined. 
This defines the invariants needed in the formulation of Conjecture \ref{conj:oldintro} of the introduction.

\section{Unitary and quaternionic unitary groups}
\label{sec:uqu}

In \S \ref{subsec:hsh}, we review the general theory of hermitian and skew-hermitian forms
over local fields and number fields. In \S \ref{subsec:key}, we describe the construction of a certain 
skew-hermitian space over a quaternion algebra (over a number field), which plays an important role throughout the paper, while 
in \S \ref{subsec:hasse} we review the connection between this construction and the failure of the Hasse principle for quaternionic skew-hermitian forms.

\subsection{Hermitian  and skew-hermitian spaces}
\label{subsec:hsh}

\subsubsection{Hermitian spaces}

Let $F$ be a field of characteristic zero and $E$ a quadratic extension of $F$, possibly split.  
Let $\VV$ be a right $E$-vector space of dimension $n$ (i.e., a free $E$-module of rank $n$), equipped with a 
Hermitian form 
\[
(\cdot, \cdot):\VV \times \VV \rar E.
\]
Such a form is linear in one variable and antilinear in the other, and 
we fix any one convention at this point. For example, if $(\cdot, \cdot)$ is antilinear in the first variable 
and linear in the second, then:
\[
(v \alpha, v' \beta) = \alpha^\rho (v,v') \beta, \quad (v,v')=(v',v)^\rho,
\]
where $\rho$ denotes the nontrivial involution of $E/F$.  

To such a $\VV$, one associates the following invariants: $\dim(\VV)=n$ and $\disc (\VV)\in F^\times/\N_{E/F} E^\times$, where
\[
\disc (\VV) = (-1)^{n(n-1)/2} \det\left( (v_i,v_j) \right),
\]
with $\{ v_i \}$ an $E$-basis for $\VV$. Since $(\cdot, \cdot)$ is Hermitian, $\disc(\VV)$ lies in $F^\times$
 and its class in $F^\times/\N_{E/F} E^\times$ is independent of the choice of basis. 

Let $\GU(\VV)$ denote the unitary similitude group of $\VV$. (Occasionally, we will write $\GU_E(\VV)$ for clarity.) This is an algebraic group over $F$ such that
for any $F$-algebra $R$, we have
\[
\GU(\VV) (R):= \{ g \in \GL(\VV \otimes R): \quad (gv,gv')= \nu(g)(v,v') \text{ for all }v,v' \text{ and } \nu(g)\in R^\times\}.
 \]

If $E=F\times F$, then $\GU(\VV) \simeq \GL_n \times \GL_1$. If $E$ is a field, the various possibilities for $\GU(\VV)$ are 
discussed below.

\paragraph{$p$-adic local fields}
Let $F$ be $p$-adic. 
As a Hermitian space, 
$\VV$ is determined up to isomorphism by its dimension and discriminant. Further, given any choice of dimension and 
discriminant, there is a space $\VV$ with these as its invariants. 
If $\dim(\VV)$ is odd, the group $\GU(\VV)$ is (up to isomorphism) independent of $\disc(\VV)$ and is quasi-split. 
If $\dim(\VV)$ is even, there are two posibilities for $\GU(\VV)$ up to isomorphism and 
$\GU(\VV)$ is quasi-split if and only if $\disc(\VV) =1$. 

\paragraph{Archimedean fields} Let $F=\R$ and $E=\C$. Then 
the form $(\cdot,\cdot)$ can be put into the diagonal form $(1,\ldots, 1, -1, \ldots, -1)$ which is called the signature of $\VV$
; we say $\VV$ is of type $(p,q)$ if the number of $1$s is $p$ and the number of $-1$s is $q$. 
Hermitian spaces are classified up to isomorphism by their signature (which determines both the dimension and discrminant) and we write $\GU(p,q)$ for the associated group.
The only isomorphisms between these groups are $\GU(p,q) \simeq \GU(q,p)$.

\paragraph{Number fields} Let $E/F$ be a quadratic extension of number fields. 
If $\VV$ is a Hermitian $E$-space, then for each place $v$ of $F$, one gets 
a local space $\VV_v$ which is a Hermitian space for $E_v/F_v$
and such that for almost all $v$, the discriminant of $\VV_v$ is $1$. The Hasse principle
says that $\VV$ is determined up to isomorphism by this collection of local spaces. 
Conversely, suppose we are given for each place $v$ a local space $\VV_v$ (of some fixed dimension $n$) such that 
almost all of the local discriminants are equal to $1$. The collection of 
local discriminants gives an element of 
$\A^\times_F/\N_{E/F} \A^\times_E$. Such a collection of local spaces comes from a global space
if and only if this element lies in the image of $F^\times$, i.e., is trivial in the quotient
$\A^\times_F/F^\times \N_{E/F} \A^\times_E$, which has order $2$.

\subsubsection{Skew-Hermitian spaces}
Let $E/F$ be a quadratic extension as in the beginning of the previous section. 
Skew-hermitian $E$-spaces are defined similarly to hermitian spaces but with the condition
\[
(v,v') = -(v',v)^\rho.
\]
We can go back and forth between hermitian and skew-hermitian spaces simply by multiplying 
the form by an element in $E^\times$ of trace zero. Indeed, pick a trace zero element $\i \in E^\times$. If $(\cdot,\cdot)$ is skew-hermitian form on $\VV$, the product $(\cdot,\cdot)':=\i \cdot (\cdot,\cdot)$ is hermitian. The group $\GU(\VV)$ is the same for both $(\cdot,\cdot)$ and $(\cdot,\cdot)'$. Thus the classification of 
skew-hermitian forms (and the corresponding groups) can be deduced from the hermitian case. 

 \subsubsection{Quaternionic hermitian spaces}
Let $F$ be a field and $B$ a quaternion algebra over $F$. Let $a \mapsto a^*$ denote the main involution on $B$. A $B$-Hermitian space is a right $B$-space $V$
equipped with a $B$-valued form
\[
\langle \cdot,\cdot \rangle : V \times V \rar B
\]
satisfying 
\[
\langle v\alpha, v' \beta \rangle = \alpha^* \langle v,v' \rangle \beta, \quad \langle v, v' \rangle = \langle v', v \rangle ^*,
\]
for $v,v' \in V$ and $\alpha,\beta \in B$. 

Let $\GU(V)$ denote the unitary similitude group of $V$. (Sometimes, we write $\GU_B(V)$ for clarity.) This is an algebraic group over $F$ such that
for any $F$-algebra $R$, we have
\[
\GU(V) (R):= \{ g \in \GL(V \otimes R): \quad \langle gv,gv' \rangle= \nu(g)\langle v,v'\rangle \text{ for all }v,v' \text{ and } \nu(g)\in R^\times\}.
 \]

If $B$ is split, there is a unique such space $V$ of any given dimension $n$ over $B$. The corresponding group 
$\GU(V)$ is identified with $\GSp(2n)$. 
If $B$ is nonsplit, the classification of such spaces over $p$-adic fields and number fields is recalled below. 

\paragraph{$p$-adic fields}
If $F$ is a $p$-adic field, there is a unique such space of any given dimension, up to isometry.
The corresponding group is the unique nontrivial inner form of $\GSp (2n)$.

\paragraph{Archimedean fields}
If $F=\R$, such spaces are classified by dimension and signature. If the signature is of type $(p,q)$, the associated 
group is denoted $\GSp(p,q)$. The only isomorphisms between these are $\GSp(p,q)\simeq \GSp(q,p)$. 

\paragraph{Global fields} 
The Hasse principle holds in this case, so a global $B$-hermitian space is determined up to isometry by 
the collection of corresponding local $B_v$-Hermitian spaces. Conversely, given any collection of 
$B_v$-hermitian spaces, there is a (unique) $B$-Hermitian space that gives rise to this local collection up to isometry.

\subsubsection{Quaternionic skew-hermitian spaces}
These are defined similarly to $B$-hermitian spaces but with the condition
\[
\langle v,v' \rangle = - \langle v',v \rangle^*.
\]
To such a space $V$ is associated the invariant $\det (V) \in F^\times/(F^\times)^2$ as follows. Pick a $B$-basis
$\{ v_i \}$ for $V$ and set 
\[
\det(V) = \nu_B (\langle v_i,v_j \rangle ).
\]
Here $\nu_B$ denotes the reduced norm. (Often, we will omit the subscript $B$ when the choice of 
quaternion algebra is clear.)
The group $\GU(V)$ is defined similarly as above. It is however not connected as an algebraic group.
We now recall the classification of such spaces and the associated groups. Note that if $B$ is split,
we can associate to $V$ a quadratic space $V^\dagger$ over $F$ of dimension $2n$ (where 
$n=\dim_B (V)$) and $\GU(V)\simeq \GO(V^\dagger)$. 

\paragraph{$p$-adic fields}
Let $F$ be $p$-adic. If $B$ is split, $V$ is determined by $\dim(V)$, $\det(V)$ and the Hasse invariant of $V^\dagger$. 
If $B$ is nonsplit, $V$ is determined by $\dim(V)$ and $\det(V)$. 

\paragraph{Archimedean fields}
If $F=\R$ and $B$ is split, $V$ is determined by the signature of $V^\dagger$. The group $\GU(V)$ is isomorphic to $\GO(p,q)$
where $(p,q)$ is the signature.
If $B$ is nonsplit, $V$ is determined just by $n=\dim_B (V)$. The group $\GU(V)$ is isomorphic to $\GO^*(2n)$.
If $F=\C$, then $B$ must be split and there is a unique skew-hermitian space of any given dimension. 
Then $\GU(V)\simeq \GO(2n,\C)$. 

\paragraph{Global fields}
Let $F$ be a number field. If $B$ is split, then the classification reduces to that for quadratic spaces via the assignment $V\mapsto V^\dagger$. 
In this case, the Hasse principle holds. 
If $B$ is nonsplit, then the Hasse principle does not hold. Let $\Sigma_B$ be 
the set of places $v$ where $B$ is ramified and let $s=|\Sigma_B|$. 
The space $V$ gives rise to a collection of local spaces and up to isometry
there are exactly $2^{s-2}$ global $B$-skew-hermitian spaces that give rise to the same 
set of local spaces. 
Conversely a collection of local $B_v$-skew-hermitian spaces $V_v$ arises from 
a global $B$-skew-hermitian space $V$ if and only if there exists a global 
element $d\in F^\times$ such that $\det(V_v)=d $ in $F_v^\times/(F_v^\times)^2$ 
for all $v$ and for almost all $v$, the Hasse invariant of $V_v^\dagger$ is trivial.

\subsection{The key constructions}
\label{subsec:key}

In this section, we assume 
that $B_1$ and $B_2$ are two quaternion algebras 
 over a number field $F$ and $E/F$ is a quadratic extension that 
embeds in both $B_1$ and $B_2$.
We will fix embeddings $E \hookrightarrow B_1$ and $E \hookrightarrow B_2$. Via these embeddings, $B_1$ and $B_2$ are hermitian spaces over $E$. Let $\tau_i$ and $\nu_i$ be respectively the reduced trace and norm on $B_i$. We think of $B_1$ and $B_2$ as right $E$-vector spaces, the Hermitian form being described below.
Write
$$ B_1 = E + E \j_1= E + \j_1 E ,\qquad B_2 = E + E \j_2= E + \j_2 E ,$$
where $\tau_1(\j_1) = \tau_2 (\j_2) =0$. We write $\pr_i$ for the projection $B_i \rightarrow E$ onto the ``first coordinate" and $*_i$ for the main involution on $B_i$. 
Then $B_i$ is a right Hermitian $E$-space, the form being given by:
$$ (x,y)_i = \pr_i (x^{*_i} y).$$
If $x=a+\j_i b$, $y= c+\j_i d$, then 
$$ (x,y)_i = (a + \j_i b, c+\j_i d)_i =  a^\rho c - J_i  b^\rho d ,$$
where $J_i := - \nu_i (\j_i) = \j_i^2$. 
This form satisfies the relations
$$ (x\alpha, y\beta)_i = \alpha^\rho (x,y)_i \beta, \qquad \text{for} \qquad \alpha,\beta \in E,$$
and
$$ (x,y)_i = (y,x)_i^\rho.$$

We note that $B_i^\times$ acts naturally on $B_i$ by left multiplication, and this action is $E$-linear. Further,
\begin{equation}
\label{eqn:B_i-unitary}
(\ba x, \ba y)_i = \nu_i (\ba) (x,y)_i
\end{equation}
for all $\ba \in B_i$. Thus $B_i^\times$ embeds naturally in $\GU_E (B_i)$. In fact, 
$$F^\times \backslash (B_i^\times \times E^\times)  \simeq \GU_E(B_i), $$
where $E^\times$ acts on $B_i$ by right multiplication, and we think of $F^\times$ as a subgroup of 
$B_i^\times \times E^\times $ via $\lambda \mapsto (\lambda^{-1}, \lambda)$. 

Consider the (right) $E$-vector space 
$$ V:= B_1 \otimes_E B_2.$$

\begin{rem}
 If $x\in B_1$, $y \in B_2$, $\alpha \in E$, then by definition,
$$ (x\otimes y )\alpha = x\alpha \otimes y = x \otimes y \alpha. $$
\end{rem}

 The $E$-vector space $V$ is equipped with a natural Hermitian form given by the tensor product $(\cdot,\cdot)_1 \otimes (\cdot,\cdot)_2$. We fix a nonzero element $\i \in E$ of trace $0$, and define $(\cdot , \cdot )$ on $V$ by
 \begin{equation}
 \label{eqn:()defn}
  (\cdot, \cdot) : = \i \cdot (\cdot,\cdot)_1 \otimes (\cdot,\cdot)_2.
  \end{equation}
Clearly, $(\cdot , \cdot )$ satisfies
$$ (x\alpha, y \beta) = \alpha^\rho (x,y) \beta , \qquad \text{for} \qquad \alpha,\beta \in E,$$
$$ ( x,y ) = - ( y,x )^\rho .$$
Thus $(\cdot, \cdot)$ is an $E$-skew-Hermitian form on $V$. 

It will be useful to write down the form $(\cdot, \cdot)$ explicitly in terms of coordinates with respect to a suitable $E$-basis. We pick the following (orthogonal) basis:
\begin{equation}
\label{eq:basisX}
 \e_1:= 1\otimes 1\qquad \e_2 := \j_1 \otimes 1 \qquad \e_3:= 1\otimes \j_2 \qquad \e_4:= \j_1 \otimes \j_2.
\end{equation}
In this basis,
$$( \e_1 a + \e_2 b + \e_3 c + \e_4 d, \e_1 a' + \e_2 b' + \e_3 c' + \e_4 d') = \i \cdot \left[ a^\rho a' - J_1 b^\rho b' - J_2 c^\rho c' + J_1 J_2 d^\rho d'\right] .$$

There is a natural map
$$ \GU_E (B_1) \times \GU_E (B_2) \rightarrow \GU_E(V),$$
given by the actions of $\GU_E(B_1)$ and $\GU_E(B_2)$ on the first and second component respectively of $V=B_1\otimes_E B_2$. The kernel of this map is 
$$ \mathrm{Z} := \left\{  ([\lambda_1, \alpha_1 ], [\lambda_2, \alpha_2] ) : \lambda_i \in F^\times, \alpha_i \in E^\times, \lambda_1\lambda_2 \alpha_1 \alpha_2 =1 \right\}. $$

Let $B:= B_1 \cdot B_2$ be the product in the Brauer group over $F$. Then $E$ embeds in $B$ as 
well, and we will fix an embedding $E \rightarrow B$. We may write 
$$B = E + E \j$$
where $\tau (\j)=0$ and $J: = - \nu (\j) = \j^2$ satisfies 
$$ J= J_1 J_2.$$
Here $\tau$ and $\nu$ are respectively the reduced trace and reduced norm on $B$.
We define a right action of $B$ on $V$ (extending the right $E$-action on $V$) by setting
\begin{eqnarray}
\label{eqn:jaction11} (1\otimes 1)  \cdot \j&=& \j_1 \otimes \j_2 \\
\label{eqn:jactionj1}(\j_1 \otimes 1)  \cdot \j &=& J_1 (1 \otimes \j_2) \\
\label{eqn:jaction1j} (1 \otimes \j_2)  \cdot \j&=& J_2 (\j_1 \otimes 1) \\
\label{eqn:jactionjj}(\j_1 \otimes \j_2)  \cdot \j&=& J_1 J_2 (1\otimes 1)
\end{eqnarray}
and requiring the right action by $\j$ on $V$ to be conjugate $E$-linear. (It is straightforward to check that this gives an action.)
Then $V$ is a free rank-$2$ right $B$-module. For example, a basis for $V$ as a right $B$-module is given either by 
$$ \{ 1\otimes 1, \j_1 \otimes 1 \}$$
or 
$$ \{ 1\otimes 1, 1 \otimes \j_2 \}.$$
Further, one checks that (equivalently)
\begin{eqnarray}
\label{eqn:jaction1} (x\j, y ) &=& (y\j, x)\\
\label{eqn:jaction2} (x\j, y\j)^\rho &=& - J (x,y) 
\end{eqnarray}
for all $x,y \in V$.

We will now show that there is a $B$-valued skew-Hermitian form $\langle \cdot, \cdot \rangle$ on $V$ such that 
$$ \pr \circ \langle \cdot, \cdot \rangle = (\cdot, \cdot).$$
Indeed, define
\begin{equation}
\label{eqn:<>defn}
 \langle x,y \rangle = (x,y) - \frac{1}{J} \cdot \j \cdot (x\j, y). 
\end{equation}
It may be checked (using (\ref{eqn:jaction1}) and (\ref{eqn:jaction2})) that 
\begin{eqnarray}
\label{eqn:skewherm1} \langle x\ba, y \bb\rangle &=& \ba^* \langle x,y\rangle \bb, \\
\label{eqn:skewherm2}  \langle x,y\rangle &=& -  \langle y,x \rangle^*,
\end{eqnarray}
for all $\ba,\bb \in B$. 
For future reference, we write down the matrix of inner products $\langle \e_i, \e_j \rangle$. 

$$
\begin{array}{|c|c|c|c|c|}
\hline
\langle \cdot, \cdot \rangle & \e_1& \e_2 & \e_3 & \e_4 \\
\hline
\e_1 & \i & 0&0 & \i \j \\
\hline
\e_2 & 0 &  -J_1 \i& - \i \j & 0 \\
\hline
\e_3 & 0 & -\i \j& -J_2 \i & 0 \\
\hline
\e_4 & \i \j & 0&  0& J \i\\
\hline
\end{array}
$$

\noindent From the table, we see that $\det(V) = \nu (-J_1u) = 1$ in $F^\times/(F^\times)^2$.

Notice that $B_1^\times$ and $B_2^\times$ act on $V$ by left multiplication on the first and second factor of $V=B_1 \otimes _E B_2$ respectively. These actions are (right) $E$-linear and in fact (right) $B$-linear as is easily checked using (\ref{eqn:jaction11}) through (\ref{eqn:jactionjj}). Further, it follows from (\ref{eqn:B_i-unitary}), (\ref{eqn:()defn}), and (\ref{eqn:<>defn}) that 
\begin{equation}
\langle \ba_i \cdot x, \ba_i \cdot y\rangle = \nu_i (\ba_i) \langle x,y \rangle 
\end{equation}
for $\ba_i \in B_i^\times$. Clearly, the actions of $B_1^\times $ and $B_2^\times$ commute, hence one gets an embedding
\begin{equation}
\label{eqn:qu-gubv}
 F^\times \backslash (B_1 ^\times \times B_2^\times ) \hookrightarrow \GU_B (V), 
 \end{equation}
the quaternionic unitary group of the $B$-skew-Hermitian form $\langle \cdot , \cdot \rangle$. 
(Here we think of $F^\times$ as embedded antidiagonally in $B_1^\times \times B_2^\times$ via $\lambda \mapsto (\lambda^{-1}, \lambda)$.)
Then (\ref{eqn:qu-gubv}) gives 
an isomorphism
$$ F^\times \backslash (B_1 ^\times \times B_2^\times ) \simeq \GU_B (V)^0,$$
where $\GU_B(V)^0$ denotes the identity component of $\GU_B (V)$. 
Further, one has a commutative diagram
$$ \xymatrix{
F^\times \backslash (B_1^\times \times B_2^\times) \ar[rr]^{\simeq} \ar@{^{ (}->}[d] & & \GU_B(V)^0 \ar@{^{ (}->}[d] \\
\mathrm{Z} \backslash (\GU_E(B_1) \times \GU_E (B_2)) \ar[rr]  & & \GU_E(V)  \\
}
$$
where the vertical map
$$ F^\times \backslash (B_1^\times \times B_2^\times) \hookrightarrow \mathrm{Z} \backslash (\GU_E(B_1) \times \GU_E(B_2)) $$ 
is 
$$ [b_1, b_2] \mapsto \left[ [(b_1,1)], [ (b_2 , 1)]\right],$$
and the vertical map $\GU_B(V)^0 \hookrightarrow \GU_E(V)$ is just the natural inclusion.

Let $\V=\mathrm{Res}_{E/F} (V)$, that is $\V$ is just $V$ thought of as an $F$-space, with non-degenerate symplectic form
$$ \llangle v_1,v_2 \rrangle:= \frac{1}{2} \tr_{E/F} (v_1,v_2).$$
Let
$$ \X= F\e_1 \oplus F\e_2 \oplus F\e_3 \oplus F\e_4 \subset \V.$$
Since $\X$ is maximal isotropic for $\llangle \cdot,\cdot \rrangle$, there exists a unique maximal isotropic subspace $\Y$ in $\V$, such that 
$\V=\X\oplus \Y$. Let  $(\e_1^*, \ldots, \e_4^*)$ be an $F$-basis for $\Y$ that is dual to $(\e_1, \ldots,\e_4)$. We can identify this basis precisely: letting $\i^2=u \in F^\times$, we have
\begin{equation}
\label{eq:basisY}
  \e_1^*= \frac{1}{u} \e_1 \i, \quad \e_2^*= -\frac{1}{J_1 u} \e_2\i, \quad \e_3^*= -\frac{1}{J_2 u}\e_3 \i, \quad \e_4^* = \frac{1}{Ju} \e_4 \i. 
\end{equation}

\subsubsection{The unitary group $\U_E (V)$ at infinite places}

This section will not be relevant in this paper. We simply record for future use the 
isomorphism class of the unitary group $\U_E(V)$ at the infinite places $v$ 
assuming that $F_v =\R$ and $E_v=\C$. 
The skew-hermitian form is given by  
\[
\i \cdot \left[ a^\rho a' - J_1 b^\rho b' - J_2 c^\rho c' + J d^\rho d'\right] 
\]
Thus we have the following table which summarizes the relation between 
the ramification of $B_1$ and $B_2$ at $v$ and the isomorphism class of $\U_E(V)$. 

\begin{center}
{\renewcommand{\arraystretch}{1.2}
\begin{tabular}{|c | c | c |}
\hline
$B_1, B_2$ & $J_1, J_2 $& $\U_E(V)$\\
\hline
split, split & $J_1>0,J_2 >0 $ &   $\U (2,2)$ \\
\hline
ramified, split & $J_1 <0 ,J_2 >0 $ &    $\U (2,2)$ \\
\hline
split, ramified & $J_1 >0 , J_2 <0 $ &    $\U (2,2)$ \\
\hline
ramified, ramified & $J_1 <0,J_2<0$ &    $\U (4,0)$ \\
\hline
\end{tabular}
}
\end{center}

\subsection{The failure of the Hasse principle}
\label{subsec:hasse}
The constructions above illustrate the failure of the Hasse principle for skew-hermitian $B$-spaces.
Indeed, let us fix a $B$ and consider pairs $(B_1,B_2)$ such that $\Sigma_{B_1} \cap \Sigma_{B_2} =\Sigma_0$,
where $\Sigma_0$ is some fixed set of places not intersecting $\Sigma_B$.  
Let $E/F$ be a quadratic extension that is nonsplit at all the places in $\Sigma_B \cup \Sigma_0$. 
Then $E$ embeds in $B$,$B_1$,$B_2$, so the constructions from the previous section apply. 
The various spaces $V$ obtained by taking different choices of $B_1$ and $B_2$ are all locally isometric,
since all of them have $\det(V)=1$ and the Hasse invariant of $V_v^\dagger$ is independent of $V$ for $v \not \in \Sigma_B$. 
Since interchanging $B_1$ and $B_2$ gives an isometric global space, the number of different global spaces obtained in this way (up to isometry) is exactly $2^{s-2}$, where $s=|\Sigma_B|$. 

Conversely, suppose that we are given a quaternion {\it division} algebra $B$ and a collection of local $B_v$-skew-hermitian spaces $V_v$ such that $\det(V_v)=1$ for all $v$ and the Hasse invariant of $V_v^\dagger$ (for $v\not \in \Sigma_B$) is 
$1$ for all but finitely many $v$. Then there are up to isometry $2^{s-2}$ different global skew-hermitian spaces that give rise to this collection of local spaces, and all of them may be obtained by the construction above, by suitably choosing $B_1$, $B_2$ and $\i$.

\section{Theta correspondences}
\label{sec:theta}

\subsection{Preliminaries}
\label{subsec:theta-prelim}

\subsubsection{Weil indices}

Let $F$ be a local field of characteristic not $2$ and fix a non-trivial additive character $\psi$ of $F$.
For a non-degenerate symmetric $F$-bilinear form $q$,
we let $\gamma_F(\psi \circ q) \in \mu_8$ denote the Weil index
associated to the character of second degree $x \mapsto \psi(q(x,x))$
(see \cite{weil}, \cite[Appendix]{rangarao}).
When $q(x,y) = xy$ for $x, y \in F$, we write $\gamma_F(\psi) := \gamma_F(\psi \circ q)$.
Put 
\[
 \gamma_F(a, \psi) := \frac{\gamma_F(a \psi)}{\gamma_F(\psi)}
\]
for $a \in F^{\times}$,
where $(a \psi)(x) := \psi(ax)$ for $x \in F$.
Then we have
\begin{align*}
 \gamma_F(ab^2, \psi) & = \gamma_F(a, \psi), \\
 \gamma_F(a b, \psi) & = \gamma_F(a, \psi) \cdot \gamma_F(b, \psi)
 \cdot (a,b)_F, \\
 \gamma_F(a, b \psi) & = \gamma_F(a,\psi) \cdot (a,b)_F, \\
 \gamma_F(a, \psi)^2 & = (-1, a)_F, \\
 \gamma_F(a, \psi)^4 & = 1, \\
 \gamma_F(\psi)^2 & = \gamma_F(-1, \psi)^{-1}, \\
 \gamma_F(\psi)^8 & = 1
\end{align*}
for $a,b \in F^{\times}$ (see \cite[p.~367]{rangarao}).
Here $(\cdot,\cdot)_F$ is the quadratic Hilbert symbol of $F$.

Let $q$ be a non-degenerate symmetric $F$-bilinear form.
Let $\det q \in F^{\times} / (F^{\times})^2$ and $h_F(q) \in \{ \pm 1\}$
denote the determinant and the Hasse invariant of $q$.
For example, when  
\[
 q(x,y) = a_1 x_1 y_1 + \cdots + a_m x_m y_m
\]
for $x=(x_1, \ldots, x_m)$, $y = (y_1, \ldots, y_m) \in F^m$, then
\[
 \det q = \prod_{1 \le i \le m} a_i, \qquad
 h_F(q) = \prod_{1 \le i<j \le m} (a_i, a_j)_F.
\]
Moreover, we have
\begin{equation}
\label{eqn:weil_index}
 \gamma_F(\psi \circ q) = 
 \gamma_F(\psi)^m \cdot \gamma_F(\det q, \psi) \cdot h_F(q)
\end{equation}
(see \cite[pp.~367--368]{rangarao}).

\subsubsection{Leray invariants}

Let $\V$ be a $2n$-dimensional $F$-vector space with a non-degenerate symplectic form $\llangle \cdot, \cdot \rrangle : \V \times \V \rightarrow F$.
For maximal isotropic subspaces $\Y$, $\Y'$, $\Y''$ of $\V$,
the Leray invariant $q(\Y,\Y',\Y'')$ is a non-degenerate symmetric $F$-bilinear form defined as follows.
(See also Definitions 2.4 and 2.10 of \cite{rangarao}.)

Suppose first that $\Y$, $\Y'$, $\Y''$ are pairwise transverse.
Let $P_{\Y}$ be the maximal parabolic subgroup of $\Sp(\V)$ stabilizing $\Y$
and let $N_{\Y}$ be its unipotent radical.
By Lemma 2.3 of \cite{rangarao}, there exists a unique $g \in N_{\Y}$ such that $\Y' g = \Y''$.
We write
\[
 g = \begin{pmatrix} 1 & b \\ & 1 \end{pmatrix},
 \qquad b \in \Hom(\Y', \Y)
\]
with respect to the complete polarization $\V = \Y' + \Y$.
Then $q = q(\Y,\Y',\Y'')$ is a symmetric bilinear form on $\Y'$ defined by
\[
 q(x', y') : = \llangle x', y' b \rrangle.
\]

In general, we consider a symplectic space $\V_{\R} := \R^{\perp} / \R$,
where 
\[
 \R := (\Y \cap \Y') + (\Y' \cap \Y'') + (\Y'' \cap \Y),
\]
and maximal isotropic subspaces
\[
 \Y_{\R}:= (\Y \cap \R^\perp) / \R, \qquad
 \Y'_{\R}:= (\Y' \cap \R^\perp) / \R, \qquad
 \Y''_{\R}:= (\Y'' \cap \R^\perp) / \R
\]
of $\V_{\R}$.
By Lemma 2.9 of \cite{rangarao}, $\Y_{\R}$, $\Y'_{\R}$, $\Y''_{\R}$ are pairwise transverse.
We put
\[
 q(\Y,\Y',\Y'') := q(\Y_{\R},\Y'_{\R},\Y''_{\R}).
\]

By Theorem 2.11 of \cite{rangarao}, we have
\[
 q(\Y g,\Y' g,\Y'' g) = q(\Y,\Y',\Y'')  
\]
for $g \in \Sp(\V)$.

\subsection{\texorpdfstring{Weil representation for $\Mp$}{Weil representation for Mp} } 
\label{subsec:theta-weil}

\subsubsection{Heisenberg group, Heisenberg representations}

Let $F$ be a local field of characteristic not $2$.
For simplicity, we assume that $F$ is non-archimedean.

Let $\V$ be a finite dimensional vector space over $F$ equipped with 
a non-degenerate symplectic form
\[
 \llangle \cdot, \cdot \rrangle : \V \times \V \longrightarrow F.
\]
The Heisenberg group $H(\V)$ is defined by
$$ H(\V):= \V \oplus F$$
as a set, with group law
$$ (v_1, z_1) \cdot (v_2, z_2) = \left(v_1+v_2, z_1 + z_2 + \frac{1}{2} \llangle v_1, v_2 \rrangle \right).$$
The center of $H(\V)$ is $F$. 

Let $\psi$ be a nontrivial additive character of $F$. Then by the Stone--von Neumann theorem, $H(\V)$ admits a unique (up to isomorphism) irreducible representation $\rho_\psi$ on which $F$ acts via $\psi$. This representation can be realized as follows.
Fix a complete polarization
$$\V=\X \oplus \Y,$$
i.e., $\X$ and $\Y$ are maximal isotropic subspaces of $\V$.
We construct a character $\psi_{\Y}$ of $H(\Y)=\Y\oplus F$ by setting 
$$\psi_\Y (y,z) = \psi (z).$$
Define 
$$S_\Y := \Ind_{H(\Y)}^{H(\V)} \psi_\Y.$$
i.e. 
$ S_\Y$ is the space of functions $f: H(\V) \rightarrow \C$ satisfying
\begin{enumerate}
 \item $f(\tilde{y}\tilde{v}) = \psi_\Y (\tilde{y}) f(\tilde{v})$ for $\tilde{y} \in H(\Y)$, $\tilde{v} \in H(\V)$.
 \item  $f$ is smooth i.e. there exists an open compact subgroup (a lattice !) $L$ in $\V$ such that 
$$ f( \tilde{v} \ell) = f(\tilde{v}) \quad \text{for all} \ \ell \in L\subset \V \subset H(\V).$$
\end{enumerate}
Then $H(\V)$ acts on $S_\Y$ on the right naturally. 
We can identify $S_\Y$ with $\SS (\X)$, the Schwartz space of locally constant functions with compact support on $\X$, via the restriction of functions to $\X$. 
\subsubsection{Metaplectic group}
\label{sssec:theta-metaplectic}

Let $\Sp(\V)$ be the symplectic group of $\V$.
Following Weil, we let $\Sp(\V)$ act on $\V$ on the right.
Recall that $\Sp(\V)$ acts on $H(\V)$ by
\[
 (v, z)^g :=  (vg, z).
\]

Let $\widetilde{\Sp}(\V)$ be the unique non-trivial $2$-fold central extension
of $\Sp(\V)$.
The metaplectic group $\Mp(\V)$ is a central extension 
\[
 1 \longrightarrow \C^1 \longrightarrow \Mp(\V)
 \longrightarrow \Sp(\V) \longrightarrow 1
\]
defined by
\[
 \Mp(\V) := \widetilde{\Sp}(\V) \times_{\{ \pm 1 \}} \C^1.
\]

\begin{lem}
\label{lem:aut_Mp_triv}
Any automorphism of $\Mp(\V)$ which lifts the identity map of $\Sp(\V)$
and which restricts to the identity map of $\C^1$
must be the identity map of $\Mp(\V)$.
\end{lem}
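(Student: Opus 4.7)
The plan is to measure how far the automorphism $\Phi$ is from the identity by a single continuous function $\chi : \Sp(\V) \to \C^1$ and then show that this $\chi$ must be trivial because $\Sp(\V)$ is a perfect group.

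First, I would observe that for any $\tilde g \in \Mp(\V)$, the element $\Phi(\tilde g)$ has the same image in $\Sp(\V)$ as $\tilde g$ (since $\Phi$ lifts the identity on $\Sp(\V)$), so the two lifts differ by a central element:
\[
\Phi(\tilde g) = \chi(\tilde g) \cdot \tilde g, \qquad \chi(\tilde g) \in \C^1.
\]
Next, using that $\Phi$ restricts to the identity on $\C^1$, for any $z \in \C^1$ I get $\Phi(z \tilde g) = z \Phi(\tilde g)$, so $\chi(z \tilde g) = \chi(\tilde g)$. Hence $\chi$ factors through the projection $\Mp(\V) \to \Sp(\V)$ and defines a function $\chi : \Sp(\V) \to \C^1$.

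Then I would verify that $\chi$ is a group homomorphism. Writing two elements $\tilde g_1, \tilde g_2$ and using that $\Phi$ is a homomorphism together with the centrality of $\C^1$,
\[
\chi(g_1 g_2)\, \tilde g_1 \tilde g_2 = \Phi(\tilde g_1 \tilde g_2) = \Phi(\tilde g_1)\Phi(\tilde g_2) = \chi(g_1)\chi(g_2)\, \tilde g_1 \tilde g_2,
\]
so $\chi$ is a homomorphism $\Sp(\V) \to \C^1$.

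Finally, I would invoke the fact that $\Sp(\V)$ is a perfect group (for any nonzero symplectic space over $F$, $\Sp(\V)$ equals its own commutator subgroup; the trivial case $\V=0$ is vacuous). Since $\C^1$ is abelian, any homomorphism $\Sp(\V) \to \C^1$ is trivial, hence $\chi \equiv 1$ and $\Phi = \mathrm{id}_{\Mp(\V)}$. The only step requiring a nontrivial input is this last one, the perfectness of $\Sp(\V)$, but this is classical and the other steps are purely formal manipulations with the central extension.
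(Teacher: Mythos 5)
Your argument is correct and is essentially the same as the paper's: both define the discrepancy function $\chi(\tilde g) = \Phi(\tilde g)\tilde g^{-1}$, show it descends to a character of $\Sp(\V)$, and kill it by perfectness of $\Sp(\V)$. The only cosmetic difference is that the paper phrases $\kappa$ as a character of $\Mp(\V)$ trivial on $\C^1$ rather than first quotienting; the substance is identical.
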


\begin{proof}
Let $p: \Mp(\V) \rightarrow \Sp(\V)$ be the projection.
Let $f: \Mp(\V) \rightarrow \Mp(\V)$ be such an automorphism.
Since $p(f(g)) \cdot p(g)^{-1} = 1$ for $g \in \Mp(\V)$, 
there exists a character $\kappa : \Mp(\V) \rightarrow \C^1$ such that
$f(g) \cdot g^{-1} = \kappa(g)$.
Since $f(z) = z$ for $z \in \C^1$, $\kappa$ is trivial on $\C^1$,
and hence induces a character of $\Sp(\V)$.
Since $[\Sp(\V), \Sp(\V)] = \Sp(\V)$, this character must be trivial.
\end{proof}

One can realize $\Mp(\V)$ explicitly as follows.
Put 
\[
 z_{\Y}(g_1, g_2) = \gamma_F(\frac{1}{2} \psi \circ q(\Y, \Y g_2^{-1}, \Y g_1)) 
\]
for $g_1, g_2 \in \Sp(\V)$.
By Theorem 4.1 of \cite{rangarao}, $z_{\Y}$ is a $2$-cocycle,
and the group
\[
 \Mp(\V)_{\Y} := \Sp(\V) \times \C^1
\]
with group law
\[
 (g_1, z_1) \cdot (g_2, z_2) = (g_1 g_2, z_1 z_2 \cdot z_{\Y}(g_1, g_2))
\]
is isomorphic to $\Mp(\V)$.
Moreover, by Lemma \ref{lem:aut_Mp_triv}, this isomorphism is canonical.
If there is no confusion, we identify $\Mp(\V)_{\Y}$ with $\Mp(\V)$.

Let $\V = \X' \oplus \Y'$ be another complete polarization.

\begin{lem}
\label{lem:compare-2cocycles}
We have
\[
 z_{\Y'}(g_1, g_2) = 
 \lambda(g_1 g_2) \lambda(g_1)^{-1} \lambda(g_2)^{-1}
 \cdot z_{\Y}(g_1, g_2),
\]
where $\lambda: \Sp(\V) \rightarrow \C^1$ is given by
\[
 \lambda(g) :=
 \gamma_F(\frac{1}{2} \psi \circ q(\Y, \Y' g^{-1}, \Y'))
 \cdot \gamma_F(\frac{1}{2} \psi \circ q(\Y, \Y', \Y g)).
\]
In particular, the bijection
\begin{align*}
 \Mp(\V)_{\Y} & \longrightarrow \Mp(\V)_{\Y'} \\
 (g, z) & \longmapsto (g, z \cdot \lambda(g))
\end{align*}
is an isomorphism.
\end{lem}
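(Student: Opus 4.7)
The ``In particular'' clause is formal: the map $(g,z)\mapsto (g, z\lambda(g))$ is visibly a bijection, and a direct check of multiplication in $\Mp(\V)_{\Y'}$ shows it intertwines the two group structures precisely when
\[
z_{\Y'}(g_1,g_2) = \lambda(g_1 g_2)\lambda(g_1)^{-1}\lambda(g_2)^{-1}\cdot z_\Y(g_1,g_2)
\]
holds. So the substance of the lemma is the explicit coboundary identity for $z_{\Y'}/z_\Y$, and we now outline its proof.

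Both $z_\Y$ and $z_{\Y'}$ are $\C^1$-valued $2$-cocycles on $\Sp(\V)$ realizing the same central extension $\Mp(\V)$ (by Lemma \ref{lem:aut_Mp_triv} together with the uniqueness of the nontrivial topological double cover $\widetilde{\Sp}(\V)$). Hence $z_{\Y'}/z_\Y$ is automatically a coboundary; the content of the lemma is the explicit identification of the trivializing $1$-cochain with $\lambda$. We establish this by direct computation, following Rao \cite{rangarao}.

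The main tool is Rao's four-term cocycle identity for Leray invariants: for any four maximal isotropic subspaces $\Y_1,\Y_2,\Y_3,\Y_4$ of $\V$,
\[
q(\Y_2,\Y_3,\Y_4)\perp q(\Y_1,\Y_2,\Y_4) \;=\; q(\Y_1,\Y_3,\Y_4)\perp q(\Y_1,\Y_2,\Y_3)
\]
in the Witt group $W(F)$. Applying formula \eqref{eqn:weil_index} converts this into a multiplicative identity among the corresponding Weil indices, up to correction terms involving $\gamma_F(\det q,\psi)$ and Hasse invariants $h_F(q)$, which ultimately recombine into Hilbert-symbol factors. The ratio $z_{\Y'}(g_1,g_2)/z_\Y(g_1,g_2)$ is a quotient of two $\gamma_F$-factors attached to the triples $(\Y',\Y' g_2^{-1},\Y' g_1)$ and $(\Y,\Y g_2^{-1},\Y g_1)$, while $\lambda(g_1 g_2)\lambda(g_1)^{-1}\lambda(g_2)^{-1}$ unfolds into six $\gamma_F$-factors attached to triples of the shape $(\Y,\Y'g^{-1},\Y')$ and $(\Y,\Y',\Y g)$ with $g\in\{g_1,g_2,g_1 g_2\}$. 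The plan is to apply Rao's four-term identity twice --- once to a quadruple built from $\{\Y,\Y',\Y g_2^{-1},\Y' g_2^{-1}\}$ and once to an appropriate $g_1$-translate involving $\{\Y,\Y',\Y g_1,\Y' g_1\}$ --- and to combine them using the $\Sp(\V)$-invariance $q(\Y_1 g,\Y_2 g,\Y_3 g) = q(\Y_1,\Y_2,\Y_3)$ (\cite{rangarao}, Theorem 2.11) in order to align arguments and reduce the required eight-term multiplicative identity to a tautology.

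The principal obstacle is the combinatorial bookkeeping needed to match the eight Weil-index factors on each side, together with the careful tracking of the $\det q$ and Hasse-invariant corrections from \eqref{eqn:weil_index}: one must verify that these error terms collapse to trivial Hilbert-symbol contributions in the end. Once this algebra is carried out as in Rao \cite{rangarao}, the main formula, and hence the ``In particular'' statement, follows.
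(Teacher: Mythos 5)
The paper's proof is a one-line citation to Kudla \cite[Lemma~4.2]{kudla-splitting}, and your sketch is an outline of exactly the argument that citation contains, so the strategy is the right one. Your verification of the ``In particular'' clause is correct and complete. But two things are worth flagging.

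First, the substantive part of your proposal is a plan, not a proof: you identify that the identity to be established is an eight-term relation for the Leray invariants and that the four-term cocycle relation
\[
q(\Y_2,\Y_3,\Y_4) - q(\Y_1,\Y_3,\Y_4) + q(\Y_1,\Y_2,\Y_4) - q(\Y_1,\Y_2,\Y_3) = 0
\]
is the engine, but you do not actually carry out the reduction; you gesture at ``apply the identity twice'' with two named quadruples without checking that this particular choice works, and you concede the bookkeeping is the ``principal obstacle'' without resolving it. Since that bookkeeping \emph{is} the content of the lemma, as written this is closer to a research plan than a proof; to make it complete you would need to either finish the computation or, as the paper does, invoke \cite[Lemma~4.2]{kudla-splitting} directly.

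Second, the worry about ``correction terms involving $\gamma_F(\det q,\psi)$ and Hasse invariants $h_F(q)$'' is a false alarm, and it suggests a small misconception. The Leray cocycle relation is an identity in the Witt group of $F$, and $q\mapsto \gamma_F(\tfrac12\psi\circ q)$ is a \emph{homomorphism} from the Witt group to $\mu_8$: it is additive under orthogonal direct sum by definition, and it kills hyperbolic planes because $\gamma_F(\psi)^2\cdot\gamma_F(-1,\psi)=1$. Consequently the four-term additive identity passes immediately to a four-term multiplicative identity for the Weil indices, with no residual $\det q$ or Hasse-invariant contributions to track. Formula \eqref{eqn:weil_index} is a computational device for evaluating individual Weil indices, not something that introduces error terms into the cocycle relation.
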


\begin{proof}
See Lemma 4.2 of \cite{kudla-splitting}.
\end{proof}

Suppose that $\V = \V_1 \oplus \V_2$, where each $\V_i$ is a non-degenerate symplectic subspace.
One can lift the natural embedding $\Sp(\V_1) \times \Sp(\V_2) \hookrightarrow \Sp(\V)$ to a homomorphism
\[
 \Mp(\V_1) \times \Mp(\V_2) \longrightarrow \Mp(\V).
\]
If $\V_i = \X_i \oplus \Y_i$ is a complete polarization and 
\[
 \X = \X_1 \oplus \X_2, \qquad \Y = \Y_1 \oplus \Y_2,
\]
then this homomorphism is given by 
\begin{align*}
 \Mp(\V_1)_{\Y_1} \times \Mp(\V_2)_{\Y_2} & \longrightarrow \Mp(\V)_{\Y}, \\
 \left((g_1, z_1), (g_2, z_2) \right)
 & \longmapsto (g_1 g_2, z_1 z_2)
\end{align*}
i.e., we have
\[
 z_{\Y_1}(g_1, g_1') \cdot  z_{\Y_2}(g_2, g_2') = z_{\Y}(g_1 g_2, g_1' g_2')
\]
for $g_i, g_i' \in \Sp(\V_i)$ (see Theorem 4.1 of \cite{rangarao}).

Let $L$ be a self-dual lattice in $\V$ and 
let $K$ be the stabiliser of $L$ in $\Sp(\V)$.
If the \emph{residual} characteristic of $F$ is not $2$, then there exists a splitting 
\[
 \xymatrix{
  & \Mp(\V) \ar@{->}[d] \\
  K \ar@{->}[r] \ar@{->}[ur] & \Sp(\V)}.
\]
Moreover, if the residue field of $F$ has at least four elements, 
then $[K, K] = K$
(see Lemma 11.1 of \cite{moore}),
and hence such a splitting is unique.
In the next section, we shall describe this splitting by using the Schr\"odinger model.

\subsubsection{Weil representation, Schr\"odinger model}
\label{sssec:theta-weil}

Recall that $\rho_{\psi}$ is the unique (up to isomorphism) 
irreducible smooth representation of $H(\V)$ with central character $\psi$.
Let $S$ be the underlying space of $\rho_{\psi}$.
The Weil representation $\omega_{\psi}$ of $\Mp(\V)$ on $S$
is a smooth representation characterized by the following properties:
\begin{itemize}
 \item $\rho_{\psi}(h^g) = \omega_{\psi}(g)^{-1} \rho_{\psi}(h) \omega_{\psi}(g)$ for all $h \in H(\V)$ and $g \in \Mp(\V)$.
 \item $\omega_{\psi}(z) = z \cdot \id_S$ for all $z \in \C^1$.
\end{itemize}

One can realize $\omega_{\psi}$ explicitly as follows.
We regard $\V = F^{2n}$ as the space of row vectors.
Fix a complete polarization $\V = \X \oplus \Y$.
Choose a basis $\e_1, \ldots, \e_n, \e_1^*, \ldots, \e_n^*$ of $\V$ such that
\[
 \X = F \e_1 + \cdots + F \e_n, \qquad 
 \Y = F \e_1^* + \cdots + F \e_n^*, \qquad 
 \llangle \e_i, \e_j^* \rrangle = \delta_{ij}.
\]
Then we have
\[
 \Sp(\V) = \left\{ g \in \GL_{2n}(F) \, | \,
 g \begin{pmatrix} & \1_n \\ -\1_n & \end{pmatrix}
 {}^t g = \begin{pmatrix} & \1_n \\ -\1_n & \end{pmatrix} \right\}.
\]
The Weil representation $\omega_{\psi}$ of $\Mp(\V)_{\Y}$ on the Schwartz space $\SS(\X)$ is given as follows:
\begin{align*}
 \omega_{\psi} \left( \begin{pmatrix} a & \\ & {}^t a^{-1} \end{pmatrix}, z \right) \varphi(x) & = 
 z \cdot |\det a|^{1/2} \cdot \varphi(xa) \\
 \omega_{\psi} \left( \begin{pmatrix} \1_n & b \\ & \1_n \end{pmatrix}, z \right) \varphi(x)
 & = z \cdot \psi \left(\frac{1}{2} x b {}^t x \right) \cdot \varphi(x) \\
 \omega_{\psi} \left( \begin{pmatrix} & \1_n \\ -\1_n \end{pmatrix}, z \right) \varphi(x)
 & = z \cdot \int_{F^n} \varphi(y) \psi(x{}^t y) \, dy.
\end{align*}
for $\varphi \in \SS(\X)$, $x \in \X \cong F^n$,
$a \in \GL(\X) \cong \GL_n(F)$,
$b \in \Hom(\X, \Y) \cong \M_n(F)$ with ${}^t b = b$,
and $z \in \C^1$.
More generally, for $(g, z) \in \Mp(\V)_{\Y}$ with $g = \smat{a}{b}{c}{d} \in \Sp(\V)$, we have
\[
 \omega_{\psi}(g, z) \varphi(x) = z \cdot 
 \int_{F^n / \ker(c)}
 \psi \left( \frac{1}{2}(xa, xb) + (xb, yc) + \frac{1}{2}(yc, yd) \right)
 \varphi(xa + yc) \, d \mu_g(y),
\]
where $(x, y) = x {}^t y$ for row vectors $x, y \in F^n$, and the measure $d \mu_g(y)$ on $F^n / \ker(c)$ is normalized so that this operator is unitary (see \cite[Proposition 2.3]{kudla-note}).
In particular, if $\det c \ne 0$, then $\omega_{\psi}(g, 1) \varphi(x)$ is equal to 
\begin{align*}
 & \int_{F^n}
 \psi \left( \frac{1}{2}(xa, xb) + (xb, yc) + \frac{1}{2}(yc, yd) \right)
 \varphi(xa + yc) \, d \mu_g(y) \\
 & = |\det c|^{-1} \cdot \psi \left( \frac{1}{2}(xa, xb) \right)
 \cdot \int_{F^n}
 \psi \left( (xb, y - xa) + \frac{1}{2}(y - xa, yc^{-1}d - xac^{-1} d) \right)
 \varphi(y)  \, d \mu_g(y) \\
 & = |\det c|^{-1} \cdot \psi \left( \frac{1}{2}(xa, x(ac^{-1}d - b)) \right) 
 \cdot \int_{F^n}
 \psi \left( (y, x(b - a c^{-1} d)) + \frac{1}{2}(y, yc^{-1}d) \right)
 \varphi(y)  \, d \mu_g(y) \\
 & = |\det c|^{-1} \cdot \psi \left( \frac{1}{2}(xa, x {}^t c^{-1}) \right)
 \cdot \int_{F^n}
 \psi \left( - (y, x {}^t c^{-1}) + \frac{1}{2}(y, yc^{-1}d) \right)
 \varphi(y)  \, d \mu_g(y) \\
 & = |\det c|^{-1/2} \cdot \psi \left( \frac{1}{2}(xa c^{-1}, x) \right)
 \cdot \int_{F^n}
 \psi \left( - (x {}^t c^{-1}, y) + \frac{1}{2}(yc^{-1}d, y) \right)
 \varphi(y)  \, d y,
\end{align*}
where $dy$ is the self-dual Haar measure on $F^n$ with respect to the pairing $\psi \circ (\cdot,\cdot)$.

If the residual characteristic of $F$ is not $2$,
let $K$ be the stabilizer of the self-dual lattice
\[
 \o \e_1 + \cdots + \o \e_n + \o \e_1^* + \cdots + \o \e_n^*.
\]
Then the splitting $K \rightarrow \Mp(\V)_{\Y}$ is given as follows.
Assume that $\psi$ is of order zero.
Let $\varphi^0 \in \SS(\X)$ be the characteristic function of
$\o \e_1 + \cdots + \o \e_n \cong \o^n$.
Since the residual characteristic of $F$ is not $2$, we see that 
\[
 \omega_{\psi}(k,1) \varphi^0 = \varphi^0
\]
for 
\[
 k =  \begin{pmatrix} a & \\ & {}^t a^{-1} \end{pmatrix}, \
\begin{pmatrix} \1_n & b \\ & \1_n \end{pmatrix} \ \text{and} \
\begin{pmatrix} & \1_n \\ -\1_n \end{pmatrix},
\]
where $a \in \GL_n(\o)$ and $b \in \M_n(\o)$.
Since these elements generate $K$, there exists a function $s_\Y:K \rightarrow \C^1$ such that
\[
 \omega_{\psi}(k,1) \varphi^0 = s_\Y(k)^{-1} \cdot 
 \varphi^0 \quad \text{for all} \ k \in K.
\]
Thus we obtain
\[
 z_{\Y}(k_1, k_2) = s_\Y(k_1 k_2) s_\Y(k_1)^{-1} s_\Y(k_2)^{-1}
\]
for $k_1, k_2 \in K$,
so that the map $k \mapsto (k, s_\Y (k))$ is the desired splitting.

\subsubsection{Change of models}

Suppose $\V= \X'\oplus \Y'$ is another polarization of $\V$. Then likewise the representation $\rho_\psi$ can be realized on $S_{\Y'} \simeq \SS (\X')$.
We will need an explicit isomorphism between these representations of $H(\V)$.
At the level of the induced representations, this is given by the map
$$ S_{\Y'} \rightarrow S_{\Y}, \qquad f' \mapsto f $$
$$ f(\tilde{v}) = \int_{\Y\cap \Y' \backslash \Y} f'((y,0) \cdot \tilde{v}) \cdot \psi_{\Y} (y,0)^{-1} \, dy =  \int_{\Y \cap \Y' \backslash \Y} f'((y,0) \cdot \tilde{v}) \, dy.$$

For now, we will take any Haar measure on $\Y$ to define this. We will fix this more carefully later. Let us now write down this isomorphism in terms of Schwartz spaces. 

\begin{lem}
Suppose that $\varphi \in \SS(\X)$ and $\varphi' \in \SS(\X')$
correspond to $f \in S_{\Y}$ and $f' \in S_{\Y'}$ respectively.
Then we have
\begin{equation}
\label{eqn:partial-fourier}
 \varphi(x) = \int_{\Y\cap \Y' \backslash \Y}
 \psi \left( \frac{1}{2} \llangle x', y' \rrangle - \frac{1}{2} \llangle x, y \rrangle \right) \varphi'(x') \, dy, 
\end{equation}
where $x' = x'(x,y) \in \X'$ and $y' = y'(x,y) \in \Y'$
are given by $x'+y' = x+y \in \V$.
\end{lem}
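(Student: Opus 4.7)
The plan is to reduce the lemma to a direct computation in the Heisenberg group, using only the identifications $\SS(\X)\cong S_{\Y}$ and $\SS(\X')\cong S_{\Y'}$ given by restriction to $\X$ and $\X'$, together with the defining intertwining map
\[
 f(\tilde{v}) = \int_{\Y \cap \Y' \backslash \Y} f'((y,0)\cdot \tilde{v})\,dy
\]
recalled just above the statement.

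First I would substitute $\tilde{v}=(x,0)$ and use the group law
\[
(y,0)\cdot (x,0) = \bigl(x+y,\tfrac{1}{2}\llangle y,x\rrangle\bigr).
\]
Given $x\in\X$ and $y\in\Y$, let $x'=x'(x,y)\in\X'$ and $y'=y'(x,y)\in\Y'$ be the unique elements with $x+y=x'+y'$. A second application of the group law rewrites
\[
\bigl(x+y,\tfrac{1}{2}\llangle y,x\rrangle\bigr)
= (y',z)\cdot (x',0), \qquad
z=\tfrac{1}{2}\llangle y,x\rrangle-\tfrac{1}{2}\llangle y',x'\rrangle.
\]
Since $(y',z)\in H(\Y')$, the covariance property $f'(\tilde{y}'\tilde{w})=\psi_{\Y'}(\tilde{y}')f'(\tilde{w})$ gives
\[
 f'\bigl((y,0)\cdot(x,0)\bigr)=\psi(z)\cdot f'((x',0))=\psi(z)\cdot \varphi'(x').
\]
Using that $\llangle\cdot,\cdot\rrangle$ is alternating converts the exponent $z$ into $\tfrac{1}{2}\llangle x',y'\rrangle-\tfrac{1}{2}\llangle x,y\rrangle$, yielding the desired formula.

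The only non-formal point is to verify that the integrand descends to $\Y\cap\Y'\backslash\Y$, so that the integral makes sense. If one replaces $y$ by $y+w$ with $w\in\Y\cap\Y'$, then the decomposition $x+y+w=x'+(y'+w)$ shows $x'$ is unchanged and $y'$ is translated by $w$; the change in the phase is
\[
 \tfrac{1}{2}\llangle x',w\rrangle-\tfrac{1}{2}\llangle x,w\rrangle
 =\tfrac{1}{2}\llangle x'-x,w\rrangle
 =\tfrac{1}{2}\llangle y-y',w\rrangle,
\]
which vanishes because $\Y$ and $\Y'$ are each isotropic and $w$ lies in both. I expect no real obstacle in this proof: the whole argument is a bookkeeping exercise with the Heisenberg cocycle, and the only thing requiring any care is keeping track of signs in $\llangle\cdot,\cdot\rrangle$ and confirming this well-definedness mod $\Y\cap\Y'$.
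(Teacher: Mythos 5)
Your proof is correct and follows essentially the same route as the paper's: substitute $\tilde v=(x,0)$ into the defining intertwining integral, factor $(y,0)\cdot(x,0)$ through $H(\Y')$ using the Heisenberg cocycle, and apply the covariance of $f'$. The only difference is organizational (you write $(y,0)\cdot(x,0)=(y',z)\cdot(x',0)$ directly rather than first deriving the closed form $f'(v,z)=\psi(z-\tfrac12\llangle v,x'\rrangle)\varphi'(x')$), and you add a well-definedness check on $\Y\cap\Y'\backslash\Y$ that the paper leaves implicit — both of which are fine.
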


\begin{proof}
Let $\varphi' \in \SS (\X')$. Let $(x'+y',z) \in H(\V)$. We write this as:
$$ (x'+y',z) = \left(y', z- \frac{1}{2} \llangle y',x' \rrangle \right) \cdot (x',0).$$ 
Thus if $f' \in S_{\Y'}$ corresponds to $\varphi'$, then 
$$ f' (x'+y',z) = \psi \left( z- \frac{1}{2} \llangle y',x' \rrangle \right) \cdot \varphi'(x').$$
We can rewrite this as: (with $v= x'+y'$)
$$ f' (v,z) = \psi \left( z- \frac{1}{2} \llangle v,x' \rrangle \right) \cdot \varphi'(x').$$
Thus $f'$ corresponds to $f \in S_\Y$ given by
$$ f (x+y,z) =  \int_{\Y\cap \Y' \backslash \Y} f'((y_0, 0) \cdot (x+y,z) ) \, dy_0= \int_{\Y\cap \Y' \backslash \Y} f'\left(x+y+y_0, z+ \frac{1}{2} \llangle y_0,x \rrangle \right) dy_0 .$$
Thus 
\begin{align*}
 \varphi(x) &= \int_{\Y\cap \Y' \backslash \Y} f'\left(x+y_0, \frac{1}{2} \llangle y_0,x \rrangle \right)dy_0 \\ &=  \int_{\Y\cap \Y' \backslash \Y}
 \psi\left(  \frac{1}{2} \llangle y_0,x \rrangle \right)
 f'\left(x+y_0, 0\right) \, dy_0 \\
&=  \int_{\Y\cap \Y' \backslash \Y}  \psi \left( -\frac{1}{2} \llangle x+y_0,x' \rrangle + \frac{1}{2} \llangle y_0,x \rrangle \right) \varphi'(x') \, dy_0. 
\end{align*}
\end{proof}

Thus we obtain an $H(\V)$-equivariant isomorphism
$\SS (\X') \cong \SS(\X)$ defined by the partial Fourier transform
\eqref{eqn:partial-fourier}.
Using the characterization of the Weil representation of $\Mp(\V)$,
one sees that this isomorphism is also $\Mp(\V)$-equivariant.

The isomorphism $\SS (\X') \cong \SS(\X)$ is in fact a partial Fourier transform.
To see this, using \cite[Lemma 2.2]{rangarao}, we choose a basis $\e_1, \ldots, \e_n, \e_1^*, \ldots, \e_n^*$ of $\V$ such that 
\begin{align*}
 \X & = F \e_1 + \cdots + F \e_n, & 
 \X'& = F \e_1 + \cdots + F \e_k + F \e^*_{k+1} + \cdots + F \e^*_n, \\
 \Y & = F \e^*_1 + \cdots + F \e^*_n, & 
 \Y'& = F \e^*_1 + \cdots + F \e^*_k + F \e_{k+1} + \cdots + F \e_n,
\end{align*}
and $\llangle \e_i, \e_j^* \rrangle = \delta_{ij}$, where $k = \dim (\Y \cap \Y')$.
In particular, we have $\Y \cap \Y' = F \e^*_1 + \cdots + F \e^*_k$.
Let $\varphi \in \SS(\X)$ and $\varphi' \in \SS(\X')$ be as in \eqref{eqn:partial-fourier}.
We also regard $\varphi'$ as a function on $F^n$ via the basis $\e_1, \ldots, \e_k, \e^*_{k+1}, \ldots, \e^*_n$.
Write $x + y = x' + y'$ with $x \in \X$, $y \in \Y$, $x' \in \X'$, $y' \in \Y'$.
If we write 
\[
 x = x_1 \e_1 + \cdots + x_n \e_n, \qquad 
 y = y_1 \e^*_1 + \cdots + y_n \e^*_n
\]
with $x_i, y_j \in F$, then 
\begin{align*}
 x' & = x_1 \e_1 + \cdots + x_k \e_k + y_{k+1} \e^*_{k+1} + \cdots + y_n \e^*_n,\\
 y' & = y_1 \e^*_1 + \cdots + y_k \e^*_k + x_{k+1} \e_{k+1} + \cdots + x_n \e_n,
\end{align*}
and 
\[
 \llangle x, y \rrangle = x_1 y_1 + \cdots + x_n y_n, \qquad
 \llangle x', y' \rrangle = x_1 y_1 + \cdots + x_k y_k - x_{k+1} y_{k+1} - \cdots - x_n y_n.
\]
Hence we have
\begin{align*}
 \varphi(x) & = \int_{\Y \cap \Y' \backslash \Y}
 \psi \left( \frac{1}{2} \left( \llangle x', y' \rrangle - \llangle x, y \rrangle \right) \right)
 \varphi'(x') \, dy \\
 & = \int_{F^{n-k}}
 \psi \left( - x_{k+1} y_{k+1} - \cdots - x_n y_n \right)
 \varphi'(x_1, \ldots, x_k, y_{k+1}, \ldots, y_n) \, dy_{k+1} \cdots \, dy_n.
\end{align*}

\subsubsection{Over global fields}

In this section, let $F$ be a number field with ring of adeles $\A$.
Let $\V$ be a symplectic space over $F$.
The global metaplectic group $\Mp(\V)_{\A}$ is defined as follows.

Fix a lattice $L$ in $\V$.
For each finite place $v$, let $K_v$ be the stabilizer of $L_v$ in $\Sp(\V_v)$.
For almost all $v$, $L_v$ is self-dual and there exists a unique splitting
$K_v \hookrightarrow \Mp(\V_v)$, in which case we identify $K_v$ with its image in $\Mp(\V_v)$.

For a finite set $S$ of places of $F$ including all archimedean places,
we define a central extension
\[
 1 \longrightarrow \C^1 \longrightarrow \Mp(\V)_S
 \longrightarrow \prod_{v \in S} \Sp(\V_v) \longrightarrow 1
\]
by 
\[
 \Mp(\V)_S := \left( \prod_{v \in S} \Mp(\V_v)  \right) /
 \left\{ (z_v) \in \prod_{v \in S} \C^1 \, | \, \prod_{v \in S} z_v = 1 \right\}.
\]
Put $K^S := \prod_{v \notin S} K_v$.
If $S \subset S'$ are sufficiently large,
the splitting $K_v \hookrightarrow \Mp(\V_v)$ induces an embedding
\[
 \Mp(\V)_S \times K^S \hookrightarrow \Mp(\V)_{S'} \times K^{S'}.
\]
Then $\Mp(\V)_{\A}$ is defined by
\[
 \Mp(\V)_{\A} := \varinjlim_{S} \left(  \Mp(\V)_S \times K^S \right).
\]

There exists a unique splitting
\[
 \xymatrix{
  & \Mp(\V)_{\A} \ar@{->}[d] \\
 \Sp(\V)(F) \ar@{->}[r] \ar@{->}[ur] & \Sp(\V)(\A)}
\]
given as follows.
Fix a complete polarization $\V = \X \oplus \Y$ over $F$.
Recall that the metaplectic group $\Mp(\V_v) = \Sp(\V_v) \times \C^1$
is determined by the $2$-cocycle $z_{\Y_v}$.
Moreover, for almost all $v$, there exists a function $s_{\Y_v} : K_v \rightarrow \C^1$ such that
\[
 z_{\Y_v}(k_1, k_2) = s_{\Y_v}(k_1 k_2) s_{\Y_v}(k_1)^{-1} s_{\Y_v}(k_2)^{-1}
\]
for $k_1, k_2 \in K_v$.

\begin{lem}
\label{lem:s_trivial}
Let $\gamma \in \Sp(\V)(F)$.
Then we have
\[
 s_{\Y_v}(\gamma) = 1
\]
for almost all $v$.
\end{lem}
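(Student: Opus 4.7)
The plan is to decompose $\gamma$ over $F$ as a product of standard Bruhat generators of $\Sp(\V)$, evaluate $s_{\Y_v}$ on each generator, and then control the cocycle corrections that appear when they are composed. First, since $\gamma \in \Sp(\V)(F)$ has only finitely many non-zero matrix entries in $F$, both $\gamma$ and $\gamma^{-1}$ are $\o_v$-integral at almost all finite places, and hence $\gamma \in K_v$ at such $v$. Discarding finitely more places, one may additionally assume that $v$ has odd residual characteristic and that $\psi_v$ is of order $0$.

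Next, using the Bruhat decomposition of the split group $\Sp(\V)$ over $F$, I would write $\gamma = \gamma_1 \cdots \gamma_r$ as a product of elements of the three standard generator types: Levi elements $m(a) = \smat{a}{0}{0}{{}^t a^{-1}}$ with $a \in \GL_n(F)$, Siegel-unipotent elements $n(b) = \smat{\1_n}{b}{0}{\1_n}$ with $b = {}^t b \in \M_n(F)$, and the long Weyl involution $w = \smat{0}{\1_n}{-\1_n}{0}$ (the remaining Weyl elements for $\Sp_{2n}$ are already of the form $m(a)$ with $a$ a permutation matrix, so these three types do generate $\Sp(\V)(F)$). At almost all $v$ each factor $\gamma_i$ lies in $K_v$, and the explicit Weil-representation formulas at the end of Section \ref{sssec:theta-weil} directly yield $\omega_{\psi_v}(\gamma_i, 1)\varphi^0_v = \varphi^0_v$ — i.e.\ $s_{\Y_v}(\gamma_i) = 1$ — at all such $v$.

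Iterating the cocycle identity $s_{\Y_v}(k_1 k_2) = s_{\Y_v}(k_1) s_{\Y_v}(k_2) \cdot z_{\Y_v}(k_1, k_2)$ one obtains
\[
 s_{\Y_v}(\gamma) = \prod_{i=1}^r s_{\Y_v}(\gamma_i) \cdot \prod_{i=1}^{r-1} z_{\Y_v}(\gamma_1 \cdots \gamma_i, \gamma_{i+1}),
\]
so it remains to show that each cocycle value is trivial at almost all $v$. Each such value equals the Weil index $\gamma_{F_v}(\tfrac12 \psi_v \circ q_v)$ of the Leray form $q = q(\Y, \Y \gamma_{i+1}^{-1}, \Y \gamma_1 \cdots \gamma_i)$ which, crucially, is built from triples of $F$-rational maximal isotropic subspaces of $\V$ and is therefore itself a non-degenerate symmetric bilinear form defined over $F$. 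Its determinant and Hasse invariant are then global invariants, and formula \eqref{eqn:weil_index} together with the standard global product formulas for local Hilbert symbols and for the Weil indices of additive characters will force the exceptional set of places at which this Weil index is non-trivial to be finite. The main obstacle is precisely this last step: the individual local factors $\gamma_{F_v}(\psi_v)$, $\gamma_{F_v}(\det q, \psi_v)$ and $h_{F_v}(q)$ need not individually be trivial at unramified places, so one must combine them via the global reciprocity laws to see that their product vanishes at almost every $v$.
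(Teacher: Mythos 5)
Your overall strategy is close to the paper's, but there is a real gap in the cocycle-control step, and the remedy you propose does not close it. The paper's proof uses the Bruhat decomposition in a much more structured way: one writes $\gamma = p_1 w p_2$ with $p_1, p_2$ in the \emph{Siegel parabolic} $P(F)$ and $w$ one of the Weyl elements $\tau_j$. The crucial point is that Ranga Rao's Theorem 4.1 gives $z_{\Y_v}(p, g) = z_{\Y_v}(g, p) = 1$ identically for $p \in P$ and any $g \in \Sp(\V_v)$, so the equation $(p_1 w p_2, 1) = (p_1,1)\cdot(w,1)\cdot(p_2,1)$ holds in $\Mp(\V_v)_{\Y_v}$ with \emph{no} cocycle corrections whatsoever. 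Because you decompose $\gamma$ into a generic string $\gamma_1 \cdots \gamma_r$ rather than a single Bruhat cell $P\,w\,P$, you acquire the correction terms $z_{\Y_v}(\gamma_1\cdots\gamma_i,\gamma_{i+1})$ and now have to prove they vanish at almost all $v$ --- a genuine extra burden that the paper's arrangement avoids.

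The way you propose to discharge that burden does not work: the product formulas for the Weil index and the Hilbert symbol tell you that $\prod_{\text{all } v} z_{\Y_v}(\cdot,\cdot) = 1$, which is a statement about the product over \emph{all} places; it does not by itself say the local factor is $1$ outside a finite set. You acknowledge this as the main obstacle but do not actually overcome it. What \emph{would} finish the argument is the observation you dismissed as unavailable: for a place $v$ of odd residue characteristic with $\psi_v$ of order zero, one has $\gamma_{F_v}(\frac12\psi_v) = 1$, $\gamma_{F_v}(a,\frac12\psi_v) = 1$ whenever $a \in \o_v^\times$, and $(a,b)_{F_v} = 1$ whenever $a,b \in \o_v^\times$. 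Since the Leray form $q(\Y,\Y\gamma_{i+1}^{-1},\Y\gamma_1\cdots\gamma_i)$ is defined over $F$, its entries (and hence $\det q$) are $v$-units for almost all $v$, so each factor in formula \eqref{eqn:weil_index} \emph{is} individually trivial at almost all $v$, contrary to what you asserted. So your route can be repaired, but it requires this purely local computation rather than global reciprocity --- and it remains more work than the paper's two-line observation that putting the two parabolic pieces on the outside kills the cocycle before it ever appears.
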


\begin{proof}
By the Bruhat decomposition, we may write $\gamma = p_1 w p_2$ with some
\[
 p_i = \begin{pmatrix} a_i & b_i \\ & {}^t a_i^{-1} \end{pmatrix}, \qquad
 w = \begin{pmatrix} & & \1_k & \\ & \1_{n-k} & & \\ -\1_k & & & \\ & & & \1_{n-k} \end{pmatrix},
\]
where $a_i \in \GL_n(F)$ and $b_i \in \M_n(F)$.
By Theorem 4.1 of \cite{rangarao}, we have
\[
 z_{\Y_v}(p_1, g) = z_{\Y_v}(g, p_2) = 1
\]
for all $v$ and $g \in \Sp(\V_v)$, so that 
\[
 (p_1 w p_2, 1) = (p_1, 1) \cdot (w, 1) \cdot (p_2, 1) 
 \quad \text{in} \ \Mp(\V_v).
\]
On the other hand, for almost all $v$, we have $p_i \in K_v$ and
\[
 \omega_{\psi_v}(p_i, 1) \varphi^0_v = \varphi^0_v, \qquad 
 \omega_{\psi_v}(w, 1) \varphi^0_v = \varphi^0_v,
\]
where $\psi_v$ is a non-trivial character of $F_v$ of order zero
and $\varphi^0_v$ is the characteristic function of $\o_v^n$.
Thus we obtain
\[
 \omega_{\psi_v}(\gamma, 1) \varphi^0_v = 
 \omega_{\psi_v}(p_1, 1) \omega_{\psi_v}(w, 1) \omega_{\psi_v}(p_2, 1) \varphi^0_v = \varphi^0_v
\]
for almost all $v$.
\end{proof}

For $\gamma \in \Sp(\V)(F)$, let $(\gamma, 1)$ be the element in $\prod_v \Mp(\V_v)$ such that $(\gamma, 1)_v = (\gamma, 1)$ for all $v$.
By Lemma \ref{lem:s_trivial}, we have $(\gamma, 1)_v \in K_v$ for almost all $v$.
Hence, if $S$ is a sufficiently large finite set of places of $F$,
then $(\gamma, 1)$ maps to an element $i(\gamma)$ in $\Mp(\V)_S \times K^S$.

\begin{lem}
The map
\[
 i: \Sp(\V)(F) \longrightarrow \Mp(\V)_{\A}
\] 
is a homomorphism.
\end{lem}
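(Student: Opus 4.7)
The goal is to show that for $\gamma_1, \gamma_2 \in \Sp(\V)(F)$, we have the equality $i(\gamma_1 \gamma_2) = i(\gamma_1) \cdot i(\gamma_2)$ in $\Mp(\V)_{\A}$. My plan is to reduce this to a product formula for Weil indices. First, using the explicit realization $\Mp(\V_v) \cong \Mp(\V_v)_{\Y_v} = \Sp(\V_v) \times \C^1$ with $2$-cocycle $z_{\Y_v}$, one computes
\[
 (\gamma_1, 1)_v \cdot (\gamma_2, 1)_v = (\gamma_1 \gamma_2,\, z_{\Y_v}(\gamma_1, \gamma_2))
\]
in each $\Mp(\V_v)$. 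Under the identification $\Mp(\V)_{\A} = \varinjlim_S (\Mp(\V)_S \times K^S)$, where one mods out by the kernel $\{(z_v) \in \prod_{v \in S} \C^1 : \prod_{v \in S} z_v = 1\}$, the desired equality $i(\gamma_1)i(\gamma_2) = i(\gamma_1 \gamma_2)$ becomes the assertion that
\[
 \prod_v z_{\Y_v}(\gamma_1, \gamma_2) = 1.
\]

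To see that this product makes sense, note that by Lemma \ref{lem:s_trivial}, for almost all $v$ one has $s_{\Y_v}(\gamma_1) = s_{\Y_v}(\gamma_2) = s_{\Y_v}(\gamma_1\gamma_2) = 1$; since $z_{\Y_v}(g_1, g_2) = s_{\Y_v}(g_1 g_2) s_{\Y_v}(g_1)^{-1} s_{\Y_v}(g_2)^{-1}$ on $K_v \times K_v$, this forces $z_{\Y_v}(\gamma_1,\gamma_2) = 1$ for almost all $v$, so the product is finite.

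For the vanishing of the product itself, I would use the definition
\[
 z_{\Y_v}(\gamma_1, \gamma_2) = \gamma_{F_v}\bigl(\tfrac{1}{2} \psi_v \circ q_v\bigr),
\]
where $q = q(\Y, \Y \gamma_2^{-1}, \Y \gamma_1)$ is the Leray invariant. The key observation is that since $\gamma_1, \gamma_2$ and $\Y$ are all defined over $F$, the Leray invariant $q$ is a non-degenerate symmetric $F$-bilinear form whose localization at $v$ is precisely the local Leray invariant $q_v$ used above. The claim then follows from the product formula for Weil indices of a global form,
\[
 \prod_v \gamma_{F_v}(\psi_v \circ q_v) = 1,
\]
applied to $\tfrac{1}{2} q$. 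This formula in turn reduces, via \eqref{eqn:weil_index}, to three classical identities: $\prod_v \gamma_{F_v}(\psi_v) = 1$ (the product formula for the local Weil indices of $\psi$), $\prod_v \gamma_{F_v}(a,\psi_v) = 1$ for any $a \in F^\times$, and $\prod_v h_{F_v}(q_v) = 1$. The last is Hilbert reciprocity for the quadratic Hilbert symbol; the middle one follows from Hilbert reciprocity combined with the first; and the first is the standard product formula.

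The main obstacle is therefore essentially a bookkeeping task: identifying the localization of the global Leray invariant with the local one used to define $z_{\Y_v}$, and then invoking the product formula for Weil indices. Both are standard once set up carefully, and no further input beyond Hilbert reciprocity is needed.
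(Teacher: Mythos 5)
Your proof is correct and follows the same strategy as the paper's: compute the product locally in each $\Mp(\V_v)$ to produce the $2$-cocycle $z_{\Y_v}(\gamma_1,\gamma_2)$, observe (via Lemma~\ref{lem:s_trivial}) that it equals $1$ for almost all $v$, and conclude by the product formula for the Weil index. Your unpacking of that final step — noting that the global Leray invariant $q(\Y,\Y\gamma_2^{-1},\Y\gamma_1)$ is defined over $F$ and localizes compatibly, then reducing via \eqref{eqn:weil_index} to Hilbert reciprocity and the product formula for $\gamma_{F_v}(\psi_v)$ — is more explicit than the paper, which simply cites the product formula without elaboration.
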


\begin{proof}
Let $\gamma_1, \gamma_2 \in \Sp(\V)(F)$.
For each $v$, we have
\[
 (\gamma_1, 1)_v \cdot  (\gamma_2, 1)_v
 = (\gamma_1 \gamma_2, z_{\Y_v}(\gamma_1, \gamma_2))
 \quad \text{in} \ \Mp(\V_v).
\]
Choose a finite set $S$ of places of $F$ such that
\[
 \gamma_1, \gamma_2 \in K_v, \qquad s_{\Y_v}(\gamma_1) = s_{\Y_v}(\gamma_2) = s_{\Y_v}(\gamma_1 \gamma_2) = 1
\]
for $v \notin S$.
Then we have
\[
 z_{\Y_v}(\gamma_1, \gamma_2) = 1
\]
for $v \notin S$.
Moreover, by the product formula for the Weil index, we have
\[
 \prod_{v \in S} z_{\Y_v}(\gamma_1, \gamma_2) = 1.
\]
Hence the image of $(\gamma_1, 1) \cdot (\gamma_2, 1)$
in $\Mp(\V)_S \times K^S$ is equal to $i(\gamma_1 \gamma_2)$.
\end{proof}

Fix a non-trivial additive character $\psi$ of $\A/F$.
We have the global Weil representation $\omega_{\psi}$ of $\Mp(\V)_{\A}$
on the Schwartz space $\SS(\X(\A))$.
For each $\varphi \in \SS(\X(\A))$, the associated theta function 
on $\Mp(\V)_{\A}$ is defined by
\[
 \Theta_{\varphi}(g) := \sum_{x \in \X} \omega_{\psi}(g) \varphi(x).
\]
Then $\Theta_{\varphi}$ is a left $\Sp(\V)(F)$-invariant
slowly increasing smooth function on $\Mp(\V)_{\A}$.

\subsection{Reductive dual  pairs}
\label{subsec:theta-dualpairs}

In this section, we consider the reductive dual pair $(\GU(V), \GU(W))$, where $V$ is a skew-Hermitian right $B$-space of dimension two and $W$ is a Hermitian left $B$-space of dimension one. 

\subsubsection{Reductive dual pairs; examples}
\label{sssec:theta-dualpairs-examples}

Recall that in \S \ref{subsec:key}, we have constructed the $2$-dimensional skew-Hermitian right $B$-space $V = B_1 \otimes_E B_2$ with the skew-Hermitian form $\langle \cdot, \cdot \rangle : V \times V \rightarrow B$.
Let $W = B$ be the $1$-dimensional Hermitian left $B$-space with the Hermitian form $\langle \cdot, \cdot \rangle : W \times W \rightarrow B$ defined by 
\[
 \langle x, y \rangle = x y^*.
\]
These forms satisfy that 
\begin{align*}
 \langle v \ba, v' \bb \rangle & = \ba^* \langle v, v' \rangle \bb &
 \langle v', v \rangle  & = - \langle v, v' \rangle^* \\
 \langle \ba w, \bb w' \rangle & = \ba \langle w, w' \rangle \bb^* &
 \langle w', w \rangle  & = \langle w, w' \rangle^* 
\end{align*}
for $v, v' \in V$, $w,w' \in W$ and $\ba, \bb \in B$.
We let $\GL(V)$ act on $V$ on the left and let $\GL(W)$ act on $W$ on the right.
Let $\GU(V)$ and $\GU(W)$ be the similitude groups of $V$ and $W$
with the similitude characters
$\nu:\GU(V) \rightarrow F^{\times}$ and $\nu:\GU(W) \rightarrow F^{\times}$
respectively:
\begin{align*}
 \GU(V) & := \{ g \in \GL(V) \, | \, \langle g v, gv' \rangle =
 \nu(g) \cdot  \langle v, v' \rangle \text{ for all } v, v' \in V \}, \\
 \GU(W) & := \{ g \in \GL(W) \, | \, \langle w g, w' g \rangle =
 \nu(g) \cdot  \langle w, w' \rangle \text{ for all } w, w' \in W \}.
\end{align*}
Let $\U(V) := \ker \nu$ and $\U(W) := \ker \nu$
be the unitary groups of $V$ and $W$ respectively.

Put
\[
 \V := V \otimes_B W.
\]
Then $\V$ is an $F$-space equipped with a symplectic form
\[
 \llangle \cdot, \cdot \rrangle :=
 \frac{1}{2} \tr_{B/F}
 \left( \langle \cdot, \cdot \rangle \otimes \langle \cdot, \cdot \rangle^* \right).
\]
If we identify $\Res_{B/F}(V)$ with $\V$ via the map $v \mapsto v \otimes 1$, 
then the associated symplectic form on $\V$ is given by 
\[
 \llangle \cdot, \cdot \rrangle =
 \frac{1}{2} \tr_{B/F} \langle \cdot, \cdot \rangle
 = \frac{1}{2} \tr_{E/F} ( \cdot, \cdot ),
\]
where $( \cdot, \cdot ) = \pr \circ \langle \cdot, \cdot \rangle$ is the associated $E$-skew-Hermitian form.
We let $\GL(V) \times \GL(W)$ act on $\V$ on the right:
\[
(v \otimes w) \cdot (g, h) := g^{-1} v \otimes w h.
\]
This gives a natural homomorphism
\[
 \G(\U(V) \times \U(W)) \longrightarrow \Sp(\V),
\]
where 
\[
 \G(\U(V) \times \U(W)) 
 := \{ (g,h) \in \GU(V) \times \GU(W) \, | \, \nu(g) = \nu(h) \}.
\] 

\subsubsection{Splittings}

Recall that
\[
 \V = \e_1 E + \e_2 E + \e_3 E + \e_4 E.
\]
Let $\V = \X + \Y$ be the complete polarization given by
\[
 \X = F \e_1 + F \e_2 + F \e_3 + F \e_4, \qquad
 \Y = F \e_1^* + F \e_2^* + F \e_3^* + F \e_4^*.
\]

We first suppose that $F$ is a local field.
In Appendix \ref{sec:spl-B}, we define a function 
\[
 s : \G(\U(V) \times \U(W))^0 \longrightarrow \C^1
\]
such that
\[
 z_{\Y}(g_1,g_2) = s(g_1 g_2) s(g_1)^{-1} s(g_2)^{-1},
\]
so that the map 
\begin{align*}
 \iota: \G(\U(V) \times \U(W))^0 & \longrightarrow \Mp(\V)_{\Y} \\
 g & \longmapsto (g, s(g))
\end{align*}
is a homomorphism.
Thus we have a commutative diagram
\[
 \xymatrix{
  & \Mp(\V) \ar@{->}[d] & \\
  \G(\U(V) \times \U(W))^0 \ar@{->}[ru]^{\iota}
  \ar@{->}[r] &
  \Sp(\V)}.
\]
If $\V = \X' + \Y'$ is another complete polarization, we choose $g_0 \in \Sp(\V)$ such that $\Y' = \Y g_0$ and define a function
\[
 s' : \G(\U(V) \times \U(W))^0 \longrightarrow \C^1
\]
by
\begin{align*}
 s'(g) & = s(g) \cdot z_{\Y}(g_0, g g_0^{-1}) \cdot z_{\Y}(g, g_0^{-1}) \\
 & = s(g) \cdot z_{\Y}(g_0 g g_0^{-1}, g_0)^{-1} \cdot z_{\Y}(g_0, g).
\end{align*}
By Lemma \ref{lem:compare-2cocycles}, we have
\[
 z_{\Y'}(g_1, g_2) = s'(g_1 g_2) s'(g_1)^{-1} s'(g_2)^{-1},
\]
so that the map
\begin{align*}
 \G(\U(V) \times \U(W))^0 & \longrightarrow \Mp(\V)_{\Y'} \\
 g & \longmapsto (g, s'(g))
\end{align*}
is a homomorphism.

We next suppose that $F$ is a number field.
For each place $v$ of $F$, we have defined a function 
\[
 s_v : \G(\U(V_v) \times \U(W_v))^0 \longrightarrow \C^1
\]
with associated homomorphism 
\[
 \iota_v :\G(\U(V_v) \times \U(W_v))^0 \longrightarrow \Mp(\V_v).
\]

\begin{lem}
The homomorphisms $\iota_v$ induce a homomorphism
\[
 \iota:\G(\U(V) \times \U(W))^0(\A) \longrightarrow \Mp(\V)_{\A}.
\]
Moreover, the diagram
\[
 \xymatrix{
 \G(\U(V) \times \U(W))^0(F) \ar@{->}[d] \ar@{^(->}[r] &
 \G(\U(V) \times \U(W))^0(\A) \ar@{->}[d]^{\iota}  \\
 \Sp(\V)(F) \ar@{->}[r]^i & \Mp(\V)_{\A}
 }
\]
is commutative.
\end{lem}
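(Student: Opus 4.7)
The plan is to verify the three bullet points implicit in the statement: (a) the local homomorphisms $\iota_v$ do patch into a map into the restricted direct product realization of $\Mp(\V)_{\A}$; (b) the resulting map is a group homomorphism; and (c) the global splitting $i$ of $\Sp(\V)(F)$ is recovered by restricting $\iota$ to $F$-points. Step (c) is where the content lies, and it will reduce to the product formula for the local splittings $s_v$ proved in Appendix \ref{sec:spl-B}.

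For (a), fix an adelic element $g = (g_v) \in \G(\U(V) \times \U(W))^0(\A)$. We first pick a sufficiently large finite set $S$ of places containing all archimedean places, all places where $B$, $B_1$ or $B_2$ is ramified, all finite places whose residual characteristic is $2$, and all places where $g_v$ fails to lie in a fixed hyperspecial open compact subgroup of $\G(\U(V_v) \times \U(W_v))^0$ whose image in $\Sp(\V_v)$ is contained in the stabilizer $K_v$ of the self-dual lattice used to define $\Mp(\V)_{\A}$. For $v \notin S$ we then need that the element $\iota_v(g_v) = (g_v, s_v(g_v)) \in \Mp(\V_v)_{\Y_v}$ lies in the image of the canonical splitting $K_v \hookrightarrow \Mp(\V_v)$; equivalently, that $s_v(g_v) = s_{\Y_v}(g_v)$ in the notation of \S\ref{sssec:theta-weil}. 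This equality at almost all places is part of the explicit construction in Appendix \ref{sec:spl-B} (it is forced because the canonical splitting of $K_v$ is unique once the residue field has more than three elements, by Moore's theorem cited in \S\ref{sssec:theta-metaplectic}). Granted this compatibility, the collection $(\iota_v(g_v))_v$ determines a well-defined element of $\Mp(\V)_S \times K^S$, hence of $\Mp(\V)_{\A}$, and enlarging $S$ gives compatible classes, yielding $\iota(g)$.

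For (b), since multiplication in $\Mp(\V)_{\A}$ is defined componentwise modulo the scalar identification, and each $\iota_v$ is already a homomorphism by the cocycle identity $z_{\Y_v}(g_1,g_2) = s_v(g_1 g_2) s_v(g_1)^{-1} s_v(g_2)^{-1}$ recalled in the statement above, the adelic map $\iota$ is automatically a homomorphism.

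For (c), let $\gamma \in \G(\U(V) \times \U(W))^0(F)$ and view $\gamma$ as an element of $\Sp(\V)(F)$. Unwinding definitions, $\iota(\gamma)$ is the class of $((\gamma, s_v(\gamma)))_v$ and $i(\gamma)$ is the class of $((\gamma, 1))_v$ in some $\Mp(\V)_S \times K^S$; they project to the same element $\gamma$ of $\Sp(\V)(\A)$, so their ratio is the class of the scalar family $(s_v(\gamma))_v \in \prod_v \C^1$. By the definition of $\Mp(\V)_S$ as a quotient by the kernel of $\prod_{v \in S}$, this class is trivial precisely when $\prod_v s_v(\gamma) = 1$. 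This is exactly the product formula (Proposition \ref{prop:spl-B-prod}) for the splittings $s_v$ constructed in Appendix \ref{sec:spl-B}, and it is the one non-formal ingredient here. Applying it gives $\iota(\gamma) = i(\gamma)$, which proves commutativity of the diagram.

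The main obstacle, which is handled in the appendices and whose output we are using as a black box, is the product formula together with the compatibility $s_v(g_v) = s_{\Y_v}(g_v)$ on hyperspecial subgroups at almost all $v$; once these are in hand, the global assembly above is automatic.
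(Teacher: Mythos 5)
Your proposal follows essentially the same route as the paper: patch the local splittings into an adelic map using the compatibility $s_v = s_{\Y_v}$ on $\KK_v := \G(\U(V_v)\times\U(W_v))^0 \cap K_v$ at almost all places, and reduce commutativity of the square to the product formula $\prod_v s_v(\gamma)=1$. Both of these are exactly what the paper invokes, namely Lemma \ref{lem:spl-B-K} and Proposition \ref{prop:spl-B-prod}.

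One parenthetical in your step (a) is not quite right as written, though it is not load-bearing. You say the compatibility $s_v|_{\KK_v}=s_{\Y_v}|_{\KK_v}$ is ``forced because the canonical splitting of $K_v$ is unique.'' That alone does not suffice: $\KK_v$ is a proper subgroup of $K_v$, and uniqueness of the splitting of $K_v$ (via $[K_v,K_v]=K_v$, Moore) does not by itself rule out that two splittings of the smaller group $\KK_v$ differ by a nontrivial character of $\KK_v$. In particular, $s_v|_{\KK_v}$ is a splitting of $\KK_v$ but is not a priori the restriction of a splitting of $K_v$. The paper's Lemma \ref{lem:spl-B-K} does use uniqueness on $K_v$ (to get $s_{\Y_v}=s_{\Y'_v}\cdot\mu_v$ on $K_v$), but then separately verifies the identity $s'_v|_{\KK_v}=s_{\Y'_v}|_{\KK_v}$ by unwinding the explicit construction of $s'_v$ from Appendix \ref{sec:spl-B}; that second step is needed and is not a formal consequence of uniqueness. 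Since your primary citation is to the Appendix result rather than to the parenthetical heuristic, the proof structure is still sound, but the parenthetical should be dropped or replaced by a reference to Lemma \ref{lem:spl-B-K}.
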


\begin{proof}
Recall that, for almost all $v$, $K_v$ is the maximal compact subgroup of $\Sp(\V_v)$ and $s_{\Y_v}:K_v \rightarrow \C^1$ is the function which defines the splitting
$K_v \hookrightarrow \Mp(\V_v)$.
Put
\[
 \KK_v := \G(\U(V_v) \times \U(W_v))^0 \cap K_v.
\]
Then $\KK_v$ is a maximal compact subgroup of $\G(\U(V_v) \times \U(W_v))^0$ for almost all $v$.
By Lemma \ref{lem:spl-B-K}, we have
\[
 s_v|_{\KK_v} = s_{\Y_v}|_{\KK_v}
\]
for almost all $v$.
Hence, for $g =(g_v) \in \G(\U(V) \times \U(W))^0(\A)$,
the element $(\iota_v(g_v)) \in \prod_v \Mp(\V_v)$ maps to
an element $\iota(g)$ in $\Mp(\V)_S \times K^S$ if $S$ is sufficiently large.
This proves the first assertion.

Let $\gamma \in \G(\U(V) \times \U(W))^0(F)$.
By Proposition \ref{prop:spl-B-prod}, we have
\[
 \prod_v s_v(\gamma) = 1. 
\]
Hence, if $S$ is sufficiently large,
the image of $(\iota_v(\gamma))$ in $\Mp(\V)_S \times K^S$ is equal to 
that of $(\gamma, 1)$.
This proves the second assertion.
\end{proof}

\subsubsection{Weil representation for the above dual reductive pair}

If $F$ is local, we get the Weil representation $\omega_\psi \circ \iota$ of $\G(\U(V) \times \U(W))^0$ on $\SS(\X)$, where $\omega_\psi$ is the Weil representation of $\Mp(\V)_{\Y}$ and $\iota: \G(\U(V) \times \U(W))^0 \rightarrow \Mp(\V)_{\Y}$ is the above homomorphism. 
Similarly, if $F$ is global, we get the global Weil representation $\omega_\psi \circ \iota$ of $\G(\U(V) \times \U(W))^0(\A)$ on $\SS(\X(\A))$.
If there is no confusion, we suppress $\iota$ from the notation.

\section{The Rallis inner product formula and the Jacquet--Langlands correspondence}
\label{sec:rallis-general}

\subsection{Setup}
\label{subsec:rallissetup}

Let $F$ be a number field and $B$ a quaternion algebra over $F$.
As in Appendix \ref{sec:spl-double-B}, we consider the following spaces:
\begin{itemize}
\item $V = B_1 \otimes_E B_2$ is the $2$-dimensional right skew-hermitian $B$-space.
\item $W = B$ is the $1$-dimensional left hermitian $B$-space.
\item $W^{\square} = W \oplus W$ is the $2$-dimensional left hermitian $B$-space.
\item $\V = V \otimes_B W$ is the $8$-dimensional symplectic $F$-space.
\item $\V^{\square} = V \otimes_B W^{\square} = \V \oplus \V$ is the $16$-dimensional $F$-space. \\
\item $W^{\square} = W^{\bigtriangledown} \oplus W^{\triangle}$ is the complete polarization over $B$.
\item $\V = \X \oplus \Y$ is the complete polarization over $F$.
\item $\V_v = \X'_v \oplus \Y'_v$ is the complete polarization over $F_v$.
\item $\V^{\square} = \V^{\bigtriangledown} \oplus \V^{\triangle}$ is the complete polarization over $F$.
\item $\V^{\square} = \X^{\square} \oplus \Y^{\square}$ is the complete polarization over $F$.
\item $\V_v^{\square} = \X_v'^{\square} \oplus \Y_v'^{\square}$ is the complete polarization over $F_v$.
\end{itemize}
We have a natural map
\[
 \iota : \G(\U(W) \times \U(W)) \longrightarrow \GU(W^{\square})
\]
and a see-saw diagram
\[
 \xymatrix{
  \GU(W^{\square}) \ar@{-}[dr] \ar@{-}[d] &
  \G(\U(V) \times \U(V)) \ar@{-}[dl] \ar@{-}[d] \\
  \G(\U(W) \times \U(W)) & \GU(V)}.
\]

\subsubsection{Partial Fourier transform}

Fix a non-trivial character $\psi = \otimes_v \psi_v$ of $\A / F$.
Recall that $\e_1, \ldots, \e_4$ is a basis of $\X$ over $F$.
For each place $v$ of $F$, this basis and the self-dual measure on $F_v$ with respect to $\psi_v$ define a Haar measure $dx_v$ on $\X_v$.
Then the product measure $dx = \prod_v dx_v$ is the Tamagawa measure on $\X(\A)$.
We define a hermitian inner product $\langle \cdot, \cdot \rangle$ on $\SS(\X(\A))$ by
\[
 \langle \varphi_1, \varphi_2 \rangle := \int_{\X(\A)} \varphi_1(x) \overline{\varphi_2(x)} \, dx.
\]
Recall that $\V^{\square} = \V^{\bigtriangledown} \oplus \V^{\triangle} = \X^{\square} \oplus \Y^{\square}$.
We define a partial Fourier transform
\begin{align*}
\SS(\X^{\square}(\A)) & \longrightarrow \SS(\V^{\bigtriangledown}(\A)) \\
\varphi & \longmapsto \hat{\varphi}
\end{align*}
by
\[
 \hat{\varphi}(u) = \int_{(\V^{\triangle} \cap \Y^{\square} \backslash \V^{\triangle})(\A)} \varphi(x)
 \psi \left( \frac{1}{2} \left( \llangle x, y \rrangle - \llangle u, v \rrangle \right) \right) dv,
\]
where we write $u+v = x+y$ with $u \in \V^{\bigtriangledown}(\A)$, $v \in \V^{\triangle}(\A)$, $x \in \X^{\square}(\A)$, $y \in \Y^{\square}(\A)$, and $dv$ is the Tamagawa measure.

\begin{lem}[{\cite[p.~182, (13)]{li92}}]
If $\varphi = \varphi_1 \otimes \bar{\varphi}_2 \in \SS(\X^{\square}(\A))$ with $\varphi_i \in \SS(\X(\A))$, then we have
\[
 \hat{\varphi}(0) = \langle \varphi_1, \varphi_2 \rangle.
\]
\end{lem}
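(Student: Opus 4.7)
The plan is to unpack the definition of the partial Fourier transform at $u = 0$ and show that after identifying the subquotient $(\V^{\triangle} \cap \Y^{\square}) \backslash \V^{\triangle}$ with $\X$, the exponential phase collapses to $1$ and the integrand restricts to $\varphi_1(x_0) \overline{\varphi_2(x_0)}$.

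Concretely, I would first fix the conventions from Appendix \ref{sec:spl-double-B}: under the identification $\V^{\square} = \V \oplus \V$ induced by $W^{\square} = W^{\bigtriangledown} \oplus W^{\triangle}$, the space $\V^{\triangle}$ sits as the ``diagonal'' copy of $\V$, while $\X^{\square} = \X \oplus \X$ and $\Y^{\square} = \Y \oplus \Y$ with respect to $\V = \X \oplus \Y$. Take $v \in \V^{\triangle}(\A)$ and write the unique decomposition $v = x + y$ with $x \in \X^{\square}(\A)$, $y \in \Y^{\square}(\A)$. Under the identification $\V^{\triangle} \cong \V$, if $v \leftrightarrow w = x_0 + y_0 \in \X(\A) \oplus \Y(\A)$, then $x$ is the ``diagonal'' image of $x_0$ in $\X^{\square}$ and $y$ is the diagonal image of $y_0$ in $\Y^{\square}$. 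In particular the equivalence class of $v$ modulo $\V^{\triangle} \cap \Y^{\square}$ is determined by $x_0 \in \X(\A)$, giving an isomorphism $(\V^{\triangle} \cap \Y^{\square}) \backslash \V^{\triangle} \cong \X$ of $F$-vector spaces.

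Next, I would compute the phase. Setting $u = 0$ kills the $\llangle u, v \rrangle$ term. For the $\llangle x, y \rrangle$ term, the defining property of the polarization $\V^{\square} = \V^{\bigtriangledown} \oplus \V^{\triangle}$ forces the symplectic form on $\V^{\square}$ to restrict to zero on $\V^{\triangle}$, which by the explicit formula equals $\llangle x_0, y_0 \rrangle - \llangle x_0, y_0 \rrangle = 0$. Hence $\psi(\tfrac{1}{2}(\llangle x, y \rrangle - \llangle u, v \rrangle)) = 1$. On the other hand, since $\varphi = \varphi_1 \otimes \bar\varphi_2$ under $\X^{\square} = \X \oplus \X$, the pullback along the diagonal gives $\varphi(x) = \varphi_1(x_0) \overline{\varphi_2(x_0)}$.

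Finally I would check the measure: the Tamagawa measure on $\V^{\triangle}(\A)$ (induced from the chosen basis of $\X^{\square}$ and $\psi$) descends on $(\V^{\triangle} \cap \Y^{\square}) \backslash \V^{\triangle}(\A)$ to the Tamagawa measure on $\X(\A)$ under the above isomorphism. Putting it all together,
\[
 \hat\varphi(0) = \int_{\X(\A)} \varphi_1(x_0) \overline{\varphi_2(x_0)} \, dx_0 = \langle \varphi_1, \varphi_2 \rangle.
\]
The only real obstacle is the bookkeeping for the measures and the explicit identification of the polarizations from Appendix \ref{sec:spl-double-B}; once the conventions are pinned down, the vanishing of the phase and the reduction of the integrand to $\varphi_1 \bar\varphi_2$ on the diagonal are formal consequences of the isotropy of $\V^{\triangle}$ and the tensor product structure of $\varphi$.
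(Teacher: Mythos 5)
Your argument is correct and follows essentially the same route as the paper: identify $(\V^{\triangle}\cap\Y^{\square})\backslash\V^{\triangle}$ with $\X$ via the diagonal, observe that $\llangle x,y\rrangle=0$ because $x=(x_0,x_0)$ and $y=(y_0,y_0)$ both lie in the Lagrangian $\V^{\triangle}$, and reduce the integral to $\int_{\X(\A)}\varphi_1\overline{\varphi_2}$. The only cosmetic difference is that the paper keeps $u$ general (so also the $\llangle u,v\rrangle=2\llangle y_0,v_0\rrangle$ cross term survives) and specializes to $u=0$ at the end, whereas you set $u=0$ from the outset, which streamlines the computation slightly.
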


\begin{proof}
We include a proof for convenience.
Since $\V^{\triangle} \cap \Y^{\square} = \Y^{\triangle}$, we have
\[
 \hat{\varphi}(u) = \int_{\X^{\triangle}(\A)} \varphi(x)
 \psi \left( \frac{1}{2} \left( \llangle x, y \rrangle - \llangle u, v \rrangle \right) \right) dv.
\]
We write 
\[
 v = (v_0, v_0), \qquad u = (u_0, -u_0), \qquad u_0 = x_0 + y_0
\]
with $v_0, x_0 \in \X(\A)$ and $y_0 \in \Y(\A)$, so that 
\[
 x = (v_0 + x_0, v_0 - x_0), \qquad y = (y_0, - y_0).
\]
We have
\[
 \llangle x, y \rrangle
 = \llangle v_0 + x_0, y_0 \rrangle - \llangle v_0 - x_0, -y_0 \rrangle
 = 2 \llangle v_0, y_0 \rrangle, \qquad
 \llangle u, v \rrangle = 2 \llangle u_0, v_0 \rrangle = 2 \llangle y_0, v_0 \rrangle,
\]
and hence 
\[
 \hat{\varphi}(u) = \int_{\X(\A)} \varphi(v_0+x_0, v_0-x_0)
 \psi (2 \llangle v_0, y_0 \rrangle) \, dv_0,
\]
where $dv_0$ is the Tamagawa measure on $\X(\A)$.
In particular, we have
\[
 \hat{\varphi}(0) = \int_{\X(\A)} \varphi(v_0, v_0) \, dv_0.
\]
\end{proof}

For each place $v$ of $F$, we define a hermitian inner product $\langle \cdot, \cdot \rangle$ on $\SS(\X_v)$ with respect to the Haar measure $dx_v$ on $\X_v$ given above.
Fix a Haar measure on $\X'_v$ and define a hermitian inner product $\langle \cdot, \cdot \rangle$ on $\SS(\X'_v)$ similarly.
For $\varphi' \in \SS(\X'_v)$, we define its partial Fourier transform $\varphi \in \SS(\X_v)$ by
\[
 \varphi(x) = \int_{\Y_v \cap \Y'_v \backslash \Y_v} \varphi'(x')
 \psi_v \left( \frac{1}{2} \left( \llangle x', y' \rrangle - \llangle x, y \rrangle \right) \right) dy,
\]
where we write $x+y = x'+y'$ with $x \in \X_v$, $y \in \Y_v$, $x' \in \X'_v$, $y' \in \Y'_v$, and we normalize a Haar measure $dy$ so that
\[
 \langle \varphi_1, \varphi_2 \rangle  = \langle \varphi_1', \varphi_2' \rangle
\]
holds for $\varphi_1', \varphi_2' \in \SS(\X'_v)$ and their partial Fourier transforms $\varphi_1, \varphi_2 \in \SS(\X_v)$.

\subsubsection{Weil representations}

Fix a place $v$ of $F$ and suppress the subscript $v$ from the notation.
In Appendices \ref{sec:spl-B}, \ref{sec:spl-double-B}, we have defined the maps
\begin{itemize}
 \item $\hat{s} : \G(\U(V) \times \U(W^{\square})) \rightarrow \C^1$ such that $z_{\V^{\triangle}} = \partial \hat{s}$,
 \item $s: \GU(V)^0 \times \GU(W) \rightarrow \C^1$ such that $z_{\Y} = \partial s$,
 \item $s': \GU(V)^0 \times \GU(W) \rightarrow \C^1$ such that $z_{\Y'} = \partial s'$.
\end{itemize}
Let $\omega_{\psi}$ and $\omega_{\psi}^{\square}$ be the Weil representations of $\Mp(\V)$ and $\Mp(\V^{\square})$ with respect to $\psi$, respectively.
Composing these with $\hat{s}$, $s$, $s'$, we obtain:
\begin{itemize}
 \item a representation $\omega_{\psi}^{\square}$ of $\G(\U(V) \times \U(W^{\square}))$ on $\SS(\V^{\bigtriangledown})$, 
 \item a representation $\omega_{\psi}$ of $\G(\U(V)^0 \times \U(W))$ on $\SS(\X)$, 
 \item a representation $\omega_{\psi}$ of $\G(\U(V)^0 \times \U(W))$ on $\SS(\X')$.
\end{itemize}
By \S \ref{ss:spl-double-B-Y}, the partial Fourier transform
\[
 \SS(\V^{\bigtriangledown}) \cong \SS(\X^{\square}) = \SS(\X) \otimes \SS(\X)
\]
induces an isomorphism
\[
 \omega_{\psi}^{\square} \circ (\id_V \otimes \iota) \cong \omega_{\psi} \otimes \bar{\omega}_{\psi}
\]
as representations of $\G(\U(V)^0 \times \U(W) \times \U(W))$.
By definition, the partial Fourier transform $\SS(\X') \cong \SS(\X)$ is $\G(\U(V)^0 \times \U(W))$-equivariant.

\subsection{The Jacquet--Langlands--Shimizu correspondence}
\label{subsec:jls}

Let $F$ be a number field and $B$ a quaternion algebra over $F$.
We assume that $B$ is division.
Set 
\begin{align*}
 G & = \GU(W), & H & = \GU(V), & H^0 & = \GU(V)^0, \\
 G_1 & = \U(W), & H_1 & = \U(V), & H_1^0 & = \U(V)^0.
\end{align*}
Recall that $G \cong B^{\times}$ and 
\[
 1 \longrightarrow F^{\times} \overset{i}\longrightarrow
 B_1^{\times} \times B_2^{\times} \longrightarrow H^0 \longrightarrow 1,
\]
where $B_1$ and $B_2$ are quaternion algebras over $F$ such that $B_1 \cdot B_2 = B$ in the Brauer group and $i(z) = (z, z^{-1})$.

Put $(\A^{\times})^+ = \nu(G(\A)) \cap \nu(H^0(\A))$,
\[
 G(\A)^+ = \{ g \in G(\A) \, | \, \nu(g) \in (\A^{\times})^+ \}, \qquad
 H^0(\A)^+ = \{ h \in H^0(\A) \, | \, \nu(h) \in (\A^{\times})^+ \}.
\]
For each place $v$ of $F$, we define $(F_v^{\times})^+$, $G_v^+$, $(H^0_v)^+$ similarly.
We have $(F_v^{\times})^+ = F_v^{\times}$ if $v$ is either finite or complex.
If $v$ is real, then we have
\[
 (F_v^{\times})^+ =
 \begin{cases}
  \R^{\times} & \text{if $B_v$, $B_{1,v}$, $B_{2,v}$ are split,}\\
  \R^{\times}_+ &  \text{otherwise.}
 \end{cases}
\]
We have $(\A^{\times})^+ = \prod'_v (F_v^{\times})^+$, $G(\A)^+ = \prod'_v G_v^+$, and $H^0(\A)^+ = \prod'_v (H^0_v)^+$.

Let $\pi$ be an irreducible unitary cuspidal automorphic representation of $\GL_2(\A)$.
We assume that its Jacquet--Langlands transfers $\pi_B$, $\pi_{B_1}$, $\pi_{B_2}$ to $B^{\times}(\A)$, $B_1^{\times}(\A)$, $B_2^{\times}(\A)$ exist.
We regard $\pi_B$ and $\pi_{B_1} \boxtimes \pi_{B_2}$ as irreducible unitary automorphic representations of $G(\A)$ and $H^0(\A)$ respectively.

We define a theta distribution $\Theta : \SS(\X(\A)) \rightarrow \C$ by
\[
 \Theta(\varphi) = \sum_{x \in \X(F)} \varphi(x)
\]
for $\varphi \in \SS(\X(\A))$.
Let $\varphi \in \SS(\X(\A))$ and $f \in \pi_B$.
For $h \in H^0(\A)^+$, choose $g \in G(\A)^+$ such that $\nu(g) = \nu(h)$ and put
\begin{equation}
\label{eq:jls_theta_lift_def}
 \theta_\varphi(f)(h) :=
 \int_{G_1(F) \backslash G_1(\A)} \Theta(\omega_{\psi}(g_1 g h) \varphi)
 f(g_1 g) \, dg_1.
\end{equation}
Here $dg_1 = \prod_v dg_{1,v}$ is the Tamagawa measure on $G_1(\A)$ and we may assume that the volume of a hyperspecial maximal compact subgroup of $G_{1,v}$ with respect to $dg_{1,v}$ is $1$ for almost all $v$.
Note that $\vol(G_1(F) \backslash G_1(\A)) = 1$. 
Using Eichler's norm theorem, one can see that $\theta_\varphi(f)(\gamma h) = \theta_\varphi(f)(h)$ for $\gamma \in H^0(F) \cap H^0(\A)^+$ and $h \in H^0(\A)^+$.
Since $H^0(\A) = H^0(F) H^0(\A)^+$, $\theta_\varphi(f)$ defines an automorphic form on $H^0(\A)$.
Let $\Theta(\pi_B)$ be the automorphic representation of $H^0(\A)$ generated by $\theta_\varphi(f)$ for all $\varphi \in \SS(\X(\A))$ and $f \in \pi_B$.

\begin{lem}
\label{lem:cusp-B}
The automorphic representation $\Theta(\pi_B)$ is cuspidal.
\end{lem}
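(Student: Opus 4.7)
The plan is to verify that for every proper $F$-parabolic subgroup $P = MN$ of $H^0 = \GU(V)^0$, the constant term
\[
 \theta_\varphi(f)_N(h) := \int_{N(F) \backslash N(\A)} \theta_\varphi(f)(nh) \, dn
\]
vanishes for all $\varphi \in \SS(\X(\A))$, $f \in \pi_B$, and $h \in H^0(\A)^+$. First I would classify the proper $F$-parabolic subgroups of $H^0$ via the isogeny $H^0 \cong (B_1^\times \times B_2^\times)/F^\times$: these are pulled back from proper parabolics in one of the factors $B_i^\times$. Since $B_i^\times$ has a proper $F$-parabolic iff $B_i \cong \M_2(F)$ is split, and since $B = B_1 \cdot B_2$ is assumed division (so $B_1$ and $B_2$ cannot both be split), there are two cases. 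If both $B_1, B_2$ are division, then $H^0$ is anisotropic modulo its center, admits no proper $F$-parabolic, and cuspidality is automatic. Otherwise, after possibly swapping $B_1$ and $B_2$, we may assume $B_1 \cong \M_2(F)$; then up to $H^0(F)$-conjugacy there is a single proper parabolic $P_L$, namely the stabilizer of an isotropic $B$-line $L \subset V$ (with $L^{\perp} = L$ since $V$ is $2$-dimensional hermitian), and its unipotent radical $N_L$ is abelian of $F$-dimension one.

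In the non-trivial case I would unfold the constant term in the standard way. Combining \eqref{eq:jls_theta_lift_def} with Fubini (justified by absolute convergence, since $f$ is rapidly decreasing and $\Theta(\omega_\psi(\cdot)\varphi)$ has moderate growth),
\[
 \theta_\varphi(f)_{N_L}(h) = \int_{G_1(F) \backslash G_1(\A)} \Theta_{N_L}\bigl( \omega_\psi(g_1 g h) \varphi \bigr) \cdot f(g_1 g) \, dg_1,
\]
where $g \in G(\A)^+$ satisfies $\nu(g) = \nu(h)$ and
\[
 \Theta_{N_L}(\phi) := \int_{N_L(F) \backslash N_L(\A)} \sum_{x \in \X(F)} \omega_\psi(n) \phi(x) \, dn.
\]
The explicit formula for $\omega_\psi$ on $\SS(\X(\A))$, via the splittings of Appendix \ref{sec:spl-B}, realizes the one-parameter subgroup $N_L$ as acting by characters $\phi(x) \mapsto \psi(t \cdot q(x))\phi(x)$, where $q$ is an $F$-valued quadratic form on $\X$. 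Swapping the sum and integral and invoking orthogonality of $\psi$ on $F \backslash \A$ then yields
\[
 \Theta_{N_L}(\phi) = \sum_{x \in \X_0(F)} \phi(x),
\]
where $\X_0 := \{q = 0\} \subset \X$ is the $F$-subspace corresponding to $L \otimes_B W \subset V \otimes_B W = \V$ under the polarization of \S \ref{sssec:theta-dualpairs-examples}.

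Finally, I would identify the resulting expression as a period of $f$ that vanishes by cuspidality. Since $L^{\perp}/L = 0$, the inner sum is a theta kernel for a dual pair whose ``$V$-side'' has collapsed to zero. Decomposing $\X_0(F) \setminus \{0\}$ into orbits of $M(F)$ acting through its quotient $\GL_B(L) \cong B^\times$, and then unfolding each orbit into the $G_1$-integration via a see-saw identity pairing the Levi $M$ with an auxiliary subgroup of $\U(W) \times \U(W)$, the constant term reduces to a finite sum of period integrals of $f$ against functions on $G_1(\A)$ that transform non-trivially under a Levi of an auxiliary split group. Since $f$ is cuspidal in the infinite-dimensional representation $\pi_B$, the see-saw identifies each such period with a constant term of $f$ along a non-trivial unipotent subgroup, which is zero by cuspidality. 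The main technical obstacle will be the explicit Fourier-analytic identification of $\Theta_{N_L}$ and the clean see-saw reduction of the final period to a constant Fourier coefficient of $f$; both require precise bookkeeping of the splittings and similitude characters from Appendix \ref{sec:spl-B}.
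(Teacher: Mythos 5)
Your high-level plan—reduce to the case where exactly one of $B_1, B_2$ is split, compute the constant term along the unique class of proper parabolics, and invoke cuspidality of $\pi$—is the right strategy and matches the paper's approach. But the execution has a genuine gap in the middle, and the final reduction is heading in the wrong direction. The polarization $\X$ of \S\ref{subsec:key} (spanned by $\e_1, \dots, \e_4$) is \emph{not} adapted to the parabolic $P_L$: the unipotent $\n(b)$ does not act on $\SS(\X(\A))$ by multiplication by a character, because $\X$ is not stable under $N_L$. The paper's proof first passes, via a partial Fourier transform, to the polarization $\tilde{\X} = W \otimes_B \tilde{X}$ adapted to $N_L$, so that the theta kernel becomes $\chi(\g) \sum_{x \in W(F)} \tilde\omega_\psi(\g) \tilde\varphi(x)$ and $\tilde\omega_\psi(\n(b))$ is multiplication by $\psi(\tfrac{1}{2} b \langle x,x \rangle)$. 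Your appeal to ``the explicit formula for $\omega_\psi$ on $\SS(\X(\A))$'' skips this necessary change of model.

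More substantively, your identification $\X_0 := \{q=0\}$ ``corresponding to $L \otimes_B W$'' is incoherent: $L \otimes_B W$ is a $4$-dimensional Lagrangian, whereas the relevant quadratic form on $\tilde\X \cong W = B$ is $q(x) = \tfrac{1}{2}\langle x,x\rangle = \tfrac{1}{2}\nu(x)$, the reduced norm, which is \emph{anisotropic} because $B$ is division (the standing hypothesis in \S\ref{subsec:jls}). So $\{q=0\} = \{0\}$, the inner sum localizes instantly to $x=0$, and the constant term is $\tilde\varphi(0) \int_{G_1(F) \backslash G_1(\A)} f(g_1)\,dg_1$, which vanishes because cuspidality forces $\pi_B|_{G_1(\A)}$ to be orthogonal to the trivial representation. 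The machinery you propose afterward—orbit decomposition of $\X_0(F) \setminus \{0\}$ and a see-saw reducing to constant terms of $f$—is unnecessary ($\X_0 \setminus \{0\}$ is empty) and cannot be made to work as stated: $f$ lives on $B^\times(\A)$ with $B$ division, which has no unipotent subgroups along which constant terms could be taken, so the vanishing must come from the trivial-period argument, not from cuspidality of $f$ along a unipotent. Your own remark that ``the inner sum is a theta kernel for a dual pair whose $V$-side has collapsed to zero'' has the right intuition; the anisotropy of the norm form on the division algebra is what realizes it in one stroke.
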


\begin{proof}
If both $B_1$ and $B_2$ are division, then $H^0$ is anisotropic and the assertion is obvious.
Hence we may assume that either $B_1$ or $B_2$ is split.
Then there exists a complete polarization $V = \tilde{X} \oplus \tilde{Y}$ over $B$.
As in \S \ref{ss:B1-spl}, we regard $V$ as a left $B$-space.
Choosing a basis $\tilde{\v}, \tilde{\v}^*$ of $V$ such that $\tilde{X} = B \tilde{\v}$, $\tilde{Y} = B \tilde{\v}^*$, $\langle \tilde{\v}, \tilde{\v}^* \rangle = 1$, we may write
\[
 H = \left\{ h \in \GL_2(B) \, \left| \,
 h \begin{pmatrix} & 1 \\ -1 & \end{pmatrix} {}^t h^*
 = \nu(h) \cdot \begin{pmatrix} & 1 \\ - 1 & \end{pmatrix} \right. \right\}.
\]
Put 
\[
 \n(b) := \begin{pmatrix} 1 & b \\ & 1 \end{pmatrix} \in H
\]
for $b \in F$.
It remains to show that
\[
 \int_{F \backslash \A} \theta_{\varphi}(f)(\n(b)) \, db = 0
\]
for all $\varphi \in \SS(\X(\A))$ and $f \in \pi_B$.

Let $\V = \tilde{\X} \oplus \tilde{\Y}$ be another complete polarization over $F$ given by $\tilde{\X} = W \otimes_B \tilde{X}$ and $\tilde{\Y} = W \otimes_B \tilde{Y}$, where we regard $W$ as a right $B$-space.
As in \cite[\S 5]{kudla-splitting}, we define a Weil representation $\tilde{\omega}_{\psi}$ of $G_1(\A) \times H_1(\A)$ with respect to $\psi$ on $\SS(\tilde{\X}(\A)) \cong \SS(W(\A))$.
Note that
\[
 \tilde{\omega}_{\psi}(g_1) \tilde{\varphi}(x) = \tilde{\varphi}(g_1^{-1} x), 
 \qquad
 \tilde{\omega}_{\psi}(\n(b)) \tilde{\varphi}(x) = \psi(\tfrac{1}{2} \langle x, x \rangle b) \tilde{\varphi}(x)
\]
for $\tilde{\varphi} \in \SS(W(\A))$, $x \in W(\A)$, $g_1 \in G_1(\A)$, and $b \in \A$.
Let $\tilde{\varphi} \in \SS(W(\A))$ be the partial Fourier transform of $\varphi \in \SS(\X(\A))$.
Then we have
\[
 \Theta(\omega_{\psi}(\g) \varphi)
 = \chi(\g) \sum_{x \in W(F)} \tilde{\omega}_{\psi}(\g) \tilde{\varphi}(x)
\]
for $\g \in G_1(\A) \times H_1^0(\A)$ with some character $\chi$ of $G_1(\A) \times H_1^0(\A)$ trivial on $G_1(F) \times H_1^0(F)$.
One can see that $\chi(g_1) = \chi(\n(b)) = 1$ for $g_1 \in G_1(\A)$ and $b \in \A$.
Since $W$ is anisotropic, we have
\begin{align*}
 \int_{F \backslash \A} \theta_{\varphi}(f)(\n(b)) \, db 
 & = \int_{F \backslash \A} \int_{G_1(F) \backslash G_1(\A)} 
 \sum_{x \in W(F)} \psi(\tfrac{1}{2} \langle x, x \rangle b) 
 \tilde{\omega}_{\psi}(g_1) \tilde{\varphi}(x) f(g_1) \, dg_1 \, db \\
 & = \int_{G_1(F) \backslash G_1(\A)} 
 \tilde{\omega}_{\psi}(g_1) \tilde{\varphi}(0) f(g_1) \, dg_1 \\
 & = \tilde{\varphi}(0) \int_{G_1(F) \backslash G_1(\A)} f(g_1) \, dg_1.
\end{align*}
Since $\pi$ is cuspidal, the restriction of $\pi_B$ to $G_1(\A)$ is orthogonal to the trivial representation of $G_1(\A)$, so that this integral vanishes.
This completes the proof.
\end{proof}

\begin{lem}
\label{lem:nonvanish-B}
The automorphic representation $\Theta(\pi_B)$ is non-zero.
\end{lem}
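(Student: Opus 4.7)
The plan is to deduce non-vanishing of $\Theta(\pi_B)$ from the general Rallis inner product formula, to be established in the remainder of this chapter, together with non-vanishing of the adjoint $L$-value $L(1,\pi,\mathrm{ad})$. Specifically, I will show that for a suitable choice of $\varphi \in \SS(\X(\A))$ and $f \in \pi_B$, the Petersson norm $\langle \theta_\varphi(f), \theta_\varphi(f) \rangle_{H^0}$ is non-zero, whence $\Theta(\pi_B) \neq 0$.

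The first step is to apply the seesaw identity attached to the diagram of \S\ref{subsec:rallissetup} with the doubled hermitian $B$-space $W^{\square} = W \oplus W$. Replacing $\varphi$ by $\varphi \otimes \bar\varphi \in \SS(\X^{\square}(\A))$ and using the partial Fourier transform $\SS(\X^{\square}(\A)) \cong \SS(\V^{\bigtriangledown}(\A))$, the seesaw rewrites $\langle \theta_\varphi(f), \theta_\varphi(f) \rangle_{H^0}$ as a double integral over $[G_1] \times [G_1]$ of $f \otimes \bar f$ against the pull-back to $\G(\U(W) \times \U(W))$ of a theta kernel on $\GU(W^{\square}) \times \GU(V)$. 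Unfolding this kernel along the Siegel parabolic of $\GU(W^{\square})$ via the doubling method of Piatetski-Shapiro--Rallis (using the cuspidality already established in Lemma~\ref{lem:cusp-B}) produces, in the standard way, an Euler product of local doubling zeta integrals times a partial adjoint $L$-value $L^S(1, \pi, \mathrm{ad})$, up to explicit nonzero normalizing factors. Since $L(1, \pi, \mathrm{ad}) \neq 0$ by Shahidi's non-vanishing theorem, it then suffices to exhibit local Schwartz data $\varphi_v$ and vectors $f_v$ at each place so that the corresponding local zeta integral is non-zero.

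The main obstacle will be this last step at the ``bad'' places: archimedean $v \in \Sigma_{B,\infty}$, where $(\U(V_v)^0, \U(W_v))$ is a compact quaternionic dual pair, and finite $v$ where $\pi_v$ is special or ramified principal series and $B_{1,v}, B_{2,v}$ may contribute non-trivial ramification. At each such place the local zeta integral is a nontrivial $\G(\U(V_v)^0 \times \U(W_v))$-invariant sesquilinear form on $\omega_{\psi_v} \otimes \pi_{B,v}$, and its non-vanishing is equivalent to non-vanishing of the local theta lift of $\pi_{B,v}$ for the quaternionic dual pair in question. This I would verify place-by-place using the explicit realizations of $V_v$ and $W_v$ from Chapter~\ref{sec:uqu} and the splittings constructed in Appendices~\ref{sec:spl-B}--\ref{sec:spl-double-B}, or alternatively deduce it from local Howe duality. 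Assembling these local choices into global data $\varphi$ and $f$ gives $\langle \theta_\varphi(f), \theta_\varphi(f) \rangle_{H^0} \neq 0$, and hence $\Theta(\pi_B) \neq 0$.
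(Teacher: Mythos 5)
Your proposal follows essentially the same route as the paper: deduce non-vanishing of $\Theta(\pi_B)$ from the Rallis inner product formula (Prop.~\ref{prop:rallis-B}), the non-vanishing of $L(1,\pi,\ad)$, and a local non-vanishing statement for the doubling zeta integrals (Lemma~\ref{lem:nonvanish-local-B}). The one place you diverge is in handling the local step: rather than verifying non-vanishing place-by-place or appealing directly to local Howe duality as you suggest, the paper cites the surjectivity of $\varphi_v\mapsto\mathcal{F}_{\varphi_v}|_{G^{\square}_{1,v}}$ onto $\II_{1,v}(\tfrac12)$ (Yamana) together with the Kudla--Rallis non-vanishing of doubling zeta integrals for some section, which sidesteps any explicit local computation.
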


The proof of this lemma will be given in \S \ref{subsec:rallis-B} below.

\begin{prop}
We have
\[
 \Theta(\pi_B) = \pi_{B_1} \boxtimes \pi_{B_2}.
\]
\end{prop}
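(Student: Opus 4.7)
\medskip

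\noindent\textbf{Proof proposal.} By Lemmas \ref{lem:cusp-B} and \ref{lem:nonvanish-B}, $\Theta(\pi_B)$ is a nonzero cuspidal automorphic representation of $H^0(\A)$. I would identify it by computing its local components at almost all places and then appealing to strong multiplicity one; the final equality (rather than mere containment) would come from Howe duality.

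First, fix an irreducible summand $\sigma = \otimes_v \sigma_v \subset \Theta(\pi_B)$. The plan is to show that at almost all places $v$, we have $\sigma_v \cong \pi_{B_1,v} \boxtimes \pi_{B_2,v}$ as representations of $H^0_v \cong (B_{1,v}^\times \times B_{2,v}^\times)/F_v^\times$. Fix a finite place $v$ at which everything is unramified: $\pi_v$ is an unramified principal series, $B$, $B_1$, $B_2$ are split at $v$, $V_v$ and $W_v$ are hyperbolic, $\psi_v$ has conductor $\mathfrak{o}_v$, and the splittings $s_v$ constructed in Appendix \ref{sec:spl-B} are the canonical ones (i.e.\ the spherical vector $\varphi_v^0 \in \SS(\X_v)$ is fixed by the hyperspecial subgroup under the image $\iota_v$). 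At such a $v$, one may realize $\theta_v(\pi_{B,v})$ on the spherical Hecke module, and a direct computation using the explicit Weil representation formulas of \S\ref{sssec:theta-weil} (applied to the Siegel and unipotent elements) determines the Satake parameters of the local lift. The outcome I expect is that if $\pi_{B,v}$ has Satake parameters $\{\alpha_v,\beta_v\}$, then $\theta_v(\pi_{B,v})$ corresponds to the pair of parameters $(\{\alpha_v,\beta_v\},\{\alpha_v,\beta_v\})$ on $\GL_2(F_v)\times \GL_2(F_v)/F_v^\times$, which is exactly $\pi_{B_1,v}\boxtimes\pi_{B_2,v}$.

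With this local identification in hand, the global statement follows by standard strong multiplicity one applied to $\GL_2(\A_F) \times \GL_2(\A_F)$ together with the uniqueness of Jacquet--Langlands transfers: $\sigma$ is cuspidal, agrees with $\pi_{B_1} \boxtimes \pi_{B_2}$ at almost all places, and hence is isomorphic to it abstractly. Since $\pi_{B_1} \boxtimes \pi_{B_2}$ appears with multiplicity one in the cuspidal spectrum of $H^0(\A)$ (by multiplicity one on each factor and triviality of the restriction of the central character to the embedded $F^\times$), and since Howe duality at each place implies any two irreducible summands of $\Theta(\pi_B)$ have the same local components everywhere, one concludes that $\Theta(\pi_B)$ is isotypic of type $\pi_{B_1} \boxtimes \pi_{B_2}$, with equality $\Theta(\pi_B) = \pi_{B_1} \boxtimes \pi_{B_2}$.

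The main obstacle is the unramified local computation: one must carefully bookkeep the contribution of the splittings $s_v$ from Appendix \ref{sec:spl-B}, since these determine the precise twist by which the representation of $\G(\U(V_v)\times \U(W_v))^0$ differs from the abstract Weil representation of $\Mp(\V_v)$. A small additional subtlety is that $H^0$ is not itself $B_1^\times \times B_2^\times$ but a quotient by the diagonal central $F^\times$, so one must check that the central character condition is compatible with the similitude factor $\nu(g)=\nu(h)$ defining the similitude theta lift in \eqref{eq:jls_theta_lift_def}; this compatibility is exactly what Proposition \ref{prop:spl-B-prod} is designed to ensure. Once these points are settled, the argument is essentially a standard local--global theta-correspondence argument together with strong multiplicity one.
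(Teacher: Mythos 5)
Your proposal is correct and takes essentially the same route as the paper: the paper's proof is a one-liner invoking cuspidality (Lemma \ref{lem:cusp-B}), nonvanishing (Lemma \ref{lem:nonvanish-B}), the unramified local theta correspondence to identify local components at almost all places, and strong multiplicity one. Your write-up simply spells out those steps in more detail, which is fine.
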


\begin{proof}
Since $\Theta(\pi_B)$ is cuspidal and non-zero, the assertion follows from the local theta correspondence for unramified representations and the strong multiplicity one theorem.
\end{proof}

\subsection{The doubling method}
\label{subsec:rallis-general-doubling}

\subsubsection{Degenerate principal series representations}

Set
\[
 G^{\square} = \GU(W^{\square}), \qquad G_1^{\square} = \U(W^{\square}).
\]
Choosing a basis $\w, \w^*$ of $W^{\square}$ such that $W^{\bigtriangledown} = B \w$, $W^{\triangle} = B \w^*$, $\langle \w, \w^* \rangle = 1$, we may write
\[
 G^{\square} = \left\{ g \in \GL_2(B) \, \left| \,
 g \begin{pmatrix} & 1 \\ 1 & \end{pmatrix} {}^t g^*
 = \nu(g) \cdot \begin{pmatrix} & 1 \\ 1 & \end{pmatrix} \right. \right\}.
\]
Let $P$ and $P_1$ be the Siegel parabolic subgroups of $G^{\square}$ and $G^{\square}_1$ given by 
\[
 P = \left\{ \left.
 \begin{pmatrix} a & * \\ & \nu \cdot (a^*)^{-1} \end{pmatrix} \in G^{\square}
 \, \right| \, a \in B^{\times}, \, \nu \in F^{\times} \right\}
\]
and $P_1 = P \cap G^{\square}_1$ respectively.
Let $\delta_P$ and $\delta_{P_1}$ denote the modulus characters of $P(\A)$ and $P_1(\A)$ respectively.
We have
\[
 \delta_P \left( \begin{pmatrix} a & * \\ & \nu \cdot (a^*)^{-1} \end{pmatrix} \right) = |\nu(a)|^3 \cdot |\nu|^{-3}
\]
and $\delta_{P_1} = \delta_P|_{P_1(\A)}$.
Put
\[
 d(\nu) := \begin{pmatrix} 1 & \\ & \nu \end{pmatrix} \in P
\]
for $\nu \in F^{\times}$.
We fix a maximal compact subgroup $K$ of $G^{\square}(\A)$ such that $G^{\square}(\A) = P(\A) K$ and $G^{\square}_1(\A) = P_1(\A) K_1$, where $K_1 = K \cap G^{\square}_1(\A)$.

For $s \in \C$, we consider a degenerate principal series representation $\II(s) := \Ind^{G^{\square}}_P(\delta_P^{s/3})$ of $G^{\square}(\A)$ consisting of smooth functions $\mathcal{F}$ on $G^{\square}(\A)$ which satisfy
\[
 \mathcal{F} \left( \begin{pmatrix} a & * \\ & \nu \cdot (a^*)^{-1} \end{pmatrix} g \right) = |\nu(a)|^{s + \frac{3}{2}} \cdot |\nu|^{-s - \frac{3}{2}} \cdot \mathcal{F}(g).
\]
We define a degenerate principal series representation $\II_1(s) := \Ind^{G_1^{\square}}_{P_1}(\delta_{P_1}^{s/3})$ of $G_1^{\square}(\A)$ similarly.
Then the restriction $\II(s) \rightarrow \II_1(s)$ to $G_1^{\square}(\A)$ as functions is a $G_1^{\square}(\A)$-equivariant isomorphism.
For each place $v$ of $F$, we define degenerate principal series representations $\II_v(s)$ and $\II_{1,v}(s)$ of $G^{\square}_v$ and $G^{\square}_{1,v}$ similarly.

For $\varphi \in \SS(\V^{\bigtriangledown}(\A))$, we define $\mathcal{F}_{\varphi} \in \II(\frac{1}{2})$ by
\[
 \mathcal{F}_{\varphi}(g) = |\nu(g)|^{-2} \cdot (\omega^{\square}_{\psi}(d(\nu(g)^{-1}) g) \varphi)(0).
\]
One can see that the map $\varphi \mapsto \mathcal{F}_{\varphi}$ is $\G(\U(V) \times \U(W^{\square}))(\A)$-equivariant, where $\GU(V)(\A)$ acts trivially on $\II(\frac{1}{2})$.

\subsubsection{Eisenstein series}

For a holomorphic section $\mathcal{F}_s$ of $\II(s)$, we define an Eisenstein series $E(\mathcal{F}_s)$ on $G^{\square}(\A)$ by (the meromorphic continuation of)
\[
 E(g, \mathcal{F}_s)
 = \sum_{\gamma \in P(F) \backslash G^{\square}(F)} \mathcal{F}_s(\gamma g).
\]
For a holomorphic section $\mathcal{F}_{1,s}$ of $\II_1(s)$, we define an Eisenstein series $E(\mathcal{F}_{1,s})$ on $G^{\square}_1(\A)$ similarly.
If $\mathcal{F}_s$ is a holomorphic section of $\II(s)$,
then $\mathcal{F}_s|_{G^{\square}_1(\A)}$ is a holomorphic section of $\II_1(s)$
and $E(\mathcal{F}_s)|_{G^{\square}_1(\A)} = E(\mathcal{F}_s|_{G^{\square}_1(\A)})$.
By \cite[Theorem 3.1]{yamana:sw}, $E(\mathcal{F}_s)$ is holomorphic at $s = \tfrac{1}{2}$.
In particular, the map
\[
 E:\II(\tfrac{1}{2}) \longrightarrow \mathscr{A}(G^{\square}) 
\]
given by $E(\mathcal{F}) := E(\mathcal{F}_s)|_{s=\frac{1}{2}}$ is $G^{\square}(\A)$-equivariant, where $\mathscr{A}(G^{\square})$ is the space of automorphic forms on $G^{\square}(\A)$ and $\mathcal{F}_s$ is the holomorphic section of $\II(s)$ such that $\mathcal{F}_{\frac{1}{2}} = \mathcal{F}$ and $\mathcal{F}_s|_K$ is independent of $s$.

\subsubsection{Doubling zeta integrals}

Let $\langle \cdot, \cdot \rangle$ be the invariant hermitian inner product on $\pi_B$ given by
\[
 \langle f_1, f_2 \rangle := \int_{Z_G(\A) G(F) \backslash G(\A)} f_1(g) \overline{f_2(g)} \, dg
\]
for $f_1, f_2 \in \pi_B$.
Here $Z_G$ is the center of $G$ and $dg$ is the Tamagawa measure on $Z_G(\A) \backslash G(\A)$.
Note that $\vol(Z_G(\A) G(F) \backslash G(\A)) = 2$.
Fix an isomorphism $\pi_B \cong \otimes_v \pi_{B,v}$.
For each place $v$ of $F$, we choose an invariant hermitian inner product $\langle \cdot, \cdot \rangle$ on $\pi_{B,v}$ so that $\langle f_1,f_2 \rangle = \prod_v \langle f_{1,v}, f_{2,v} \rangle$ and $\langle f_{1,v}, f_{2,v} \rangle = 1$ for almost all $v$ for $f_1 = \otimes_v f_{1,v}, f_2 = \otimes_v f_{2,v} \in \pi_B$.
Set 
\[
 \GG = \G(\U(W) \times \U(W)) = \{ (g_1, g_2) \in G \times G \, | \, \nu(g_1) = \nu(g_2) \}.
\]
Then the doubling zeta integral of Piatetski-Shapiro and Rallis \cite{psr} is given by
\[
 Z(\mathcal{F}_s, f_1, f_2) = \int_{Z(\A) \GG(F) \backslash \GG(\A)} E(\iota(g_1, g_2), \mathcal{F}_s) f_1(g_1) \overline{f_2(g_2)} \, d \g
\]
for a holomorphic section $\mathcal{F}_s$ of $\II(s)$ and $f_1, f_2 \in \pi_B$.
Here $Z$ is the center of $G^{\square}$ and $d \g$ is the Tamagawa measure on $Z(\A) \backslash \GG(\A)$.
Note that $\vol(Z(\A) \GG(F) \backslash \GG(\A)) = 2$.
For each place $v$ of $F$, put
\[
 Z(\mathcal{F}_{s,v}, f_{1,v}, f_{2,v})
 = \int_{G_{1,v}} \mathcal{F}_{s,v} (\iota(g_{1,v}, 1)) \langle \pi_{B,v}(g_{1,v}) f_{1,v}, f_{2,v} \rangle \, dg_{1,v}
\]
for a holomorphic section $\mathcal{F}_{s,v}$ of $\II_v(s)$ and $f_{1,v}, f_{2,v} \in \pi_{B,v}$.
Note that, for fixed $f_{1,v}$ and $f_{2,v}$, this integral depends only on the holomorphic section $\mathcal{F}_{s,v}|_{G^{\square}_{1,v}}$ of $\II_{1,v}(s)$.

\begin{lem}
\label{lem:doubling-B}
We have 
\[
 Z(\mathcal{F}_s, f_1, f_2) = \frac{L^S(s+\tfrac{1}{2}, \pi, \ad)}
 {\zeta^S(s+\tfrac{3}{2}) \zeta^S(2s+1)}
 \cdot \prod_{v \in S} Z(\mathcal{F}_{s,v}, f_{1,v}, f_{2,v})
\]
for a holomorphic section $\mathcal{F}_s = \otimes_v \mathcal{F}_{s,v}$ of $\II(s)$ and $f_1 = \otimes_v f_{1,v}, f_2 = \otimes_v f_{2,v} \in \pi_B$.
Here $S$ is a sufficiently large finite set of places of $F$.
\end{lem}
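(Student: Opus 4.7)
The plan is to follow the standard doubling method of Piatetski-Shapiro--Rallis \cite{psr}, adapted to the quaternionic unitary similitude setting. First I would unfold the Eisenstein series $E(\mathcal{F}_s)$ against the cusp forms $f_1, \overline{f_2}$ using the orbit decomposition of $P(F) \backslash G^{\square}(F) / \iota(\GG(F))$. This double coset space has finitely many orbits, and the closed orbits contribute zero after pairing with the cusp forms by cuspidality, while the unique open orbit contains the element $\delta$ sending $(g_1, g_2) \mapsto \iota(g_1, g_2)$ with stabilizer (inside $\GG$) equal to the diagonal copy of $G$ embedded as $g \mapsto (g,g)$. Thus the unfolding produces
\[
 Z(\mathcal{F}_s, f_1, f_2) = \int_{Z_G(\A) G(F) \backslash G(\A)} \int_{G_1(\A)} \mathcal{F}_s(\iota(g_1 g, g)) f_1(g_1 g) \overline{f_2(g)} \, dg_1 \, dg.
\]

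Next I would insert the Fourier expansion of $f_1$ against $\overline{f_2}$ — or more directly, exchange the order of integration and collapse the integral over $G(F) \backslash G(\A)$ modulo center into a matrix coefficient. Explicitly, if $\mathcal{F}_s = \otimes_v \mathcal{F}_{s,v}$ and $f_j = \otimes_v f_{j,v}$ are factorizable, the integral factors as
\[
 Z(\mathcal{F}_s, f_1, f_2) = \prod_v Z(\mathcal{F}_{s,v}, f_{1,v}, f_{2,v}),
\]
where the local zeta integral is as defined above. This step is formal once the global unfolding is established.

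The main content then is the local unramified computation: for $v \notin S$, where everything is unramified, $f_{j,v}$ is the spherical vector, and $\mathcal{F}_{s,v}$ is the spherical section normalized so that $\mathcal{F}_{s,v}(1) = 1$, I would show
\[
 Z(\mathcal{F}_{s,v}, f_{1,v}, f_{2,v}) = \frac{L(s+\tfrac{1}{2}, \pi_v, \ad)}{\zeta_v(s+\tfrac{3}{2}) \zeta_v(2s+1)}.
\]
The hard part is this unramified calculation. However, at any place $v \notin S$, the quaternion algebra $B_v$ is split, so $G_v \simeq \GL_2(F_v)$, $G^{\square}_v \simeq \GL_4(F_v)$, and the integral reduces to the classical doubling integral for $\GL_2 \times \GL_2 \subset \GL_4$. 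One computes this either by the standard Casselman--Shalika-type manipulation (writing the integrand using the Iwasawa decomposition on $G_1(F_v)$ and using the Macdonald formula for spherical functions) or by identifying $Z(\mathcal{F}_{s,v}, f_{1,v}, f_{2,v})$ with a Godement--Jacquet-type zeta integral for $\pi_v \otimes \widetilde{\pi}_v$, which factors as the product $L(s+\tfrac{1}{2}, \pi_v \times \widetilde{\pi}_v)$ divided by the normalizing factor $\zeta_v(s+\tfrac{3}{2})\zeta_v(2s+1)$ coming from the intertwining operator. The identity $L(s, \pi_v \times \widetilde{\pi}_v) = L(s,\pi_v,\ad) \cdot \zeta_v(s)$ and cancellation of $\zeta_v(s+\frac{1}{2})$ (which is absorbed into the central character normalization, since $\pi$ is unitary) then yields the stated expression.

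Collecting the places in $S$ versus outside $S$ gives the asserted formula with the ratio $L^S(s+\tfrac{1}{2},\pi,\ad)/(\zeta^S(s+\tfrac{3}{2})\zeta^S(2s+1))$ times the remaining finite product of local zeta integrals. The only real obstacle is carefully bookkeeping the normalization constants (volumes, Tamagawa measures, and the $|\nu|$-shift in the definition of $\mathcal{F}_\varphi$), but these are determined by the conventions fixed in \S\ref{subsec:rallissetup} and do not affect the shape of the formula.
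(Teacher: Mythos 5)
Your proof takes essentially the same route as the paper's: unfold the Eisenstein series so only the open $\GG$-orbit in $P\backslash G^\square$ survives (the others vanish by cuspidality), reparametrize to extract the matrix coefficient $\langle \pi_B(g')f_1,f_2\rangle$ and factor into local integrals, then invoke the unramified calculation of Piatetski-Shapiro--Rallis. The paper's proof simply cites \cite{psr} for the spherical identity, whereas you sketch carrying it out via the identity $L(s,\pi_v\times\widetilde{\pi}_v)=L(s,\pi_v,\ad)\zeta_v(s)$; that is the right idea, though the parenthetical ``absorbed into the central character normalization'' is not quite the correct explanation for where the extra $\zeta_v(s+\tfrac12)$ goes -- it cancels against the corresponding factor in the Gindikin--Karpelevich normalizing constant of the Siegel Eisenstein series, not anything involving the central character.
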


\begin{proof}
The assertion follows from the doubling method \cite{psr}.
Indeed, as in \cite{psr}, \cite[\S 6.2]{harris-jams}, we unfold the Eisenstein series $E(\iota(g_1, g_2), \mathcal{F}_s)$ and see that only the open $\GG$-orbit $P \backslash P \GG$ in $P \backslash G^\square$ contributes to the integral $Z(\mathcal{F}_s, f_1, f_2)$.
Hence we have
\[
 Z(\mathcal{F}_s, f_1, f_2) = \int_{Z(\A) G^{\triangle}(F) \backslash \GG(\A)} \mathcal{F}_s(\iota(g_1, g_2) ) f_1(g_1) \overline{f_2(g_2)} \, d \g, 
\]
where $G^{\triangle} = \{ (g, g) \, | \, g \in G \}$.
We have $\mathcal{F}_s(\iota(g_1, g_2)) = \mathcal{F}_s(\iota(g_2^{-1} g_1, 1))$ for $(g_1, g_2) \in \GG$.
Writing $g = g_2$ and $g' = g_2^{-1} g_1$, we have
\begin{align*}
 Z(\mathcal{F}_s, f_1, f_2)
 & = \int_{G_1(\A)} \int_{Z_G(\A) G(F) \backslash G(\A)}
 \mathcal{F}_s(\iota(g', 1)) f_1(g g') \overline{f_2(g)} \, d g \, d g' \\
 & = \int_{G_1(\A)} \mathcal{F}_s(\iota(g', 1)) \langle \pi_B(g') f_1, f_2 \rangle \, d g' \\
 & = \prod_v Z(\mathcal{F}_{s,v}, f_{1,v}, f_{2,v}).
\end{align*}
By \cite{psr}, we have
\[
 Z(\mathcal{F}_{s,v}, f_{1,v}, f_{2,v})
 = \frac{L(s+\tfrac{1}{2}, \pi_v, \ad)}{\zeta_v(s+\tfrac{3}{2}) \zeta_v(2s+1)}
\]
for almost all $v$.
This completes the proof.
\end{proof}

\subsubsection{Local zeta integrals}

\begin{lem}
\label{lem:conv-B}
The integral $Z(\mathcal{F}_v, f_{1,v}, f_{2,v})$ is absolutely convergent for $\mathcal{F}_v \in \II_v(\frac{1}{2})$ and $f_{1,v}, f_{2,v} \in \pi_{B,v}$.
\end{lem}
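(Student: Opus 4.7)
The plan is to follow the standard convergence argument for local doubling zeta integrals of $\GL_2$-type at the critical value $s=\tfrac{1}{2}$.

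First I would dispose of the compact case. If $B_v$ is ramified -- either a local division algebra at a non-archimedean place, or the Hamilton quaternions at an archimedean place -- then $G_{1,v} = \U(W)_v$ is a compact group. Both $\mathcal{F}_v$ and the matrix coefficient $g \mapsto \langle \pi_{B,v}(g) f_{1,v}, f_{2,v}\rangle$ are smooth, hence continuous and bounded, on $G_{1,v}$, so the integral converges trivially.

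Assume then that $B_v$ is split, so that $G_{1,v} \simeq \SL_2(F_v)$. Applying the Cartan decomposition $G_{1,v} = K_v A_v^+ K_v$ with $A_v^+ = \{a(t) = \mathrm{diag}(t, t^{-1}) : |t|_v \ge 1\}$, and using $K_v$-finiteness of $\mathcal{F}_v$ and of the $f_{i,v}$, absolute convergence of $Z(\mathcal F_v, f_{1,v}, f_{2,v})$ reduces to that of
\[
 \int_{|t|_v \ge 1} \bigl|\mathcal{F}_v(\iota(a(t),1))\bigr| \cdot \bigl|\langle \pi_{B,v}(a(t)) f_{1,v}, f_{2,v}\rangle\bigr| \cdot \Delta_v(t)\, d^\times t,
\]
where the Cartan Jacobian satisfies $\Delta_v(t) \ll |t|_v^{2}$.

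Next I would bound the section by writing out the Iwasawa decomposition of $\iota(a(t),1) \in G^{\square}_v$ with respect to the Siegel parabolic $P_v$, in the spirit of the standard $\GL_n$ doubling computation of Piatetski-Shapiro--Rallis (compare \cite[\S 6.2]{harris-jams}), translated to our quaternionic model via the splittings of Appendix \ref{sec:spl-double-B}. The outcome is an upper bound of the form $|\mathcal F_v(\iota(a(t),1))| \ll |t|_v^{-2}$: this comes from the transformation rule $\mathcal F_v(pg) = \delta_{P_v}^{2/3}(p)\mathcal F_v(g)$ at $s = \tfrac{1}{2}$, together with the fact that the Siegel-Levi component of the Iwasawa factor of $\iota(a(t),1)$ has reduced norm comparable to $|t|_v$.

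For the matrix coefficient I would invoke the unitarity of $\pi_{B,v}$ combined with the classification of the unitary dual of $\GL_2(F_v)$: since $\pi$ is a unitary cuspidal representation, the local component $\pi_v$ is either tempered, or a unitary complementary series with Satake parameter bounded strictly away from $\tfrac{1}{2}$, and its Jacquet--Langlands transfer $\pi_{B,v}$ has the same parameter. In either case there exists $\delta < \tfrac{1}{2}$ for which
\[
 \bigl|\langle \pi_{B,v}(a(t)) f_{1,v}, f_{2,v}\rangle\bigr| \ll |t|_v^{-1/2 + \delta}
\]
on $A_v^+$. Multiplying the bounds from Steps 3--4 against the Jacobian then gives an integrand of size $\ll |t|_v^{-1/2 + \delta}$, which is integrable with respect to $d^\times t$ on $|t|_v \ge 1$.

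The hard part will not be the Cartan reduction or the section bound, which are routine once one has written out the Iwasawa coordinates of $\iota(a(t),1)$; it is rather the strict decay of matrix coefficients of $\pi_{B,v}$ on $A_v^+$, which is what places us inside the range of absolute convergence at the critical point $s = \tfrac{1}{2}$. This input is nevertheless standard, since any non-tempered unitary representation of $\GL_2(F_v)$ occurring as the local component of a unitary cuspidal automorphic representation is a complementary series with parameter bounded strictly away from the boundary (e.g., by any non-trivial progress toward Ramanujan, such as Kim--Sarnak).
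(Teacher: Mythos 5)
Your proposal is correct, but takes a different route from the paper: for the split case the paper simply invokes \cite[Lemma 6.5]{gi} (and handles the ramified/compact case exactly as you do, in one sentence), whereas you unfold the argument that presumably underlies that citation. What your version buys is a self-contained account, and it makes transparent exactly where the two inputs come from: the polynomial decay $|\mathcal F_v(\iota(a(t),1))|\ll |t|_v^{-2}$ of the section at $s=\tfrac12$ from the Siegel-Iwasawa computation, and the strict decay $|t|_v^{-1+2\sigma_0}$, $\sigma_0<\tfrac12$, of the matrix coefficients of $\pi_{B,v}$. What the paper's two-line proof buys is brevity, at the cost of sending the reader elsewhere.

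One small quibble: you do not need any nontrivial progress toward Ramanujan here, so the appeal to Kim--Sarnak is overkill. The bound $\sigma_0 < \tfrac12$ that makes the integral converge at $s=\tfrac12$ is already forced by unitarity of $\pi_{B,v}$ together with the fact that the local component of a cuspidal automorphic representation of $\GL_2$ at a place where $B$ splits is infinite-dimensional (hence cannot be the boundary case $\sigma_0=\tfrac12$, which is the one-dimensional quotient). So the ``strictly away from $\tfrac12$'' you need is the trivial, representation-theoretic one, not an analytic bound. Also, at archimedean split places one should not lean on $K_v$-finiteness of $\mathcal F_v$ and $f_{i,v}$ (smooth vectors need not be $K$-finite); but the Cartan reduction still works for smooth vectors because the Iwasawa factor and the $\Xi$-type bound on matrix coefficients are uniform, so this is a matter of phrasing rather than a real gap.
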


\begin{proof}
If $B_v$ is split, then the lemma is proved in \cite[Lemma 6.5]{gi}.
If $B_v$ is division, then $G_{1,v}$ is compact and the assertion is obvious.
\end{proof}

\begin{lem}
\label{lem:nonvanish-local-B}
There exist $\varphi_v \in \SS(\V^{\bigtriangledown}_v)$ and $f_{1,v}, f_{2,v} \in \pi_{B,v}$ such that $Z(\mathcal{F}_{\varphi_v}, f_{1,v}, f_{2,v}) \ne 0$.
\end{lem}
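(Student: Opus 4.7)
My plan is to reduce the non-vanishing of $Z(\mathcal{F}_{\varphi_v}, f_{1,v}, f_{2,v})$ to the non-vanishing of the local theta lift of $\pi_{B,v}$ from $G_{1,v} = \U(W)_v$ to $\U(V)_v$.

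First I would invoke the partial Fourier transform of \S\ref{ss:spl-double-B-Y} to identify $\SS(\V_v^{\bigtriangledown})$ with $\SS(\X_v) \otimes \SS(\X_v)$ in a manner that intertwines $\omega_\psi^{\square}\circ(\id_V \otimes \iota)$ with $\omega_\psi \otimes \bar\omega_\psi$ as representations of $\G(\U(V)\times \U(W)\times \U(W))_v$. Taking $\varphi_v$ corresponding to a pure tensor $\varphi_{1,v}\otimes \bar\varphi_{2,v}$ and evaluating the resulting mixed-model realization at $0 \in \V_v^{\bigtriangledown}$, the defining formula for $\mathcal{F}_{\varphi_v}$ yields
\[
\mathcal{F}_{\varphi_v}(\iota(g_{1,v},1)) = \langle \omega_\psi(g_{1,v})\varphi_{1,v},\, \varphi_{2,v}\rangle, \qquad g_{1,v}\in G_{1,v}.
\]
Substituting into the definition of the local zeta integral gives
\[
Z(\mathcal{F}_{\varphi_v}, f_{1,v}, f_{2,v}) = \int_{G_{1,v}} \langle \omega_\psi(g_{1,v})\varphi_{1,v}, \varphi_{2,v}\rangle \cdot \langle \pi_{B,v}(g_{1,v}) f_{1,v}, f_{2,v}\rangle \, dg_{1,v},
\]
which is absolutely convergent by Lemma \ref{lem:conv-B}.

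The right-hand side is precisely the local matrix-coefficient pairing whose non-vanishing governs the local theta correspondence for the isometry pair $(\U(W)_v, \U(V)_v)$; equivalently it is the local Rallis inner product attached to the pairs $(\varphi_{i,v}, f_{i,v})$. By Howe duality (in the Howe form, or in the see-saw reformulation for the above dual pair) it is non-zero for some choice of $\varphi_{1,v},\varphi_{2,v},f_{1,v},f_{2,v}$ if and only if the small local theta lift $\theta_v(\pi_{B,v})$ to $\U(V)_v$ is non-zero. When $B_v$ is split this non-vanishing follows from Shimizu's local correspondence for the pair $(\GL_2,\GO(2,2))$, as developed in \cite{gi} \S 6--7. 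When $B_v$ is division, the group $\U(W)_v$ is compact modulo center, and the non-vanishing of $\theta_v(\pi_{B,v})$ can be verified directly from the decomposition of $\omega_\psi$ restricted to $\U(W)_v\times \U(V)_v^0$ together with the local Jacquet--Langlands correspondence for $\U(V)_v^0 \cong (B_1^\times \times B_2^\times)/F_v^\times$; alternatively one may invoke conservation relations to reduce to the split case.

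The main obstacle is the last step, namely confirming the local non-vanishing of the theta lift uniformly at split and division places, and carefully matching the local Rallis-type integral obtained above with the standard matrix-coefficient formulation of the Howe pairing while bookkeeping the splittings $s_v$ and $\hat{s}_v$ of Appendices \ref{sec:spl-B} and \ref{sec:spl-double-B}. Once these are in place, the remaining manipulations — passing to pure tensors, unfolding $\mathcal{F}_{\varphi_v}$, and applying absolute convergence to justify the Fubini step — are essentially formal.
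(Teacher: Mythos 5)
Your strategy differs from the paper's in a substantive way, and the point you flag at the end as an "obstacle" is in fact the genuine gap. Let me compare.

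The paper's proof splits on whether $B_v$ is split or division. For split $B_v$ it cites \cite[Lemma 6.6]{gi}, as you essentially do. For division $B_v$ the paper works \emph{entirely on the degenerate principal series side}: it first produces, via the Kudla--Rallis arguments \cite[Theorem 3.2.2]{kr90}, \cite[Proposition 7.2.1]{kr94}, a section $\mathcal{F}_v \in \II_v(\tfrac{1}{2})$ for which $Z(\mathcal{F}_v,f_{1,v},f_{2,v}) \neq 0$, and then invokes Yamana's surjectivity \cite[Theorems 1.2, 9.2]{yamana:dps} of $\varphi_v \mapsto \mathcal{F}_{\varphi_v}|_{G^\square_{1,v}}$ to lift that section back to a Schwartz function. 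This never touches the local theta lift of $\pi_{B,v}$ at all, and does not need the partial Fourier transform identity \eqref{eq:local_zeta_integral} (which the paper records only \emph{after} the lemma).

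Your route instead uses \eqref{eq:local_zeta_integral} to rewrite the zeta integral as
\[
 \int_{G_{1,v}} \langle \omega_\psi(g)\varphi_{1,v},\varphi_{2,v}\rangle\,\langle\pi_{B,v}(g)f_{1,v},f_{2,v}\rangle\,dg
\]
and then asserts this is nonzero for some data if and only if $\theta_v(\pi_{B,v})\neq 0$, "by Howe duality." That "if and only if" is where the argument is incomplete. The easy direction is that nonvanishing of the integral forces nonvanishing of $\Hom_{G_{1,v}}(\omega_\psi,\pi_{B,v})$, hence of the big theta lift. But the direction you actually need — that nonvanishing of the theta lift forces nonvanishing of \emph{this specific sesquilinear pairing} for some choice of data — is not a formal consequence of Howe duality for non-compact $G_{1,v}$; it is essentially a local nonvanishing statement for the doubling integral, which is exactly what Kudla--Rallis provide. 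In the division case, $G_{1,v}=B_v^1$ is compact and the implication \emph{does} become formal (Schur orthogonality: the integral projects onto the $\pi_{B,v}$-isotypic piece, and once you know $\pi_{B,v}$ occurs in $\omega_\psi|_{B_v^1}$, some matrix coefficient must survive), but you invoke Howe duality rather than Schur orthogonality and never actually establish the occurrence of $\pi_{B,v}$ in $\omega_\psi|_{B_v^1}$ — you gesture at "local Jacquet--Langlands" or "conservation relations" without an argument. The paper's route via Kudla--Rallis plus Yamana surjectivity is precisely the mechanism that fills this hole without needing to first prove local theta nonvanishing.

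In short: your reduction to the matrix-coefficient form of $Z$ is correct (and is recorded in the paper as \eqref{eq:local_zeta_integral}), and your split case matches the paper. But for the division case you have replaced the paper's concrete argument (nonvanishing of some section plus surjectivity from Schwartz space) with an appeal to local theta nonvanishing that you do not actually prove, and the "equivalence by Howe duality" linking the integral to that nonvanishing conflates an easy implication with a harder one. To complete your version you would need to (i) state and use Schur orthogonality for the compact group $B_v^1$, and (ii) supply a genuine argument that $\pi_{B,v}$ occurs in $\omega_\psi|_{B_v^1}$ — at which point you have essentially reproduced the content that Kudla--Rallis and Yamana package for you.
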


\begin{proof}
If $B_v$ is split, then the lemma is proved in \cite[Lemma 6.6]{gi}.
Assume that $B_v$ is division.
As in \cite[Theorem 3.2.2]{kr90}, \cite[Proposition 7.2.1]{kr94}, one can see that there exist $\mathcal{F}_v \in \II_v(\frac{1}{2})$ and $f_{1,v}, f_{2,v} \in \pi_{B,v}$ such that $Z(\mathcal{F}_v, f_{1,v}, f_{2,v}) \ne 0$.
On the other hand, by \cite[Theorems 1.2, 9.2]{yamana:dps}, the map
\begin{align*}
 \SS(\V^{\bigtriangledown}_v) & \longrightarrow \II_{1,v}(\tfrac{1}{2}) \\
 \varphi_v & \longmapsto \mathcal{F}_{\varphi_v}|_{G^{\square}_{1,v}}
\end{align*}
is surjective .
This yields the lemma.
\end{proof}

If $\varphi_v$ is the partial Fourier transform of $\varphi_{1,v} \otimes \bar{\varphi}_{2,v} \in \SS(\X_v^{\square})$ with $\varphi_{i,v} \in \SS(\X_v)$, then we have
\begin{equation}
\label{eq:local_zeta_integral}
 Z(\mathcal{F}_{\varphi_v}, f_{1,v}, f_{2,v})
 = \int_{G_{1,v}} \langle \omega_{\psi}(g_{1,v}) \varphi_{1,v}, \varphi_{2,v} \rangle
 \langle \pi_{B,v}(g_{1,v}) f_{1,v}, f_{2,v} \rangle \, d g_{1,v}.
\end{equation}
This will be used later to explicate the Rallis inner product formula.

\subsection{The Rallis inner product formula}
\label{subsec:rallis-B}

\subsubsection{Theta integrals}

Recall that $\G(\U(V) \times \U(W^{\square}))(\A)$ acts on $\SS(\V^{\bigtriangledown}(\A))$ via the Weil representation $\omega_{\psi}^{\square}$.
We define a $G_1^{\square}(\A)$-equivariant and $H_1(\A)$-invariant map 
\[
 I:\SS(\V^{\bigtriangledown}(\A)) \longrightarrow \mathscr{A}(G_1^{\square})
\]
as follows.
Here $\mathscr{A}(G_1^{\square})$ is the space of automorphic forms on $G_1^{\square}(\A)$.

Let $\Theta : \SS(\V^{\bigtriangledown}(\A)) \rightarrow \C$ be the theta distribution given by 
\[
 \Theta(\varphi) = \sum_{x \in \V^{\bigtriangledown}(F)} \varphi(x)
\]
for $\varphi \in \SS(\V^{\bigtriangledown}(\A))$.
Let $d h_1$ be the Haar measure on $H_1(\A)$ such that $\vol(H_1(F) \backslash H_1(\A)) = 1$.

First we assume that either $B_1$ or $B_2$ is split.
Then the integral
\begin{equation} \label{eq:theta_int}
 \int_{H_1(F) \backslash H_1(\A)} \Theta(\omega_{\psi}^{\square}(g_1h_1) \varphi) \, dh_1
\end{equation}
may not be convergent.
Following Yamana \cite[\S 2]{yamana:sw}, we choose a place $v \in \Sigma_B$ and an element $z_0$ in the Bernstein center of $H_{1,v}$ or the universal enveloping algebra of the complexified Lie algebra of $H_{1,v}$.
Then the integral
\[
 I(g_1, \varphi) := \int_{H_1(F) \backslash H_1(\A)} \Theta(\omega_{\psi}^{\square}(g_1h_1) (z_0 \cdot \varphi)) \, dh_1
\]
is absolutely convergent for all $g_1 \in G^{\square}_1(\A)$ and $\varphi \in \SS(\V^{\bigtriangledown}(\A))$, and defines an automorphic form on $G^{\square}_1(\A)$.
Note that $I(g_1, \varphi) = \eqref{eq:theta_int}$ if the right-hand side is absolutely convergent for all $g_1$.
In particular, $I(g_1, \varphi)$ does not depend on choice of $v$ and $z_0$.
Next we assume that both $B_1$ and $B_2$ are division.
Then $H_1(F) \backslash H_1(\A)$ is compact.
For $\varphi \in \SS(\V^{\bigtriangledown}(\A))$, we define an automorphic form $I(\varphi)$ on $G^{\square}_1(\A)$ by 
\[
 I(g_1, \varphi)
 := \int_{H_1(F) \backslash H_1(\A)} \Theta(\omega_{\psi}^{\square}(g_1h_1) (z_0 \cdot \varphi)) \, dh_1,
\]
where we write $z_0$ for the identity operator for uniformity.

Similarly, we define a $G_1^{\square}(\A)$-equivariant and $H^0_1(\A)$-invariant map 
\[
 I^0:\SS(\V^{\bigtriangledown}(\A)) \longrightarrow \mathscr{A}(G_1^{\square})
\]
by
\[
 I^0(g_1, \varphi) := \int_{H^0_1(F) \backslash H^0_1(\A)} \Theta(\omega_{\psi}^{\square}(g_1h^0_1) (z_0 \cdot \varphi)) \, dh^0_1,
\]
where $dh^0_1$ is the Tamagawa measure on $H^0_1(\A)$.
Note that $\vol(H^0_1(F) \backslash H^0_1(\A)) = 2$.

\begin{lem}
\label{lem:theta_int^0}
We have
\[
 I^0 = 2 \cdot I.
\]
\end{lem}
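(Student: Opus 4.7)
The plan is to identify both $I$ and $I^0$ as integrals of one and the same $H_1(F)$-invariant function, and then reduce the factor $2$ to an elementary measure comparison using the two volume normalizations.

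First, I set $F(h) := \Theta(\omega_\psi^\square(g_1 h)(z_0 \cdot \varphi))$ and verify that $F$ extends from $H_1^0(\A)$ to a well-defined function on all of $H_1(\A)$ that is left-invariant under $H_1(F)$. For the extension, the splittings constructed in Appendix \ref{sec:spl-B} extend canonically from $H_1^0 = \U(V)^0$ to the full isometry group $H_1 = \U(V)$ (via the standard Kudla splittings for isometry groups), so $\omega_\psi^\square(g_1 h)$ makes sense for any $h \in H_1(\A)$. The $H_1(F)$-invariance then comes from the automorphy of the theta kernel under $\Sp(\V)(F) \supset H_1(F)$, together with the product formula of Proposition \ref{prop:spl-B-prod}. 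Consequently $F$ descends to a function on $H_1(F) \backslash H_1(\A)$, and both $I$ and $I^0$ are integrals of this single function, differing only in the domain and measure.

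Second, I carry out the measure comparison. The component group $\pi_0 := H_1/H_1^0$ is a twisted form of $\mu_2$, because $V$ is a $2$-dimensional skew-hermitian $B$-space. One can exhibit an $F$-rational element $\sigma \in H_1(F) \setminus H_1^0(F)$ — for example, the involution fixing one $B$-line of an orthogonal decomposition of $V$ and acting by $-1$ on the other — giving $[H_1(F) : H_1^0(F)] = 2$. The key technical step is then to verify the strong approximation identity
\[
 H_1(\A) = H_1(F) \cdot H_1^0(\A),
\]
so that the natural map $H_1^0(F) \backslash H_1^0(\A) \to H_1(F) \backslash H_1(\A)$ is a bijection of measure spaces. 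Granted this, the normalizations $\vol(H_1(F) \backslash H_1(\A), dh_1) = 1$ and $\vol(H_1^0(F) \backslash H_1^0(\A), dh_1^0) = 2$ force $dh_1^0 = 2 \cdot dh_1$ on the common quotient, and substitution gives $I^0 = 2 \cdot I$ immediately.

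The main obstacle is the strong approximation statement $H_1(\A) = H_1(F) \cdot H_1^0(\A)$. This is nontrivial because a priori the image of $H_1(\A)$ in $\pi_0(\A)$ is much larger than the diagonal image of $\pi_0(F)$ — at almost every place, a hyperspecial maximal compact of $H_1(F_v)$ meets both local components. Establishing the identity amounts to a Hasse principle for the connecting map in the long exact sequence $1 \to H_1^0 \to H_1 \to \pi_0 \to 1$, and in the present setting it should reduce, via an Eichler-type argument for the underlying quaternionic structure, to the classical fact that the image of reduced norm on $B^\times(\A)$ modulo $B^\times(F)$ is controlled in the expected way. Once this is in hand, the comparison of normalizations gives the claimed factor of $2$ automatically.
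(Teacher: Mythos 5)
Your second step does not go through: the identity $H_1(\A) = H_1(F)\cdot H_1^0(\A)$ is false, and no Hasse principle rescues it. With $B$ split at $v$, the local component group $H_{1,v}/H_{1,v}^0$ is $\mu_2$ (detected by the determinant on $\O(V_v^\dagger)$), and for almost all $v$ the hyperspecial maximal compact already meets both components. Thus the image of $H_1(\A)$ in $\pi_0(\A)=\prod_v'\mu_2(F_v)$ is all of $\pi_0(\A)$, while the image of $H_1(F)$ is only the diagonal $\mu_2(F)=\{\pm1\}$. Consequently $H_1^0(\A) H_1(F)\backslash H_1(\A)\cong \mu_2(F)\backslash\mu_2(\A)$ is an \emph{infinite} compact group, and the map $H_1^0(F)\backslash H_1^0(\A)\to H_1(F)\backslash H_1(\A)$ is injective but nowhere near surjective. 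The factor $2$ you are after cannot come from strong approximation for the component group, because that approximation fails.

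The mechanism you are missing is representation-theoretic, not arithmetic. One fibers $I(g_1,\varphi)$ over $H_1^0(\A)H_1(F)\backslash H_1(\A)$, with fibers $H_1^0(F)\backslash H_1^0(\A)$, obtaining
\[
 I(g_1,\varphi)=\int_{H_1^0(\A)H_1(F)\backslash H_1(\A)} I^0\bigl(g_1,\omega_\psi^\square(\dot h_1)\varphi\bigr)\, d\dot h_1
\]
with $\vol(H_1^0(\A)H_1(F)\backslash H_1(\A))=\tfrac12$ for the quotient measure induced from $dh_1$ and the Tamagawa measure $dh_1^0$. To evaluate the integrand one does \emph{not} need the base to be a point; one needs the element $I^0(g_1,\cdot)\in\Hom_{H_1^0(\A)}(\SS(\V^{\bigtriangledown}(\A)),\C)$ to be fixed by the $H_1(\A)$-action, i.e., $I^0(g_1,\omega_\psi^\square(\dot h_1)\varphi)=I^0(g_1,\varphi)$. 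This is a local statement: for $v\notin\Sigma_B$, since $\dim V_v^\dagger>\tfrac12\dim(W_v^\square)^\dagger$, Rallis's result (\cite[p.~399]{rallis84}) gives $\Hom_{H_{1,v}}(\SS(\V_v^{\bigtriangledown}),\sgn_v)=0$, so the nontrivial component of $H_{1,v}$ has no $\sgn_v$-eigenvector to contribute and must act trivially on $\Hom_{H_{1,v}^0}(\SS(\V_v^{\bigtriangledown}),\C)$; for $v\in\Sigma_B$ one has $H_{1,v}=H_{1,v}^0$ anyway. This local vanishing is the irreducible content of the lemma, and there is no substitute for it in your sketch.

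Your first step (extending to $H_1(\A)$ via the Kudla splittings for isometry groups and invoking the product formula for $H_1(F)$-invariance of the theta kernel) is fine; it is only the second step, where the factor $2$ is supposed to emerge from a measure comparison on ``the common quotient,'' that collapses, because there is no common quotient.
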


\begin{proof}
The lemma follows from \cite[Proposition 4.2]{kudla03} with slight modifications.
We include a proof for convenience.
For each place $v \notin \Sigma_B$, we consider the space $\Hom_{H_{1,v}^0}(\SS(\V_v^{\bigtriangledown}), \C)$ with the natural action of $H_{1,v}^0 \backslash H_{1,v}$.
Let $V_v^{\dagger}$ and $(W_v^{\square})^{\dagger}$ be the $4$-dimensional quadratic $F_v$-space and the $4$-dimensional symplectic $F_v$-space associated to $V_v$ and $W_v^{\square}$ respectively.
Since $\dim V_v^{\dagger} > \frac{1}{2} \dim (W_v^{\square})^{\dagger}$, 
we have $\Hom_{H_{1,v}}(\SS(\V_v^{\bigtriangledown}), \sgn_v) = \{ 0\}$ by \cite[p.~399]{rallis84}, where $\sgn_v$ is the non-trivial character of $H_{1,v}^0 \backslash H_{1,v}$.
Hence $H_{1,v}$ acts trivially on $\Hom_{H_{1,v}^0}(\SS(\V_v^{\bigtriangledown}), \C)$.
On the other hand, we have $H_{1,v}^0 = H_{1,v}$ for all $v \in \Sigma_B$.
Hence $H_1(\A)$ acts trivially on $\Hom_{H_1^0(\A)}(\SS(\V^{\bigtriangledown}(\A)), \C)$, so that
\begin{align*}
 I(g_1, \varphi) 
 & = \int_{H_1^0(\A) H_1(F) \backslash H_1(\A)} I^0(g_1, \omega_{\psi}^{\square}(\dot{h}_1) \varphi) \, d \dot{h}_1 \\
 & = \int_{H_1^0(\A) H_1(F) \backslash H_1(\A)} I^0(g_1, \varphi) \, d \dot{h}_1 \\
 & = \frac{1}{2} \cdot I^0(g_1, \varphi),
\end{align*}
where $d \dot{h}_1$ is the Haar measure on $H_1^0(\A) \backslash H_1(\A)$ such that $\vol(H_1^0(\A) H_1(F) \backslash H_1(\A)) = \frac{1}{2}$.
\end{proof}

\subsubsection{The Siegel--Weil formula}

The Siegel--Weil formula \cite[Theorem 3.4]{yamana:sw} due to Yamana says that $I(\varphi) = E(\mathcal{F}_\varphi)|_{G^{\square}_1(\A)}$ for $\varphi \in \SS(\V^{\bigtriangledown}(\A))$.
Hence, by Lemma \ref{lem:theta_int^0}, we have
\begin{equation}
\label{eq:SW-B}
 I^0(\varphi) = 2 \cdot E(\mathcal{F}_\varphi)|_{G^{\square}_1(\A)}
\end{equation}
for $\varphi \in \SS(\V^{\bigtriangledown}(\A))$.

\subsubsection{The Rallis inner product formula}

Let $Z_{H^0}$ be the center of $H^0$ and $dh^0$ the Tamagawa measure on $Z_H^0(\A) \backslash H^0(\A)$.
Note that $\vol(Z_{H^0}(\A) H^0(F) \backslash H^0(\A)) = 4$.

\begin{prop}
\label{prop:rallis-B}
Let $\varphi = \otimes_v \varphi_v \in \SS(\V^{\bigtriangledown}(\A))$ be the partial Fourier transform of $\varphi_1 \otimes \bar{\varphi}_2 \in \SS(\X^{\square}(\A))$ with $\varphi_i = \otimes_v \varphi_{i,v} \in \SS(\X(\A))$.
Let $f_1 = \otimes_v f_{1,v}, f_2 = \otimes_v f_{2,v} \in \pi_B$.
Then we have
\[
 \int_{Z_{H^0}(\A) H^0(F) \backslash H^0(\A)} \theta_{\varphi_1}(f_1)(h^0)
 \cdot \overline{\theta_{\varphi_2}(f_2)(h^0)} \, dh^0
 = 2 \cdot \frac{L^S(1, \pi, \ad)}{\zeta^S(2)^2} \cdot 
 \prod_{v \in S} Z(\mathcal{F}_{\varphi_v}, f_{1,v}, f_{2,v}).
\]
Here $S$ is a sufficiently large finite set of places of $F$.
\end{prop}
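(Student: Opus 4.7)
The plan is to follow the standard Rallis strategy, assembling the proof in three phases: (1) unfold the product of theta lifts and combine the two theta kernels into a single doubled theta kernel on $\SS(\V^\bigtriangledown(\A))$ via partial Fourier transform, (2) invoke the Siegel--Weil formula \eqref{eq:SW-B} to replace the inner $H_1^0$-integral by a Siegel Eisenstein series on $G_1^\square(\A) \subset G^\square(\A)$, and (3) recognize the remaining outer integral as the doubling zeta integral $Z(\mathcal{F}_\varphi, f_1, f_2)$ and invoke Lemma \ref{lem:doubling-B} at $s = \tfrac{1}{2}$ to extract the partial adjoint $L$-value.

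In more detail, for $h^0 \in H^0(\A)^+$ I choose $g = g(h^0) \in G(\A)^+$ with $\nu(g) = \nu(h^0)$ and expand both theta lifts using \eqref{eq:jls_theta_lift_def}, so that the integrand of the LHS becomes
\begin{align*}
&\theta_{\varphi_1}(f_1)(h^0)\,\overline{\theta_{\varphi_2}(f_2)(h^0)} \\
&\quad = \int_{[G_1]^2} \Theta(\omega_\psi(g_{1,1}gh^0)\varphi_1)\,\overline{\Theta(\omega_\psi(g_{1,2}gh^0)\varphi_2)}\, f_1(g_{1,1}g)\,\overline{f_2(g_{1,2}g)}\, dg_{1,1}\, dg_{1,2}.
\end{align*}
Using the $\G(\U(V)^0 \times \U(W) \times \U(W))(\A)$-equivariant identification $\omega_\psi \otimes \bar\omega_\psi \cong \omega_\psi^\square \circ (\id_V \otimes \iota)$ via partial Fourier transform, together with the matching of the two theta distributions $\Theta^{\otimes 2}$ and $\Theta^\square$ on the corresponding Schr\"odinger models (by Poisson summation), the product of theta kernels rewrites as $\Theta^\square(\omega_\psi^\square(\iota(g_{1,1}g, g_{1,2}g)\cdot h^0)\varphi)$, where $\varphi \in \SS(\V^\bigtriangledown(\A))$ is the partial Fourier transform of $\varphi_1 \otimes \bar\varphi_2$.

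I then interchange the order of integration and perform the $h^0$-integral first. The quotient $Z_{H^0}(\A)H^0(F)\backslash H^0(\A)$ fibers over $\nu(H^0(\A))$ with fiber $H_1^0(F)\backslash H_1^0(\A)$; after absorbing the similitude factor $g$ into a parameter on $G_1^\square$, the inner integral takes the shape
\[
 \int_{[H_1^0]} \Theta^\square(\omega_\psi^\square(\iota(g_{1,1},g_{1,2}) h_1^0)\varphi)\, dh_1^0 = I^0(\iota(g_{1,1},g_{1,2}),\varphi) = 2\cdot E(\iota(g_{1,1},g_{1,2}),\mathcal{F}_\varphi),
\]
where the last equality is the Siegel--Weil identity \eqref{eq:SW-B}. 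When $B_1$ or $B_2$ is split, one must insert Yamana's regularizing element $z_0$ for convergence; however, since $\pi_B$ is cuspidal, $z_0$ acts by a scalar on the relevant matrix coefficient, and the net effect cancels between the two sides.

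The outer integral now matches, up to volume constants, the doubling zeta integral
\[
 Z(\mathcal{F}_\varphi, f_1, f_2) = \int_{Z(\A)\GG(F)\backslash \GG(\A)} E(\iota(g_1,g_2),\mathcal{F}_\varphi)\, f_1(g_1)\,\overline{f_2(g_2)}\, d\g,
\]
so invoking Lemma \ref{lem:doubling-B} at $s = \tfrac{1}{2}$ produces the quotient $L^S(1,\pi,\ad)/\zeta^S(2)^2$ and the product of local zeta integrals. The overall constant $2$ on the RHS arises from combining the factor $2$ in the Siegel--Weil formula with the Tamagawa volumes $\vol(Z_{H^0}(\A)H^0(F)\backslash H^0(\A)) = 4$, $\vol(Z(\A)\GG(F)\backslash\GG(\A)) = 2$, $\vol([H_1^0])=2$, and $\vol([G_1])=1$. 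I expect the main obstacle to be the careful bookkeeping of these volume factors through the unfolding and change of variables, together with a clean justification of Fubini in the regularized setting.
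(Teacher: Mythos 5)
Your plan uses exactly the same ingredients as the paper's proof of Proposition \ref{prop:rallis-B} -- the partial Fourier transform identification $\omega_\psi^\square \circ (\id\otimes\iota)\cong\omega_\psi\otimes\bar\omega_\psi$, the Siegel--Weil identity \eqref{eq:SW-B}, the decomposition over $\mathcal{C}\times[H_1^0]$ (resp.\ $\mathcal{C}\times[G_1]^2$) with the volume constants you list, and Lemma \ref{lem:doubling-B} at $s=\tfrac12$ -- and your bookkeeping of the factor of $2$ is correct. The one place where the argument has a real gap is the regularization, and it is not a cosmetic point.

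The paper runs the manipulation in the opposite direction from yours: it starts from $Z(\mathcal{F}_{\varphi,s},f_1,f_2)$, replaces $E(\mathcal{F}_{\varphi_c})$ by $I^0(\varphi_c)$ (which already has the regularizer $z_0$ built in and is absolutely convergent), unfolds, and only at the very end removes $z_0$. In your direction you interchange $\int_{[H_1^0]}$ past $\int_{[G_1]^2}$ and ``perform the $h^0$-integral first'' \emph{before} $z_0$ is in place; but when $B_1$ or $B_2$ is split the inner integral $\int_{[H_1^0]}\Theta^\square(\omega_\psi^\square(\iota(g_{1,1},g_{1,2})h_1^0)\varphi)\,dh_1^0$ is not absolutely convergent (indeed $\dim_F V^\dagger = 4 < 2n+1 = 5$, below the Weil convergence range for the doubled Eisenstein series), so the interchange is not justified by Fubini and the inner integral is not even well-defined as written. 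The paper's direction is chosen precisely because every intermediate integral there is absolutely convergent, so no Fubini step has to be justified across a divergent inner integral.

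Your stated mechanism for discharging $z_0$ -- ``since $\pi_B$ is cuspidal, $z_0$ acts by a scalar on the relevant matrix coefficient, and the net effect cancels between the two sides'' -- is also not the paper's argument, and it does not close the gap as stated. It is true that $\theta_{z_0\cdot\varphi}(f) = \theta_\varphi(f)*z_0$, and that $*z_0$ acts by a scalar on the irreducible $\pi_{B_1}\boxtimes\pi_{B_2}$; but nothing forces that scalar to equal $1$, and ``cancels between the two sides'' does not identify where a compensating scalar would appear. What the paper actually uses is Lemma \ref{lem:cusp-B} (cuspidality of $\Theta(\pi_B)$, hence rapid decrease of $\theta_{\varphi_1}(f_1)\cdot\overline{\theta_{\varphi_2}(f_2)}$) to conclude that the function $h_1^0\mapsto\theta_{\varphi_1}(f_1)(h_1^0h_c)\overline{\theta_{\varphi_2}(f_2)(h_1^0h_c)}$ is integrable over $[H_1^0]$, and then the general fact that for an integrable $\phi$ one has $\int_{[H_1^0]}(\phi*z_0)=\int_{[H_1^0]}\phi$. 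So: either rewrite your argument to start from the zeta integral and unfold as the paper does, or, if you want to keep your direction, you must insert $z_0$ \emph{before} interchanging the $[H_1^0]$ and $[G_1]^2$ integrals, prove the resulting regularized triple integral is absolutely convergent, and then appeal to the integrability fact above (not to a scalar-action cancellation) to remove $z_0$ at the end.
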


\begin{proof}
Put $(F^{\times})^+ = F^{\times} \cap (\A^{\times})^+$,
\[
 G(F)^+ = G(F) \cap G(\A)^+, \qquad H^0(F)^+ = H^0(F) \cap H^0(\A)^+.
\]
Set $\mathcal{C} = (\A^{\times})^2 (F^{\times})^+ \backslash (\A^{\times})^+$.
Then the similitude characters induce isomorphisms
\[
 Z_G(\A) G_1(\A) G(F)^+ \backslash G(\A)^+ \cong \mathcal{C}, \qquad
 Z_{H^0}(\A) H^0_1(\A) H^0(F)^+ \backslash H^0(\A)^+ \cong \mathcal{C}.
\]
Fix cross sections $c \mapsto g_c$ and $c \mapsto h_c$ of $G(\A)^+ \rightarrow \mathcal{C}$ and $H^0(\A) \rightarrow \mathcal{C}$ respectively.
Since
\[
 \GG(\A) = Z(\A) \cdot \GG(F) \cdot (G_1 \times G_1)(\A) \cdot \{ (g_c, g_c ) \, | \, c \in \mathcal{C} \},
\]
we have
\[
 Z(\mathcal{F}_{\varphi, s}, f_1, f_2)
 = 2 \int_{\mathcal{C}} \int_{G_1(F) \backslash G_1(\A)} \int_{G_1(F) \backslash G_1(\A)} E(\iota(g_1 g_c, g_2 g_c), \mathcal{F}_{\varphi, s})
 f_1(g_1 g_c) \overline{f_2(g_2 g_c)} \, dg_1 \, dg_2 \, dc,
\]
where $dg_1$, $dg_2$ are the Tamagawa measures on $G_1(\A)$ and $dc$ is the Haar measure on $\mathcal{C}$ such that $\vol(\mathcal{C}) = 1$.
For each $c \in \mathcal{C}$, put $\varphi_c = \omega_{\psi}^{\square}(\iota(g_c, g_c), h_c) \varphi$.
Since $ E(g \iota(g_c, g_c), \mathcal{F}_{\varphi}) = E(g, \mathcal{F}_{\varphi_c})$, we have
\begin{align*}
 & Z(\mathcal{F}_{\varphi}, f_1, f_2) \\
 & = 2 \int_{\mathcal{C}} \int_{G_1(F) \backslash G_1(\A)} \int_{G_1(F) \backslash G_1(\A)}
 E(\iota(g_1, g_2), \mathcal{F}_{\varphi_c})
 f_1(g_1 g_c) \overline{f_2(g_2 g_c)} \, dg_1 \, dg_2 \, dc \\
 & = \int_{\mathcal{C}} \int_{G_1(F) \backslash G_1(\A)} \int_{G_1(F) \backslash G_1(\A)}
 I^0(\iota(g_1, g_2), \varphi_c)
 f_1(g_1 g_c) \overline{f_2(g_2 g_c)} \, dg_1 \, dg_2 \, dc \\
 & = \int_{\mathcal{C}} \int_{G_1(F) \backslash G_1(\A)} \int_{G_1(F) \backslash G_1(\A)} \int_{H_1^0(F) \backslash H_1^0(\A)}
 \Theta(\omega_{\psi}^{\square} (\iota(g_1, g_2) h_1^0)(z_0 \cdot \varphi_c)) \\
 & \hspace{10cm} \times f_1(g_1 g_c) \overline{f_2(g_2 g_c)} \, dh_1^0 \, dg_1 \, dg_2 \, dc \\
 & = \int_{\mathcal{C}} \int_{H_1^0(F) \backslash H_1^0(\A)}
 \int_{G_1(F) \backslash G_1(\A)} \int_{G_1(F) \backslash G_1(\A)}
 \Theta(\omega_{\psi}^{\square} (\iota(g_1, g_2) h_1^0) \varphi_c) * z_0 \\
 & \hspace{10cm} \times f_1(g_1 g_c) \overline{f_2(g_2 g_c)} \, dg_1 \, dg_2 \, dh_1^0 \, dc
\end{align*}
by the Siegel--Weil formula \eqref{eq:SW-B}.
On the other hand, we have
\[
 \Theta(\omega_{\psi}^{\square} (\iota(g_1, g_2) h_1^0) \varphi_c)
 = \Theta(\omega_{\psi} (g_1 g_c h_1^0 h_c) \varphi_1) \cdot 
 \overline{\Theta(\omega_{\psi} (g_2 g_c h_1^0 h_c) \varphi_2)}.
\]
Hence we have
\begin{multline*}
 \int_{G_1(F) \backslash G_1(\A)} \int_{G_1(F) \backslash G_1(\A)}
 \Theta(\omega_{\psi}^{\square} (\iota(g_1, g_2) h_1^0) \varphi_c) * z_0
 \cdot f_1(g_1 g_c) \overline{f_2(g_2 g_c)} \, dg_1 \, dg_2 \\
 = \left( \theta_{\varphi_1}(f_1)(h_1^0 h_c) \cdot 
 \overline{\theta_{\varphi_2}(f_2)(h_1^0 h_c)} \right) * z_0.
\end{multline*}
By Lemma \ref{lem:cusp-B}, the function $h_1^0 \mapsto \theta_{\varphi_1}(f_1)(h_1^0 h_c) \cdot \overline{\theta_{\varphi_2}(f_2)(h_1^0 h_c)}$ is integrable over $H_1^0(F) \backslash H_1^0(\A)$, so that 
\begin{multline*}
 \int_{H_1^0(F) \backslash H_1^0(\A)} 
 \left( \theta_{\varphi_1}(f_1)(h_1^0 h_c) \cdot 
 \overline{\theta_{\varphi_2}(f_2)(h_1^0 h_c)} \right) *z_0 \, dh^0_1 \\
 = \int_{H_1^0(F) \backslash H_1^0(\A)} 
 \theta_{\varphi_1}(f_1)(h_1^0 h_c) \cdot 
 \overline{\theta_{\varphi_2}(f_2)(h_1^0 h_c)} \, dh^0_1
\end{multline*}
and hence
\[
 Z(\mathcal{F}_{\varphi}, f_1, f_2)
 = \int_{\mathcal{C}} \int_{H_1^0(F) \backslash H_1^0(\A)}
 \theta_{\varphi_1}(f_1)(h_1^0 h_c) \cdot 
 \overline{\theta_{\varphi_2}(f_2)(h_1^0 h_c)} \, dh_1^0 \, dc.
\]
Since $H^0(\A) = Z_{H^0}(\A) \cdot H^0(F) \cdot H^0_1(\A) \cdot \{ h_c \, | \, c \in \mathcal{C} \}$, this integral is equal to 
\[
 \frac{1}{2} \int_{Z_{H^0}(\A) H^0(F) \backslash H^0(\A)} 
 \theta_{\varphi_1}(f_1)(h^0) \cdot 
 \overline{\theta_{\varphi_2}(f_2)(h^0)} \, dh^0.
\]
Now the assertion follows from this and Lemma \ref{lem:doubling-B}.
\end{proof}

Now Lemma \ref{lem:nonvanish-B} follows from Proposition \ref{prop:rallis-B} and Lemma \ref{lem:nonvanish-local-B}.

\section{Schwartz functions}
\label{sec:schwartz}

Let $F$ be a number field.
Let $\o$ be the integer ring of $F$ and $\mathfrak{d}$ the different of $F$ over $\Q$.
Let $D$ be the discriminant of $F$.
For each finite place $v$ of $F$, let $\o_v$ be the integer ring of $F_v$,
$\p_v = \varpi_v \o_v$ the maximal ideal of $\o_v$,
$\varpi_v$ a uniformizer of $\o_v$, and $q_v$ the cardinality of the residue field $\o_v/\p_v$.
Let $d_v$ be the non-negative integer such that $\mathfrak{d} \otimes_{\o} \o_v = \varpi_v^{d_v} \o_v$.
Then we have $|D| = \prod_{v \in \Sigma_\fin} q_v^{d_v}$.

Let $\psi_0 = \otimes_v \psi_{0,v}$ be the non-trivial character of $\A_{\Q}/\Q$ given by 
\begin{itemize}
 \item $\psi_{0,\infty}(x) = e^{2 \pi \sqrt{-1} x}$ for $x \in \R$,
 \item $\psi_{0,p}(x) = e^{-2 \pi \sqrt{-1} x}$ for $x \in \Q_p$.
\end{itemize}
Let $\psi = \otimes_v \psi_v$ be the non-trivial character of $\A/F$ defined by $\psi = \psi_0 \circ \tr_{F/\Q}$.
We call $\psi$ the standard additive character of $\A/F$.
If $v$ is a real place of $F$, then $\psi_v(x) = e^{2 \pi \sqrt{-1} x}$ for $x \in F_v$.
If $v$ is a complex place of $F$, then $\psi_v(x) = e^{2 \pi \sqrt{-1} (x+\bar{x})}$ for $x \in F_v$, where $\bar{x}$ is the complex conjugate of $x$.
If $v$ is a finite place of $F$, then $\psi_v$ is trivial on $\varpi_v^{-d_v} \o_v$ but non-trivial on $\varpi_v^{-d_v-1} \o_v$.
For each place $v$ of $F$, we define a Fourier transform
\begin{align*}
 \SS(F_v) & \longrightarrow \SS(F_v) \\
 \phi & \longmapsto \hat\phi
\end{align*}
by 
\[
 \hat{\phi}(x) = \int_{F_v} \phi(y) \psi_v(xy) \, dy,
\]
where $dy$ is the self-dual Haar measure on $F_v$ with respect to $\psi_v$.

Let $V = B_1 \otimes_E B_2$ be the $2$-dimensional right skew-hermitian $B$-space given in \S \ref{subsec:key} and $W = B$ the $1$-dimensional left hermitian $B$-space given in \S \ref{sssec:theta-dualpairs-examples}.
Recall that
\begin{align*}
 E & = F + F \i, & B & = E + E \j, & B_1 & = E + E \j_1, & B_2 & = E + E \j_2, \\
 u & = \i^2, & J & = \j^2, & J_1 & = \j_1^2, & J_2 & = \j_2^2,
\end{align*}
where $J = J_1 J_2$.
Let $\V = V \otimes_B W$ be the $8$-dimensional symplectic $F$-space.
We identify $\V$ with $\Res_{B/F}(V)$ via the map $v \mapsto v \otimes 1$.
In \S \ref{subsec:key}, we chose a complete polarization $\V = \X \oplus \Y$ over $F$.
Let $\e_1,\dots,\e_4$ and $\e_1^*,\dots,\e_4^*$ be the bases of $\X$ and $\Y$, respectively, given by \eqref{eq:basisX}, \eqref{eq:basisY}.

\subsection{Complete polarizations}
\label{ssec:complete-pol}

In Appendix \ref{sec:spl-B}, we also choose a complete polarization $\V_v = \X_v' \oplus \Y_v'$ over $F_v$ for each place $v$ of $F$.
Note that in picking the polarization, we use the assumption that for any place $v$ of $F$, at least one of $u$, $J$, $J_1$, $J_2$ is a square in $F_v$.
In this subsection, we recall the choice of this polarization.
Later, we will pick a Schwartz function on $\X_v'$ and then transfer it to a Schwartz function on $\X_v$ by a partial Fourier transform. 
From now on, we fix a place $v$ of $F$ and suppress the subscript $v$ from the notation.

\subsubsection{The case $u \in (F^{\times})^2$}
\label{sssec:cpu}

Choose $t \in F^{\times}$ such that $u = t^2$.
We define an isomorphism $\ii : B \rightarrow \M_2(F)$ of quaternion $F$-algebras by
\begin{equation}
\label{eq:isomB-u}
 \ii(1) = \begin{pmatrix} 1 & \\ & 1 \end{pmatrix}, \qquad
 \ii(\i) = \begin{pmatrix} t & \\ & -t \end{pmatrix}, \qquad
 \ii(\j) = \begin{pmatrix} & 1 \\ J & \end{pmatrix}, \qquad
 \ii(\i \j) = \begin{pmatrix} & t \\ - tJ & \end{pmatrix}.
\end{equation}
Put 
\[
 e = \frac{1}{2} + \frac{1}{2t} \i, \qquad  
 e' = \frac{1}{2} \j + \frac{1}{2t} \i \j, \qquad 
 e'' = \frac{1}{2J} \j - \frac{1}{2tJ} \i \j, \qquad 
 e^* = \frac{1}{2} - \frac{1}{2t} \i,
\]
so that 
\[
 \ii(e) = \begin{pmatrix} 1 & 0 \\ 0 & 0 \end{pmatrix}, \qquad
 \ii(e') = \begin{pmatrix} 0 & 1 \\ 0 & 0 \end{pmatrix}, \qquad
 \ii(e'') = \begin{pmatrix} 0 & 0 \\ 1 & 0 \end{pmatrix}, \qquad
 \ii(e^*) = \begin{pmatrix} 0 & 0 \\ 0 & 1 \end{pmatrix}.
\]
Let $W^\dagger := eW$ be the $2$-dimensional $F$-space associated to $W$ equipped with a non-degenerate symplectic form $\langle \cdot, \cdot \rangle^\dagger$ defined by 
\begin{equation}
\label{eq:W^dagger-form}
 \langle x, y \rangle^* = \langle x, y \rangle^{\dagger} \cdot e' 
\end{equation}
for $x, y \in W^\dagger$.
Then the restriction to $W^\dagger$ induces a natural isomorphism $\GU(W) \cong \GSp(W^\dagger)$.
We have
\[
 \langle e, e \rangle^{\dagger} = \langle e', e' \rangle^{\dagger} = 0, \qquad
 \langle e, e' \rangle^{\dagger} = 1,
\]
and 
\[
 \begin{bmatrix} e \cdot \ba \\ e' \cdot \ba \end{bmatrix}
 = \ii(\ba) \cdot \begin{bmatrix} e \\ e' \end{bmatrix}
\]
for $\ba \in B$.
We take a complete polarization $W^\dagger = X \oplus Y$ given by 
\[
 X = F e, \qquad Y = F e'.
\]
Similarly, let $V^\dagger := V e$ be the $4$-dimensional $F$-space associated to $V$ equipped with a non-degenerate symmetric bilinear form $\langle \cdot, \cdot \rangle^{\dagger}$ defined by 
\begin{equation}
\label{eq:V^dagger-form} 
 \frac{1}{2} \cdot \langle x, y \rangle = 
 \langle x, y \rangle^{\dagger} \cdot e''
\end{equation}
for $x, y \in V^\dagger$.
Then the restriction to $V^\dagger$ induces a natural isomorphism $\GU(V) \cong \GO(V^\dagger)$.
We take a complete polarization $\V = \X' \oplus \Y'$ given by
\[
 \X' = V^\dagger \otimes X, \qquad \Y' = V^\dagger \otimes Y.
\]
We identify $\X'$ with $V^\dagger$ via the map $v \mapsto v \otimes e$.
Put
\begin{equation}
\label{eq:basisX'Y'-u}
\begin{aligned}
 \v_1 & = 2 \e_1 e = \e_1 + t \e_1^*, &
 \v_1^* & = -\frac{1}{t} \e_1 e^* = -\frac{1}{2t} \e_1 + \frac{1}{2} \e_1^*, \\
 \v_2 & = 2 \e_2 e = \e_2 - tJ_1 \e_2^*, &
 \v_2^* & = \frac{1}{tJ_1} \e_2 e^* = \frac{1}{2 tJ_1} \e_2 + \frac{1}{2} \e_2^*, \\
 \v_3 & = -\frac{1}{t J_1} \e_2 e'' = -\frac{1}{2 tJ} \e_3 + \frac{1}{2 J_1} \e_3^*, &
 \v_3^* & = - 2\e_2 e' = - J_1 \e_3 - tJ \e_3^*, \\
 \v_4 & = -\frac{1}{t} \e_1 e'' = -\frac{1}{2tJ} \e_4 - \frac{1}{2} \e_4^*, &
 \v_4^* & = 2 \e_1 e' = \e_4 - tJ \e_4^*. 
\end{aligned}
\end{equation}
Then $\v_1, \dots, \v_4$ and $\v_1^*, \dots, \v_4^*$ are bases of $\X'$ and $\Y'$, respectively, such that $\llangle \v_i, \v_j^* \rrangle = \delta_{ij}$.

We may identify the quadratic space $V^\dagger$ with the space $\M_2(F)$ equipped with a non-degenerate symmetric bilinear form
\begin{equation}
\label{eq:M_2(F)-form}
 \tr(x y^*) = x_1 y_4 - x_2 y_3 - x_3 y_2 + x_4 y_1 
\end{equation}
for $x = \smat{x_1}{x_2}{x_3}{x_4}$, $y = \smat{y_1}{y_2}{y_3}{y_4}$.
Indeed, the basis $\v_1, \dots, \v_4$ of $V^\dagger$ gives rise to an isomorphism $V^\dagger \cong \M_2(F)$ of quadratic spaces by 
\[
 \v_1 \longmapsto \mat{1}{0}{0}{0}, \qquad \v_2 \longmapsto \mat{0}{1}{0}{0}, \qquad \v_3 \longmapsto \mat{0}{0}{1}{0}, \qquad \v_4 \longmapsto \mat{0}{0}{0}{1}.
\]
Under this identification, we have
\[
 \ba_1 \v = \v \cdot \ii_1(\ba_1)^*, \qquad 
 \ba_2 \v = \ii_2(\ba_2) \cdot \v
\]
for $\ba_i \in B_i$ and $\v \in V^\dagger \cong \M_2(F)$, where $\ii_1 : B_1 \rightarrow \M_2(F)$ and $\ii_2 : B_2 \rightarrow \M_2(F)$ are isomorphisms of quaternion $F$-algebras given by 
\begin{equation}
\label{eq:isomB1B2-u}
\begin{aligned}
 \ii_1(a + b \i + c \j_1 + d \i \j_1) & =
 \begin{pmatrix}
  a - bt & -(c-dt) \\
  -J_1 (c+dt) & a + bt
 \end{pmatrix}, \\
 \ii_2(a + b \i + c \j_2 + d \i \j_2) & =
\begin{pmatrix}
 a+bt & - \frac{1}{2tJ_1} (c + dt) \\
 -2 tJ (c - dt) & a - bt
\end{pmatrix}.
\end{aligned}
\end{equation}

\subsubsection{The case $J \in (F^{\times})^2$}
\label{sssec:cpJ}

Choose $t \in F^{\times}$ such that $J = t^2$.
We define an isomorphism $\ii : B \rightarrow \M_2(F)$ of quaternion $F$-algebras by
\begin{equation}
\label{eq:isomB-J}
 \ii(1) = \begin{pmatrix} 1 & \\ & 1 \end{pmatrix}, \qquad
 \ii(\i) = \begin{pmatrix} & 1 \\ u & \end{pmatrix}, \qquad
 \ii(\j) = \begin{pmatrix} t & \\ & -t \end{pmatrix}, \qquad
 \ii(\i \j) = \begin{pmatrix} &  -t \\ tu & \end{pmatrix}.
\end{equation}
Put 
\[
 e = \frac{1}{2} + \frac{1}{2t} \j, \qquad  
 e' = \frac{1}{2} \i - \frac{1}{2t} \i \j, \qquad 
 e'' = \frac{1}{2u} \i + \frac{1}{2tu} \i \j, \qquad 
 e^* = \frac{1}{2} - \frac{1}{2t} \j,
\]
so that 
\[
 \ii(e) = \begin{pmatrix} 1 & 0 \\ 0 & 0 \end{pmatrix}, \qquad
 \ii(e') = \begin{pmatrix} 0 & 1 \\ 0 & 0 \end{pmatrix}, \qquad
 \ii(e'') = \begin{pmatrix} 0 & 0 \\ 1 & 0 \end{pmatrix}, \qquad
 \ii(e^*) = \begin{pmatrix} 0 & 0 \\ 0 & 1 \end{pmatrix}.
\]
Let $W^\dagger := eW$ be the $2$-dimensional $F$-space associated to $W$ equipped with a non-degenerate symplectic form $\langle \cdot, \cdot \rangle^\dagger$ defined by \eqref{eq:W^dagger-form}.
We have
\[
 \langle e, e \rangle^{\dagger} = \langle e', e' \rangle^{\dagger} = 0, \qquad
 \langle e, e' \rangle^{\dagger} = 1,
\]
and 
\[
 \begin{bmatrix} e \cdot \ba \\ e' \cdot \ba \end{bmatrix}
 = \ii(\ba) \cdot \begin{bmatrix} e \\ e' \end{bmatrix}
\]
for $\ba \in B$.
We take a complete polarization $W^\dagger = X \oplus Y$ given by 
\[
 X = F e, \qquad Y = F e'.
\]
Similarly, let $V^\dagger := V e$ be the $4$-dimensional $F$-space associated to $V$ equipped with a non-degenerate symmetric bilinear form $\langle \cdot, \cdot \rangle^{\dagger}$ defined by \eqref{eq:V^dagger-form}.
We take a complete polarization $\V = \X' \oplus \Y'$ given by
\[
 \X' = V^\dagger \otimes X, \qquad \Y' = V^\dagger \otimes Y.
\]
We identify $\X'$ with $V^\dagger$ via the map $v \mapsto v \otimes e$.
Put
\begin{align*}
 \tilde{\v}_1 & = \e_1 e = \frac{1}{2} \e_1 + \frac{1}{2t} \e_4, &
 \tilde{\v}_1^* & = \frac{2}{u} \e_1 e' = \e_1^* + t \e_4^*, \\
 \tilde{\v}_2 & = \e_1e'' = \frac{1}{2} \e_1^* - \frac{t}{2} \e_4^*, &
 \tilde{\v}_2^* & = -2\e_1 e^* = - \e_1 +\frac{1}{t} \e_4, \\
 \tilde{\v}_3 & = \e_2 e=  \frac{1}{2} \e_2 + \frac{J_1}{2t} \e_3, &
 \tilde{\v}_3^* & = -\frac{2}{uJ_1} \e_2 e' = \e_2^* + \frac{t}{J_1} \e_3^*, \\
 \tilde{\v}_4 & = \e_2 e'' = -\frac{J_1}{2} \e_2^* + \frac{t}{2} \e_3^*, & 
 \tilde{\v}_4^* & = \frac{2}{J_1} \e_2 e^* = \frac{1}{J_1} \e_2 - \frac{1}{t} \e_3.
\end{align*}
Then $\tilde{\v}_1, \dots, \tilde{\v}_4$ and $\tilde{\v}_1^*, \dots, \tilde{\v}_4^*$ are bases of $\X'$ and $\Y'$, respectively, such that $\llangle \tilde{\v}_i, \tilde{\v}_j^* \rrangle = \delta_{ij}$.

We need to use two coordinate systems given as follows:

\paragraph{The case (i)}
\label{par:cpJ-i}

We fix $s \in F^\times$ and define bases $\v_1, \dots, \v_4$ and $\v_1^*, \dots, \v_4^*$ of $\X'$ and $\Y'$, respectively, such that $\llangle \v_i, \v_j^* \rrangle = \delta_{ij}$ by 
\begin{equation}
\label{eq:basisX'Y'-J-i}
\begin{aligned}
 \v_1 & = \tilde{\v}_1, & 
 \v_2 & = \tilde{\v}_2, & 
 \v_3 & = \frac{1}{s} \tilde{\v}_3, &
 \v_4 & = \frac{1}{s} \tilde{\v}_4, \\
 \v_1^* & = \tilde{\v}_1^*, &
 \v_2^* & = \tilde{\v}_2^*, &
 \v_3^* & = s \tilde{\v}_3^*, &
 \v_4^* & = s \tilde{\v}_4^*.
\end{aligned}
\end{equation}

We may identify the quadratic space $V^\dagger$ with the space $B_1$ equipped with a non-degenerate symmetric bilinear form
\[
 -\frac{1}{4} \tr_{B_1/F}(xy^*).
\]
Indeed, since 
\[
 \langle \tilde{\v}_1, \tilde{\v}_1 \rangle^{\dagger} = \frac{u}{2}, \qquad
 \langle \tilde{\v}_2, \tilde{\v}_2 \rangle^{\dagger} = - \frac{1}{2}, \qquad 
 \langle \tilde{\v}_3, \tilde{\v}_3 \rangle^{\dagger} = -\frac{uJ_1}{2}, \qquad 
 \langle \tilde{\v}_4, \tilde{\v}_4 \rangle^{\dagger} =  \frac{J_1}{2}, 
\]
and $\langle \tilde{\v}_i, \tilde{\v}_j \rangle^{\dagger} = 0$ if $i \ne j$, 
the basis $\tilde{\v}_1, \dots, \tilde{\v}_4$ of $V^\dagger$ gives rise to an isomorphism $V^\dagger \cong B_1$ of quadratic spaces by
\[
 \tilde{\v}_1 \longmapsto \i, \qquad
 \tilde{\v}_2 \longmapsto 1, \qquad
 \tilde{\v}_3 \longmapsto \j_1\i, \qquad
 \tilde{\v}_4 \longmapsto \j_1.
\]
Under this identification, we have
\[
 \ba_1 \v = \ba_1 \cdot \v, \qquad 
 \ba_2 \v = \v \cdot \ii_2(\ba_2)^*
\]
for $\ba_i \in B_i$ and $\v \in V^\dagger \cong B_1$, where $\ii_2 : B_2 \rightarrow B_1$ is an isomorphism of quaternion $F$-algebras given by
\begin{equation}
\label{eq:isomB2-J-i}
 \ii_2(\alpha + \beta \j_2) = \alpha^\rho + \frac{t \beta^\rho}{J_1} \j_1
\end{equation}
for $\alpha, \beta \in E$.

\paragraph{The case (ii)}
\label{par:cpJ-ii}

Assume that $J_1 \in (F^{\times})^2$.
We choose $t_1 \in F^\times$ such that $J_1 = t_1^2$ and define bases $\v_1, \dots, \v_4$ and $\v_1^*, \dots, \v_4^*$ of $\X'$ and $\Y'$, respectively, such that $\llangle \v_i, \v_j^* \rrangle = \delta_{ij}$ by 
\begin{equation}
\label{eq:basisX'Y'-J-ii}
\begin{aligned}
 \v_1 & = \tilde{\v}_1 + \frac{1}{t_1} \tilde{\v}_3
 = \frac{1}{2} \e_1 + \frac{1}{2t_1} \e_2 + \frac{t_1}{2t} \e_3 + \frac{1}{2t} \e_4, \\
 \v_2 & = \tilde{\v}_2 + \frac{1}{t_1} \tilde{\v}_4
 = \frac{1}{2} \e_1^* - \frac{t_1}{2} \e_2^* + \frac{t}{2t_1} \e_3^* - \frac{t}{2} \e_4^*, \\
 \v_3 & = \tilde{\v}_2 - \frac{1}{t_1} \tilde{\v}_4
 = \frac{1}{2} \e_1^* + \frac{t_1}{2} \e_2^* - \frac{t}{2t_1} \e_3^* - \frac{t}{2} \e_4^*, \\
 \v_4 & = \frac{1}{u} \tilde{\v}_1 - \frac{1}{t_1u} \tilde{\v}_3
 = \frac{1}{2u} \e_1 - \frac{1}{2t_1u} \e_2 - \frac{t_1}{2tu} \e_3 + \frac{1}{2tu} \e_4, \\
 \v_1^* & = \frac{1}{2} \tilde{\v}_1^* + \frac{t_1}{2} \tilde{\v}_3^*
 = \frac{1}{2} \e_1^* + \frac{t_1}{2} \e_2^* + \frac{t}{2t_1} \e_3^* + \frac{t}{2} \e_4^*, \\
 \v_2^* & = \frac{1}{2} \tilde{\v}_2^* + \frac{t_1}{2} \tilde{\v}_4^*
 = -\frac{1}{2} \e_1 + \frac{1}{2t_1} \e_2 - \frac{t_1}{2t} \e_3 + \frac{1}{2t} \e_4, \\
 \v_3^* & = \frac{1}{2} \tilde{\v}_2^* - \frac{t_1}{2} \tilde{\v}_4^*
 = -\frac{1}{2} \e_1 - \frac{1}{2t_1} \e_2 + \frac{t_1}{2t} \e_3 + \frac{1}{2t} \e_4, \\
 \v_4^* & = \frac{u}{2} \tilde{\v}_1^* - \frac{t_1u}{2} \tilde{\v}_3^*
 = \frac{u}{2} \e_1^* - \frac{t_1u}{2} \e_2^* - \frac{tu}{2t_1} \e_3^* + \frac{tu}{2} \e_4^*.
\end{aligned}
\end{equation}

We may identify the quadratic space $V^\dagger$ with the space $\M_2(F)$ equipped with the non-degenerate symmetric bilinear form \eqref{eq:M_2(F)-form}.
Indeed, the basis $\v_1, \dots, \v_4$ of $V^\dagger$ gives rise to an isomorphism $V^\dagger \cong \M_2(F)$ of quadratic spaces by 
\[
 \v_1 \longmapsto \mat{1}{0}{0}{0}, \qquad \v_2 \longmapsto \mat{0}{1}{0}{0}, \qquad \v_3 \longmapsto \mat{0}{0}{1}{0}, \qquad \v_4 \longmapsto \mat{0}{0}{0}{1}.
\]
Under this identification, we have
\[
 \ba_1 \v = \ii_1(\ba_1) \cdot \v, \qquad 
 \ba_2 \v = \v \cdot \ii_2(\ba_2)^*
\]
for $\ba_i \in B_i$ and $\v \in V^\dagger \cong \M_2(F)$, where $\ii_1 : B_1 \rightarrow \M_2(F)$ and $\ii_2 : B_2 \rightarrow \M_2(F)$ are isomorphisms of quaternion $F$-algebras given by 
\begin{equation}
\label{eq:isomB1B2-J-ii}
\begin{aligned}
 \ii_1(a + b \i + c \j_1 + d \i \j_1) & =
 \begin{pmatrix}
  a + ct_1 & b - dt_1 \\
  u(b + dt_1) & a - ct_1
 \end{pmatrix}, \\
 \ii_2(a + b \i + c \j_2 + d \i \j_2) & =
\begin{pmatrix}
 a - c \frac{t}{t_1} & -u (b + d \frac{t}{t_1}) \\
 - (b - d \frac{t}{t_1}) & a + c \frac{t}{t_1}
\end{pmatrix}.
\end{aligned}
\end{equation}

\subsubsection{The case $J_1 \in (F^{\times})^2$ or $J_2 \in (F^{\times})^2$}
\label{sssec:cpJ1}

We only consider the case $J_1 \in (F^{\times})^2$; we switch the roles of $B_1$ and $B_2$ in the other case.
Choose $t \in F^{\times}$ such that $J_1 = t^2$.
We define isomorphisms $\ii_1 : B_1 \rightarrow \M_2(F)$ and $\ii_2 : B_2 \rightarrow B$ of quaternion $F$-algebras by
\begin{equation}
\label{eq:isomB1-J1}
 \ii_1(1) = \begin{pmatrix} 1 & \\ & 1 \end{pmatrix}, \qquad
 \ii_1(\i) = \begin{pmatrix} & 2 \\ \frac{u}{2} & \end{pmatrix}, \qquad
 \ii_1(\j_1) = \begin{pmatrix} t & \\ & -t \end{pmatrix}, \qquad
 \ii_1(\i \j_1) = \begin{pmatrix} & -2t \\ \frac{tu}{2} & \end{pmatrix},
\end{equation}
and 
\begin{equation}
\label{eq:isomB2-J1}
 \ii_2 (\alpha + \beta \j_2) =  \alpha+ \frac{\beta}{t} \j 
\end{equation}
for $\alpha, \beta \in E$.
Put
\[
 \v := \frac{1}{2} \e_1 + \frac{1}{2 t} \e_2, \qquad
 \v^* := \e_1^* + t \e_2^* = \frac{1}{u} \e_1 \i - \frac{1}{tu} \e_2 \i.
\]
Then $\v,\v^*$ is a basis of $V$ over $B$ such that 
\[
 \langle \v, \v \rangle = \langle \v^*, \v^* \rangle = 0, \qquad
 \langle \v, \v^* \rangle = 1.
\]
Moreover, we have 
\[
 \begin{bmatrix} \ba_i \cdot \v & \ba_i \cdot \v^* \end{bmatrix}
 = \begin{bmatrix} \v & \v^* \end{bmatrix} \cdot \ii_i(\ba_i)
\]
for $\ba_i \in B_i$.
Here we identify $\ii_2(\ba_2)$ with the scalar matrix $\ii_2(\ba_2) \cdot \1_2$ in $\M_2(B)$.
Let $V' := V$, regarded as a left $B$-space via $\ba \cdot x' := (x \cdot \ba^*)'$, where for an element $x \in V$, we write $x'$ for the corresponding element in $V'$.
We have a natural skew-hermitian form $\langle \cdot, \cdot \rangle'$ on $V'$ defined by $\langle x', y' \rangle' = \langle x, y \rangle$.
Let $\GL(V')$ act on $V'$ on the right.
We may identify $\GU(V)$ with $\GU(V')$ via the isomorphism
\begin{align*}
 \GL(V) & \longrightarrow \GL(V'). \\
 g & \longmapsto \left[ x' \mapsto (g^{-1} \cdot x)' \right]
\end{align*}
Under this identification, we have
\[
 \begin{bmatrix} \v' \cdot \ba_i \\ (\v^*)' \cdot \ba_i \end{bmatrix}
 = {}^t (\ii_i(\ba_i)^{-1})^* \cdot
 \begin{bmatrix} \v' \\ (\v^*)' \end{bmatrix}
\]
for $\ba_i \in B_i$.
We take a complete polarization $V' = X' \oplus Y'$ given by
\[
 X' = B \cdot \v', \qquad Y' = B \cdot (\v^*)'.
\]
Similarly, let $W' := W$, regarded as a right $B$-space via $x' \cdot \ba := (\ba^* \cdot x)'$.
We have a natural hermitian form $\langle \cdot, \cdot \rangle'$ on $W'$ defined by $\langle x',y' \rangle' = \langle x,y \rangle$.
Let $\GL(W')$ act on $W'$ on the left.
We may identify $\GU(W)$ with $\GU(W')$ via the isomorphism
\begin{align*}
 \GL(W) & \longrightarrow \GL(W'). \\
 g & \longmapsto \left[ x' \mapsto (x \cdot g^{-1})' \right]
\end{align*}
We now consider an $F$-space $\V' := W' \otimes_B V'$ equipped with a non-degenerate symplectic form
\[
 \llangle \cdot, \cdot \rrangle' := 
 \frac{1}{2} \tr_{B/F} (\langle \cdot, \cdot \rangle' \otimes \langle \cdot, \cdot \rangle'^*).
\]
Let $\GL(\V')$ act on $\V'$ on the right.
We identify $\V$ with $\V'$ via the map $\x = x \otimes y \mapsto \x' = y' \otimes x'$.
Then by Lemma \ref{lem:GSp(V)-GSp(V')-identify}, we may identify $\GSp(\V)$ with $\GSp(\V')$ via the isomorphism
\begin{align*}
 \GL(\V) & \longrightarrow \GL(\V'), \\
 \g & \longmapsto \left[ \x' \mapsto (\x \cdot \g)' \right]
\end{align*}
which induces a commutative diagram
\[
 \xymatrix{
  \GU(V) \times \GU(W) \ar@{->}[r] \ar@{->}[d] &
  \GSp(\V) \ar@{->}[d] \\
  \GU(W') \times \GU(V') \ar@{->}[r] & \GSp(\V')
 }.
\]
We take a complete polarization
\[
 \V' = (W' \otimes_B X') \oplus (W' \otimes_B Y').
\]
Under the identification $\V = \V'$, this gives a complete polarization $\V = \X' \oplus \Y'$, where
\[
 \X' = (\v \cdot B) \otimes_B W, \qquad \Y' = (\v^* \cdot B) \otimes_B W.
\]
We identify $\X'$ with $W$ via the map $w \mapsto \v \otimes w$.
We fix $s \in F^{\times}$ and put 
\begin{equation}
\label{eq:basisX'Y'-J1}
\begin{aligned}
 \v_1 & = \v = \frac{1}{2} \e_1 + \frac{1}{2t} \e_2, &
 \v_1^* & = \v^* = \e_1^* + t \e_2^*, \\
 \v_2 & = \frac{1}{u} \v \i = \frac{1}{2} \e_1^* - \frac{t}{2} \e_2^*, &
 \v_2^* & = - \v^* \i = -\e_1 + \frac{1}{t} \e_2, \\
 \v_3 & = \frac{1}{s} \v \j = \frac{1}{2s} \e_4 + \frac{t}{2s} \e_3, &
 \v_3^* & = - \frac{s}{J} \v^* \j = s \e_4^* + \frac{s}{t} \e_3^*, \\
 \v_4 & = \frac{1}{su} \v \i \j = -\frac{J}{2s} \e_4^* + \frac{J}{2st} \e_3^*, &
 \v_4^* & = \frac{s}{J} \v^* \i \j = \frac{s}{J} \e_4 - \frac{st}{J} \e_3.
\end{aligned}
\end{equation}
Then $\v_1, \dots, \v_4$ and $\v_1^*, \dots, \v_4^*$ are bases of $\X'$ and $\Y'$, respectively, such that $\llangle \v_i, \v_j^* \rrangle = \delta_{ij}$.

\subsection{Weil representations}
\label{ssec:weilrep}

Recall that we have the Weil representation $\omega_{\psi}$ of $\G(\U(V)^0 \times \U(W))$ on $\SS(\X)$ obtained from the map $s: \GU(V)^0 \times \GU(W) \rightarrow \C^1$ such that $z_{\Y} = \partial s$ given in Appendix \ref{sec:spl-B}.
This Weil representation is unitary with respect to the hermitian inner product $\langle \cdot, \cdot \rangle$ on $\SS(\X)$ given by
\[
 \langle \varphi_1, \varphi_2 \rangle = \int_{\X} \varphi_1(x) \overline{\varphi_2(x)} \, dx, 
\]
where $dx = dx_1 \cdots dx_4$ for $x = x_1 \e_1 + \dots + x_4 \e_4$ with the self-dual Haar measure $dx_i$ on $F$ with respect to $\psi$.
The map $s$ is defined in terms of another map $s': \GU(V)^0 \times \GU(W) \rightarrow \C^1$ such that $z_{\Y'} = \partial s'$ given in Appendix \ref{sec:spl-B}, based on \cite{kudla-splitting}.
Thus we obtain the Weil representation $\omega_{\psi}$ of $\G(\U(V)^0 \times \U(W))$ on $\SS(\X')$ from $s'$, as in \cite[\S 5]{kudla-splitting}, \cite[\S 5]{hk-duke}.
This Weil representation is unitary with respect to the hermitian inner product $\langle \cdot, \cdot \rangle$ on $\SS(\X')$ given in terms of certain Haar measure on $\X'$.
In this subsection, we define this Haar measure on $\X'$ and give explicit formulas for the Weil representation on $\SS(\X')$.

\subsubsection{The case $u \in (F^\times)^2$}
\label{sssec:wru}

Recall that we identified $\X'$ with $V^\dagger$.
We take the self-dual Haar measure on $V^\dagger$ with respect to the pairing $(x,y) \mapsto \psi(\langle x, y \rangle^\dagger)$.
More explicitly, this measure is given by
\[
 dx = dx_1 \cdots dx_4
\]
for $x = x_1 \v_1 + \cdots + x_4 \v_4 \in \X'$, where $\v_1,\dots,\v_4$ is the basis of $\X'$ given by \eqref{eq:basisX'Y'-u} and $dx_i$ is the self-dual Haar measure on $F$ with respect to $\psi$.

We identity $\GU(W) \cong B^{\times}$ with $\GL_2(F)$ via the isomorphism $\ii$ given by \eqref{eq:isomB-u}.
Then $\U(W) \cong \SL_2(F)$ acts on $\SS(\X')$ by 
\begin{align*}
 \omega_\psi \begin{pmatrix} a & \\ & a^{-1} \end{pmatrix} \varphi(x) & = |a|^2 \varphi(ax), & a & \in F^\times, \\
 \omega_\psi \begin{pmatrix} 1 & b \\ & 1 \end{pmatrix} \varphi(x)
 & = \psi\left(\frac{1}{2} b \langle x,x \rangle^\dagger\right) \varphi(x), & b & \in F, \\
 \omega_\psi \begin{pmatrix} & -1 \\ 1 & \end{pmatrix} \varphi (x)
 & = \int_{\X'} \varphi(y) \psi(-\langle x,y \rangle ^\dagger) \, dy.
\end{align*}
This action extends to an action of $\G(\U(V)^0 \times \U(W))$ by 
\[
 \omega_\psi(g,h) = \omega_\psi(g \cdot d(\nu)^{-1}) \circ L(h) = L(h) \circ \omega_\psi(d(\nu)^{-1} \cdot g)
\]
for $g \in \GU(W) \cong \GL_2(F)$ and $h \in \GU(V)^0 \cong \GO(V^\dagger)^0$ such that $\nu(g) = \nu(h) =: \nu$,
where $d(\nu) = \smat{1}{}{}{\nu}$ and
\[
 L(h) \varphi(x) = |\nu|^{-1} \varphi(h^{-1} x).
\]

\subsubsection{The case $J \in (F^\times)^2$}
\label{sssec:wrJ}

Recall that we identified $\X'$ with $V^\dagger$.
We take the self-dual Haar measure on $V^\dagger$ with respect to the pairing $(x,y) \mapsto \psi(\langle x, y \rangle^\dagger)$.
More explicitly, according the coordinate system, this measure is given as follows:
\begin{enumerate}
\item
\[
 dx = \left| \frac{u J_1}{4s^2} \right| dx_1 \cdots dx_4
\]
for $x = x_1 \v_1 + \cdots + x_4 \v_4 \in \X'$, where $\v_1,\dots,\v_4$ is the basis of $\X'$ given by \eqref{eq:basisX'Y'-J-i} and $dx_i$ is the self-dual Haar measure on $F$ with respect to $\psi$.
\item
\[
 dx = dx_1 \cdots dx_4
\]
for $x = x_1 \v_1 + \cdots + x_4 \v_4 \in \X'$, where $\v_1,\dots,\v_4$ is the basis of $\X'$ given by \eqref{eq:basisX'Y'-J-ii} and $dx_i$ is the self-dual Haar measure on $F$ with respect to $\psi$.
\end{enumerate}

We identity $\GU(W) \cong B^{\times}$ with $\GL_2(F)$ via the isomorphism $\ii$ given by \eqref{eq:isomB-J}.
Then $\U(W) \cong \SL_2(F)$ acts on $\SS(\X')$ by 
\begin{align*}
 \omega_\psi \begin{pmatrix} a & \\ & a^{-1} \end{pmatrix} \varphi(x) & = |a|^2 \varphi(ax), & a & \in F^\times, \\
 \omega_\psi \begin{pmatrix} 1 & b \\ & 1 \end{pmatrix} \varphi(x)
 & = \psi\left(\frac{1}{2} b \langle x,x \rangle^\dagger\right) \varphi(x), & b & \in F, \\
 \omega_\psi \begin{pmatrix} & -1 \\ 1 & \end{pmatrix} \varphi (x)
 & = \gamma_{B_1} \int_{\X'} \varphi(y) \psi(-\langle x,y \rangle ^\dagger) \, dy,
\end{align*}
where
\[
 \gamma_{B_1} =
 \begin{cases}
  1 & \text{if $B_1$ is split,} \\
  -1 & \text{if $B_1$ is ramified.}
 \end{cases}
\]
This action extends to an action of $\G(\U(V)^0 \times \U(W))$ by 
\[
 \omega_\psi(g,h) = \omega_\psi(g \cdot d(\nu)^{-1}) \circ L(h) = L(h) \circ \omega_\psi(d(\nu)^{-1} \cdot g)
\]
for $g \in \GU(W) \cong \GL_2(F)$ and $h \in \GU(V)^0 \cong \GO(V^\dagger)^0$ such that $\nu(g) = \nu(h) =: \nu$,
where $d(\nu) = \smat{1}{}{}{\nu}$ and
\[
 L(h) \varphi(x) = |\nu|^{-1} \varphi(h^{-1} x).
\]

\subsubsection{The case $J_1 \in (F^\times)^2$ or $J_2 \in (F^\times)^2$}
\label{sssec:wrJ1}

We only consider the case $J_1 \in (F^{\times})^2$; we switch the roles of $B_1$ and $B_2$ in the other case.
Recall that we identified $\X'$ with $W$.
We take the self-dual Haar measure on $W$ with respect to the pairing $(x,y) \mapsto \psi(\frac{1}{2} \tr_{B/F}\langle x, y \rangle)$.
More explicitly, this measure is given by
\[
 dx = \left| \frac{J}{s^2u} \right| dx_1 \cdots dx_4
\]
for $x = x_1 \v_1 + \cdots + x_4 \v_4 \in \X'$, where $\v_1,\dots,\v_4$ is the basis of $\X'$ given by \eqref{eq:basisX'Y'-J1} and $dx_i$ is the self-dual Haar measure on $F$ with respect to $\psi$.

We identity $\GU(V)^0 \cong (B_1^\times \times B_2^\times)/F^\times$ with the group
\[
 \left\{ g \in \GL_2 (B) \, \left| \, {}^t g ^* \begin{pmatrix} & 1 \\  -1  & \end{pmatrix} g = \nu (g) \begin{pmatrix} & 1 \\  -1  & \end{pmatrix} \right. \right\}
\]
via the map $(\ba_1, \ba_2) \mapsto \ii_1(\ba_1) \ii_2(\ba_2)$, where $\ii_1$ and $\ii_2$ are the isomorphisms given by \eqref{eq:isomB1-J1}, \eqref{eq:isomB2-J1}.
Then $\U(V)^0$ acts on $\SS(\X')$ via the identification $\U(V)^0 \cong \U(V')^0$ followed by the Weil representation of $\U(V')^0$ on $\SS(W' \otimes_B X')$ given in \cite[\S 5]{kudla-splitting}.
Hence $\U(V)^0$ acts on $\SS(\X')$ by
\begin{align*}
 \omega_{\psi} \begin{pmatrix} a & \\ & (a^{-1})^* \end{pmatrix} \varphi(x)
 & = |\nu(a)|^{-1} \varphi(a^{-1} x), & a & \in B^\times, \\
 \omega_\psi \begin{pmatrix} 1 & \\ b & 1 \end{pmatrix} \varphi(x)
 & = \psi \left(-\frac{1}{2} b \langle x,x \rangle \right) \varphi(x), & b & \in F, \\
 \omega_\psi \begin{pmatrix} & -1 \\ 1 & \end{pmatrix} \varphi(x)
 & = \gamma_B \int_{\X'} \varphi(y) \psi\left(- \frac{1}{2} \tr_{B/F}\langle x,y \rangle \right) dy,
\end{align*}
where
\[
 \gamma_B =
 \begin{cases}
  1 & \text{if $B$ is split,} \\
  -1 & \text{if $B$ is ramified.}
 \end{cases}
\]
This action extends to an action of $\G(\U(V)^0 \times \U(W))$ by 
\[
 \omega_\psi(g,h) = \omega_\psi(h \cdot d(\nu)^{-1}) \circ R(g) = R(g) \circ \omega_\psi(d(\nu)^{-1} \cdot h)
\]
for $g \in \GU(W)$ and $h \in \GU(V)^0$ such that $\nu(g) = \nu(h) =: \nu$, where $d(\nu) = \smat{1}{}{}{\nu}$ and
\[
  R(g) \varphi(x)= |\nu| \varphi(xg).
\]

\subsection{Partial Fourier transforms}
\label{ssec:pft}

Recall that the partial Fourier transform $\varphi \in \SS(\X)$ of $\varphi' \in \SS(\X')$ is given by
\[
 \varphi(x) = \int_{\Y/\Y\cap \Y'} \varphi'(x')
 \psi \left(\frac{1}{2} \left(\llangle x',y' \rrangle - \llangle x,y \rrangle \right)\right)
 d\mu_{\Y/\Y\cap \Y'}(y),
\]
where for $x \in \X$ and $y \in \Y$, we write $x+y = x'+y'$ with $x' = x'(x,y) \in \X'$ and $y' = y'(x,y) \in \Y'$, and we take the Haar measure $\mu_{\Y/\Y\cap \Y'}$ on $\Y/\Y\cap \Y'$ so that the map
\begin{align*}
 \SS(\X') & \longrightarrow \SS(\X) \\
 \varphi' & \longmapsto \varphi
\end{align*}
respects the hermitian inner products (given in terms of the Haar measures on $\X$ and $\X'$ given in \S \ref{ssec:weilrep}).
By construction, this partial Fourier transform is a unitary equivalence between the Weil representations of $\G(\U(V)^0 \times \U(W))$ on $\SS(\X)$ and $\SS(\X')$.
In this subsection, we explicate the Haar measure $\mu_{\Y/\Y\cap \Y'}$ and the partial Fourier transform $\SS(\X') \rightarrow \SS(\X)$.

We write
\begin{align*}
 x & = x_1 \e_1 + \dots + x_4 \e_4 \in \X, & 
 y & = y_1 \e_1^* + \dots + y_4 \e_4^* \in \Y, \\
 x' & = x'_1 \v_1 + \dots + x'_4 \v_4 \in \X', &
 y' & = y'_1 \v_1^* + \dots + y'_4 \v_4^* \in \Y',
\end{align*}
where $\v_1, \dots, \v_4$ and $\v_1^*, \dots, \v_4^*$ are the bases of $\X'$ and $\Y'$, respectively, given in \S \ref{ssec:complete-pol}.
Let $dx_i, dy_j, dx_i', dy_j'$ be the self-dual Haar measures on $F$ with respect to $\psi$.

\subsubsection{The case $u \in (F^\times)^2$}
\label{sssec:pftu}

Recall that $\v_i, \v_j^*$ are given by \eqref{eq:basisX'Y'-u}.
Note that $\Y \cap \Y' = \{ 0 \}$.
We define a Haar measure $\mu_{\Y/\Y\cap \Y'}$ on $\Y$ by
\[
 d\mu_{\Y/\Y\cap \Y'}(y) = |4u|^{-\frac{1}{2}} \, dy_1 \cdots dy_4
\]
for $y = y_1 \e_1^* + \dots + y_4 \e_4^*$.
We will see below that the partial Fourier transform with respect to this Haar measure is an isometry.

If $x + y = x' + y'$, then we have
\begin{align*}
 x'_1 & = \frac{1}{2t} (y_1 + tx_1), &
 y'_1 & = y_1 - tx_1, \\
 x'_2 & = - \frac{1}{2 tJ_1} (y_2 - tJ_1 x_2), &
 y'_2 & = y_2 + tJ_1 x_2, \\
 x'_3 & = J_1 (y_3 - t J_2 x_3), &
 y'_3 & = - \frac{1}{2tJ}(y_3 + t J_2 x_3), \\
 x'_4 & = -(y_4+ tJx_4), &
 y'_4 & = -\frac{1}{2tJ}(y_4 - tJx_4).
\end{align*}
Namely, putting
\[
 a_1 = t, \quad 
 a_2 = - tJ_1, \quad 
 a_3 = - tJ_2, \quad 
 a_4 = tJ, \quad 
 b_1 = b_2 = 1, \quad
 b_3 = b_4 = - 2tJ,
\]
we have
\[
 x_i' = \frac{b_i}{2 a_i} (y_i + a_i x_i), \qquad
 y_i' = \frac{1}{b_i}(y_i - a_i x_i),
\]
so that
\[
 x_i' y_i' - x_i y_i
 = x_i' \left( \frac{2a_i}{b_i^2} x_i' -\frac{2a_i}{b_i} x_i \right)
 - x_i \left( \frac{2a_i}{b_i} x_i' - a_i x_i \right)
 = \frac{2 a_i}{b_i^2} (x_i')^2 - \frac{4a_i}{b_i} x_i x_i' + a_i x_i^2. 
\]
Hence, if $\varphi'(x') = \prod_{i=1}^4 \varphi'_i (x_i')$ with $\varphi_i' \in \SS(F)$, then we have
\[
 \varphi(x) = |4u|^{-\frac{1}{2}} \prod_{i=1}^4 \varphi_i (x_i),
\]
where
\begin{align*}
 \varphi_i(x_i)
 & = \int_F \varphi'_i(x_i') \psi \left( \frac{1}{2} (x_i' y_i' - x_i y_i) \right) dy_i \\
 & = \left| \frac{2a_i}{b_i} \right| \psi \left( \frac{a_i}{2} x_i^2 \right)
 \int_F \varphi'_i(x_i') \psi \left( \frac{a_i}{b_i^2} (x_i')^2 - \frac{2a_i}{b_i} x_i x_i' \right) dx_i'.
\end{align*}
Since 
\[
 \prod_{i=1}^4 \frac{2a_i}{b_i} = 4u, 
\]
the partial Fourier transform with respect to $\mu_{\Y/\Y\cap \Y'}$ is an isometry.

\subsubsection{The case $J \in (F^\times)^2$}
\label{sssec:pftJ}

\paragraph{The case (i)}
\label{par:pftJi}

Recall that $\v_i, \v_j^*$ are given by \eqref{eq:basisX'Y'-J-i}.
Note that $\Y \cap \Y' = F \v_1^* + F \v_3^*$.
Let $\mu_\Y$ and $\mu_{\Y \cap \Y'}$ be the Haar measures on $\Y$ and $\Y \cap \Y'$, respectively, defined by 
\[
 d \mu_\Y(y) = dy_1 \cdots dy_4, \qquad
 d \mu_{\Y \cap \Y'}(y') = dy_1' \, dy_3'
\]
for $y = y_1 \e_1^* + \dots + y_4 \e_4^*$ and $y' = y_1' \v_1^* + y_3' \v_3^*$.
We define a Haar measure $\mu_{\Y/\Y\cap \Y'}$ on $\Y/\Y\cap \Y'$ by 
\[
 \mu_{\Y/\Y \cap \Y'} = \left| \frac{uJ_1}{4s^2J} \right|^{\frac{1}{2}} \frac{\mu_{\Y}}{\mu_{\Y \cap \Y'}}.
\]
We will see below that the partial Fourier transform with respect to this Haar measure is an isometry.

If $x + y = x' + y'$, then we have
\begin{align*}
 x'_1 & = x_1 + tx_4, &
 y'_1 & = \frac{1}{2} \left( y_1 + \frac{1}{t} y_4 \right), \\
 x'_2 & = y_1 - \frac{1}{t} y_4, &
 y'_2 & = - \frac{1}{2}(x_1 - t x_4), \\
 x'_3 & = s \left( x_2 + \frac{t}{J_1} x_3 \right), &
 y'_3 & = \frac{1}{2s} \left( y_2 + \frac{J_1}{t} y_3 \right), \\
 x'_4 & = - \frac{s}{J_1} \left( y_2 - \frac{J_1}{t} y_3 \right), &
 y'_4 & = \frac{J_1}{2s} \left( x_2 - \frac{t}{J_1} x_3 \right),
\end{align*}
so that 
\[
 x_1' y_1' - x_2' y_2' = x_1 y_1 + x_4 y_4, \qquad 
 x_3' y_3' - x_4' y_4' = x_2 y_2 + x_3 y_3.
\]
Also, we have
\[
 dx'_1 \, dx'_3 \, dy'_2 \, dy'_4  = |J| \, dx_1 \cdots dx_4, \qquad 
 dx'_2 \, dx'_4 \, dy'_1 \, dy'_3  = |J|^{-1} \, dy_1 \cdots dy_4.
\]
Hence, if $\varphi'(x') = \prod_{i=1}^4 \varphi'_i (x_i')$ with $\varphi_i' \in \SS(F)$, then we have
\begin{align*}
 \varphi(x) & = \left| \frac{uJJ_1}{4s^2} \right|^{\frac{1}{2}} \varphi_1'(x_1') \varphi_3'(x_3')
 \int_F \int_F \varphi_2'(x_2') \varphi_4'(x_4') \psi(x_2'y_2' + x_4' y_4') \, dx_2' \, dx_4' \\
 & = \left| \frac{uJJ_1}{4s^2} \right|^{\frac{1}{2}} \varphi_1'(x_1') \hat{\varphi}_2'(y_2') \varphi_3'(x_3') \hat{\varphi}_4'(y_4').
\end{align*}
In particular, the partial Fourier transform with respect to $\mu_{\Y/\Y\cap \Y'}$ is an isometry.

\paragraph{The case (ii)}
\label{par:pftJii}

Recall that $\v_i, \v_j^*$ are given by \eqref{eq:basisX'Y'-J-ii}.
Note that $\Y \cap \Y' = F \v_1^* + F \v_4^*$.
Let $\mu_\Y$ and $\mu_{\Y \cap \Y'}$ be the Haar measures on $\Y$ and $\Y \cap \Y'$, respectively, defined by 
\[
 d \mu_\Y(y) = dy_1 \cdots dy_4, \qquad
 d \mu_{\Y \cap \Y'}(y') = dy_1' \, dy_4'
\]
for $y = y_1 \e_1^* + \dots + y_4 \e_4^*$ and $y' = y_1' \v_1^* + y_4' \v_4^*$.
We define a Haar measure $\mu_{\Y/\Y\cap \Y'}$ on $\Y/\Y\cap \Y'$ by 
\[
 \mu_{\Y/\Y \cap \Y'} = |uJ|^{-\frac{1}{2}} \frac{\mu_{\Y}}{\mu_{\Y \cap \Y'}}.
\]
We will see below that the partial Fourier transform with respect to this Haar measure is an isometry.

If $x + y = x' + y'$, then we have
\begin{align*}
 x'_1 & = \frac{1}{2} \left( x_1 + t_1 x_2 + \frac{t}{t_1} x_3 + t x_4 \right), &
 y'_1 & = \frac{1}{2} \left( y_1 + \frac{1}{t_1} y_2 + \frac{t_1}{t} y_3 + \frac{1}{t} y_4 \right), \\
 x'_2 & = \frac{1}{2} \left( y_1 - \frac{1}{t_1} y_2 + \frac{t_1}{t} y_3 - \frac{1}{t} y_4 \right), &
 y'_2 & = - \frac{1}{2} \left( x_1 - t_1 x_2 + \frac{t}{t_1} x_3 - t x_4 \right), \\
 x'_3 & = \frac{1}{2} \left( y_1 + \frac{1}{t_1} y_2 - \frac{t_1}{t} y_3 - \frac{1}{t} y_4 \right), &
 y'_3 & = - \frac{1}{2} \left( x_1 + t_1 x_2 - \frac{t}{t_1} x_3 - t x_4 \right), \\
 x'_4 & = \frac{u}{2} \left( x_1 - t_1 x_2 - \frac{t}{t_1} x_3 + t x_4 \right), &
 y'_4 & = \frac{1}{2u} \left( y_1 - \frac{1}{t_1} y_2 - \frac{t_1}{t} y_3 + \frac{1}{t} y_4 \right),
\end{align*}
so that
\[
 x_1' y_1' - x_2' y_2' - x_3' y_3' + x_4' y_4' = x_1 y_1 + x_2 y_2 + x_3 y_3 + x_4 y_4.
\]
Also, we have
\[
 dx'_1 \, dx'_4 \, dy'_2 \, dy'_3  = |uJ| \, dx_1 \cdots dx_4, \qquad 
 dx'_2 \, dx'_3 \, dy'_1 \, dy'_4  = |uJ|^{-1} \, dy_1 \cdots dy_4.
\]
Hence, if $\varphi'(x') = \prod_{i=1}^4 \varphi'_i (x_i')$ with $\varphi_i' \in \SS(F)$, then we have
\begin{align*}
 \varphi(x) & = |uJ|^{\frac{1}{2}} \varphi_1'(x_1') \varphi_4'(x_4')
 \int_F \int_F \varphi_2'(x_2') \varphi_3'(x_3') \psi(x_2'y_2' + x_3' y_3') \, dx_2' \, dx_3' \\
 & = |uJ|^{\frac{1}{2}} \varphi_1'(x_1') \hat{\varphi}_2'(y_2') \hat{\varphi}_3'(y_3') \varphi_4'(x_4').
\end{align*}
In particular, the partial Fourier transform with respect to $\mu_{\Y/\Y\cap \Y'}$ is an isometry.

\subsubsection{The case $J_1 \in (F^\times)^2$ or $J_2 \in (F^\times)^2$}
\label{sssec:pftJ1}

We only consider the case $J_1 \in (F^{\times})^2$; we switch the roles of $B_1$ and $B_2$ in the other case.
Recall that $\v_i, \v_j^*$ are given by \eqref{eq:basisX'Y'-J1}.
Note that $\Y \cap \Y' = F \v_1^* + F \v_3^*$.
Let $\mu_\Y$ and $\mu_{\Y \cap \Y'}$ be the Haar measures on $\Y$ and $\Y \cap \Y'$, respectively, defined by 
\[
 d \mu_\Y(y) = dy_1 \cdots dy_4, \qquad
 d \mu_{\Y \cap \Y'}(y') = dy_1' \, dy_3'
\]
for $y = y_1 \e_1^* + \dots + y_4 \e_4^*$ and $y' = y_1' \v_1^* + y_3' \v_3^*$.
We define a Haar measure $\mu_{\Y/\Y\cap \Y'}$ on $\Y/\Y\cap \Y'$ by 
\[
 \mu_{\Y/\Y \cap \Y'} = |s^2u|^{-\frac{1}{2}} \frac{\mu_{\Y}}{\mu_{\Y \cap \Y'}}.
\]
We will see below that the partial Fourier transform with respect to this Haar measure is an isometry.

If $x + y = x' + y'$, then we have
\begin{align*}
 x'_1 & = x_1 + t x_2, & 
 y'_1 & = \frac{1}{2} \left( y_1 +\frac{1}{t} y_2 \right), \\
 x'_2 & = y_1 - \frac{1}{t} y_2, &
 y'_2 & = - \frac{1}{2}(x_1 - t x_2), \\
 x'_3 & = s \left( x_4 + \frac{1}{t} x_3 \right), &
 y'_3 & = \frac{1}{2s} (y_4 + t y_3), \\
 x'_4 & = - \frac{s}{J} (y_4 - t y_3), &
 y'_4 & = \frac{J}{2s} \left( x_4 - \frac{1}{t} x_3 \right),
\end{align*}
so that 
\[
 x_1' y_1' - x_2' y_2' = x_1 y_1 + x_2 y_2, \qquad 
 x_3' y_3' - x_4' y_4' = x_3 y_3 + x_4 y_4.
\]
Also, we have
\[
 dx'_1 \, dx'_3 \, dy'_2 \, dy'_4  = |J| \, dx_1 \cdots dx_4, \qquad
 dx'_2 \, dx'_4 \, dy'_1 \, dy'_3  = |J|^{-1} \, dy_1 \cdots dy_4.
\]
Hence, if $\varphi'(x') = \prod_{i=1}^4 \varphi'_i (x_i')$ with $\varphi_i' \in \SS(F)$, then we have
\begin{align*}
 \varphi(x) & = \left| \frac{J^2}{s^2u} \right|^{\frac{1}{2}} \varphi_1'(x_1') \varphi_3'(x_3')
 \int_F \int_F \varphi_2'(x_2') \varphi_4'(x_4') \psi(x_2'y_2' + x_4' y_4') \, dx_2' \, dx_4' \\
 & = \left| \frac{J^2}{s^2u} \right|^{\frac{1}{2}} \varphi_1'(x_1') \hat{\varphi}_2'(y_2') \varphi_3'(x_3') \hat{\varphi}_4'(y_4').
\end{align*}
In particular, the partial Fourier transform with respect to $\mu_{\Y/\Y\cap \Y'}$ is an isometry.

\subsection{Automorphic representations}
\label{ssec:autom-rep}

Suppose that $F$ is a totally real number field.
Let $\pi_B \cong \otimes_v \pi_{B,v}$ be an irreducible unitary cuspidal automorphic representation of $B^\times(\A)$ satisfying the following conditions:
\begin{itemize}
\item For $v \in \Sigma_{\fin} \smallsetminus \Sigma_{B,\fin}$,
\begin{itemize}
\item[(ur)] $\pi_{B,v} = \Ind(\chi_v \otimes \mu_v)$ is a principal series representation, where $\chi_v$ and $\mu_v$ are unitary unramified; or
\item[(rps)] $\pi_{B,v} = \Ind(\chi_v \otimes \mu_v)$ is a principal series representation, where $\chi_v$ is unitary unramified and $\mu_v$ is unitary ramified; or
\item[(st)] $\pi_{B,v} = \St \otimes \chi_v$ is a twist of the Steinberg representation, where $\chi_v$ is unitary unramified.
\end{itemize}
\item For $v \in \Sigma_{B,\fin}$, 
\begin{itemize}
\item[(1d)] $\pi_{B,v} = \chi_v \circ \nu_v$ is a $1$-dimensional representation, where $\chi_v$ is unitary unramified.
\end{itemize}
\item For $v \in \Sigma_{\infty} \smallsetminus \Sigma_{B,\infty}$,
\begin{itemize}
\item[(ds)] $\pi_{B,v} = \DS_{k_v}$ is the irreducible unitary (limit of) discrete series representation of weight $k_v$.
\end{itemize}
\item For $v \in \Sigma_{B,\infty}$,
\begin{itemize}
\item[(fd)] $\pi_{B,v} = \Sym^{k_v}$ is the irreducible unitary $(k_v+1)$-dimensional representation.
\end{itemize}
\end{itemize}
We assume that $\pi_{B,v}$ is unramified for all finite places $v$ of $F$ such that $F_v$ is ramified or of residual characteristic $2$.
By Proposition \ref{prop:choices1}, we may assume that the following conditions (which are relevant to the choice of the polarization $\V_v = \X_v' \oplus \Y_v'$) are satisfied:
\begin{itemize}
\item If $v \notin \Sigma_B$, then $J \in (F_v^\times)^2$ except in the case (ur).
\item If $v \in \Sigma_B$, then either $J_1 \in (F_v^\times)^2$ or $J_2 \in (F_v^\times)^2$.
\end{itemize}
In fact, Proposition \ref{prop:choices1} enables us to impose more precise ramification conditions as described in the following table:
\begin{flushleft}
{\renewcommand{\arraystretch}{1.2}
\renewcommand{\thefootnote}{\fnsymbol{footnote}}
\begin{tabular}{|c|c|c|c|c|c|c|c|c|} \hline
 & $\pi$ & $B$ & $B_1, B_2$ & $E$ & $u$ & $F$ & $J$ & $J_1, J_2$ \\ \hline
 ur & ur.p.s. & split & spl, spl & split & sq of unit & ur & integer & integers \\
 & & & & split & sq of unit & ram & sq of unit & sqs of units \\
 & & & & inert & nonsq unit & ur & unit $\cdot$ sq of int & units $\cdot$ sqs of ints \\
 & & & & ramified & uniformizer &ur & sq of unit & sqs of units \\ \hline
 rps & r.p.s. & split & spl, spl & split & sq of unit & ur & sq of unit & sqs of units \\ \hline 
 st & St & split & spl, spl & split & sq of unit & ur & sq of unit & sqs of units \\
 & & & ram, ram & inert & nonsq unit & ur & sq of uniform & uniforms\footnotemark \\ \hline
 1d & St & ramified & spl, ram & inert & nonsq unit & ur & uniform & sq of unit, uniform \\
 & & & ram, spl & inert & nonsq unit & ur & uniform & uniform, sq of unit \\ \hline \hline
 ds & d.s. & split & spl, spl & $\C$ & negative & $\R$ & positive & positive, positive \\
 & & & ram, ram & $\C$ & negative & $\R$ & positive & negative, negative \\ \hline
 fd & d.s. & ramified & spl, ram & $\C$ & negative & $\R$ & negative & positive, negative \\
 & & & ram, spl & $\C$ & negative & $\R$ & negative & negative, positive \\ \hline 
\end{tabular}
\footnotetext{The ratio $J_1/J_2$ is a square of a unit.}
\renewcommand{\thefootnote}{\arabic{footnote}}
}
\end{flushleft}
\begin{itemize}[leftmargin=\parindent]
\item[$\diamond$]
All places above $2$ fall into the case (ur) with $E$ being split.
\item[$\diamond$]
In the case (ur) with $E$ being inert, we need to consider separately the case $J \in (F^{\times})^2$ and the case $J_1 \in (F^{\times})^2$ or $J_2 \in (F^{\times})^2$.
\end{itemize}
Here $\pi \cong \otimes_v \pi_v$ is the Jacquet--Langlands transfer of $\pi_B$ to $\GL_2(\A)$.
These conditions will be very useful in the computation of the partial Fourier transform.
From now on, we fix a place $v$ of $F$ and suppress the subscript $v$ from the notation.

\subsection{Schwartz functions on $\X'$}
\label{ssec:schwartzX'}

In this subsection, we pick a Schwartz function $\varphi' \in \SS(\X')$ such that $\langle \varphi', \varphi' \rangle = 1$,
together with maximal compact subgroups $\K, \K_1, \K_2$ of $B^{\times}, B_1^{\times}, B_2^{\times}$, respectively.
Also, we study equivariance properties of $\varphi'$ under the action of $\K$ and $\K_1 \times \K_2$, regarded as subgroups of $\GU(W) \cong B^{\times}$ and $\GU(V)^0 \cong (B_1^{\times} \times B_2^{\times}) / F^{\times}$, respectively.

We need to introduce some notation.
For any set $A$, let $\I_A$ denote the characteristic function of $A$.
If $F$ is non-archimedean, then for any positive integer $n$, we define a subalgebra $R_n$ of $\M_2(F)$ by
\[
 R_n = \left\{ \left. \begin{pmatrix} a & b \\ c & d \end{pmatrix} \in \M_2(\o) \, \right| \, c \in \varpi^n \o \right\}.
\]
Note that $R_1$ is an Iwahori subalgebra of $\M_2(F)$.
If $F = \R$, then we choose an isomorphism $E \cong \C$ such that
\[
 \frac{\i}{\sqrt{-1}} > 0,
\]
i.e., $\i = |u|^{\frac{1}{2}} \sqrt{-1}$.
Put
\[
 \frac{\partial}{\partial z} = \frac{1}{2} \left( \frac{\partial}{\partial x} + \frac{1}{\i} \frac{\partial}{\partial y} \right), \qquad 
 \frac{\partial}{\partial z^\rho} = \frac{1}{2} \left( \frac{\partial}{\partial x} - \frac{1}{\i} \frac{\partial}{\partial y} \right)
\]
for $z = x + y \i$.
For any integer $k$, we define a character $\chi_k$ of $\C^{\times}$ by 
\[
 \chi_k(\alpha) = \left( \frac{\alpha}{\sqrt{\alpha \alpha^\rho}} \right)^k.
\]
Put
\[
 \mathtt{H} = \begin{pmatrix} 1 & 0 \\ 0 & -1 \end{pmatrix}, \qquad
 \mathtt{X} = \begin{pmatrix} 0 & 1 \\ 0 & 0 \end{pmatrix}, \qquad
 \mathtt{Y} = \begin{pmatrix} 0 & 0 \\ 1 & 0 \end{pmatrix}.
\]

\subsubsection{The case (ur)}
\label{sssec:schwartzX'-ur}

\paragraph{The case when $E$ is split and $F$ is unramified}
\label{par:schwartzX'-ur-Espl-Fur}

In this case, we have:
\begin{itemize}
 \item $F$ is non-archimedean,
 \item $\psi$ is of order zero,
 \item $u = t^2$ for some $t \in \o^\times$,
 \item $J, J_1, J_2 \in \o$.
\end{itemize}
We define maximal orders $\o_B, \o_{B_1}, \o_{B_2}$ in $B, B_1, B_2$, respectively, by
\[
 \o_B = \ii^{-1}(\M_2(\o)), \qquad 
 \o_{B_1} = \ii_1^{-1}(\M_2(\o)), \qquad
 \o_{B_2} = \ii_2^{-1}(\M_2(\o)),
\]
where $\ii,\ii_1,\ii_2$ are the isomorphisms given by \eqref{eq:isomB-u}, \eqref{eq:isomB1B2-u}.
Put
\[
 \K = \o_B^\times, \qquad
 \K_1 = \o_{B_1}^\times, \qquad
 \K_2 = \o_{B_2}^\times.
\]
We take the complete polarization $\V = \X' \oplus \Y'$ and identify $\X'$ with $V^\dagger \cong \M_2(F)$ as in \S \ref{sssec:cpu}.
We define $\varphi' \in \SS(\X')$ by $\varphi' = \I_{\M_2(\o)}$, i.e.,
\[
 \varphi'(x) = \I_{\o}(x_1) \I_{\o}(x_2) \I_{\o}(x_3) \I_{\o}(x_4)
\]
for $x = x_1 \v_1 + \dots + x_4 \v_4$, where $\v_1,\dots,\v_4$ is the basis of $\X'$ given by \eqref{eq:basisX'Y'-u}.
Then we have
\[
 \omega_{\psi}(k, (k_1, k_2)) \varphi' = \varphi'
\]
for $k \in \K$, $k_1 \in \K_1$, $k_2 \in \K_2$ such that $\nu(k) = \nu(k_1) \nu(k_2)$.

\paragraph{The case when $E$ is split and $F$ is ramified}
\label{par:schwartzX'-ur-Espl-Fram}

In this case, we have:
\begin{itemize}
 \item $F$ is non-archimedean,
 \item $u = t^2$ for some $t \in \o^\times$,
 \item $J, J_1, J_2 \in (\o^\times)^2$.
\end{itemize}
Let $d$ be the non-negative integer such that $\psi$ is trivial on $\varpi^{-d} \o$ but non-trivial on $\varpi^{-d-1} \o$.
We define maximal orders $\o_B, \o_{B_1}, \o_{B_2}$ in $B, B_1, B_2$, respectively, by 
\[
 \o_B = \ii^{-1}\left( \begin{pmatrix} 1 & \\ & \varpi^d \end{pmatrix} \M_2(\o) \begin{pmatrix} 1 & \\ & \varpi^{-d} \end{pmatrix} \right), \qquad
 \o_{B_1} = \ii_1^{-1}(\M_2(\o)), \qquad
 \o_{B_2} = \ii_2^{-1}(\M_2(\o)),
\]
where $\ii,\ii_1,\ii_2$ are the isomorphisms given by \eqref{eq:isomB-u}, \eqref{eq:isomB1B2-u}.
Put
\[
 \K = \o_B^\times, \qquad
 \K_1 = \o_{B_1}^\times, \qquad
 \K_2 = \o_{B_2}^\times.
\]
We take the complete polarization $\V = \X' \oplus \Y'$ and identify $\X'$ with $V^\dagger \cong \M_2(F)$ as in \S \ref{sssec:cpu}.
We define $\varphi' \in \SS(\X')$ by $\varphi' = q^d \cdot \I_{\M_2(\o)}$, i.e.,
\[
 \varphi'(x) = q^d \cdot \I_{\o}(x_1) \I_{\o}(x_2) \I_{\o}(x_3) \I_{\o}(x_4)
\]
for $x = x_1 \v_1 + \dots + x_4 \v_4$, where $\v_1,\dots,\v_4$ is the basis of $\X'$ given by \eqref{eq:basisX'Y'-u}.
Then we have
\[
 \omega_{\psi}(k, (k_1, k_2)) \varphi' = \varphi'
\]
for $k \in \K$, $k_1 \in \K_1$, $k_2 \in \K_2$ such that $\nu(k) = \nu(k_1) \nu(k_2)$.

\paragraph{The case when $E$ is inert and $J \in (F^{\times})^2$}
\label{par:schwartzX'-ur-Einert-J}

In this case, we have:
\begin{itemize}
 \item $F$ is non-archimedean,
 \item $\psi$ is of order zero,
 \item $2 \in \o^\times$,
 \item $u \in \o^\times \smallsetminus (\o^\times)^2$,
 \item $J = t^2$ for some $t \in \o$,
 \item $J_1 \in s^2 \o^{\times}$ for some $s \in \o$,
 \item $J_2 \in \o$.
\end{itemize}
We define maximal orders $\o_B, \o_{B_1}, \o_{B_2}$ in $B, B_1, B_2$, respectively, by 
\[
 \o_B = \ii^{-1}(\M_2(\o)), \qquad 
 \o_{B_1} = \o + \o \i + \o \frac{\j_1}{s} + \o \frac{\i\j_1}{s}, \qquad
 \o_{B_2} = \ii_2^{-1}(\o_{B_1}),
\]
where $\ii,\ii_2$ are the isomorphisms given by \eqref{eq:isomB-J}, \eqref{eq:isomB2-J-i}.
Put
\[
 \K = \o_B^\times, \qquad
 \K_1 = \o_{B_1}^\times, \qquad
 \K_2 = \o_{B_2}^\times.
\]
We take the complete polarization $\V = \X' \oplus \Y'$ as in \S \ref{sssec:cpJ} and identify $\X'$ with $V^\dagger \cong B_1$ as in \S \ref{par:cpJ-i}.
We define $\varphi' \in \SS(\X')$ by $\varphi' = \I_{\o_{B_1}}$, i.e.,
\[
 \varphi'(x) = \I_{\o}(x_1) \I_{\o}(x_2) \I_{\o}(x_3) \I_{\o}(x_4)
\]
for $x = x_1 \v_1 + \dots + x_4 \v_4$, where $\v_1,\dots,\v_4$ is the basis of $\X'$ given by \eqref{eq:basisX'Y'-J-i}.
Then we have
\[
 \omega_{\psi}(k, (k_1, k_2)) \varphi' = \varphi'
\]
for $k \in \K$, $k_1 \in \K_1$, $k_2 \in \K_2$ such that $\nu(k) = \nu(k_1) \nu(k_2)$.

\paragraph{The case when $E$ is inert, and $J_1 \in (F^{\times})^2$ or $J_2 \in (F^{\times})^2$}
\label{par:schwartzX'-ur-Einert-J1}

We only consider the case $J_1 \in (F^{\times})^2$; we switch the roles of $B_1$ and $B_2$ in the other case.
In this case, we have:
\begin{itemize}
 \item $F$ is non-archimedean,
 \item $\psi$ is of order zero,
 \item $2 \in \o^\times$,
 \item $u \in \o^\times \smallsetminus (\o^\times)^2$,
 \item $J_1 = t^2$ for some $t \in \o$,
 \item $J \in s^2 \o^{\times}$ for some $s \in \o$,
 \item $J_2 \in \o$.
\end{itemize}
We define maximal orders $\o_B, \o_{B_1}, \o_{B_2}$ in $B, B_1, B_2$, respectively, by
\[
 \o_B = \o + \o \i + \o \frac{\j}{s} + \o \frac{\i\j}{s}, \qquad 
 \o_{B_1} = \ii_1^{-1} (\M_2 (\o)), \qquad
 \o_{B_2} = \ii_2^{-1}(\o_B),
\]
where $\ii_1,\ii_2$ are the isomorphisms given by \eqref{eq:isomB1-J1}, \eqref{eq:isomB2-J1}.
Put
\[
 \K = \o_B^\times, \qquad
 \K_1 = \o_{B_1}^\times, \qquad
 \K_2 = \o_{B_2}^\times.
\]
We take the complete polarization $\V = \X' \oplus \Y'$ and identify $\X'$ with $W = B$ as in \S \ref{sssec:cpJ1}.
We define $\varphi' \in \SS(\X')$ by $\varphi' = \I_{\o_B}$, i.e.,
\[
 \varphi'(x) = \I_{\o}(x_1) \I_{\o}(x_2) \I_{\o}(x_3) \I_{\o}(x_4)
\]
for $x = x_1 \v_1 + \dots + x_4 \v_4$, where $\v_1,\dots,\v_4$ is the basis of $\X'$ given by \eqref{eq:basisX'Y'-J1}.
Then we have
\[
 \omega_{\psi}(k, (k_1, k_2)) \varphi' = \varphi'
\]
for $k \in \K$, $k_1 \in \K_1$, $k_2 \in \K_2$ such that $\nu(k) = \nu(k_1) \nu(k_2)$.

\paragraph{The case when $E$ is ramified}
\label{par:schwartzX'-ur-Eram}

In this case, we have:
\begin{itemize}
 \item $F$ is non-archimedean,
 \item $\psi$ is of order zero,
 \item $2 \in \o^\times$,
 \item $u \in \varpi \o^\times$,
 \item $J = t^2$ for some $t \in \o^\times$,
 \item $J_1 = t_1^2$ for some $t_1 \in \o^\times$,
 \item $J_2 \in (\o^\times)^2$.
\end{itemize}
We define maximal orders $\o_B, \o_{B_1}, \o_{B_2}$ in $B, B_1, B_2$, respectively, by 
\[
 \o_B = \ii^{-1}(\M_2(\o)), \qquad
 \o_{B_1} = \ii_1^{-1}(\M_2(\o)), \qquad 
 \o_{B_2} = \ii_2^{-1}(\M_2(\o)),
\]
where $\ii,\ii_1,\ii_2$ are the isomorphisms given by \eqref{eq:isomB-J}, \eqref{eq:isomB1B2-J-ii}.
Put
\[
 \K = \o_B^\times, \qquad
 \K_1 = \o_{B_1}^\times, \qquad
 \K_2 = \o_{B_2}^\times.
\]
We take the complete polarization $\V = \X' \oplus \Y'$ as in \S \ref{sssec:cpJ} and identify $\X'$ with $V^\dagger \cong \M_2(F)$ as in \S \ref{par:cpJ-ii}.
We define $\varphi' \in \SS(\X')$ by $\varphi' = \I_{\M_2(\o)}$, i.e.,
\[
 \varphi'(x) = \I_{\o}(x_1) \I_{\o}(x_2) \I_{\o}(x_3) \I_{\o}(x_4)
\]
for $x = x_1 \v_1 + \dots + x_4 \v_4$, where $\v_1,\dots,\v_4$ is the basis of $\X'$ given by \eqref{eq:basisX'Y'-J-ii}.
Then we have
\[
 \omega_{\psi}(k, (k_1, k_2)) \varphi' = \varphi'
\]
for $k \in \K$, $k_1 \in \K_1$, $k_2 \in \K_2$ such that $\nu(k) = \nu(k_1) \nu(k_2)$.

\subsubsection{The case (rps)}
\label{sssec:schwartzX'-tr}

In this case, we have:
\begin{itemize}
 \item $F$ is non-archimedean,
 \item $\psi$ is of order zero,
 \item $2 \in \o^\times$,
 \item $u = t^2$ for some $t \in \o^\times$,
 \item $J, J_1, J_2 \in (\o^\times)^2$.
\end{itemize}
We define maximal orders $\o_B, \o_{B_1}, \o_{B_2}$ in $B, B_1, B_2$ and subalgebras $\o_{B,n}, \o_{B_1,n}, \o_{B_2,n}$ of $B, B_1, B_2$, respectively, by 
\begin{align*}
 \o_B & = \ii^{-1}(\M_2(\o)), &
 \o_{B_1} & = \ii_1^{-1}(\M_2(\o)), &
 \o_{B_2} & = \ii_2^{-1}(\M_2(\o)), \\
 \o_{B,n} & = \ii^{-1}(R_n), &
 \o_{B_1,n} & = \ii_1^{-1}(R_n), &
 \o_{B_2,n} & = \ii_2^{-1}(R_n),
\end{align*}
where $\ii,\ii_1,\ii_2$ are the isomorphisms given by \eqref{eq:isomB-u}, \eqref{eq:isomB1B2-u}.
We define orientations
\[
 o_B : \o_{B,n} \longrightarrow \o/\varpi^n \o, \qquad 
 o_{B_1} : \o_{B_1,n} \longrightarrow \o/\varpi^n \o, \qquad 
 o_{B_2} : \o_{B_2,n} \longrightarrow \o/\varpi^n \o
\]
by 
\[
 o_B \! \left( \ii^{-1} \! \smat{a}{b}{c}{d} \right) = d \bmod {\varpi^n \o}, \qquad
 o_{B_1} \! \left( \ii_1^{-1} \! \smat{a}{b}{c}{d} \right) = d \bmod {\varpi^n \o}, \qquad
 o_{B_2} \! \left( \ii_2^{-1} \! \smat{a}{b}{c}{d} \right) = a \bmod {\varpi^n \o}.
\]
Put
\begin{align*}
 \K & = \o_B^\times, &
 \K_1 & = \o_{B_1}^\times, &
 \K_2 & = \o_{B_2}^\times, \\
 \K_n & = \o_{B,n}^\times, &
 \K_{1,n} & = \o_{B_1,n}^\times, &
 \K_{2,n} & = \o_{B_2,n}^\times.
\end{align*}
We take the complete polarization $\V = \X' \oplus \Y'$ and identify $\X'$ with $V^\dagger \cong \M_2(F)$ as in \S \ref{sssec:cpu}.
For a unitary ramified character $\mu$ of $F^{\times}$ of conductor $q^n$, i.e., trivial on $1+\varpi^n \o$ but non-trivial on $1+\varpi^{n-1} \o$ (resp.~$\o^\times$) if $n > 1$ (resp.~if $n=1$), we define $\varphi' = \varphi'_\mu \in \SS(\X')$ by
\[
 \varphi'(x) = q^{\frac{n+1}{2}} (q-1)^{-\frac{1}{2}} \cdot \I_\o(x_1) \I_\o(x_2) \I_{\varpi^n \o}(x_3) \I_{\o^{\times}}(x_4) \mu(x_4)
\]
for $x = x_1 \v_1 + \dots + x_4 \v_4$, where $\v_1,\dots,\v_4$ is the basis of $\X'$ given by \eqref{eq:basisX'Y'-u}.
Then we have
\[
 \omega_{\psi}(k, (k_1, k_2)) \varphi' = \Mu(k)^{-1} \Mu(k_1) \Mu(k_2)^{-1} \mu(\nu(k_2)) \varphi'
\]
for $k \in \K_n$, $k_1 \in \K_{1,n}$, $k_2 \in \K_{2,n}$ such that $\nu(k) = \nu(k_1) \nu(k_2)$, 
where $\Mu$ is the character of $R_n^\times$ (and those of $\K_n,\K_{1,n},\K_{2,n}$ via $\ii,\ii_1,\ii_2$) defined by
\[
 \Mu(k) := \mu(d)
\]
for $k = \smat{a}{b}{c}{d}$.

\subsubsection{The case (st)}
\label{sssec:schwartzX'-st}

\paragraph{The case when $B_1$ and $B_2$ are split}
\label{par:schwartzX'-st-B1spl}

In this case, we have:
\begin{itemize}
 \item $F$ is non-archimedean,
 \item $\psi$ is of order zero,
 \item $2 \in \o^\times$,
 \item $u = t^2$ for some $t \in \o^\times$,
 \item $J, J_1, J_2 \in (\o^\times)^2$.
\end{itemize}
We define maximal orders $\o_B, \o_{B_1}, \o_{B_2}$ in $B, B_1, B_2$ and Iwahori subalgebras $\mathfrak{I}, \mathfrak{I}_1, \mathfrak{I}_2$ of $B, B_1, B_2$, respectively, by
\begin{align*}
 \o_B & = \ii^{-1}(\M_2(\o)), &
 \o_{B_1} & = \ii_1^{-1}(\M_2(\o)), &
 \o_{B_2} & = \ii_2^{-1}(\M_2(\o)), \\
 \mathfrak{I} & = \ii^{-1}(R_1), &
 \mathfrak{I}_1 & = \ii_1^{-1}(R_1), &
 \mathfrak{I}_2 & = \ii_2^{-1}(R_1),
\end{align*}
where $\ii,\ii_1,\ii_2$ are the isomorphisms given by \eqref{eq:isomB-u}, \eqref{eq:isomB1B2-u}.
Put
\begin{align*}
 \K & = \o_B^\times, &
 \K_1 & = \o_{B_1}^\times, &
 \K_2 & = \o_{B_2}^\times, \\
 \II & = \mathfrak{I}^\times, &
 \II_1 & = \mathfrak{I}_1^\times, &
 \II_2 & = \mathfrak{I}_2^\times.
\end{align*}
We take the complete polarization $\V = \X' \oplus \Y'$ and identify $\X'$ with $V^\dagger \cong \M_2(F)$ as in \S \ref{sssec:cpu}.
We define $\varphi' \in \SS(\X')$ by
\[
 \varphi'(x) = q^{\frac{1}{2}} \cdot \I_\o(x_1) \I_\o(x_2) \I_\p(x_3) \I_\o(x_4)
\]
for $x = x_1 \v_1 + \dots + x_4 \v_4$, where $\v_1,\dots,\v_4$ is the basis of $\X'$ given by \eqref{eq:basisX'Y'-u}.
Then we have
\[
 \omega_{\psi}(k, (k_1, k_2)) \varphi' = \varphi'
\]
for $k \in \II$, $k_1 \in \II_1$, $k_2 \in \II_2$ such that $\nu(k) = \nu(k_1) \nu(k_2)$.

\paragraph{The case when $B_1$ and $B_2$ are ramified}
\label{par:schwartzX'-st-B1ram}

In this case, we have:
\begin{itemize}
 \item $F$ is non-archimedean,
 \item $\psi$ is of order zero,
 \item $2 \in \o^\times$,
 \item $u \in \o^\times \smallsetminus (\o^\times)^2$,
 \item $J = t^2$ for some $t \in \varpi \o^\times$,
 \item $J_1, J_2 \in \varpi \o^\times$.
\end{itemize}
We define a maximal order $\o_B$ in $B$ and an Iwahori subalgebra $\mathfrak{I}$ of $B$ by 
\[
 \o_B = \ii^{-1}(\M_2(\o)), \qquad
 \mathfrak{I} = \ii^{-1}(R_1),
\]
where $\ii$ is the isomorphism given by \eqref{eq:isomB-J}.
Let $\o_{B_1}$ and $\o_{B_2}$ be the unique maximal orders in $B_1$ and $B_2$, respectively.
Then we have
\[
 \o_{B_1} = \o + \o \i + \o \j_1 + \o \i\j_1, \qquad
 \o_{B_2} = \ii_2^{-1}(\o_{B_1}),
\]
where $\ii_2$ is the isomorphism given by \eqref{eq:isomB2-J-i}.
Put
\[
 \K = \o_B^\times, \qquad
 \II = \mathfrak{I}^\times, \qquad
 \K_1 = \o_{B_1}^{\times}, \qquad 
 \K_2 = \o_{B_2}^{\times}.
\]
Put $s=1$.
We take the complete polarization $\V = \X' \oplus \Y'$ as in \S \ref{sssec:cpJ} and identify $\X'$ with $V^\dagger \cong B_1$ as in \S \ref{par:cpJ-i}.
We define $\varphi' \in \SS(\X')$ by $\varphi' = q^{\frac{1}{2}} \cdot \I_{\o_{B_1}}$, i.e.,
\[
 \varphi'(x) = q^{\frac{1}{2}} \cdot \I_{\o}(x_1) \I_{\o}(x_2) \I_{\o}(x_3) \I_{\o}(x_4)
\]
for $x = x_1 \v_1 + \dots + x_4 \v_4$, where $\v_1,\dots,\v_4$ is the basis of $\X'$ given by \eqref{eq:basisX'Y'-J-i}.
Then we have
\[
 \omega_{\psi}(k, (k_1, k_2)) \varphi' = \varphi'
\]
for $k \in \II$, $k_1 \in \K_1$, $k_2 \in \K_2$ such that $\nu(k) = \nu(k_1) \nu(k_2)$.

\subsubsection{The case (1d)}
\label{sssec:schwartzX'-1d}

We only consider the case $J_1 \in (F^{\times})^2$; we switch the roles of $B_1$ and $B_2$ in the other case.
In this case, we have:
\begin{itemize}
 \item $F$ is non-archimedean,
 \item $\psi$ is of order zero,
 \item $2 \in \o^\times$,
 \item $u \in \o^\times \smallsetminus (\o^\times)^2$,
 \item $J_1 = t^2$ for some $t \in \o^\times$,
 \item $J, J_2 \in \varpi \o^\times$.
\end{itemize}
We define a maximal order $\o_{B_1}$ in $B_1$ and an Iwahori subalgebra $\mathfrak{I}_1$ of $B_1$ by 
\[ 
 \o_{B_1} = \ii_1^{-1}(\M_2(\o)), \qquad
 \mathfrak{I}_1 = \ii_1^{-1}\left( \begin{pmatrix} 1 & \\ & \varpi^{-1} \end{pmatrix} R_1 \begin{pmatrix} 1 & \\ & \varpi \end{pmatrix} \right),
\]
where $\ii_1$ is the isomorphism given by \eqref{eq:isomB1-J1}.
Let $\o_B$ and $\o_{B_2}$ be the unique maximal orders in $B$ and $B_2$, respectively.
Then we have
\[
 \o_B = \o + \o \i + \o \j + \o \i\j, \qquad
 \o_{B_2} = \ii_2^{-1}(\o_B),
\]
where $\ii_2$ is the isomorphism given by \eqref{eq:isomB2-J1}.
Put
\[
 \K = \o_B^{\times}, \qquad
 \K_1 = \o_{B_1}^{\times}, \qquad 
 \II_1 = \mathfrak{I}_1^\times, \qquad
 \K_2  = \o_{B_2}^{\times}.
\]
Put $s=1$.
We take the complete polarization $\V = \X' \oplus \Y'$ and identify $\X'$ with $W = B$ as in \S \ref{sssec:cpJ1}.
We define $\varphi' \in \SS(\X')$ by $\varphi' = q^{\frac{1}{2}} \cdot \I_{\o_B}$, i.e.,
\[
 \varphi'(x) = q^{\frac{1}{2}} \cdot \I_{\o}(x_1) \I_{\o}(x_2) \I_{\o}(x_3) \I_{\o}(x_4)
\]
for $x = x_1 \v_1 + \dots + x_4 \v_4$, where $\v_1,\dots,\v_4$ is the basis of $\X'$ given by \eqref{eq:basisX'Y'-J1}.
Then we have
\[
 \omega_{\psi}(k, (k_1, k_2)) \varphi' = \varphi'
\]
for $k \in \K$, $k_1 \in \II_1$, $k_2 \in \K_2$ such that $\nu(k) = \nu(k_1) \nu(k_2)$.

\subsubsection{The case (ds)}
\label{sssec:schwartzX'-ds}

\paragraph{The case when $B_1$ and $B_2$ are split}
\label{par:schwartzX'-ds-B1spl}

In this case, we have:
\begin{itemize}
 \item $F = \R$,
 \item $\psi(x) = e^{2 \pi \sqrt{-1} x}$,
 \item $u < 0$,
 \item $J = t^2$ for some $t \in F^\times$,
 \item $J_1 = s^2$ for some $s \in F^\times$,
 \item $J_2 > 0$.
\end{itemize}
Put $v = |u|^{\frac{1}{2}}$.
We take the complete polarization $\V = \X' \oplus \Y'$ as in \S \ref{sssec:cpJ} and identify $\X'$ with $V^\dagger \cong B_1$ as in \S \ref{par:cpJ-i}.
For a non-negative integer $k$, we define $\varphi' = \varphi'_k \in \SS(\X')$ by 
\[
 \varphi'(x) = c_k^{-\frac{1}{2}} \cdot (x_2 - x_1 \i)^k \cdot e^{- \frac{\pi}{2v}(x_2^2 - u x_1^2 + x_4^2 - u x_3^2)}
\]
for $x = x_1 \v_1 + \dots + x_4 \v_4$, where $\v_1,\dots,\v_4$ is the basis of $\X'$ given by \eqref{eq:basisX'Y'-J-i} and 
\[
 c_k = \frac{k! |u|^{\frac{k}{2}+1}}{4 \pi^k}.
\]

\begin{lem}
\label{lem:varphi'-ds-spl}
We have $\langle \varphi', \varphi' \rangle = 1$ and
\[
 \omega_\psi(\alpha, (\alpha_1,\alpha_2)) \varphi' = \chi_k(\alpha)^{-1} \chi_k(\alpha_1) \chi_k(\alpha_2) \varphi'
\]
for $\alpha,\alpha_1,\alpha_2 \in E^\times$ such that $\nu(\alpha) = \nu(\alpha_1) \nu(\alpha_2)$.
\end{lem}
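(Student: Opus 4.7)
The proof splits into two parts: the $L^2$-normalization $\langle\varphi',\varphi'\rangle=1$, and the equivariance on the torus. For the normalization, using the Haar measure $dx = \frac{v^2}{4}\,dx_1\cdots dx_4$ from \S\ref{sssec:wrJ} (since $J_1=s^2$ and $|u|=v^2$), and writing $x = \alpha + \j_1\beta \in B_1$ with $\alpha = x_2+x_1\i$, $\beta=(x_4-x_3\i)/s$, one has $\alpha\alpha^\rho = x_2^2-ux_1^2$, $J_1\beta\beta^\rho = x_4^2-ux_3^2$, and
\[
 \varphi'(x) = c_k^{-1/2}\,(\alpha^\rho)^k\, e^{-\pi(\alpha\alpha^\rho + J_1\beta\beta^\rho)/(2v)}.
\]
Thus $|\varphi'|^2$ decouples into two radial Gaussian densities on $\R^2$, one multiplied by the extra factor $(\alpha\alpha^\rho)^k$. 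The rescalings $(vx_1,x_2)\to(u_1,u_2)$ and $(vx_3,x_4)\to(u_3,u_4)$ reduce these to standard radial Gaussian integrals on $\R^2$, evaluating to $v^{k+1}k!/\pi^k$ and $v$ respectively; the constant $c_k = k! v^{k+2}/(4\pi^k)$ is chosen so that the product is exactly $1$.

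For the equivariance, the subgroup $\{(\alpha,(\alpha_1,\alpha_2))\in(E^\times)^3 : \nu(\alpha)=\nu(\alpha_1)\nu(\alpha_2)\}$, modulo the kernel $\{(1,(\lambda^{-1},\lambda)) : \lambda\in F^\times\}$ of its map into $\GU(W)\times\GU(V)^0$, is generated by the three compact factors $E^{(1)}$ (one in each slot) together with one similitude scalar, say $(\lambda,(\lambda,1))$ with $\lambda\in F^\times$; one checks directly that the character $\chi_k(\alpha)^{-1}\chi_k(\alpha_1)\chi_k(\alpha_2)$ descends to this quotient. It suffices to verify the transformation law on each such class of generators.

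The actions of $E^{(1)}\subset B_1^\times$ and $E^{(1)}\subset B_2^\times$ on $V^\dagger\cong B_1$ are given in \S\ref{par:cpJ-i}: the former is left multiplication, sending $\alpha+\j_1\beta \mapsto \alpha_1\alpha + \j_1\alpha_1^\rho\beta$, and the latter is right multiplication by $\ii_2(\alpha_2)^* = \alpha_2$, sending $\alpha+\j_1\beta \mapsto \alpha_2\alpha + \j_1\alpha_2\beta$ (using that the main involution of $B_1$ restricts to $\rho$ on $E$). Since $\nu(h)=1$, the formula $L(h)\varphi'(x)=\varphi'(h^{-1}x)$ together with $\alpha_i^{\rho}=\alpha_i^{-1}$ for $\alpha_i\in E^{(1)}$ gives, by direct substitution, $L(h)\varphi' = \alpha_i^{-\rho k}\varphi' = \alpha_i^k\varphi' = \chi_k(\alpha_i)\varphi'$, the Gaussian $\alpha\alpha^\rho+J_1\beta\beta^\rho$ being preserved since $|\alpha_i|=1$. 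The similitude element $(\lambda,(\lambda,1))$ is handled by noting that $L(h)$ contributes $\lambda^{-2}\varphi'(\lambda^{-1}x)$ while $\omega_\psi(g\cdot d(\nu)^{-1})$ with $g\cdot d(\nu)^{-1}=\mathrm{diag}(\lambda,\lambda^{-1})$ contributes $|\lambda|^2\varphi'(\lambda x)$; these cancel to give $\varphi'$, matching $\chi_k(\lambda)^{-1}\chi_k(\lambda)\chi_k(1)=1$.

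The essential remaining step is the action of $\alpha=e^{\sqrt{-1}\theta}\in E^{(1)}\subset\U(W)$, embedded in $\GL_2(\R)$ via $\ii(\alpha)=\smat{\cos\theta}{\sin\theta/v}{-v\sin\theta}{\cos\theta}$. The decomposition $\ii(\alpha) = d_-\, k_\theta\, d_+$ with $d_\pm = \mathrm{diag}(v^{\pm 1/2},v^{\mp 1/2})$ and $k_\theta = \smat{\cos\theta}{\sin\theta}{-\sin\theta}{\cos\theta}$ reduces the claim to showing
\[
 \omega_\psi(k_\theta)\bigl(\bar\zeta^{\,k}\, e^{-\pi(|\zeta|^2+|\eta|^2)/2}\bigr) = e^{-\sqrt{-1}k\theta}\,\bar\zeta^{\,k}\, e^{-\pi(|\zeta|^2+|\eta|^2)/2},
\]
where $\zeta := x_2+v\sqrt{-1}\,x_1$, $\eta := x_4+v\sqrt{-1}\,x_3$: indeed $\omega_\psi(d_+)$ rescales $\varphi'$ to a scalar multiple of this standard Gaussian-polynomial, and $\omega_\psi(d_-) = \omega_\psi(d_+)^{-1}$ undoes the rescaling. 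This last weight identity is verified infinitesimally from $\omega_\psi'(X)\phi = 2\pi\sqrt{-1}\,q(x)\cdot\phi$ with $q(x) = \frac{1}{4}(-|\zeta|^2+|\eta|^2)$ (signature $(2,2)$), together with $\omega_\psi'(Y) = \frac{\sqrt{-1}}{4\pi}\sum c_i^{-1}\partial_i^2$ (the dual Laplacian to $q$); a direct Leibniz computation shows that $\omega_\psi'(X)$ and $\omega_\psi'(Y)$ act on $\bar\zeta^k e^{-\pi(|\zeta|^2+|\eta|^2)/2}$ by operators whose difference is the scalar $-k\sqrt{-1}$, the $\pi^2(-|\zeta|^2+|\eta|^2)$ terms cancelling between $X$ and $Y$ and a cross-term $2k\pi\bar\zeta$ arising from $\partial^2(\bar\zeta^k)\cdot G$ producing the eigenvalue $-k\sqrt{-1}$. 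The main obstacle is organizing this final infinitesimal check so that the signs arising from the indefinite signature of $q$ and the $\bar\zeta$-dependent cross-terms match correctly.
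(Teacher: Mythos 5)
Your proposal is correct and follows essentially the same route as the paper's proof: the normalization is the same Gaussian integral (split into two radial $\R^2$-integrals), the $B_1^\times$- and $B_2^\times$-compact factors are handled by direct substitution into $L(h)$, and the nontrivial $E^{(1)}\subset\U(W)$-equivariance is verified infinitesimally via the $\mathfrak{sl}_2$-triple and cancellation of the multiplication-by-$q$ terms against the Laplacian terms. The only cosmetic difference is that you first conjugate by $d_+$ to the $v$-independent Gaussian $\bar\zeta^k e^{-\pi(|\zeta|^2+|\eta|^2)/2}$ and work with $\mathtt{X}-\mathtt{Y}$, whereas the paper works directly with $\varphi'$ and the scaled generator $v^{-1}\mathtt{X}-v\mathtt{Y}$; these are the same computation. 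One small numerical slip: the cross-term in the Leibniz expansion is $\partial_{\bar\zeta}(\bar\zeta^k)\cdot\partial_\zeta G = -\tfrac{\pi k}{2}\bar\zeta^k G$, not ``$2k\pi\bar\zeta$'', but this does not affect the conclusion — carrying out the computation carefully one indeed finds that the $|\zeta|^2$, $|\eta|^2$ terms cancel and the eigenvalue is $-k\sqrt{-1}$ as claimed.
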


\begin{proof}
Recall that the Haar measure on $\X'$ is given by $dx = \frac{|u|}{4} \, dx_1 \cdots dx_4$.
We have
\begin{align*}
 & \frac{|u|}{4} \int_{F^4} (x_2^2 - u x_1^2)^k e^{- \frac{\pi}{v}(x_2^2 - u x_1^2 + x_4^2 - u x_3^2)} \, dx_1 \cdots \, dx_4 \\
 & = \frac{|u|}{4 \pi^2} \left( \frac{v}{\pi} \right)^k
 \int_{F^4} (x_2^2 + x_1^2)^k e^{-(x_2^2 + x_1^2 + x_4^2 + x_3^2)} \, dx_1 \cdots \, dx_4 \\
 & = \frac{|u|^{\frac{k}{2}+1}}{4 \pi^{k+2}} \cdot (2 \pi)^2
 \int_0^\infty \int_0^\infty r_1^{2k} e^{-(r_1^2 + r_2^2)} \, r_1 \, dr_1 \, r_2 \, dr_2 \\
 & = \frac{|u|^{\frac{k}{2}+1}}{4 \pi^k}
 \int_0^\infty \int_0^\infty r_1^k e^{-(r_1 + r_2)} \, dr_1 \, dr_2 \\
 & = \frac{|u|^{\frac{k}{2}+1}}{4 \pi^k} \cdot \Gamma(k+1)
\end{align*}
and hence $\langle \varphi', \varphi' \rangle = 1$.
If we write $z_1 = x_2 + x_1 \i$ and $z_2 = x_4 + x_3 \i$, then 
\[
 \varphi'(x) = c_k^{-\frac{1}{2}} \cdot (z_1^\rho)^k \cdot e^{- \frac{\pi}{2v}(z_1 z_1^\rho + z_2 z_2^\rho)},
\]
and it is easy to see that 
\[
  \omega_\psi(\nu^\frac{1}{2}, (\alpha_1,\alpha_2)) \varphi' = \chi_k(\alpha_1) \chi_k(\alpha_2) \varphi'
\]
for $\alpha_1, \alpha_2 \in E^\times$ and $\nu = \nu(\alpha_1) \nu(\alpha_2)$.
On the other hand, we have
\begin{align*}
 \omega_\psi(\mathtt{H}) \varphi'(x) & 
 = \left( 2 + x_1 \frac{\partial}{\partial x_1} + \dots + x_4 \frac{\partial}{\partial x_4} \right)
 \varphi'(x), \\
 \omega_\psi(\mathtt{X}) \varphi'(x)
 & = \frac{\pi \sqrt{-1}}{2} (ux_1^2 - x_2^2 - ux_3^2 + x_4^2) \varphi'(x), \\
 \omega_\psi(\mathtt{Y}) \varphi'(x)
 & = -\frac{1}{2 \pi \sqrt{-1}} \left( \frac{1}{u} \frac{\partial^2}{\partial x_1^2} - \frac{\partial^2}{\partial x_2^2} - \frac{1}{u} \frac{\partial^2}{\partial x_3^2} + \frac{\partial^2}{\partial x_4^2} \right) \varphi'(x),
\end{align*}
where we identity $\GU(W) \cong B^{\times}$ with $\GL_2(F)$ via the isomorphism $\ii$ given by \eqref{eq:isomB-J}.
Thus, noting that 
\[
 \frac{\partial^2}{\partial z_1 \partial z_1^\rho} = \frac{1}{4} \left( \frac{\partial^2}{\partial x_2^2} - \frac{1}{u} \frac{\partial^2}{\partial x_1^2} \right), \qquad
 \frac{\partial^2}{\partial z_2 \partial z_2^\rho} = \frac{1}{4} \left( \frac{\partial^2}{\partial x_4^2} - \frac{1}{u} \frac{\partial^2}{\partial x_3^2} \right), 
\]
we see that
\[
 \omega_\psi(v^{-1} \mathtt{X} - v \mathtt{Y}) \varphi' = - \sqrt{-1} k \varphi'.
\]
This implies that
\[
 \omega_\psi(\alpha, (1,1)) \varphi' = \chi_k(\alpha)^{-1} \varphi'
\]
for $\alpha \in E^1$ since
\[
 \ii(\alpha) = 
 \begin{pmatrix}
  a & b \\
  bu & a
 \end{pmatrix}
 = 
 \begin{pmatrix}
  1 & \\
  & v
 \end{pmatrix}
 \begin{pmatrix}
  \cos \theta & \sin \theta \\
  - \sin \theta & \cos \theta
 \end{pmatrix}
 \begin{pmatrix}
  1 & \\
  & v^{-1}
 \end{pmatrix} 
 = \exp((v^{-1} \mathtt{X} - v \mathtt{Y}) \theta)
\]
if we write $\alpha = a + b \i = e^{\sqrt{-1} \theta}$.
This completes the proof.
\end{proof}

\paragraph{The case when $B_1$ and $B_2$ are ramified}
\label{par:schwartzX'-ds-B1ram}

In this case, we have:
\begin{itemize}
 \item $F = \R$,
 \item $\psi(x) = e^{2 \pi \sqrt{-1} x}$,
 \item $u < 0$,
 \item $J = t^2$ for some $t \in F^\times$,
 \item $J_1 = -s^2$ for some $s \in F^\times$,
 \item $J_2 < 0$.
\end{itemize}
Put $v = |u|^{\frac{1}{2}}$.
We take the complete polarization $\V = \X' \oplus \Y'$ as in \S \ref{sssec:cpJ} and identify $\X'$ with $V^\dagger \cong B_1$ as in \S \ref{par:cpJ-i}.
For a non-negative integer $k$, we define $\varphi' = \varphi'_k \in \SS(\X')$ by 
\[
 \varphi'(x) = c_k^{-\frac{1}{2}} \cdot (x_2 - x_1 \i)^k \cdot e^{- \frac{\pi}{2v}(x_2^2 - u x_1^2 + x_4^2 - u x_3^2)}
\]
for $x = x_1 \v_1 + \dots + x_4 \v_4$, where $\v_1,\dots,\v_4$ is the basis of $\X'$ given by \eqref{eq:basisX'Y'-J-i} and 
\[
 c_k = \frac{k! |u|^{\frac{k}{2}+1}}{4 \pi^k}.
\]

\begin{lem}
\label{lem:varphi'-ds-ram}
We have $\langle \varphi', \varphi' \rangle = 1$ and
\[
 \omega_\psi(\alpha, (\alpha_1,\alpha_2)) \varphi' = \chi_{k+2}(\alpha)^{-1} \chi_k(\alpha_1) \chi_k(\alpha_2) \varphi'
\]
for $\alpha,\alpha_1,\alpha_2 \in E^\times$ such that $\nu(\alpha) = \nu(\alpha_1) \nu(\alpha_2)$.
\end{lem}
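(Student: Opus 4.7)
The plan is to parallel the proof of Lemma \ref{lem:varphi'-ds-spl}, adapting the three steps (the norm identity, the similitude equivariance, and the $E^1$-equivariance) to the ramified setting where $J_1 = -s^2 < 0$. Since $\varphi'$ is given by exactly the same formula and the Haar measure on $\X'$ is still $\frac{|u|}{4}\, dx_1\cdots dx_4$ (because only $|J_1/s^2| = 1$ enters), the Gaussian integral computation of $\langle \varphi', \varphi' \rangle = 1$ goes through verbatim.

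For the similitude piece, I would write $x \in V^\dagger \cong B_1$ as $x = z_1 + \j_1 z_2$ with $z_1 = x_2 + x_1 \i$ and $z_2 = x_4 + x_3 \i$. A direct check, using $\j_1 \alpha = \alpha^\rho \j_1$ for $\alpha \in E$ and $\ii_2(\alpha_2)^* = \alpha_2$, shows that $\alpha_1 \cdot x \cdot \ii_2(\alpha_2)^*$ sends $z_1 \mapsto \alpha_1 \alpha_2 z_1$ and $z_2 \mapsto \alpha_1^\rho \alpha_2 z_2$, identically to the split case. Combined with the explicit action of the diagonal torus of $\GU(W)$ on $\varphi'$ (which depends only on the formula of $\varphi'$, not on the sign of $J_1$), this yields $\omega_\psi(\nu^{1/2}, (\alpha_1, \alpha_2))\varphi' = \chi_k(\alpha_1)\chi_k(\alpha_2)\varphi'$ as before.

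The main new computation concerns the action of $E^1 \subset \GU(W)$. Since $\ii(\alpha) = \exp((v^{-1}\mathtt{X} - v\mathtt{Y})\theta)$ for $\alpha = e^{\sqrt{-1}\theta} \in E^1$, it suffices to compute the eigenvalue of $\omega_\psi(v^{-1}\mathtt{X} - v\mathtt{Y})$ on $\varphi'$. The formulas for $\omega_\psi(\mathtt{X})$ and $\omega_\psi(\mathtt{Y})$ involve the quadratic form and its Laplacian on $V^\dagger$, which in the ramified case becomes $\frac{u}{2}x_1^2 - \frac{1}{2}x_2^2 + \frac{u}{2}x_3^2 - \frac{1}{2}x_4^2$ (negative-definite), whereas in the split case it was of signature $(2,2)$. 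In the $(z_1,z_2)$-coordinates one obtains
\[
\omega_\psi(\mathtt{X})\varphi' = -\tfrac{\pi\sqrt{-1}}{2}(z_1 z_1^\rho + z_2 z_2^\rho)\varphi', \qquad \omega_\psi(\mathtt{Y})\varphi' = \tfrac{2}{\pi\sqrt{-1}}(\partial_{z_1}\partial_{z_1^\rho} + \partial_{z_2}\partial_{z_2^\rho})\varphi',
\]
where the $z_2$-pieces now enter with the \emph{same} sign as the $z_1$-pieces rather than the opposite sign. Direct differentiation gives $\partial_{z_i}\partial_{z_i^\rho}\varphi' = [-\tfrac{\pi}{2v}(1+k\delta_{i,1}) + \tfrac{\pi^2}{4v^2}z_i z_i^\rho]\varphi'$. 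In the split case, the constant terms from $i=1$ and $i=2$ cancelled in $\omega_\psi(v^{-1}\mathtt{X} - v\mathtt{Y})\varphi'$; in the ramified case they instead add, contributing an extra $-2\sqrt{-1}\varphi'$. Together with the $-\sqrt{-1}k\varphi'$ coming from the $(z_1^\rho)^k$ factor, this produces $\omega_\psi(v^{-1}\mathtt{X} - v\mathtt{Y})\varphi' = -\sqrt{-1}(k+2)\varphi'$, whence $\omega_\psi(\alpha,(1,1))\varphi' = \chi_{k+2}(\alpha)^{-1}\varphi'$.

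The only genuine obstacle is careful sign bookkeeping: the substitution $J_1 \rightsquigarrow -J_1$ must be tracked through the definitions of $\v_3, \v_4, \v_3^*, \v_4^*$, through the diagonal inner products $\langle \v_i, \v_i \rangle^\dagger$, and finally through the formulas for $\omega_\psi(\mathtt{X})$, $\omega_\psi(\mathtt{Y})$. Once this is done cleanly, the mechanism producing the shift $k \rightsquigarrow k+2$ is transparent and reflects the change of signature of $V^\dagger$ from $(2,2)$ to $(0,4)$.
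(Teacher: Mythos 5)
Your proof is correct and follows the same approach as the paper: the paper says ``The proof is the same as that of Lemma \ref{lem:varphi'-ds-spl} and we omit the details'' and simply records the modified formulas for $\omega_\psi(\mathtt{X})\varphi'$ and $\omega_\psi(\mathtt{Y})\varphi'$ (with $+ux_3^2-x_4^2$ and $+\frac{1}{u}\partial^2_{x_3}-\partial^2_{x_4}$ in place of the split-case signs), which is exactly the input you track through the $(z_1,z_2)$-coordinates. Your detailed bookkeeping of the sign flip and the resulting non-cancellation of the constant terms, producing $\omega_\psi(v^{-1}\mathtt{X}-v\mathtt{Y})\varphi' = -\sqrt{-1}(k+2)\varphi'$, correctly spells out the details the paper leaves to the reader.
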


\begin{proof}
The proof is the same as that of Lemma \ref{lem:varphi'-ds-spl} and we omit the details.
Note that, in this case, we have
\begin{align*}
 \omega_\psi(\mathtt{H}) \varphi'(x) & 
 = \left( 2 + x_1 \frac{\partial}{\partial x_1} + \dots + x_4 \frac{\partial}{\partial x_4} \right)
 \varphi'(x), \\
 \omega_\psi(\mathtt{X}) \varphi'(x)
 & = \frac{\pi \sqrt{-1}}{2} (ux_1^2 - x_2^2 + ux_3^2 - x_4^2) \varphi'(x), \\
 \omega_\psi(\mathtt{Y}) \varphi'(x)
 & = -\frac{1}{2 \pi \sqrt{-1}} \left( \frac{1}{u} \frac{\partial^2}{\partial x_1^2} - \frac{\partial^2}{\partial x_2^2} + \frac{1}{u} \frac{\partial^2}{\partial x_3^2} - \frac{\partial}{\partial x_4^2} \right) \varphi'(x).
\end{align*}
\end{proof}

\subsubsection{The case (fd)}
\label{sssec:schwartzX'-fd}

We only consider the case $J_1 \in (F^{\times})^2$; we switch the roles of $B_1$ and $B_2$ in the other case.
In this case, we have:
\begin{itemize}
 \item $F = \R$,
 \item $\psi(x) = e^{2 \pi \sqrt{-1} x}$,
 \item $u < 0$,
 \item $J_1 = t^2$ for some $t \in F^\times$,
 \item $J = -s^2$ for some $s \in F^\times$,
 \item $J_2 < 0$.
\end{itemize}
Put $v = |u|^{\frac{1}{2}}$.
We take the complete polarization $\V = \X' \oplus \Y'$ and identify $\X'$ with $W = B$ as in \S \ref{sssec:cpJ1}.
For a non-negative integer $k$, we define $\varphi' = \varphi'_k \in \SS(\X')$ by 
\[
 \varphi'(x) = c_k^{-\frac{1}{2}} \cdot \left( x_1 - x_2 \frac{\i}{u} \right)^k
 \cdot e^{- \frac{\pi v}{2} ( x_1^2 - \frac{1}{u}x_2^2 + x_3^2 - \frac{1}{u}x_4^2 )}
\]
for $x = x_1 \v_1 + \dots + x_4 \v_4$, where $\v_1,\dots,\v_4$ is the basis of $\X'$ given by \eqref{eq:basisX'Y'-J1} and 
\[
 c_k = \frac{k!}{\pi^k |u|^{\frac{k}{2}+1}}.
\]

\begin{lem}
\label{lem:varphi'-fd}
We have $\langle \varphi', \varphi' \rangle = 1$ and
\[
 \omega_\psi(\alpha, (\alpha_1,\alpha_2)) \varphi' = \chi_k(\alpha)^{-1} \chi_{k+2}(\alpha_1) \chi_k(\alpha_2) \varphi'
\]
for $\alpha,\alpha_1,\alpha_2 \in E^\times$ such that $\nu(\alpha) = \nu(\alpha_1) \nu(\alpha_2)$.
\end{lem}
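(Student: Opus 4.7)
The proof proceeds along the same lines as Lemmas \ref{lem:varphi'-ds-spl} and \ref{lem:varphi'-ds-ram}, adapted to the polarization and Weil representation formulas of \S\ref{sssec:cpJ1} and \S\ref{sssec:wrJ1}. The essential new feature is that, since we are in case (fd) with $J_1 \in (F^{\times})^2$, we identify $\X'$ with $W = B$ rather than with $V^{\dagger}$, and the action of $\GU(W)$ is the ``right'' action $R(g)\varphi(x) = |\nu(g)|\varphi(xg)$, while $\GU(V)^0 \cong (B_1^\times \times B_2^\times)/F^\times$ acts through the identification with $\GU(V')^0$ using the isomorphisms $\ii_1$, $\ii_2$ of \eqref{eq:isomB1-J1}, \eqref{eq:isomB2-J1}.

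To verify the normalization $\langle \varphi', \varphi' \rangle = 1$, I set $z_1 = x_1 - x_2 \i/u$ and $z_2 = x_3 - x_4 \i/u$. Since $u < 0$, one has $z_i z_i^{\rho} = x_{2i-1}^2 - x_{2i}^2/u > 0$, so that $|\varphi'(x)|^2 = c_k^{-1} (z_1 z_1^{\rho})^k \, e^{-\pi v (z_1 z_1^{\rho} + z_2 z_2^{\rho})}$. Recalling from \S\ref{sssec:wrJ1} that $dx = |J/(s^2u)|\,dx_1\cdots dx_4 = |u|^{-1}\,dx_1\cdots dx_4$ (using $J = -s^2$), the substitution $y_2 = v^{-1} x_2$, $y_4 = v^{-1} x_4$ absorbs the $|u|^{-1}$ factor into the Jacobian and turns $z_i z_i^\rho$ into the sum of two squares. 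Two polar-coordinate changes reduce the integral to $(2\pi)^2 \int_0^\infty r_1^{2k+1} e^{-\pi v r_1^2}\,dr_1 \cdot \int_0^\infty r_2 e^{-\pi v r_2^2}\,dr_2 = k!/(\pi^k v^{k+2})$, and the value of $c_k$ is chosen precisely so that $\langle \varphi', \varphi' \rangle = 1$.

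For the equivariance, I first reduce to the case $\alpha, \alpha_1, \alpha_2 \in E^1$. Indeed, using $\omega_{\psi}(g,h) = R(g) \circ \omega_\psi(d(\nu)^{-1}\cdot h)$ from \S\ref{sssec:wrJ1} together with the homogeneity of degree $k$ of the factor $(x_1 - x_2\i/u)^k$, one checks directly that the similitude center acts with the expected scalar, absorbing the $|\nu|$ contributions. It remains to check equivariance under $E^1 \times E^1 \times E^1$, which I do infinitesimally: writing $\alpha = e^{\sqrt{-1}\theta}$, the matrices $\ii(\alpha)$, $\ii_1(\alpha_1)$, $\ii_2(\alpha_2)$ are computed via \eqref{eq:isomB1-J1}, \eqref{eq:isomB2-J1} and then expressed as exponentials of rescaled elements $av^{-1}\mathtt{X} + b v\mathtt{Y} + c\mathtt{H}$. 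Applying the corresponding first- and second-order differential operators from \S\ref{sssec:wrJ1} to $\varphi'$ and using the holomorphic/antiholomorphic decomposition in the variables $z_1, z_2$ yields the eigenvalues $-\sqrt{-1}\, k$ (for the $\GU(W)$-action), $-\sqrt{-1}(k+2)$ (for the $B_1^{\times}$-action), and $-\sqrt{-1}\, k$ (for the $B_2^{\times}$-action), which integrate to the claimed characters $\chi_k^{-1}, \chi_{k+2}, \chi_k$ respectively.

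The main obstacle is bookkeeping the weight shift $k \mapsto k+2$ attached to the split factor $B_1$, as opposed to just $k$ on the ramified factors $B$ and $B_2$. This asymmetry mirrors the $k \mapsto k+2$ shift in the Jacquet--Langlands correspondence between the $(k+1)$-dimensional representation $\Sym^k$ of $\H^{\times}$ and the weight-$(k+2)$ discrete series of $\GL_2(\R)$, and must be traced carefully through the change of models $\GU(V)^0 \cong \GU(V')^0$ and the right-versus-left conventions; the extra $+2$ can be attributed to the contribution of the Gaussian factor $e^{-\pi v z_2 z_2^\rho/2}$, which is annihilated by $\partial/\partial z_2^\rho$ and thus behaves as a ``weight one'' vector on each side of the split factor, while being genuinely $K$-fixed with respect to the ramified actions.
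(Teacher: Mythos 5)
Your proposal follows the same two-step structure as the paper's proof: a direct Gaussian integral for the norm, then reduction to the $E^1$-action plus a Lie-algebra computation for the equivariance. The norm computation is correct (your substitution $y_2 = v^{-1}x_2$, $y_4 = v^{-1}x_4$ is a clean way to organize what the paper does by scaling all variables at once, and $c_k = k!/(\pi^k v^{k+2})$ agrees with the paper's $c_k = k!/(\pi^k|u|^{k/2+1})$).

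There is, however, a sign problem in the equivariance step as you have written it. You assert eigenvalues $-\sqrt{-1}\,k$, $-\sqrt{-1}(k+2)$, $-\sqrt{-1}\,k$ for the $\GU(W)$, $B_1^\times$, $B_2^\times$ generators, and say these integrate to $\chi_k^{-1}$, $\chi_{k+2}$, $\chi_k$. The first is internally consistent ($e^{-\sqrt{-1}k\theta} = \alpha^{-k} = \chi_k(\alpha)^{-1}$ for $\alpha = e^{\sqrt{-1}\theta} \in E^1$), but the last two are not: $e^{-\sqrt{-1}(k+2)\theta}$ integrates to $\chi_{k+2}^{-1}$, not $\chi_{k+2}$. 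The correct eigenvalues for the $B_1^\times$- and $B_2^\times$-generators are $+\sqrt{-1}(k+2)$ and $+\sqrt{-1}\,k$. Concretely, with $\X' \cong W = B$ and $x = z_1 + z_2\j/s$, right multiplication by $\alpha \in E^1$ sends $z_1 \mapsto z_1\alpha$ (pulling out $\alpha^{-k} = \chi_k(\alpha)^{-1}$ from the factor $(z_1^\rho)^k$ in the paper's convention, or $(\alpha^\rho)^k = \alpha^{-k}$ in yours), while the scalar $\GU(V')^0$-action $\varphi(x) \mapsto \varphi(\alpha_2^{-1}x)$ sends $z_1 \mapsto \alpha_2^{-1}z_1$, which pulls out $\alpha_2^{+k} = \chi_k(\alpha_2)$; the left and right actions are genuinely opposite in sign, which your parenthetical doesn't reflect. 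The $B_1$-generator computation, worked out the way the paper does it, gives $\omega_\psi(2v^{-1}\mathtt{X} - 2^{-1}v\mathtt{Y})\varphi' = +\sqrt{-1}(k+2)\varphi'$ paired with $\ii_1(\alpha_1) = \exp((2v^{-1}\mathtt{X} - 2^{-1}v\mathtt{Y})\theta)$, giving $\chi_{k+2}$ with the right sign. Your final heuristic about the Gaussian ``behaving as a weight-one vector'' is a reasonable mnemonic for the shift $k \mapsto k+2$ (two Gaussians, each contributing $1$), but the sign bookkeeping is the actual content that needs to be correct, and in the sketch as written it isn't.
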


\begin{proof}
Recall that the Haar measure on $\X'$ is given by $dx = \frac{1}{|u|} \, dx_1 \cdots dx_4$.
We have
\begin{align*}
 & \frac{1}{|u|} \int_{F^4} \left( x_1^2 - \frac{1}{u} x_2^2 \right)^k
 e^{- \pi v (x_1^2 - \frac{1}{u}x_2^2 + x_3^2 - \frac{1}{u}x_4^2)} \, dx_1 \cdots \, dx_4 \\
 & = \frac{1}{\pi^2 |u|} \left( \frac{1}{\pi v} \right)^k
 \int_{F^4} (x_1^2 + x_2^2)^k e^{-(x_1^2 + x_2^2 + x_3^2 + x_4^2)} \, dx_1 \cdots \, dx_4 \\
 & = \frac{1}{\pi^{k+2} |u|^{\frac{k}{2}+1}} \cdot (2 \pi)^2
 \int_0^\infty \int_0^\infty r_1^{2k} e^{-(r_1^2 + r_2^2)} \, r_1 \, dr_1 \, r_2 \, dr_2 \\
 & = \frac{1}{\pi^k |u|^{\frac{k}{2}+1}}
 \int_0^\infty \int_0^\infty r_1^k e^{-(r_1 + r_2)} \, dr_1 \, dr_2 \\
 & = \frac{1}{\pi^k |u|^{\frac{k}{2}+1}} \cdot \Gamma(k+1)
\end{align*}
and hence $\langle \varphi', \varphi' \rangle = 1$.
If we write $z_1 = x_1 + x_2 \frac{\i}{u}$ and $z_2 = x_3 + x_4 \frac{\i}{u}$, then 
\[
 \varphi'(x) = c_k^{-\frac{1}{2}} \cdot (z_1^\rho)^k \cdot e^{- \frac{\pi v}{2} (z_1 z_1^\rho + z_2 z_2^\rho)},
\]
and it is easy to see that 
\[
  \omega_\psi(\alpha, (\nu^\frac{1}{2},\alpha_2)) \varphi' = \chi_k(\alpha)^{-1} \chi_k(\alpha_2) \varphi'
\]
for $\alpha, \alpha_2 \in E^\times$ and $\nu = \nu(\alpha) \nu(\alpha_2)^{-1}$.
On the other hand, we have
\begin{align*}
 \omega_\psi(\mathtt{H}) \varphi'(x) & 
 = - \left( 2 + x_1 \frac{\partial}{\partial x_1} + \dots + x_4 \frac{\partial}{\partial x_4} \right)
 \varphi'(x), \\
 \omega_\psi(\mathtt{X}) \varphi'(x)
 & = \frac{1}{4 \pi \sqrt{-1}} \left( \frac{\partial^2}{\partial x_1^2} - u \frac{\partial^2}{\partial x_2^2} + \frac{\partial^2}{\partial x_3^2} - u \frac{\partial}{\partial x_4^2} \right) \varphi'(x), \\
 \omega_\psi(\mathtt{Y}) \varphi'(x)
 & = -\pi \sqrt{-1} \left( x_1^2 - \frac{1}{u} x_2^2 + x_3^2 - \frac{1}{u} x_4^2 \right) \varphi'(x),
\end{align*}
where we identity $\GU(V)^0 \cong (B_1^\times \times B_2^\times)/F^\times$ with a subgroup of $\GL_2(B)$ via the isomorphisms $\ii_1,\ii_2$ given by \eqref{eq:isomB1-J1}, \eqref{eq:isomB2-J1}.
Thus, noting that 
\[
 \frac{\partial^2}{\partial z_1 \partial z_1^\rho} = \frac{1}{4} \left( \frac{\partial^2}{\partial x_1^2} - u \frac{\partial^2}{\partial x_2^2} \right), \qquad
 \frac{\partial^2}{\partial z_2 \partial z_2^\rho} = \frac{1}{4} \left( \frac{\partial^2}{\partial x_3^2} - u \frac{\partial^2}{\partial x_4^2} \right),
\]
we see that
\[
 \omega_\psi(2 v^{-1} \mathtt{X} - 2^{-1} v \mathtt{Y}) \varphi' = \sqrt{-1} (k+2) \varphi'.
\]
This implies that
\[
 \omega_\psi(1, (\alpha,1)) \varphi' = \chi_{k+2}(\alpha) \varphi'
\]
for $\alpha \in E^1$ since
\[
 \ii_1(\alpha) = 
 \begin{pmatrix}
  a & 2b \\
  \frac{bu}{2} & a
 \end{pmatrix}
 = 
 \begin{pmatrix}
  1 & \\
  & 2^{-1}v
 \end{pmatrix}
 \begin{pmatrix}
  \cos \theta & \sin \theta \\
  - \sin \theta & \cos \theta
 \end{pmatrix}
 \begin{pmatrix}
  1 & \\
  & 2v^{-1}
 \end{pmatrix} 
 = \exp((2 v^{-1} \mathtt{X} - 2^{-1} v \mathtt{Y})\theta)
\]
if we write $\alpha = a + b \i = e^{\sqrt{-1} \theta}$.
This completes the proof.
\end{proof}

\subsection{Schwartz functions on $\X$}
\label{ssec:schwartzX}

Let $\varphi \in \SS(\X)$ be the partial Fourier transform of the Schwartz function $\varphi' \in \SS(\X')$ given in \S \ref{ssec:schwartzX'}.
(We also write $\varphi_\mu$ and $\varphi_k$ for the partial Fourier transforms of $\varphi'_\mu$ and $\varphi_k'$, respectively, to indicate the dependence on a unitary ramified character $\mu$ in the case (rps) and on a non-negative integer $k$ in the cases (ds), (fd).)
Then we have
\[
 \langle \varphi, \varphi \rangle = \langle \varphi', \varphi' \rangle = 1.
\]
Also, since the partial Fourier transform is a $\G(\U(V)^0 \times \U(W))$-equivariant map, $\varphi$ satisfies the same equivariance properties as $\varphi'$.
In this subsection, we compute $\varphi$ explicitly.

We need to introduce more notation.
Put $\kappa_1 = 1$ and $\kappa_2 = -J_1$.
We define a quadratic $F$-algebra $K$ by 
\[
 K = F + F \j.
\]
We write
\[
 x = \e_1 z_1 + \e_2 z_2 = x_1 \e_1 + x_2\e_2+x_3 \e_3+ x_4 \e_4 \in \X, \qquad
 z_i = \alpha_i + \beta_i \j \in K,
\]
so that
\[
 \alpha_1 = x_1, \qquad
 \beta_1 = x_4, \qquad
 \alpha_2 = x_2, \qquad
 \beta_2 = \frac{1}{J_1} x_3.
\]
Recall that the Weil index $\gamma_F(\psi)$ is an $8$th root of unity such that
\[
 \int_F \phi(x) \psi(x^2) \, dx
 = \gamma_F(\psi) |2|^{-\frac{1}{2}} \int_F \hat{\phi}(x) \psi\left( -\frac{x^2}{4} \right) dx
\]
for all $\phi \in \SS(F)$,
where $\hat{\phi}$ is the Fourier transform of $\phi$ with respect to $\psi$ and $dx$ is the self-dual Haar measure on $F$ with respect to $\psi$.
For any non-negative integer $n$, let $H_n(x)$ denote the Hermite polynomial defined by
\[
 H_n(x) = (-1)^n e^{x^2} \frac{d^n}{dx^n} \left( e^{-x^2} \right).
\]

\subsubsection{The case (ur)}

\paragraph{The case when $E$ is split and $F$ is unramified}

We use the notation of \S \ref{par:schwartzX'-ur-Espl-Fur}.
By the partial Fourier transform given in \S \ref{sssec:pftu}, we have $\varphi(x) = |2|^{-1} \prod_{i=1}^4 \varphi_i (x_i)$, where 
\[
 \varphi_i(x_i)
  = \left| \frac{2a_i}{b_i} \right| \psi \left( \frac{a_i}{2} x_i^2 \right)
 \int_\o \psi \left( \frac{a_i}{b_i^2} (x_i')^2 - \frac{2a_i}{b_i} x_i x_i' \right) dx_i'.
\]

\begin{lem}
\label{lem:main-fin}
Assume that $\psi$ is of order zero.
Put
\[
 I(a,b) = \int_{\o} \psi(ax^2 + bx) \, dx
\]
for $a, b \in F$.
\begin{enumerate}
\item 
We have
\[
 I(a, b) = 
 \begin{cases}
  \I_{\o}(b) & \text{if $a \in \o$,} \\
  \psi(-\frac{b^2}{4a}) \gamma_F(a \psi) |2a|^{-\frac{1}{2}} \I_{\o}(\frac{b}{2a}) & \text{if $4a \notin \p$,}
 \end{cases}
\]
where $a\psi$ is the non-trivial character of $F$ given by $(a\psi)(x) = \psi(ax)$.
\item
If $F = \Q_2$ and $2a \in \o^{\times}$, then we have
\[
 I(a,b) = \I_{\o^{\times}}(\tfrac{b}{a}).
\]
\end{enumerate}
\end{lem}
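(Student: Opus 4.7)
The plan is to handle parts (i) and (ii) separately, with (i) being a completion-of-the-square argument powered by Weil's identity, and (ii) a direct decomposition of the integral over $\Z_2$.

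For part (i), first consider the case $a \in \o$: since $\psi$ has order zero and $ax^2 \in \o$ for every $x \in \o$, we have $\psi(ax^2) \equiv 1$, so the integral collapses to $\int_\o \psi(bx)\, dx = \I_\o(b)$ by self-duality of $\o$ under $\psi$. When instead $4a \notin \p$, I would invoke the distributional form of Weil's identity
\[
 \int_F \phi(x) \psi(ax^2)\, dx \;=\; \gamma_F(a\psi)\, |2a|^{-1/2} \int_F \hat\phi(y) \psi\bigl(-y^2/(4a)\bigr)\, dy,
\]
applied to the test function $\phi(x) := \I_\o(x)\psi(bx)$, whose Fourier transform is $\hat\phi(y) = \I_{-b+\o}(y)$ by self-duality. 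Substituting $y = -b + z$ with $z \in \o$, one expands $-y^2/(4a) = -b^2/(4a) + bz/(2a) - z^2/(4a)$; the hypothesis $4a \notin \p$ is exactly what forces $z^2/(4a) \in \o$ for every $z \in \o$, so the $z^2$ term is annihilated by $\psi$. What remains factors as $\psi(-b^2/(4a)) \int_\o \psi(bz/(2a))\, dz = \psi(-b^2/(4a)) \I_\o(b/(2a))$, again by self-duality, which assembles into the claimed formula.

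For part (ii), with $F = \Q_2$ and $2a \in \o^\times$, I would write $a = u/2$ with $u \in \Z_2^\times$. The key observation is that $\psi(ax^2)$ depends only on the parity of $x \in \Z_2$: when $x$ is even, $ax^2 \in 2u\Z_2 \subset \Z_2$ gives $\psi(ax^2) = 1$, while when $x$ is odd, $x^2 \equiv 1 \pmod 8$ and $u$ odd together force $ax^2 \equiv 1/2 \pmod{\Z_2}$, so $\psi(ax^2) = \psi(1/2) = -1$. Decomposing $\Z_2 = 2\Z_2 \sqcup \Z_2^\times$ accordingly yields
\[
 I(a,b) = 2\int_{2\Z_2} \psi(bx)\, dx - \int_{\Z_2} \psi(bx)\, dx,
\]
which evaluates via the change of variable $x = 2y$ (with Jacobian $|2|$) and self-duality to $\I_{\Z_2}(2b) - \I_{\Z_2}(b)$. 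This vanishes except when $b \in \tfrac{1}{2}\Z_2^\times$, equivalently when $b/a = 2b/u \in \Z_2^\times$, in which case it equals $1$, matching the claim.

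The main point of friction is getting the scalar $|2a|^{-1/2}$ right in Weil's identity; deducing it from the $a=1$ form stated in the preamble to the lemma requires tracking the rescaling of the self-dual Haar measure by $|a|^{1/2}$ when one passes from $\psi$ to the character $a\psi$, whose order differs from that of $\psi$ by the valuation of $a$. Once this bookkeeping is in place, both parts are essentially mechanical.
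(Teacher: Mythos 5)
Your proof is correct, and it takes a genuinely different route from the paper's.

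For part (i), the paper first establishes the vanishing of $I(a,b)$ outside $b/(2a) \in \o$ via an invariance trick (the substitution $x \mapsto x + y/(2a)$ for $y \in \o$ shows $I(a,b) = \psi(y^2/(4a) + by/(2a))I(a,b)$, which under $4a \notin \p$ reduces to $I(a,b) = \psi(by/(2a))I(a,b)$), and then separately completes the square and invokes the Weil-index identity with $\phi = \I_\o$. You instead feed the full test function $\phi(x) = \I_\o(x)\psi(bx)$ directly into the Weil identity; the Fourier transform $\hat\phi(y) = \I_\o(b+y)$ already encodes the shift, so the vanishing and the evaluation emerge together from the single integral $\int_\o \psi(bz/(2a) - z^2/(4a))\,dz$. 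This is somewhat more economical, at the cost of needing the $a$-scaled Weil identity up front (which, as you note, is a standard measure-rescaling exercise that the paper itself cites as ``by definition''). For part (ii), the paper again runs the invariance argument first to force $b/a \in \o$, then handles $b/a \in 2\o$ by showing $\int_\o \psi(ax^2)\,dx = 0$ via $\psi(a) = -1$, and handles $b/a \in \o^\times$ by a separate substitution with $a = y + \tfrac12$, $b/(2a) = z + \tfrac12$. Your parity decomposition $\Z_2 = 2\Z_2 \sqcup \Z_2^\times$, on which $\psi(ax^2)$ is identically $1$ resp.\ $-1$, collapses all three subcases into the single identity $I(a,b) = \I_{\Z_2}(2b) - \I_{\Z_2}(b)$, which is cleaner and more unified. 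Both approaches are sound; yours makes the structure of the answer more transparent at the expense of slightly heavier Fourier-analytic setup in (i).
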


\begin{proof}
If $a \in \o$, then we have $I(a,b) = \hat{\I}_{\o}(b) = \I_{\o}(b)$.
If $2a \notin \p$, then we change the variable $x \mapsto x + \frac{y}{2a}$ with $y \in \o$ to get
\begin{align*}
 I(a,b)
 & = \int_{\o} \psi\left( a \left( x + \frac{y}{2a} \right)^2 + b \left( x + \frac{y}{2a} \right) \right) dx \\
 & = \psi\left( \frac{y^2}{4a} + \frac{by}{2a} \right) \int_{\o} \psi(ax^2 + xy+bx) \, dx \\
 & = \psi\left( \frac{y^2}{4a} + \frac{by}{2a} \right) I(a,b).
\end{align*}
Assume that $4a \notin \p$.
Then we have $I(a,b) = \psi(\frac{by}{2a}) I(a,b)$ for all $y \in \o$, so that $I(a,b) = 0$ unless $\frac{b}{2a} \in \o$.
If $\frac{b}{2a} \in \o$, then we have 
\[
 I(a,b) = \int_{\o} \psi\left( a \left( x + \frac{b}{2a} \right)^2 - \frac{b^2}{4a} \right) dx
 = \psi \left( - \frac{b^2}{4a} \right) \int_{\o} \psi(a x^2) \, dx.
\]
On the other hand, by definition, we have
\[
 \int_F \phi(x) \psi(a x^2) \, dx
 = \gamma_F(a \psi) |2a|^{-\frac{1}{2}} \int_F \hat{\phi}(x)
 \psi\left( -\frac{x^2}{4a} \right) dx
\]
for all $\phi \in \SS(F)$.
Hence we have
\[
 \int_{\o} \psi(a x^2) \, dx = \gamma_F(a \psi) |2a|^{-\frac{1}{2}} \int_{\o} \psi \left( - \frac{x^2}{4a} \right) dx
 = \gamma_F(a \psi) |2a|^{-\frac{1}{2}}.
\]
This proves (i).

Assume that $F = \Q_2$ and $2a \in \o^{\times}$.
As above, we have $I(a,b) = \psi(\frac{by}{2a}) I(a,b)$ for all $y \in 2\o$, so that $I(a,b) = 0$ unless $\frac{b}{a} \in \o$.
Also, we have
\[
 I(a,b) = \psi \left( - \frac{b^2}{4a} \right) \int_{\o} \psi(a x^2) \, dx
\]
if $\frac{b}{a} \in 2\o$.
Changing the variable $x \mapsto x+1$, we have
\[
 \int_{\o} \psi(a x^2) \, dx
 = \int_{\o} \psi(a x^2 + 2ax + a) \, dx 
 = \psi(a) \int_{\o} \psi(a x^2) \, dx.
\]
Since $F = \Q_2$, $\psi$ is of order zero, and $\psi(a)^2 = \psi(2a) = 1$, we must have $\psi(a) = -1$.
Hence we have
\[
 \int_{\o} \psi(a x^2) \, dx = 0,
\]
so that $I(a,b)=0$ if $\frac{b}{a} \in 2\o$.
Assume that $\frac{b}{a} \in \o^\times$.
Since $F = \Q_2$, we may write $a = y + \frac{1}{2}$ and $\frac{b}{2a} = z + \frac{1}{2}$ for some $y,z \in \o$.
Then we have
\[
 I(a,b) = \int_{\o}\psi \left( x^2 \left( y+\frac{1}{2} \right) + 2 x \left( y+\frac{1}{2} \right) \left( z+\frac{1}{2} \right) \right) dx = \int_{\o}\psi \left( \frac{1}{2} x^2 + \frac{1}{2} x \right) dx = 1
\]
since $\frac{1}{2} x(x+1) \in \o$ for all $x \in \o$.
This proves (ii).
\end{proof}

By Lemma \ref{lem:main-fin}, we have
\[
 I\left( \frac{a_i}{b_i^2}, -\frac{2a_i}{b_i} x_i \right) = 
 \begin{cases}
  \I_{\o}(\frac{2a_i}{b_i} x_i) & \text{if $\frac{a_i}{b_i^2} \in \o$,} \\
  \psi(-a_ix_i^2) \gamma_F(a_i \psi) \left| \frac{b_i^2}{2a_i} \right|^{\frac{1}{2}} \I_{\o}(b_i x_i) & \text{if $\frac{4a_i}{b_i^2} \notin \p$,}
 \end{cases}
\]
so that 
\begin{align*}
 \varphi_1(x_1) & = |2| \cdot \psi \left( \frac{t}{2} x_1^2 \right) \cdot \I_\o(2x_1), \\
 \varphi_2(x_2) & = |2J_1| \cdot \psi \left( - \frac{tJ_1}{2} x_2^2 \right) \cdot \I_\o(2J_1x_2), \\
 \varphi_3(x_3) & = \gamma_F(-tJ_2 \psi) \cdot |2J_2|^{\frac{1}{2}} \cdot \psi \left( \frac{tJ_2}{2} x_3^2 \right) \cdot \I_{\o}(2Jx_3), \\
 \varphi_4(x_4) & = \gamma_F(tJ \psi) \cdot |2J|^{\frac{1}{2}} \cdot \psi \left( -\frac{tJ}{2} x_4^2 \right) \cdot \I_{\o}(2Jx_4).
\end{align*}
We have 
\begin{align*}
 \gamma_F(-tJ_2 \psi) \cdot \gamma_F(tJ \psi)
 & = \gamma_F(-2tJ_2, \frac{1}{2} \psi) \cdot \gamma_F(2tJ, \frac{1}{2} \psi) \cdot \gamma_F(\frac{1}{2} \psi)^2 \\
 & = \gamma_F(-J_1, \frac{1}{2} \psi) \cdot (-2tJ_2, 2tJ)_F \cdot \gamma_F(-1, \frac{1}{2} \psi)^{-1} \\
 & = \gamma_F(J_1, \frac{1}{2} \psi) \cdot (2tJ_2, J_1)_F.
\end{align*}
Hence we have
\begin{align*}
 \varphi(x) & = \gamma_F(J_1, \frac{1}{2} \psi) \cdot (2tJ_2, J_1)_F \cdot |2|^2 |J_1|^{\frac{3}{2}} |J_2| \\
 & \quad \times \psi\left( \frac{t}{2} (x_1^2 -J_1 x_2^2 + J_2 x_3^2 - Jx_4^2) \right)
 \cdot \I_\o(2x_1) \I_\o(2J_1x_2) \I_{\o}(2Jx_3) \I_{\o}(2Jx_4) \\
 & = \gamma_F(J_1, \frac{1}{2} \psi) \cdot (2tJ_2, J_1)_F \cdot |2|^2 |J_1|^{\frac{3}{2}} |J_2|
 \cdot \tilde{\varphi}_{\kappa_1}(z_1) \tilde{\varphi}_{\kappa_2}(z_2),
\end{align*}
where 
\[
 \tilde{\varphi}_\kappa(z) = \psi\left( \frac{\kappa t}{2} \N_{K/F}(z) \right) 
 \cdot \I_{\o + \o \frac{\j}{J}}(2 \kappa z)
\]
for $z \in K$.

\paragraph{The case when $E$ is split and $F$ is ramified}

We use the notation of \S \ref{par:schwartzX'-ur-Espl-Fram}.
Note that the inverse different $\mathfrak{d}^{-1}$ is equal to $\varpi^{-d} \o$.
By the partial Fourier transform given in \S \ref{sssec:pftu},
we have $\varphi(x) = q^d |2|^{-1} \prod_{i=1}^4 \varphi_i (x_i)$, where 
\[
 \varphi_i(x_i)
  = \left| \frac{2a_i}{b_i} \right| \psi \left( \frac{a_i}{2} x_i^2 \right)
 \int_\o \psi \left( \frac{a_i}{b_i^2} (x_i')^2 - \frac{2a_i}{b_i} x_i x_i' \right) dx_i'.
\]
In particular, we have
\[
 \varphi_i(x_i) \in \Z[q^{-\frac{1}{2}}, \mu_{p^\infty}]
\]
for all $x_i \in F$, where $p$ is the residual characteristic of $F$ and $\mu_{p^\infty}$ is the group of $p$-power roots of unity.
If further $2 \in \o^\times$, then we have
\[
 \varphi_i(x_i) = \psi \left( \frac{a_i}{2} x_i^2 \right) \cdot \hat{\I}_\o\left( - \frac{2a_i}{b_i} x_i \right) 
 = q^{-\frac{d}{2}} \cdot \psi \left( \frac{a_i}{2} x_i^2 \right) \cdot \I_{\mathfrak{d}^{-1}}(x_i)
\]
and hence
\begin{align*}
 \varphi(x) & = q^{-d} \cdot \psi\left( \frac{t}{2}(x_1^2 - J_1 x_2^2 - J_2 x_3^2 + J x_4^2) \right)
 \cdot \I_{\mathfrak{d}^{-1}}(x_1) \I_{\mathfrak{d}^{-1}}(x_2) \I_{\mathfrak{d}^{-1}}(x_3) \I_{\mathfrak{d}^{-1}}(x_4) \\
 & = q^{-d} \cdot \tilde{\varphi}_{\kappa_1}(z_1) \tilde{\varphi}_{\kappa_2}(z_2),
\end{align*}
where
\[
 \tilde{\varphi}_\kappa(z) = \psi\left( \frac{\kappa t}{4} \tr_{K/F}(z^2) \right) \cdot \I_{\mathfrak{d}^{-1}+\mathfrak{d}^{-1}\j}(z)
\]
for $z \in K$.

\paragraph{The case when $E$ is inert and $J \in (F^{\times})^2$}

We use the notation of \S \ref{par:schwartzX'-ur-Einert-J}.
By the partial Fourier transform given in \S \ref{par:pftJi}, we have
\begin{align*}
 \varphi(x) & = |J|^{\frac{1}{2}} \cdot
 \I_\o(x_1 + tx_4) \hat{\I}_\o\left( - \frac{1}{2}(x_1 - t x_4) \right)
 \I_\o\left( s \left( x_2 + \frac{t}{J_1} x_3 \right) \right)
 \hat{\I}_\o\left( \frac{J_1}{2s} \left( x_2 - \frac{t}{J_1} x_3 \right) \right) \\
 & = |J|^{\frac{1}{2}} \cdot \I_\o(x_1) \I_\o(s x_2) \I_\o\left(\frac{st}{J_1} x_3\right) \I_\o(t x_4) \\
 & = |J|^{\frac{1}{2}} \cdot \I_{\o + \o \frac{\j}{t}}(z_1) \I_{\o + \o \frac{\j}{t}}(s z_2).
\end{align*}

\paragraph{The case when $E$ is inert, and $J_1 \in (F^{\times})^2$ or $J_2 \in (F^{\times})^2$}

We use the notation of \S \ref{par:schwartzX'-ur-Einert-J1}.
By the partial Fourier transform given in \S \ref{sssec:pftJ1}, we have
\begin{align*}
 \varphi(x) & = |J|^{\frac{1}{2}} \cdot
 \I_{\o}(x_1 + t x_2) \hat{\I}_{\o}\left( - \frac{1}{2}(x_1 - t x_2) \right)
 \I_\o\left( s \left( x_4 + \frac{1}{t} x_3 \right) \right)
 \hat{\I}_\o\left( \frac{J}{2s} \left( x_4 - \frac{1}{t} x_3 \right) \right) \\
 & = |J|^{\frac{1}{2}} \cdot \I_{\o}(x_1) \I_{\o}(tx_2) \I_{\o}\left( \frac{s}{t} x_3 \right) \I_{\o}(sx_4) \\
 & = |J|^{\frac{1}{2}} \cdot \I_{\o + \o \frac{\j}{s}}(z_1) \I_{\o + \o \frac{\j}{s}}(t z_2).
\end{align*}

\paragraph{The case when $E$ is ramified}

We use the notation of \S \ref{par:schwartzX'-ur-Eram}.
By the partial Fourier transform given in \S \ref{par:pftJii}, we have
\begin{align*}
 \varphi(x) & = q^{-\frac{1}{2}} \cdot
 \I_\o\left( \frac{1}{2} \left( x_1 + t_1 x_2 + \frac{t}{t_1} x_3 + t x_4 \right) \right)
 \hat{\I}_\o\left( - \frac{1}{2} \left( x_1 - t_1 x_2 + \frac{t}{t_1} x_3 - t x_4 \right) \right) \\
 & \quad \times \hat{\I}_\o\left( - \frac{1}{2} \left( x_1 + t_1 x_2 - \frac{t}{t_1} x_3 - t x_4 \right) \right)
 \I_\o\left( \frac{u}{2} \left( x_1 - t_1 x_2 - \frac{t}{t_1} x_3 + t x_4 \right) \right) \\
 & = q^{-\frac{1}{2}} \cdot \I_\o(x_1 - t x_4) \I_{\o}\left( x_2 - \frac{t}{J_1} x_3 \right)
 \I_\o\left( x_1 + t_1 x_2 + \frac{t}{t_1} x_3 + t x_4 \right)
 \I_{\varpi^{-1} \o}\left( x_1 - t_1 x_2 - \frac{t}{t_1} x_3 + t x_4 \right) \\
 & = q^{-\frac{1}{2}} \cdot \I_\o(\alpha_1 - t \beta_1) \I_{\o}(\alpha_2 - t \beta_2)
 \I_\o(\alpha_1 + t \beta_1 + t_1 \alpha_2 + tt_1 \beta_2)
 \I_{\varpi^{-1} \o}(\alpha_1 + t \beta_1 - t_1 \alpha_2 - tt_1 \beta_2).
\end{align*}

\subsubsection{The case (rps)}

We use the notation of \S \ref{sssec:schwartzX'-tr}.
By the partial Fourier transform given in \S \ref{sssec:pftu}, we have $\varphi(x) = q^{\frac{n+1}{2}}(q-1)^{-\frac{1}{2}}\prod_{i=1}^4 \varphi_i (x_i)$, where 
\[
 \varphi_i(x_i) = \psi \left( \frac{a_i}{2} x_i^2 \right) \cdot \hat{\I}_\o\left( - \frac{2a_i}{b_i} x_i \right)
 = \I_\o(x_i)
\]
for $i=1,2$,
\[
 \varphi_3(x_3) = \psi \left( \frac{a_3}{2} x_3^2 \right)
 \cdot \hat{\I}_{\varpi^n \o}\left( - \frac{2a_3}{b_3} x_3 \right) = q^{-n} \cdot 
 \psi\left( - \frac{tJ_2}{2} x_3^2 \right) \cdot \I_{\varpi^{-n} \o}(x_3), 
\]
and 
\[
 \varphi_4(x_4)
 = \psi \left( \frac{a_4}{2} x_4^2 \right) \cdot \widehat{\I_{\o^\times} \mu}
 \left( - \frac{2a_4}{b_4} x_4 \right)
 = \psi \left( \frac{tJ}{2} x_4^2 \right) \cdot \widehat{\I_{\o^\times} \mu} (x_4).
\]
Since $\mu$ is of conductor $q^n$, we have
\[
 \widehat{\I_{\o^\times}\mu} = q^{-n} \cdot \mathfrak{g}(\mu, \psi) \cdot \I_{\varpi^{-n} \o^\times} \mu^{-1},
\]
where 
\[
 \mathfrak{g}(\mu, \psi) = \int_{\varpi^{-n} \o^{\times}} \mu(y) \psi(y) \, dy.
\]
Note that $|\mathfrak{g}(\mu, \psi)| = q^{\frac{n}{2}}$.
Hence we have
\begin{align*}
 \varphi(x) & = q^{-\frac{3}{2}n + \frac{1}{2}}(q-1)^{-\frac{1}{2}} \cdot \mathfrak{g}(\mu, \psi) \\
 & \quad \times \psi\left( \frac{t}{2}(-J_2x_3^2 + J x_4^2) \right) \cdot 
 \I_\o(x_1) \I_\o(x_2) \I_{\varpi^{-n} \o}(x_3) \I_{\varpi^{-n} \o^\times}(x_4) \mu(x_4)^{-1} \\
 & = q^{-\frac{3}{2}n + \frac{1}{2}}(q-1)^{-\frac{1}{2}} \cdot \mathfrak{g}(\mu, \psi) \\
 & \quad \times \psi\left( \frac{\kappa_1 t J}{2} \beta_1^2 \right) \cdot \I_\o(\alpha_1) \I_{\varpi^{-n} \o^\times}(\beta_1) \mu(\beta_1)^{-1}
 \cdot \psi\left( \frac{\kappa_2 t J}{2} \beta_2^2 \right) \cdot \I_\o(\alpha_2) \I_{\varpi^{-n} \o}(\beta_2).
\end{align*}

\subsubsection{The case (st)}

\paragraph{The case when $B_1$ and $B_2$ are split}

We use the notation of \S \ref{par:schwartzX'-st-B1spl}.
By the partial Fourier transform given in \S \ref{sssec:pftu}, we have $\varphi(x) = q^{\frac{1}{2}} \prod_{i=1}^4 \varphi_i(x_i)$, where
\[
 \varphi_i(x_i) = \psi \left( \frac{a_i}{2} x_i^2 \right) \cdot \hat{\I}_\o\left( - \frac{2a_i}{b_i} x_i \right)
 = \I_\o(x_i)
\]
for $i=1,2,4$ and 
\[
 \varphi_3(x_3) = \psi \left( \frac{a_3}{2} x_3^2 \right) \cdot \hat{\I}_\p \left( - \frac{2a_3}{b_3} x_3 \right)
 = q^{-1} \cdot \psi \left( - \frac{tJ_2}{2} x_3^2 \right) \cdot \I_{\varpi^{-1} \o}(x_3).
\]
Hence we have
\begin{align*}
 \varphi(x) & = q^{-\frac{1}{2}} \cdot \psi\left( - \frac{tJ_2}{2} x_3^2 \right)
 \cdot \I_{\o}(x_1) \I_{\o}(x_2) \I_{\varpi^{-1} \o}(x_3) \I_{\o}(x_4) \\
 & = q^{-\frac{1}{2}} \cdot \I_\o(\alpha_1) \I_\o(\beta_1) \cdot
 \psi\left( \frac{\kappa_2 t J}{2} \beta_2^2 \right) \cdot \I_\o(\alpha_2) \I_{\varpi^{-1} \o}(\beta_2).
\end{align*}

\paragraph{The case when $B_1$ and $B_2$ are ramified}

We use the notation of \S \ref{par:schwartzX'-st-B1ram}.
By the partial Fourier transform given in \S \ref{par:pftJi}, we have
\begin{align*}
 \varphi(x) & = q^{-1} \cdot  \I_\o(x_1 + tx_4) \hat{\I}_\o\left( - \frac{1}{2}(x_1 - t x_4) \right)
 \I_\o\left( s \left( x_2 + \frac{t}{J_1} x_3 \right) \right)
 \hat{\I}_\o \left( \frac{J_1}{2s} \left( x_2 - \frac{t}{J_1} x_3 \right) \right) \\
 & = q^{-1} \cdot \I_\o(x_1) \I_{\varpi^{-1} \o}(x_4)
 \I_\o \left( x_2 + \frac{t}{J_1} x_3 \right) \I_{\varpi^{-1} \o} \left( x_2 - \frac{t}{J_1} x_3 \right) \\
 & = q^{-1} \cdot \I_\o(\alpha_1) \I_{\varpi^{-1} \o}(\beta_1)
 \I_\o(\alpha_2 + t \beta_2) \I_{\varpi^{-1} \o}(\alpha_2 - t \beta_2).
\end{align*}

\subsubsection{The case (1d)}

We use the notation of \S \ref{sssec:schwartzX'-1d}.
By the partial Fourier transform given in \S \ref{sssec:pftJ1}, we have
\begin{align*}
 \varphi(x) & = q^{-\frac{1}{2}} \cdot \I_\o(x_1 + t x_2) \hat{\I}_\o\left( - \frac{1}{2}(x_1 - t x_2) \right)
 \I_\o\left( s \left( x_4 + \frac{1}{t} x_3 \right) \right)
 \hat{\I}_\o\left( \frac{J}{2s} \left( x_4 - \frac{1}{t} x_3 \right) \right) \\
 & = q^{-\frac{1}{2}} \cdot \I_\o(x_1) \I_\o(x_2) 
 \I_\o\left( x_4 + \frac{1}{t} x_3 \right) \I_{\varpi^{-1} \o}\left( x_4 - \frac{1}{t} x_3 \right) \\
 & = q^{-\frac{1}{2}} \cdot \I_\o(\alpha_1) \I_\o(\alpha_2) 
 \I_\o(\beta_1 + t \beta_2) \I_{\varpi^{-1} \o}(\beta_1 - t \beta_2).
\end{align*}

\subsubsection{The case (ds)}

\paragraph{The case when $B_1$ and $B_2$ are split}

We use the notation of \S \ref{par:schwartzX'-ds-B1spl}.
By the partial Fourier transform given in \S \ref{par:pftJi}, we have
\[
 \varphi(x) = c_k^{-\frac{1}{2}} \left| \frac{uJ}{4} \right|^{\frac{1}{2}}
 \int_{-\infty}^\infty (x_2' - v \sqrt{-1} x_1')^k e^{- \frac{\pi}{2}(v x_1'^2 + \frac{1}{v} x_2'^2)} e^{2 \pi \sqrt{-1} x_2'y_2'} \, dx_2'
 \int_{-\infty}^\infty e^{- \frac{\pi}{2}(v x_3'^2 + \frac{1}{v} x_4'^2)} e^{2 \pi \sqrt{-1} x_4'y_4'} \, dx_4'.
\]

\begin{lem}
\label{lem:main-real}
Let $k$ be a non-negative integer and $v$ a positive real number.
Put
\[
 I(x,y) = \int_{-\infty}^\infty
 \left(x + \frac{\sqrt{-1}}{v} w \right)^k e^{-\frac{\pi}{2}(vx^2 + \frac{1}{v} w^2)} e^{2 \pi \sqrt{-1} wy} \, dw
\]
for $x, y \in \R$.
Then we have
\[
 I(x,y) = \frac{1}{\sqrt{2^{k-1} \pi^k v^{k-1}}}
 \cdot H_k\left( \sqrt{2 \pi v} \left( \frac{1}{2}x-y \right) \right)
 \cdot e^{-\pi v(\frac{1}{2} x^2 + 2 y^2)}.
\]
\end{lem}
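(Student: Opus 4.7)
\textbf{Proof proposal for Lemma \ref{lem:main-real}.} The plan is to reduce the integral to a standard Gaussian-times-polynomial integral over $\R$, and then match it to the Hermite polynomial via the generating function identity $e^{2Xz-z^2}=\sum_k H_k(X)z^k/k!$.

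First, I would pull out the $x$-dependent Gaussian factor $e^{-\pi v x^2/2}$ and complete the square in the exponent involving $w$:
\[
 -\frac{\pi}{2v}w^2+2\pi\sqrt{-1}\,wy
 =-\frac{\pi}{2v}\bigl(w-2v\sqrt{-1}\,y\bigr)^2-2\pi v y^2.
\]
Shifting the contour $w\mapsto u+2v\sqrt{-1}\,y$ (justified by the rapid decay of the Gaussian along horizontal lines) and noting that $\tfrac{\sqrt{-1}}{v}(u+2v\sqrt{-1}\,y)=\tfrac{\sqrt{-1}}{v}u-2y$, the integral becomes
\[
 I(x,y)=e^{-\pi v(\frac{1}{2}x^{2}+2y^{2})}
 \int_{-\infty}^{\infty}\Bigl(a+\tfrac{\sqrt{-1}}{v}u\Bigr)^{k}
 e^{-\frac{\pi}{2v}u^{2}}\,du,\qquad a:=x-2y.
\]

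Next, I would rescale $u=\sqrt{2v/\pi}\,s$, which turns the Gaussian into $e^{-s^{2}}$ and the polynomial argument into $a+\sqrt{-1}\alpha s$ with $\alpha=\sqrt{2/(\pi v)}$. To evaluate
\[
 J_k:=\int_{-\infty}^{\infty}\bigl(a+\sqrt{-1}\alpha s\bigr)^{k}e^{-s^{2}}\,ds,
\]
I would compute the generating series $\sum_k J_k z^k/k!=\int_\R e^{z(a+\sqrt{-1}\alpha s)}e^{-s^2}ds$. Completing the square in $s$ gives $\sqrt{\pi}\,e^{az-\alpha^{2}z^{2}/4}$, which, by substituting $z\mapsto \alpha z/2$ in the Hermite generating function, equals $\sqrt{\pi}\sum_k H_k(a/\alpha)(\alpha/2)^k z^k/k!$. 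Comparing coefficients yields $J_k=\sqrt{\pi}\,(\alpha/2)^{k}H_k(a/\alpha)$.

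Finally, substituting back $\alpha=\sqrt{2/(\pi v)}$ and $a/\alpha=\sqrt{\pi v/2}\,(x-2y)=\sqrt{2\pi v}(\tfrac{1}{2}x-y)$, and combining with the Jacobian $\sqrt{2v/\pi}$ and the extracted Gaussian factor, produces exactly
\[
 I(x,y)=\frac{1}{\sqrt{2^{k-1}\pi^{k}v^{k-1}}}\,H_k\!\bigl(\sqrt{2\pi v}(\tfrac{1}{2}x-y)\bigr)\,e^{-\pi v(\frac{1}{2}x^{2}+2y^{2})}.
\]
The only subtle point is the contour shift in the first step, but since the integrand is entire in $w$ and decays like $e^{-\pi w^{2}/(2v)}$ in any horizontal strip, this is routine; everything else is bookkeeping of constants. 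No step is really an obstacle.
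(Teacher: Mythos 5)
Your proof is correct and follows essentially the same route as the paper: complete the square in $w$, shift the contour (using decay along horizontal lines), and rescale to reduce $I(x,y)$ to a Gaussian moment integral $\int_{\R}(a+\sqrt{-1}\alpha s)^k e^{-s^2}\,ds$. The only difference is in the final step: the paper directly cites the integral representation $H_k(X)=\frac{2^k}{\sqrt\pi}\int_{\R}(X+\sqrt{-1}w)^k e^{-w^2}\,dw$, whereas you re-derive the needed formula from the generating function $e^{2Xt-t^2}=\sum_k H_k(X)t^k/k!$ by another completion of the square; this is equivalent and, if anything, slightly more self-contained. All constants check out.
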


\begin{proof}
We have
\begin{align*}
 I(x,y) & = e^{-\frac{\pi v}{2} x^2} \sqrt{\frac{2 v}{\pi}} \int_{-\infty}^\infty 
 \left(x + \sqrt{\frac{2}{\pi v}} \sqrt{-1} w \right)^k e^{-w^2} e^{2 \sqrt{2 \pi v} \sqrt{-1} wy} \, dw \\
 & = e^{-\frac{\pi v}{2} x^2} \sqrt{\frac{2^{k+1}}{\pi^{k+1} v^{k-1}}} \int_{-\infty}^\infty 
 \left(\sqrt{\frac{\pi v}{2}} x + \sqrt{-1} w \right)^k e^{-(w - \sqrt{2 \pi v} \sqrt{-1} y)^2 - 2 \pi v y^2} \, dw \\
 & = e^{-\frac{\pi v}{2} x^2 - 2 \pi v y^2} \sqrt{\frac{2^{k+1}}{\pi^{k+1} v^{k-1}}} \int_{-\infty}^\infty
 \left(\sqrt{\frac{\pi v}{2}} x + \sqrt{-1} w - \sqrt{2 \pi v} y \right)^k e^{-w^2} \, dw.
\end{align*}
Hence the assertion follows from the integral representation of the Hermite polynomial:
\[
 H_k(x) = \frac{2^k}{\sqrt{\pi}} \int_{-\infty}^\infty (x + \sqrt{-1} w)^k e^{-w^2} \, dw.
\]
\end{proof}

By Lemma \ref{lem:main-real}, we have
\begin{align*}
 \varphi(x) & = c_k^{-\frac{1}{2}} \left| \frac{uJ}{4} \right|^{\frac{1}{2}}
 \cdot \frac{(-v \sqrt{-1})^k}{\sqrt{2^{k-2} \pi^k v^{k-2}}}
 \cdot H_k\left( \sqrt{2 \pi v} \left( \frac{1}{2} x_1' - y_2' \right) \right)
 \cdot e^{-\pi v(\frac{1}{2} x_1'^2 + 2 y_2'^2 + \frac{1}{2} x_3'^2 + 2 y_4'^2)} \\
 & = \frac{|uJ|^{\frac{1}{2}}(-\sqrt{-1})^k}{2^{\frac{k}{2}-1} \sqrt{k!}}
 \cdot H_k(\sqrt{2 \pi v} x_1)
 \cdot e^{-\pi v(x_1^2 + J x_4^2 + J_1 x_2^2 + \frac{J}{J_1} x_3^2)} \\
 & = \frac{|uJ|^{\frac{1}{2}} (-\sqrt{-1})^k}{2^{\frac{k}{2}-1} \sqrt{k!}}
 \cdot H_k(\sqrt{2 \pi v} \alpha_1)
 \cdot e^{-\pi v(\alpha_1^2 + J \beta_1^2)} \cdot e^{-\pi v J_1 (\alpha_2^2 + J \beta_2^2)}.
\end{align*}

\paragraph{The case when $B_1$ and $B_2$ are ramified}

We use the notation of \S \ref{par:schwartzX'-ds-B1ram}.
By the partial Fourier transform given in \S \ref{par:pftJi}, we have
\[
 \varphi(x) = c_k^{-\frac{1}{2}} \left| \frac{uJ}{4} \right|^{\frac{1}{2}}
 \int_{-\infty}^\infty (x_2' - v \sqrt{-1} x_1')^k e^{- \frac{\pi}{2}(v x_1'^2 + \frac{1}{v} x_2'^2)} e^{2 \pi \sqrt{-1} x_2'y_2'} \, dx_2'
 \int_{-\infty}^\infty e^{- \frac{\pi}{2}(v x_3'^2 + \frac{1}{v} x_4'^2)} e^{2 \pi \sqrt{-1} x_4'y_4'} \, dx_4'.
\]
By Lemma \ref{lem:main-real}, we have
\begin{align*}
 \varphi(x) & = c_k^{-\frac{1}{2}} \left| \frac{uJ}{4} \right|^{\frac{1}{2}}
 \cdot \frac{(-v \sqrt{-1})^k}{\sqrt{2^{k-2} \pi^k v^{k-2}}}
 \cdot H_k\left( \sqrt{2 \pi v} \left( \frac{1}{2} x_1' - y_2' \right) \right)
 \cdot e^{-\pi v(\frac{1}{2} x_1'^2 + 2 y_2'^2 + \frac{1}{2} x_3'^2 + 2 y_4'^2)} \\
 & = \frac{|uJ|^{\frac{1}{2}}(-\sqrt{-1})^k}{2^{\frac{k}{2}-1} \sqrt{k!}}
 \cdot H_k(\sqrt{2 \pi v} x_1)
 \cdot e^{-\pi v(x_1^2 + J x_4^2 - J_1 x_2^2 - \frac{J}{J_1} x_3^2)} \\
 & = \frac{|uJ|^{\frac{1}{2}} (-\sqrt{-1})^k}{2^{\frac{k}{2}-1} \sqrt{k!}}
 \cdot H_k(\sqrt{2 \pi v} \alpha_1)
 \cdot e^{-\pi v (\alpha_1^2 + J \beta_1^2)} \cdot e^{\pi v J_1 (\alpha_2^2 + J \beta_2^2)}.
\end{align*}

\subsubsection{The case (fd)}

We use the notation of \S \ref{sssec:schwartzX'-fd}.
By the partial Fourier transform given in \S \ref{sssec:pftJ1}, we have
\[
 \varphi(x) = c_k^{-\frac{1}{2}} \left| \frac{J}{u} \right|^{\frac{1}{2}}
 \int_{-\infty}^\infty \left(x_1' + \frac{\sqrt{-1}}{v} x_2' \right)^k
 e^{- \frac{\pi}{2}(v x_1'^2 + \frac{1}{v} x_2'^2)} e^{2 \pi \sqrt{-1} x_2'y_2'} \, dx_2'
 \int_{-\infty}^\infty e^{- \frac{\pi}{2}(v x_3'^2 + \frac{1}{v} x_4'^2)} e^{2 \pi \sqrt{-1} x_4'y_4'} \, dx_4'.
\]
By Lemma \ref{lem:main-real}, we have
\begin{align*}
 \varphi(x) & = c_k^{-\frac{1}{2}} \left| \frac{J}{u} \right|^{\frac{1}{2}} \cdot \frac{1}{\sqrt{2^{k-2} \pi^k v^{k-2}}}
 \cdot H_k\left( \sqrt{2 \pi v} \left( \frac{1}{2} x_1' - y_2' \right) \right)
 \cdot e^{-\pi v(\frac{1}{2} x_1'^2 + 2 y_2'^2 + \frac{1}{2} x_3'^2 + 2 y_4'^2)} \\
 & = \frac{|uJ|^\frac{1}{2}}{2^{\frac{k}{2}-1} \sqrt{k!}} \cdot 
 H_k(\sqrt{2 \pi v} x_1) \cdot e^{-\pi v (x_1^2 + J_1 x_2^2 - J x_4^2 - \frac{J}{J_1} x_3^2)} \\
 & = \frac{|uJ|^\frac{1}{2}}{2^{\frac{k}{2}-1} \sqrt{k!}} \cdot 
 H_k(\sqrt{2 \pi v} \alpha_1) \cdot e^{-\pi v (\alpha_1^2 - J \beta_1^2)} 
 \cdot e^{-\pi v J_1 (\alpha_2^2 - J \beta_2^2)}.
\end{align*}

\section{Explicit form of the Rallis inner product formula}
\label{sec:rallis-explicit}

In this section, we shall explicate the Rallis inner product formula (Proposition \ref{prop:rallis-B}).

\subsection{Measures}
\label{ssec:measures}

In \S \ref{sec:rallis-general}, for any connected reductive algebraic group $G$ over a number field $F$, we have always taken the Tamagawa measure on $G(\A)$, which is a product of Haar measures on $G_v$ defined in terms of a non-zero invariant differential form of top degree on $G$ over $F$. However, with respect to this Haar measure, the volume of a hyperspecial maximal compact subgroup of $G_v$ is not necessarily $1$ for almost all $v$.
For our applications, it is more convenient to take the ``standard'' measure on $G(\A)$, which is a product of Haar measures on $G_v$ such that the volume of a maximal compact subgroup of $G_v$ is $1$ for all $v$.
In this subsection, we give a precise definition of the standard measures on $\A^\times \backslash B^\times(\A)$ and $B^1(\A)$, where $B$ is a quaternion algebra over $F$, and compare them with the Tamagawa measures.

Let $F$ be a number field and $\psi$ the standard additive character of $\A/F$.
Let $D = D_F$ be the discriminant of $F$.
We have $|D| = \prod_{v \in \Sigma_\fin} q_v^{d_v}$, where $d_v$ is the non-negative integer such that $\psi_v$ is trivial on $\varpi_v^{-d_v} \o_v$ but non-trivial on $\varpi_v^{-d_v-1} \o_v$.
For each place $v$ of $F$, let $\zeta_v(s)$ be the local zeta function of $F_v$ defined by 
\[
 \zeta_v(s) =
 \begin{cases}
  (1 - q_v^{-s})^{-1} & \text{if $v$ is finite,} \\
  \pi^{-\frac{s}{2}} \Gamma(\frac{s}{2}) & \text{if $v$ is real,} \\
  2 (2 \pi)^{-s} \Gamma(s) & \text{if $v$ is complex.}
 \end{cases}
\]
Note that
\[
 \zeta_v(1) =
 \begin{cases}
  1 & \text{if $v$ is real,} \\
  \pi^{-1} & \text{if $v$ is complex.}
 \end{cases}
\]
Let $\zeta_F(s) = \prod_{v \in \Sigma_\fin} \zeta_v(s)$ be the Dedekind zeta function of $F$.
Put
\[
 \rho_F := \operatorname*{Res}_{s=1} \zeta_F(s) = \frac{2^{r_1} (2\pi)^{r_2} h R}{|D|^{\frac{1}{2}} w},
\]
where $r_1$ is the number of the real places of $F$, $r_2$ is the number of the complex places of $F$,
$h = h_F$ is the class number of $F$, $R = R_F$ is the regulator of $F$, and $w = w_F$ is the number of roots of unity in $F$.
For any connected reductive algebraic group $G$ over $F$, let $\tau(G)$ denote the Tamagawa number of $G$.

From now on, we assume that $F$ is totally real.

\subsubsection{Measures on $\A^\times$}
\label{sssec:measures-Gm}

For each place $v$ of $F$, we define a Haar measure $d^\times x_v^\Tam$ on $F_v^\times$ by 
\[
 d^\times x_v^\Tam := \zeta_v(1) \cdot \frac{dx_v}{|x_v|},
\]
where $dx_v$ is the self-dual Haar measure on $F_v$ with respect to $\psi_v$.
Note that:
\begin{itemize}
 \item $\vol(\o_v, dx_v) = q_v^{-\frac{d_v}{2}}$ if $v$ is finite,
 \item $dx_v$ is the Lebesgue measure if $v$ is real.
\end{itemize}
Then the Tamagawa measure on $\A^\times$ is given by
\[
 d^\times x^\Tam := \rho_F^{-1} \cdot \prod_v d^\times x_v^\Tam.
\]
We have $\tau(\mathbb{G}_m) = 1$.

On the other hand, we define the standard measure on $\A^\times$ as a product measure $d^\times x := \prod_v d^\times x_v$, where 
\begin{itemize}
 \item $d^\times x_v$ is the Haar measure on $F_v^\times$ such that $\vol(\o_v^\times, d^\times x_v) = 1$ if $v$ is finite,
 \item $d^\times x_v = \frac{dx_v}{|x_v|}$ if $v$ is real.
\end{itemize}
We have
\begin{equation}
\label{eq:measure-compare-Gm}
 d^\times x^\Tam = |D|^{-\frac{1}{2}} \rho_F^{-1} \cdot d^\times x.
\end{equation}

\subsubsection{Measures on $B^\times(\A)$}
\label{sssec:measures-B}

For each place $v$ of $F$, we define a Haar measure $d^\times \ba_v^\Tam$ on $B_v^\times$ by
\[
 d^\times \ba_v^\Tam := \zeta_v(1) \cdot \frac{d \ba_v}{|\nu(\ba_v)|^2},
\]
where $d \ba_v$ is the self-dual Haar measure on $B_v$ with respect to the pairing $(\ba_v, \bb_v) \mapsto \psi_v(\tr_{B_v/F_v}(\ba_v \bb_v))$.
Then the Tamagawa measure on $B^\times(\A)$ is given by 
\[
 d^\times \ba^\Tam := \rho_F^{-1} \cdot \prod_v d^\times \ba_v^\Tam.
\]
Also, the Tamagawa measure on $(B^\times / \mathbb{G}_m)(\A) = B^\times(\A) / \A^\times$ is given by the quotient measure $d^\times\ba^\Tam / d^\times x^\Tam$.
We have $\tau(B^\times/\mathbb{G}_m) = 2$.

On the other hand, we define the standard measure on $B^\times(\A)$ as a product measure $d^\times \ba := \prod_v d^\times \ba_v$, where $d^\times \ba_v$ is given as follows:
\begin{itemize}
\item 
For $v \in \Sigma_\fin \smallsetminus \Sigma_{B,\fin}$, fix an isomorphism $i_v : B_v \rightarrow \M_2(F_v)$ of quaternion $F_v$-algebras and let $d^\times \ba_v$ be the Haar measure on $B_v^\times$ such that $\vol(i_v^{-1}(\GL_2(\o_v)), d^\times \ba_v) = 1$.
Since $i_v$ is unique up to inner automorphisms, $d^\times \ba_v$ is independent of the choice of $i_v$.
\item
For $v \in \Sigma_{B,\fin}$, let $d^\times \ba_v$ be the Haar measure on $B_v^\times$ such that $\vol(\o_{B_v}^\times, d^\times \ba_v) = 1$, where $\o_{B_v}$ is the unique maximal order in $B_v$.
\item
For $v \in \Sigma_\infty \smallsetminus \Sigma_{B,\infty}$, fix an isomorphism $i_v : B_v \rightarrow \M_2(F_v)$ of quaternion $F_v$-algebras and define a Haar measure $d^\times \ba_v$ on $B_v^\times$ by
\[
 d^\times \ba_v = \frac{dx_v \, dy_v}{|y_v|^2} \, \frac{dz_v}{z_v} \, d \kappa_v
\]
for $\ba_v = i_v^{-1} \left( \smat{1}{x_v}{}{1} \big( \begin{smallmatrix} y_v & \\ & 1 \end{smallmatrix} \big) z_v \kappa_v \right)$ with $x_v \in \R$, $y_v \in \R^\times$, $z_v \in \R^\times_+$, $\kappa_v \in \SO(2)$, where $dx_v$, $dy_v$, $dz_v$ are the Lebesgue measures and $d\kappa_v$ is the Haar measure on $\SO(2)$ such that $\vol(\SO(2), d\kappa_v) = 1$.
Since $i_v$ is unique up to inner automorphisms, $d^\times \ba_v$ is independent of the choice of $i_v$.
\item
For $v \in \Sigma_{B,\infty}$, let $d^\times \ba_v$ be the Haar measure on $B_v^\times$ such that $\vol(B_v^\times / F_v^\times, d^\times \ba_v / d^\times x_v) = 1$.
\end{itemize}
Also, we define the standard measure on $B^\times(\A) / \A^\times$ as the quotient measure $d^\times \ba / d^\times x$.

\begin{lem}
\label{lem:measure-compare-B}
We have
\[
 d^\times \ba^\Tam
 = (2 \pi)^{|\Sigma_\infty \smallsetminus \Sigma_{B,\infty}|}
 \cdot (4 \pi^2)^{|\Sigma_{B,\infty}|}
 \cdot \prod_{v \in \Sigma_{B, \fin}} (q_v-1)^{-1}
 \cdot |D|^{-2} \cdot \rho_F^{-1} \cdot \zeta_F(2)^{-1} \cdot d^\times \ba.
\]
\end{lem}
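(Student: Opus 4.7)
Since both $d^\times\ba^{\Tam}$ and $d^\times\ba$ are (up to the global constant $\rho_F^{-1}$ appearing in $d^\times\ba^\Tam$) products of local Haar measures, the plan is to compute at each place $v$ the local ratio $c_v := d^\times \ba_v^\Tam / d^\times \ba_v$, and then verify that
\[
 \rho_F^{-1} \prod_v c_v = (2\pi)^{|\Sigma_\infty\setminus\Sigma_{B,\infty}|}(4\pi^2)^{|\Sigma_{B,\infty}|}\prod_{v\in\Sigma_{B,\fin}}(q_v-1)^{-1}\cdot |D|^{-2}\rho_F^{-1}\zeta_F(2)^{-1}.
\]
Since the factor $\rho_F^{-1}$ cancels, the work is the local computation.

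At a finite place $v\notin\Sigma_B$, I would compare both measures on the maximal compact $i_v^{-1}(\GL_2(\o_v))$. The lattice $\M_2(\o_v)$ is self-dual under $\psi_v\circ\tr_{B_v/F_v}$ up to $\varpi_v^{d_v}$, giving $\vol(\M_2(\o_v),d\ba_v)=q_v^{-2d_v}$. Since $|\GL_2(\F_{q_v})/\M_2(\F_{q_v})|=(1-q_v^{-1})(1-q_v^{-2})$, we obtain
\[
 c_v = \zeta_v(1)\cdot q_v^{-2d_v}(1-q_v^{-1})(1-q_v^{-2}) = q_v^{-2d_v}\zeta_v(2)^{-1}.
\]
At a finite $v\in\Sigma_{B,\fin}$, the key input is that the different of $B_v/F_v$ is $\Pi_v\o_{B_v}$, so the trace-dual of $\o_{B_v}$ is $\Pi_v^{-1}\o_{B_v}$. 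Hence the dual under $\psi_v\circ\tr$ is $\Pi_v^{-2d_v-1}\o_{B_v}$, and using $|\o_{B_v}/\Pi_v\o_{B_v}|=q_v^2$ we get $\vol(\o_{B_v},d\ba_v)=q_v^{-2d_v-1}$. Combined with $\vol(\o_{B_v}^\times)=\vol(\o_{B_v})(1-q_v^{-2})$, we find $c_v = q_v^{-2d_v-1}(1+q_v^{-1})$, which rewrites as $(q_v-1)^{-1}\cdot q_v^{-2d_v}\zeta_v(2)^{-1}$. Thus the finite-place product gives $|D|^{-2}\zeta_F(2)^{-1}\prod_{v\in\Sigma_{B,\fin}}(q_v-1)^{-1}$ on the nose.

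At an archimedean split place, I would use the Iwasawa decomposition to express $d\ba_v/|\det|^2$ in the coordinates $(x_v,y_v,z_v,\kappa_v)$. A direct Jacobian check at the identity shows that the Lebesgue measure $d\ba_v$ (which is self-dual with respect to $\psi_v\circ\tr$ on $\M_2(\R)$) equals $dx_v\,dy_v\,dz_v\,d\theta$ there, while the paper's $d^\times\ba_v$ uses $d\kappa_v=d\theta/(2\pi)$; this yields $c_v = \zeta_v(1)\cdot 2\pi = 2\pi$. At an archimedean ramified place, the self-duality calculation on $\mathbb{H}$ (using the Gaussian $e^{-2\pi|\ba|^2}$) gives $d\ba_v = 4\,da_0\cdots da_3$, hence in polar form $d\ba_v/|\nu|^2 = 4\,(dr/r)\,d\sigma$ where $d\sigma$ is the round measure on $S^3$ of total volume $2\pi^2$. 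The standard measure is $d^\times\ba_v = C(dr/r)\,d\sigma$ with $C$ fixed by $\vol(\mathbb{H}^\times/\R^\times)=1$; pushing forward through $\mathbb{H}^\times/\R^\times\simeq\SO(3)=\SU(2)/\{\pm1\}$ (with induced measure of total mass $C\pi^2$) forces $C=\pi^{-2}$, giving $c_v = 4\pi^2$.

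The main obstacle, as always in Tamagawa measure comparisons, is bookkeeping: correctly identifying the self-dual lattice in the ramified quaternion algebra (which requires the different $\Pi_v\o_{B_v}$ that makes $\o_{B_v}$ fail to be self-dual) and handling the $\R^\times$-quotient at archimedean ramified places. Once these are in hand, the combinatorics of combining $\prod q_v^{-2d_v}=|D|^{-2}$, the $\zeta_v(2)^{-1}$ factors from the unramified places, and the correction $(q_v+1)/q_v^2\cdot\zeta_v(2) = (q_v-1)^{-1}$ at the ramified finite places assembles exactly into the claimed formula.
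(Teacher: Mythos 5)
Your proposal is correct and reaches all the same local constants, but it takes a noticeably more elementary route at the archimedean places. For the finite places your computation matches the paper's (and you go one step further by \emph{deriving} the input $\vol(\o_{B_v},d\ba_v)=q_v^{-2d_v-1}$ from the fact that the different of $\o_{B_v}$ is $\p_{B_v}$, whereas the paper asserts it without comment). At a split archimedean place the paper compares gauge forms by finding the lattice in $\mathrm{Lie}\,\GL_2(\R)$ to which the Iwasawa coordinates correspond and computing an index against $\M_2(\Z)$; you instead compute the Jacobian of the Iwasawa chart at the identity, which is a legitimate shortcut since two Haar measures on the unimodular group $\GL_2(\R)$ differ by a constant that can be read off at any point. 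At a ramified archimedean place the paper appeals to Macdonald's volume formula via a specific integral lattice in $\mathrm{Lie}(\H^\times/\R^\times)$, while you pass to polar coordinates on $\H \simeq \R^4$, use the trace-form Gram determinant to get the factor $4$, and pin down the normalization constant by pushing forward to $\H^1/\{\pm 1\}$ and matching total mass to $1$. Your approach is more self-contained and avoids Macdonald's theorem; the paper's approach is more systematic and would generalize more readily to higher-rank groups. Both assemble the local constants into the global formula in the same way.
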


\begin{proof}
For each place $v$ of $F$, let $C_v$ be the constant such that $d^\times \ba_v^\Tam = C_v \cdot d^\times \ba_v$.
If $v \in \Sigma_\fin \smallsetminus \Sigma_{B,\fin}$, we identify $B_v$ with $\M_2(F_v)$.
Then we have $\vol(\M_2(\o_v), d\ba_v) = q_v^{-2d_v}$ and hence
\begin{align*}
 C_v & = \vol(\GL_2(\o_v), d^\times \ba_v^\Tam) \\
 & = \zeta_v(1) \cdot |\GL_2(\Ff_{q_v})| \cdot \vol(1 + \M_2(\p_v), d \ba_v) \\
 & = q_v^{-2d_v} \cdot \zeta_v(2)^{-1}.
\end{align*}
If $v \in \Sigma_{B,\fin}$, then we have $\vol(\o_{B_v}, d\ba_v) = q_v^{-2d_v-1}$ and hence 
\begin{align*}
 C_v & = \vol(\o_{B_v}^\times, d^\times \ba_v^\Tam) \\
 & = \zeta_v(1) \cdot |\Ff_{q_v^2}^\times| \cdot \vol(1 + \p_{B_v}, d \ba_v) \\
 & = q_v^{-2d_v} \cdot (q_v-1)^{-1} \cdot \zeta_v(2)^{-1}.
\end{align*}
If $v \in \Sigma_\infty \smallsetminus \Sigma_{B,\infty}$, we identify $B_v$ with $\M_2(\R)$.
Then $d^\times \ba_v^\Tam$ arises from the gauge form on $\GL_2(\R)$ determined (up to sign) by the lattice $\M_2(\Z)$ in $\Lie \GL_2(\R) = \M_2(\R)$.
Also, the measures $\frac{dx_v \, dy_v}{|y_v|^2}$, $\frac{dz_v}{z_v}$, $d \kappa_v$ in the definition of $d^\times \ba_v$ arise from the (left invariant) gauge forms determined by the lattices
\[
 \Z \begin{pmatrix} 1 & 0 \\ 0 & 0 \end{pmatrix} +
 \Z \begin{pmatrix} 0 & 1 \\ 0 & 0 \end{pmatrix}, \qquad
 \Z \begin{pmatrix} 1 & 0 \\ 0 & 1 \end{pmatrix}, \qquad
 2 \pi \Z \begin{pmatrix} 0 & 1 \\ -1 & 0 \end{pmatrix},
\]
respectively.
Hence we have
\[
 C_v = 2 \pi.
\]
If $v \in \Sigma_{B,\infty}$, we identify $B_v$ with
\[
 \H := \left\{
 \begin{pmatrix} \alpha & \beta \\ - \bar{\beta} & \bar{\alpha} \end{pmatrix}
 \, \middle| \, \alpha, \beta \in \C \right\}.
\]
Then $d^\times \ba_v^\Tam$ arises from the gauge form on $\H^\times$ determined by the lattice spanned by
\[
 \frac{1}{\sqrt{2}} \begin{pmatrix} 1 & 0 \\ 0 & 1 \end{pmatrix}, \qquad 
 \frac{1}{\sqrt{2}} \begin{pmatrix} \sqrt{-1} & 0 \\ 0 & -\sqrt{-1} \end{pmatrix}, \qquad
 \frac{1}{\sqrt{2}} \begin{pmatrix} 0 & 1 \\ -1 & 0 \end{pmatrix}, \qquad
 \frac{1}{\sqrt{2}} \begin{pmatrix} 0 & \sqrt{-1} \\ \sqrt{-1} & 0 \end{pmatrix}
\]
in $\Lie \H^\times = \H$.
Let $d^\times \dot{\ba}_v$ be the Haar measure on $\H^\times/\R^\times$ which arises from the gauge form determined by the lattice spanned by
\[
 \frac{1}{2} \begin{pmatrix} \sqrt{-1} & 0 \\ 0 & -\sqrt{-1} \end{pmatrix}, \qquad
 \begin{pmatrix} 0 & 1 \\ -1 & 0 \end{pmatrix}, \qquad
 \begin{pmatrix} 0 & \sqrt{-1} \\ \sqrt{-1} & 0 \end{pmatrix},
\]
so that we have $d^\times \ba_v^\Tam/d^\times x_v = 2 \cdot d^\times \dot{\ba}_v$.
Note that this lattice is an integral lattice in $\Lie \H^\times/\R^\times$ constructed in \cite{macdonald-volume}.
It follows from \cite{macdonald-volume} that $\vol(\H^\times/\R^\times, d^\times \dot{\ba}_v) = 2 \pi^2$ and hence
\[
 C_v = \vol(\H^\times/\R^\times, d^\times \ba_v^\Tam/d^\times x_v) = 4 \pi^2.
\]
This completes the proof.
\end{proof}

\begin{example}[Eichler's mass formula \cite{eichler}, \cite{vigneras-book}, \cite{yu-chia-fu}]
Suppose that $B$ is totally definite.
Put $B_\infty = B \otimes_\Q \R$ and fix a maximal compact subgroup $\K$ of $B^\times(\A_\fin)$.
We can write
\[
 B^\times(\A) = \bigsqcup_{i=1}^n \A^\times B^\times(F) B_\infty^\times \ba_i \K,
\]
where $\{ \ba_i \in B^\times(\A_\fin) \, | \, 1 \le i \le n \}$ is a (finite) set of representatives for $\A^\times B^\times(F) B_\infty^\times \backslash B^\times(\A) / \K$.
Put
\[
 \mathfrak{mass} := \vol \left( \A^\times B^\times(F) \backslash B^\times(\A), \frac{d^\times \ba}{d^\times x} \right)
 = \sum_{i=1}^n \frac{1}{|\Gamma_i|},
\]
where $\Gamma_i = F^\times \backslash (B^\times(F) \cap \A_\fin^\times \ba_i \K \ba_i^{-1})$.
Then it follows from \eqref{eq:measure-compare-Gm} and Lemma \ref{lem:measure-compare-B} that
\begin{align*}
 \mathfrak{mass} & = \tau(B^\times / \mathbb{G}_m)
 \cdot (4 \pi^2)^{-d} \cdot \prod_{v \in \Sigma_{B,\fin}} (q_v - 1)
 \cdot |D|^{\frac{3}{2}} \cdot \zeta_F(2) \\
 & = (-1)^d \cdot 2^{-d+1} \cdot \prod_{v \in \Sigma_{B,\fin}} (q_v-1) \cdot  \zeta_F(-1),
\end{align*}
where $d = [F:\Q]$.
\end{example}

Finally, we compare the standard measure on $\A^\times \backslash B^\times(\A)$ with the measure on the Shimura variety.
Let $(G,X) = (\Res_{F/\Q}(B^\times), X_B)$ be the Shimura datum given in \S \ref{ssec:avs-on-qvs}.
If $v \in \Sigma_\infty \smallsetminus \Sigma_{B,\infty}$, we identify $B_v$ with $\M_2(\R)$.
As explained in \S \ref{ssec:avs-on-qvs}, this gives rise to an identification 
\begin{equation}
\label{eq:X-identification}
 X = \prod_{v \in \Sigma_\infty \smallsetminus \Sigma_{B,\infty}}\fh^\pm.
\end{equation}
Fix an open compact subgroup $K$ of $B^\times(\A_\fin)$ such that 
\[
 \hat{\o}^\times \subset K, 
\]
where $\hat{\o}^\times := \prod_{v \in \Sigma_\fin} \o_v^\times \subset \A_\fin^\times$.
Let $\Sh_K(G,X)$ be the associated Shimura variety:
\[
 \Sh_K(G,X) = B^\times(F) \backslash X \times B^\times(\A_\fin) / K.
\]
Since $\fh^\pm = \GL_2(\R) / \R_+^\times \cdot \SO(2)$, we have a natural surjective map
\[
 p: B^\times(\A) \longrightarrow \Sh_K(G,X).
\]
Recall that in Definition \ref{defn:pet-norm}, we have taken the measure on $\Sh_K(G,X)$ given as follows:
\begin{itemize}
 \item On $X$, we take the product over $v \in \Sigma_\infty \smallsetminus \Sigma_{B,\infty}$ of the $\GL_2(\R)$-invariant measure
 \[
  \frac{dx_v \, dy_v}{|y_v|^2}
 \]
 for $x_v + \sqrt{-1} y_v \in \fh^\pm$, where $dx_v$, $dy_v$ are the Lebesgue measures.
 This measure is independent of the choice of identification \eqref{eq:X-identification}.
 \item On $B^\times(\A_\fin)/ K$, we take the counting measure.
 \item If $B$ is not totally definite, then $\o^\times \backslash B^\times(F)$ acts on $X \times B^\times(\A_\fin)/ K$ properly discontinuously, and we take a natural measure $d \mu_x$ on $\Sh_K(G,X)$ induced by the product of the above measures.
 \item If $B$ is totally definite, then $\Sh_K(G,X)$ is a finite set, and for any $x \in \Sh_K(G,X)$, its stabilizer $\Gamma_x$ in $\o^\times \backslash B^\times(F)$ is a finite group. We take a measure $d \mu_x$ on $\Sh_K(G,X)$ given by
 \[
  \int_{\Sh_K(G,X)} \phi(x) \, d \mu_x = \sum_{x \in \Sh_K(G,X)} |\Gamma_x|^{-1} \phi(x).
 \]
\end{itemize}

\begin{lem}
\label{lem:meas-shim-st}
Let $\phi$ be an integrable function on $\Sh_K(G,X)$ such that $\phi(x \cdot z) = \phi(x)$ for all $x \in \Sh_K(G,X)$ and $z \in \A^\times$.
Then we have
\begin{equation}
\label{eq:meas-shim-st}
 \int_{\Sh_K(G,X)} \phi(x) \, d\mu_x = 
 2^{|\Sigma_\infty \smallsetminus \Sigma_{B,\infty}|} \cdot [K_0:K] \cdot h_F \cdot 
 \int_{\A^\times B^\times(F) \backslash B^\times(\A)} p^*\phi(\ba) \, d^\times \ba,
\end{equation}
where $K_0$ is any maximal compact subgroup of $B^\times(\A_\fin)$ containing $K$.
\end{lem}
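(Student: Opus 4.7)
The plan is to express both sides of \eqref{eq:meas-shim-st} as integrals over the common double-coset space
\[
 M := B^\times(F) \backslash B^\times(\A) / (K_\infty \cdot \A_\fin^\times K),
\]
where
\[
 K_\infty := \prod_{v \in \Sigma_\infty \smallsetminus \Sigma_{B,\infty}} (\R_+^\times \cdot \SO(2)) \times \prod_{v \in \Sigma_{B,\infty}} B_v^\times
\]
is the stabilizer of the base point $i_0 \in X$ in $B^\times(\R)$. The map $p$ factors through a bijection $B^\times(F) \backslash B^\times(\A)/K_\infty K \xrightarrow{\sim} \Sh_K(G,X)$; since $F_\infty^\times \subset K_\infty$ (scalars fix every point of $\fh^\pm$) and $\hat{\o}^\times \subset K$, the invariance $\phi(x \cdot z) = \phi(x)$ forces $p^*\phi$ to descend to $M$. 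Moreover, $K \subset K_0$ together with $K_0 \cap \A_\fin^\times = \hat{\o}^\times$ forces $K \cap \A_\fin^\times = \hat{\o}^\times$, a fact used throughout.

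First, I would compute the pushforward of $d^\times \ba$ to $M$ place-by-place. At a split infinite place $v$, the Iwasawa decomposition $\ba_v = n(x_v) a(y_v) z_v \kappa_v$ with $y_v \in \R^\times$, $z_v \in \R_+^\times$, $\kappa_v \in \SO(2)$ yields
\[
 d^\times \ba_v = \frac{dx_v \, dy_v}{|y_v|^2} \cdot \frac{dz_v}{z_v} \cdot d\kappa_v,
\]
and $\ba_v \cdot i = x_v + \sqrt{-1}\, y_v$; the $(x_v, y_v)$-factor reproduces the Shimura measure on $\fh^\pm$, while the $(z_v, \kappa_v)$-factor is absorbed into the action of $K_\infty \cap B_v^\times = \R_+^\times \cdot \SO(2)$ combined with the central measure $d^\times x_v$ on $F_v^\times = \R^\times \subset \R_+^\times \cdot \SO(2)$. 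The resulting mismatch is a factor of $2$ per split infinite place, coming from the double cover $\SO(2) \twoheadrightarrow \SO(2)/\{\pm I\}$ (of Haar volume $\tfrac12$ in our normalization), since the central element $-I \in \R^\times$ lies in $\SO(2)$. At $v \in \Sigma_{B,\infty}$, the contribution is trivial by the normalization $\vol(B_v^\times/F_v^\times, d^\times \ba_v / d^\times x_v) = 1$. At finite $v$, $d^\times \ba_v$ assigns mass $[K_{0,v} : K_v]^{-1}$ to $K_v$, globally contributing the factor $[K_0 : K]^{-1}$.

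Next, I would account for the discrepancy between integrating on $\A^\times B^\times(F) \backslash B^\times(\A)$ and on $M$ via the class group of $F$. The natural surjection
\[
 \A^\times B^\times(F) \backslash B^\times(\A) \twoheadrightarrow M
\]
has fibers of cardinality
\[
 |F^\times \backslash \A^\times / (F_\infty^\times \hat{\o}^\times)| \;=\; |\A_\fin^\times / F^\times \hat{\o}^\times| \;=\; h_F.
\]
Combining the three ingredients and inverting the resulting measure ratio yields the constant $2^{|\Sigma_\infty \smallsetminus \Sigma_{B,\infty}|} \cdot [K_0 : K] \cdot h_F$ in \eqref{eq:meas-shim-st}.

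The main obstacle is the careful bookkeeping of the various factors of $2$ at split infinite places, which trace back to the fact that $-I \in F_v^\times$ lies in both the central $\R^\times$ and in $\SO(2)$, so that quotienting by the center ``uses up'' only half of the compact stabilizer $\SO(2)$. In the totally definite case (where $\Sigma_\infty \smallsetminus \Sigma_{B,\infty} = \varnothing$), the left-hand side of \eqref{eq:meas-shim-st} reduces to the finite weighted sum $\sum_x |\Gamma_x|^{-1} \phi(x)$, and the identity becomes the stabilizer comparison $|\Gamma_i| = h_F \cdot |\Gamma_x|$ between the two notions appearing in Eichler's mass formula and in the measure-theoretic definition of $d\mu_x$; this comparison is proved by the same class-group calculation.
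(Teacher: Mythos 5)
Your per-place computations of the pushforward of $d^\times \ba$ are essentially correct, and the factor of $2$ per split archimedean place does come from $\R^\times = \R_+^\times \cdot \{\pm I\}$ straddling the $z_v$-coordinate and $\SO(2)$ (the paper attributes it to $[\R^\times:\R_+^\times]$, which amounts to the same thing). However, there is a genuine error in the class-number step. The surjection $\A^\times B^\times(F) \backslash B^\times(\A) \twoheadrightarrow M$ does \emph{not} have finite fibers, let alone fibers of cardinality $h_F$: since $\A^\times = F_\infty^\times \A_\fin^\times \subset K_\infty \cdot \A_\fin^\times K$ (because $F_\infty^\times \subset K_\infty$), the fibers are orbits of $(K_\infty K)/(F_\infty^\times \hat{\o}^\times)$, a group containing a copy of $\SO(2)/\{\pm I\}$ at every split archimedean place, hence continuous. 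The factor $h_F$ comes from a \emph{different} fibering: either from $\Sh_K(G,X) \to M$, which is generically $h_F$-to-$1$ because the $\A_\fin^\times$-stabilizer of a point of $\Sh_K$ is $F^\times \hat{\o}^\times$ and $\A_\fin^\times/(F^\times\hat{\o}^\times) \cong \mathrm{Cl}_F$; or, as the paper argues, from $\vol(F^\times \backslash \A^\times/F_\infty^\times \hat{\o}^\times) = h_F$, used when passing from $B^\times(F)\backslash B^\times(\A)/F_\infty^\times\hat{\o}^\times$ to $\A^\times B^\times(F)\backslash B^\times(\A)$. Your closing remark about the totally definite case has a similar confusion: the identity compares \emph{masses} $\sum_x |\Gamma_x|^{-1}$ and $\sum_i |\Gamma_i|^{-1}$, whose index sets have different sizes, so it is not a pointwise stabilizer comparison $|\Gamma_i| = h_F \cdot |\Gamma_x|$.

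For comparison, the paper avoids these subtleties by decomposing $B^\times(\A) = \bigsqcup_i B^\times(F) B_\infty^\times \ba_i K$, which gives $\Sh_K(G,X) = \bigsqcup_i \Gamma_i \backslash X$, then (in the non-definite case) writing $\phi|_{\Gamma_i \backslash X}$ as $\sum_{\gamma \in \Gamma_i} \varphi_i(\gamma x)$ for compactly supported $\varphi_i$ and unfolding to $X$, where the measure comparison with $B_\infty^\times \ba_i K/F_\infty^\times \hat{\o}^\times$ is direct; the $h_F$ only enters in the very last step. To salvage your approach you would need to push both $d\mu_x$ and $d^\times\ba$ down to $M$ correctly, tracking the $h_F$ factor through $\Sh_K \to M$ rather than through the map you wrote, and treating the continuous fibers of $\A^\times B^\times(F)\backslash B^\times(\A) \to M$ by an actual fiber integration rather than a cardinality count.
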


\begin{proof}
Put $F_\infty = F \otimes_\Q \R$ and $B_\infty = B \otimes_\Q \R$.
We can write
\[
 B^\times(\A) = \bigsqcup_{i=1}^n B^\times(F) B_\infty^\times \ba_i K,
\]
where $\{ \ba_i \in B^\times(\A_\fin) \, | \, 1 \le i \le n \}$ is a (finite) set of representatives for $B^\times(F) B_\infty^\times \backslash B^\times(\A) / K$.
Then we have
\[
 \Sh_K(G,X) = \bigsqcup_{i=1}^n \Gamma_i \backslash X,
\]
where $\Gamma_i = \o^\times \backslash (B^\times(F) \cap \ba_i K \ba_i^{-1})$.
For each $i$, we have a natural commutative diagram
\[
 \xymatrix{
 B_\infty^\times \ba_i K \ar[r]^(0.6){p_i} \ar[d] & X \ar[d] \\
 B^\times(F) B_\infty^\times \ba_i K \ar[r]^(0.65){p} & \Gamma_i \backslash X
 }.
\]

First we assume that $B$ is not totally definite.
Since both sides of \eqref{eq:meas-shim-st} are proportional, we may assume that for each $i$, the restriction of $\phi$ to $\Gamma_i \backslash X$ is of the form
\[
 \phi(x) = \sum_{\gamma \in \Gamma_i} \varphi_i(\gamma x)
\]
for some continuous compactly supported function $\varphi_i$ on $X$.
Then, noting that $\Gamma_i$ acts on $X$ faithfully, we have
\[
 \int_{\Gamma_i \backslash X} \phi(x) \, d \mu_x = \int_X \varphi_i(x) \, d \mu_x,
\]
where the measure $d \mu_x$ on $X$ on the right-hand side is as defined above.
By the definition of the standard measure, we have
\[
 \int_X \varphi_i(x) \, d \mu_x
 = 2^{|\Sigma_\infty \smallsetminus \Sigma_{B,\infty}|} \cdot \vol(K)^{-1} \cdot
 \int_{B_\infty^\times \ba_i K / F_\infty^\times \hat{\o}^\times} p_i^* \varphi_i(\ba) \, d^\times \ba.
\]
(Here the factor $2$ arises from $[\R^\times : \R_+^\times]$.)
Since
\[
 p^*\phi(\ba) = \phi(p(\ba))
 = \sum_{\gamma \in \Gamma_i} \varphi_i(\gamma p_i(\ba))
 = \sum_{\gamma \in \Gamma_i} \varphi_i(p_i(\gamma \ba))
 = \sum_{\gamma \in \Gamma_i} p_i^* \varphi_i(\gamma \ba)
\]
for $\ba \in B_\infty^\times \ba_i K$, we have
\[
 \int_{B_\infty^\times \ba_i K / F_\infty^\times \hat{\o}^\times} p_i^* \varphi_i(\ba) \, d^\times \ba
 = \int_{\Gamma_i \backslash B_\infty^\times \ba_i K / F_\infty^\times \hat{\o}^\times} p^* \phi(\ba) \, d^\times \ba.
\]
Thus, noting that 
\[
 \Gamma_i \backslash B_\infty^\times \ba_i K / F_\infty^\times \hat{\o}^\times
  = B^\times(F) \backslash B^\times(F) B_\infty^\times \ba_i K / F_\infty^\times \hat{\o}^\times,
\]
we have
\[
 \int_{\Gamma_i \backslash X} \phi(x) \, d \mu_x
 = 2^{|\Sigma_\infty \smallsetminus \Sigma_{B,\infty}|} \cdot \vol(K)^{-1} \cdot
 \int_{B^\times(F) \backslash B^\times(F) B_\infty^\times \ba_i K / F_\infty^\times \hat{\o}^\times} p^*\phi(\ba) \, d^\times \ba.
\]
Summing over $i$, we obtain 
\begin{align*}
 \int_{\Sh_K(G,X)} \phi(x) \, d \mu_x
 & = 2^{|\Sigma_\infty \smallsetminus \Sigma_{B,\infty}|} \cdot \vol(K)^{-1} \cdot  
 \int_{B^\times(F) \backslash B^\times(\A) / F_\infty^\times \hat{\o}^\times} p^*\phi(\ba) \, d^\times \ba \\
 & = 2^{|\Sigma_\infty \smallsetminus \Sigma_{B,\infty}|} \cdot \vol(K)^{-1} \cdot  
 \vol(F^\times \backslash \A^\times / F_\infty^\times \hat{\o}^\times) \cdot 
 \int_{\A^\times B^\times(F) \backslash B^\times(\A)} p^*\phi(\ba) \, d^\times \ba.
\end{align*}
On the other hand, we have $\vol(K) = [K_0:K]^{-1}$ for any maximal compact subgroup $K_0$ of $B^\times(\A_\fin)$ containing $K$, and $\vol(F^\times \backslash \A^\times / F_\infty^\times \hat{\o}^\times) = h_F$ since the standard measure on $\A^\times/ F_\infty^\times \hat{\o}^\times$ is the counting measure.
This proves \eqref{eq:meas-shim-st}.

Next we assume that $B$ is totally definite.
Since 
\[
 \vol(B^\times(F) \backslash B^\times(F) B_\infty^\times \ba_i K / F_\infty^\times \hat{\o}^\times)
 = |\Gamma_i|^{-1} \cdot \vol(K),
\]
we have
\begin{align*}
 \int_{B^\times(F) \backslash B^\times(\A) / F_\infty^\times \hat{\o}^\times} p^*\phi(\ba) \, d^\times \ba
 & = \vol(K) \cdot \sum_{i=1}^n |\Gamma_i|^{-1} p^*\phi(\ba_i) \\
 & = \vol(K) \cdot \int_{\Sh_K(G,X)} \phi(x) \, d \mu_x.
\end{align*}
The rest of the proof is the same as before.
\end{proof}

\subsubsection{Measures on $B^1(\A)$}
\label{sssec:measures-B^1}

We recall the exact sequence
\[
 1 \longrightarrow B^1 \longrightarrow B^\times \overset{\nu}\longrightarrow \mathbb{G}_m \longrightarrow 1
\]
of algebraic groups over $F$.
For each place $v$ of $F$, this induces an exact sequence
\[
 1 \longrightarrow B^1_v \longrightarrow B^\times_v \overset{\nu}\longrightarrow F_v^\times.
\]
We define a Haar measure $dg_v^\Tam$ on $B^1_v$ by requiring that
\[
 \int_{B_v^\times} \phi(\ba_v) \, d^\times \ba_v^\Tam
 = \int_{\nu(B_v^\times)} \dot{\phi}(x_v) \, d^\times x_v^\Tam 
\]
for all $\phi \in L^1(B_v^\times)$, where
\[
 \dot{\phi}(\nu(\ba_v)) :=  \int_{B_v^1} \phi(g_v \ba_v) \, dg_v^\Tam.
\]
Note that $\nu(B^\times_v) = F_v^\times$ unless $v \in \Sigma_{B,\infty}$, in which case we have $\nu(B^\times_v) = \R^\times_+$.
Then the Tamagawa measure on $B^1(\A)$ is given by 
\[
 dg^\Tam := \prod_v dg_v^\Tam.
\]
We have $\tau(B^1) = 1$.

On the other hand, we define the standard measure on $B^1(\A)$ as a product measure $dg := \prod_v dg_v$, where $dg_v$ is given as follows:
\begin{itemize}
\item 
For $v \in \Sigma_\fin \smallsetminus \Sigma_{B,\fin}$, fix an isomorphism $i_v : B_v \rightarrow \M_2(F_v)$ of quaternion $F_v$-algebras, which is unique up to inner automorphisms by elements of $\GL_2(F_v)$, and let $dg_v$ be the Haar measure on $B_v^1$ such that $\vol(i_v^{-1}(\SL_2(\o_v)), dg_v) = 1$.
Noting that there are exactly $2$ conjugacy classes of maximal compact subgroups of $\SL_2(F_v)$, i.e., those of $\SL_2(\o_v)$ and $\big( \begin{smallmatrix} \varpi_v & \\ & 1 \end{smallmatrix} \big) \SL_2(\o_v) \big( \begin{smallmatrix} \varpi_v^{-1} & \\ & 1 \end{smallmatrix} \big)$, we have
\[
 \vol(i_v^{-1}(h_v \SL_2(\o_v) h_v^{-1}), dg_v) = \vol(i_v^{-1}(\SL_2(\o_v)), dg_v)
\]
for $h_v \in \GL_2(F_v)$.
Hence $dg_v$ is independent of the choice of $i_v$.
\item 
For $v \in \Sigma_{B,\fin}$, let $dg_v$ be the Haar measure on $B_v^1$ such that $\vol(B_v^1, dg_v) = 1$.
\item 
For $v \in \Sigma_\infty \smallsetminus \Sigma_{B,\infty}$, fix an isomorphism $i_v : B_v \rightarrow \M_2(F_v)$ of quaternion $F_v$-algebras, which is unique up to inner automorphisms by elements of $\GL_2(F_v)$, and define a Haar measure $dg_v$ on $B_v^1$ by
\[
 dg_v = \frac{dx_v \, dy_v}{y_v^2} \, d \kappa_v
\]
for $g_v = i_v^{-1} \left( \smat{1}{x_v}{}{1} \big( \begin{smallmatrix} \sqrt{y_v} & \\ & \sqrt{y_v}^{-1} \end{smallmatrix} \big) \kappa_v \right)$ with $x_v \in \R$, $y_v \in \R_+^\times$, $\kappa_v \in \SO(2)$, where $dx_v$, $dy_v$ are the Lebesgue measures and $d\kappa_v$ is the Haar measure on $\SO(2)$ such that $\vol(\SO(2), d\kappa_v) = 1$.
This measure $dg_v$ does not change if we replace $i_v$ by $\Ad(h_v) \circ i_v$ for $h_v \in \SL_2(F_v)$.
If we replace $i_v$ by $\Ad \smat{-1}{}{}{1} \circ i_v$, then $dg_v$ becomes $\frac{dx_v \, dy_v}{y_v^2} \, d \kappa_v$ for $g_v = i_v^{-1} \left( \smat{1}{-x_v}{}{1} \big( \begin{smallmatrix} \sqrt{y_v} & \\ & \sqrt{y_v}^{-1} \end{smallmatrix} \big) \kappa_v^{-1} \right)$, which is in fact equal to the original $dg_v$.
Hence $dg_v$ is independent of the choice of $i_v$.
\item 
For $v \in \Sigma_{B,\infty}$, let $dg_v$ be the Haar measure on $B_v^1$ such that $\vol(B_v^1, dg_v) = 1$.
\end{itemize}

\begin{lem}
\label{lem:measure-compare-B^1}
We have
\[
 dg^\Tam = \pi^{|\Sigma_\infty \smallsetminus \Sigma_{B,\infty}|}
 \cdot (4 \pi^2)^{|\Sigma_{B,\infty}|}
 \cdot \prod_{v \in \Sigma_{B, \fin}} (q_v-1)^{-1}
 \cdot |D|^{-\frac{3}{2}} \cdot \zeta_F(2)^{-1} \cdot dg.
\]
\end{lem}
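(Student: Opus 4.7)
The plan is to compute, for each place $v$ of $F$, the positive constant $C_v$ such that $dg_v^\Tam = C_v \cdot dg_v$, and then take the product (since $\tau(B^1) = 1$, there is no global correcting factor, so $dg^\Tam = \prod_v dg_v^\Tam = \prod_v C_v \cdot dg_v$). The basic tool at each place is the defining property of $dg_v^\Tam$ relative to $d^\times \ba_v^\Tam$ and $d^\times x_v^\Tam$, together with the already-computed volumes of the standard maximal compact subgroups under the Tamagawa measure on $B_v^\times$ from the proof of Lemma \ref{lem:measure-compare-B}.

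At a finite place $v \notin \Sigma_{B,\fin}$, I will plug $\phi = \I_{\GL_2(\o_v)}$ into
\[
 \int_{B_v^\times} \phi(\ba_v) \, d^\times \ba_v^\Tam = \int_{F_v^\times} d^\times x_v^\Tam \int_{B_v^1} \phi(g_v \ba_v(x_v)) \, dg_v^\Tam
\]
with the section $\ba_v(x_v) = \smat{x_v}{}{}{1}$. A direct matrix check shows $\I_{\GL_2(\o_v)}(g_v \ba_v(x_v)) = \I_{\o_v^\times}(x_v) \cdot \I_{\SL_2(\o_v)}(g_v)$, so the identity gives
\[
 \vol(\GL_2(\o_v), d^\times \ba_v^\Tam) = \vol(\o_v^\times, d^\times x_v^\Tam) \cdot \vol(\SL_2(\o_v), dg_v^\Tam).
\]
Using $\vol(\GL_2(\o_v), d^\times \ba_v^\Tam) = q_v^{-2d_v} \zeta_v(2)^{-1}$ from Lemma \ref{lem:measure-compare-B} and $\vol(\o_v^\times, d^\times x_v^\Tam) = q_v^{-d_v/2}$ (which follows from $\vol(\o_v, dx_v) = q_v^{-d_v/2}$ for the self-dual measure), one obtains $C_v = q_v^{-3 d_v/2} \zeta_v(2)^{-1}$. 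The same argument with $\phi = \I_{\o_{B_v}^\times}$ at $v \in \Sigma_{B,\fin}$ yields $C_v = q_v^{-3 d_v/2} (q_v-1)^{-1} \zeta_v(2)^{-1}$.

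At a real place $v \in \Sigma_{B,\infty}$, the sequence $1 \to \H^1 \to \H^\times \to \R_+^\times \to 1$ splits via $\sigma(\nu) = \sqrt{\nu}$, and the standard measure on $\H^\times \cong \H^1 \times \R_+^\times$ works out (using $\vol(\H^\times/\R^\times, d^\times\ba/d^\times x) = 1$ and $\H^\times/\R^\times \cong \H^1/\{\pm 1\}$) to $d^\times \ba_v = 2 \, dg_v \, dr/r$, equivalently $d^\times \ba_v = dg_v \, d^\times \nu$ in the $\nu$-coordinate. Combined with $d^\times \ba_v^\Tam = 4\pi^2 \cdot d^\times \ba_v$ (Lemma \ref{lem:measure-compare-B}) and $d^\times \nu^\Tam = d^\times \nu$, the defining relation forces $C_v = 4\pi^2$. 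The hardest case is $v \in \Sigma_\infty \smallsetminus \Sigma_{B,\infty}$: here I will compute $C_v$ directly by comparing the Iwasawa form $dg_v = \tfrac{dx\, dy}{y^2} \, d\kappa_v$ with the Tamagawa gauge form on $\SL_2$ via the lattice $\Z E_{12} + \Z H + \Z E_{21}$ in $\mathfrak{sl}_2$. Changing coordinates to Lie-algebra exponential coordinates near the identity (where $n(x) a_s(y) r(\theta)$ with $y=e^{2s}$ corresponds to $(x+\theta, s, -\theta)$ in the basis $(E_{12}, H, E_{21})$) has Jacobian $1/2$, so $dg_v = \tfrac{1}{\pi} \, du_1 \, du_2 \, du_3$ while $dg_v^\Tam = du_1 \, du_2 \, du_3$; hence $C_v = \pi$. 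As a consistency check, the exact sequence $1 \to \SL_2 \to \GL_2 \to \G_m \to 1$ at the real split place, with the section $\sigma(\nu) = \sqrt{|\nu|}\smat{\sgn(\nu)}{}{}{1}$, gives $d^\times \ba_v = \tfrac{1}{2} dg_v \, d^\times \nu$, which combined with $d^\times \ba_v^\Tam = 2\pi \, d^\times \ba_v$ and $d^\times \nu^\Tam = d^\times \nu$ recovers $C_v = \pi$.

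Finally, taking the product:
\[
 \prod_{v \in \Sigma_{\fin}} q_v^{-3 d_v/2} = |D|^{-3/2}, \quad
 \prod_{v \in \Sigma_{\fin}} \zeta_v(2)^{-1} = \zeta_F(2)^{-1},
\]
and combining with the contributions $(q_v-1)^{-1}$ from $\Sigma_{B,\fin}$, $\pi$ from $\Sigma_\infty \smallsetminus \Sigma_{B,\infty}$, and $4\pi^2$ from $\Sigma_{B,\infty}$, yields the stated formula. The main technical obstacle is the direct Lie-algebra Jacobian computation at the real split places, which must be done carefully to avoid confusing the parameter $y \in \R^\times_+$ in the Iwasawa decomposition of $\SL_2(\R)$ with the coordinate on the Cartan subalgebra $\R H$.
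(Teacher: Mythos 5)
Your proposal is correct and follows essentially the same route as the paper: at each place, compare $dg_v^\Tam$ and $dg_v$ by passing to the quotient $B_v^\times \to F_v^\times$ (or the center decomposition) and invoking the values of $C_v$ from Lemma \ref{lem:measure-compare-B}, then take the product over $v$. The only real divergence is at the real split places, where you pursue a direct Lie-algebra Jacobian computation as your primary derivation; this is fine (your factor-of-$1/2$ refers to $(x,y,\theta) \mapsto (u_1,u_2,u_3)$ at $y=1$, and the final $C_v = \pi$ is right), but it is more work than the paper's one-line center decomposition $\ba_v = z_v \cdot g_v$ with $d^\times\ba_v^\Tam = 2\,d^\times z_v\,dg_v^\Tam$ and $d^\times\ba_v = d^\times z_v\,dg_v$, which together with $C_v' = 2\pi$ from Lemma \ref{lem:measure-compare-B} immediately gives $\pi$. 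Your ``consistency check'' via the determinant section is in fact the cleaner calculation and matches the paper's logic; I would promote it to the main argument and drop the exponential-coordinate Jacobian.
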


\begin{proof}
For each place $v$ of $F$, let $C_v$ be the constant such that $dg_v^\Tam = C_v \cdot dg_v$.
If $v \in \Sigma_\fin \smallsetminus \Sigma_{B,\fin}$, we identify $B_v$ with $\M_2(F_v)$.
As in the proof of Lemma \ref{lem:measure-compare-B}, we have
\[
 C_v = \vol(\SL_2(\o_v), dg_v^\Tam) = \frac{\vol(\GL_2(\o_v), d^\times \ba_v^\Tam)}{\vol(\o_v^\times, d^\times x_v^\Tam)} = q_v^{- \frac{3 d_v}{2}} \cdot \zeta_v(2)^{-1}.
\]
If $v \in \Sigma_{B,\fin}$, then as in the proof of Lemma \ref{lem:measure-compare-B}, we have
\[
 C_v = \vol(B_v^1, dg_v^\Tam) = \frac{\vol(\o_{B_v}^\times, d^\times \ba_v^\Tam)}{\vol(\o_v^\times, d^\times x_v^\Tam)} = q_v^{- \frac{3 d_v}{2}} \cdot (q_v-1)^{-1} \cdot \zeta_v(2)^{-1}.
\]
If $v \in \Sigma_\infty \smallsetminus \Sigma_{B,\infty}$, we identify $B_v$ with $\M_2(\R)$.
For $\ba_v \in \GL_2(\R)^+$, we write $\ba_v = z_v \cdot g_v$ with $z_v \in \R_+^\times$ and $g_v \in \SL_2(\R)$.
Then we have
\[
 d^\times \ba_v^\Tam = 2 \cdot d^\times z_v \, dg_v^\Tam, \qquad 
 d^\times \ba_v = d^\times z_v \, dg_v
\]
on $\GL_2(\R)^+$.
Since $d^\times \ba_v^\Tam = 2 \pi \cdot d^\times \ba_v$ as in the proof of Lemma \ref{lem:measure-compare-B}, we have
\[
 C_v = \frac{1}{2} \cdot 2 \pi = \pi.
\]
If $v \in \Sigma_{B,\infty}$, then we have $d^\times \ba_v^\Tam = 2 \cdot d^\times z_v \, dg_v^\Tam$ for $\ba_v = z_v \cdot g_v$ with $z_v \in \R_+^\times$ and $g_v \in B_v^1$.
Hence we have
\begin{align*}
 C_v & = \vol(B_v^1, dg_v^\Tam) \\
 & = \frac{1}{2} \cdot \vol(B_v^\times / \R_+^\times, d^\times \ba_v^\Tam / d^\times z_v) \\
 & = \vol(B_v^\times / \R^\times, d^\times \ba_v^\Tam / d^\times z_v) \\
 & = 4 \pi^2
\end{align*}
as in the proof of Lemma \ref{lem:measure-compare-B}.
This completes the proof.
\end{proof}

\begin{example}[Siegel's formula \cite{siegel-tams}]
Suppose that $B = \M_2(F)$.
Put
\[
 \mathfrak{vol} := \vol \left( \SL_2(\o) \backslash \fh^d, \prod_{v \in \Sigma_\infty} \frac{dx_v \, dy_v}{y_v^2} \right),
\]
where $d = [F:\Q]$.
Since 
\[
 \SL_2(\o) \backslash \fh^d \cong \SL_2(F) \backslash \SL_2(\A) / K, 
\]
where $K = \prod_{v \in \Sigma_\infty} \SO(2) \times \prod_{v \in \Sigma_\fin} \SL_2(\o_v)$, we have
\[
 \vol(\SL_2(F) \backslash \SL_2(\A), dg) 
 = \mathfrak{vol} \cdot
 \vol \left( \{ \pm 1 \} \backslash K, \prod_{v \in \Sigma_\infty} d \kappa_v \cdot \prod_{v \in \Sigma_\fin} dg_v \right)
 = \mathfrak{vol} \cdot \frac{1}{2}.
\]
On the other hand, it follows from Lemma \ref{lem:measure-compare-B^1} that
\[
 \vol(\SL_2(F) \backslash \SL_2(\A), dg)
 = \tau(B^1) \cdot \pi^{-d} \cdot |D|^{\frac{3}{2}} \cdot \zeta_F(2)
 = (-2 \pi)^d \cdot \zeta_F(-1).
\]
Hence we have
\[
 \mathfrak{vol} = (-1)^d \cdot 2^{d+1} \cdot \pi^d \cdot \zeta_F(-1).
\]
\end{example}

\subsection{New vectors}
\label{ssec:new-vec}

In this subsection, we define a $1$-dimensional subspace of new vectors in the space of an irreducible representation of $B_v^\times$.
For the moment, we fix a place $v$ of $F$ and suppress the subscript $v$ from the notation.
We only consider representations $\pi$ of $B^\times$ listed below:

\begin{itemize}
\item If $F$ is non-archimedean and $B$ is split, then
\begin{itemize}
\item[(ur)] $\pi = \Ind(\chi \otimes \mu)$ is a principal series representation, where $\chi$ and $\mu$ are unitary unramified; or
\item[(rps)] $\pi = \Ind(\chi \otimes \mu)$ is a principal series representation, where $\chi$ is unitary unramified and $\mu$ is unitary ramified of conductor $q^n$; or
\item[(st)] $\pi = \St \otimes \chi$ is a twist of the Steinberg representation, where $\chi$ is unitary unramified.
\end{itemize}
\item If $F$ is non-archimedean and $B$ is ramified, then
\begin{itemize}
\item[(1d)] $\pi = \chi \circ \nu$ is a $1$-dimensional representation, where $\chi$ is unitary unramified.
\end{itemize}
\item If $F = \R$ and $B$ is split, then
\begin{itemize}
\item[(ds)] $\pi = \DS_k$ is the irreducible unitary (limit of) discrete series representation of weight $k$.
\end{itemize}
\item If $F = \R$ and $B$ is ramified, then
\begin{itemize}
\item[(fd)] $\pi = \Sym^k$ is the irreducible unitary $(k+1)$-dimensional representation.
\end{itemize}
\end{itemize}
If $F$ is non-archimedean, we define a compact subgroup $K_n$ of $\GL_2(F)$ by
\[
 K_n = \left\{ \left. \begin{pmatrix} a & b \\ c & d \end{pmatrix} \in \GL_2(\o) \, \right| \, c \in  \varpi^n \o \right\}.
\]
Note that $I := K_1$ is an Iwahori subgroup of $\GL_2(F)$.
If $F = \R$, we define a character $\chi_k$ of $\C^{\times}$ by 
\[
 \chi_k(\alpha) = \left( \frac{\alpha}{\sqrt{\alpha \alpha^\rho}} \right)^k.
\]

\subsubsection{The case (ur)}

Fix an isomorphism $i : B \rightarrow \M_2(F)$.
This determines a maximal compact subgroup $\K = i^{-1}(\GL_2(\o))$ of $B^\times$.
We say that $f \in \pi$ is a new vector with respect to $\K$ if
\[
 \pi(k) f = f
\]
for all $k \in \K$.

\subsubsection{The case (rps)}

Fix an isomorphism $i : B \rightarrow \M_2(F)$.
This determines a compact subgroup $\K_n = i^{-1}(K_n)$ of $B^\times$.
We define a character $\Mu$ of $\K_n$ by $\Mu(k) = \mu(d)$ for $k = i^{-1} \smat{a}{b}{c}{d}$.
We say that $f \in \pi$ is a new vector with respect to $(\K_n, \Mu)$ if
\[
 \pi(k) f = \Mu(k) f
\]
for all $k \in \K_n$.

\subsubsection{The case (st)}

Fix an isomorphism $i : B \rightarrow \M_2(F)$.
This determines an Iwahori subgroup $\II = i^{-1}(I)$ of $B^\times$.
We say that $f \in \pi$ is a new vector with respect to $\II$ if
\[
 \pi(k) f = f
\]
for all $k \in \II$.

\subsubsection{The case (1d)}

Let $\K = \o_B^\times$ be the unique maximal compact subgroup of $B^\times$.
Then we have
\[
 \pi(k) f = f
\]
for all $k \in \K$ and $f \in \pi$.
For uniformity, we call any $f \in \pi$ a new vector with respect to $\K$.

\subsubsection{The cases (ds), (fd)}

Fix an embedding $h : \C^\times \hookrightarrow B^\times$.
We say that $f \in \pi$ is a new vector with respect to $h$ if
\[
 \pi(h(z)) f = \chi_k(z) f
\]
for all $z \in \C^\times$.

\subsection{An explicit Rallis inner product formula}
\label{ssec:explicit-rallis}

Suppose that $F$ is global.
Let $\pi \cong \otimes_v \pi_v$ be an irreducible unitary cuspidal automorphic representation of $\GL_2(\A)$ such that for $v \in \Sigma_{\fin}$, 
\begin{itemize}
\item $\pi_v = \Ind(\chi_v \otimes \mu_v)$, where $\chi_v$ and $\mu_v$ are unitary unramified; or
\item $\pi_v = \Ind(\chi_v \otimes \mu_v)$, where $\chi_v$ is unitary unramified and $\mu_v$ is unitary ramified of conductor $q_v^{n_v}$; or
\item $\pi_v = \St \otimes \chi_v$, where $\chi_v$ is unitary unramified, 
\end{itemize}
and for $v \in \Sigma_\infty$,
\begin{itemize}
\item $\pi_v = \DS_{k_v}$, where $k_v \ge 1$.
\end{itemize}
We assume that $\pi_v$ is unramified for all finite places $v$ of $F$ such that $F_v$ is ramified or of residual characteristic $2$.
Put $\Sigma_\pi = \{ v \, | \, \text{$\pi_v$ is a discrete series} \}$, $\Sigma_{\pi,\fin} = \Sigma_\pi \cap \Sigma_\fin$, and
\[ 
 \Sigma'_{\pi,\fin}
 := \{ v \in \Sigma_\fin \, | \, \text{$\pi_v$ is a ramified principal series} \}.
\]
We consider a non-zero vector $f = \otimes_v f_v \in \pi$ such that:
\begin{itemize}
 \item for $v \in \Sigma_\fin \smallsetminus (\Sigma_{\pi,\fin} \cup \Sigma'_{\pi,\fin})$, $f_v$ is a new vector with respect to $\GL_2(\o_v)$;
 \item for $v \in \Sigma_{\pi,\fin}$, $f_v$ is a new vector with respect to the Iwahori subgroup $I$ of $\GL_2(F_v)$ given in \S \ref{ssec:new-vec};
 \item for $v \in \Sigma'_{\pi,\fin}$, $f_v$ is a new vector with respect to $(K_{n_v},\Mu_v)$, where $K_{n_v}$ is the compact subgroup of $\GL_2(F_v)$ given in \S \ref{ssec:new-vec} and $\Mu_v$ is the character of $K_{n_v}$ defined by $\Mu_v\!\smat{a}{b}{c}{d} = \mu_v(d)$;
 \item for $v \in \Sigma_\infty$, $f_v$ is a new vector with respect to the embedding $h_v : \C^\times \hookrightarrow \GL_2(\R)$ defined by $h_v(a + b \sqrt{-1}) = \smat{a}{b}{-b}{a}$.
\end{itemize}
We normalize such a vector $f$, which is unique up to scalars, so that
\[
 W_f \begin{pmatrix} \delta^{-1} & \\ & 1 \end{pmatrix} = e^{-2 \pi d},
\]
where $W_f$ is the Whittaker function of $f$ defined by
\[
 W_f(g) = \int_{F \backslash \A} f\left( 
 \begin{pmatrix} 1 & x \\ & 1 \end{pmatrix} g \right)
 \overline{\psi(x)} \, dx
\]
with the Tamagawa measure $dx$ on $\A$, $\delta = (\varpi_v^{d_v}) \in \A_\fin^\times$, and $d = [F:\Q]$ (see also Lemmas \ref{lem:whittaker-formula-ur}, \ref{lem:whittaker-formula-st}, \ref{lem:whittaker-formula-tr} and \eqref{eq:whittaker-formula-ds} below).
Let $\langle f, f \rangle$ be the Petersson norm of $f$ defined by
\[
 \langle f, f \rangle = \int_{\A^\times \GL_2(F) \backslash \GL_2(\A)} |f(g)|^2 \, dg,
\]
where $dg$ is the standard measure on $\A^\times \backslash \GL_2(\A)$.
In \S \ref{ssec:computation_of_<f,f>} below, we will prove:
\begin{prop}
\label{prop:<f,f>}
We have
\[
 \langle f, f \rangle = 2 \cdot \prod_{v \in \Sigma_\infty} \frac{(k_v - 1)!}{2^{2 k_v + 1} \pi^{k_v + 1}}
 \cdot \prod_{v \in \Sigma_{\pi,\fin} \cup \Sigma'_{\pi,\fin}} \frac{q_v}{q_v+1} \cdot |D| \cdot L(1, \pi, \ad),
\]
where $L(s, \pi, \ad) = \prod_{v \in \Sigma_\fin}L(s, \pi_v, \ad)$ is the adjoint $L$-function of $\pi$.
\end{prop}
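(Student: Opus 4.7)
The plan is to compute $\langle f,f\rangle$ by the Rankin--Selberg method together with the identity
\[
L(s,\pi\otimes\tilde\pi) \;=\; \zeta_F(s)\cdot L(s,\pi,\ad),
\]
and then to convert carefully from Tamagawa measures to the standard measures defined in \S\ref{sssec:measures-Gm}--\S\ref{sssec:measures-B}. I form the global zeta integral
\[
Z(s,f,\bar f,\Phi) \;:=\; \int_{Z(\A)\GL_2(F)\backslash\GL_2(\A)} f(g)\,\overline{f(g)}\,E(g,s,\Phi)\,dg
\]
with respect to the Tamagawa measure, where $\Phi = \otimes_v\Phi_v$ is a decomposable Schwartz--Bruhat section on $\A^2$ and $E(g,s,\Phi)$ the associated Eisenstein series. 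The Eisenstein series has a simple pole at $s=1$ whose residue is a constant multiple of $\Phi(0)$ involving $\rho_F = \mathrm{Res}_{s=1}\zeta_F(s)$, so that $\mathrm{Res}_{s=1}Z(s,f,\bar f,\Phi) = \langle f,f\rangle_{\Tam}\cdot C(\Phi)$ for an explicit $C(\Phi)$. On the other hand, Whittaker unfolding factors $Z(s,f,\bar f,\Phi)$ into a product of local Rankin--Selberg zeta integrals $Z_v(s,W_{f,v},\bar W_{f,v},\Phi_v)$, each of which equals $L(s,\pi_v\otimes\tilde\pi_v)/\zeta_v(2s)$ up to an explicit local constant, for a judicious choice of $\Phi_v$ (characteristic function of $\o_v^2$ at finite $v$, standard Gaussian at archimedean $v$).

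The heart of the argument is the case-by-case local computation, invoking the explicit Whittaker formulas (Lemmas \ref{lem:whittaker-formula-ur}, \ref{lem:whittaker-formula-st}, \ref{lem:whittaker-formula-tr} at finite places, and the archimedean formula \eqref{eq:whittaker-formula-ds}) for the normalized new vector $f_v$. At $v\in \Sigma_\fin\smallsetminus(\Sigma_{\pi,\fin}\cup\Sigma'_{\pi,\fin})$ the classical unramified calculation gives $Z_v = L(s,\pi_v\otimes\tilde\pi_v)/\zeta_v(2s)$. At $v\in\Sigma_{\pi,\fin}\cup\Sigma'_{\pi,\fin}$ a direct computation on the Iwahori-fixed (respectively $(K_{n_v},\Mu_v)$-equivariant) Whittaker function produces the extra factor $q_v/(q_v+1)$, reflecting the ratio $[\GL_2(\o_v):I]^{-1}=(q_v+1)^{-1}$ against the volume of the maximal compact. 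At $v\in\Sigma_\infty$ the computation on the weight-$k_v$ holomorphic new vector yields the factor $(k_v-1)!/(2^{2k_v+1}\pi^{k_v+1})$ after a standard Mellin transform. Assembling these local contributions and taking the residue at $s=1$, one uses $\mathrm{Res}_{s=1}L(s,\pi\otimes\tilde\pi) = \rho_F\cdot L(1,\pi,\ad)$ to extract $L(1,\pi,\ad)$.

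Finally, one passes to the standard measure on $\A^\times\backslash\GL_2(\A)$ via \eqref{eq:measure-compare-Gm} and Lemma \ref{lem:measure-compare-B}, which introduces $|D|\cdot\rho_F^{-1}$; together with the factor $\zeta_F(2)^{-1}$ coming from the product $\prod_v\zeta_v(2)^{-1}$ over the unramified finite places, this cancels the $\rho_F\cdot\zeta_F(2)$ produced in the previous step. The leading factor $2$ is $\vol(Z(\A)\GL_2(F)\backslash\GL_2(\A))$.

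The main obstacle will be not the underlying analysis (all ingredients are standard) but the accurate bookkeeping: tracking (i) the precise residue of $E(g,s,\Phi)$ in the Tamagawa normalization, (ii) the consistency of the Whittaker normalization $W_f(\delta^{-1})=e^{-2\pi d}$ with the self-dual local measures and the different $\mathfrak{d}$, (iii) the local Rankin--Selberg integrals at the Steinberg and ramified principal series places, and (iv) the measure conversions from Tamagawa to standard. None of these is deep individually, but the accumulation of constants is where subtle errors can slip in; careful comparison of all the $|D|$, $\rho_F$, $\zeta_F(2)$ factors is essential.
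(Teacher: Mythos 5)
Your proposal is correct and follows essentially the same route as the paper: the paper likewise applies the Rankin--Selberg residue formula (citing Jacquet--Shalika, Lapid--Offen, and W.~Zhang rather than re-deriving it from the Eisenstein series), reduces to the local Whittaker norms $\|W_v\|^2$, evaluates these at each type of place using the explicit Whittaker formulas, and then converts from Tamagawa to standard measure via \eqref{eq:measure-compare-Gm} and Lemma~\ref{lem:measure-compare-B}. The only cosmetic difference is that you propose unpacking the residue formula from scratch whereas the paper treats it as a citation; the local bookkeeping you flag as the crux is exactly what Lemmas~\ref{lem:|W|^2-ur}, \ref{lem:|W|^2-st}, \ref{lem:|W|^2-tr}, and \ref{lem:|W|^2-ds} carry out.
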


Let $B$, $B_1$, $B_2$ be quaternion algebras over $F$ such that $B = B_1 \cdot B_2$ in the Brauer group.
We assume that $\Sigma_B \ne \varnothing$ and $\Sigma_B \cup \Sigma_{B_1} \cup \Sigma_{B_2} \subset \Sigma_\pi$, i.e., $B$ is division and the Jacquet--Langlands transfers $\pi_B$, $\pi_{B_1}$, $\pi_{B_2}$ of $\pi$ to $B^{\times}(\A)$, $B_1^{\times}(\A)$, $B_2^{\times}(\A)$ exist.
Now, we choose a totally imaginary quadratic extension $E$ of $F$ such that $E$ embeds into $B$, $B_1$, $B_2$, and write $E = F + F \i$, $B = E + E \j$, $B_1 = E + E \j_1$, $B_2 = E + E \j_2$.
We also impose the ramification conditions on $u,J,J_1,J_2$ in \S \ref{ssec:autom-rep}.
We consider non-zero vectors $f_B = \otimes_v f_{B,v} \in \pi_B$, $f_{B_1} = \otimes_v f_{B_1,v} \in \pi_{B_1}$, $f_{B_2} = \otimes_v f_{B_2,v} \in \pi_{B_2}$ such that:
\begin{itemize}
 \item for $v \in \Sigma_\fin \smallsetminus (\Sigma_{\pi,\fin} \cup \Sigma'_{\pi,\fin})$, $f_{B,v}$, $f_{B_1,v}$, $f_{B_2,v}$ are new vectors with respect to $\K$, $\K_1$, $\K_2$, respectively, given in \S \ref{sssec:schwartzX'-ur};
 \item for $v \in \Sigma_{\pi,\fin} \smallsetminus (\Sigma_{B, \fin} \cup \Sigma_{B_1, \fin} \cup \Sigma_{B_2, \fin})$, $f_{B,v}$, $f_{B_1,v}$, $f_{B_2,v}$ are new vectors with respect to $\II$, $\II_1$, $\II_2$, respectively, given in \S \ref{par:schwartzX'-st-B1spl};
 \item for $v \in \Sigma_{B_1, \fin} \cap \Sigma_{B_2, \fin}$, $f_{B,v}$, $f_{B_1,v}$, $f_{B_2,v}$ are new vectors with respect to $\II$, $\K_1$, $\K_2$, respectively, given in \S \ref{par:schwartzX'-st-B1ram};
 \item for $v \in \Sigma_{B, \fin} \cap \Sigma_{B_2, \fin}$, $f_{B,v}$, $f_{B_1,v}$, $f_{B_2,v}$ are new vectors with respect to $\K$, $\II_1$, $\K_2$, respectively, given in \S \ref{sssec:schwartzX'-1d}; we switch the roles of $B_1$ and $B_2$ for $v \in \Sigma_{B, \fin} \cap \Sigma_{B_1, \fin}$;
 \item for $v \in \Sigma'_{\pi,\fin}$, $f_{B,v}$, $f_{B_1,v}$, $f_{B_2,v}$ are new vectors with respect to $(\K_{n_v}, \Mu_v)$, $(\K_{1,n_v}, \Mu_v)$, $(\K_{2,n_v}, \Mu_v^{-1} \cdot \mu_v \circ \nu)$, respectively, given in \S \ref{sssec:schwartzX'-tr};
 \item for $v \in \Sigma_\infty$, $f_{B,v}$, $f_{B_1,v}$, $f_{B_2,v}$ are new vectors with respect to the embeddings
 $\C^\times \cong E_v^\times \hookrightarrow B_v^\times$, $\C^\times \cong E_v^\times \hookrightarrow B_{1,v}^\times$, $\C^\times \cong E_v^\times \hookrightarrow B_{2,v}^\times$, respectively, given in \S\S \ref{sssec:schwartzX'-ds}, \ref{sssec:schwartzX'-fd}.
\end{itemize}
We fix such vectors $f_B$, $f_{B_1}$, $f_{B_2}$, which are unique up to scalars.
We emphasize that the $1$-dimensional subspaces of $\pi_B$, $\pi_{B_1}$, $\pi_{B_2}$ spanned by $f_B$, $f_{B_1}$, $f_{B_2}$, respectively, depend on the choice of $E$, $\i$, $\j$, $\j_1$, $\j_2$.
Let $\langle f_B, f_B \rangle$ be the Petersson norm of $f_B$ defined by
\[
 \langle f_B, f_B \rangle = \int_{\A^\times B^\times(F) \backslash B^\times(\A)} |f_B(g)|^2 \, dg,
\]
where $dg$ is the standard measure on $\A^\times \backslash B^\times(\A)$.
We define $\langle f_{B_1}, f_{B_1} \rangle$ and $\langle f_{B_2}, f_{B_2} \rangle$ similarly.

Let $\varphi = \otimes_v \varphi_v \in \SS(\X(\A))$ be the Schwartz function given in \S \ref{ssec:schwartzX}, where
\begin{itemize}
 \item $\varphi_v = \varphi_{\mu_v}$ for $v \in \Sigma'_{\pi,\fin}$;
 \item $\varphi_v = \varphi_{k_v}$ for $v \in \Sigma_\infty \smallsetminus (\Sigma_{B,\infty} \cup \Sigma_{B_1,\infty} \cup \Sigma_{B_2,\infty})$;
 \item $\varphi_v = \varphi_{k_v-2}$ for $v \in \Sigma_{B_1, \infty} \cap \Sigma_{B_2, \infty}$;
 \item $\varphi_v = \varphi_{k_v-2}$ for $v \in \Sigma_{B,\infty} \cap \Sigma_{B_2,\infty}$; we switch the roles of $B_1$ and $B_2$ for $v \in \Sigma_{B,\infty} \cap \Sigma_{B_1,\infty}$.
\end{itemize}
In \S \ref{subsec:jls}, we have defined the theta lift $\theta_\varphi(f_B)$, but for our purposes, we slightly modify its definition: on the right-hand side of \eqref{eq:jls_theta_lift_def}, we take the standard measure on $B^1(\A)$ rather than the Tamagawa measure on $B^1(\A)$.
We regard $\theta_\varphi(f_B)$ as an automorphic form on $B_1^\times(\A) \times B_2^\times(\A)$.
Then it follows from the equivariance properties of $\varphi$ that there exists a constant $\alpha(B_1,B_2) \in \C$ (once we fix $f_B$, $f_{B_1}$, $f_{B_2}$) such that
\[
 \theta_\varphi(f_B) = \alpha(B_1,B_2) \cdot (f_{B_1} \times f_{B_2}).
\]

Now we state an explicit Rallis inner product formula.

\begin{thm}
\label{thm:rallis-B-explicit}
We have
\[
 |\alpha(B_1,B_2)|^2
 \cdot \langle f_{B_1}, f_{B_1} \rangle \cdot \langle f_{B_2}, f_{B_2} \rangle
 = C \cdot \langle f, f \rangle \cdot \langle f_B, f_B \rangle,
\]
where $C = |D|^2 \cdot \prod_v C_v$ with
\[
 C_v = 
 \begin{cases}
  1 & \text{if $v \in \Sigma_\fin \smallsetminus (\Sigma_{\pi,\fin} \cup \Sigma'_{\pi,\fin})$,} \\
  \dfrac{q_v}{(q_v+1)^2} & \text{if $v \in \Sigma_{\pi,\fin} \smallsetminus (\Sigma_{B, \fin} \cup \Sigma_{B_1, \fin} \cup \Sigma_{B_2, \fin})$,} \\
  q_v & \text{if $v \in \Sigma_{B_1, \fin} \cap \Sigma_{B_2, \fin}$,} \\
  q_v & \text{if $v \in \Sigma_{B, \fin}$,} \\
  \dfrac{1}{q_v^{n_v-3}(q_v-1)(q_v+1)^2} & \text{if $v \in \Sigma'_{\pi,\fin}$,} \\
  \dfrac{2^{2 k_v + 2} \pi^{k_v}}{k_v!} & \text{if $v \in \Sigma_\infty \smallsetminus (\Sigma_{B,\infty} \cup \Sigma_{B_1,\infty} \cup \Sigma_{B_2,\infty})$,} \\
  \dfrac{2^{2 k_v} \pi^{k_v-2}}{(k_v-1)^2 \cdot (k_v - 2)!} & \text{if $v \in \Sigma_{B_1, \infty} \cap \Sigma_{B_2, \infty}$,} \\
  \dfrac{2^{2 k_v -2} \pi^{k_v-2}}{(k_v-1)^2 \cdot (k_v - 2)!} & \text{if $v \in \Sigma_{B,\infty}$.}
 \end{cases}
\]
\end{thm}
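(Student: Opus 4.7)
The plan is to specialize the general Rallis inner product formula (Proposition \ref{prop:rallis-B}) to the setting at hand, then evaluate each local doubling zeta integral and each global Petersson norm explicitly. More precisely, I would apply Proposition \ref{prop:rallis-B} with $f_1 = f_2 = f_B$ and $\varphi_1 = \varphi_2 = \varphi$, where $\varphi = \otimes_v \varphi_v \in \SS(\X(\A))$ is the Schwartz function chosen in Section \ref{sec:schwartz}. By the defining identity $\theta_{\varphi}(f_B) = \alpha(B_1, B_2) \cdot (f_{B_1} \times f_{B_2})$, the left-hand side of Proposition \ref{prop:rallis-B} becomes
\[
 |\alpha(B_1,B_2)|^2 \int_{Z_{H^0}(\A) H^0(F) \backslash H^0(\A)} |f_{B_1}(h_1)|^2 |f_{B_2}(h_2)|^2 \, dh^{0,\Tam},
\]
which, under the isomorphism $H^0 \simeq (B_1^\times \times B_2^\times)/\mathbb{G}_m$, unfolds to $\langle f_{B_1}, f_{B_1}\rangle \cdot \langle f_{B_2}, f_{B_2}\rangle$ times a ratio of Tamagawa and standard measures computable from Lemmas \ref{lem:measure-compare-B} and \ref{lem:measure-compare-Gm}.

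Next I would compute each local doubling zeta integral. By the formula \eqref{eq:local_zeta_integral}, and since the partial Fourier transform $\SS(\X_v') \to \SS(\X_v)$ is a $\G(\U(V_v)^0 \times \U(W_v))$-equivariant isometry, we have
\[
 Z(\mathcal{F}_{\varphi_v}, f_{B,v}, f_{B,v}) = \int_{G_{1,v}} \langle \omega_\psi(g_{1,v}) \varphi_v', \varphi_v' \rangle \cdot \langle \pi_{B,v}(g_{1,v}) f_{B,v}, f_{B,v}\rangle \, dg_{1,v},
\]
where $\varphi_v'$ are the Schwartz functions on $\X_v'$ picked in \S \ref{ssec:schwartzX'}. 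The integrand is a product of two matrix coefficients, and by the explicit formulas for the action of $\U(W)_v$ on $\varphi_v'$ (given in \S \ref{ssec:weilrep}) together with the equivariance properties of $\varphi_v'$ under the fixed compact subgroup $\K_v \subset B_v^\times$ (which coincides with the subgroup under which $f_{B,v}$ is a new vector), the integral reduces to an integral of the matrix coefficient of $f_{B,v}$ against itself restricted to a distinguished torus/Iwahori double coset. This must be carried out case by case according to the partition (ur), (rps), (st), (1d), (ds), (fd) of \S \ref{ssec:autom-rep}, with appropriate subcases according to the ramification of $E$, $B$, $B_1$, $B_2$ and the valuations of $u$, $J$, $J_1$, $J_2$.

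Once the local zeta integrals are known, I would invoke Proposition \ref{prop:<f,f>} to replace $L^S(1,\pi,\ad)$ in Proposition \ref{prop:rallis-B} by
\[
 \langle f, f\rangle \cdot \prod_{v \in S} L(1, \pi_v, \ad)^{-1} \cdot
 \Bigl( 2 \cdot \prod_{v \in \Sigma_\infty} \tfrac{(k_v-1)!}{2^{2k_v+1} \pi^{k_v+1}}
 \cdot \prod_{v \in \Sigma_{\pi,\fin} \cup \Sigma'_{\pi,\fin}} \tfrac{q_v}{q_v+1} \cdot |D| \Bigr)^{-1},
\]
combined with the factorization $\zeta^S(2) = \zeta_F(2)/\prod_{v \in S}\zeta_v(2)$ and the measure comparisons in Lemmas \ref{lem:measure-compare-B} and \ref{lem:measure-compare-B^1} (used both to convert the Tamagawa measure on $B^1(\A)$ appearing in the local zeta integral and the Tamagawa measures on $H^0(\A)$ and $G(\A)$ appearing on either side of Proposition \ref{prop:rallis-B} into the standard measures used throughout Section \ref{sec:rallis-explicit}). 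Collecting the local factors and taking $S$ large enough produces the expression $|D|^2 \cdot \prod_v C_v$ with the claimed local constants.

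The main obstacle will be the case-by-case evaluation of the local zeta integrals. The archimedean cases (ds) and (fd) require explicit integration of products of Hermite polynomials against weight-$k_v$ matrix coefficients of discrete series, while the ramified principal series case (rps) requires tracking a Gauss sum $\mathfrak{g}(\mu_v,\psi_v)$ that arises from the partial Fourier transform and verifying that its absolute value squared cancels appropriately against the conductor $q_v^{n_v}$ appearing in the matrix coefficient of the new vector $f_{B,v}$. The non-archimedean cases (st) and (1d) involve matching the Iwahori or maximal-order equivariance of $\varphi_v'$ with the corresponding equivariance of $f_{B,v}$; the residual contributions (powers of $q_v$ and factors like $(q_v+1)$) ultimately produce the nonstandard factors $q_v/(q_v+1)^2$ and $q_v$ in the statement. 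Once these local computations are carried out and assembled with the measure conversions, the theorem follows.
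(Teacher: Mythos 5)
Your proposal matches the paper's proof in every essential respect: specialize Proposition \ref{prop:rallis-B} to $f_1 = f_2 = f_B$ and $\varphi_1 = \varphi_2 = \varphi$, convert the Tamagawa measures on $H^0$, $G$, and $B^1$ to standard measures via \eqref{eq:measure-compare-Gm} and Lemmas \ref{lem:measure-compare-B}, \ref{lem:measure-compare-B^1}, substitute Proposition \ref{prop:<f,f>} to trade $L^S(1,\pi,\ad)$ for $\langle f,f\rangle$, and finally evaluate the local zeta integrals case-by-case (which is exactly what the paper does in Lemmas \ref{lem:zeta-integral-ur} through \ref{lem:zeta-integral-fd}, computing $\langle\omega_\psi(g)\varphi'_v,\varphi'_v\rangle$ and $\langle\pi_{B,v}(g)f_{B,v},f_{B,v}\rangle$ and integrating against the Cartan/Iwahori decomposition of $B_v^1$). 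You have correctly identified all the moving parts, including the role of the partial Fourier transform isometry and the Gauss-sum cancellation in the (rps) case.
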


\begin{proof}
By Proposition \ref{prop:rallis-B}, we have
\begin{multline*}
 C_{B_1} \cdot C_{B_2} \cdot (C_B^1)^2 \cdot |\alpha(B_1,B_2)|^2
 \cdot \langle f_{B_1}, f_{B_1} \rangle \cdot \langle f_{B_2}, f_{B_2} \rangle \\
 = 2 \cdot C_B \cdot C_B^1 \cdot \frac{L^S(1, \pi, \ad)}{\zeta_F^S(2)^2}
 \cdot \langle f_B, f_B \rangle \cdot  \prod_{v \in S} Z_v
\end{multline*}
for a sufficiently large finite set $S$ of places of $F$, where
\begin{itemize}
 \item $C_B$ is the constant such that $dg^\Tam = C_B \cdot dg$, where $dg^\Tam$ is the Tamagawa measure on $\A^\times \backslash B^\times(\A)$ and $dg$ is the standard measure on $\A^\times \backslash B^\times(\A)$; we define $C_{B_1}$ and $C_{B_2}$ similarly; 
 \item $C_B^1$ is the constant such that $dg_1^\Tam = C_B^1 \cdot dg_1$, where $dg_1^\Tam$ is the Tamagawa measure on $B^1(\A)$ and $dg$ is the standard measure on $B^1(\A)$;
 \item $Z_v$ is the integral defined by
\[
 Z_v = \int_{B^1_v} \langle \omega_{\psi}(g_{1,v}) \varphi_v, \varphi_v \rangle \langle \pi_{B,v}(g_{1,v}) f_{B,v}, f_{B,v} \rangle \, d g_{1,v}
\]
(cf.~\eqref{eq:local_zeta_integral}), where
\begin{itemize}
 \item the hermitian inner product $\langle \cdot, \cdot \rangle$ on $\SS(\X_v)$ is normalized as in \S \ref{ssec:weilrep};
 \item the invariant hermitian inner product $\langle \cdot, \cdot \rangle$ on $\pi_{B,v}$ is normalized so that $\langle f_{B,v}, f_{B,v} \rangle = 1$;
 \item $dg_{1,v}$ is the standard measure on $B_v^1$.
\end{itemize}
\end{itemize}
Hence, by \eqref{eq:measure-compare-Gm} and Lemmas \ref{lem:measure-compare-B}, \ref{lem:measure-compare-B^1}, we have
\[
 |\alpha(B_1,B_2)|^2 \cdot \langle f_{B_1}, f_{B_1} \rangle \cdot \langle f_{B_2}, f_{B_2} \rangle
 = C' \cdot L(1, \pi, \ad) \cdot \langle f_B, f_B \rangle
 \cdot \prod_{v \in S_\fin} \frac{\zeta_v(2)^2}{L(1, \pi_v, \ad)} \cdot \prod_{v \in S} Z_v, 
\]
where $S_\fin = S \cap \Sigma_\fin$ and
\begin{align*}
 C' & = \frac{2 \cdot C_B}{C_{B_1} \cdot C_{B_2} \cdot C_B^1 \cdot \zeta_F(2)^2} \\
 & = 2 \cdot 2^{|\Sigma_\infty \smallsetminus \Sigma_{B,\infty}|} \cdot
 (2 \pi)^{-|\Sigma_\infty \smallsetminus \Sigma_{B_1,\infty}| - |\Sigma_\infty \smallsetminus \Sigma_{B_2,\infty}|}
 \cdot (4 \pi^2)^{-|\Sigma_{B_1,\infty}|-|\Sigma_{B_2,\infty}|} \\
 & \quad \times \prod_{v \in \Sigma_{B_1, \fin}} (q_v-1) \cdot \prod_{v \in \Sigma_{B_2, \fin}} (q_v-1) \cdot |D|^3 \\
 & = 2 \cdot |D|^3 \cdot \prod_v C_v' 
\end{align*}
with 
\[
 C_v' = 
 \begin{cases}
  1 & \text{if $v \in \Sigma_\fin \smallsetminus (\Sigma_{B, \fin} \cup \Sigma_{B_1, \fin} \cup \Sigma_{B_2, \fin})$,} \\
  (q_v - 1)^2 & \text{if $v \in \Sigma_{B_1, \fin} \cap \Sigma_{B_2, \fin}$,} \\
  q_v - 1 & \text{if $v \in \Sigma_{B, \fin}$,} \\
  (2 \pi^2)^{-1} & \text{if $v \in \Sigma_\infty \smallsetminus (\Sigma_{B,\infty} \cup \Sigma_{B_1,\infty} \cup \Sigma_{B_2,\infty})$,} \\
  (8 \pi^4)^{-1} & \text{if $v \in \Sigma_{B_1, \infty} \cap \Sigma_{B_2, \infty}$,} \\
  (8 \pi^3)^{-1} & \text{if $v \in \Sigma_{B,\infty}$.}
 \end{cases}
\]
Moreover, by Proposition \ref{prop:<f,f>}, we have
\[
 |\alpha(B_1,B_2)|^2 \cdot \langle f_{B_1}, f_{B_1} \rangle \cdot \langle f_{B_2}, f_{B_2} \rangle 
 = C'' \cdot \langle f, f \rangle \cdot \langle f_B, f_B \rangle
 \cdot \prod_{v \in S_\fin} \frac{\zeta_v(2)^2}{L(1, \pi_v, \ad)} \cdot  \prod_{v \in S} Z_v,
\]
where $C'' = |D|^2 \cdot \prod_v C_v''$ with 
\[
 C_v'' = 
 \begin{cases}
  1 & \text{if $v \in \Sigma_\fin \smallsetminus (\Sigma_{\pi,\fin} \cup \Sigma'_{\pi,\fin})$,} \\
  \dfrac{q_v + 1}{q_v} & \text{if $v \in \Sigma_{\pi,\fin} \smallsetminus (\Sigma_{B, \fin} \cup \Sigma_{B_1, \fin} \cup \Sigma_{B_2, \fin})$,} \\
  \dfrac{(q_v - 1)^2(q_v + 1)}{q_v} & \text{if $v \in \Sigma_{B_1, \fin} \cap \Sigma_{B_2, \fin}$,} \\
  \dfrac{(q_v - 1)(q_v + 1)}{q_v} & \text{if $v \in \Sigma_{B, \fin}$,} \\
  \dfrac{q_v + 1}{q_v} & \text{if $v \in \Sigma'_{\pi,\fin}$,} \\
  \dfrac{2^{2 k_v} \pi^{k_v-1}}{(k_v - 1)!} & \text{if $v \in \Sigma_\infty \smallsetminus (\Sigma_{B,\infty} \cup \Sigma_{B_1,\infty} \cup \Sigma_{B_2,\infty})$,} \\
  \dfrac{2^{2 k_v -2} \pi^{k_v-3}}{(k_v - 1)!} & \text{if $v \in \Sigma_{B_1, \infty} \cap \Sigma_{B_2, \infty}$,} \\
  \dfrac{2^{2 k_v -2} \pi^{k_v-2}}{(k_v - 1)!} & \text{if $v \in \Sigma_{B,\infty}$.}
 \end{cases}
\]
Now Theorem \ref{thm:rallis-B-explicit} follows from this and Lemmas \ref{lem:zeta-integral-ur}, \ref{lem:zeta-integral-tr}, \ref{lem:zeta-integral-st}, \ref{lem:zeta-integral-1d}, \ref{lem:zeta-integral-ds}, \ref{lem:zeta-integral-fd} in \S \ref{ssec:computation_of_Zv} below, where we compute the integral $Z_v$ explicitly.
\end{proof}

\subsection{Computation of $\langle f, f \rangle$}
\label{ssec:computation_of_<f,f>}

Proposition \ref{prop:<f,f>} follows from a standard computation of the Rankin--Selberg integral, but we give the details of the proof for the convenience of the reader.
We retain the notation of \S \ref{ssec:explicit-rallis}.
Put 
\[
 \n(x) = \begin{pmatrix} 1 & x \\ & 1 \end{pmatrix}, \qquad 
 \t(y) = \begin{pmatrix} y & \\ & 1 \end{pmatrix}, \qquad
 w = \begin{pmatrix} & -1 \\ 1 & \end{pmatrix}.
\]

By \cite[\S 4]{jacquet-shalika}, \cite[\S 2.2]{lapid-offen}, \cite[Proposition 3.1]{zhang-wei-jams}, we have
\[
 C \cdot \langle f, f \rangle = \frac{2}{\rho_F} \cdot \frac{\Res_{s=1} L^S(s, \pi \times \pi^\vee)}{\zeta^S_F(2)} \cdot |D|^{-\frac{1}{2}} \cdot \prod_{v \in S} \| W_v \|^2
\]
for a sufficiently large finite set $S$ of places of $F$, where
\begin{itemize}
 \item $C$ is the constant such that $dg^\Tam = C \cdot dg$, where $dg^\Tam$ is the Tamagawa measure on $\A^\times \backslash \GL_2(\A)$ and $dg$ is the standard measure on $\A^\times \backslash \GL_2(\A)$;
 \item the Whittaker function $W_f$ of $f$ is decomposed as a product $W_f = \prod_v W_v$, where $W_v$ is the Whittaker function of $\pi_v$ with respect to $\psi_v$ normalized so that
\begin{itemize}
 \item $W_v(\t(\varpi_v^{-d_v})) = 1$ for $v \in \Sigma_\fin \smallsetminus (\Sigma_{\pi,\fin} \cup \Sigma'_{\pi,\fin})$;
 \item $W_v(1) = 1$ for $v \in \Sigma_{\pi,\fin}$;
 \item $W_v(1) = 1$ for $v \in \Sigma'_{\pi,\fin}$;
 \item $W_v(1) = e^{-2\pi}$ for $v \in \Sigma_\infty$;
\end{itemize}
 \item $\| W_v \|^2$ is the integral defined by
 \[
  \| W_v \|^2 = \int_{F_v^\times} |W_v(\t(y_v))|^2 \, d^\times y_v,
 \]
 where $d^\times y_v$ is the standard measure on $F_v^\times$.
\end{itemize}
We remark that:
\begin{itemize}
 \item the volume of $F^\times \backslash \A^1$ given in \cite[Proposition 3.1]{zhang-wei-jams} is equal to $\rho_F$,
 \item $\prod_v d^\times y_v^\Tam = |D|^{-\frac{1}{2}} \cdot \prod_v d^\times y_v$.
\end{itemize}
Hence, by \eqref{eq:measure-compare-Gm} and Lemma \ref{lem:measure-compare-B}, we have
\[
 \langle f, f \rangle = 2 \cdot (2 \pi)^{-[F:\Q]} \cdot |D| \cdot L(1, \pi, \ad) \cdot 
 \prod_{v \in S_\fin} \frac{\zeta_v(2)}{\zeta_v(1) \cdot L(1, \pi_v, \ad)} \cdot \prod_{v \in S} \| W_v \|^2,
\]
where $S_\fin = S \cap \Sigma_\fin$.
Now Proposition \ref{prop:<f,f>} follows from this and Lemmas \ref{lem:|W|^2-ur}, \ref{lem:|W|^2-st}, \ref{lem:|W|^2-tr}, \ref{lem:|W|^2-ds} below, where we compute the integral $\| W_v \|^2$ explicitly.

For the rest of this subsection, we fix a place $v$ of $F$ and suppress the subscript $v$ from the notation.

\subsubsection{The case $v \in \Sigma_\fin \smallsetminus (\Sigma_{\pi,\fin} \cup \Sigma'_{\pi,\fin})$}

Recall that $\pi = \Ind(\chi \otimes \mu)$, where $\chi$ and $\mu$ are unitary unramified.
Put $\alpha = \chi(\varpi)$ and $\beta = \mu(\varpi)$.
We have
\[
 L(s, \pi, \ad) = \frac{1}{(1-q^{-s})(1 - \alpha \beta^{-1} q^{-s})(1 - \alpha^{-1} \beta q^{-s})}.
\]

\begin{lem}
\label{lem:whittaker-formula-ur}
We have
\[
 W(\t(\varpi^{i-d})) =
 \begin{cases}
  q^{-\frac{i}{2}} \cdot \dfrac{\alpha^{i+1} - \beta^{i+1}}{\alpha - \beta}
  & \text{if $i \ge 0$,} \\
  0 & \text{if $i < 0$.}
 \end{cases}
\]
\end{lem}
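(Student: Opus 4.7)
The plan is to derive this formula by combining the standard Shintani/Casselman--Shalika formula for the unramified Whittaker function with respect to a character of conductor $\o$ with the normalization imposed by $\psi$ having conductor $\mathfrak{d}^{-1} = \varpi^{-d}\o$.

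First, I would introduce the auxiliary character $\psi'(x) := \psi(\varpi^{-d} x)$, which has conductor $\o$. Let $f^0 \in \pi = \Ind_B^{\GL_2(F)}(\chi \otimes \mu)$ be the spherical vector normalized by $f^0(1) = 1$, and let $W^{\psi'}_{f^0}$ be the $\psi'$-Whittaker function attached to $f^0$ via the (analytically continued) Jacquet integral. A change of variables $x \mapsto \varpi^{-d} x$ inside the Jacquet integral, together with the identity
\[
 \t(y) \, \n(x) = \n(xy) \, \t(y),
\]
shows that the $\psi$-Whittaker function $W^{\psi}_{f^0}$ attached to $f^0$ satisfies
\[
 W^{\psi}_{f^0}(\t(y)) = W^{\psi'}_{f^0}(\t(y \varpi^{-d}))
\]
up to a nonzero constant that is absorbed by the normalization $W(\t(\varpi^{-d})) = 1$. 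Hence the claim is reduced to showing that the $\psi'$-normalized unramified Whittaker function $W^{\psi'}$ with $W^{\psi'}(1) = 1$ satisfies
\[
 W^{\psi'}(\t(\varpi^i)) = \begin{cases} q^{-i/2} \cdot \dfrac{\alpha^{i+1} - \beta^{i+1}}{\alpha - \beta} & \text{if } i \ge 0, \\ 0 & \text{if } i < 0. \end{cases}
\]

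Second, this is the classical Shintani/Casselman--Shalika formula, which I would recover quickly as follows. The vanishing for $i < 0$ follows either from the asymptotics of the Jacquet integral or, more cleanly, from the fact that the Kirillov model of $\pi$ is realized on $\SS(F^\times)$ restricted to the diagonal torus, combined with the observation that the spherical vector's Kirillov function is supported on $\o \cap F^\times$. For $i \ge 0$, the identity is a direct consequence of the Hecke eigenvalue equation: the spherical Hecke operator $T_\varpi$ acts on $f^0$ with eigenvalue $q^{1/2}(\alpha + \beta)$, which on the Whittaker side translates into the three-term recursion
\[
 W^{\psi'}(\t(\varpi^{i+1})) = q^{1/2}(\alpha + \beta) \, W^{\psi'}(\t(\varpi^i)) - q \, \alpha\beta \cdot W^{\psi'}(\t(\varpi^{i-1})).
\]
Together with the initial conditions $W^{\psi'}(1) = 1$ and $W^{\psi'}(\t(\varpi^{-1})) = 0$, a routine induction yields the Schur-polynomial closed form $\frac{\alpha^{i+1}-\beta^{i+1}}{\alpha-\beta}$.

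The main obstacle is purely bookkeeping rather than conceptual: tracking the unitary-versus-non-unitary normalization of the induced representation, the shift by $\varpi^{-d}$ forced by the conductor of $\psi$, and matching the normalization $W(\t(\varpi^{-d})) = 1$ on the nose. Since no analytic input beyond Shintani's classical formula is needed, the argument is short once these conventions are set.
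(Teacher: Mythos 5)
Your overall strategy — introduce the order-zero character $\psi'(x):=\psi(\varpi^{-d}x)$, relate the two Whittaker functions by a torus translation, and invoke Casselman--Shalika — is exactly the one the paper uses (the paper writes $\psi^0$, $W^0$, and the identity $W(g)=W^0(\t(\varpi^d)g)$), and your elementary re-derivation of Casselman--Shalika from the Hecke recursion plus Kirillov-model support is a perfectly good, more self-contained substitute for a citation. However, there is a sign error in the key reduction step that derails the argument.

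You assert $W^{\psi}_{f^0}(\t(y)) = W^{\psi'}_{f^0}(\t(y\varpi^{-d}))$ up to a nonzero constant. The correct relation has the translation in the \emph{opposite} direction: $W^{\psi}(\t(y)) = c\cdot W^{\psi'}(\t(\varpi^{d}y))$. Indeed, the function $g\mapsto W^{\psi}(\t(\varpi^{-d})g)$ satisfies
\[
W^{\psi}\bigl(\t(\varpi^{-d})\n(x)g\bigr) = W^{\psi}\bigl(\n(\varpi^{-d}x)\t(\varpi^{-d})g\bigr) = \psi(\varpi^{-d}x)\,W^{\psi}\bigl(\t(\varpi^{-d})g\bigr) = \psi'(x)\,W^{\psi}\bigl(\t(\varpi^{-d})g\bigr),
\]
is right $\GL_2(\o)$-invariant, and so equals $c\cdot W^{\psi'}$ by uniqueness of the Whittaker model; rearranging gives $W^{\psi}(\t(y)) = c\,W^{\psi'}(\t(\varpi^{d}y))$, and the normalizations $W(\t(\varpi^{-d}))=1$, $W^{\psi'}(1)=1$ force $c=1$, whence $W(\t(\varpi^{i-d}))=W^{\psi'}(\t(\varpi^{i}))$ as needed. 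With your sign, one would instead obtain $W(\t(\varpi^{i-d}))=c\,W^{\psi'}(\t(\varpi^{i-2d}))$, which is supported on $i\ge 2d$ rather than $i\ge 0$ and, worse, conflicts with the imposed normalization when $d>0$: it would force $1=W(\t(\varpi^{-d}))=c\cdot W^{\psi'}(\t(\varpi^{-2d}))=0$. The constant $c$ absorbs only the overall scale, not a wrong sign on the shift, so this is not mere bookkeeping to be waved away. Once the sign is corrected, the rest of your argument goes through and matches the paper's.
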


\begin{proof}
Recall that $d$ is the non-negative integer such that $\psi$ is trivial on $\varpi^{-d} \o$ but non-trivial on $\varpi^{-d-1} \o$.
We define a non-trivial character $\psi^0$ of $F$ of order zero by $\psi^0(x) = \psi(\varpi^{-d}x)$.
Let $W^0$ be the Whittaker function of $\pi$ with respect to $\psi^0$ such that
\begin{itemize}
 \item $W^0(gk) = W^0(g)$ for all $g \in \GL_2(F)$ and $k \in \GL_2(\o)$, 
 \item $W^0(1) = 1$.
\end{itemize}
Then we have $W(g) = W^0(\t(\varpi^d)) g)$, so that the assertion follows from the Casselman--Shalika formula \cite{casselman-shalika}.
\end{proof}

\begin{lem}
\label{lem:|W|^2-ur}
We have
\[
 \| W \|^2 = \frac{\zeta(1) \cdot L(1, \pi, \ad)}{\zeta(2)}.
\]
\end{lem}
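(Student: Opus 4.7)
The plan is to use right $\GL_2(\o)$-invariance of $W$ (since $\pi$ is unramified) to reduce the integral $\|W\|^2$ to a discrete sum indexed by $\Z$, then evaluate the sum using Lemma \ref{lem:whittaker-formula-ur} and the fact that $\chi,\mu$ are unitary.

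First I would note that since $W$ is right $\GL_2(\o)$-invariant, we have $W(\t(yu)) = W(\t(y))$ for all $u \in \o^\times$. The standard measure $d^\times y$ on $F^\times$ satisfies $\vol(\o^\times) = 1$, so the integral collapses to
\[
 \|W\|^2 = \sum_{i \in \Z} |W(\t(\varpi^{i-d}))|^2
 = \sum_{i=0}^\infty q^{-i} \left| \frac{\alpha^{i+1} - \beta^{i+1}}{\alpha - \beta} \right|^2,
\]
where the vanishing for $i<0$ and the explicit formula for $i\ge 0$ are from Lemma \ref{lem:whittaker-formula-ur}.

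Next I would exploit $|\alpha| = |\beta| = 1$ to set $\gamma := \alpha/\beta$ and rewrite
\[
 \left| \frac{\alpha^{i+1} - \beta^{i+1}}{\alpha - \beta} \right|^2
 = \frac{2 - \gamma^{i+1} - \gamma^{-(i+1)}}{2 - \gamma - \gamma^{-1}}.
\]
Summing the three resulting geometric series yields
\[
 \|W\|^2 = \frac{1}{2-\gamma-\gamma^{-1}} \left[ \frac{2}{1-q^{-1}} - \frac{\gamma}{1-\gamma q^{-1}} - \frac{\gamma^{-1}}{1-\gamma^{-1} q^{-1}} \right].
\]
Clearing to the common denominator $(1-q^{-1})(1-\gamma q^{-1})(1-\gamma^{-1}q^{-1})$, the numerator factors as $(2-\gamma-\gamma^{-1})(1+q^{-1})$, so the unpleasant prefactor cancels and
\[
 \|W\|^2 = \frac{1+q^{-1}}{(1-q^{-1})(1-\gamma q^{-1})(1-\gamma^{-1}q^{-1})}.
\]
Comparing with $\zeta(1)/\zeta(2) = (1+q^{-1})/(1-q^{-1})$ and $L(1,\pi,\ad) = [(1-q^{-1})(1-\gamma q^{-1})(1-\gamma^{-1}q^{-1})]^{-1}$ (recalled at the start of this subsection) finishes the proof.

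This is essentially a routine calculation with no real obstacle; the only mild point requiring care is the cancellation in the numerator, which relies crucially on $|\alpha| = |\beta| = 1$ so that $\bar\alpha = \alpha^{-1}$ and $\bar\beta = \beta^{-1}$. In particular, one does not need any deeper input such as the Rankin--Selberg unfolding used globally in \S \ref{ssec:computation_of_<f,f>}; the formula follows directly from the Casselman--Shalika expression.
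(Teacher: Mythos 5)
Your proof is correct and follows essentially the same route as the paper: reduce to a discrete sum via right $\GL_2(\o)$-invariance, apply the Casselman--Shalika formula from Lemma~\ref{lem:whittaker-formula-ur}, use unitarity of $\chi,\mu$ to write the squared modulus in terms of $\gamma=\alpha/\beta$, and sum the resulting geometric series. The only cosmetic difference is that the paper keeps the factor $\tfrac{1}{(\alpha-\beta)(\alpha^{-1}-\beta^{-1})}$ explicit and writes the bracket as four terms rather than three, which is the same algebra.
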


\begin{proof}
By Lemma \ref{lem:whittaker-formula-ur}, we have
\begin{align*}
 \| W \|^2 & = \sum_{i=0}^{\infty} |W(\t(\varpi^{i-d}))|^2 \\ 
 & = \frac{1}{(\alpha - \beta)(\alpha^{-1} - \beta^{-1})} \cdot 
 \sum_{i=0}^\infty q^{-i}(\alpha^{i+1} - \beta^{i+1}) (\alpha^{-i-1} - \beta^{-i-1}) \\
 & = \frac{1}{(\alpha - \beta)(\alpha^{-1} - \beta^{-1})} \cdot 
 \left( \frac{1}{1 - q^{-1}}
 - \frac{\alpha \beta^{-1}}{1 - \alpha \beta^{-1} q^{-1}}
 - \frac{\alpha^{-1} \beta}{1 - \alpha^{-1} \beta q^{-1}}
 + \frac{1}{1 -q^{-1}} \right) \\
 & = \frac{1+q^{-1}}{(1 - q^{-1})(1 - \alpha \beta^{-1} q^{-1})(1 - \alpha^{-1} \beta q^{-1})}.
\end{align*}
\end{proof}

\subsubsection{The case $v \in \Sigma_{\pi,\fin}$}

Recall that $\pi = \St \otimes \chi$, where $\chi$ is unitary unramified.
Put $\alpha = \chi(\varpi)$.
We have $L(s, \pi, \ad) = \zeta(s+1)$.

\begin{lem}
\label{lem:whittaker-formula-st}
We have
\[
 W(\t(\varpi^i)) = 
 \begin{cases}
  q^{-i} \cdot \alpha^i & \text{if $i \ge 0$,} \\
  0 & \text{if $i < 0$.}
 \end{cases}
\]
\end{lem}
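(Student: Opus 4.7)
The plan is to realize $\pi = \St \otimes \chi$ as a subrepresentation of the (reducible) unramified principal series $\Ind(\chi_1 \otimes \chi_2)$ with $\chi_1 = \chi \cdot |\cdot|^{1/2}$, $\chi_2 = \chi \cdot |\cdot|^{-1/2}$, and to exploit Casselman's basis of the Iwahori-fixed subspace together with the explicit Jacquet integral for the Whittaker functional.

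First, I would recall that the two-dimensional Iwahori-fixed subspace of $\Ind(\chi_1 \otimes \chi_2)$ has the Casselman basis $\{f_1, f_w\}$ characterized by $\supp f_1 = BI$, $\supp f_w = BwI$ with $f_1(1) = f_w(w) = 1$. The spherical vector is (up to a scalar) $f_1 + f_w$, and $\pi \subset \Ind(\chi_1 \otimes \chi_2)$ is the unique irreducible subrepresentation, cut out by a single linear condition on the Iwahori-fixed line. A short computation using $I = (I \cap B) \sqcup (I \cap B) w_0 (I \cap N)$-type decompositions shows that the Iwahori-fixed line of $\pi$ is spanned by $f_w$ (and not by $f_1$, which sits in the one-dimensional quotient $\chi \circ \det$).

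Next, I would evaluate the Jacquet integral
\[
 W_{f_w}(g) = \int_{F} f_w(w \n(x) g) \, \psi(-x) \, dx
\]
at $g = \t(\varpi^i)$. The element $w \n(x) \t(\varpi^i) = \bigl( \begin{smallmatrix} 0 & -1 \\ \varpi^i & x \end{smallmatrix}\bigr)$ must be put into Iwasawa form $bk$ with $k \in I$ to test membership in $BwI$ and to read off the $B$-part. A case analysis on $\val(x)$ versus $i$ shows that the integrand is supported on $\{x : \val(x) \ge -i\}$ when $i \ge 0$ (and is empty for $i < 0$, explaining the vanishing), and on each ball $\val(x) = j$ the integrand is the constant $\chi_1(\varpi^{i+j}) \chi_2(\varpi^{-j}) q^{-(i+2j)/2} = \alpha^i q^{-i/2}(\chi_1\chi_2^{-1}(\varpi))^j q^{-j}$. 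Since $\chi_1 \chi_2^{-1} = |\cdot|$, the factor $(\chi_1\chi_2^{-1})(\varpi) q^{-1} = q^{-2}$, so summing the geometric series in $j$ collapses; combined with $\int \psi(-x)\,dx$ over each annulus, the series telescopes to $\alpha^i q^{-i/2} \cdot (\text{volume factor})$.

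The main obstacle, and the only subtle bookkeeping, is tracking the normalizations: the self-dual measure for $\psi$, the conductor $d$ of $\psi$ hidden in the definition of $W$ (which the lemma statement presumes to be zero since $v \in \Sigma_{\pi,\fin}$ lies outside the ramified places of $F$), and the rescaling needed to pass from the computed $W_{f_w}(1)$ to the normalization $W(1) = 1$ specified just before Lemma~\ref{lem:whittaker-formula-ur}. Once this scalar is determined, dividing the displayed formula by it gives exactly $W(\t(\varpi^i)) = q^{-i}\alpha^i$ for $i\ge 0$ and zero otherwise. As a sanity check, the same answer can be read off from the Hecke recurrence: the Iwahori-new vector is a $U_\varpi$-eigenvector with eigenvalue $\alpha$ (in the unitary normalization), giving $W(\t(\varpi^{i+1})) = q^{-1}\alpha\, W(\t(\varpi^i))$, which with $W(1)=1$ iterates to the claimed formula.
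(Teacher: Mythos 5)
Your plan hinges on the claim that the Iwahori\nobreakdash-fixed line of $\pi=\St\otimes\chi$ inside $\Ind(\chi_1\otimes\chi_2)$ is spanned by $f_w$, ``and not by $f_1$, which sits in the one-dimensional quotient $\chi\circ\det$.'' That is not correct, and the error is fatal. Because the quotient $\chi\circ\det$ is one-dimensional and the Iwahori-fixed subspace of $\Ind(\chi_1\otimes\chi_2)$ is two-dimensional, the kernel (that is, $\St^I$) is a specific \emph{line} inside $\C f_1\oplus\C f_w$; there is no reason for it to be one of the two coordinate axes, and in fact it is not. Both $f_1$ and $f_w$ map to nonzero vectors in the quotient. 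The paper identifies $\St^I$ by computing the intertwining operator $M$ on the Iwahori-fixed basis: $M(f_1)(1)=q^{-1}$ and $M(f_w)(1)=1$, so the kernel is spanned by $f_1-q^{-1}f_w$, not by $f_w$. If you nevertheless carry out the Jacquet integral with $f_w$ alone, the computation is clean (the support is $x\in\o$ after the substitution $x\mapsto x\varpi^{-i}$, and $f_w(w\n(x))\equiv 1$ there), and you get $\Wc_w(\t(\varpi^i))=\I_\o(\varpi^i)$, a constant in $i\ge 0$; after normalizing to $W(1)=1$ this yields $W(\t(\varpi^i))=\alpha^i$, missing the crucial decay factor $q^{-i}$. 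The $q^{-i}$ comes precisely from the $\Wc_1$ contribution, which in the linear combination $\Wc_1-q^{-1}\Wc_w$ produces $-q^{-i-1}(1+q^{-1})$ for $i\ge 0$.

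There are also arithmetic slips in your sketch of the Jacquet integral: the support should be $\{x:\val(x)\ge i\}$ (equivalently, $x\in\o$ after the change of variable $x\mapsto x\varpi^{-i}$), not $\{x:\val(x)\ge -i\}$, and the claimed integrand value $\alpha^i q^{-i/2}(\chi_1\chi_2^{-1}(\varpi))^j q^{-j}$ does not track the exponents of $q$ consistently with the stated factors. Your ``sanity check'' via the $U_\varpi$-recurrence is a reasonable idea, but as stated the Hecke eigenvalue on the new vector of $\St\otimes\chi$ in the unitary normalization is $\alpha q^{-1/2}$ rather than $\alpha$; one would need to pin down the precise normalization of the recurrence $W(\t(\varpi^{i+1}))=c\cdot W(\t(\varpi^i))$ before it can serve as a check. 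The fix for the main problem is straightforward: determine $\St^I$ via the intertwining operator (or some equivalent characterization), use the correct vector $f_1-q^{-1}f_w$, and compute both $\Wc_1$ and $\Wc_w$ as the paper does.
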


\begin{proof}
We may assume that $\chi = 1$.
We recall the exact sequence
\[
 0 \longrightarrow \St \longrightarrow \Ind(|\cdot|^{\frac{1}{2}} \otimes |\cdot|^{-\frac{1}{2}})
 \overset{M}\longrightarrow \1 \longrightarrow 0,
\]
where $M : \Ind(|\cdot|^{\frac{1}{2}} \otimes |\cdot|^{-\frac{1}{2}}) \rightarrow \Ind(|\cdot|^{-\frac{1}{2}} \otimes |\cdot|^{\frac{1}{2}})$ is the intertwining operator defined by 
\[
 M(f)(g) = \int_F f(w \n(x) g) \, dx
\]
with the Haar measure $dx$ on $F$ such that $\vol(\o, dx) = 1$.
In particular, we have
\[
 \St = \{ f \in \Ind(|\cdot|^{\frac{1}{2}} \otimes |\cdot|^{-\frac{1}{2}}) \, | \, M(f)(1) = 0 \}.
\]
Also, we have
\[
 \dim_\C \St^I = 1, \qquad 
 \dim_\C \Ind(|\cdot|^{\frac{1}{2}} \otimes |\cdot|^{-\frac{1}{2}})^I = 2.
\]
Let $f_1, f_w$ be the basis of $\Ind(|\cdot|^{\frac{1}{2}} \otimes |\cdot|^{-\frac{1}{2}})^I$ determined by
\[
 f_1|_{\GL_2(\o)} = \I_I, \qquad f_w|_{\GL_2(\o)} = \I_{IwI}.
\]
Then $f_1 - q^{-1} f_w$ is a basis of $\St^I$.
Indeed, noting that 
\[
 w \n(x) = \begin{pmatrix} & -1 \\ 1 & x \end{pmatrix}
 = \begin{pmatrix} x^{-1} & -1 \\ & x \end{pmatrix} \begin{pmatrix} 1 & \\ x^{-1} & 1 \end{pmatrix},
\]
we have
\[
 M(f_1)(1) = \sum_{j=1}^{\infty} \int_{\varpi^{-j} \o^{\times}} |x|^{-2} \, dx = \sum_{j=1}^{\infty} q^{-j} (1-q^{-1}) = q^{-1}
\]
and 
\[
 M(f_w)(1) = \int_{\o} dx = 1.
\]

We consider the Jacquet integral
\[
 \Wc_k(g) := \int_F f_k(w \n(x) g) \overline{\psi(x)} \, dx
\]
for $k = 1, w$, where we recall that $\psi$ is assumed to be of order zero.
We have 
\[
 \Wc_k(\t(y)) = |y|^{-1} \int_F f_k(w \n(xy^{-1})) \overline{\psi(x)} \, dx
 = \int_F f_k(w \n(x)) \overline{\psi(xy)} \, dx.
\]
If $k=1$, then we have
\[
 \Wc_1(\t(y)) = \sum_{j=1}^{\infty} \int_{\varpi^{-j} \o^{\times}} |x|^{-2} \overline{\psi(xy)} \, dx
 = \sum_{j=1}^\infty q^{-j} \cdot \hat{\I}_{\o^\times}(\varpi^{-j} y).
\]
Since 
\[
 \hat{\I}_{\o^\times}(x) = 
 \begin{cases}
  1-q^{-1} & \text{if $x \in \o$,} \\
  -q^{-1} & \text{if $x \in \varpi^{-1} \o^\times$,} \\
  0 & \text{otherwise,}
 \end{cases}
\]
we have
\[
 \Wc_1(\t(\varpi^i)) =
 \begin{cases}
 \sum_{j=1}^i q^{-j} (1-q^{-1}) + q^{-(i+1)} \cdot (-q^{-1}) = q^{-1} - q^{-i-1} - q^{-i-2} & \text{if $i>0$,} \\
 q^{-1} \cdot (-q^{-1}) = -q^{-2} & \text{if $i=0$,} \\
 0 & \text{if $i<0$.}
 \end{cases}
\]
If $k=w$, then we have
\[
 \Wc_w(\t(y)) = \int_\o \overline{\psi(xy)} \, dx = \I_\o(y).
\]
Hence, if we put $\Wc = \Wc_1 - q^{-1}\Wc_w$, then we have
\[
 \Wc(\t(\varpi^i)) = 
 \begin{cases}
 - q^{-i-1}(1+q^{-1}) & \text{if $i \ge 0$,} \\
 0 & \text{if $i<0$.}
 \end{cases}
\]
Thus $W = \Wc(1)^{-1} \cdot \Wc$ and the assertion follows.
\end{proof}

\begin{lem}
\label{lem:|W|^2-st}
We have
\[
 \| W \|^2 = L(1, \pi, \ad).
\]
\end{lem}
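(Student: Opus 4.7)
The plan is to compute $\|W\|^2$ directly as a sum using the explicit formula for $W(\t(\varpi^i))$ provided by Lemma \ref{lem:whittaker-formula-st}, and then match the result against $L(1,\pi,\ad)$.

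First I would decompose the integral defining $\|W\|^2$ according to the valuation of $y$. Since $W$ is right $\GL_2(\o)$-invariant (as a new vector, once suitably translated) — or more precisely, since $W(\t(y))$ depends only on $|y|$ by the standard argument using the action of $\o^\times \subset \GL_2(\o)$ — we have
\[
 \|W\|^2 = \sum_{i \in \Z} |W(\t(\varpi^i))|^2 \cdot \vol(\varpi^i \o^\times, d^\times y) = \sum_{i \in \Z} |W(\t(\varpi^i))|^2,
\]
because $d^\times y$ is the standard measure (so $\vol(\o^\times) = 1$, and hence $\vol(\varpi^i \o^\times) = 1$ for all $i$). By Lemma \ref{lem:whittaker-formula-st}, the terms with $i < 0$ vanish, and for $i \ge 0$ we have $W(\t(\varpi^i)) = q^{-i} \alpha^i$.

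Next I would use that $\pi$ is unitary: since $\pi = \St \otimes \chi$ with $\chi$ unitary unramified, $\alpha = \chi(\varpi)$ satisfies $|\alpha| = 1$. Thus
\[
 \|W\|^2 = \sum_{i=0}^\infty q^{-2i} |\alpha|^{2i} = \sum_{i=0}^\infty q^{-2i} = \frac{1}{1 - q^{-2}} = \zeta(2).
\]
Finally, recalling $L(s,\pi,\ad) = \zeta(s+1)$ in the Steinberg case (as stated just before the lemma), we conclude $\|W\|^2 = \zeta(2) = L(1,\pi,\ad)$.

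There is no genuine obstacle here: the entire proof is a geometric series, given the explicit Whittaker values already established in Lemma \ref{lem:whittaker-formula-st} and the normalization of $d^\times y$ as the standard measure giving $\vol(\o^\times)=1$. The only point requiring mild care is to note that $W$ is right-invariant under $\o^\times$ embedded in $\GL_2(\o)$ via $y \mapsto \t(y)$, which justifies replacing the integral over $F^\times$ by a sum over $\varpi^\Z$; this follows from the fact that $W$ is a new vector for the Iwahori subgroup $I$ (which contains $\t(\o^\times)$) together with the $\psi$-equivariance under $\n(x)$.
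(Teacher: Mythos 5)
Your proof is correct and takes essentially the same approach as the paper: decompose the integral into a geometric series over powers of $\varpi$ using Lemma~\ref{lem:whittaker-formula-st} and the normalization of the standard measure, then sum to get $\zeta(2)=L(1,\pi,\ad)$. You spell out the $\t(\o^\times)$-invariance justifying the passage from integral to sum, which the paper leaves implicit, but the argument is identical in substance.
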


\begin{proof}
By Lemma \ref{lem:whittaker-formula-st}, we have
\[
 \| W \|^2 = \sum_{i=0}^\infty |W(\t(\varpi^i))|^2 = \sum_{i=0}^\infty q^{-2i} = \frac{1}{1-q^{-2}}.
\]
\end{proof}

\subsubsection{The case $v \in \Sigma'_{\pi,\fin}$}

Recall that $\pi = \Ind(\chi \otimes \mu)$, where $\chi$ is unitary unramified and $\mu$ is unitary ramified of conductor $q^n$.
Put $\alpha = \chi(\varpi)$.
We have $L(s, \pi, \ad) = \zeta(s)$.

\begin{lem}
\label{lem:whittaker-formula-tr}
We have
\[
 W(\t(\varpi^i)) = 
 \begin{cases}
  q^{-\frac{i}{2}} \cdot \alpha^i & \text{if $i \ge 0$,} \\
  0 & \text{if $i < 0$.}
 \end{cases}
\]
\end{lem}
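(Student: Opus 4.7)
The plan is to realize $\pi \cong \Ind(\chi \otimes \mu)$ explicitly, pin down the new vector $f_0 \in \pi$ with respect to $(K_n, \Mu)$, and then evaluate the Jacquet integral
\[
 \Wc(g) = \int_F f_0(w\n(x) g) \overline{\psi(x)}\,dx,
\]
which produces the Whittaker function of $\pi$ up to the overall normalization enforcing $W(1)=1$.

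The first step---identifying $f_0$---is the substantive part. Parametrize $P \backslash \GL_2(F) \cong \P^1(F)$ via the bottom row; then $P \backslash \GL_2(F)/K_n$ is identified with the $K_n$-orbits on $\P^1(F)$. The orbit of $[0:1]$ is $\{[x:1] : x \in \varpi^n\o\}$, corresponding to the double coset $PK_n$. I would then check that on any other $(P,K_n)$-double coset, say $P \alpha K_n$, the consistency of the transformation law $f_0(p \alpha k) = \delta_P(p)^{1/2}(\chi \otimes \mu)(p)\mu(d_k)f_0(\alpha)$ along the joint stabilizer $\{(p,k) \in P \times K_n : p \alpha = \alpha k\}$ forces a condition that, because $\chi|_{\o^\times}=1$ while $\mu|_{\o^\times}$ is nontrivial, is incompatible with $f_0(\alpha) \ne 0$. (For example, taking $\alpha = w$, a direct computation of $P \cap wK_nw^{-1}$ produces the constraint $\mu(d)\mu(a)^{-1}=1$ for all $a,d \in \o^\times$, forcing $\mu$ to be unramified, contradiction.) Thus $f_0$ is supported on $PK_n$ with
\[
 f_0\bigl(\smat{a}{b}{0}{d} k\bigr) = |a/d|^{1/2} \chi(a) \mu(d) \mu(d_k),
\]
where $d_k$ denotes the $(2,2)$-entry of $k \in K_n$.

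For the second step, the matrix identity
\[
 w\n(x)\t(\varpi^i) = \smat{0}{-1}{\varpi^i}{x} = \smat{\varpi^i/x}{-1}{0}{x} \cdot \smat{1}{0}{\varpi^i/x}{1}
\]
(for $x \ne 0$) shows that $w\n(x)\t(\varpi^i) \in PK_n$ iff $|x| \ge q^{n-i}$, in which case $f_0(w\n(x)\t(\varpi^i)) = q^{-i/2}|x|^{-1}\alpha^i \chi(x)^{-1}\mu(x)$. Splitting the Jacquet integral into shells $x \in \varpi^{-j}\o^\times$ and using $\chi(x)^{-1}=\alpha^j$ (unramifiedness of $\chi$) yields
\[
 \Wc(\t(\varpi^i)) = q^{-i/2}\alpha^i \sum_{j \ge n-i} q^{-j}\alpha^j \int_{\varpi^{-j}\o^\times} \mu(x)\overline{\psi(x)}\,dx.
\]
Since $\psi$ has order zero and $\mu$ has conductor $q^n$, the standard Gauss-sum argument (shifting $x$ by elements of $1+\varpi^{\min(j,n)-1}\o$) shows the inner integral vanishes unless $j = n$. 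Hence only the term $j=n$ survives, which requires $i \ge 0$; for such $i$ the sum collapses to a single nonzero constant $c$ independent of $i$, times $q^{-i/2}\alpha^i$. Rescaling $\Wc$ by $\Wc(1)=c$ to enforce $W(1)=1$ yields the formula.

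The main obstacle is the first step: ruling out the other $(P,K_n)$-double cosets requires the unramifiedness of $\chi$ used in concert with the ramification of $\mu$, and is a small representation-theoretic argument rather than a computation. Once the support and explicit formula for $f_0$ are in hand, the Jacquet integral reduces to a short shell decomposition combined with the standard vanishing/Gauss-sum property of $\int_{\varpi^{-j}\o^\times}\mu(x)\psi(x)\,dx$.
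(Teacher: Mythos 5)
Your proof is correct and follows essentially the same route as the paper: take the new vector $f_0$ supported on $PK_n$ with $f_0|_{\GL_2(\o)} = \I_{K_n}\Mu$, evaluate the Jacquet integral by a shell decomposition in which only one shell survives the Gauss-sum vanishing, and normalize by $\Wc(1)$. The only differences are expository—you sketch a Mackey double-coset argument for the support of the new vector (which the paper takes as known), and you absorb $\t(\varpi^i)$ into the Iwasawa decomposition of $w\n(x)\t(\varpi^i)$ directly rather than first factoring out the torus dependence as the paper does; both lead to the same single surviving term.
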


\begin{proof}
Let $f \in \Ind(\chi \otimes \mu)$ be the new vector with respect to $(K_n, \Mu)$ determined by
\[
 f|_{\GL_2(\o)} = \I_{K_n} \Mu.
\]
We consider the Jacquet integral
\[
 \Wc(g) := \int_F f(w \n(x) g) \overline{\psi(x)} \, dx,
\]
where we recall that $\psi$ is assumed to be of order zero.
We have 
\[
 \Wc(\t(y)) = \mu(y) |y|^{-\frac{1}{2}} \int_F f(w \n(xy^{-1})) \overline{\psi(x)} \, dx
 = \mu(y) |y|^{\frac{1}{2}} \int_F f(w \n(x)) \overline{\psi(xy)} \, dx.
\]
Noting that 
\[
 w \n(x) = \begin{pmatrix} & -1 \\ 1 & x \end{pmatrix}
 = \begin{pmatrix} x^{-1} & -1 \\ & x \end{pmatrix} \begin{pmatrix} 1 & \\ x^{-1} & 1 \end{pmatrix},
\]
we have
\[
 \int_F f(w \n(x)) \overline{\psi(xy)} \, dx 
 = \sum_{j=n}^\infty \int_{\varpi^{-j} \o^{\times}} \chi(x)^{-1} \mu(x) |x|^{-1} \overline{\psi(xy)} \, dx
 = \sum_{j=n}^\infty \alpha^j \mu(\varpi)^{-j} \cdot \widehat{\I_{\o^\times} \mu}(\varpi^{-j} y).
\]
Since $\widehat{\I_{\o^\times}\mu} = q^{-n} \cdot \mathfrak{g}(\mu, \psi) \cdot \I_{\varpi^{-n} \o^\times} \mu^{-1}$, where
\[
 \mathfrak{g}(\mu, \psi) = \int_{\varpi^{-n} \o^{\times}} \mu(x) \psi(x) \, dx,
\]
we have
\begin{align*}
 \Wc(\t(\varpi^i)) & = \mu(\varpi)^i q^{-\frac{i}{2}} \cdot \alpha^{i+n} \mu(\varpi)^{-(i+n)}
 \cdot q^{-n} \cdot \mathfrak{g}(\mu, \psi) \cdot \mu(\varpi)^n \\
 & = q^{-\frac{i}{2}-n} \cdot \alpha^{i+n} \cdot \mathfrak{g}(\mu, \psi)
\end{align*}
if $i \ge 0$, and $\Wc(\t(\varpi^i)) = 0$ if $i < 0$.
Thus $W = \Wc(1)^{-1} \cdot \Wc$ and the assertion follows.
\end{proof}

\begin{lem}
\label{lem:|W|^2-tr}
We have
\[
 \| W \|^2 = L(1, \pi, \ad).
\]
\end{lem}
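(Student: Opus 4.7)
The plan is to reduce $\|W\|^2$ to a geometric series by combining Lemma \ref{lem:whittaker-formula-tr} with the right-equivariance of $W$ under $K_n$, exactly as in the proofs of Lemmas \ref{lem:|W|^2-ur} and \ref{lem:|W|^2-st}. Since $d^\times y$ is the standard measure on $F^\times$, normalized so that $\vol(\o^\times, d^\times y) = 1$, the decomposition $F^\times = \bigsqcup_{i \in \Z} \varpi^i \o^\times$ gives $\vol(\varpi^i \o^\times, d^\times y) = 1$ for every $i$.

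First I would observe that the integrand $|W(\t(y))|^2$ depends only on $|y|$. Indeed, the new vector $f$ defining $W$ satisfies $f(gk) = \Mu(k) f(g)$ for $k \in K_n$, and hence $W(gk) = \Mu(k) W(g)$. For $u \in \o^\times$ the diagonal element $\t(u) = \smat{u}{0}{0}{1}$ lies in $K_n$ with $\Mu(\t(u)) = \mu(1) = 1$, so commuting $\t(u)$ past $\t(\varpi^i)$ gives $W(\t(u \varpi^i)) = W(\t(\varpi^i))$.

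Next I would apply Lemma \ref{lem:whittaker-formula-tr}, which yields $|W(\t(\varpi^i))|^2 = q^{-i}$ for $i \ge 0$ (using $|\alpha| = 1$) and $W(\t(\varpi^i)) = 0$ for $i < 0$. Hence
\[
 \|W\|^2 = \sum_{i=0}^{\infty} |W(\t(\varpi^i))|^2 \cdot \vol(\varpi^i \o^\times, d^\times y)
 = \sum_{i=0}^{\infty} q^{-i} = \frac{1}{1-q^{-1}} = \zeta(1),
\]
which equals $L(1,\pi,\ad)$ since $L(s,\pi,\ad) = \zeta(s)$ in the ramified principal series case.

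There is no real obstacle: the only subtlety is ensuring that the right-equivariance under $\t(\o^\times) \subset K_n$ is controlled by the trivial character (so that $|W|^2$ really is a function of $|y|$ alone), which is immediate from $\Mu\!\smat{u}{0}{0}{1} = \mu(1) = 1$. Everything else is a one-line geometric series.
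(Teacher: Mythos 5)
Your proof is correct and follows essentially the same route as the paper: plug in the explicit Whittaker values from Lemma \ref{lem:whittaker-formula-tr} and sum the resulting geometric series. The extra paragraph you include checking that $|W(\t(y))|^2$ depends only on $|y|$ (via $\Mu\!\smat{u}{0}{0}{1} = \mu(1) = 1$) is a correct justification of a step the paper leaves implicit, but it is not a different method.
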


\begin{proof}
By Lemma \ref{lem:whittaker-formula-tr}, we have
\[
 \| W \|^2 = \sum_{i=0}^\infty |W(\t(\varpi^i))|^2 = \sum_{i=0}^\infty q^{-i} = \frac{1}{1-q^{-1}}.
\]
\end{proof}

\subsubsection{The case $v \in \Sigma_\infty$}

Recall that $\pi = \DS_k$ and $\psi(x) = e^{2 \pi \sqrt{-1} x}$.
It is known that
\begin{equation}
\label{eq:whittaker-formula-ds}  
 W(\t(y)) = 
 \begin{cases}
  y^{\frac{k}{2}} e^{-2 \pi y} & \text{if $y > 0$,} \\
  0 & \text{if $y < 0$.}
 \end{cases}
\end{equation}

\begin{lem}
\label{lem:|W|^2-ds}
We have
\[
 \| W \|^2 = \frac{(k-1)!}{(4\pi)^k}.
\]
\end{lem}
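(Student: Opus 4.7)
The plan is to compute $\|W\|^2$ directly from the explicit formula \eqref{eq:whittaker-formula-ds} for $W(\t(y))$, using the definition
\[
 \|W\|^2 = \int_{F^\times} |W(\t(y))|^2 \, d^\times y,
\]
where $d^\times y = dy/|y|$ is the standard measure on $\R^\times$ (see \S\ref{sssec:measures-Gm}).

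First, I would observe that by \eqref{eq:whittaker-formula-ds} the integrand vanishes on the negative reals, so the integral reduces to
\[
 \|W\|^2 = \int_0^\infty y^{k} e^{-4\pi y} \, \frac{dy}{y} = \int_0^\infty y^{k-1} e^{-4\pi y} \, dy.
\]
Then I would perform the change of variables $u = 4\pi y$, which transforms the integral into
\[
 \|W\|^2 = \frac{1}{(4\pi)^k} \int_0^\infty u^{k-1} e^{-u} \, du = \frac{\Gamma(k)}{(4\pi)^k} = \frac{(k-1)!}{(4\pi)^k},
\]
using the standard evaluation of the Gamma function at a positive integer. This yields the claimed formula.

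There is no real obstacle here; the only point requiring care is ensuring the normalization of the measure $d^\times y$ matches the one fixed in \S\ref{sssec:measures-Gm} (namely, $dy/|y|$ at a real place, with $dy$ being Lebesgue measure), and checking that the weight-normalization $W(1) = e^{-2\pi}$ fixed in \S\ref{ssec:explicit-rallis} is consistent with the formula $W(\t(y)) = y^{k/2} e^{-2\pi y}$ recalled in \eqref{eq:whittaker-formula-ds}. Both are immediate, so the computation is a one-line integral evaluation.
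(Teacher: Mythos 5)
Your proof is correct and follows the same one-line computation as the paper: substitute the explicit Whittaker function from \eqref{eq:whittaker-formula-ds} into the integral, observe that only positive $y$ contributes, and evaluate the resulting Gamma integral. The extra remarks about measure normalization and the change of variables are fine but not substantively different from what the paper does.
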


\begin{proof}
By \eqref{eq:whittaker-formula-ds}, we have
\[
 \| W \|^2 = \int_0^\infty |W(\t(y))|^2 \, \frac{dy}{y}
 = \int_0^\infty y^{k-1} e^{-4 \pi y} \, dy = \frac{\Gamma(k)}{(4\pi)^k}.
\]
\end{proof}

\subsection{Matrix coefficients of the Weil representation}
\label{ssec:mat-coeff-weil}

Suppose that $F$ is local.
In this subsection, we compute the function
\[
 \Phi(g) := \langle \omega_\psi(g) \varphi, \varphi \rangle
\]
on $\U(W) \cong B^1$ explicitly, where $\varphi \in \SS(\X)$ is the Schwartz function given in \S \ref{ssec:schwartzX}.
Since $\varphi$ is the partial Fourier transform of the Schwartz function $\varphi' \in \SS(\X')$ given in \S \ref{ssec:schwartzX'}, we have 
\[
 \Phi(g) = \langle \omega_\psi(g) \varphi', \varphi' \rangle.
\]
Put
\[
 \m(a) = \begin{pmatrix} a & \\ & a^{-1} \end{pmatrix}, \qquad
 \n(b) = \begin{pmatrix} 1 & b \\ & 1 \end{pmatrix}
\]
for $a \in F^\times$, $b \in F$.

\subsubsection{The case (ur)}
\label{sssec:mat-coeff-weil-ur}

We identify $B^\times$ with $\GL_2(F)$ via:
\begin{itemize}
 \item the isomorphism $\ii$ given by \eqref{eq:isomB-u} if $E$ is split and $F$ is unramified;
 \item the isomorphism $\Ad \smat{1}{}{}{\varpi^{-d}} \circ \ii$, where $\ii$ is the isomorphism given by \eqref{eq:isomB-u} and $\varpi^{-d} \o$ is the inverse different, if $E$ is split and $F$ is ramified; 
 \item the isomorphism $\ii$ given by \eqref{eq:isomB-J} if $E$ is inert and $J \in (F^{\times})^2$;
 \item any fixed isomorphism $\ii:B \rightarrow \M_2(F)$ such that $\ii(\o_B) = \M_2(\o)$, where $\o_B$ is the maximal order in $B$ given in \S \ref{par:schwartzX'-ur-Einert-J1}, if $E$ is inert, and $J_1 \in (F^{\times})^2$ or $J_2 \in (F^{\times})^2$;
 \item the isomorphism $\ii$ given by \eqref{eq:isomB-J} if $E$ is ramified.
\end{itemize}
Under this identification, we have $\K = \GL_2(\o)$, where $\K$ is the maximal compact subgroup of $B^\times$ given in \S \ref{sssec:schwartzX'-ur}.

\begin{lem}
\label{lem:mat-coeff-weil-ur}
We have $\Phi(\m(a)) = |a|^2$ for $a \in \o \smallsetminus \{ 0 \}$.
\end{lem}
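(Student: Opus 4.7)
The argument is a direct computation, separating into the five subcases (a)--(e) of \S\ref{sssec:schwartzX'-ur}: (a) $E$ split, $F$ unramified; (b) $E$ split, $F$ ramified; (c) $E$ inert with $J\in(F^\times)^2$; (d) $E$ inert with $J_1$ or $J_2$ in $(F^\times)^2$; (e) $E$ ramified. In each subcase, $\varphi' = c\cdot \I_\Lambda$ for a nonzero constant $c$ and a compact open $\o$-lattice $\Lambda\subset\X'$: explicitly, $(c,\Lambda)=(1,\M_2(\o))$ in (a) and (e), $(q^d,\M_2(\o))$ in (b), $(1,\o_{B_1})$ in (c), and $(1,\o_B)$ in (d). A preliminary bookkeeping check using the Haar measure normalizations of \S\ref{ssec:weilrep} shows $c^2\cdot \vol(\Lambda)=1$ in every subcase, using only the ramification hypotheses on $u, J, J_1, J_2$ recorded in \S\ref{ssec:autom-rep}: in (a) this is immediate since $\psi$ has order zero; in (b) the factor $q^d$ in $\varphi'$ compensates $\vol(\M_2(\o))=q^{-2d}$; in (c)--(e) the measure-theoretic Jacobian factors in $dx$ (namely $|uJ_1/(4s^2)|$, $|J/s^2u|$, and $1$ respectively) are units under the standing hypotheses $u, J_1/s^2, J/s^2\in\o^\times$ and $2\in\o^\times$.

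In subcases (a), (b), (c), (e), where $\X'$ is identified with $V^\dagger$, the Weil representation of $\U(W)\cong\SL_2(F)$ on $\SS(\X')$ satisfies
\[
\omega_\psi(\m(a))\varphi'(x) \;=\; |a|^2\,\varphi'(ax)
\]
by the formulas of \S\S\ref{sssec:wru}, \ref{sssec:wrJ}. For $a\in\o\smallsetminus\{0\}$ one has $a\Lambda\subseteq\Lambda$, so $\varphi'(ax)\overline{\varphi'(x)}=c^2\,\I_\Lambda(x)$ and consequently
\[
\Phi(\m(a)) \;=\; |a|^2\cdot c^2\cdot\vol(\Lambda) \;=\; |a|^2,
\]
as required. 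A minor remark is in order for subcase (b): the isomorphism $\Ad\smat{1}{}{}{\varpi^{-d}}\circ\ii$ used in \S\ref{sssec:mat-coeff-weil-ur} differs from the one used to define $\varphi'$ in \S\ref{par:schwartzX'-ur-Espl-Fram} by conjugation by a diagonal matrix; since this diagonal matrix commutes with $\m(a)$, the underlying element of $B^1$ corresponding to $\m(a)$ is the same under either identification, so the action formula applies verbatim.

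The remaining subcase (d) requires separate treatment because $\X'$ is identified with $W=B$ and $\U(W)$ acts on $\SS(\X')$ by right translation: specializing the formulas of \S\ref{sssec:wrJ1} to $\nu(g)=1$ yields $\omega_\psi(g)\varphi'(x)=\varphi'(xg)$. Writing $g=\ii^{-1}(\m(a))\in B^1$ for the fixed isomorphism $\ii$ with $\ii(\o_B)=\M_2(\o)$, we get
\[
\Phi(\m(a)) \;=\; \int_B \I_{\o_B}(xg)\,\I_{\o_B}(x)\,dx \;=\; \vol\bigl(\ii^{-1}(\{y\in\M_2(\o):y\,\m(a)\in\M_2(\o)\})\bigr).
\]
A trivial matrix multiplication shows $\{y\in\M_2(\o):y\,\m(a)\in\M_2(\o)\}=\bigl(\begin{smallmatrix}\o & a\o\\ \o & a\o\end{smallmatrix}\bigr)$ when $a\in\o$, a sublattice of $\M_2(\o)$ of index $|a|^{-2}$. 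Since any two Haar measures on $B$ are proportional, the ratio of volumes is preserved under $\ii$, so the right-hand side equals $|a|^2\cdot\vol(\o_B)=|a|^2$ by the preliminary normalization check.

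No real obstacle arises; the entire argument reduces to tracking the various identifications and Haar measure normalizations. The one mildly delicate point is subcase (d), where one must carefully unwind the formal identification $\X'\cong W$ (via the map $w\mapsto\v\otimes w$ of \S\ref{sssec:cpJ1}) and verify that under this identification $\o_B$ corresponds to $\o^4$ in the coordinates $\v_1,\ldots,\v_4$ of \eqref{eq:basisX'Y'-J1}, so that $\vol(\o_B)=1$ drops out cleanly from the measure-theoretic setup.
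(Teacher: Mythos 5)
Your proof is correct and follows essentially the same case-by-case computation as the paper's, using the same Weil representation formulas from \S\ref{ssec:weilrep} and the same Haar measure normalizations; the unifying observation that $\Phi(\m(a))=|a|^2\langle\varphi',\varphi'\rangle$ when $a\Lambda\subseteq\Lambda$ is a clean repackaging of what the paper verifies term by term. The only presentational difference is in subcase (d), where you compute the volume of the sublattice $\{y\in\M_2(\o):y\,\m(a)\in\M_2(\o)\}$ directly, whereas the paper factors the integral as $\phi(a)^2\phi(a^{-1})^2$ with $\phi(a)=\int_F\I_\o(ax)\I_\o(x)\,dx$ — these are the same computation.
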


\begin{proof}
Put
\[
 \phi(a) := \int_F \I_\o(a x) \I_\o(x) \, dx = q^{-\frac{d}{2}} \times 
 \begin{cases}
  1 & \text{if $a \in \o$,} \\
  |a|^{-1} & \text{otherwise,}
 \end{cases}
\]
where $dx$ is the self-dual Haar measure on $F$ with respect to $\psi$.
Note that $d=0$ unless $E$ is split and $F$ is ramified.

Assume that $E$ is split and $F$ is unramified.
We use the notation of \S \ref{par:schwartzX'-ur-Espl-Fur}.
Then the Weil representation $\omega_\psi$ on $\SS(\X')$ is given in \S \ref{sssec:wru}.
We have
\[
 \Phi(\m(a)) = |a|^2 \cdot \int_{\X'} \varphi'(a x) \overline{\varphi'(x)} \, dx
 = |a|^2 \cdot \prod_{i=1}^4 \int_F \I_\o(a x_i) \I_\o(x_i) \, dx_i = |a|^2 \cdot \phi(a)^4.
\]
This yields the desired identity.

Assume that $E$ is split and $F$ is ramified.
We use the notation of \S \ref{par:schwartzX'-ur-Espl-Fram}.
Then the Weil representation $\omega_\psi$ on $\SS(\X')$ is given in \S \ref{sssec:wru}, where $B^\times$ is identified with $\GL_2(F)$ via $\ii$ rather than $\Ad \smat{1}{}{}{\varpi^{-d}} \circ \ii$.
We have
\[
 \Phi(\m(a)) 
 = |a|^2 \cdot \int_{\X'} \varphi'(a x) \overline{\varphi'(x)} \, dx 
 = q^{2d} \cdot |a|^2 \cdot \prod_{i=1}^4 \int_{F} \I_\o(a x_i) \I_\o(x_i) \, dx_i
 = q^{2d} \cdot |a|^2 \cdot \phi(a)^4.
\]
This yields the desired identity.

Assume that $E$ is inert and $J \in (F^{\times})^2$.
We use the notation of \S \ref{par:schwartzX'-ur-Einert-J}.
Then the Weil representation $\omega_\psi$ on $\SS(\X')$ is given in \S \ref{sssec:wrJ}.
We have
\[
 \Phi(\m(a)) 
 = |a|^2 \cdot \int_{\X'} \varphi'(a x) \overline{\varphi'(x)} \, dx
 = |a|^2 \cdot \prod_{i=1}^4 \int_F \I_\o(a x_i) \I_\o(x_i) \, dx_i
 = |a|^2 \cdot \phi(a)^4.
\]
This yields the desired identity.

Assume that $E$ is inert and $J_1 \in (F^{\times})^2$; the case when $E$ is inert and $J_2 \in (F^{\times})^2$ is similar.
We use the notation of \S \ref{par:schwartzX'-ur-Einert-J1}.
Then the Weil representation $\omega_\psi$ on $\SS(\X')$ is given in \S \ref{sssec:wrJ1}.
We have
\[
 \Phi(\m(a)) 
 = \int_{\X'} \varphi'(x \m(a)) \overline{\varphi'(x)} \, dx
 = \int_{\M_2(F)} \varphi'(x \m(a)) \overline{\varphi'(x)} \, dx
 = \phi(a)^2 \cdot \phi(a^{-1})^2,
\]
where we identify $\X' \cong W$ with $\M_2(F)$ via the fixed isomorphism $\ii$ and normalize the Haar measure on $\M_2(F)$ so that $\vol(\M_2(\o)) = 1$.
This yields the desired identity.

Assume that $E$ is ramified.
We use the notation of \S \ref{par:schwartzX'-ur-Eram}.
Then the Weil representation $\omega_\psi$ on $\SS(\X')$ is given in \S \ref{sssec:wrJ}.
We have
\[
 \Phi(\m(a)) 
 = |a|^2 \cdot \int_{\X'} \varphi'(a x) \overline{\varphi'(x)} \, dx
 = |a|^2 \cdot \prod_{i=1}^4 \int_{F} \I_\o(a x_i) \I_\o(x_i) \, dx_i
 = |a|^2 \cdot \phi(a)^4.
\]
This yields the desired identity.
\end{proof}

\subsubsection{The case (rps)}
\label{sssec:mat-coeff-weil-tr}

We identify $B^\times$ with $\GL_2(F)$ via the isomorphism $\ii$ given by \eqref{eq:isomB-u}.
Under this identification, we have $\K = \GL_2(\o)$, where $\K$ is the maximal compact subgroup of $B^\times$ given in \S \ref{sssec:schwartzX'-tr}.
We write $\Phi = \Phi_\mu$ to indicate the dependence of $\varphi = \varphi_\mu$ on a unitary ramified character $\mu$ of conductor $q^n$.

\begin{lem}
\label{lem:mat-coeff-weil-tr}
We have
\[
 \Phi_\mu(\n(b) \m(a)) = 
 \begin{cases}
  \mu(a) & \text{if $a \in \o^\times$ and $b \in \o$,} \\
  0 & \text{otherwise.}
 \end{cases}
\]
\end{lem}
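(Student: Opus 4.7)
The plan is to compute $\Phi_\mu(\n(b)\m(a)) = \langle \omega_\psi(\n(b)\m(a))\varphi', \varphi' \rangle$ directly in the model $\SS(\X') \cong \SS(\M_2(F))$ of \S \ref{sssec:cpu} using the explicit formulas for $\omega_\psi|_{\U(W)}$ recalled in \S \ref{sssec:wru}. Under the identification $\X' \cong V^\dagger \cong \M_2(F)$ with the form \eqref{eq:M_2(F)-form}, one checks immediately that $\tfrac{1}{2}\langle x, x\rangle^\dagger = \det(x)$, so that
\[
 \omega_\psi(\n(b)\m(a))\varphi'(x) = |a|^2 \cdot \psi(b\, a^2 \det x) \cdot \varphi'(ax).
\]

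The first step is to observe that $\omega_\psi(\m(a))\varphi' = 0$ on the support of $\varphi'$ unless $a \in \o^\times$. Indeed, $\varphi'(x)\varphi'(ax)$ involves the factor $\I_{\o^\times}(x_4) \I_{\o^\times}(ax_4)$, which forces $|x_4| = |ax_4| = 1$, so $|a| = 1$. When $a \in \o^\times$, all the other support conditions $\I_\o, \I_{\varpi^n\o}$ are preserved and $\mu(ax_4) = \mu(a)\mu(x_4)$, yielding the simple identity
\[
 \omega_\psi(\m(a))\varphi' \;=\; \mu(a)\,\varphi' \qquad (a \in \o^\times).
\]

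The second step, assuming $a \in \o^\times$, is to evaluate
\[
 \Phi_\mu(\n(b)\m(a)) = \mu(a)\cdot q^{n+1}(q-1)^{-1} \int_{\o}\!\int_{\o}\!\int_{\varpi^n \o}\!\int_{\o^\times} \psi\bigl(b(x_1 x_4 - x_2 x_3)\bigr) \,dx_1\,dx_2\,dx_3\,dx_4,
\]
where we used $|\mu(x_4)|=1$ and the fact that $dx_i$ is self-dual with respect to $\psi$ of order zero. Integrating first in $x_1$ over $\o$ gives $\widehat{\I_\o}(-bx_4) = \I_\o(bx_4)$; since $x_4 \in \o^\times$, this forces $b \in \o$ (otherwise the integral vanishes) and equals $1$ when $b \in \o$. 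Given $b \in \o$, the remaining factor $\psi(-bx_2 x_3)$ satisfies $bx_2 x_3 \in \o$ for $x_2 \in \o$ and $x_3 \in \varpi^n\o$, hence is trivial on the domain of integration. The remaining volumes contribute $\vol(\o)^2 \cdot \vol(\varpi^n\o) \cdot \vol(\o^\times) = q^{-n}(1-q^{-1}) = q^{-n-1}(q-1)$, which cancels the prefactor $q^{n+1}(q-1)^{-1}$, leaving $\Phi_\mu(\n(b)\m(a)) = \mu(a)$. This gives the claimed formula, the "otherwise" case being covered either by the first step (if $a \notin \o^\times$) or by the vanishing of the $x_1$-integral (if $b \notin \o$).

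There is no serious obstacle; the only points requiring care are the identification of $\tfrac{1}{2}\langle\cdot,\cdot\rangle^\dagger$ with $\det$ on $\M_2(F)$, and the bookkeeping of the normalization constant $q^{(n+1)/2}(q-1)^{-1/2}$ of $\varphi'$, which is precisely engineered so that $\langle \varphi',\varphi'\rangle = 1$, and therefore so that $\Phi_\mu(1) = 1$, as a consistency check on the final answer.
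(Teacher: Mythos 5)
Your proof is correct and follows essentially the same route as the paper: both compute $\Phi_\mu(\n(b)\m(a))$ directly in the Schr\"odinger model on $\SS(\X') \cong \SS(\M_2(F))$, use the factor $\I_{\o^\times}(ax_4)\I_{\o^\times}(x_4)$ to force $a \in \o^\times$, use the Fourier transform of $\I_\o$ in $x_1$ to force $b\in\o$ (with the remaining oscillatory factor $\psi(-bx_2x_3)$ then trivial on the support), and check that the volumes cancel the normalization of $\varphi'$. One small slip: the formula $\omega_\psi(\n(b)\m(a))\varphi'(x) = |a|^2\psi(ba^2\det x)\varphi'(ax)$ should not have the factor $a^2$; the correct identity is $\omega_\psi(\n(b)\m(a))\varphi'(x) = |a|^2\psi(b\det x)\varphi'(ax)$, since $\omega_\psi(\n(b)\m(a)) = \omega_\psi(\n(b))\circ\omega_\psi(\m(a))$ and the multiplier $\psi(\tfrac12 b\langle x,x\rangle^\dagger) = \psi(b\det x)$ is applied after the dilation (your displayed integral in Step~2 in fact reverts to the correct version with $\psi(b(x_1x_4-x_2x_3))$). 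Since the conclusion is only about whether $a\in\o^\times$ and $b\in\o$, and $a^2\in\o^\times$ when $a\in\o^\times$, this does not affect the final answer.
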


\begin{proof}
We use the notation of \S \ref{sssec:schwartzX'-tr}.
Then the Weil representation $\omega_\psi$ on $\SS(\X')$ is given in \S \ref{sssec:wru}.
We have
\begin{align*}
 & \Phi_\mu(\n(b) \m(a)) =
 |a|^2 \cdot \int_{\X'} \varphi'(a x) \overline{\varphi'(x)}
 \psi\left( \frac{1}{2} b \langle x, x \rangle^\dagger \right) dx \\
 & = q^{n+1} (q-1)^{-1} \cdot \mu(a) \cdot |a|^2 \\
 & \quad \times \int_{F^4} \I_\o(a x_1) \I_\o(x_1) \I_\o(a x_2) \I_\o(x_2)
 \I_{\varpi^n \o}(a x_3) \I_{\varpi^n \o}(x_3) \I_{\o^{\times}}(a x_4) \I_{\o^{\times}}(x_4) 
 \psi( b(x_1 x_4 - x_2 x_3)) \, dx_1 \, dx_2 \, dx_3 \, dx_4.
\end{align*}
Since $\I_{\o^{\times}}(a x_4) \I_{\o^{\times}}(x_4) = \I_{\o^{\times}}(a) \I_{\o^{\times}}(x_4)$, the above integral is zero unless $a \in \o^\times$, in which case it is equal to 
\begin{align*}
 & \int_{F^4} \I_\o(x_1) \I_\o(x_2) \I_{\varpi^n \o}(x_3) \I_{\o^{\times}}(x_4) 
 \psi( b(x_1 x_4 - x_2 x_3)) \, dx_1 \, dx_2 \, dx_3 \, dx_4 \\
 & = \int_F \I_{\o}(b x_3) \I_{\varpi^n \o}(x_3) \, dx_3 \cdot 
 \int_F \I_\o(b x_4) \I_{\o^{\times}}(x_4) \, dx_4 \\
 & = 
 \begin{cases}
 q^{-n} (1-q^{-1}) & \text{if $b \in \o$,} \\
 0 & \text{otherwise.}
 \end{cases}
\end{align*}
This yields the lemma.
\end{proof}

\subsubsection{The case (st)}
\label{sssec:mat-coeff-weil-st}

We identify $B^\times$ with $\GL_2(F)$ via:
\begin{itemize}
 \item the isomorphism $\ii$ given by \eqref{eq:isomB-u} if $B_1$ and $B_2$ are split;
 \item the isomorphism $\ii$ given by \eqref{eq:isomB-J} if $B_1$ and $B_2$ are ramified.
\end{itemize}
Under this identification, we have $\K = \GL_2(\o)$, where $\K$ is the maximal compact subgroup of $B^\times$ given in \S \ref{sssec:schwartzX'-st}.
Put
\[
 w = \begin{pmatrix} & -1 \\ 1 & \end{pmatrix}.
\]

\begin{lem}
\label{lem:mat-coeff-weil-st}
We have
\begin{align*}
 \Phi(\m(\varpi^i)) & =
 \begin{cases}
  q^{-2i} & \text{if $i \ge 0$,} \\
  q^{2i} & \text{if $i \le 0$,}
 \end{cases} \\
 \Phi(\m(\varpi^i) w) & =
 \begin{cases}
  \gamma_{B_1} \cdot q^{-2i-1} & \text{if $i \ge 0$,} \\
  \gamma_{B_1} \cdot q^{2i+1} & \text{if $i < 0$,}
 \end{cases}
\end{align*}
where  
\[
 \gamma_{B_1} = 
 \begin{cases}
  1 & \text{if $B_1$ is split,} \\
  -1 & \text{if $B_1$ is ramified.}
 \end{cases}
\]
\end{lem}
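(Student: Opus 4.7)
Since the partial Fourier transform $\SS(\X') \to \SS(\X)$ is a unitary $\G(\U(V)^0\times\U(W))$-equivariant map (cf.\ \S\ref{ssec:pft}), it suffices to compute $\Phi(g)=\langle\omega_\psi(g)\varphi',\varphi'\rangle$ using the explicit $\varphi'$ and Weil-representation formulas from \S\S\ref{sssec:schwartzX'-st}, \ref{ssec:weilrep}. In both subcases, $\omega_\psi(\m(a))\varphi'(x)=|a|^2\,\varphi'(ax)$ and $\omega_\psi(w)\varphi'(x)=\gamma_{B_1}\int_{\X'}\varphi'(y)\,\psi(-\langle x,y\rangle^\dagger)\,dy$, with the integration measure on $\X'$ normalized as in \S\ref{ssec:weilrep} so that $\langle\varphi',\varphi'\rangle=1$.

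For $\Phi(\m(\varpi^i))$, the integral reduces to computing the Lebesgue volume of the intersection of $\varpi^i\cdot\operatorname{supp}(\varphi')$ and $\operatorname{supp}(\varphi')$. In the case when $B_1$, $B_2$ are split, $\varphi'=q^{1/2}\I_{\M_2(\o)}$ twisted by $\I_{\o}\I_{\o}\I_{\p}\I_{\o}$ in the coordinates \eqref{eq:basisX'Y'-u}; for $i\ge 0$ the intersection is just the support of $\varphi'$ (volume $q^{-1}$), giving $\Phi(\m(\varpi^i))=q^{-2i}\cdot q\cdot q^{-1}=q^{-2i}$, while for $i<0$ the support shrinks by a factor of $q^{4i-1+1}=q^{4i}$, giving $q^{2i}$. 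In the case when $B_1$, $B_2$ are ramified, $\varphi'=q^{1/2}\I_{\o_{B_1}}$ and the same case analysis (using that the Haar measure on $\X'\cong B_1$ carries the extra factor $|uJ_1|/4=q^{-1}$) yields the same answer.

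For $\Phi(\m(\varpi^i)w)$, first compute $\omega_\psi(w)\varphi'$ explicitly: since $\varphi'$ is a product of characteristic functions of $\o$-lattices in each coordinate and $\langle\cdot,\cdot\rangle^\dagger$ is diagonal (up to the pairing matrix) in the chosen bases, the integral decomposes into one-variable Fourier transforms $\hat\I_\o=\I_\o$ and $\hat\I_\p=q^{-1}\I_{\varpi^{-1}\o}$. In the split case this yields $\omega_\psi(w)\varphi'(x)=q^{-1/2}\I_\o(x_1)\I_{\varpi^{-1}\o}(x_2)\I_\o(x_3)\I_\o(x_4)$ (the asymmetry coming from the swap induced by the form $\tr(xy^*)$), and in the ramified case one obtains analogously $-q^{-1/2}\I_\o(x_1)\I_\o(x_2)\I_{\varpi^{-1}\o}(x_3)\I_{\varpi^{-1}\o}(x_4)$ (the sign being $\gamma_{B_1}=-1$). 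Then $\omega_\psi(\m(\varpi^i))$ just scales the argument by $\varpi^i$, and pairing against $\overline{\varphi'}$ reduces once again to computing the Lebesgue volume of an explicit intersection of translated $\o$-lattices.

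The only delicate point is the case division according to the sign of $i$, where the shifted lattice $\varpi^{-1-i}\o$ crosses from being contained in to containing $\p$ (and similarly for $\o$). Tracking these crossings carefully produces the piecewise formulas $q^{-2i-1}$ for $i\ge 0$ and $q^{2i+1}$ for $i<0$, each multiplied by $\gamma_{B_1}$. This case analysis, rather than any deep input, is the only real obstacle; everything else is routine computation with Gaussian–type sums over $\o/\p$.
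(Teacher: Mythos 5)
The proposal is correct and follows essentially the same approach as the paper: reduce to a direct computation with the explicit Schwartz function $\varphi'$ on $\X'$ (using the unitary $\G(\U(V)^0 \times \U(W))$-equivariance of the partial Fourier transform), apply the Weil representation formulas for $\m(a)$ and $w$ from \S\ref{ssec:weilrep}, compute $\omega_\psi(w)\varphi'$ by one-variable Fourier transforms of characteristic functions of fractional ideals (picking up the factor $\gamma_{B_1}$ in the ramified case), and evaluate the resulting pairing as a product of volumes $\phi(j,k)=\int_F \I_{\p^j}\I_{\p^k}\,dx$, with the case split in $i$ tracking which lattice contains which. The student's bookkeeping of the measure factor $|uJ_1/(4s^2)|=q^{-1}$ in the ramified case and of the "coordinate swap" under $w$ in the split case matches the paper's computation exactly.
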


\begin{proof}
For convenience, we write $\p^i = \varpi^i \o$ for $i \in \Z$.
Put
\[
 \phi(j,k) := \int_F \I_{\p^j}(x) \I_{\p^k}(x) \, dx = 
 \begin{cases}
  q^{-j} & \text{if $j \ge k$,} \\
  q^{-k} & \text{if $j \le k$}
 \end{cases}
\]
for $j,k \in \Z$, where $dx$ is the self-dual Haar measure on $F$ with respect to $\psi$.

Assume that $B_1$ and $B_2$ are split.
We use the notation of \S \ref{par:schwartzX'-st-B1spl}.
Then the Weil representation $\omega_\psi$ on $\SS(\X')$ is given in \S \ref{sssec:wru}.
We have
\begin{align*}
 \varphi'(x) & = q^{\frac{1}{2}} \cdot \I_{\o}(x_1) \I_{\o}(x_2) \I_{\p}(x_3) \I_{\o}(x_4), \\
 \omega_\psi(w) \varphi'(x) & = q^{-\frac{1}{2}} \cdot \I_{\o}(x_1) \I_{\p^{-1}}(x_2) \I_{\o}(x_3) \I_{\o}(x_4),
\end{align*}
so that 
\begin{align*}
 \Phi(\m(\varpi^i)) & = q^{-2i} \cdot \int_{\X'} \varphi'(\varpi^i x) \overline{\varphi'(x)} \, dx = q^{-2i+1} \cdot \phi(-i, 0)^3 \cdot \phi(-i+1, 1), \\
 \Phi(\m(\varpi^i) w) & = q^{-2i} \cdot \int_{\X'} \omega_\psi(w) \varphi'(\varpi^i x) \overline{\varphi'(x)} \, dx = q^{-2i} \cdot \phi(-i,0)^2 \cdot \phi(-i-1,0) \cdot \phi(-i,1).
\end{align*}
This yields the desired identity.

Assume that $B_1$ and $B_2$ are ramified.
We use the notation of \S \ref{par:schwartzX'-st-B1ram}.
Then the Weil representation $\omega_\psi$ on $\SS(\X')$ is given in \S \ref{sssec:wrJ}.
We have
\begin{align*}
 \varphi'(x) & = q^{\frac{1}{2}} \cdot \I_{\o}(x_1) \I_{\o}(x_2) \I_{\o}(x_3) \I_{\o}(x_4), \\
 \omega_\psi(w) \varphi'(x) & = - q^{-\frac{1}{2}} \cdot \I_{\o}(x_1) \I_{\o}(x_2) \I_{\p^{-1}}(x_3) \I_{\p^{-1}}(x_4),
\end{align*}
so that 
\begin{align*}
 \Phi(\m(\varpi^i)) & = q^{-2i} \cdot \int_{\X'} \varphi'(\varpi^i x) \overline{\varphi'(x)} \, dx = q^{-2i} \cdot \phi(-i,0)^4, \\
 \Phi(\m(\varpi^i) w) & = q^{-2i} \cdot \int_{\X'} \omega_\psi(w) \varphi'(\varpi^i x) \overline{\varphi'(x)} \, dx = - q^{-2i-1} \cdot \phi(-i,0)^2 \cdot \phi(-i-1,0)^2.
\end{align*}
This yields the desired identity.
\end{proof}

\subsubsection{The case (1d)}
\label{sssec:mat-coeff-weil-1d}

Let $\K = \o_B^\times$ be the unique maximal compact subgroup of $B^\times$.
We have $B^1 \subset \K$.

\begin{lem}
\label{lem:mat-coeff-weil-1d}
We have $\Phi(g) = 1$ for $g \in B^1$.
\end{lem}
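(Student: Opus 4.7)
The plan is to deduce the assertion directly from the equivariance properties of the Schwartz function $\varphi'$ already established in \S\ref{sssec:schwartzX'-1d}, together with the $\G(\U(V)^0 \times \U(W))$-equivariance of the partial Fourier transform $\SS(\X') \cong \SS(\X)$, which in particular gives $\Phi(g) = \langle \omega_\psi(g)\varphi', \varphi'\rangle$.

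First, I would observe that in case (1d), $B$ is a division quaternion algebra over a non-archimedean local field $F$, so its unique maximal order $\o_B$ coincides with the set of elements of integral reduced norm. In particular, $B^1 \subset \o_B^\times = \K$, which justifies the remark already made in the paper that $B^1 \subset \K$. Hence every $g \in B^1$ is eligible to play the role of ``$k$'' in the equivariance statement of \S\ref{sssec:schwartzX'-1d}.

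Next, I would specialize the equivariance
\[
\omega_\psi(k,(k_1,k_2))\varphi' = \varphi', \qquad k \in \K,\ k_1 \in \II_1,\ k_2 \in \K_2,\ \nu(k)=\nu(k_1)\nu(k_2),
\]
to $k=g \in B^1$ and $k_1 = 1 \in \II_1$, $k_2 = 1 \in \K_2$. The similitude condition $\nu(g) = 1 = \nu(1)\nu(1)$ is automatic since $g \in B^1$. Viewing $g$ as the element $(1,1;g)$ of $\G(\U(V)^0 \times \U(W))$ corresponding to the inclusion $\U(W) \hookrightarrow \G(\U(V)^0\times \U(W))$, this yields $\omega_\psi(g)\varphi' = \varphi'$, and hence
\[
\Phi(g) = \langle \omega_\psi(g)\varphi', \varphi' \rangle = \langle \varphi',\varphi'\rangle = 1,
\]
where the last equality uses that the measure on $\X'$ was normalized in \S\ref{sssec:pftJ1} (together with the factor $q^{\frac{1}{2}} \cdot \I_{\o_B}$ in the definition of $\varphi'$) precisely so that $\langle \varphi',\varphi'\rangle = 1$. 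There is no real obstacle here: the substantive work has already been done in the verification of the equivariance of $\varphi'$ under $\K \times \II_1 \times \K_2$ in \S\ref{sssec:schwartzX'-1d} and in the normalization of Haar measures in \S\ref{sssec:pftJ1}; the lemma is simply the shadow they cast on the compact group $B^1$.
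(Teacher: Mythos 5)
Your proof is correct and follows exactly the same line as the paper's: invoke the $\K$-equivariance of $\varphi'$ from \S\ref{sssec:schwartzX'-1d} (with $B^1 \subset \K$) to get $\omega_\psi(g)\varphi' = \varphi'$, then conclude via $\langle\varphi',\varphi'\rangle = 1$. Your write-up simply makes explicit the justifications (the similitude condition, the identification of $g$ with $(1,1;g)$, the normalization of the Haar measure) that the paper leaves tacit.
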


\begin{proof}
We use the notation of \S \ref{sssec:schwartzX'-1d}.
We have $\omega_\psi(g) \varphi' = \varphi'$ and hence $\Phi(g) = \langle \varphi', \varphi' \rangle = 1$ for all $g \in B^1$.
\end{proof}

\subsubsection{The case (ds)}
\label{sssec:mat-coeff-weil-ds}

We identify $B^\times$ with $\GL_2(F)$ via the isomorphism $\ii$ given by \eqref{eq:isomB-J}.
We write $\Phi = \Phi_k$ to indicate the dependence of $\varphi = \varphi_k$ on a non-negative integer $k$.

\begin{lem}
\label{lem:mat-coeff-weil-ds}
We have
\[
 \Phi_k(\m(a)) = \left( \frac{a+a^{-1}}{2} \right)^{-k-2}
\]
for $a > 0$.
\end{lem}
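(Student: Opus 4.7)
The plan is to compute $\Phi_k(\m(a))$ directly from the Schr\"odinger model. By the formula for the Weil representation on $\SS(\X')$ recorded in \S\ref{sssec:wrJ}, for $a \in F^{\times}$ we have $\omega_\psi(\m(a))\varphi'_k(x) = |a|^2 \varphi'_k(ax)$, so that
\[
\Phi_k(\m(a)) = |a|^2 \int_{\X'} \varphi'_k(ax)\,\overline{\varphi'_k(x)} \, dx.
\]
Using the explicit Schwartz function from \S\ref{sssec:schwartzX'-ds}, together with the identity $(x_2 - x_1\i)(x_2 + x_1\i) = x_2^2 - u x_1^2 = x_2^2 + v^2 x_1^2$ coming from $\i = v\sqrt{-1}$ and $u = -v^2$, I would rewrite the integrand as
\[
\varphi'_k(ax)\overline{\varphi'_k(x)} = c_k^{-1} a^k (x_2^2 + v^2 x_1^2)^k \exp\!\left(-\tfrac{\pi(a^2+1)}{2v}(x_2^2 + v^2 x_1^2 + x_4^2 + v^2 x_3^2)\right),
\]
so that the integrand splits as a product of a $(x_1,x_2)$-part and a $(x_3,x_4)$-part.

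Next I would evaluate the two pieces. The Haar measure on $\X'$ is $dx = \frac{|u|}{4} dx_1\cdots dx_4 = \frac{v^2}{4} dx_1\cdots dx_4$; after the substitutions $y_1 = v x_1$, $y_3 = v x_3$ (contributing a Jacobian of $v^{-2}$), both integrals become standard Gaussians on $\mathbb{R}^2$. The $(x_3,x_4)$-integral is the area integral of $e^{-\alpha(y_3^2+y_4^2)}$ with $\alpha = \pi(a^2+1)/(2v)$, which equals $\pi/\alpha = 2v/(a^2+1)$. For the $(x_1,x_2)$-integral, passing to polar coordinates and letting $s = r^2$ produces $\pi \Gamma(k+1) \alpha^{-k-1}/v = k!\cdot 2^{k+1} v^k / (\pi^k (a^2+1)^{k+1})$.

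Finally, I would collect all factors. Substituting the value $c_k = k! v^{k+2}/(4\pi^k)$ from \S\ref{par:schwartzX'-ds-B1spl}, the prefactors $|a|^2 \cdot c_k^{-1} a^k$ combine with the two Gaussian evaluations and the measure factor $v^2/4$; the powers of $v$, $\pi$, $k!$ and $4$ all cancel, leaving
\[
\Phi_k(\m(a)) = \frac{2^{k+2} a^{k+2}}{(a^2+1)^{k+2}} = \left(\frac{2a}{a^2+1}\right)^{\!k+2} = \left(\frac{a+a^{-1}}{2}\right)^{\!-(k+2)}
\]
for $a > 0$, as required. The work here is essentially bookkeeping; the only genuine subtlety is keeping track of the three normalizations --- the Haar measure on $\X'$, the constant $c_k$, and the sign convention $u<0$ that makes the Gaussian decay --- and observing that the answer is independent of whether $B_1, B_2$ are split or ramified, since the formula for $\varphi'_k$ in \S\ref{par:schwartzX'-ds-B1spl} and \S\ref{par:schwartzX'-ds-B1ram} is the same.
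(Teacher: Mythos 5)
Your proposal follows the same route as the paper: express $\Phi_k(\m(a))$ via $\omega_\psi(\m(a))\varphi_k'(x)=|a|^2\varphi_k'(ax)$, use $\overline{(x_2-x_1\i)^k}=(x_2+x_1\i)^k$ to rewrite the integrand as $c_k^{-1}a^k(x_2^2-ux_1^2)^k$ times a Gaussian, split into two planar Gaussian integrals, and substitute the value of $c_k$; the observation that the split and ramified cases give identical computations is also the paper's. One small warning on your bookkeeping: with the measure $\frac{|u|}{4}dx_1\cdots dx_4$ and the substitutions $y_1=vx_1$, $y_3=vx_3$ absorbed into a single Jacobian factor $v^{-2}$, the standard Gaussian for the $(x_1,x_2)$ block should be $\pi k!\,\alpha^{-k-1}$ \emph{without} the extra $/v$ you display (otherwise you pick up a stray $v^{-1}$); since your final formula is nonetheless correct, this is a slip in the write-up rather than a gap in the argument.
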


\begin{proof}
Assume that $B_1$ and $B_2$ are split.
We use the notation of \S \ref{par:schwartzX'-ds-B1spl}.
Then the Weil representation $\omega_\psi$ on $\SS(\X')$ is given in \S \ref{sssec:wrJ}.
We have
\begin{align*}
 \Phi_k(\m(a)) 
 & = a^2 \cdot \int_{\X'} \varphi'(a x) \overline{\varphi'(x)} \, dx \\
 & = \frac{|u|}{4} \cdot c_k^{-1} \cdot a^{k+2} \cdot
 \int_{F^4} (x_2^2 - u x_1^2)^k \cdot e^{-\frac{\pi}{2v} (a^2+1) (x_2^2 - ux_1^2 + x_4^2 - u x_3^2)} \, dx_1 \cdots dx_4 \\
 & = \frac{|u|}{4} \cdot c_k^{-1} \cdot a^{k+2} \cdot v^{-2} \cdot 
 \left(\frac{\pi}{2v} (a^2+1)\right)^{-k-2} \cdot \phi(k) \cdot \phi(0), 
\end{align*}
where 
\[
 \phi(k) := \int_{-\infty}^\infty \int_{-\infty}^\infty (x^2 + y^2)^k e^{-(x^2+y^2)} \, dx \, dy
\]
with the Lebesgue measures $dx$, $dy$ on $\R$.
Since 
\[
 \phi(k) = \int_{0}^{2 \pi} \int_{0}^{\infty} r^{2k} e^{-r^2} r \, dr \, d \theta 
 = \pi \cdot \int_{0}^{\infty} r^k e^{-r} \, dr
 = \pi \cdot k!,
\]
we have
\[
 \Phi_k(\m(a)) = \frac{|u|}{4} \cdot \frac{4 \pi^k}{k! |u|^{\frac{k}{2}+1}}
 \cdot a^{k+2} \cdot v^{-2} \cdot \left(\frac{\pi}{2v} (a^2+1)\right)^{-k-2} \cdot \pi^2 \cdot k!
 = \left( \frac{a+a^{-1}}{2} \right)^{-k-2}.
\]

Assume that $B_1$ and $B_2$ are ramified.
We use the notation of \S \ref{par:schwartzX'-ds-B1ram}.
Then the Weil representation $\omega_\psi$ on $\SS(\X')$ is given in \S \ref{sssec:wrJ} and the computation is the same as in the case when $B_1$ and $B_2$ are split.
\end{proof}

\subsubsection{The case (fd)}
\label{sssec:mat-coeff-weil-fd}

We identify $\C^\times$ with a subgroup of $B^\times$ via the isomorphism $E \cong \C$ such that $\i/\sqrt{-1} > 0$ and the fixed embedding $E \hookrightarrow B$.
Let $\phi_k$ be the matrix coefficient of $\Sym^k$ such that 
\begin{itemize}
 \item $\phi_k(\alpha g \beta) = \chi_k(\alpha) \chi_k(\beta) \phi_k(g)$ for $\alpha, \beta \in \C^\times$ and $g \in B^\times$,
 \item $\phi_k(1) = 1$.
\end{itemize}
We write $\Phi = \Phi_k$ to indicate the dependence of $\varphi = \varphi_k$ on a non-negative integer $k$.

\begin{lem}
\label{lem:mat-coeff-weil-fd}
We have $\Phi_k(g) = \overline{\phi_k(g)}$ for $g \in B^1$.
\end{lem}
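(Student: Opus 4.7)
The strategy is a direct computation, exploiting the explicit Gaussian formula for $\varphi'$ in \S\ref{sssec:schwartzX'-fd} together with the especially simple form of the Weil representation of $\U(W)$ in this case. The plan is to parameterize $g \in B^1$ as $g = \gamma + \delta \j$ with $\gamma, \delta \in E$ and $|\gamma|^2 - J |\delta|^2 = 1$, and to verify that both $\Phi_k(g)$ and $\overline{\phi_k(g)}$ equal $(\gamma^\rho)^k$.

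For $\Phi_k$, the formulas in \S\ref{sssec:wrJ1} show that since $\nu(g) = 1$ for $g \in \U(W) = B^1$, the action on $\SS(\X') \cong \SS(W)$ is simply right multiplication on $W = B$: $\omega_\psi(g) \varphi'(x) = \varphi'(xg)$. After the identification $\X' \cong W$ from \S\ref{sssec:cpJ1} and writing $x = z_1 + \tfrac{z_2}{s} \j$, the measure becomes $dx = d^2 z_1 \, d^2 z_2$. A direct calculation using $\j a = a^\rho \j$, $\j^2 = J$, and $s^2 = -J$ yields
\[
 xg = \bigl( z_1 \gamma + \tfrac{J z_2 \delta^\rho}{s} \bigr) + \tfrac{1}{s}(s z_1 \delta + z_2 \gamma^\rho) \j,
\]
and shows that the Gaussian exponent $|z_1|^2 + |z_2|^2$ in $\varphi'$ is preserved: the diagonal coefficients both equal the $B^1$-norm $|\gamma|^2 - J|\delta|^2 = 1$, and the cross-terms cancel precisely because $\tfrac{J}{s} + s = 0$. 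Hence
\[
 \Phi_k(g) = c_k^{-1} \int_{E \times E} \bigl( z_1^\rho \gamma^\rho + \tfrac{J z_2^\rho \delta}{s} \bigr)^k z_1^k \, e^{-\pi v (|z_1|^2 + |z_2|^2)} \, d^2 z_1 \, d^2 z_2.
\]
Expanding by the binomial theorem and using that $\int_E (z^\rho)^m e^{-\pi v |z|^2} \, d^2 z$ vanishes for $m \ne 0$ by rotation invariance, only the $j = k$ term survives; combining the standard Gaussian integrals $\int_E |z|^{2k} e^{-\pi v |z|^2} \, d^2 z = k!/(\pi^k v^{k+1})$ and $\int_E e^{-\pi v |z|^2} \, d^2 z = 1/v$ with $c_k = k!/(\pi^k v^{k+2})$ gives $\Phi_k(g) = (\gamma^\rho)^k$.

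For $\overline{\phi_k(g)}$, I will use the complexified embedding $B \otimes_\R \C \hookrightarrow \M_2(\C)$ sending $\i \mapsto \operatorname{diag}(\i, \i^\rho)$ and $\j \mapsto \smat{0}{s}{-s}{0}$, which restricts to $B^1 \hookrightarrow \SU(2)$ with $g \mapsto M_g = \smat{\gamma}{s \delta^\rho}{-s \delta}{\gamma^\rho}$. One checks directly that $g \mapsto \gamma^k = (M_g)_{11}^k$ is the matrix coefficient of $\Sym^k$ attached to the highest weight vector $X^k$; it satisfies $\phi_k(\alpha g \beta) = \chi_k(\alpha) \chi_k(\beta) \phi_k(g)$ with $\phi_k(1) = 1$ (using $\chi_k|_{E^1}(\alpha) = \alpha^k$), and so must equal $\phi_k$ by uniqueness. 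Conjugating gives $\overline{\phi_k(g)} = (\gamma^\rho)^k = \Phi_k(g)$. The only delicate point — hardly an obstacle, but the place where errors easily creep in — is bookkeeping: keeping the identifications $E \cong \C$ (via $\i = v \sqrt{-1}$), the Galois action $\rho$, the complex conjugation, and the right-versus-left conventions for the action of $B$ on $W$ consistently aligned throughout, after which the proof reduces to the two standard Gaussian integrals above.
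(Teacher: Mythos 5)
Your proof is correct, and it takes a genuinely different route from the paper. The paper's argument is representation-theoretic: it shows that the $B^1$-span $\SS_k$ of $\varphi'$ is precisely the $(k+1)$-dimensional weight space spanned by $(z_1^\rho)^i(z_2^\rho)^{k-i}e^{-\frac{\pi v}{2}(z_1z_1^\rho + z_2z_2^\rho)}$, identifies the resulting $B^1$-representation with $\Sym^k|_{B^1}$, concludes that $\Phi_k$ is a matrix coefficient of that representation, and then pins it down to $\bar\phi_k|_{B^1}$ using the $E^1$-bi-equivariance from Lemma \ref{lem:varphi'-fd} together with $\Phi_k(1)=1$. You instead compute $\Phi_k(g)$ head-on: using that $\omega_\psi(g)\varphi'(x) = \varphi'(xg)$ for $g\in\U(W)=B^1$ (from \S\ref{sssec:wrJ1}), that the Gaussian exponent $z_1z_1^\rho + z_2z_2^\rho$ is preserved under right multiplication by $B^1$ (via $\frac{J}{s}+s=0$ and $\nu(g)=1$), and the elementary Gaussian integrals, you get the explicit closed form $\Phi_k(g)=(\gamma^\rho)^k$ for $g=\gamma+\delta\j$. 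You then separately identify $\phi_k|_{B^1}(g)=\gamma^k$ via the $\SU(2)$ picture. Your version has the advantage of producing the explicit formula $\Phi_k(g)=(\gamma^\rho)^k$, which the paper never records; the paper's is shorter and avoids the integral bookkeeping, at the cost of having to establish the realization of $\Sym^k$ inside $\SS(\X')$. Two minor remarks on the write-up: the matrix you display should be $M_g = \smat{\gamma}{s\delta}{-s\delta^\rho}{\gamma^\rho}$ (your off-diagonal entries are swapped, though this does not affect the $(1,1)$ entry you actually use); and the candidate $g\mapsto\gamma^k$ satisfies the paper's equivariance only for $\alpha,\beta\in\C^1$, not for all $\alpha,\beta\in\C^\times$, so the uniqueness you invoke is uniqueness among $E^1$-bi-equivariant matrix coefficients of $\Sym^k|_{B^1}$, yielding the equality $\phi_k|_{B^1}(g)=\gamma^k$ on $B^1$ only — which is all the lemma requires, but the phrase ``must equal $\phi_k$'' should be understood as an equality of restrictions.
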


\begin{proof}
We use the notation of \S \ref{sssec:schwartzX'-fd}.
Then the Weil representation $\omega_\psi$ on $\SS(\X')$ is given in \S \ref{sssec:wrJ1}.
If we write $x = z_1 + z_2 \frac{\j}{s} \in \X' \cong W = B$ with $z_1, z_2 \in E$, then 
\[
 \varphi'(x) = c_k^{-\frac{1}{2}} \cdot (z_1^\rho)^k \cdot e^{- \frac{\pi v}{2} (z_1 z_1^\rho + z_2 z_2^\rho)}.
\]
Let $\SS_k$ be the subspace of $\SS(\X')$ generated by $\omega_\psi(g) \varphi'$ for all $g \in B^1$.
Since
\[
 \omega_\psi(g) \varphi'(z_1, z_2)
 = \varphi'(z_1 \alpha_1 - z_2 \alpha_2^\rho, z_1 \alpha_2 + z_2 \alpha_1^\rho)
\]
for $g = \alpha_1 + \alpha_2 \frac{\j}{s} \in B^1$ with $\alpha_1, \alpha_2 \in E$, $\SS_k$ is generated by
\[
 (z_1^\rho)^i \cdot (z_2^\rho)^{k-i} \cdot e^{-\frac{\pi v}{2}(z_1 z_1 \rho + z_2 z_2^\rho)}
\]
for all $0 \le i \le k$.
Moreover, the representation of $B^1$ on $\SS_k$ is isomorphic to the unique irreducible $(k+1)$-dimensional representation $\Sym^k|_{B^1}$, so that $\Phi_k$ is a matrix coefficient of $\Sym^k|_{B^1}$.
On the other hand, by Lemma \ref{lem:varphi'-fd}, we have $\Phi_k(\alpha g \beta) = \chi_k(\alpha)^{-1} \chi_k(\beta)^{-1} \Phi_k(g)$ for $\alpha, \beta \in \C^1$ and $\Phi_k(1) = \langle \varphi', \varphi' \rangle = 1$. 
Hence we must have $\Phi_k = \bar{\phi}_k|_{B^1}$.
\end{proof}

\subsection{Computation of $Z_v$}
\label{ssec:computation_of_Zv}

To finish the proof of Theorem \ref{thm:rallis-B-explicit}, it remains to compute the integral $Z_v$.
We fix a place $v$ of $F$ and suppress the subscript $v$ from the notation.
Recall that
\[
 Z = \int_{B^1} \Phi(g) \Psi(g) \, dg,
\]
where 
\begin{itemize}
 \item $\Phi$ is the function on $B^1$ given in \S \ref{ssec:mat-coeff-weil};
 \item $\Psi$ is the function on $B^1$ defined by 
 \[
  \Psi(g) = \langle \pi_B(g) f_B, f_B \rangle, 
 \]
 where $f_B \in \pi_B$ is the new vector as in \S \ref{ssec:explicit-rallis} and $\langle \cdot, \cdot \rangle$ is the invariant hermitian inner product on $\pi_B$ normalized so that $\langle f_B, f_B \rangle = 1$;
 \item $dg$ is the standard measure on $B^1$.
\end{itemize}

\subsubsection{The case (ur)}

In this case, $\pi_B = \Ind(\chi \otimes \mu)$, where $\chi$ and $\mu$ are unitary unramified.
We have 
\[
 L(s, \pi, \ad) = \frac{1}{(1-q^{-s})(1 - \gamma q^{-s})(1 - \gamma^{-1} q^{-s})},
\]
where $\gamma = \chi(\varpi) \cdot \mu(\varpi)^{-1}$.

\begin{lem}
\label{lem:zeta-integral-ur}
We have
\[
 Z = \frac{L(1, \pi, \ad)}{\zeta(2)^2}.
\]
\end{lem}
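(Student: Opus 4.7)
The plan is to exploit the $\K^1$-bi-invariance of both matrix coefficients and reduce to a geometric sum over Cartan double cosets. In every sub-case of (ur) we have $v \notin \Sigma_B$, so $B_v$ is split and $B^1 \cong \SL_2(F)$ with $\K^1 := \K \cap B^1 \cong \SL_2(\o)$. The bi-invariance of $\Phi$ follows from the $\K$-invariance of $\varphi'$ (established in \S \ref{sssec:schwartzX'-ur}) transported to $\varphi$ via the $\G(\U(V)^0 \times \U(W))$-equivariance of the partial Fourier transform, while the bi-invariance of $\Psi$ is immediate since $f_B$ is spherical in the unramified principal series. Applying the Cartan decomposition
\[
 \SL_2(F) = \bigsqcup_{i \ge 0} \SL_2(\o)\, \m(\varpi^i)\, \SL_2(\o),
\]
with double-coset volumes (for the standard measure) equal to $1$ if $i=0$ and $(q+1)q^{2i-1}$ if $i \ge 1$, collapses $Z$ to a sum indexed by $i$.

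Next, I would substitute the two explicit formulas. Lemma \ref{lem:mat-coeff-weil-ur} gives $\Phi(\m(\varpi^i)) = q^{-2i}$ for $i \ge 0$, so $\Phi(\m(\varpi^i)) \cdot \vol(\K^1 \m(\varpi^i) \K^1) = 1 + q^{-1}$ for every $i \ge 1$. Macdonald's formula for the spherical function on $\SL_2(F)$, with Satake parameter $\gamma := \chi(\varpi)\mu(\varpi)^{-1}$, gives
\[
 \Psi(\m(\varpi^i)) = \frac{q^{-i}}{1+q^{-1}}\left( \frac{\gamma - q^{-1}}{\gamma - 1}\gamma^{i} + \frac{\gamma^{-1} - q^{-1}}{\gamma^{-1} - 1}\gamma^{-i} \right)
\]
for $i \ge 0$ (derived by restricting the $\GL_2$ spherical function for $\Ind(\chi \otimes \mu)$ using the central character relation $\phi_\pi(\m(\varpi^i)) = (\chi\mu)(\varpi^{-i})\,\phi_\pi(\mathrm{diag}(\varpi^{2i},1))$). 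Hence $Z = 1 + (1+q^{-1}) \sum_{i \ge 1} \Psi(\m(\varpi^i))$.

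The final step is the algebraic check. Each of the sums $\sum_{i \ge 1} (\gamma^{\pm 1} q^{-1})^{i}$ is a convergent geometric series by the unitarity (hence temperedness) of $\pi_v$. Combining the two terms over a common denominator and simplifying, with $x := q^{-1}$ and $T := \gamma + \gamma^{-1}$, yields
\[
 Z = \frac{(1+x)^2(1-x)}{1 - Tx + x^2},
\]
which matches the right-hand side of the lemma since $L(1,\pi,\ad) = ((1-x)(1-Tx+x^2))^{-1}$ (using $(1-\gamma x)(1-\gamma^{-1} x) = 1 - Tx + x^2$) and $\zeta(2)^2 = (1-x^2)^{-2}$, giving $L(1,\pi,\ad)/\zeta(2)^2 = (1-x)(1+x)^2/(1 - Tx + x^2)$. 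The only real obstacle is bookkeeping in this simplification, but it is routine. An alternative would be to deduce the identity from the unramified doubling-method computation in the proof of Lemma \ref{lem:doubling-B}, but this would still require a measure conversion from Tamagawa to standard measure together with a verification that $\mathcal{F}_{\varphi_v}$ is the standard spherical section of $\II_v(\tfrac{1}{2})$, so the direct approach above is more self-contained.
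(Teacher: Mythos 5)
Your proof is correct and follows the same route as the paper's: expand $Z$ over the Cartan decomposition of $\SL_2(F)$ with the standard double-coset volumes, substitute $\Phi(\m(\varpi^i)) = q^{-2i}$ from Lemma~\ref{lem:mat-coeff-weil-ur} and Macdonald's formula for $\Psi$, and sum the resulting geometric series; your closed form $\dfrac{(1+x)^2(1-x)}{1-Tx+x^2}$ is exactly the paper's $\dfrac{(1+q^{-1})(1-q^{-2})}{(1-\gamma q^{-1})(1-\gamma^{-1}q^{-1})}$. (Only cosmetic caveat: unitarity of $\pi_v$ per se does not force temperedness, but here $\chi_v,\mu_v$ are assumed unitary so $|\gamma|=1$ and convergence holds regardless.)
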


\begin{proof}
We retain the notation of \S\S \ref{sssec:schwartzX'-ur}, \ref{sssec:mat-coeff-weil-ur}.
Put $\K' = \SL_2(\o)$.
Then we have
\[
 Z = \sum_{i=0}^\infty \Phi(\m(\varpi^i)) \Psi(\m(\varpi^i)) \vol(\K' \m(\varpi^i) \K').
\]
By Macdonald's formula \cite{macdonald-spherical}, \cite{casselman}, we have
\[
 \Psi(\m(\varpi^i)) = \frac{q^{-i}}{1 + q^{-1}} \cdot \left( \gamma^i \cdot \frac{1 - \gamma^{-1} q^{-1}}{1 - \gamma^{-1}} + \gamma^{-i} \cdot \frac{1 - \gamma q^{-1}}{1 - \gamma} \right).
\]
Also, we see that 
\[
 \vol(\K' \m(\varpi^i) \K') = 
 \begin{cases}
  1 & \text{if $i = 0$,} \\
  q^{2i}(1 + q^{-1}) & \text{if $i \ge 1$.}
 \end{cases}
\]
Combining these with Lemma \ref{lem:mat-coeff-weil-ur}, we obtain
\begin{align*}
 Z & = 1 + \sum_{i=1}^\infty q^{-i} \cdot \left( \gamma^i \cdot \frac{1 - \gamma^{-1} q^{-1}}{1 - \gamma^{-1}} + \gamma^{-i} \cdot \frac{1 - \gamma q^{-1}}{1 - \gamma} \right) \\
 & = 1 + \frac{\gamma q^{-1}}{1 - \gamma q^{-1}} \cdot \frac{1 - \gamma^{-1} q^{-1}}{1 - \gamma^{-1}} + \frac{\gamma^{-1} q^{-1}}{1 - \gamma^{-1} q^{-1}} \cdot \frac{1 - \gamma q^{-1}}{1 - \gamma} \\
 & = \frac{(1 + q^{-1})(1 - q^{-2})}{(1 - \gamma q^{-1})(1 - \gamma^{-1} q^{-1})}.
\end{align*}
\end{proof}

\subsubsection{The case (rps)}

In this case, $\pi_B = \Ind(\chi \otimes \mu)$ and $\Phi = \Phi_\mu$, where $\chi$ is unitary unramified and $\mu$ is unitary ramified of conductor $q^n$.
We have $L(s, \pi, \ad) = \zeta(s)$.

\begin{lem}
\label{lem:zeta-integral-tr}
We have
\[
 Z = \frac{1}{q^{n-4}(q-1)(q+1)^3} \cdot \frac{L(1,\pi,\ad)}{\zeta(2)^2}.
\]
\end{lem}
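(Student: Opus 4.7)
The plan is to identify the support of $\Phi_\mu$ on $B^1 = \SL_2(F)$ and reduce $Z$ to a volume computation. By the equivariance of $\varphi'_\mu$ and $f_B$ recorded in \S \ref{sssec:schwartzX'-tr} and \S \ref{ssec:new-vec}, for $k_1,k_2 \in \K_n \cap B^1$ we have
\[
 \Phi_\mu(k_1 g k_2) = \Mu(k_1)^{-1}\Mu(k_2)^{-1}\Phi_\mu(g), \qquad
 \Psi(k_1 g k_2) = \Mu(k_1)\Mu(k_2)\Psi(g),
\]
so the product $\Phi_\mu\Psi$ is bi-invariant under $\K_n \cap B^1$. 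Lemma \ref{lem:mat-coeff-weil-tr} tells us that the restriction of $\Phi_\mu$ to the Borel subgroup $B(F)$ is supported on $\{\n(b)\m(a) : a \in \o^\times,\, b \in \o\}$, which is exactly the upper-triangular part of $\K_n \cap B^1 \subset \SL_2(\o)$; combined with right $\K_n\cap B^1$-equivariance, this shows that on $B(F)(\K_n\cap B^1)$ the support of $\Phi_\mu$ is $\K_n\cap B^1$, on which $\Phi_\mu(k) = \Mu(k)^{-1}$.

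The key step is to show that $\Phi_\mu$ vanishes identically on $\SL_2(F) \setminus (\K_n \cap B^1)$. Using the Cartan decomposition $\SL_2(F) = \bigsqcup_{i \ge 0} \SL_2(\o)\m(\varpi^i)\SL_2(\o)$ together with the bi-equivariance, it suffices to test $\Phi_\mu$ on representatives of the form $\m(\varpi^i)$ and $\m(\varpi^i)w$ for $i \in \Z$. Lemma \ref{lem:mat-coeff-weil-tr} already gives $\Phi_\mu(\m(\varpi^i)) = 0$ for $i \ne 0$. For $\m(\varpi^i)w$, the key input is a Gauss-sum-type orthogonality: expanding $\omega_\psi(w)\varphi'_\mu$ by the Fourier-transform formula in \S \ref{ssec:weilrep} produces, in the $x_4$-coordinate, the Fourier transform $\widehat{\I_{\o^\times}\mu}(y_4) = q^{-n}\mathfrak{g}(\mu,\psi)\I_{\varpi^{-n}\o^\times}(y_4)\mu^{-1}(y_4)$, and pairing against $\varphi'_\mu$ (which carries $\I_{\o^\times}(x_4)\mu(x_4)$) restricts the resulting integral to $\varpi^{-n}\o^\times \cap \o^\times = \varnothing$ since $n \ge 1$. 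A routine translation by $\m(\varpi^i)$ and tracking of supports then shows the full vanishing.

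Granted the support claim, $(\Phi_\mu \Psi)(k) = \Mu(k)^{-1}\Mu(k) = 1$ on $\K_n \cap B^1$, so
\[
 Z = \vol(\K_n \cap B^1).
\]
By the normalization of the standard measure on $\SL_2(F)$ in \S \ref{sssec:measures-B^1}, $\vol(\SL_2(\o)) = 1$, and
\[
 [\SL_2(\o) : \K_n \cap B^1] = \frac{|\SL_2(\o/\varpi^n)|}{|B^1(\o/\varpi^n)|}
 = \frac{q^{3n-2}(q^2-1)}{q^{2n-1}(q-1)} = q^{n-1}(q+1),
\]
giving $\vol(\K_n \cap B^1) = (q^{n-1}(q+1))^{-1}$. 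Finally, since $L(1,\pi,\ad) = \zeta(1)$, one computes $L(1,\pi,\ad)/\zeta(2)^2 = (1-q^{-2})^2/(1-q^{-1}) = (q-1)(q+1)^2/q^3$, and multiplying by $(q^{n-4}(q-1)(q+1)^3)^{-1}$ yields $(q^{n-1}(q+1))^{-1}$, matching $Z$.

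The main obstacle is establishing the support claim: while the Borel-side support comes for free from Lemma \ref{lem:mat-coeff-weil-tr}, the vanishing on the remaining Bruhat cells requires a careful Fourier-analytic argument exploiting the ramification of $\mu$.
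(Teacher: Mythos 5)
Your route is genuinely different from the paper's, and the final answer matches, but the central claim is not correctly established. You try to show that the matrix coefficient $\Phi_\mu$ is supported exactly on $\K_n \cap B^1$ and then conclude $Z = \vol(\K_n \cap B^1)$. The paper (following \cite[Chapter VIII]{kry}) instead sidesteps the support of $\Phi_\mu$ entirely: it writes $Z = \langle \tilde{f}_B, f_B\rangle$ with $\tilde{f}_B = \int_{B^1}\Phi(g)\,\pi_B(\cdot\, g)f_B\,dg$, uses that $\tilde{f}_B$ is again a $(\K_n,\Mu)$-new vector so $\tilde{f}_B = \tilde{f}_B(1)\,f_B(1)^{-1}f_B$, and then computes $\tilde{f}_B(1)$ by the Iwasawa decomposition. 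The crucial point is that $f_B|_{\SL_2(\o)}$ is supported on $\K_n'$, so the integral over $\SL_2(\o)$ collapses to $\K_n'$ and only $\Phi(\n(b)\m(a))$ on the Borel — exactly the content of Lemma \ref{lem:mat-coeff-weil-tr} — is needed. The compact-support claim for $\Phi_\mu$ is never required.

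The gap in your argument is the reduction step. You invoke the Cartan decomposition $\SL_2(F) = \bigsqcup_i \SL_2(\o)\m(\varpi^i)\SL_2(\o)$ ``together with the bi-equivariance,'' but the bi-equivariance you have is only under $\K_n' = \K_n \cap B^1$, which for $n\geq 1$ is a proper subgroup of $\SL_2(\o)$ (strictly smaller than the Iwahori when $n \geq 2$). The $\SL_2(\o)$-Cartan representatives $\m(\varpi^i)$ (nor the set $\{\m(\varpi^i), \m(\varpi^i)w\}$, which are rather affine-Bruhat representatives for the Iwahori) do not generate $\SL_2(F)$ together with $\K_n'$. Concretely, for $n\geq 2$ the double cosets $\K_n'\backslash\SL_2(F)/\K_n'$ also have representatives such as lower-unipotents $\smat{1}{0}{c}{1}$ with $0 \le \val(c) < n$, and the asserted ``routine translation and tracking of supports'' is not carried out for any of them. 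Your support claim is likely true (spot checks on $w$ and $\smat{1}{0}{c}{1}$ with $c \in \o^\times$ do give $0$, by the disjointness of the $x_1$-support: the $y_4$-integral Fourier-dualizes into the $x_1$-variable under the form $x_1y_4 - x_2y_3 - x_3y_2 + x_4y_1$, landing in $\varpi^{-n}\o^\times$, disjoint from $\o$ — note this is the $x_1$-coordinate, not $x_4$ as you wrote), but establishing it in full is exactly the work you've declared routine; filling that in would produce a valid alternative proof, but at the cost of considerably more case analysis than the paper's Iwasawa argument, which only needs $\Phi_\mu$ on the Borel.
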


\begin{proof}
Following \cite[Chapter VIII]{kry}, we shall compute $Z$ explicitly.
We retain the notation of \S\S \ref{sssec:schwartzX'-tr}, \ref{sssec:mat-coeff-weil-tr}.
Put $\K' = \SL_2(\o)$ and $\K_n' = \K_n \cap \SL_2(\o)$.
We take the invariant hermitian inner product $\langle \cdot, \cdot \rangle$ on $\pi_B$ defined by 
\[
 \langle f_1, f_2 \rangle = \int_\K f_1(k) \overline{f_2(k)} \, dk,
\]
where $dk$ is the Haar measure on $\K$ such that $\vol(\K) = 1$.
Then $f_B$ is determined by
\[
 f_B|_\K = \vol(\K_n)^{-\frac{1}{2}} \cdot \I_{\K_n} \Mu.
\]
We can define a new vector $\tilde{f}_B \in \pi_B$ with respect to $(\K_n, \Mu)$ by
\[
 \tilde{f}_B(h) = \int_{B^1} \Phi(g) f_B(hg) \, dg 
\]
for $h \in B^\times$.
Since $\tilde{f}_B = \frac{\tilde{f}_B(1)}{f_B(1)} \cdot f_B$ and $\langle f_B, f_B \rangle = 1$, we have
\[
 Z = \langle \tilde{f}_B, f_B \rangle = \vol(\K_n)^{\frac{1}{2}} \cdot \tilde{f}_B(1).
\]
We have
\begin{align*}
 \tilde{f}_B(1) & = \int_{B^1} \Phi(g) f_B(g) \, dg \\
 & = \int_{\K'} \int_{F^\times} \int_F \Phi(\n(b) \m(a) k) \cdot f_B(\n(b) \m(a) k) \cdot |a|^{-2} \, db \, da \, dk \\
 & = \vol(\K_n)^{-\frac{1}{2}} \cdot \int_{\K_n'} \int_{F^\times} \int_F \Phi(\n(b) \m(a)) \Mu(k)^{-1} \cdot \chi(a) \mu(a)^{-1} |a| \Mu(k) \cdot |a|^{-2} \, db \, da \, dk \\
 & = \vol(\K_n)^{-\frac{1}{2}} \cdot \vol(\K_n') \cdot \int_{F^\times} \int_F \Phi(\n(b) \m(a)) \cdot \chi(a) \mu(a)^{-1} |a|^{-1} \, db \, da,
\end{align*}
where 
\begin{itemize}
 \item $db$ is the Haar measure on $F$ such that $\vol(\o) = 1$;
 \item $da$ is the Haar measure on $F^{\times}$ such that $\vol(\o^{\times}) = 1$;
 \item $dk$ is the Haar measure on $\K'$ such that $\vol(\K') = 1$.
\end{itemize}
By Lemma \ref{lem:mat-coeff-weil-tr}, we have
\[
 \int_{F^\times} \int_F \Phi(\n(b) \m(a)) \cdot \chi(a) \mu(a)^{-1} |a|^{-1} \, db \, da = \int_{\o^{\times}} \int_\o \chi(a) |a|^{-1} \, db \, da = 1.
\]
Hence we have
\[
 Z = \vol(\K_n') = \frac{1}{q^{n-1}(q+1)}.
\]
\end{proof}

\subsubsection{The case (st)}

In this case, $\pi_B = \St \otimes \chi$, where $\chi$ is unitary unramified.
We have $L(s, \pi, \ad) = \zeta(s+1)$.

\begin{lem}
\label{lem:zeta-integral-st}
\begin{enumerate}
\item If $B_1$ and $B_2$ are split, then we have
\[
 Z = \frac{q^2}{(q+1)^3} \cdot \frac{L(1,\pi,\ad)}{\zeta(2)^2}.
\]
\item If $B_1$ and $B_2$ are ramified, then we have
\[
 Z = \frac{q^2}{(q-1)^2(q+1)} \cdot \frac{L(1,\pi,\ad)}{\zeta(2)^2}.
\]
\end{enumerate}
\end{lem}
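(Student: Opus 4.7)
My plan is to compute $Z$ directly by unfolding against the Iwahori double-coset decomposition of $B^1 = \SL_2(F)$. Because the Schwartz function $\varphi'$ is invariant under the Iwahori $\II \subset B^\times$ (\S\ref{sssec:schwartzX'-st}) and $f_B$ is a new vector for $\pi_B = \St \otimes \chi$ with respect to $\II$, both matrix coefficients $\Phi$ and $\Psi$ are bi-invariant under $\II' := \II \cap B^1$. I will use the decomposition
\[
\SL_2(F) = \bigsqcup_{n \in \Z} \II' \m(\varpi^n) \II' \sqcup \bigsqcup_{n \in \Z} \II' \m(\varpi^n) w \II',
\]
with $w = \smat{}{-1}{1}{}$. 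Direct computation of the indices $[\II' : \II' \cap g \II' g^{-1}]$ gives the double-coset volumes
\[
\vol(\II' \m(\varpi^n) \II') = \frac{q^{2|n|}}{q+1}, \qquad \vol(\II' \m(\varpi^n) w \II') = \frac{q^{|2n+1|}}{q+1}
\]
relative to the standard measure (for which $\vol(\SL_2(\o)) = 1$ and $\vol(\II') = 1/(q+1)$). Using the values $\Phi(\m(\varpi^n)) = q^{-2|n|}$ and $\Phi(\m(\varpi^n) w) = \gamma_{B_1} q^{-|2n+1|}$ from Lemma \ref{lem:mat-coeff-weil-st}, the $q$-powers cancel in each double-coset contribution, leaving
\[
Z = \frac{1}{q+1} \left( \sum_{n \in \Z} \Psi(\m(\varpi^n)) + \gamma_{B_1} \sum_{n \in \Z} \Psi(\m(\varpi^n) w) \right).
\]

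Note that this formula depends on $B_1$ only through the sign $\gamma_{B_1} \in \{\pm 1\}$, while $\Psi$ depends only on $\pi_B = \St \otimes \chi$, which is the same in both cases. To compute $\Psi$ on the coset representatives, I realize $\pi_B \hookrightarrow \Ind(\chi|\cdot|^{1/2} \otimes \chi|\cdot|^{-1/2})$, whose Iwahori invariants are spanned by the standard basis $f_1, f_w$ (with $f_1|_\K = \I_\II$ and $f_w|_\K = \I_{\II w \II}$, extended to $G$ by the induced representation formula). The Steinberg new vector is $f_B = c \cdot (f_1 - q^{-1} f_w)$, with $c$ determined by the normalization $\langle f_B, f_B \rangle = 1$ in the invariant unitary inner product on $\St \otimes \chi$. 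Performing the Iwasawa decomposition of $k \m(\varpi^n)$ and $k \m(\varpi^n) w$ for $k \in \K$, I compute
\[
\Psi(\m(\varpi^n)) = q^{-2|n|}, \qquad \Psi(\m(\varpi^n) w) = - q^{-|2n+1|}.
\]
These sum to the geometric series $(q^2+1)/(q^2-1)$ and $-2q/(q^2-1)$, respectively. Substituting into the formula for $Z$ and simplifying via the identity $L(1, \pi, \ad)/\zeta(2)^2 = 1 - q^{-2}$ yields the two claimed formulas, corresponding to $\gamma_{B_1} = +1$ in case (i) and $\gamma_{B_1} = -1$ in case (ii).

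The main technical obstacle will be the computation of $\Psi$ on the $w$-cosets: the Iwasawa decomposition of $k \m(\varpi^n) w$ for $k \in \K$ needs to be treated separately for $n \ge 0$ and $n < 0$, and the unitary structure on $\St \otimes \chi$ (which is \emph{not} the restriction of the pairing $\int_\K f_1 \overline{f_2} \, dk$ from the non-unitarily-induced ambient representation) must be correctly pinned down when determining $c$. As a useful consistency check, in case (ii) the identity $\Phi = \Psi$ in fact holds on all of $B^1$, reflecting the fact that when $B_1$ is ramified the Iwahori-invariant Schwartz function $\varphi'$ realizes the Iwahori-new vector of the Steinberg theta lift directly; in case (i) the identity $\Phi = \Psi$ fails on $w$-cosets precisely because of the sign $\gamma_{B_1} = +1$, which reflects an additional quadratic character of the Weil representation associated with split $B_1$.
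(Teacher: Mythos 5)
Your approach is essentially the same as the paper's: both organize $Z$ as a sum over the Iwahori double cosets $\II' \m(\varpi^n) w^j \II'$ in $\SL_2(F)$, use $\vol(\II'\tilde w\II') = q^{\ell(\tilde w)}/(q+1)$, and feed in the $\Phi$ values from Lemma~\ref{lem:mat-coeff-weil-st}. The only structural difference is cosmetic: you contract $\Phi(\cdot)\,\vol(\cdot)$ to a constant $\gamma_{B_1}^{j}/(q+1)$ and then sum $\Psi$, whereas the paper contracts $\Psi(\cdot)\,\vol(\cdot)$ to $(-1)^{\ell(\tilde w)}/(q+1)$ (using the Iwahori--Hecke identity $\Psi(\tilde w)=(-q)^{-\ell(\tilde w)}$ from \cite[\S 7]{gj}) and then sums $\Phi$. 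Your intermediate values $\Psi(\m(\varpi^n))=q^{-2|n|}$ and $\Psi(\m(\varpi^n)w)=-q^{-|2n+1|}$, and the resulting geometric series, are all correct.

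The one place where your writeup is not self-contained is precisely these $\Psi$ values: you assert them and sketch an Iwasawa-decomposition computation in the non-unitarily-induced model, while flagging (rightly) that the $\SL_2(F)$-invariant pairing on $\St$ is \emph{not} $\int_\K f_1\bar f_2\,dk$ in that model. You don't resolve that point, and the naive pairing would in fact give the wrong normalization for $\Psi$ off the identity. The paper sidesteps this by quoting the closed formula $\Psi(\omega^k\tilde w)=(-\chi(\varpi))^k(-q)^{-\ell(\tilde w)}$ from Godement--Jacquet, which is the clean way to get these numbers. If you want to keep the direct computation, either pair $\St\subset\Ind(|\cdot|^{1/2}\otimes|\cdot|^{-1/2})$ against $\St^\vee\subset\Ind(|\cdot|^{-1/2}\otimes|\cdot|^{1/2})$ using $\int_\K$, or invoke the general fact that on an Iwahori-spherical discrete series the normalized Iwahori-bi-invariant matrix coefficient on $W_a$ equals $(-q)^{-\ell}$; otherwise just cite \cite{gj}. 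Your concluding observation that $\Phi = \Psi$ on all of $B^1$ when $B_1$ and $B_2$ are ramified is a nice consistency check, though the heuristic explanation you attach to it is informal and not needed for the proof.
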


\begin{proof}
We retain the notation of \S\S \ref{sssec:schwartzX'-st}, \ref{sssec:mat-coeff-weil-st}.
Put $\II' = \II \cap \SL_2(\o)$.
Let $\tilde{W} = N(T_0)/T_0$ be the extended affine Weyl group of $\GL_2(F)$, where $T_0 = \{ \smat{a}{}{}{d} \, | \, a, d \in \o^\times \}$ and $N(T_0)$ is the normalizer of $T_0$ in $\GL_2(F)$.
Then we have
\[
 \GL_2(F) = \bigsqcup_{\tilde{w} \in \tilde{W}} \II \tilde{w} \II.
\]
We can write $\tilde{W} = \Omega \ltimes W_a$ with $\Omega = \langle \omega \rangle$ and $W_a = \langle w_1, w_2 \rangle$, where
\[
 \omega =
 \begin{pmatrix}
  & 1 \\
  \varpi &  
 \end{pmatrix}, \qquad
 w_1 =
 \begin{pmatrix}
  & 1 \\
  1 &
 \end{pmatrix}, \qquad
 w_2 =
 \begin{pmatrix}
  & \varpi^{-1} \\
  \varpi &
 \end{pmatrix}.
\]
Noting that $w_1^2 = w_2^2 = 1$ and $w_1 w_2 = \m(\varpi)$, we have
\[
 \SL_2(F) = \bigsqcup_{j=0}^1 \bigsqcup_{i=-\infty}^{\infty} \II' \m(\varpi^i) w^j \II'
\]
and hence
\[
 Z = \sum_{j=0}^1 \sum_{i=-\infty}^{\infty} \Phi(\m(\varpi^i) w^j) \Psi(\m(\varpi^i) w^j) \vol(\II' \m(\varpi^i) w^j \II').
\]
Let $\ell$ be the length function on $\tilde{W}$, so that $\ell(\omega) = 0$ and $\ell(w_1) = \ell(w_2) = 1$.
By \cite[\S 7]{gj}, we have
\[
 \Psi(\omega^k \tilde{w}) = (-\chi(\varpi))^k \cdot (-q)^{-\ell(\tilde{w})}
\]
for $k \in \Z$ and $\tilde{w} \in W_a$.
Also, we see that $|\II \tilde{w} \II/\II| = q^{\ell(\tilde{w})}$ for $\tilde{w} \in \tilde{W}$.
Hence we have
\[
 \Psi(\m(\varpi^i) w^j) \vol(\II' \m(\varpi^i) w^j \II') 
 = (-1)^{\ell(\m(\varpi^i) w^j)} \cdot \vol(\II')
 = \frac{1}{q+1} \times 
 \begin{cases}
  1 & \text{if $j=0$,} \\
  -1 & \text{if $j=1$,}
 \end{cases}
\]
so that
\[
 Z = \frac{1}{q+1} \cdot \left( \sum_{i=-\infty}^{\infty} \Phi(\m(\varpi^i)) - \sum_{i=-\infty}^{\infty} \Phi(\m(\varpi^i) w) \right).
\]
Combining this with Lemma \ref{lem:mat-coeff-weil-st}, we obtain
\begin{align*}
 Z & = \frac{1}{q+1} \cdot \left( \sum_{i=0}^{\infty} q^{-2i} + \sum_{i=1}^{\infty} q^{-2i} - \sum_{i=0}^{\infty} q^{-2i-1} - \sum_{i=1}^{\infty} q^{-2i+1} \right) \\
 & = \frac{1}{q+1} \cdot \frac{1 + q^{-2} - q^{-1} - q^{-1}}{1 - q^{-2}} \\
 & = \frac{q-1}{(q+1)^2}
\end{align*}
if $B_1$ and $B_2$ are split, and
\begin{align*}
 Z & = \frac{1}{q+1} \cdot \left( \sum_{i=0}^{\infty} q^{-2i} + \sum_{i=1}^{\infty} q^{-2i} + \sum_{i=0}^{\infty} q^{-2i-1} + \sum_{i=1}^{\infty} q^{-2i+1} \right) \\ 
 & = \frac{1}{q+1} \cdot \frac{1 + q^{-2} + q^{-1} + q^{-1}}{1 - q^{-2}} \\
 & = \frac{1}{q-1}
\end{align*}
if $B_1$ and $B_2$ are ramified.
\end{proof}

\subsubsection{The case (1d)}

In this case, $\pi_B = \chi \circ \nu$, where $\chi$ is unitary unramified.
We have $L(s, \pi, \ad) = \zeta(s+1)$.

\begin{lem}
\label{lem:zeta-integral-1d} 
We have
\[
 Z = \frac{q^2}{(q-1)(q+1)} \cdot \frac{L(1, \pi, \ad)}{\zeta(2)^2}.
\]
\end{lem}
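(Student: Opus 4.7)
The plan is to observe that both factors in the integrand $\Phi(g) \Psi(g)$ are identically $1$ on $B^1$, so the zeta integral $Z$ reduces to the volume of $B^1$, and the lemma then follows from a direct verification that $\frac{q^2}{(q-1)(q+1)} \cdot \frac{L(1,\pi,\ad)}{\zeta(2)^2} = 1$.

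More precisely, first I would note that in case (1d) we have $\pi_B = \chi \circ \nu$ with $\chi$ unitary unramified, so $\pi_B$ is a one-dimensional representation of $B^\times$. For any $g \in B^1$ the reduced norm satisfies $\nu(g) = 1$, hence $\pi_B(g)$ acts on the line spanned by $f_B$ as multiplication by $\chi(1) = 1$. Since $f_B$ is normalized by $\langle f_B, f_B \rangle = 1$, this gives $\Psi(g) = 1$ for all $g \in B^1$. Next, Lemma \ref{lem:mat-coeff-weil-1d} shows $\Phi(g) = 1$ for all $g \in B^1$. Therefore
\[
 Z = \int_{B^1} \Phi(g)\Psi(g)\, dg = \vol(B^1, dg) = 1,
\]
where the last equality is the definition of the standard measure $dg$ on $B^1_v$ at a ramified finite place $v \in \Sigma_{B,\fin}$ given in \S \ref{sssec:measures-B^1}.

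It remains to verify that the right-hand side of the stated identity equals $1$. Since $L(s,\pi,\ad) = \zeta(s+1)$, we have
\[
 \frac{L(1,\pi,\ad)}{\zeta(2)^2} = \frac{\zeta(2)}{\zeta(2)^2} = \zeta(2)^{-1} = 1 - q^{-2} = \frac{(q-1)(q+1)}{q^2},
\]
and multiplying by $\frac{q^2}{(q-1)(q+1)}$ gives $1$, matching the computation of $Z$. There is no real obstacle in this case; the main content is just unwinding definitions and invoking the two lemmas on $\Phi$ and the trivial action of $B^1$ on the one-dimensional $\pi_B$.
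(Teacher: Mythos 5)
Your proof is correct and follows essentially the same approach as the paper: both use Lemma \ref{lem:mat-coeff-weil-1d} to get $\Phi \equiv 1$, note that $\Psi \equiv 1$ on $B^1$ since $\pi_B$ factors through $\nu$, and conclude $Z = \vol(B^1,dg) = 1$, with the stated formula verified by the elementary identity $\zeta(2)^{-1} = (q-1)(q+1)/q^2$. The only difference is that you spell out the $\Psi \equiv 1$ step and the final arithmetic, which the paper leaves implicit.
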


\begin{proof}
We retain the notation of \S\S \ref{sssec:schwartzX'-1d}, \ref{sssec:mat-coeff-weil-1d}.
Then by Lemma \ref{lem:mat-coeff-weil-1d}, we have
\[
 Z = \int_{B^1} dg = 1.
\]
\end{proof}

\subsubsection{The case (ds)}

In this case, $\pi_B = \DS_k$ and $\Phi = \Phi_l$, where
\[
 l = 
 \begin{cases}
  k & \text{if $B_1$ and $B_2$ are split,} \\
  k-2 & \text{if $B_1$ and $B_2$ are ramified.}
 \end{cases}
\]

\begin{lem}
\label{lem:zeta-integral-ds}
\begin{enumerate}
\item If $B_1$ and $B_2$ are split, then we have
\[
 Z = \frac{4 \pi}{k}.
\]
\item If $B_1$ and $B_2$ are ramified, then we have
\[
 Z = \frac{4 \pi}{k-1}.
\]
\end{enumerate}
\end{lem}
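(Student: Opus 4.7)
The plan is to exploit the $\SO(2)$-equivariance of both $\Phi$ and $\Psi$ to reduce $Z$ to a one-variable integral over the split torus, and then evaluate it by an elementary substitution. In both cases $B = B_1 \cdot B_2$ is split over $\R$ (trivially in (i), and because $\H \cdot \H$ is trivial in $\operatorname{Br}(\R) = \Z/2\Z$ in (ii)), so $B^1 = \SL_2(\R)$ and the Jacquet--Langlands transfer $\pi_B$ coincides with the original discrete series $\pi = \DS_k$. By Lemmas \ref{lem:varphi'-ds-spl} and \ref{lem:varphi'-ds-ram}, one has $\omega_\psi(h(z))\varphi = \chi_k(z)^{-1}\varphi$ for $z \in \C^1$, so $\Phi$ transforms by $\chi_k^{-1}$ on each side under $\SO(2) = h(\C^1)$; since $f_B$ has $\SO(2)$-weight $\chi_k$, the matrix coefficient $\Psi$ transforms by $\chi_k$ on each side, and these cancel to make $\Phi\Psi$ bi-$\SO(2)$-invariant.

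Next I would switch to hyperbolic polar coordinates on $\fh = \SL_2(\R)/\SO(2)$ based at $i$. Since $\m(e^t)\cdot i = e^{2t}\,i$ sits at hyperbolic distance $2t$ from $i$, the standard Haar measure of \S\ref{sssec:measures-B^1} (for which $\vol(\SO(2)) = 1$) gives
\[
 \int_{\SL_2(\R)} f(g)\,dg \;=\; 4\pi \int_0^\infty f(\m(e^t))\,\sinh(2t)\,dt
\]
on bi-$\SO(2)$-invariant $f$. Combined with Lemma \ref{lem:mat-coeff-weil-ds} and the classical formula $\Psi(\m(a)) = \bigl((a+a^{-1})/2\bigr)^{-k}$ for the unit lowest $\SO(2)$-weight vector in $\DS_k$, the integrand $\Phi_l \cdot \Psi$ evaluated at $\m(e^t)$ collapses to $\cosh(t)^{-2k-2}$ in case (i) (where $l = k$) and to $\cosh(t)^{-2k}$ in case (ii) (where $l = k-2$). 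Writing $\sinh(2t) = 2\sinh t \cosh t$ and substituting $u = \cosh t$ reduces both integrals to $8\pi \int_1^\infty u^{-n}\,du$ with $n = 2k+1$ or $n = 2k-1$, yielding $\tfrac{4\pi}{k}$ and $\tfrac{4\pi}{k-1}$ respectively.

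The only step requiring more than bookkeeping is the classical matrix coefficient formula for $\Psi$: one must identify the new vector $f_B$ of \S\ref{ssec:new-vec} as a unit lowest $\SO(2)$-weight vector in the standard $\fsl_2(\C)$-module structure on $\DS_k$, and then evaluate $\langle \pi(\exp(t\mathtt{H}))v,v\rangle$ (for instance via the Gauss decomposition of $\exp(t\mathtt{H})$ in $\SL_2(\C)$, using that $v$ is annihilated by the lowering operator $F = \tfrac12(\mathtt{H}+i(\mathtt{X}+\mathtt{Y}))$). The remaining effort is careful tracking of normalizations, especially the factor of $4\pi$ in the polar-coordinate integration formula against the convention $\vol(\SO(2)) = 1$ adopted in \S\ref{sssec:measures-B^1}.
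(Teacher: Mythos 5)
Your argument follows the paper's proof essentially verbatim: both reduce $Z$ to a one-variable integral over the split torus using the Cartan decomposition of $\SL_2(\R)$ with the $4\pi\sinh(2t)$ Jacobian, both feed in $\Psi(\m(e^t)) = \cosh(t)^{-k}$ and $\Phi_l(\m(a)) = \bigl((a+a^{-1})/2\bigr)^{-l-2}$ from Lemma~\ref{lem:mat-coeff-weil-ds}, and both evaluate via $u = \cosh t$ to get $8\pi/(k+l)$. The extra discussion you include (bi-$\SO(2)$-invariance of $\Phi\Psi$, and a sketch of how to derive the matrix coefficient $\Psi$ from a lowest-weight-vector computation) is correct but is taken as known in the paper.
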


\begin{proof}
We retain the notation of \S\S \ref{sssec:schwartzX'-ds}, \ref{sssec:mat-coeff-weil-ds}.
In particular, we identify $\C^\times$ with a subgroup of $B^\times$.
Then we have
\begin{align*}
 Z & = 4 \pi \cdot \int_{\C^1} \int_0^\infty \int_{\C^1} \Phi(\kappa_1 \m(e^t) \kappa_2) \Psi(\kappa_1 \m(e^t) \kappa_2) \sinh(2t) \, d \kappa_1 \, dt \, d \kappa_2 \\ 
 & = 4 \pi \cdot \int_0^\infty \Phi(\m(e^t)) \Psi(\m(e^t)) \sinh(2t) \, dt,
\end{align*}
where
\begin{itemize}
 \item $dt$ is the Lebesgue measure;
 \item $d \kappa_1$ and $d \kappa_2$ are the Haar measures on $\C^1$ such that $\vol(\C^1) = 1$.
\end{itemize}
It is known that
\[
 \Psi(\m(e^t)) = \cosh(t)^{-k}.
\]
Combining these with Lemma \ref{lem:mat-coeff-weil-ds}, we obtain
\begin{align*}
 Z & = 4 \pi \cdot \int_0^\infty \cosh(t)^{-k-l-2} \sinh(2t) \, dt \\ 
 & = 8 \pi \cdot \int_0^{\infty} \cosh(t)^{-k-l-1} \sinh(t) \, dt \\
 & = 8 \pi \cdot \int_1^{\infty} t^{-k-l-1} \, dt \\
 & = \frac{8 \pi}{k+l}.
\end{align*}
\end{proof}

\subsubsection{The case (fd)}

In this case, $\pi_B = \Sym^{k-2}$ and $\Phi = \Phi_{k-2}$.

\begin{lem}
\label{lem:zeta-integral-fd}
We have
\[
 Z = \frac{1}{k-1}.
\]
\end{lem}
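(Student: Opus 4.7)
The plan is to identify both $\Phi$ and $\Psi$ with matrix coefficients of the irreducible representation $\Sym^{k-2}$ of the compact group $B^1$ and then invoke Schur orthogonality. Recall that in case (fd) the algebra $B$ is ramified at $v$, so $B^1$ is compact and (by the normalization in \S \ref{sssec:measures-B^1}) the standard measure satisfies $\vol(B^1, dg) = 1$.

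First I would identify $\Psi$ with $\phi_{k-2}$. By construction $\Psi(g) = \langle \pi_B(g) f_B, f_B \rangle$ is a matrix coefficient of $\pi_B = \Sym^{k-2}$, with $\Psi(1) = 1$ because $\langle f_B, f_B \rangle = 1$. Since the inner product on $\pi_B$ is $B^1$-invariant and $\pi_B(h(z)) f_B = \chi_{k-2}(z) f_B$ for $z \in \C^\times$, a direct computation gives
\[
 \Psi(\alpha g \beta) = \chi_{k-2}(\alpha) \chi_{k-2}(\beta) \Psi(g)
\]
for $\alpha, \beta \in \C^\times$ (using $|\chi_{k-2}(z)| = 1$). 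This is the same transformation law that characterizes $\phi_{k-2}$, and the space of matrix coefficients with this biequivariance is one-dimensional because $\chi_{k-2}$ appears with multiplicity one in the restriction of $\Sym^{k-2}$ to $\C^\times$ on each side. Since $\Psi(1) = \phi_{k-2}(1) = 1$, I obtain $\Psi = \phi_{k-2}$ on $B^1$ (in fact on all of $B^\times$).

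Next I would combine this with Lemma \ref{lem:mat-coeff-weil-fd}, which states $\Phi_{k-2}(g) = \overline{\phi_{k-2}(g)}$ for $g \in B^1$. Thus
\[
 Z = \int_{B^1} \Phi_{k-2}(g) \Psi(g) \, dg = \int_{B^1} |\phi_{k-2}(g)|^2 \, dg.
\]
Finally I would apply Schur orthogonality for the compact group $B^1$ with Haar measure of total mass one: writing $\phi_{k-2}(g) = \langle \Sym^{k-2}(g) v_0, v_0 \rangle$ for a unit vector $v_0$ in $\Sym^{k-2}$ realizing the $\chi_{k-2}$-eigenline under $h(\C^\times)$, one obtains
\[
 \int_{B^1} |\phi_{k-2}(g)|^2 \, dg = \frac{1}{\dim \Sym^{k-2}} = \frac{1}{k-1}.
\]

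There is no real obstacle here; the only point requiring care is the identification $\Psi = \phi_{k-2}$, and in particular checking that the normalization of the invariant inner product on $\pi_B$ (so that $\langle f_B, f_B \rangle = 1$) matches the normalization $\phi_{k-2}(1) = 1$. Both are pinned down by the value at the identity, so they must agree.
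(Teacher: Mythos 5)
Your proof is correct and takes essentially the same approach as the paper: identify $\Phi$ with $\overline{\phi_{k-2}}$ via Lemma \ref{lem:mat-coeff-weil-fd}, observe that $\Psi=\phi_{k-2}$, and apply Schur orthogonality on the compact group $B^1$ with $\vol(B^1)=1$ and $\dim\Sym^{k-2}=k-1$. The only difference is that you spell out the verification $\Psi=\phi_{k-2}$ (the biequivariance under $h(\C^\times)$, the multiplicity-one argument, and the matching normalization at $g=1$), which the paper leaves implicit.
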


\begin{proof}
We retain the notation of \S\S \ref{sssec:schwartzX'-fd}, \ref{sssec:mat-coeff-weil-fd}.
Then by Lemma \ref{lem:mat-coeff-weil-fd} and the Schur orthogonality relations, we have
\[
 Z = \int_{B^1} |\phi_{k-2}(g)|^2 \, dg = \frac{1}{k-1}.
\]
\end{proof}

\section{The main conjecture on the arithmetic of theta lifts}
\label{sec:main-conj}

\subsection{On the choices of $u$, $J_1$ and $J_2$}
\label{ssec:choices}

We suppose now that we are given a totally real number field $F$ and two quaternion algebras 
$B_1$ and $B_2$ over $F$. 
Let us define for convenience:
\begin{align*}
\fd_{B_1 \smallsetminus B_2} &= \prod_{\fq \mid \fd_{B_1}, \, \fq \nmid \fd_{B_2}} \fq, &  \hspace{-25mm} \fd_{B_2 \smallsetminus B_1} &= \prod_{\fq \mid \fd_{B_2}, \, \fq \nmid \fd_{B_1}} \fq, \\
\quad \fd_{B_1 \cup B_2} &= \prod_{\fq \mid \fd_{B_1} \fd_{B_2}} \fq, &  \hspace{-30mm} \fd_{B_1 \cap B_2} &= \prod_{\fq \mid (\fd_{B_1}, \fd_{B_2}) } \fq
\end{align*}
and 
\begin{align*}
\Sigma_{B_1 \smallsetminus B_2} &= \Sigma_{B_1} \smallsetminus \Sigma_{B_2},  &  \hspace{-25mm} \Sigma_{B_2\smallsetminus B_1}&=\Sigma_{B_2} \smallsetminus \Sigma_{B_1}, \\
 \Sigma_{B_1 \cup B_2} &= \Sigma_{B_1} \cup \Sigma_{B_2},  &  \hspace{-25mm}  \Sigma_{B_1 \cap B_2} &= \Sigma_{B_1} \cap \Sigma_{B_2}.
\end{align*}

For the constructions so far (especially the constructions of splittings), the only condition needed is:
\begin{equation}
\label{eqn:condition-atleast-one-is-a-square}
\quad \text{ At every place $v$ of $F$, at least one of $u$, $J_1$, $J_2$, $J$ is a square.}
\end{equation}
However, to formulate the main conjecture 
we will need to make a more careful choice. In this section, we show that we can make such a choice 
that satisfies a number of useful auxiliary conditions. 

\begin{prop}
\label{prop:choices1}
Suppose that $\ell$ is a rational prime that is coprime to $\fd_{B_1\cup B_2}$ and $\{ \ff_1, \cdots, \ff_n \}$ is a collection of primes of $\cO_F$ (possibly empty) that
are coprime to $\ell \fd_{B_1 \cup B_2}$. 
Then we can find elements $u,J_1,J_2 \in F$ such that the following hold:
\begin{enumerate}
\item \label{choices:are-integral} $u,J_1, J_2$ lie in $\cO_F$. 
\item \label{choices:atleast-one-is-a-square} At every place $v$ of $F$, at least one of $u$, $J_1$, $J_2$, $J$ is a square.
\item \label{choices:<<0} $u \ll 0$, so that $E:=F+F\i$, $\i^2=u$ is a CM field. 
\item \label{choices:unit-at-unramified} $u$ is a unit at any prime $\fq$ that is unramified in $E$. 
\item \label{choices:at-2} If $\fq$ is a prime of $F$ dividing $2$, then 
$E_\fq$ is the unique unramified quadratic extension of $F_\fq$ 
if $\fq \mid \fd_{B_1\cup B_2}$ and $E_\fq/F_\fq$ is split otherwise. 
\item \label{choices:j1j2} 
\begin{itemize}[labelindent=0pt]
\item $B_1 \simeq E+ E\j_1$, with $\j_1^2 = J_1$ and $\i\j_1=-\j_1 \i$. 
\item $B_2 \simeq E+ E\j_2$, with $\j_2^2 = J_2$ and $\i\j_2=-\j_2 \i$.
\item $B\simeq E+E\j$ with $\j^2=J=J_1 J_2$ and $\i \j = - \j \i$.
\end{itemize}

\item \label{choices:squares-at-bad-primes} 
\begin{itemize}[labelindent=0pt]
\item If $\fq \mid \fd_{B_1\smallsetminus B_2}$, then $J_1$ is a uniformizer at $\fq$ and $J_2$ is the square of a unit. 
\item If $\fq \mid \fd_{B_2\smallsetminus B_1}$, then $J_2$ is a uniformizer at $\fq$ and $J_1$ is the square of a unit. 
\item If $\fq \mid \fd_{B_1 \cap B_2}$, then $J_1$ and $J_2$ are both uniformizers at $\fq$ such that $J_1/J_2$ is the square of a unit. 
\end{itemize}

\item \label{choices:sq-of-units-at-pf} $u$, $J_1$, $J_2$ and $J$ are squares of units at the primes in $\{ \ff_1, \ldots, \ff_n \}$ and at all primes $\fl$ of $F$ above $\ell$. 

\end{enumerate}
\end{prop}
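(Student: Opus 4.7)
The proof goes in two stages: first construct the CM field $E = F(\sqrt{u})$, then choose $J_1, J_2$ appropriately. The main tool throughout is weak approximation, applied first to $F^\times/(F^\times)^2$ (Grunwald--Wang for quadratic extensions) and then to $F^\times/N_{E/F}(E^\times)$.

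For the construction of $u$, I would apply weak approximation to $F^\times/(F^\times)^2$ to produce a square class whose local representatives are: negative at every archimedean place; a nonsquare unit giving the unramified quadratic extension at each $\fq \in \Sigma_{B_1 \cup B_2, \fin}$ (regardless of residue characteristic); and a square at every other prime in the specified finite set, namely $\fq \mid 2$ with $\fq \notin \Sigma_{B_1 \cup B_2}$, each $\fl \mid \ell$, and each $\ff_j$. Then I would select an integral representative $u \in \cO_F$ of this square class with $v_\fq(u) \in \{0,1\}$ for every $\fq \nmid 2$, and in fact $v_\fq(u) = 0$ for every $\fq \nmid 2$ outside $\Sigma_{B_1 \cup B_2}$. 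This yields (i), (iii), (v), and the $u$-part of (viii); condition (iv) then follows since, for $\fq \nmid 2$, $E$ is unramified at $\fq$ iff $v_\fq(u)$ is even, iff $v_\fq(u) = 0$ by construction.

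For the construction of $J_1, J_2$: at every $v \in \Sigma_{B_i}$ the completion $E_v$ is a field (the unramified quadratic extension at finite $v$; $\C$ at infinite $v$), so by the Albert--Brauer--Hasse--Noether theorem $E$ embeds globally into each $B_i$. Fixing embeddings gives $B_i = E + E\j_i$ with $\j_i^2 = J_i \in F^\times$, where $J_i$ is determined modulo $N_{E/F}(E^\times)$. Applying weak approximation to this norm quotient, I would multiply $J_i$ by a global norm to obtain prescribed local shapes: a uniformizer at $\fq \mid \fd_{B_i}$, a square of a unit at primes $\fq \mid \fd_{B_{3-i} \smallsetminus B_i}$, at $\fl \mid \ell$, and at $\ff_j$, together with the correct archimedean sign (giving (vi)). At $\fq \mid \fd_{B_1 \cap B_2}$ I would further coordinate so that $J_1/J_2 \in (\cO_{F,\fq}^\times)^2$; this is possible since both $J_1, J_2$ must lie in the same non-norm coset there, and matching their square classes is a single further local bit. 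A final multiplication by squares in $F^\times$ places $J_i \in \cO_F$. Condition (ii) is then checked place by place: at $\fq \mid \fd_{B_1 \smallsetminus B_2}$ (resp.\ $\fd_{B_2 \smallsetminus B_1}$), $J_2$ (resp.\ $J_1$) is a square of a unit; at $\fq \mid \fd_{B_1 \cap B_2}$, the product $J = J_1 J_2 = J_2^2 \cdot (J_1/J_2)$ is a square; at all other finite primes $u$ itself is a square; archimedean signs are analogous.

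The main obstacle is the integral bookkeeping in the first stage: weak approximation only pins down the square class of $u$ at finitely many places, so a naive global representative $u_0 \in F^\times$ may have $v_\fq(u_0)$ arbitrarily positive (of either parity) at primes outside the control set. One must enlarge the control set by finitely many auxiliary primes representing the nontrivial classes in $\mathrm{Cl}(F)$, and then multiply by a carefully chosen global square to arrange $v_\fq(u) \in \{0,1\}$ for all $\fq \nmid 2$, with $v_\fq(u) = 0$ outside a controlled set. Once this integral model is in place, the remaining ingredients --- Albert--Brauer--Hasse--Noether, weak approximation for the norm quotient, and the place-by-place verification of (ii) --- are routine.
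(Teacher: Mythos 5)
Your overall strategy is reasonable and broadly parallel to the paper's, but in reverse: the paper constructs $J_1, J_2$ first (as explicit products of global elements $\alpha_\bullet$ obtained from an auxiliary Tchebotarev lemma) and only then constructs $u$; you construct $u$ first and then modify $J_1, J_2$ by norms. Your use of the Albert--Brauer--Hasse--Noether theorem plus approximation in the norm quotient for the $J_i$ is a legitimate alternative to the paper's direct construction-and-verification of Hilbert symbols, and the local bookkeeping there (which norm cosets contain uniformizers, how to match square classes of $J_1/J_2$ at $\fq \mid \fd_{B_1\cap B_2}$) checks out.

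The genuine gap is the integral construction of $u$. You correctly identify the difficulty --- weak approximation only pins down the square class at a finite set --- but your proposed fix does not work. Multiplying a representative $u_0$ by a global square $c^2$ changes $(u_0)$ only by $(c)^2$, hence preserves the parity of $v_\fq(u_0)$ at \emph{every} prime; it can never eliminate a prime from the squarefree part of $(u_0)$. And ``enlarging the control set by primes representing nontrivial classes in $\Cl(F)$'' does not address the obstruction either: the squarefree part $J$ of $(u_0)$ is determined by the square class alone, $(u_0) = I^2 J$, and the class $[I] \in \Cl(F)$ is also an invariant of the square class (since changing $u_0$ by $c^2$ changes $I$ by the principal ideal $(c)$). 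To get an integral representative with controlled support you need $I$ to be principal, which is an actual constraint that neither a square nor an auxiliary-prime ramification at a \emph{principal} auxiliary prime can fix. The paper sidesteps this entirely with a different device: it picks any $\alpha$ with the desired signs and local square classes, uses Tchebotarev in $\Gal(H_\fm/F)$ to find a \emph{principal} prime $\fQ$ with $(\alpha)\fQ = (\beta)$ and $\beta \equiv 1 \pmodx{\fm}$, and sets $u := -\alpha^{-1}\beta$, so that $(u) = \fQ$ by construction and $u \equiv -\alpha$ locally at $S$ because $\beta$ is a local square there. Some version of this ray-class-field Tchebotarev argument is needed; your proposal as written does not supply it.

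A smaller inaccuracy: in verifying condition (ii) you assert that ``at all other finite primes $u$ itself is a square.'' That is false at any prime $\fq \nmid 2$, $\fq \notin \Sigma_{B_1\cup B_2}$, where $E$ is inert or (at your auxiliary primes) ramified. The statement you want is still true, but the argument must go through $J_1, J_2$: since $B_1, B_2$ are split at such $\fq$, both $J_1$ and $J_2$ lie in $N_{E_\fq/F_\fq}(E_\fq^\times)$, which has index $2$ in $F_\fq^\times$; so either one of them is a square, or both lie in the unique non-square norm class and then $J = J_1 J_2$ is a square. This is exactly how the paper handles the inert primes and the auxiliary ramified prime $\fQ$, and you will need the same argument at whatever auxiliary primes your construction of $u$ introduces.
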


Let $K$ denote the quadratic extension of $F$ given by
\begin{equation}
\label{eqn:defn-of-K}
K=F+F\j.
\end{equation}
Note that the condition \eqref{choices:sq-of-units-at-pf} above implies that 
both $E$ and $K$ are split at the primes in $\{ \fl \mid \ell\} \cup \{ \ff_1, \ldots, \ff_n \}$.

Prop. \ref{prop:choices1} will suffice for the current paper. The following
enhancement of it will be useful in \cite{periods2}, \cite{periods3}.

\begin{prop}
\label{prop:choices2} 
Let $\ell$ and $\ff_1,\ldots, \ff_n$ be as in the previous proposition. 
Suppose that the prime $\ell$ satisfies the following conditions:
\begin{itemize}
\item $\ell$ is unramified in $F$. 
\item $\ell >5$ and for any $\fq \mid \fd_{B_1 \smallsetminus B_2} \cdot \fd_{B_2 \smallsetminus B_1}$, we have 
\[
\N \fq \not \equiv 0, \pm 1 \pmod \ell.
\]
\end{itemize}
Then we can choose $u$, $J_1$, $J_2$ such that in addition to  \eqref{choices:are-integral} through \eqref{choices:sq-of-units-at-pf} above, we have:
\vspace{2mm}
\begin{enumerate}
\setcounter{enumi}{8}
\item \label{choices:modell} If $E$ or $K$ is ramified at a prime $\fp$, then $\N \fp \not \equiv 0,\pm 1 \pmod \ell$.
\end{enumerate}
\end{prop}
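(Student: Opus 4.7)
The plan is to refine the construction of Proposition \ref{prop:choices1} by controlling the extra ramification of $E$ and $K$ outside the finite set of ``constrained'' primes
\[
 S' := \{\fq : \fq \mid 2 \fd_{B_1 \cup B_2} \ell \ff_1 \cdots \ff_n\}.
\]
I would first apply Proposition \ref{prop:choices1} to obtain a triple $(u_0, J_{1,0}, J_{2,0})$ satisfying (i)--(viii). The local square classes of $u_0, J_{1,0}, J_{2,0}$ at the primes in $S' \cup \Sigma_\infty$ are entirely determined by these conditions, so for any triple $(\gamma_u, \gamma_{J_1}, \gamma_{J_2})$ of totally positive elements of $F^\times$ that are local squares at every prime of $S'$, the modified triple $(u_0 \gamma_u, J_{1,0} \gamma_{J_1}, J_{2,0} \gamma_{J_2})$ again satisfies (i)--(viii). (Condition (ii) is automatic at places of odd residue characteristic by a simple $\Ff_2$-linear algebra argument on $F_v^\times / (F_v^\times)^2$, and at the remaining places it depends only on local behavior at $S' \cup \Sigma_\infty$.) The problem is thus reduced to choosing such $\gamma$'s so that the quadratic extensions cut out by $u_0 \gamma_u$ and $J_0 \gamma_{J_1} \gamma_{J_2}$, where $J_0 = J_{1,0} J_{2,0}$, ramify outside $S'$ only at primes $\fp$ with $\N\fp \not\equiv 0, \pm 1 \pmod \ell$.

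Reformulating in terms of divisors modulo squares, outside $S'$ the extension $F(\sqrt{u_0 \gamma_u})$ ramifies exactly at the primes $\fp$ where $v_\fp(u_0 \gamma_u)$ is odd. Let $R_u^{\mathrm{bad}}$ (resp.\ $R_J^{\mathrm{bad}}$) denote the finite set of primes $\fp \notin S'$ at which $v_\fp(u_0)$ (resp.\ $v_\fp(J_0)$) is odd and $\N\fp \equiv 0, \pm 1 \pmod \ell$. I would look for $\gamma_u$ whose divisor outside $S'$ is square-free and supported on $R_u^{\mathrm{bad}}$ together with auxiliary primes of good norm residue class, and similarly for $\gamma_J := \gamma_{J_1} \gamma_{J_2}$. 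The hypothesis on $\fq \mid \fd_{B_1 \smallsetminus B_2} \cdot \fd_{B_2 \smallsetminus B_1}$ is precisely what guarantees that the forced ramification of $K$ at these primes of $S'$ (coming from $J_1$ or $J_2$ being a uniformizer per (vii)) already satisfies the good norm condition; a parallel check shows that $E$ is unramified at every prime of $S'$, so there is no other ``forced'' ramification to worry about.

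The main construction uses the Chebotarev density theorem applied to the compositum $L$ of $F(\zeta_\ell)$, the Hilbert class field $H_F$, and the narrow ray class field of $F$ of modulus $(\prod_{\fq \in S'} \fq)^N$ for a sufficiently large $N$. The hypotheses that $\ell$ is unramified in $F$ and $\ell > 5$ ensure that $\Gal(L/F)$ surjects onto the relevant product of quotients (in particular onto the residues modulo $\ell$ avoiding $\{0, \pm 1\}$). For each bad prime $\fp \in R_u^{\mathrm{bad}}$ I would apply Chebotarev to find an auxiliary prime $\fq \notin S'$ satisfying (a) $\N\fq \not\equiv 0, \pm 1 \pmod \ell$, (b) $[\fq] = [\fp]$ in $\operatorname{Cl}(F)$, and (c) a generator of the principal ideal $\fp \fq^{-1}$ that is a local square at every prime of $S'$ and totally positive. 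Setting $\gamma_u$ to be the product of one such generator for each $\fp \in R_u^{\mathrm{bad}}$, up to a global square adjustment to ensure integrality and condition (iv), yields an element with the desired divisor behavior outside $S'$. Carrying out the analogous construction for $J_0$ gives $\gamma_J$, which may then be written as $\gamma_{J_1} \cdot \gamma_{J_2}$ (with, say, $\gamma_{J_2} = 1$) to complete the construction.

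The hard part will be the simultaneous control of the ideal class of the auxiliary prime, its norm residue modulo $\ell$, and the local square class at $S'$ of a chosen generator of $\fp \fq^{-1}$; verifying that the various abelian extensions involved are sufficiently disjoint (or analyzing their intersections) to make all three conditions simultaneously realizable by Chebotarev takes some care, as does the bookkeeping required to preserve condition (iv), namely that the final $u, J_1, J_2$ be units at those primes where the corresponding quadratic extensions are unramified.
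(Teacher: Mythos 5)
Your overall strategy is reasonable, and one can in principle make a post-hoc modification argument work, but as written it has two genuine gaps, the second of which is substantial; the paper instead avoids both by \emph{re-running} the construction of Proposition \ref{prop:choices1} from scratch with stronger inputs rather than modifying its output.

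First, the claim that ``Condition (ii) is automatic at places of odd residue characteristic by a simple $\Ff_2$-linear algebra argument'' is false as stated. Over an odd local field, $F_v^\times/(F_v^\times)^2 \cong (\Z/2)^2$, and it is perfectly possible for $J_1, J_2, J_1J_2$ to be the three nonzero classes with $u$ also nonzero; then none of $u, J_1, J_2, J$ is a square. In the proof of Proposition \ref{prop:choices1} condition (ii) at an odd prime $v$ where $E$ is inert is deduced from the fact that $J_1, J_2 \in \N_{E_v/F_v}(E_v^\times)$, which in turn is exactly the Hilbert symbol condition $(u, J_i)_v = 1$ coming from (vi). So (ii) is a \emph{consequence} of (vi), not a free-standing fact, and you cannot dismiss it without first ensuring (vi) survives your modification.

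Second, and more seriously, you never address the preservation of condition (vi) under the replacement $u_0 \mapsto u_0\gamma_u$, $J_{1,0}\mapsto J_{1,0}\gamma_{J_1}$. Writing $(u, J_1)_v = (u_0, J_{1,0})_v\,(u_0,\gamma_{J_1})_v\,(\gamma_u, J_{1,0})_v\,(\gamma_u,\gamma_{J_1})_v$, you need the last three factors to be trivial at \emph{every} place. This forces additional quadratic-residue conditions: for example, $J_{1,0}$, $J_{2,0}$ and $J_0$ must all be squares modulo your auxiliary prime $\fq_u$ (equivalently, $\fq_u$ must split in $F(\sqrt{J_{1,0}},\sqrt{J_{2,0}})$), and $\gamma_u$ must be a square modulo $\fq_{B_1\smallsetminus B_2}$, $\fq_{B_2\smallsetminus B_1}$, $\fq_{B_1\cap B_2}$. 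None of these appear in your list (a)--(c) of Chebotarev conditions. They can in principle be added, but they interact with the ideal-class, norm-residue, and local-square requirements you already impose, and the compatibility question you flag as ``the hard part'' becomes correspondingly more delicate.

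The paper sidesteps all of this by observing that the construction of Proposition \ref{prop:choices1} already produces $(u) = \fQ$ (a single prime), $(J_1) = \fd_{B_1}\cdot\fq_{B_1\smallsetminus B_2}\cdot\fq_{B_1\cap B_2}$, $(J_2) = \fd_{B_2}\cdot\fq_{B_2\smallsetminus B_1}\cdot\fq_{B_1\cap B_2}$, so the only primes at which $E$ or $K$ can ramify outside $S'$ are $\fQ$, $\fq_{B_1\smallsetminus B_2}$, and $\fq_{B_2\smallsetminus B_1}$. It therefore suffices to control the norm residues $\N\fQ$, $\N\fq_{B_1\smallsetminus B_2}$, $\N\fq_{B_2\smallsetminus B_1} \pmod \ell$ \emph{at the time those primes are chosen}. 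For $\fq_{B_1\smallsetminus B_2}$ and $\fq_{B_2\smallsetminus B_1}$ this is done by replacing Lemma \ref{lem:choosing-q-one} with Lemma \ref{lem:choosing-q-two}, which adds the condition $\N\fq \not\equiv 0, \pm 1 \pmod\ell$ to the Chebotarev argument; for $\fQ$ it is done by imposing a congruence on $\alpha$ at the primes above $\ell$ so that $\N\fQ \equiv \pm\N_{\F_{\fl_1}/\F_\ell}(w^{-2})$ lands in the desired residue class. Since the new $u, J_1, J_2$ are then produced by exactly the same formulas as before, the verification of conditions (i)--(viii) — in particular the product-formula argument establishing (vi) — goes through unchanged, with no need to worry about preserving Hilbert symbols under a subsequent twist.
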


The following Lemmas \ref{lem:choosing-q-one} and \ref{lem:choosing-q-two} will be useful in the proofs of 
Prop. \ref{prop:choices1} and Prop. \ref{prop:choices2} respectively. 

\begin{lem}
\label{lem:choosing-q-one}
Let $F$ be a number field, $\Xi_f$ a finite subset of $\Sigma_{\fin}$ and $\Xi_\infty \subseteq \Sigma_\infty$ a set of real infinite places. Let $I$ be an ideal in $\cO_F$ prime to the 
primes in $\Xi_f$. Then there exists a prime ideal $\fq \subset \cO_F$ such that $I \cdot \fq = (\alpha)$ is principal with $\alpha$ satisfying:
\begin{enumerate}[label=$(\mathrm{\alph*})$, ref=\alph*]
\item $\alpha$ is a square of a unit at the primes in $\Xi_f$.
\item $\sigma_v(\alpha) <0$ for $v$ in $\Xi_\infty$ and $\sigma_v(\alpha) >0$ for any real place $v$ of $F$ not in $\Xi_\infty$. 
\end{enumerate}
Further, $\fq$ can be picked to avoid any finite set of primes.
\end{lem}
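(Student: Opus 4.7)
The plan is to reduce the lemma to the existence of primes in a given ray class, which follows from Chebotarev density (or equivalently from the generalization of Dirichlet's theorem to number fields). First I would invoke weak approximation to produce an auxiliary element $\alpha_0 \in F^\times$, prime to the ideals in $\Xi_f$, satisfying both conditions on the nose: that is, $\alpha_0 \equiv 1 \pmod{\fp^{n_\fp}}$ for each $\fp \in \Xi_f$, and $\sigma_v(\alpha_0) < 0$ for $v \in \Xi_\infty$, $\sigma_v(\alpha_0) > 0$ for the other real places. The exponents $n_\fp$ have to be chosen large enough (depending on the ramification index of $\fp$ over $2$) so that by Hensel's lemma every unit congruent to $1$ modulo $\fp^{n_\fp}$ is automatically a square in $\cO_{F,\fp}^\times$; this requires a bit of care only at primes above $2$.

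Next, set $\fm_0 := \prod_{\fp \in \Xi_f} \fp^{n_\fp}$ and let $\fm$ be the modulus consisting of $\fm_0$ together with all real archimedean places of $F$. Consider the associated ray class group $\mathrm{Cl}_\fm(F)$, and look at the class of $I^{-1}\cdot(\alpha_0)$ in $\mathrm{Cl}_\fm(F)$. By Chebotarev (applied to the ray class field of conductor $\fm$), this class contains infinitely many prime ideals, so I may choose such a $\fq$ that avoids both $\Xi_f$ and any prescribed finite set of primes.

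By the defining property of the ray class group, the fractional ideal $I\fq\cdot(\alpha_0)^{-1}$ admits a generator $\beta \in F^\times$ with $\beta \equiv 1 \pmod{\fm_0}$ and $\sigma_v(\beta) > 0$ for every real place $v$. Put $\alpha := \alpha_0 \beta$; then $(\alpha) = I \fq$. For each $\fp \in \Xi_f$, Hensel's lemma forces $\beta \in (\cO_{F,\fp}^\times)^2$, and $\alpha_0 \in (\cO_{F,\fp}^\times)^2$ by construction, so $\alpha$ is a square of a unit there, giving (a). At each real $v$ we have $\sigma_v(\alpha) = \sigma_v(\alpha_0)\sigma_v(\beta) = \sigma_v(\alpha_0)$, which carries the prescribed sign, giving (b).

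There is no substantive obstacle: the only spot worth double-checking is the lower bound on $n_\fp$ at primes over $2$ needed to make Hensel's lemma identify squares modulo $\fp^{n_\fp}$ with actual squares in $\cO_{F,\fp}^\times$, and this is a standard local computation. The ability to avoid any finite set of auxiliary primes comes for free from the fact that the chosen ray class contains infinitely many prime ideals.
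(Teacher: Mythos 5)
Your proof is correct and follows essentially the same strategy as the paper's: use weak approximation to manufacture an auxiliary element with the prescribed local behavior, then invoke Chebotarev in the ray class field of conductor $\fm$ to find $\fq$ in the right ray class. The only difference is bookkeeping: you build the congruence and sign data into $\alpha_0$ from the start and then multiply by a ray-$1$ generator $\beta$, whereas the paper first takes an arbitrary generator $\alpha$ of $I\fq$ and then corrects it by a global unit using the exact sequence $1 \to U_F/U_{F,\fm} \to F^\times/F_{\fm,1} \to \iota(F^\times)/\iota(F_{\fm,1}) \to 1$; your organization is marginally cleaner since it avoids that last step.
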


\begin{lem}
\label{lem:choosing-q-two}
Let $F$ be a number field and $\ell>5$ a rational prime unramified in $F$. Suppose that 
$\Xi_f$ is a finite subset of $\Sigma_\fin$ all whose elements are prime to $\ell$ and $\Xi_\infty \subseteq \Sigma_\infty$ is a set of real infinite places. Let $I$ be an ideal in $\cO_F$ prime to $\ell$ and the 
primes in $\Xi_f$.  Then there exists a prime ideal $\fq \subset \cO_F$ such that $I \cdot \fq = (\alpha)$ is principal with $\alpha$ satisfying:
\begin{enumerate}[label=$(\mathrm{\alph*})$, ref=\alph*]
\item \label{cond:alpha-square} $\alpha$ is a square of a unit at the primes in $\Xi_f$ and at all primes $\fl$ above $\ell$. 
\item \label{cond:alpha><} $\sigma_v(\alpha) <0$ for $v$ in $\Xi_\infty$ and $\sigma_v(\alpha) >0$ for $v$ any real place of $F$ not in $\Xi_\infty$. 
\item \label{cond:Nq} $\N \fq \not \equiv 0, \pm 1 \pmod \ell$.
\end{enumerate}
Further, $\fq$ can be picked to avoid any finite set of primes.
\end{lem}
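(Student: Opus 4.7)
The plan is to adapt the Chebotarev argument underlying Lemma~\ref{lem:choosing-q-one} by enlarging the ray class field to one that also contains $F(\zeta_\ell)$, so that condition (c) becomes a constraint on the Artin symbol inside the same Galois group as (a) and (b).

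Concretely, I would choose a modulus $\mathfrak{m} = \mathfrak{m}_f \cdot \mathfrak{m}_\infty$ on $F$, where $\mathfrak{m}_f$ is the product of a sufficiently high power $\fq^{n_\fq}$ at each $\fq \in \Xi_f$ together with the first power $\fl^{1}$ at each prime $\fl \mid \ell$ (enough to detect squares of units in $\cO_{F,\fl}^\times$, since $\ell$ is odd), and $\mathfrak{m}_\infty$ is the set of all real places of $F$. The ray class field $H^{\mathfrak{m}}$ then contains $F(\zeta_\ell)$, since the conductor of $F(\zeta_\ell)/F$ divides $\mathfrak{m}$ (tame ramification $\fl$ at each $\fl \mid \ell$ and ramification at every real place). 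Conditions (a) and (b) on $\alpha$ amount to requiring the Artin symbol $[\fq] \in \mathrm{Cl}_F^{\mathfrak{m}}$ to lie in a coset $c_0 \cdot \Gamma$, where $c_0 = [\beta I^{-1}]$ for a fixed $\beta \in F^\times$ with the prescribed signs and local square-of-unit values, and $\Gamma \subseteq \mathrm{Cl}_F^{\mathfrak{m}}$ is the image of the subgroup of principal classes $[\gamma]$ with $\gamma$ totally positive and a square of a unit at each prime in the support of $\mathfrak{m}_f$. The crucial point in allowing $\beta$ to be merely a square of a unit at $\fl$ (rather than $\equiv 1$ modulo a high power of $\fl$) is to preserve the flexibility needed to secure (c).

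Next, using the global reciprocity law (principal ideles lie in the kernel of Artin) together with explicit local reciprocity at $\fl$ --- under which $\cO_{F,\fl}^\times \twoheadrightarrow (\Z/\ell)^\times$ sends squares onto squares --- one computes that the image of $c_0 \cdot \Gamma$ under the restriction $\mathrm{Cl}_F^{\mathfrak{m}} \twoheadrightarrow \mathrm{Gal}(F(\zeta_\ell)/F) =: H \subseteq (\Z/\ell)^\times$ is the coset $(-1)^{|\Xi_\infty|} \cdot \N(I)^{-1} \cdot H^2$, where $H^2$ denotes the subgroup of squares in $H$. Picking any $c \in c_0 \cdot \Gamma$ whose image in $H$ avoids $\{\pm 1\}$ and invoking Chebotarev's density theorem on $H^{\mathfrak{m}}$ produces infinitely many primes $\fq$ with $[\fq] = c$, any of which satisfies (a), (b), (c); the finitely many primes to avoid are then discarded without issue.

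The main obstacle is verifying that the coset $(-1)^{|\Xi_\infty|} \N(I)^{-1} H^2$ actually meets $H \smallsetminus \{\pm 1\}$. Since $\ell > 5$, the group $(\Z/\ell)^\times$ has order at least $6$; in the generic situation $|H| \geq 3$ a short casework on $|H^2|$ settles the matter. The degenerate case $|H| = 2$, which occurs precisely when $F \supseteq \Q(\zeta_\ell)^+$, would make (c) impossible for every prime and must be tacitly excluded --- either as an implicit hypothesis of the lemma or via the auxiliary conditions on primes dividing $\fd_{B_1 \smallsetminus B_2} \cdot \fd_{B_2 \smallsetminus B_1}$ in Proposition~\ref{prop:choices2}, which in the intended applications force $H$ to be large enough.
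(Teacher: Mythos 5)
Your plan is in the right spirit and is essentially the same Chebotarev-in-a-ray-class-field argument as the paper's, but the ending has a genuine gap: you flag the possibility $|H|\le 2$ (with $H:=\Gal(F(\zeta_\ell)/F)$) as a degenerate case that ``must be tacitly excluded'' or secured by auxiliary conditions elsewhere. In fact it is already excluded by the hypotheses of the lemma, and the proof is incomplete without noticing this. Since $\ell$ is unramified in $F$, the subfield $F\cap\Q(\zeta_\ell)$ of $\Q(\zeta_\ell)$ is unramified at $\ell$; but $\Q(\zeta_\ell)/\Q$ is \emph{totally} ramified at $\ell$, so its only subfield unramified at $\ell$ is $\Q$. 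Hence $F\cap\Q(\zeta_\ell)=\Q$ and $H=(\Z/\ell)^\times$ in all cases, so $|H^2|=(\ell-1)/2\ge 3$ precisely because $\ell>5$, and the target coset automatically meets $H\smallsetminus\{\pm 1\}$. There is no degenerate case, no implicit hypothesis, and nothing is delegated to Proposition~\ref{prop:choices2} (which relies on this lemma, not the other way around).

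A secondary remark: even setting that aside, your ``short casework on $|H^2|$'' in the $|H|\ge 3$ regime does not by itself settle matters --- a cyclic $H$ of order $4$ has $|H^2|=2$ and could still fail --- so the argument as written would need the identification $H=(\Z/\ell)^\times$ in any case. As for comparison with the paper: the paper's proof keeps the modulus at $\prod_{\fl\mid\ell}\fl$ together with the $\Xi_f$-part and the real places, fixes a single prime $\fl_1\mid\ell$, introduces a free parameter $w\in\F_{\fl_1}^\times$ with $\beta\equiv w^2\pmod{\fl_1}$ and $\beta\equiv 1$ at the other $\fl_i$, and then reads off $\N\fq\,\N I\equiv\pm\N_{\F_{\fl_1}/\F_\ell}(w^2)\pmod\ell$ directly; surjectivity of the finite-field norm shows the right side sweeps out all of $(\F_\ell^\times)^2$, which has $\ge 3$ elements. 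That is exactly your coset computation, carried out by hand rather than by restriction to $\Gal(F(\zeta_\ell)/F)$, and it works because $(\F_\ell^\times)^2\subseteq H$, which again forces $H=(\Z/\ell)^\times$ as above. So the two arguments are the same modulo the missing observation.
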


We first prove Lemma \ref{lem:choosing-q-one} and then explain the modifications needed to prove
Lemma \ref{lem:choosing-q-two}.

\begin{proof}[Proof (of Lemma  \ref{lem:choosing-q-one})] 
Let $\fm$ be the product of all the real places of $F$ and the primes in $\Xi_f$, each raised to a sufficiently large power so that the local units congruent to $1 \pmod \fm$ are squares.  For $\alpha \in F^\times$, let $\iota(\alpha)$ denote the 
principal fractional ideal generated by $\alpha$. Also let $F_{\fm,1}$ denote the set of elements in $F^\times$ that are congruent to $1 \pmodx \fm$. If $U_F$ denotes the units in $F$ and $U_{F,\fm}$ the units congruent to $1 \pmodx \fm$, then there is an exact sequence:
\[
1 \rightarrow \frac{U_F}{U_{F,\fm}} \rightarrow \frac{F^\times}{F_{\fm, 1}}\rightarrow \frac{\iota(F^\times)}{\iota(F_{\fm,1})} \rightarrow 1.
\]
Let $H$ be the Hilbert class field of $F$ and $H_\fm$ the ray class field of $F$ of conductor $\fm$. Then $F \subset H \subset H_\fm$ and there is a canonical isomorphism
\[ \Gal (H_\fm / H) \simeq \frac{\iota(F^\times)}{\iota(F_{\fm,1})}. \]
Pick an element $\beta \in F^\times$ such that $\beta \equiv 1 \pmod \fm$ and such that $\beta$ is negative at the real places  in $\Xi_\infty$ and positive at the real places not in $\Xi_\infty$. Let $\sigma_{(\beta)} \in \Gal(H_\fm /H)$ be the element corresponding to $[\iota(\beta)]$ via the isomorphism above. Let $\sigma_I$ denote the image of $I$ in $\Gal (H_\fm/F)$ under the Artin map. By Tchebotcharev, there exists a prime ideal 
$\fq$ in $\cO_F$ that is prime to $\fm$ and such that 
\[
\sigma_\fq = \sigma_I^{-1} \cdot \sigma_{(\beta)} \ \ \text{ in } \Gal(H_\fm /F).
\]
In particular this implies that $\sigma_\fq = \sigma_I^{-1}$ in $\Gal (H/F)$, so there exists $\alpha \in F^\times $ such that $\fq \cdot I = (\alpha)$. Then $\sigma_{(\alpha)} = \sigma_{(\beta)}$, which is the same as saying that 
\[
[\iota (\alpha)] = [\iota(\beta)] \ \ \text{ in } \ \frac{\iota(F^\times)}{\iota(F_{\fm,1})}. 
\]
The exact sequence above implies then that there is a unit $u\in U_F$ such that 
\[
[u \cdot \alpha] = [\beta] \ \ \text{ in } \ \frac{F^\times}{F_{\fm, 1}}.
\]
Replacing $\alpha$ by $u\cdot \alpha$, we see that it has the required properties.
\end{proof}

\begin{proof}[Proof (of Lemma \ref{lem:choosing-q-two})]
We modify the proof of Lemma \ref{lem:choosing-q-one}. 

Let $\{ \fl_1, \ldots, \fl_r \}$ be the primes of $F$ lying over $\ell$. 
 Let $\fm$ be the product of all the real places of $F$, the primes in $\Xi_f$ (each raised to a sufficiently large power so that the local units congruent to $1 \pmod \fm$ are squares) 
 and the primes $\fl_2, \ldots, \fl_r$. Fix for the moment an element $w \in (\o_F/\fl_1)^\times$. 
By the approximation theorem, we may pick $\beta \in F^\times$ such that 
\begin{itemize}
\item $\beta$ is negative at the places in $\Xi_\infty$ and positive at the real places not in $\Xi_\infty$.
\item $\beta \equiv 1 \pmod \fm$.
\item $\beta \equiv w^2 \pmod {\fl_1}$. 
\end{itemize}

Let $\sigma_I$ denote the image of $I$ in 
$\Gal(H_{ \fm\fl_1 }/F)$ under the Artin map. By Tchebotcharev, there exists a prime ideal $\fq$ in $\cO_F$ that is prime to $ \fm \cdot \fl_1$ and such that 
\[
\sigma_\fq = \sigma_I^{-1} \cdot \sigma_{(\beta)} \ \ \text{  in } \Gal(H_{ \fm \fl_1}/F).
\]
As before then, there exists $\alpha \in F^\times$ such that $\fq \cdot I = (\alpha)$ and a unit $u\in U_F$ such that 
\[
[u \cdot \alpha] = [\beta] \ \ \text{ in } \ \frac{F^\times}{F_{\fm \fl_1,1}}.
\]
Replacing $\alpha$ by $u \cdot \alpha$, we see that $\alpha$ satisfies the requirements \eqref{cond:alpha-square}, 
\eqref{cond:alpha><}
of the lemma. It remains to show that $w$ can be chosen so that $\fq$ satisfies \eqref{cond:Nq}.  Clearly $\fq$ is prime to $\ell$. 
But 
\[
\N \fq \cdot \N I = \pm \N (\alpha) = \pm \N(\beta) \equiv \pm \N_{\F_{\fl_1}/ \F_\ell} (w^2) \pmod \ell.
\]
Since $\N I$ is fixed and we only need $\N \fq \not \equiv \pm 1 \pmod \ell$, it suffices to show that the subgroup
\[
\left\{ \N_{\F_{\fl_1}/ \F_\ell} (w^2) : w \in \F_{\fl_1}^\times \right\} \subset \F_\ell^\times 
\]
contains at least $3$ elements. But this subgroup is just $(\F_\ell^\times)^2$ (since $\fl_1$ is unramified over $\ell$) and has 
cardinality $\frac{\ell-1}{2} > 2 $ since $\ell>5$ by assumption. \end{proof}

Now we prove Prop. \ref{prop:choices1} and then explain the modifications needed to 
prove Prop. \ref{prop:choices2}.

\begin{proof}[Proof (of Prop. \ref{prop:choices1})]
Let $\ff=\ff_1 \cdots \ff_n$ and 
$\fS = 2\ell\fd_{B_1\cup B_2} \ff$. 
We begin by using Lemma \ref{lem:choosing-q-one} above to pick:
\begin{itemize}
\item
A prime ideal $\fq_{B_1\smallsetminus B_2}$ (prime to $\fS$) such that $\fd_{B_1 \smallsetminus B_2} \cdot \fq_{B_1 \smallsetminus B_2} = (\alpha_{B_1\smallsetminus B_2})$, with $\alpha_{B_1 \smallsetminus B_2}$ satisfying the following conditions:

\begin{itemize}
 \item 
 $\alpha_{B_1 \smallsetminus B_2}$ is a square of a unit at the primes dividing $\ell  \fd_{B_2} \ff$ and 
 the primes above $2$ not dividing $\fd_{B_1 \smallsetminus B_2}$. 

 \item 
 For $v \in \Sigma_\infty$, 
\[
\sigma_v (\alpha_{B_1 \smallsetminus B_2} ) <0, \text{ if } v \in \Sigma_{B_1 \smallsetminus B_2}; \qquad 
\sigma_v (\alpha_{B_1 \smallsetminus B_2} ) > 0, \text{ if } v \not \in \Sigma_{B_1 \smallsetminus B_2}.
\]

\end{itemize}

\item A prime ideal $\fq_{B_2\smallsetminus B_1}$ (prime to $\fS$) such that $\fd_{B_2 \smallsetminus B_1} \cdot \fq_{B_2 \smallsetminus B_1} = (\alpha_{B_2\smallsetminus B_1})$, with $\alpha_{B_2 \smallsetminus B_1}$ satisfying the following conditions:

\begin{itemize}
 \item 
 $\alpha_{B_2 \smallsetminus B_1}$ is a square of a unit at  the primes dividing $\ell \fd_{B_1} \ff$ and the primes above $2$ not dividing $\fd_{B_2 \smallsetminus B_1}$.

 \item 
 For $v \in \Sigma_\infty$, 
\[
\sigma_v (\alpha_{B_2 \smallsetminus B_1} ) <0, \text{ if } v \in \Sigma_{B_2 \smallsetminus B_1}; \qquad 
\sigma_v (\alpha_{B_2 \smallsetminus B_1} ) > 0, \text{ if } v \not \in \Sigma_{B_2 \smallsetminus B_1}.
\]

\end{itemize}

\item A prime ideal $\fq_{B_1\cap B_2}$ (prime to $\fS$) such that $\fd_{B_1 \cap B_2} \cdot \fq_{B_1 \cap B_2} = (\alpha_{B_1\cap B_2})$, with $\alpha_{B_1 \cap B_2}$ satisfying the following conditions:

\begin{itemize}
 \item 
 $\alpha_{B_1 \cap B_2}$ is a square of a unit at  the primes dividing $\ell \fd_{B_1\smallsetminus B_2}  \fd_{B_2 \smallsetminus B_1}\ff$ and the primes above $2$ not dividing $\fd_{B_1 \cap B_2}$. 

 \item 
 For $v \in \Sigma_\infty$, 
\[
\sigma_v (\alpha_{B_1 \cap B_2} ) <0, \text{ if } v \in \Sigma_{B_1 \cap B_2}; \qquad 
\sigma_v (\alpha_{B_1 \cap B_2} ) > 0, \text{ if } v \not \in \Sigma_{B_1 \cap B_2}.
\]

\end{itemize}
\end{itemize}

\noindent Let $\fR$ denote the ideal
\[
\fR := \fq_{B_1\smallsetminus B_2}\cdot \fq_{B_2 \smallsetminus B_1}\cdot \fq_{B_1 \cap B_2}.
\]
Next, we use the approximation theorem to pick $\alpha \in F^\times $ satisfying the following properties:
\begin{enumerate}[label=(\Roman*), ref=\Roman*]
\item \label{cond:alpha>} $\alpha \gg 0$.
\item \label{cond:alphaR} $-\alpha$ is a square of a unit at the primes dividing $\ell \fR \ff$.
\item \label{cond:alphaB} If $\fq$ is a prime dividing $\fd_{B_1 \cup B_2}$, then $-\alpha$ is a unit at $\fq$ but not a square. 
If further $\fq$ divides $2$, then we also require that $\sqrt{-\alpha}$ generate the unique unramified extension of $F_\fq$. 
\item \label{cond:alpha2} If $\fq$ is a prime dividing $2$ but not dividing $\fd_{B_1\cup B_2}$, then $-\alpha$ is a square of a unit at $\fq$. 
 \end{enumerate}
Let 
\[
\fm := 2^a \ell \cdot \fd_{B_1\cup B_2} \ff \cdot \fR \cdot \prod_{v\in \Sigma_\infty} v,
\]
with the power $2^a$ being chosen large enough so that locally at any prime above $2$, the units congruent to $1$ modulo $2^a$ are squares. 
By Tchebotcharev, there exists a prime ideal $\fQ \subset \cO_F$ (prime to $\fm$) such that 
\[
\sigma_{(\alpha)} \cdot \sigma_{\fQ} = 1 \ \ \text{ in } \Gal(H_\fm/F).
\]
This implies that
\[
(\alpha) \cdot \fQ = (\beta),
\]
for some $\beta \equiv 1 \pmodx \fm$. Now, take 
\[
u:= - \alpha^{-1} \beta, \quad J_1:= \alpha_{B_1 \smallsetminus B_2} \cdot \alpha_{B_1 \cap B_2}, \quad J_2:= \alpha _{B_2 \smallsetminus B_1} \cdot \alpha_{B_1 \cap B_2}.
\]
Since 
\[
(u) = \fQ, \quad (J_1) = \fd_{B_1} \cdot \fq_{B_1\smallsetminus B_2} \cdot \fq_{B_1 \cap B_2}, \quad \quad (J_2) = \fd_{B_2} \cdot \fq_{B_2\smallsetminus B_1} \cdot \fq_{B_1 \cap B_2},
\]
 we see that $u,J_1, J_2 $ lie in $\cO_F$, which shows that \eqref{choices:are-integral} is satisfied. 
 Let $E/F$ be the quadratic extension $E=F+F\i$ with $\i^2=u$. 
 Since $\alpha\gg 0 $ and $\beta \gg 0$, we have $u\ll 0$, which shows \eqref{choices:<<0}, whence 
 $E$ is a CM quadratic extension of $F$.
The conditions \eqref{cond:alphaB} and \eqref{cond:alpha2} above imply that if $\fq$ is a prime above $2$, then $E_\fq$ is 
the unique unramified quadratic extension of $F_\fq$ if $\fq$ divides $\fd_{B_1\cup B_2}$ and otherwise is split, which 
shows \eqref{choices:at-2}.
Since $(u) = \fQ$, it follows that $E$ is ramified exactly at the prime $\fQ$, and in particular satisfies  \eqref{choices:unit-at-unramified}.
Now we check that
\[
B_1 \simeq E+E\j_1, \quad \j_1^2= J_1, \quad \i \j_1 = -\j_1 \i.
\]
To show this, it suffices to check that the Hilbert symbol $(u,J_1)_v$ equals $-1$ exactly for those $v$ at which $B_1$ is ramified. 
At the archimedean places this is clear since $u\ll 0$ and $J_1$ is negative exactly at the places at which $B_1$ is ramified. As for the finite places, we only need to check this for $v$ dividing $2uJ_1$, since outside of these primes $B_1$ is split and $(u,J_1)=1$ since both $u$ and $J_1$ are units at such places. At the primes dividing $\fq_{B_1\smallsetminus B_2} \cdot \fq_{B_1 \cap B_2}$, the algebra $B_1$ is split and $u$ is a square of a unit, so this is clear. For $\fq \mid \fd_{B_1}$, the algebra $B_1$ is ramified, $J_1$ is a uniformizer and by \eqref{cond:alphaB} above, we have $(u,J_1)_\fq=(-\alpha, J_1)_\fq= -1$. Next we consider the primes $\fq$ above $2$. If $\fq \mid \fd_{B_1}$, this is done already. If $\fq \mid \fd_{B_2\smallsetminus B_1}$, then $J_1$ is a square at $\fq$, so $(u,J_1)_\fq = 1$ as required. This leaves the primes $\fq$ above $2$ which do not divide $\fd_{B_1 \cup B_2}$. At such primes, $u$ is a square of a unit, so $(u,J_1)_\fq = 1$.
The only prime left is $\fQ$ at which the required equality follows from the product formula! 
The isomorphism $B_2 \simeq E+E\j_2$ follows similarly, and then the isomorphism $B \simeq E+ E\j$ follows from the equality $B=B_1 \cdot B_2$ in the Brauer group. This completes the 
proof of \eqref{choices:j1j2}. 
The conditions \eqref{choices:squares-at-bad-primes} and \eqref{choices:sq-of-units-at-pf} are easily verified,
which leaves \eqref{choices:atleast-one-is-a-square}.

Finally, we check that \eqref{choices:atleast-one-is-a-square} is satisfied, namely that at every 
place $v$ of $F$,
at least one of $u$, $J_1$, $J_2$ or $J$ is a square.
  At the archimedean places, this is obvious. At the primes dividing $\fd_{B_1 \cup B_2}$, this follows from \eqref{choices:squares-at-bad-primes}.  Let $\fq$ be a finite prime not dividing $\fd_{B_1 \cup B_2}$. 
If such a $\fq$ divides $2$, then all of $u,J_1,J_2,J$ are squares at $\fq$. So let $\fq$ be prime to $2 \fd_{B_1 \cup B_2}$. 
If $E$ is split at $\fq$, then $u$ is a square at $\fq$. If $E$ is inert at $\fq$, then $J_1, J_2 $ lie in $\N_{E_\fq/F_{\fq}} (E_\fq^\times)$
since $B_1$ and $B_2$ are split at $\fq$. 
If $J_1$ and $J_2$ are both not squares at such $\fq$, it must be the case that $J=J_1 J_2$ is a square. Finally, we deal with $\fq = \fQ$, the only ramified prime in $E$. 
At this prime, $J_1$ and $J_2$ are both units. Again, if both of them are non-squares, their product must be a square. This completes the proof. 
\end{proof}

We will now prove Prop \ref{prop:choices2}. 

\begin{proof}[Proof (of Prop. \ref{prop:choices2})]
We will show that we can pick $u, J_1, J_2$ such that \eqref{choices:modell} is satisfied in addition to  \eqref{choices:are-integral}-\eqref{choices:sq-of-units-at-pf}. 
The proof is almost the same as that of Prop. \ref{prop:choices1}
with some minor modifications which we now describe. 
First, we pick as before the prime ideals 
$\fq_{B_1 \smallsetminus B_2}$, $\fq_{B_2 \smallsetminus B_1}$,
$\fq_{B_1\cap B_2}$ and the elements $\alpha_{B_1 \smallsetminus B_2}$,
$\alpha_{B_2 \smallsetminus B_1}$, $\alpha_{B_1 \cap B_2}$.
Using Lemma  \ref{lem:choosing-q-two}, we can ensure that 
for $\fq = \fq_{B_1 \smallsetminus B_2}$ and $\fq_{B_2 \smallsetminus B_1}$, we have 
\[
\N\fq \not \equiv 0,\pm1 \pmod \ell.
\]
Fix a prime $\fl_1$ of $F$ above $\ell$. Let 
$\fR$ be as before and then 
 using the approximation theorem, pick $\alpha \in F^\times $ satisfying 
the properties \eqref{cond:alpha>} through \eqref{cond:alpha2} of the proof of Prop. \ref{prop:choices1}
and the following additional conditions:
\begin{enumerate}[label=(\Roman*), ref=\Roman*]
\setcounter{enumi}{4}
\item $-\alpha \equiv w^2 \pmod {\fl_1}$, where $w \in \F_{\fl_1}^\times$ is an element such that $\N_{\F_{\fl_1}/\F_\ell} (w^2) \neq \pm 1$. 
\item $-\alpha \equiv 1 \pmod {\fl}$ for all primes $\fl \neq \fl_1$ dividing $\ell$. 
\end{enumerate}
Note that this uses the assumption that $\ell$ is unramified in $F$ and $\ell >5$. 
Next, as before, we pick $\fm$, $\beta$ and $\fQ$ and set:
\[
u:= - \alpha^{-1} \beta, \quad J_1:= \alpha_{B_1 \smallsetminus B_2} \cdot \alpha_{B_1 \cap B_2}, \quad J_2:= \alpha _{B_2 \smallsetminus B_1} \cdot \alpha_{B_1 \cap B_2}.
\]
It is easy to see (using arguments similar to those of Prop. \ref{prop:choices1}) that  \eqref{choices:are-integral}-\eqref{choices:sq-of-units-at-pf} hold. We verify \eqref{choices:modell} now.

The field $E$ is only ramified at $\fQ$. Also,
\[ 
\N \fQ = \pm \N (\beta) \cdot \N (\alpha^{-1} )\equiv \pm \N (\alpha^{-1}) \equiv \pm \N_{\F_{\fl_1}/\F_\ell} (w^{-2})  \not \equiv 0,\pm 1 \pmod \ell,
\]
which proves what we need for the field $E$.
Now let us consider the field $K$. Since $J=J_1 J_2$, we have
\begin{align*}
(J)&=  \fd_{B_1} \cdot \fq_{B_1\smallsetminus B_2} \cdot \fq_{B_1 \cap B_2} \cdot \fd_{B_2} \cdot \fq_{B_2\smallsetminus B_1} \cdot \fq_{B_1 \cap B_2}\\
&= \fd_{B_1 \smallsetminus B_2} \cdot \fd_{B_2 \smallsetminus B_1} \cdot \fq_{B_1\smallsetminus B_2} \cdot \fq_{B_2 \smallsetminus B_1} \cdot \fd_{B_1 \cap B_2}^2 \cdot \fq_{B_1 \cap B_2}^2.
\end{align*}
We claim that $K$
 is ramified exactly at the primes dividing
\[
\fd_K:=\fd_{B_1 \smallsetminus B_2} \cdot \fd_{B_2 \smallsetminus B_1} \cdot \fq_{B_1\smallsetminus B_2} \cdot \fq_{B_2 \smallsetminus B_1}.
\]
This will follow if we show that $K/F$ is unramified 
at the primes $\fq$ over $2$ that do not divide $\fd_{B_1 \smallsetminus B_2} \cdot \fd_{B_2 \smallsetminus B_1}$.
We claim that $J$ is a square (and hence $K$ is split) at such primes $\fq$.  Indeed, if a prime $\fq$ above $2$ does not divide $\fd_{B_1 \cup B_2}$, then $\alpha_{B_1 \smallsetminus B_2}$, $\alpha_{B_2 \smallsetminus B_1}$ and $\alpha_{B_1 \cap B_2}$ are all squares of units at $\fq$, hence $J_1$, $J_2$ and $J$ are squares at $\fq$. On the other hand, if a prime $\fq$ above $2$ divides $\fd_{B_1 \cap B_2}$, then $\alpha_{B_1 \smallsetminus B_2}$, $\alpha_{B_2 \smallsetminus B_1}$
are squares of units at $\fq$ and thus
\[
J= \alpha_{B_1 \smallsetminus B_2} \cdot \alpha_{B_2 \smallsetminus B_1} \cdot \alpha_{B_1 \cap B_2}^2
\]
is a square at $\fq$. 

Thus it suffices to consider the primes $\fq$ dividing $\fd_K$. For 
$\fq$ dividing either $\fd_{B_1 \smallsetminus B_2}$ or $\fd_{B_2 \smallsetminus B_1}$, it follows 
from the assumptions in the statement of the proposition that $\N \fq \not \equiv 0, \pm 1 \pmod \ell$. 
For $\fq$ equal to either  $\fq_{B_1\smallsetminus B_2}$ or  $\fq_{B_2 \smallsetminus B_1}$, the same 
follows from the choice of these prime ideals. \end{proof}

\subsection{The main conjecture}
\label{ssec:mainconjecture}

Finally, in this section we come to the main conjecture. 
Our starting data will be the totally real field $F$, the automorphic representation $\Pi$ (from the introduction) of conductor $\fN= \fN_s \cdot \fN_{\ps}$ and the 
two quaternion algebras $B_1$ and $B_2$. We assume that $\Pi$ admits a Jacquet--Langlands transfer to
both $B_1$ and $B_2$. We also assume the following condition holds:
\begin{itemize}
\item $\fN$ is prime to $2 \fD_{F/\Q}$, where $\fD_{F/\Q}$ denotes the {\it different} of $F/\Q$. 
\end{itemize}

The conjecture will in addition depend on several
auxiliary choices which we now make completely explicit in the following series of steps.

 \begin{enumerate}
 \item Let $\ell$ be a rational prime such that $(\ell,N(\Pi))=1$. 
 \item
Pick $u$, $J_1$ and $J_2$  satisfying all the conditions 
of Prop. \ref{prop:choices1}, taking 
$\{ \ff_1, \ldots, \ff_n \}$ to be the set of primes of $F$ dividing $2\fD_{F/\Q}$.

\item
Set $E=F+F\i$ where $\i^2 = u$. An explicit model for $B_i$, $i=1,2,$ is $B_i = E+ E\j_i$ where 
$\j_i^2= J_i$ and $\alpha \j_i = \j_i \alpha^\rho$ for $\alpha\in E$. Likewise an explicit model for $B=B_1\cdot B_2 $ is 
$B= E+E\j$ where $\j^2 = J:=J_1J_2$. 

\item Let $B'$ denote any one of the quaternion algebras $B$, $B_1$ or $B_2$. Let $\fd_{B'}$ denote the discriminant of ${B'}$ and define $\fN_{B'}$ by 
\[
\fN = \fd_{B'} \cdot \fN_{B'}. 
\]
Thus $\fd_{B'}$ divides $\fN_\s$, and $\fN_{\ps}$ divides $\fN_{B'}$.

\item Given the choices of $u$, $J_1$ and $J_2$, in Sec. \ref{ssec:schwartzX'} we have picked local maximal orders and {\it oriented} Eichler orders of 
level $\fN_{B'}$.  (Here the 
orientation is only picked at places dividing $\fN_\ps$.)
Let $\cO_{B'}$ (resp. $\cO_{B'} (\fN_{B'})$) denote the unique maximal order 
(resp. Eichler order of level $\fN_{B'}$) corresponding to these choices. 
Also denote by
$o_{B'}$ the corresponding orientation on $\cO_{B'} (\fN_{B'})$.
This defines open compact subgroups $\cK^{B'} =\prod_v \cK_v^{B'}$ and 
$\cKt^{B'} =\prod_v \cKt_v^{B'}$  of ${B'}^\times(\A_f)$
with 
\[
\cK_v^{B'} = \ker \left[ o_{B',v}: ( \cO_{B'} (\fN_{B'}) \otimes_{\cO_F} \cO_{F,v})^\times \rightarrow (\cO_{F,v}/\fN_\ps \cO_{F,v})^\times\right]
\]
and
\[
\cKt_v^{B'}=( \cO_{B'} (\fN_{B'}) \otimes_{\cO_F} \cO_{F,v})^\times.
\]
For future use, we record that with our choices of $u$, $J_1$ and $J_2$ and local orders, we have
\[
\cO_E \otimes \Z_{(\ell)} \subseteq \cO_{B'} \otimes \Z_{(\ell)}.
\]

\item As in \S \ref{ssec:defn-period-invariant}, we pick a large enough number field $L$ and isomorphisms $\phi_B$, $\phi_{B_1}$ and $\phi_{B_2}$ satisfying \eqref{eqn:isomBinfty-integral1}.  
We recall that $\phi_{B'}$ gives an isomorphism:
\[
\cO_{B'} \otimes \cO_{L,(\ell)} \simeq \prod_{\sigma} \M_2 (\cO_{L,(\ell)})
\]
for $B'=B$, $B_1$ and $B_2$. 
As explained in  \S \ref{ssec:defn-period-invariant}, this choice defines 
automorphic vector bundles on $\Sh_{\cK^B}(G_B,X_B)$, $\Sh_{\cK^{B_1}}(G_{B_1},X_{B_1})$ and $\Sh_{ \cK^{B_2}}(G_{B_2},X_{B_2})$
as well as sections 
 $s_B$, $s_{B_1}$ and $s_{B_2}$ of these bundles that are $\ell$-normalized. 
For $B'=B,B_1$ or $B_2$ let us denote the corresponding bundle by 
$\cV_{\uk_{B'},r}^{B'}$ to indicate that is a bundle on $X_{B'}$.

\item So far, we have not had to pick a base point of $X_{B'}$ but we will now need to do so. 
Fix isomorphisms $E\otimes_{F,\sigma} \R \simeq \C$ for all infinite places $\sigma$ of $F$ as in  \S \ref{ssec:schwartzX'}.
For $B'=B,B_1,B_2$ we define $h_{B'}$ as follows: take the composite maps
\[
\C \rightarrow \prod_{\sigma \in\Sigma_\infty} \C \simeq E\otimes \R \rightarrow B'\otimes \R
\]
where the first map sends
\[
z \mapsto  (z_\sigma)_\sigma 
\]
with $z_\sigma=z$ if $\sigma$ is split in $B'$ and $z_\sigma=1$ is $\sigma$ is ramified in $B'$. 

\item For $B'=B,B_1$ or $B_2$, 
let $F_{B'} = \Lift_{h_{B'}} (s_{B'})$.

\item For each $\sigma$, we pick a vector $v^{B'}_{\sigma,k_{B',\sigma}} \in V_{\sigma,k_{B',\sigma},r}$ 
satisfying \eqref{eqn:choice-of-vksigma} 
and that is integrally normalized with respect to the $\ell$-integral structure given by 
\[
\Sym^{k_{B',\sigma}} \cO_{L,(\ell)}^2 
\otimes \det(\cO_{L,(\ell)}^2)^{\frac{r-k_{B',\sigma}}{2}}.
\]

\item Let $v^{B'}_{\uk_{B'}} = \otimes_\sigma v^{B'}_{\sigma, k_{B',\sigma}}$. 
Now we can define $\phi_{F_{B'}} = (F_{B'}(g), v^{B'}_{\uk_{B'}})$. 
Let $f_{B'}$ denote the corresponding element of $\pi_{B'}$:
\[
f_{B'} (g) =  \phi_{F_{B'}}  (g) \cdot \nu_{B'} (g)^{-r/2}.
\]
Then $f_{B'} $ is a new-vector as defined in \S \ref{ssec:explicit-rallis}, but 
is now moreover integrally normalized at $\ell$. 
\end{enumerate}

From Defn. \ref{defn:pet-norm}, Prop. \ref{prop:s-to-F} and Prop. \ref{prop:F-to-phi}, we find that 
\begin{align*}
\llangle s_{B'}, s_{B'} \rrangle_{\cKt^{B'}} & = 2^{d_{B'}} h_F [\cK_0^{B'} : \cKt^{B'}] \cdot \langle F_{B'},F_{B'} \rangle_{h_{B'}} \\
& = 2^{d_{B'}} h_F [\cK_0^{B'} : \cKt^{B'}] \cdot 
\frac{\rank \cV^{B'}_{\uk_{B'},r}}{\langle v^{B'}_{\uk_{B'}}, v^{B'}_{\uk_{B'}} \rangle_{h_{B'}} } \cdot \langle f_{B'},f_{B'} \rangle,
\end{align*}
where $d_{B'}$ is the number of infinite places of $F$ where $B'$ is split, and $\cK_0^{B'}$ is the maximal open compact subgroup of ${B'}^\times (\A_f)$ defined by $\cK_0^{B'} = \prod_v \cK_{0,v}^{B'}$, with $\cK_{0,v}^{B'} = (\cO_{B'} \otimes \cO_{F,v})^\times$. 

In order to state the main conjecture, we will need to renormalize the measure and the Schwartz function in the definition of the theta lift. 
First we renormalize the Schwartz function. Let 
$\varphi_v$ be the Schwartz function on $\SS (\X_v)$ defined in \S \ref{ssec:schwartzX}.
We set $\bv = \otimes_v \bv_v$ where 
\[
\bv_v = \sqrt{C_v^\varphi} \cdot \varphi_v
\]
with
\[
C_v^\varphi = \begin{cases}
1 & \text{if $\Pi_v$ is unramified principal series}, \\
\dfrac{q_v-1}{q_v^{2n_v+1}} & \text{if $\Pi_v$ is ramified principal series with conductor $q_v^{n_v}$},\\
q_v^{-2} & \text{if $\Pi_v$ is special},\\
\dfrac{k_v !}{2^{k_v}\pi^{k_v}} & \text{if } v\in \Sigma_\infty \smallsetminus (\Sigma_{B,\infty} \cup \Sigma_{B_1,\infty} \cup 
\Sigma_{B_2,\infty}),\\
\dfrac{(k_v-2)!}{2^{k_v-2} \pi^{k_v-2}} &  \text{if } v\in \Sigma_{B_1,\infty}\cap \Sigma_{B_2,\infty}\\
\dfrac{(k_v-2)!}{2^{k_v-4} \pi^{k_v-2}}  &  \text{if } v\in \Sigma_{B,\infty}.
\end{cases}
\]
As for the measure used in the theta lift \eqref{eq:jls_theta_lift_def} we renormalize the measure 
on $B^{(1)}(\A)$ to 
\[
 [\cK^{B}_0 : \cKt^B] \cdot \rank \cV^{B}_{\uk_{B},r} \cdot \text{standard measure on }  B^{(1)}(\A).
\]
With this choice of measure, let us define $\ba (B_1,B_2)$ by 
\begin{equation}
\label{eqn:alpha-defn}
\theta_{\bv} (f_B) = \ba (B_1,B_2) \cdot (f_{B_1} \times f_{B_2}).
\end{equation}

\begin{thm} Suppose $B\neq \M_2 (F)$. Then 
\begin{equation}
\label{eqn:exp-rallis-cons}
|\ba (B_1,B_2)|^2 \cdot \llangle s_{B_1}, s_{B_1} \rrangle_{\cKt^{B_1}} \cdot \llangle s_{B_2}, s_{B_2} \rrangle_{\cKt^{B_2}} =
\Lambda(1,\Pi,\ad) \cdot \llangle s_B, s_B \rrangle_{\cKt^{B}} \quad \text{ in } \ \C^\times / R_{(\ell)}^\times.
\end{equation}
\end{thm}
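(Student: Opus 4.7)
The plan is to combine four inputs: (a) a comparison between $\ba(B_1,B_2)$ and the unnormalized constant $\alpha(B_1,B_2)$ of \eqref{eqn:alphaintro}, (b) the explicit Rallis formula of Theorem \ref{thm:rallis-B-explicit}, (c) the translation between $\llangle s_{B'},s_{B'}\rrangle_{\cKt^{B'}}$ and $\langle f_{B'},f_{B'}\rangle$ furnished by Propositions \ref{prop:s-to-F} and \ref{prop:F-to-phi}, and (d) the evaluation of $\langle f,f\rangle$ in Proposition \ref{prop:<f,f>}. Since the identity is asserted only modulo $R_{(\ell)}^\times$, the proof reduces to an explicit bookkeeping that all ``extra'' constants collapse to an $\ell$-adic unit.

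First, from the definition $\bv_v=\sqrt{C_v^\varphi}\,\varphi_v$ and the rescaling of the measure on $B^{(1)}(\A)$ by $[\cK_0^B:\cKt^B]\cdot\rank\cV^B_{\uk_B,r}$, the identity \eqref{eqn:alpha-defn} yields
$$\ba(B_1,B_2)=[\cK_0^B:\cKt^B]\cdot\rank\cV^B_{\uk_B,r}\cdot\prod_v(C_v^\varphi)^{1/2}\cdot\alpha(B_1,B_2).$$
Taking absolute squares, substituting $|\alpha|^2$ via Theorem \ref{thm:rallis-B-explicit}, replacing each $\langle f_{B'},f_{B'}\rangle$ by $\llangle s_{B'},s_{B'}\rrangle_{\cKt^{B'}}$ via (c), and rewriting $\langle f,f\rangle$ via Proposition \ref{prop:<f,f>} (noting $L(1,\pi,\ad)=L(1,\Pi,\ad)$ since $\pi$ and $\Pi$ differ only by a scalar twist), the target identity reduces --- after multiplying through the archimedean Euler factors $L_v(1,\Pi_v,\ad)=\Gamma_\C(k_v)\Gamma_\R(2)$ to promote $L(1,\Pi,\ad)$ to $\Lambda(1,\Pi,\ad)$ --- to the statement that an explicit constant $\kappa$ lies in $R_{(\ell)}^\times$.

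The verification of $\kappa\in R_{(\ell)}^\times$ divides into two parts. \emph{Transcendental cancellation:} all powers of $\pi$ must cancel, with contributions coming from the archimedean $C_v^\varphi$, from the local constants $C_v$ of Theorem \ref{thm:rallis-B-explicit}, from the inner products $\langle v^{B'}_{\uk_{B'}},v^{B'}_{\uk_{B'}}\rangle_{h_{B'}}$, from the prefactor in Proposition \ref{prop:<f,f>}, and from $\Gamma_\C(k_v)\Gamma_\R(2)$. A direct accounting in each of the three archimedean configurations ($v\notin\Sigma_B\cup\Sigma_{B_1}\cup\Sigma_{B_2}$; $v\in\Sigma_{B_1,\infty}\cap\Sigma_{B_2,\infty}$; $v\in\Sigma_{B,\infty}$) shows that the specific values of $C_v^\varphi$ chosen in \S\ref{ssec:mainconjecture} were designed precisely so that this cancellation holds. \emph{$\ell$-integral cancellation:} the remaining rational factor --- a product of powers of $2$, of $h_F$, of $|D_F|$, of the indices $[\cK_0^{B'}:\cKt^{B'}]$, of the ranks $\rank\cV^{B'}_{\uk_{B'},r}$, of factorials bounded by $k_\Pi!$, and of factors $q_v, q_v\pm 1$ for $v\mid\fN$ --- is automatically an $\ell$-unit, since every prime occurring in it divides $N(\Pi)=2h_FD_FN_\Pi k_\Pi!$ and $(\ell,N(\Pi))=1$.

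The main obstacle is the transcendental bookkeeping at the archimedean places: one must verify term-by-term that the $\pi$-powers and factorials from the Hermite-vector Schwartz functions $\varphi'_k$ of \S\S\ref{sssec:schwartzX'-ds}, \ref{sssec:schwartzX'-fd}, from the chosen $C_v^\varphi$, from $C_v$, from $\Gamma_\C(k_v)\Gamma_\R(2)$, and from the inner products $\langle v^{B'}_{\uk_{B'}},v^{B'}_{\uk_{B'}}\rangle_{h_{B'}}$ of the $\ell$-integrally normalized highest weight vectors cancel exactly. Although each step is mechanical, the accounting has to be carried out separately in each of the three archimedean local configurations, and this is the most delicate part of the argument; the $\ell$-integral verification, by contrast, is essentially automatic from the definition of $N(\Pi)$.
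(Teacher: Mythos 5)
Your proposal is correct and takes essentially the same approach as the paper. The one point worth flagging is that you describe the archimedean verification as a cancellation modulo $R_{(\ell)}^\times$, whereas the paper actually organizes the local bookkeeping into an exact identity: it introduces a table of constants $C_v^{B_1},C_v^{B_2},C_v^B,C_v^\Lambda,C_v^\varphi,C_v$ (with $\prod_v C_v^{B'}=[\cK_0^{B'}:\cKt^{B'}]\cdot\rank\cV^{B'}_{\uk_{B'},r}$) and proves $C_v^{B_1}C_v^{B_2}C_v^BC_v^\Lambda C_v^\varphi C_v=1$ for every $v$, so all $\pi$-powers and factorials vanish on the nose. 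The only quantities that genuinely need the $\ell\nmid N(\Pi)$ hypothesis are the surviving global factors $2^{d_{B_1}+d_{B_2}-d_B+1}h_F|D_F|^3$ and the ratio $\langle v^B,v^B\rangle/\bigl(\langle v^{B_1},v^{B_1}\rangle\langle v^{B_2},v^{B_2}\rangle\bigr)$, both of which lie in $R_{(\ell)}^\times$ because every prime dividing them divides $N(\Pi)$.
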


\begin{proof}
Recall that $\Lambda(s, \Pi, \ad) = \prod_{v \in \Sigma_\infty}L(s, \Pi_v, \ad) \cdot L(s, \Pi, \ad)$ with 
\[
 L(s, \Pi_v, \ad) = \Gamma_\R(s+1) \Gamma_\C(s+k_v-1),
\]
where 
$\Gamma_\R(s) = \pi^{-\frac{s}{2}} \Gamma(\frac{s}{2})$ and $\Gamma_\C(s) = 2 (2\pi)^{-s} \Gamma(s)$.
Since 
\[
 L(1, \Pi_v, \ad) = \frac{(k_v-1)!}{2^{k_v-1} \pi^{k_v+1}}
\]
for $v \in \Sigma_\infty$, it follows from Proposition \ref{prop:<f,f>} that
\[
 \langle f, f \rangle = 2 |D_F| \cdot \prod_v C_v^\Lambda \cdot \Lambda(1, \Pi, \ad), 
\]
where the constant $C_v^\Lambda$ is defined by the table below.
We also define a constant $C_v^{B'}$ for $B' = B_1,B_2,B$ by the table, so that the following hold:
\begin{align*}
 \llangle s_{B'}, s_{B'} \rrangle_{\cKt^{B'}} & = 2^{d_{B'}} h_F \cdot \prod_v C_v^{B'} \cdot 
 \frac{\langle f_{B'}, f_{B'} \rangle}{\langle v^{B'}_{\uk_{B'}}, v^{B'}_{\uk_{B'}} \rangle_{h_{B'}}}, \\
 |\ba (B_1,B_2)|^2 & = \prod_v (C_v^B)^2 C_v^\varphi \cdot |\alpha(B_1, B_2)|^2.
\end{align*}
\[
{\renewcommand{\arraystretch}{1.2}
\begin{array}{|c|c|c|c||c|c|c|c|c|c|} \hline 
 \Pi_v & B_{1,v} & B_{2,v} & B_v & C_v^{B_1} & C_v^{B_2} & C_v^B & C_v^\Lambda & C_v^\varphi & C_v \\ \hline \hline
 \text{ur} & \text{spl} & \text{spl} & \text{spl} & 1 & 1 & 1 & 1 & 1 & 1 \\ \hline
 \text{rps} & \text{spl} & \text{spl} & \text{spl} & q_v^{n_v-1}(q_v+1) & q_v^{n_v-1}(q_v+1) & q_v^{n_v-1}(q_v+1) & \frac{q_v}{q_v+1} & \frac{q_v-1}{q_v^{2n_v+1}} & \frac{1}{q_v^{n_v-3}(q_v-1)(q_v+1)^2} \\ \hline 
 \text{st} & \text{spl} & \text{spl} & \text{spl} & q_v+1 & q_v+1 & q_v+1 & \frac{q_v}{q_v+1} & q_v^{-2} & \frac{q_v}{(q_v+1)^2} \\ \hline
 \text{st} & \text{ram} & \text{ram} & \text{spl} & 1 & 1 & q_v+1 & \frac{q_v}{q_v+1} & q_v^{-2} & q_v \\ \hline
 \text{st} & \text{ram} & \text{spl} & \text{ram} & 1 & q_v+1 & 1 & \frac{q_v}{q_v+1} & q_v^{-2} & q_v \\ \hline
 \text{st} & \text{spl} & \text{ram} & \text{ram} & q_v+1 & 1 & 1 & \frac{q_v}{q_v+1} & q_v^{-2} & q_v \\ \hline
 \text{ds} & \text{spl} & \text{spl} & \text{spl} & 1 & 1 & 1 & \frac{1}{2^{k_v+2}} & \frac{k_v !}{2^{k_v}\pi^{k_v}} & \frac{2^{2 k_v + 2} \pi^{k_v}}{k_v!} \\ \hline
 \text{ds} & \text{ram} & \text{ram} & \text{spl} &
 k_v-1 & k_v-1 & 1 & \frac{1}{2^{k_v+2}} & \frac{(k_v-2)!}{2^{k_v-2} \pi^{k_v-2}} & \frac{2^{2 k_v} \pi^{k_v-2}}{(k_v-1)^2 \cdot (k_v - 2)!} \\ \hline 
 \text{ds} & \text{ram} & \text{spl} & \text{ram} &
 k_v-1 & 1 & k_v-1 & \frac{1}{2^{k_v+2}} & \frac{(k_v-2)!}{2^{k_v-4} \pi^{k_v-2}} & \frac{2^{2 k_v -2} \pi^{k_v-2}}{(k_v-1)^2 \cdot (k_v - 2)!} \\ \hline
 \text{ds} & \text{spl} & \text{ram} & \text{ram} &
 1 & k_v-1 & k_v-1 & \frac{1}{2^{k_v+2}} & \frac{(k_v-2)!}{2^{k_v-4} \pi^{k_v-2}} & \frac{2^{2 k_v -2} \pi^{k_v-2}}{(k_v-1)^2 \cdot (k_v - 2)!} \\ \hline
\end{array}
} 
\]

\noindent
By Theorem \ref{thm:rallis-B-explicit}, we have
\[
 |\alpha(B_1,B_2)|^2
 \cdot \langle f_{B_1}, f_{B_1} \rangle \cdot \langle f_{B_2}, f_{B_2} \rangle
 = |D_F|^2 \cdot \prod_v C_v \cdot \langle f_B, f_B \rangle \cdot \langle f, f \rangle
\]
with the constant $C_v$ above, and hence 
\begin{multline*}
 \frac{|\ba (B_1,B_2)|^2}{\prod_v (C_v^B)^2 C_v^\varphi} \cdot 
 \frac{\langle v^{B_1}_{\uk_{B_1}}, v^{B_1}_{\uk_{B_1}} \rangle_{h_{B_1}} \cdot \llangle s_{B_1}, s_{B_1} \rrangle_{\cKt^{B_1}}}{2^{d_{B_1}} h_F \cdot \prod_v C_v^{B_1}} \cdot 
 \frac{\langle v^{B_2}_{\uk_{B_2}}, v^{B_2}_{\uk_{B_2}} \rangle_{h_{B_2}} \cdot \llangle s_{B_2}, s_{B_2} \rrangle_{\cKt^{B_2}}}{2^{d_{B_2}} h_F \cdot \prod_v C_v^{B_2}} \\
 = |D_F|^2 \cdot \prod_v C_v \cdot 
 \frac{\langle v^{B}_{\uk_{B}}, v^{B}_{\uk_{B}} \rangle_{h_{B}} \cdot \llangle s_{B}, s_{B} \rrangle_{\cKt^{B}}}{2^{d_B} h_F \cdot \prod_v C_v^{B}} 
 \cdot 2 |D_F| \cdot \prod_v C_v^\Lambda \cdot \Lambda(1, \Pi, \ad).
\end{multline*}
Now the theorem follows from this and the fact that
\[
 C_v^{B_1} C_v^{B_2} C_v^B C_v^\Lambda C_v^\varphi C_v = 1
\]
for all $v$.
\end{proof}

We now motivate the main conjecture of this paper. Let us set
\[
\Lambda(\Pi):= \Lambda(1,\Pi,\ad).
\]
Thus from \eqref{eqn:exp-rallis-cons}, we see that for $B_1\neq B_2$, we have
\begin{equation}
\label{eqn:me}
|\ba(B_1,B_2)|^2 \cdot q_{B_1} (\Pi,\ell) \cdot q_{B_2}(\Pi,\ell) = \Lambda(\Pi) \cdot q_{B} (\Pi,\ell) \quad  \text{in} \quad \C^\times/ R_{(\ell)}^\times,
\end{equation}
and consequently, 
\begin{equation}
\label{eqn:me-ell}
|\ba (B_1,B_2)|^2 \cdot q_{B_1} (\Pi) \cdot q_{B_2}(\Pi) = \Lambda (\Pi) \cdot q_{B} (\Pi) \quad \text{in} \quad \C^\times/ R_{(\ell)}^\times. 
\end{equation}

If we combine this with Conjecture \ref{conj:oldintro}(ii) of the introduction, we are lead to the 
following conjectural expression for $|\ba(B_1,B_2)|^2$:
\[
|\ba (B_1,B_2)|^2 \stackrel{?}{=} \frac{\Lambda(\Pi) \cdot \dfrac{\Lambda(\Pi)}{\prod_{v\in \Sigma_B} c_v (\Pi) }}{\dfrac{\Lambda(\Pi)}{\prod_{v\in \Sigma_{B_1}} c_v(\Pi) }\cdot \dfrac{\Lambda(\Pi)}{\prod_{v\in \Sigma_{B_2}} c_v(\Pi) }}
= \left[ \prod_{v\in \Sigma_{B_1} \cap \Sigma_{B_2}} c_v (\Pi) \right]^2  \text{in} \quad \C^\times/ R_{(\ell)}^\times. 
\]

Combining this last expression with Conjecture \ref{conj:oldintro}(i) suggests Conjecture \ref{conj:newintro} of 
the introduction on the arithmetic nature of the constants $\ba (B_1,B_2)$. We restate it below for the convenience of the reader. 

\begin{conjecture}
\label{conj:main}
Suppose that $B_1\neq B_2$ and $\Sigma_{B_1} \cap \Sigma_{B_2} \cap \Sigma_\infty = \varnothing$, that is $B_1$ and $B_2$ have no infinite places of ramification in common. Then
\begin{enumerate}
\item $\ba (B_1,B_2)$ lies in $\Qbar^\times$.
\item Moreover, $\ba (B_1,B_2)$ belongs to $R_{(\ell)}$. 
\item If in addition $B_1$ and $B_2$ have no finite places of ramification in common, then $\ba (B_1,B_2)$
lies in $R_{(\ell)}^\times$. 
\end{enumerate}
\end{conjecture}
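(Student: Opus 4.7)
The plan is to address the three parts of the conjecture in sequence, with the depth increasing at each step. For part (i), I would exploit the fact that both sides of the defining identity $\theta_{\bv}(f_B) = \ba(B_1,B_2) \cdot (f_{B_1} \times f_{B_2})$ arise from sections of automorphic vector bundles carrying $\Qbar$-rational structure. The sections $s_B$, $s_{B_1}$, $s_{B_2}$ are $L$-rational for a suitable number field $L$ (by \cite{harris-jams} Prop.~2.2.4), so the corresponding automorphic forms $f_{B'}$ are $\Qbar$-rational. The nontrivial point is to verify that the theta correspondence with our specific Schwartz function $\bv = \otimes_v \bv_v$ preserves $\Qbar$-rationality: one needs to check that the Weil representation on the relevant $\Qbar$-subspace of $\SS(\X(\A))$, together with the splittings of Appendix \ref{sec:spl-B}, is defined over $\Qbar$ modulo the cyclotomic twist that matches the one appearing in the measure normalizations. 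Once this is granted, $\theta_{\bv}(f_B)$ lies in the $\Qbar$-span of $f_{B_1} \times f_{B_2}$, and the algebraicity of $\ba(B_1,B_2)$ follows.

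For part (ii), I would work prime-by-prime at places above $\ell$, combining part (i) with $\ell$-adic analysis of the Weil representation. By Proposition \ref{prop:choices1}, at every $v \mid \ell$ the elements $u, J_1, J_2, J$ are squares of $\ell$-units, so $B_v$, $B_{1,v}$, $B_{2,v}$ are all split and the local Schwartz function $\bv_v$ is, up to an $\ell$-unit scalar, the characteristic function of a self-dual integral lattice in $\X_v$. The Weil representation preserves the subspace of $\Z_\ell$-valued functions supported on this lattice, and our Haar measure normalizations assign $\ell$-unit volume to maximal compact subgroups; hence formula \eqref{eq:jls_theta_lift_def} produces an $\ell$-integral theta lift from an $\ell$-integrally normalized $f_B$. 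Pairing against the $\ell$-integrally normalized $f_{B_1} \times f_{B_2}$ then yields $\ba(B_1,B_2) \in R_{(\ell)}$.

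Part (iii) is the main obstacle and the reason the conjecture is deep. The Rallis identity \eqref{eqn:me-ell} expresses $|\ba(B_1,B_2)|^2$ modulo $R_{(\ell)}^\times$ as the ratio $\Lambda(\Pi) \cdot q_B(\Pi)/(q_{B_1}(\Pi) q_{B_2}(\Pi))$, so $\ba$ being an $\ell$-unit is essentially equivalent to Conjecture \ref{conj:oldintro} for this pair of quaternion algebras; one cannot therefore derive (iii) from the Rallis formula alone. The natural strategy is a direct mod $\lambda$ non-vanishing argument for $\theta_{\bv}(f_B)$: one wants (a) a mod $\ell$ Jacquet--Langlands identification of the $\bar\rho_{\Pi,\lambda}$-isotypic subspaces of the Hecke modules on the Shimura varieties attached to $B^\times$, $B_1^\times$, and $B_2^\times$; and (b) an Ihara-type statement ensuring that the geometric theta correspondence induces a surjection on these isotypic components modulo $\lambda$. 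The hypothesis that $B_1$ and $B_2$ share no finite ramification is precisely what rules out a level-lowering congruence that could otherwise force $\theta_{\bv}(f_B) \equiv 0 \pmod{\lambda}$. Making this geometric picture precise over a totally real base field of arbitrary parallel or partial weight --- in particular controlling bad reduction and component groups of the relevant quaternionic Shimura varieties --- is the central challenge, and will require the techniques developed in the sequels \cite{periods2}, \cite{periods3}.
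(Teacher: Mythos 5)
The paper offers no proof of Conjecture~\ref{conj:newintro}/\ref{conj:main}; it formulates it, shows (Theorem~\ref{thm:conjA-equiv-conjD}) that it implies Conjecture~\ref{conj:oldintro}, and explicitly defers its study to the sequels \cite{periods2}, \cite{periods3}. So there is no ``paper's own proof'' against which to measure you; what can be assessed is whether your sketch is a sound plan.

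Your stratification of difficulty is the right one, and your remarks on part~(iii) are on target: the relation \eqref{eqn:me-ell} controls only $|\ba(B_1,B_2)|^2$ modulo $R_{(\ell)}^\times$, so (iii) cannot be extracted from the Rallis formula plus (i)--(ii) alone --- it carries essentially the full content of Conjecture~\ref{conj:oldintro} for the triple $(B, B_1, B_2)$. Flagging mod~$\lambda$ geometric Jacquet--Langlands and an Ihara-type surjectivity as the expected inputs, with the ``no common finite ramification'' hypothesis preventing a level-lowering obstruction, is consistent with the program the paper signals for the sequels.

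Where the proposal is weaker than it reads is parts~(i) and (ii). You present these as following by fairly standard bookkeeping, but if that were so the authors would have proved them in this paper rather than packaged them into the same conjecture. For (i), the plan to track $\Qbar$-rationality through the Weil representation and the splittings of Appendix~\ref{sec:spl-B} is the right shape, but it suppresses the real work: one must verify that the chosen archimedean Schwartz functions $\bv_v$ (which are Gaussians weighted by Hermite-type polynomials, not characteristic functions) yield a theta kernel that is an arithmetic section of the relevant automorphic vector bundle, and that the integral over $G_1(F)\backslash G_1(\A)$ is a $\Qbar$-rational pairing with respect to the canonical models. This is plausible and in the tradition of Harris's rationality results, but it is not a routine reduction.

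Part~(ii) has a genuine gap. Local lattice-preservation of the Weil representation at the places above $\ell$ does not, by itself, imply $\ell$-integrality of the global theta lift. The theta lift is a global sum $\Theta(\omega_\psi(\cdot)\bv) = \sum_{x\in\X(F)}\cdots$ followed by an adelic integral against $f_B$; its value at a given point on $H^0(\A)^+$ receives contributions from the Schwartz data at \emph{all} places, not just those above $\ell$, and from volume constants of the form $\zeta_F(2)$, $h_F$, $D_F$ and the local factors $C_v^\varphi$, $C_v^B$ absorbed into the normalization of $\ba$. The assertion that the resulting scalar, measured against the $\ell$-integral normalization of $f_{B_1}\times f_{B_2}$, lies in $R_{(\ell)}$ is exactly the delicate integrality statement the conjecture is recording; deriving it would require, at minimum, showing that the theta kernel is an $\ell$-integral section in the sense of the canonical integral models of \S\ref{sec:int-models} --- which is very far from a purely local computation at $\ell$. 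You should not present (ii) as a soft consequence of the choices in Proposition~\ref{prop:choices1}; it belongs with (iii) in the ``deep'' bin, and indeed the paper treats the full conjecture as one open problem.
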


Note that this conjecture makes absolutely no reference to the constants $c_v(\Pi)$. However, we shall show 
now that the truth of this conjecture (for all $\ell$ prime to $N(\Pi)$) implies the truth of Conj. \ref{conj:oldintro}.

\begin{thm}
\label{thm:conjA-equiv-conjD}
Suppose that Conj. \ref{conj:main}  is true for all $\ell$ prime to $N(\Pi)$. Then 
  Conj. \ref{conj:oldintro} is true. 
 \end{thm}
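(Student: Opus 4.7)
The key input is the identity \eqref{eqn:me-ell}, which for any $B_1\neq B_2$ with $\Sigma_{B_i}\subseteq \Sigma_\Pi$ can be rewritten as
\[
|\ba(B_1,B_2)|^2 \;=\; \rho_{B_1}\rho_{B_2}/\rho_{B_1\cdot B_2}\quad \text{in } \C^\times/R_{(\ell)}^\times,
\]
where $\rho_B := \Lambda(\Pi)/q_B(\Pi)$. The task is to produce $c_v(\Pi) \in \C^\times/R^\times$ for each $v \in \Sigma_\Pi$, with $c_v(\Pi)\in R$ for $v$ finite, such that $\rho_B = \prod_{v\in \Sigma_B} c_v(\Pi)$ for every admissible $B$. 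Since $R^\times$ contains all roots of unity in $\Qbar$, the group $\C^\times/R^\times$ is uniquely divisible, and canonical square roots are available.

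The crucial consequence of Conjecture \ref{conj:newintro} is \emph{disjoint additivity}: whenever $\Sigma_{B_1}\cap \Sigma_{B_2}=\varnothing$ with $B_1\neq B_2$, the hypothesis of Conjecture \ref{conj:newintro}(iii) holds a fortiori, so $|\ba(B_1,B_2)|\in R_{(\ell)}^\times$, and hence $\rho_{B_1\cdot B_2}=\rho_{B_1}\rho_{B_2}$ in $\C^\times/R_{(\ell)}^\times$. Iterating over any partition of an even subset $S\subseteq \Sigma_\Pi$ into disjoint pairs expresses $\rho_{B_S}$ as the product of the pairwise $\rho_{B_{\{v,w\}}}$'s; applied to the three different pair-decompositions of a four-element set, it yields the \emph{four-point identity}
\[
\rho_{B_{\{v_1,v_2\}}}\rho_{B_{\{v_3,v_4\}}} \;=\; \rho_{B_{\{v_1,v_3\}}}\rho_{B_{\{v_2,v_4\}}} \;=\; \rho_{B_{\{v_1,v_4\}}}\rho_{B_{\{v_2,v_3\}}}
\]
for any four distinct $v_i\in \Sigma_\Pi$, each expression equal to $\rho_{B_{\{v_1,v_2,v_3,v_4\}}}$. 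Passing from $R_{(\ell)}^\times$ to $R^\times$ is justified because $R^\times = \bigcap_{\ell\nmid N(\Pi)}R_{(\ell)}^\times$.

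Now assume $|\Sigma_\Pi|\ge 3$. For each $v\in \Sigma_\Pi$, pick two distinct auxiliary places $w,w'\in \Sigma_\Pi\setminus \{v\}$ and define $c_v(\Pi)$ to be the unique square root in $\C^\times/R^\times$ of $\rho_{B_{\{v,w\}}}\rho_{B_{\{v,w'\}}}/\rho_{B_{\{w,w'\}}}$; independence from the choice of $(w,w')$ reduces, via a one-place-at-a-time modification, to the four-point identity (and is trivial when $|\Sigma_\Pi|=3$, since the pair is then essentially unique). The relation $\rho_{B_S}=\prod_{v\in S}c_v(\Pi)$ then follows by first verifying $|S|=2$: choosing a common auxiliary place $w$ in the definitions of $c_{v_1}$ and $c_{v_2}$ yields $c_{v_1}^2 c_{v_2}^2=\rho_{B_{\{v_1,v_2\}}}^2$, and unique divisibility gives $c_{v_1}c_{v_2}=\rho_{B_{\{v_1,v_2\}}}$; larger even $S$ are treated by partitioning into disjoint pairs and applying disjoint additivity. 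For $v$ finite, $c_v(\Pi)^2 = |\ba(B_{\{v,w\}},B_{\{v,w'\}})|^2$, and the intersection of ramification sets $\{v\}$ is disjoint from $\Sigma_\infty$, so Conjecture \ref{conj:newintro}(ii) applies and delivers $\ba\in R_{(\ell)}$; hence $c_v(\Pi)^2 \in R_{(\ell)}$, and the integral closedness of $R_{(\ell)}$ in $\C$ promotes this to $c_v(\Pi)\in R_{(\ell)}$; taking the intersection over all $\ell\nmid N(\Pi)$ yields $c_v(\Pi)\in R$.

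The main obstacle is the well-definedness of $c_v(\Pi)$ as an invariant of $v$ alone, which rests entirely on the four-point identity and ultimately on Conjecture \ref{conj:newintro}(iii); without the ``unit'' conclusion for disjoint pairs, $c_v(\Pi)$ would be defined only up to a cocycle ambiguity and the product formula could fail. The degenerate cases $|\Sigma_\Pi|\le 2$ are handled separately: for $|\Sigma_\Pi|\le 1$ only $B=\M_2(F)$ is admissible and Conjecture \ref{conj:oldintro} reduces to the known identity of Remark \ref{rem:conj-in-split-case}, while for $|\Sigma_\Pi|=2$ only the single product $c_{v_1}c_{v_2}=\rho_B$ is constrained, which we arrange by placing the nontrivial factor on an infinite place when one is available, and in the residual parallel-weight-one subcase where both places of $\Sigma_\Pi$ are finite, the desired integrality of $\rho_B$ is supplied by direct analysis outside the scope of Conjecture \ref{conj:newintro}.
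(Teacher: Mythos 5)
Your proof takes essentially the same route as the paper's: both define $\rho_B = \Lambda(\Pi)/q_B(\Pi)$ (the paper's $c_\Sigma(\Pi)$), extract disjoint multiplicativity $\rho_{B_{\Sigma_1}}\rho_{B_{\Sigma_2}} = \rho_{B_{\Sigma_1\cup\Sigma_2}}$ from part (iii) of Conjecture~\ref{conj:main}, define $c_v(\Pi)$ by the same square-root formula, verify independence of the auxiliary places via the four-point identity, and obtain integrality at finite $v$ from parts (i)--(ii) plus integral closedness of $R_{(\ell)}$; your writeup is slightly more explicit about the torsion-freeness of $\C^\times/R^\times$ that makes the square root canonical and about the $\ell$-by-$\ell$ patching. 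The only point of real divergence is the edge case $|\Sigma_\Pi|=2$ with both places finite (which can occur for parallel weight one): you are right that Conjecture~\ref{conj:main} gives no constraint on $\rho_B$ there (the only admissible decomposition $B=B_1B_2$ with $B_1\neq B_2$ is $(\M_2(F),B)$, yielding a trivial relation), but your appeal to unspecified ``direct analysis'' leaves that integrality unproved; note, however, that the paper's own proof asserts that at most one place of $\Sigma_\Pi$ can be finite, which is also unjustified in the weight-one setting, so this is a loose end shared with the paper rather than a gap you introduced.
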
 

\begin{rem}
The proof below will show that the validity of Conj. \ref{conj:main} for a single $\ell$ implies 
a version of Conj. \ref{conj:oldintro} with $R^\times$ replaced by $R_{(\ell)}^\times$. 
\end{rem}

\begin{proof}
Recall from Remark \ref{rem:conj-in-split-case} of the introduction that 
\begin{equation}
\label{eqn:conj-true-for-split-case}
q_{\M_2(F)} = \Lambda (\Pi) \quad \text{in} \ \C^\times/R^\times.
\end{equation}
Note that if $|\Sigma_\Pi| =0 $ or $1$, then 
$\Pi$ does not transfer to any non-split quaternion algebra,
so the conjecture follows from \eqref{eqn:conj-true-for-split-case}.

 If $|\Sigma_\Pi|=2$, say $\Sigma_\Pi=\{ v,w\}$, then there 
is a unique non-split quaternion algebra $B$ with $\Sigma_B \subseteq \Sigma_\Pi$,
given by $\Sigma_B = \Sigma_\Pi$. In this case, we 
need to pick two elements $c_v (\Pi)$ and $c_w (\Pi)$ in $\C^\times / R^\times$ such that the relation
\[
q_B (\Pi) = \frac{\Lambda(\Pi)}{c_v (\Pi) \cdot c_w (\Pi)}
\]
is satisfied (in addition to \eqref{eqn:conj-true-for-split-case}), and there are obviously many ways to do this. 
Since at most one of the places in $\Sigma_\Pi$ (say $v$) is a finite place, we can also make this choice 
so that $c_v(\Pi)$ lies in $R$. 
Note that 
in this case, the invariants $c_v (\Pi)$ and $c_w (\Pi)$ are not 
uniquely determined by the single relation above, so in order 
to get canonical invariants one would need 
to rigidify the choices by imposing other constraints on them. 
We do not pursue this here. 

Thus we may assume that $|\Sigma_\Pi| \ge 3$. We need to first define the constants $c_v (\Pi)$ in this case. 
First, for any subset $\Sigma \subseteq \Sigma_\Pi$ of {\it even} cardinality let us define 
$c_\Sigma (\Pi) \in \C^\times / R^\times$ by 
\[
c_\Sigma (\Pi)  := \frac{\Lambda (\Pi)}{q_{B_\Sigma} (\Pi)},
\]
where $B_\Sigma$ denotes the unique quaternion algebra ramified exactly at $\Sigma$. 
Note that from \eqref{eqn:conj-true-for-split-case}, we have
\begin{equation}
\label{eqn:c-empty=1}
c_{\varnothing} (\Pi) =1 \quad \text{in} \ \C^\times/R^\times.
\end{equation}
Now let $v$ be any element in $\Sigma_\Pi$. 
We will define $c_v (\Pi)$ as follows. 
 Pick any two other elements $u,w \in \Sigma_\Pi$ and 
define $c_v(\Pi) $ to be the unique element in $\C^\times/ R^\times$ such that 
\begin{equation}
\label{eqn:cv-defn}
c_v(\Pi) ^2 = \frac{c_{\{v,u\}  } (\Pi)  \cdot c_{\{v,w\}}(\Pi) }{c_{\{u,w\}}(\Pi) }.
\end{equation}

We will show that the truth of Conj.  \ref{conj:main} for a single $\ell$ implies that  
$c_v(\Pi)$ is well defined in $\C^\times / R_{(\ell)}^\times$, that it lies in $R_{(\ell)}$ 
if $v$ is a finite place and that 
the relation 
\begin{equation}
\label{eqn:mainl}
q_B (\Pi) = \frac{\Lambda (\Pi)}{\prod_{v \in \Sigma_B} c_v (\Pi)} \quad \text{ in } \quad \C^\times/ R_{(\ell)}^\times
\end{equation}
is satisfied. It follows from this that the truth of Conj.  \ref{conj:main} for {\it all} $\ell$ prime to $N(\Pi)$ implies that the 
$c_v(\Pi)$ is well defined in $\C^\times / R^\times$, that it lies in $R$ if $v$ is a finite place and that 
the relation 
\[
q_B (\Pi) = \frac{\Lambda (\Pi)}{\prod_{v \in \Sigma_B} c_v (\Pi)} \quad \text{ in } \quad \C^\times/ R^\times
\]
is satisfied, which would complete the proof of the theorem.

Thus let $\ell$ be any prime not dividing $N(\Pi)$ and let us assume the truth of Conj. \ref{conj:main}
for this fixed $\ell$. 
If $\Sigma_1$ and $\Sigma_2$ are two distinct subsets of $\Sigma_\Pi$ of even cardinality and if $B_1$ and $B_2$ 
are the corresponding quaternion algebras, the relation \eqref{eqn:me-ell}
gives 
\[
|\ba(B_1,B_2)|^2 \cdot c_{\Sigma} (\Pi) = c_{\Sigma_1} (\Pi) \cdot c_{\Sigma_2} (\Pi)   \quad \text{in} \quad  \C^\times /R_{(\ell)}^\times.
\]
If moreover $\Sigma_1$ and $\Sigma_2$ are disjoint, then Conj. \ref{conj:main} implies that 
$\ba (B_1,B_2)$ lies in $R_{(\ell)}^\times$. Thus we get the key multiplicative relation:
\begin{equation}
\label{eqn:cs-mult}
 c_{\Sigma_1} (\Pi) \cdot c_{\Sigma_2} (\Pi) = c_{\Sigma} (\Pi) \quad \text{in} \quad  \C^\times /R_{(\ell)}^\times, \quad \text{if} \quad \Sigma_1 \cap \Sigma_2 = \varnothing,
 \end{equation}
 including the case $\Sigma_1 = \Sigma_2 = \varnothing$ on account of \eqref{eqn:c-empty=1}.
We can use this to check that $c_v (\Pi) $ defined via \eqref{eqn:cv-defn} is independent of the choice 
of $u$ and $w$, viewed as an element in $\C^\times/R_{(\ell)}^\times$. Since the definition is symmetric in $u$ and $w$, it suffices to show that 
it remains invariant under changing $u$ to some other $u'$ distinct from $u$ and $w$. 
However, this follows from the equality
\[
c_{\{v,u\}} (\Pi) \cdot c_{\{u',w\}} (\Pi)  = c_{\{ v,u,u',w \}} (\Pi)  =  c_{\{v,u'\}} (\Pi) \cdot c_{\{u,w \}} (\Pi) \quad \text{in} \quad  \C^\times /R_{(\ell)}^\times,
\]
which is implied by \eqref{eqn:cs-mult}.

Next we check that if $v$ is a finite place, then $c_v (\Pi)$ lies in $R_{(\ell)}$. 
If $B_1$, $B_2$ and $B$ are the quaternion algebras 
with $\Sigma_{B_1} = \{ v,u \}$, $\Sigma_{B_2} = \{ v,w \}$ and $\Sigma_B = \{ u,w \}$, then 
$B=B_1 \cdot B_2$ and 
\[
c_v(\Pi)^2 = \frac{\dfrac{\Lambda(\Pi)}{q_{B_1}(\Pi)}\cdot \dfrac{\Lambda(\Pi)}{q_{B_2}(\Pi)}}{\dfrac{\Lambda(\Pi)}{q_{B}(\Pi)}}
=
\frac{\Lambda(\Pi) \cdot q_B (\Pi)}{q_{B_1}(\Pi) \cdot q_{B_2} (\Pi)} = |\ba(B_1,B_2)|^2 \quad \text{in} \  \C^\times /R_{(\ell)}^\times. 
\]
Since $B_1$ and $B_2$ have no infinite places of ramification in common, it follows 
from (i) and (ii) of Conj. \ref{conj:main} that 
$c_v (\Pi)$ lies in $R_{(\ell)}$.

Finally, let us check that $c_u (\Pi) \cdot c_v (\Pi) = c_{\{u,v \}} (\Pi) $ in $ \C^\times /R_{(\ell)}^\times$ if $u,v$ are distinct elements in $\Sigma_B$. Indeed, picking 
any $w$ distinct from $u$ and $v$, we have 
\[
c_u(\Pi)^2 \cdot c_v(\Pi)^2 =  \frac{c_{\{u,v\}  } (\Pi)  \cdot c_{\{u,w\}}(\Pi) }{c_{\{v,w\}}(\Pi) } \cdot  \frac{c_{\{v,u\}  } (\Pi)  \cdot c_{\{v,w\}}(\Pi) }{c_{\{u,w\}}(\Pi) } = c_{\{u,v\}} (\Pi)^2 \quad \text{in} \quad  \C^\times /R_{(\ell)}^\times,
\]
as claimed. From this, \eqref{eqn:c-empty=1}
 and \eqref{eqn:cs-mult} it follows immediately that for any subset $\Sigma \subseteq \Sigma_\Pi$ of 
even cardinality, we have 
\[
c_\Sigma (\Pi) = \prod_{v\in \Sigma} c_v (\Pi)   \quad \text{in} \quad \C^\times /R_{(\ell)}^\times,
\]
from which \eqref{eqn:mainl} follows immediately. \end{proof}

\newpage

\appendix
\settocdepth{section}

\section{Polarized Hodge structures, abelian varieties and complex conjugation}
\label{sec:hodge}

In this section we discuss polarizations and the action of complex conjugation
on Hodge structures attached to abelian varieties. This material is completely standard,
so the purpose of this section is simply to carefully fix our conventions and motivate some 
of our constructions in Chapter \ref{sec:qvs}.

If $A$ is a complex abelian variety, there is a natural Hodge structure on 
$\Lambda = H_1(A,\Z)$. If $V= \Lambda \otimes \Q$, we have
\[
V_\C = H_1(A,\C) = V^{-1,0} \oplus V^{0,-1}
\]
where $V^{-1,0} = \Lie (A)$ and $V^{0,-1} =F^0 (V) = \overline{V^{-1,0}}$ is identified with $H^1(A, \cO_A)^\vee$. 
In fact, the exact sequence
\[
0 \rightarrow V^{0,-1} \rightarrow V_\C \rightarrow V^{-1,0} \rightarrow 0
\]
is dual to 
\[
0 \rightarrow H^0 (A,\Omega^1_A) \rightarrow H^1 (A,\C) \rightarrow H^1(A,\cO_A) \rightarrow 0
\]
which describes the Hodge filtration on $H^1 (A,\C)$. 
As a complex torus, $A$ is recovered as 
\[
A = V^{0,-1} \backslash V_\C /\Lambda \simeq V^{-1,0} / \Lambda.
\]
Let $h: \Ss \rightarrow \GL(V_\R)$ be the homomorphism of the Deligne torus into 
$\GL(V_\R)$ corresponding to the Hodge structure on $H_1 (A,\Z)$. Let $C=h(i)$. Recall that 
according to our conventions, the operator $C\otimes 1$ on $V_\R \otimes _{\R} \C$ acts on $V^{-1,0}$ as $i$ 
and on $V^{0,-1}$ as $-i$. We write $F$ for $F^0 V = V^{0,-1}$ so that $\bar{F}=V^{-1,0}=\Lie (A)$. Then the composite maps 
\begin{equation}
\label{eqn:LambdaF}
\Lambda  \otimes \R \rightarrow \Lambda \otimes \C  \rightarrow F, \quad  \Lambda  \otimes \R \rightarrow \Lambda \otimes \C  \rightarrow \bar{F}
\end{equation}
are $\R$-linear isomorphisms.

Let  $\Psi$ be  a skew-symmetric form
\[
\Psi: \Lambda \times \Lambda \rightarrow \Z(1)
\]
whose $\R$-linear extension $\Psi_\R: V_\R \times V_\R \rightarrow \R (1)$ satisfies 
\[
\Psi_\R (Cv,  Cw) = \Psi_\R (v,w).
\]
Define
\[
B:\Lambda \times \Lambda \rightarrow \Z, \quad B(v,w) = \frac{1}{2\pi i }  \Psi (v,w).
\]

\begin{rem}
Note that the discussion up to this point was in fact independent of a choice of $i$. 
However, in the definition of $B$ above and in the sequel, we need to fix such a choice. 
For any element $x+yi \in \C$ let us also set 
\[
\Im(x+yi) = yi, \quad \im (x+yi) = y.
\]
\end{rem}

Let $B_\R$ and $B_\C$ denote the $\R$-linear and $\C$-linear extensions of $B$ to 
$V_\R$ and $V_\C$ respectively. Let $B_C$ denote the hermitian form on $V_\C$ given by 
\[
B_C (v,w) := B_\C (v, C \bar{w}).
\]
Finally, we let $B_F$ and $B_{\bar{F}}$ denote the bilinear forms on $F$ and 
$\bar{F}$ obtained from $B_\R$ via the isomorphisms \eqref{eqn:LambdaF} above.

\begin{prop} The forms $B_\C$ and $B_C$ have the following properties:
\begin{enumerate}
\item The subspaces $F$ and $\bar{F}$ of $V_\C$ are isotropic for $B_\C$.
\item The form $B_C$ pairs $F \times \bar{F}$ to zero. 
\item $2\cdot \im(B_C)|_F = B_F$ and  $2\cdot \im(B_C)|_{\bar{F}} = -B_{\bar{F}}$. 
\end{enumerate}
\end{prop}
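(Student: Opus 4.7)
The plan is to exploit the $C$-invariance $B_\R(Cv,Cw) = B_\R(v,w)$ together with the eigen-decomposition $V_\C = F \oplus \bar{F}$ for $C$, under which $C$ acts by $-i$ on $F$ and by $+i$ on $\bar{F}$. Since $B_\C$ is the $\C$-linear extension of $B_\R$ (and the extended $C$ is $\C$-linear), the $C$-invariance propagates to $B_\C(Cv,Cw) = B_\C(v,w)$ on all of $V_\C \times V_\C$.

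Parts (i) and (ii) will then be immediate scalar extractions. For (i), if $v,w \in F$ then $B_\C(v,w) = B_\C(Cv,Cw) = (-i)(-i)B_\C(v,w) = -B_\C(v,w)$, forcing $B_\C|_{F\times F} = 0$; the case of $\bar{F}$ is identical. For (ii), given $v \in F$ and $w \in \bar{F}$, I observe that $\bar{w} \in F$ and $C\bar{w} = -i\bar{w} \in F$, so $B_C(v,w) = B_\C(v, C\bar{w}) = -iB_\C(v,\bar{w}) = 0$ by (i).

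Part (iii) will require an explicit computation. First I would solve the system $v = v_F + v_{\bar F}$, $Cv = -iv_F + iv_{\bar F}$ to obtain the realizations
\[
v_F = \tfrac{1}{2}(v + iCv), \qquad v_{\bar F} = \tfrac{1}{2}(v - iCv)
\]
of the $\R$-linear isomorphisms \eqref{eqn:LambdaF}. Using $\overline{w_F} = w_{\bar F}$ and $Cw_{\bar F} = iw_{\bar F}$, I can rewrite $B_C(v_F,w_F) = iB_\C(v_F,w_{\bar F})$. Then I would expand $B_\C(v_F, w_{\bar F}) = \tfrac{1}{4}B_\C(v+iCv,\, w-iCw)$ into four terms and simplify using (a) the $C$-invariance $B_\R(Cv,Cw) = B_\R(v,w)$ and (b) the derived identity $B(Cv,w) = -B(v,Cw)$, which follows by substituting $w \mapsto -Cw$ in (a) and using $C^2 = -1$. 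The result is
\[
B_\C(v_F,w_{\bar F}) = \tfrac{1}{2}B(v,w) - \tfrac{i}{2}B(v,Cw),
\]
whence $B_C(v_F,w_F) = \tfrac{i}{2}B(v,w) + \tfrac{1}{2}B(v,Cw)$, whose imaginary part is $\tfrac{1}{2}B(v,w) = \tfrac{1}{2}B_F(v_F,w_F)$. The analogous calculation on $\bar{F}$ proceeds identically, except that $Cw_F = -iw_F$ replaces $Cw_{\bar F} = +iw_{\bar F}$, introducing a sign flip that produces $-B_{\bar F}$ rather than $+B_{\bar F}$.

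These are all routine linear-algebra manipulations and I do not anticipate any real obstacle. The one point requiring care is the bookkeeping of signs: specifically, the fact that $C$ acts by opposite scalars on $F$ and $\bar{F}$ is precisely what produces the opposite signs in the two halves of (iii).
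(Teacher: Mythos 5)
Your proof is correct, and parts (i) and (ii) coincide with the paper's argument. For (iii), the computational route differs in bookkeeping: the paper treats $v, w \in F$ as the primary variables, applies the identity $2\Im(B_C)(v,w) = B_C(v,w) - \overline{B_C(v,w)}$, and separately evaluates $B_F(v,w)$ by pulling back through $v \mapsto v + \bar{v} \in V_\R$, using (i) to kill the diagonal terms $B_\C(v,w)$ and $B_\C(\bar{v},\bar{w})$. You instead take real $v,w$ as primary, write the explicit projector $v_F = \tfrac{1}{2}(v + iCv)$ onto the $(-i)$-eigenspace of $C$, and expand $B_C(v_F,w_F)$ fully into $\tfrac{i}{2}B(v,w) + \tfrac{1}{2}B(v,Cw)$, reading off the imaginary part by inspection. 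Your auxiliary identity $B(Cv,w) = -B(v,Cw)$ plays the same cancellation role that the isotropy in (i) plays in the paper's computation. The two routes are equivalent; a small advantage of yours is that it exhibits the full complex value of $B_C(v_F,w_F)$ rather than only its imaginary part, which also makes the sign flip on $\bar{F}$ transparent (the factor $+i$ coming from $Cw_{\bar{F}}$ becomes $-i$ from $Cw_F$).
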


\begin{proof}
  For $v,w \in F$, we have 
\[
B_\C (v,w) = B_\C (h(i)v, h(i) w) = B_\C (-iv,-iw) = -B_\C (v,w),
\]
so $F$ is isotropic for $B_\C$. The argument for $\bar{F}$ is similar. Part (ii) follows immediately
from part (i).  
For part (iii), suppose $v,w \in F$. Then 
\begin{align*}
2 \Im (B_C) (v,w) &=  B_C (v,w) - \overline{B_C (v,w)} =  B_\C(v, C\bar{w}) - B_\C (\bar{v}, C w) \\
&= B_\C (v,i\bar{w}) - B_\C (\bar{v}, - iw) = i \left( B_\C (v,\bar{w}) + B_\C (\bar{v},w) \right).
\end{align*}
On the other hand, under the isomorphism $V_\R \simeq F$, the element $v\in F$ corresponds to $v+\bar{v} \in V_\R$. Thus 
\[
B_F (v,w) = B_\R (v+\bar{v}, w+ \bar{w}) = B_\C (v+\bar{v}, w+ \bar{w}) = B_\C (v,\bar{w})+ B_\C (\bar{v},w)
\]
from part (i). This shows that $2 \cdot \im (B_C)|_F = B_F$. The proof for $\bar{F}$ is similar.
\end{proof}

\begin{prop}
The following are equivalent:
\begin{enumerate}
\item The bilinear form $(v,w)\mapsto B_\R (v, Cw)$ on $V_\R$ is positive definite. 
\item The hermitian form $B_C$ on $V_\C$ is positive definite and induces by restriction 
positive definite hermitian forms on both $F$ and $\bar{F}$. 
\end{enumerate}
\end{prop}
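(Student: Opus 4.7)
The plan is to reduce everything to a single computation relating the real symmetric form $(v,w) \mapsto B_\R(v,Cw)$ on $V_\R$ to the restriction $B_C|_F$, and then to use the already-established fact (part (ii) of the preceding proposition) that $F$ and $\bar F$ are $B_C$-orthogonal.

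First I would set up coordinates. For $v \in V_\R$, write $v = v_F + \overline{v_F}$ with $v_F \in F = V^{0,-1}$, using that complex conjugation swaps $F$ and $\bar F$. Since $C$ acts as $-i$ on $F$ and as $+i$ on $\bar F$, one has $Cv = -i v_F + i \overline{v_F}$. Expanding
\[
B_\R(v, Cw) = B_\C(v_F + \overline{v_F},\, -i w_F + i \overline{w_F})
\]
and using $B_\C(F,F) = B_\C(\bar F, \bar F) = 0$ from part (i) of the preceding proposition, two of the four terms vanish and one gets
\[
B_\R(v, Cw) = i B_\C(v_F, \overline{w_F}) - i B_\C(\overline{v_F}, w_F).
\]
On the other hand $B_C(v_F, w_F) = B_\C(v_F, C\overline{w_F}) = i B_\C(v_F, \overline{w_F})$ (since $\overline{w_F} \in \bar F$), and its complex conjugate is $-i B_\C(\overline{v_F}, w_F)$. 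Hence
\[
B_\R(v, Cw) = B_C(v_F, w_F) + \overline{B_C(v_F, w_F)} = 2\,\mathrm{Re}\, B_C(v_F, w_F).
\]
In particular $B_\R(v, Cv) = 2\, B_C(v_F, v_F)$, which already gives the equivalence between (i) and positive-definiteness of $B_C$ restricted to $F$, because the map $v \mapsto v_F$ is an $\R$-linear isomorphism $V_\R \xrightarrow{\sim} F$.

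Next I would bootstrap from $F$ to $\bar F$ and then to all of $V_\C$. The map $v \mapsto \bar v$ is an $\R$-linear isomorphism $F \to \bar F$, and a direct check (using $C\bar v = i\bar v$ for $v \in F$ and $Cv = -iv$) shows $B_C(\bar v, \bar v) = -i B_\C(\bar v, v) = i B_\C(v, \bar v) = B_C(v, v)$, so positive-definiteness of $B_C|_F$ is equivalent to that of $B_C|_{\bar F}$. Finally, by part (ii) of the preceding proposition, $B_C$ pairs $F$ and $\bar F$ to zero, so the Hermitian form $B_C$ on $V_\C = F \oplus \bar F$ is block-diagonal with respect to this decomposition; hence $B_C$ is positive definite on $V_\C$ if and only if it is positive definite on each of $F$ and $\bar F$. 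Combining all these implications yields (i) $\iff$ (ii).

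No step is really the hard part — this is essentially a bookkeeping exercise once one has the orthogonality $B_C(F,\bar F) = 0$ in hand. The one place that requires a little care is matching signs in the computation of $B_\R(v, Cw)$, since the Hermitian form $B_C$ is defined via the twisted expression $B_\C(v, C\bar w)$ and the action of $C$ differs between $F$ and $\bar F$. The rest is formal, and the overall logical structure is just: reduce (i) to positive-definiteness on $F$, transport to $\bar F$ via conjugation, and assemble via orthogonal decomposition.
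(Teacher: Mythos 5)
Your proof is correct and follows essentially the same route as the paper: both reduce the matter to the single identity $B_\R(v, Cv) = 2\, B_\C(v_F, C\overline{v_F}) = 2\, B_C(v_F, v_F)$ for $v = v_F + \overline{v_F} \in V_\R$, and then use the $B_C$-orthogonality of $F$ and $\bar F$ to assemble the equivalence. The paper's proof treats a general $v = v_1 + v_2 \in V_\C$ and handles both summands at once, whereas you first establish the statement for $F$ and then transport to $\bar F$ by conjugation; this is a minor organizational difference, not a different argument.
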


\begin{proof}
 Let $v,w\in V_\C$. Suppose $v=v_1+v_2$ and $w=w_1+w_2$ with $v_1,w_1 \in F$ and 
$v_2,w_2 \in \bar{F}$. 
Then 
\begin{align*}
B_C (v,w) &= B_C (v_1+v_2, w_1+w_2) = B_C (v_1,w_1) + B_C (v_2,w_2) \\
&= B_\C (v_1, C\bar{w}_1) + B_\C (v_2, C \bar{w}_2) \\
\end{align*}
so in particular,
\[
B_C (v,v) = B_\C (v_1,C\bar{v}_1 ) + B_\C (v_2, C \bar{v}_2). 
\]
On the other hand,
\begin{align*}
B_\R (v_1+\bar{v}_1, C (v_1 +\bar{v}_1)) &= B_\C ( v_1, C\bar{v}_1 ) + B_\C (\bar{v}_1, Cv_1)  \\
&=  B_\C ( v_1, C\bar{v}_1 ) - B_\C (C^2 \bar{v}_1, Cv_1) \\
&= B_\C ( v_1, C\bar{v}_1 ) - B_\C (C \bar{v}_1, v_1) \\
&= 2 B_\C ( v_1, C\bar{v}_1 ).
\end{align*}
Likewise,
\[
B_\R (v_2 + \bar{v}_2, C(v_2 + \bar{v}_2) )= 2 B_\C ( v_2, C\bar{v}_2 ).
\]
The implication (i) $\iff $ (ii) is clear from this.
\end{proof}

\begin{defn}
We will say that $\Psi$ or $B$ is a polarization if either of the equivalent conditions of the proposition 
above are satisfied.
\end{defn}

\begin{rem}
In the classical theory of complex abelian varieties, one 
considers hermitian forms $H$ on $F$ or $\bar{F}$ whose imaginary part 
$\im \ H$
equals a given skew-symmetric form. A polarization 
corresponds to the choice of a skew-symmetric form such that $H$ is 
either positive or negative definite. 
This can lead to some confusion: note 
for example that the skew-symmetric form $B_F$ 
is the imaginary part of the positive definite form $2 \cdot B_C|_F$,
while the skew-symmetric form $B_{\bar{F}}$ is the imaginary part of 
the negative definite form $-2 \cdot B_C|_{\bar{F}}$. 
 We will always use the form $B_C$ which is positive definite on both $F$ and $\bar{F}$. 
\end{rem}

\section{Metaplectic covers of symplectic similitude groups}
\label{sec:weil-gsp}

\subsection{Setup}
\label{ss:weil-GMp-setup}

Let $F$ be a local field of characteristic zero.
Fix a nontrivial additive character $\psi$ of $F$.

Let $\V$ be a $2n$-dimensional symplectic space over $F$.
Let $\GSp(\V)$ and $\Sp(\V) := \ker \nu$ be the similitude group 
and the symplectic group of $\V$ respectively,
where $\nu: \GSp(\V) \rightarrow F^{\times}$ is the similitude character.

Fix a complete polarization $\V = \X \oplus \Y$.
Choose a basis $\e_1, \ldots, \e_n, \e_1^*, \ldots, \e_n^*$ of $\V$ such that
\[
 \X = F \e_1 + \cdots + F \e_n, \qquad
 \Y = F \e_1^* + \cdots + F \e_n^*, \qquad
 \llangle \e_i, \e_j^* \rrangle = \delta_{ij}.
\]
Using this basis, we may write
\[
 \GSp(\V) = \left\{ g \in \GL_{2n}(F) \, \left| \, g \begin{pmatrix} & \1_n \\ - \1_n & \end{pmatrix} {}^t g = \nu(g) \cdot \begin{pmatrix} & \1_n \\ - \1_n & \end{pmatrix} \right. \right\}.
\]
For $\nu \in F^{\times}$, we define $d(\nu) = d_{\Y}(\nu) \in \GSp(\V)$ by
\[
 d(\nu) := \begin{pmatrix} \1_n & \\ & \nu \cdot \1_n \end{pmatrix}.
\]
Let $P = P_{\Y}$ be the maximal parabolic subgroup of $\Sp(\V)$ stabilizing $\Y$:
\[
 P = \left\{ \left. \begin{pmatrix} a & * \\ & {}^t a^{-1} \end{pmatrix} \, \right| \, a \in \GL_n(F) \right\}.
\]
We have a Bruhat decomposition
\[
 \Sp(\V) = \coprod_{j=0}^n P \tau_j P,
\]
where
\[
 \tau_j := \begin{pmatrix} \1_{n-j} & & & \\ & & & -\1_j \\ & & \1_{n-j} & \\ & \1_j & & \end{pmatrix}.
\]
For $h \in \Sp(\V)$, put $j(h) := j$ if $h \in P \tau_j P$.
We define a map
\[
 x : \Sp(\V) \longrightarrow F^{\times} / (F^{\times})^2
\]
by
\[
 x(p_1 \tau_j p_2) := \det(a_1 a_2) \bmod (F^{\times})^2,
\]
where 
\[
 p_i = \begin{pmatrix} a_i & * \\ & {}^t a_i^{-1} \end{pmatrix} \in P.
\]
In particular, we have $x(p_1hp_2) = x(p_1) x(h) x(p_2)$
for $p_1, p_2 \in P$ and $h \in \Sp(\V)$.

Let $z_{\Y} = z_{\Y}^{\Sp}$ be the $2$-cocycle given by
\[
 z_{\Y}(h_1, h_2) := \gamma_F(\frac{1}{2} \psi \circ q(\Y, \Y h_2^{-1}, \Y h_1))
\]
for $h_1, h_2 \in \Sp(\V)$.

\begin{lem}
We have
\begin{itemize}
 \item $z_{\Y}(h, h^{-1}) = 1$ for $h \in \Sp(\V)$,
 \item $z_{\Y}(p_1 h_1 p, p^{-1} h_2 p_2) = z_{\Y}(h_1, h_2)$
 for $p, p_i \in P$ and $h_i \in \Sp(\V)$,
 \item $z_{\Y}(\tau_i, \tau_j) = 1$,
 \item $z_{\Y}(\tau_n, \n(\beta) \tau_n) = \gamma_F(\frac{1}{2} \psi \circ q_{\beta})$ for $\n(\beta) = \left( \begin{smallmatrix} \1_n & \beta \\ & \1_n \end{smallmatrix} \right)$ with $\beta \in \Hom(\X, \Y)$ if $q_{\beta}$ is non-degenerate, where $q_{\beta}$ is a symmetric bilinear form on $\X$ defined by $q_{\beta}(x,y) = \llangle x, y \beta \rrangle$.
\end{itemize}
\end{lem}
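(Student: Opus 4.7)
The plan is to unfold the definition $z_{\Y}(h_1,h_2)=\gamma_F(\tfrac12\psi\circ q(\Y,\Y h_2^{-1},\Y h_1))$ in each case, reducing every assertion to the construction of the Leray invariant together with its $\Sp(\V)$-equivariance (Theorem~2.11 of \cite{rangarao}).

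For (i), taking $h_2=h^{-1}$ forces $\Y h_2^{-1}=\Y h=\Y h_1$, so two of the three Lagrangians coincide and the radical $R=(\Y\cap\Y h)+(\Y h\cap\Y h)+(\Y h\cap\Y)=\Y h$ is itself Lagrangian; hence $R^\perp=R$, $\V_R=R^\perp/R=0$, and the Weil index of the zero form is $1$. For (ii), the inclusions $\Y p=\Y=\Y p_i$ allow one to rewrite $\Y(p^{-1}h_2p_2)^{-1}=\Y h_2^{-1}p$ and $\Y(p_1h_1p)=\Y h_1 p$; the triple $(\Y,\Y h_2^{-1}p,\Y h_1 p)=(\Y p,\Y h_2^{-1}p,\Y h_1 p)$ is then transported by $p^{-1}\in\Sp(\V)$ to $(\Y,\Y h_2^{-1},\Y h_1)$, and $\Sp(\V)$-invariance of $q$ gives the desired identity.

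For (iii), I would first verify directly from the matrix of $\tau_j$ that $\Y\tau_j^{-1}=\Y\tau_j=\operatorname{span}\{\e_\ell^*:\ell\le n-j\}\cup\{\e_\ell:\ell>n-j\}$. Then, assuming $i\le j$ without loss (the case $i>j$ is essentially identical), a short computation of pairwise intersections gives
\[
R=(\Y\cap\Y\tau_j)+(\Y\tau_j\cap\Y\tau_i)+(\Y\tau_i\cap\Y)=\operatorname{span}\{\e_\ell^*:\ell\le n-i\}\cup\{\e_\ell:\ell>n-i\},
\]
which is an $n$-dimensional isotropic (hence Lagrangian) subspace of $\V$; so $R^\perp=R$, $\V_R=0$, and $z_\Y(\tau_i,\tau_j)=1$.

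For (iv), one has $\Y\tau_n=\Y\tau_n^{-1}=\X$, so $\Y(\n(\beta)\tau_n)^{-1}=\X\n(\beta)^{-1}=\tilde\X$, the ``graph'' Lagrangian spanned by $\tilde\e_k=\e_k-\sum_\ell\beta_{k\ell}\e_\ell^*$; non-degeneracy of $q_\beta$ is exactly what makes $\Y,\tilde\X,\X$ pairwise transverse. The unique element of $N_\Y$ sending $\tilde\X$ to $\X$ is $\n(\beta)$ itself. Writing it as $\bigl(\begin{smallmatrix}1&b\\&1\end{smallmatrix}\bigr)$ with respect to the polarization $\V=\tilde\X+\Y$, one reads off $\tilde\e_k\cdot b=\sum_\ell\beta_{k\ell}\e_\ell^*$, whence
\[
q(\tilde\e_i,\tilde\e_j)=\llangle\tilde\e_i,\tilde\e_j b\rrangle=\beta_{ij}=q_\beta(\e_i,\e_j);
\]
the linear map $\tilde\e_k\mapsto\e_k$ is therefore an isometry $q\cong q_\beta$ over $F$, so the Weil indices agree. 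The only real obstacle is the combinatorial bookkeeping of basis vectors in (iii)--(iv)---particularly identifying the Leray form on $\tilde\X$ with $q_\beta$ on $\X$ in (iv)---but once the correct $g=\n(\beta)\in N_\Y$ is identified, this reduces to a one-line matrix computation and no input beyond the Leray-invariant formalism is required.
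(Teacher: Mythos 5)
Your proof is correct in all four parts, but it takes a genuinely different route from the paper: the paper's proof of this lemma is nothing more than a citation to Theorem~4.1 and Corollary~4.2 of Ranga~Rao, whereas you re-derive each property directly from the definition of the Leray invariant recalled earlier in the section. Your key observations all check out: in (i) and (iii) the radical $(\Y\cap\Y')+(\Y'\cap\Y'')+(\Y''\cap\Y)$ is Lagrangian so the Leray form lives on the zero space and its Weil index is trivially $1$; in (ii) one reduces to $\Sp(\V)$-invariance of the Leray invariant after using $\Y p=\Y p_i=\Y$; and in (iv) the unique $N_\Y$-element carrying $\tilde\X=\X\n(\beta)^{-1}$ to $\X$ is indeed $\n(\beta)$, whose $b$-block, read in the polarization $\V=\tilde\X\oplus\Y$, produces a form on $\tilde\X$ isometric to $q_\beta$ under $\tilde\e_k\mapsto\e_k$. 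I verified the intermediate computations, in particular $\Y\tau_j^{-1}=\Y\tau_j$, the radical formula in (iii), and the identification $q(\tilde\e_i,\tilde\e_j)=\beta_{ji}=q_\beta(\e_i,\e_j)$ in (iv). Your approach buys self-containedness at the cost of basis bookkeeping; the paper keeps the exposition shorter by deferring these (essentially identical) Leray-invariant manipulations to Ranga~Rao.
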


\begin{proof}
See \cite[Theorem 4.1, Corollary 4.2]{rangarao}.
\end{proof}

Suppose that $\V = \V_1 \oplus \V_2$, where each $\V_i$ is a non-degenerate symplectic subspace over $F$.
If $\V_i = \X_i \oplus \Y_i$ is a complete polarization and 
\[
 \X = \X_1 \oplus \X_2, \qquad \Y = \Y_1 \oplus \Y_2,
\]
then we have
\[
 z_{\Y_1}(h_1, h_1') \cdot  z_{\Y_2}(h_2, h_2') = z_{\Y}(h_1 h_2, h_1' h_2')
\]
for $h_i, h_i' \in \Sp(\V_i)$ (see Theorem 4.1 of \cite{rangarao}).

\subsection{Action of outer automorphisms on the $2$-cocycle}
\label{ss:weil-gsp-outer}

For $\nu \in F^{\times}$, let $\alpha_{\nu} = \alpha_{\Y, \nu}$ be the outer automorphism of $\Sp(\V)$ given by
\[
 \alpha_{\nu}(h) = d(\nu) \cdot h \cdot d(\nu)^{-1}
\]
for $h \in \Sp(\V)$.
This induces an action of $F^{\times}$ on $\Sp(\V)$ and thus we have an isomorphism
\begin{align*}
 \Sp(\V) \rtimes F^{\times} & \longrightarrow \GSp(\V). \\
 (h, \nu) & \longmapsto (h, \nu)_{\Y} := h \cdot d(\nu)
\end{align*}
Note that
\[
 (h, \nu)_{\Y} \cdot (h', \nu')_{\Y} = (h \cdot \alpha_{\nu}(h'), \nu \cdot \nu')_{\Y}.
\]

There exists a unique automorphism $\tilde{\alpha}_{\nu}$ of $\Mp(\V)$ such that $\tilde{\alpha}_{\nu}|_{\C^1} = \id_{\C^1}$ and the diagram
\[
 \xymatrix{
  \Mp(\V) \ar@{->}[r]^{\tilde{\alpha}_{\nu}} \ar@{->}[d] &
  \Mp(\V) \ar@{->}[d] \\
  \Sp(\V) \ar@{->}[r]^{\alpha_{\nu}} & \Sp(\V)
 }
\]
commutes.
This implies that there exists a unique function
\[
 v_{\Y} : \Sp(\V) \times F^{\times} \longrightarrow \C^1
\]
such that
\[
 \tilde{\alpha}_{\nu}(h, z) = (\alpha_{\nu}(h), z \cdot v_{\Y}(h,\nu))
\]
for $(h, z) \in \Mp(\V)_{\Y}$.
Since $\tilde{\alpha}_{\nu}$ is an automorphism, we have
\[
 z_{\Y}(\alpha_{\nu}(h), \alpha_{\nu}(h'))
 = z_{\Y}(h, h') \cdot v_{\Y}(hh', \nu)
 \cdot v_{\Y}(h, \nu)^{-1} \cdot v_{\Y}(h', \nu)^{-1}
\]
for $h, h' \in \Sp(\V)$ and $\nu \in F^{\times}$.

\begin{lem}
\label{lem:v_Y}
We have
\[
 v_{\Y}(h, \nu) = (x(h), \nu)_F \cdot \gamma_F(\nu, \frac{1}{2} \psi)^{-j(h)}
\]
for $h \in \Sp(\V)$ and $\nu \in F^{\times}$.
\end{lem}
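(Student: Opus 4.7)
My plan is to verify the formula directly via the cocycle identity defining $v_\Y$, reducing to a computation of Weil indices using the structure of the Rao $2$-cocycle together with Rao's explicit description of the Bruhat invariants $x$ and $j$.

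Since $[\Sp(\V), \Sp(\V)] = \Sp(\V)$, the same argument as in Lemma \ref{lem:aut_Mp_triv} shows that $v_\Y$ is the unique function $\Sp(\V) \times F^\times \to \C^1$ satisfying $v_\Y(1, \nu) = 1$ together with
\[
z_\Y(\alpha_\nu(h_1), \alpha_\nu(h_2)) = z_\Y(h_1, h_2) \cdot v_\Y(h_1 h_2, \nu) \cdot v_\Y(h_1, \nu)^{-1} \cdot v_\Y(h_2, \nu)^{-1},
\]
so it suffices to show that the candidate formula satisfies this identity. The key geometric observation is that $\Y \cdot d(\nu) = \Y$, and consequently $\Y \cdot \alpha_\nu(h)^{\pm 1} = \Y h^{\pm 1} \cdot d(\nu)^{-1}$. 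Since $d(\nu)^{-1} \in \GSp(\V)$ rescales the symplectic form by $\nu^{-1}$, right multiplication by $d(\nu)^{-1}$ furnishes an isometry
\[
q(\Y, \Y \alpha_\nu(h_2)^{-1}, \Y \alpha_\nu(h_1)) \;\cong\; \nu^{-1} \cdot q(\Y, \Y h_2^{-1}, \Y h_1).
\]
Combining \eqref{eqn:weil_index} with $\gamma_F(a, b\psi) = \gamma_F(a, \psi)(a, b)_F$ then gives
\[
\frac{z_\Y(\alpha_\nu(h_1), \alpha_\nu(h_2))}{z_\Y(h_1, h_2)} = \gamma_F(\nu^{-1}, \tfrac{1}{2}\psi)^m \cdot (\det q, \nu^{-1})_F,
\]
where $m$ and $\det q$ refer to the non-degenerate reduction of $q(\Y, \Y h_2^{-1}, \Y h_1)$.

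Next I would invoke Rao's explicit formulas (\cite[Theorem 5.3]{rangarao}): $m$ has the same parity as $j(h_1) + j(h_2) - j(h_1 h_2)$, and $\det q \equiv x(h_1) x(h_2) x(h_1 h_2) \pmod{(F^\times)^2}$ up to explicit signs tabulated in \emph{loc.~cit}. Substituting the candidate formula into the cocycle identity above, and invoking bimultiplicativity of the Hilbert symbol together with the relation $\gamma_F(\nu^{-1}, \psi) \cdot \gamma_F(\nu, \psi) = (-1, \nu)_F$, the identity collapses to an identity already furnished by Rao's formulas; the entire verification is then a matter of tracking Weil indices and Hilbert symbols.

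The main obstacle will be precisely this bookkeeping: tracking signs from $\gamma_F(\nu^{\pm 1}, \tfrac{1}{2}\psi)$, from the explicit signs in Rao's formula for $\det q$, and from the factors of $(2, \cdot)_F$ introduced by the passage $\psi \mapsto \tfrac{1}{2}\psi$. As sanity checks one verifies the formula directly on the two generating families for the Bruhat decomposition: for $p \in P$ one has $j(p) = 0$ and $\alpha_\nu$ stabilizes $P$ with $z_\Y$ trivial on $P \times P$, so the cocycle identity forces $v_\Y|_P$ to be a character, readily identified as $p \mapsto (x(p), \nu)_F$; for the Weyl element $\tau_j$ one has $x(\tau_j) = 1$, so the formula reduces to $v_\Y(\tau_j, \nu) = \gamma_F(\nu, \tfrac{1}{2}\psi)^{-j}$, which follows immediately from the Leray-invariant computation applied to the triple $(\Y, \Y \tau_j^{-1}, \Y \tau_j)$.
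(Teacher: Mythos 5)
Your proposal is correct, and it takes a genuinely different route from the paper's (which follows Barthel). The paper first establishes the multiplicativity $v_{\Y}(php', \nu) = v_{\Y}(p,\nu)v_{\Y}(h,\nu)v_{\Y}(p',\nu)$, reduces the computation of the character $\xi_\nu$ and the constant $\gamma_\nu$ to the case $\dim\V = 2$, and pins both down by an explicit computation of $z_\Y$ on the one-parameter family $\underline{\mathbf{n}}(x)$ of lower-triangular unipotents in $\SL_2$. You instead verify the cocycle identity globally: you observe that right multiplication by $d(\nu)^{-1}$ carries the Leray triple $(\Y, \Y\alpha_\nu(h_2)^{-1}, \Y\alpha_\nu(h_1))$ to $(\Y, \Y h_2^{-1}, \Y h_1)$ while scaling the form by $\nu^{-1}$, then feed this through \eqref{eqn:weil_index} and Rao's structure theorem expressing $\dim q$ and $\det q$ in terms of the Bruhat invariants $j$ and $x$. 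Your approach avoids the rank-one reduction and makes the similitude-equivariance of the Leray invariant do the work, at the cost of requiring the full force of Rao's Theorem~5.3 rather than just the elementary cocycle value on unipotents.

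Two remarks on the execution. First, your statement ``$m$ has the same parity as $j(h_1)+j(h_2)-j(h_1h_2)$'' is not by itself sufficient, since $\gamma_F(\nu,\tfrac12\psi)$ is only a fourth root of unity; what one actually needs is $m = j(h_1)+j(h_2)-j(h_1h_2) - 2\ell$ \emph{together with} $\det q \equiv (-1)^\ell x(h_1)x(h_2)x(h_1h_2) \pmod{(F^\times)^2}$, so that the stray factor $\gamma_F(\nu,\tfrac12\psi)^{-2\ell} = (-1,\nu)_F^\ell$ is cancelled by the $(-1)^\ell$ inside $(\det q, \nu)_F$. You gesture at this with ``up to explicit signs tabulated in \emph{loc.~cit.}'', and the cancellation does in fact close, but this is the one place where the bookkeeping genuinely matters and should be spelled out. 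Second, your closing sanity checks are not quite ``immediate'' as stated — identifying $v_\Y|_P$ with $p\mapsto (x(p),\nu)_F$ and computing $v_\Y(\tau_j,\nu)$ each require a calculation comparable to what the paper does on $\underline{\mathbf{n}}(x)$ — but since uniqueness of $v_\Y$ (by the argument of Lemma~\ref{lem:aut_Mp_triv}) already guarantees the main verification suffices, these checks are redundant and their level of detail is immaterial.
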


\begin{proof}
See \cite[Proposition 1.2.A]{barthel}.
For convenience, we recall the proof of \cite[Proposition 1.2.A]{barthel}.
{\bf Warning: our convention differs from that in \cite{barthel}.}

Note that $z_{\Y}(p,h) = z_{\Y}(h, p) = 1$ for $p \in P$ and $h \in \Sp(\V)$.
This implies that
\[
 v_{\Y}(php', \nu) = v_{\Y}(p, \nu) \cdot v_{\Y}(h, \nu) \cdot v_{\Y}(p', \nu)
\]
for $p, p' \in P$ and $h \in \Sp(\V)$.
Moreover, there exist a character $\xi_{\nu}$ of $F^{\times}$ and 
an element $\gamma_{\nu} \in \C^1$ such that
\[
 v_{\Y}(p, \nu) = \xi_{\nu}(x(p)), \qquad v_{\Y}(\tau_j, \nu) = \gamma_{\nu}^j.
\]

To determine $\xi_{\nu}$ and $\gamma_{\nu}$, we may assume that $\dim \V = 2$
as explained in the proof of \cite[Proposition 1.2.A]{barthel}.
Put
\[
 \underline{\mathbf{n}}(x) := \begin{pmatrix} 1 & 0 \\ x & 1 \end{pmatrix}.
\]
If $x \ne 0$, then we have
\[
 \underline{\mathbf{n}}(x) = 
 \begin{pmatrix} 1 & x^{-1} \\ 0 & 1 \end{pmatrix}
 \begin{pmatrix} 0 & -1 \\ 1 & 0 \end{pmatrix}
 \begin{pmatrix} x & 1 \\ 0 & x^{-1} \end{pmatrix},
\]
so that
\[
 v_{\Y}(\underline{\mathbf{n}}(x), \nu) = \xi_{\nu}(x) \cdot \gamma_{\nu}.
\]
Let $x, y \in F$ such that $x \ne 0$, $y \ne 0$, $x+y \ne 0$.
Since $\alpha_{\nu}(\underline{\mathbf{n}}(x)) = \underline{\mathbf{n}}(\nu x)$, we have
\[
 \frac{z_{\Y}(\underline{\mathbf{n}}(\nu x), \underline{\mathbf{n}}(\nu y))}
 {z_{\Y}(\underline{\mathbf{n}}(x), \underline{\mathbf{n}}(y))}
 = \frac{v_{\Y}(\underline{\mathbf{n}}(x+y), \nu)}
 {v_{\Y}(\underline{\mathbf{n}}(x), \nu) \cdot v_{\Y}(\underline{\mathbf{n}}(y), \nu)}
 = \xi_{\nu}\left( \frac{x+y}{xy} \right) \cdot \gamma_{\nu}^{-1}.
\]
By \cite[Corollary 4.3]{rangarao}, we have
\[
 z_{\Y}(\underline{\mathbf{n}}(x), \underline{\mathbf{n}}(y))
 = \gamma_F(\frac{1}{2} xy(x+y) \cdot \psi)
\]
and hence 
\begin{align*}
 \frac{z_{\Y}(\underline{\mathbf{n}}(\nu x), \underline{\mathbf{n}}(\nu y))}
 {z_{\Y}(\underline{\mathbf{n}}(x), \underline{\mathbf{n}}(y))}
 & = \frac{\gamma_F(\frac{1}{2} \nu^3 xy(x+y) \cdot \psi)} 
 {\gamma_F(\frac{1}{2} xy(x+y) \cdot \psi)} \\
 & = \frac{\gamma_F(\nu^3 xy(x+y), \frac{1}{2} \psi)} 
 {\gamma_F(xy(x+y), \frac{1}{2} \psi)} \\
 & = \gamma_F(\nu^3, \frac{1}{2} \psi) \cdot ( xy(x+y), \nu^3 )_F \\
 & = \gamma_F(\nu, \frac{1}{2} \psi) \cdot ( xy(x+y), \nu )_F.
\end{align*}
Thus we obtain
\[
 \gamma_F(\nu, \frac{1}{2} \psi) \cdot \left( \frac{x+y}{xy}, \nu \right)_F 
 = \xi_{\nu}\left( \frac{x+y}{xy} \right) \cdot \gamma_{\nu}^{-1}.
\]
Taking $x = y = 2$, we have
\[
 \gamma_{\nu} = \gamma_F(\nu, \frac{1}{2} \psi)^{-1}
\]
and hence 
\[
 \xi_{\nu}(a) = ( a, \nu )_F
\]
for all $a \in F^{\times}$.
\end{proof}

\subsection{Metaplectic groups}
\label{ss:weil-gsp-metaplectic}

For each $\nu \in F^{\times}$, we have an automorphism $\tilde{\alpha}_{\nu}$ of $\Mp(\V)$.
This induces an action of $F^{\times}$ on $\Mp(\V)$ and thus we have a topological group
\[
 \Mp(\V) \rtimes F^{\times}.
\]
We define a bijection
\begin{align*}
 \Mp(\V)_{\Y} \rtimes F^{\times} & \longrightarrow \GMp(\V)_{\Y} := \GSp(\V) \times \C^1 \\
 ((h, z), \nu) & \longmapsto ((h, \nu)_{\Y}, z)
\end{align*}
as sets.
Via this bijection, we regard $\GMp(\V)_{\Y}$ as a group.
Note that the diagram
\[
 \xymatrix{
  \Mp(\V)_{\Y} \rtimes F^{\times} \ar@{->}[r] \ar@{->}[d] &
  \GMp(\V)_{\Y} \ar@{->}[d] \\
  \Sp(\V) \rtimes F^{\times} \ar@{->}[r] & \GSp(\V)
 }
\]
commutes.
Let $z_{\Y}^{\GSp}$ be the 2-cocycle associated to $\GMp(\V)_{\Y}$.
By definition, one can see that
\[
 z_{\Y}^{\GSp}(g, g')
 = z_{\Y}^{\Sp}(h, \alpha_{\nu}(h')) \cdot v_{\Y}(h', \nu)
\]
for $g = (h, \nu)_{\Y}, g' = (h', \nu')_{\Y} \in \GSp(\V)$.
In particular, the restriction of $z_{\Y}^{\GSp}$ to $\Sp(\V) \times \Sp(\V)$
is equal to $z_{\Y}^{\Sp}$.
Thus we omit the superscripts $\GSp$ and $\Sp$ from the notation.

We shall see that the isomorphism class of $\GMp(\V)_{\Y}$ does not depend on the choice of the complete polarization.
If there is no confusion, we write $\GMp(\V) = \GMp(\V)_{\Y}$.

\subsection{Change of polarizations}
\label{ss:weil-gsp-pol}

Let $\V = \X' + \Y'$ be another complete polarization.
Fix an element $h_0 \in \Sp(\V)$ such that $\X' = \X h_0$ and $\Y' = \Y h_0$.
Let $\alpha_0$ be the inner automorphism of $\GSp(\V)$ given by
\[
 \alpha_0(g) = h_0 \cdot g \cdot h_0^{-1}
\]
for $g \in \GSp(\V)$.
Note that $\alpha_0|_{\Sp(\V)}$ is an inner automorphism of $\Sp(\V)$.
We have
\[
 d_{\Y'}(\nu) = h_0^{-1} \cdot d_{\Y}(\nu) \cdot h_0, \qquad
 \alpha_{\Y', \nu} = \alpha_0^{-1} \circ \alpha_{\Y, \nu} \circ \alpha_0.
\]

By \cite[Lemma 4.2]{kudla-splitting}, we have
\[
 z_{\Y'}(h, h') = z_{\Y}(\alpha_0(h), \alpha_0(h'))
\]
for $h,h' \in \Sp(\V)$, and an isomorphism
\begin{align*}
 \Mp(\V)_{\Y} & \longrightarrow \Mp(\V)_{\Y'}, \\
 (h,z) & \longmapsto (h, z \cdot \mu(h))
\end{align*}
where
\[
 \mu(h) = z_{\Y}(h_0, h h_0^{-1}) \cdot z_{\Y}(h, h_0^{-1})
\]
for $h \in \Sp(\V)$.

\begin{lem}
We have
\[
 v_{\Y'}(h, \nu) = v_{\Y}(\alpha_0(h), \nu)
\]
for $h \in \Sp(\V)$ and $\nu \in F^{\times}$.
\end{lem}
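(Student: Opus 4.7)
My plan is to reduce the identity to the uniqueness statement in Lemma \ref{lem:aut_Mp_triv}, together with a single cocycle identity.

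First, I choose any lift $\tilde h_0\in\Mp(\V)$ of $h_0$ and let $\beta$ denote inner conjugation by $\tilde h_0$ on $\Mp(\V)$. Because $\C^1$ is central, $\beta$ is independent of the choice of lift, is trivial on $\C^1$, and covers the inner automorphism $\alpha_0$ of $\Sp(\V)$. Since $\alpha_{\Y',\nu}=\alpha_0^{-1}\circ\alpha_{\Y,\nu}\circ\alpha_0$, the composite $\beta^{-1}\circ\tilde\alpha_{\Y,\nu}\circ\beta$ is an automorphism of $\Mp(\V)$ that covers $\alpha_{\Y',\nu}$ and is trivial on $\C^1$, so Lemma \ref{lem:aut_Mp_triv} forces
\[
 \tilde\alpha_{\Y',\nu} = \beta^{-1}\circ\tilde\alpha_{\Y,\nu}\circ\beta.
\]

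Next I compute in the $\Y$-realization. Taking $\tilde h_0=(h_0,1)_\Y$ and using $z_\Y(h_0,h_0^{-1})=1$ (from the list of properties following the Bruhat decomposition in \S\ref{ss:weil-GMp-setup}), an elementary computation gives
\[
 \beta(h,z)_\Y = (\alpha_0(h),\,z\,\lambda(h))_\Y,\qquad \lambda(h):=z_\Y(h_0,h)\,z_\Y(h_0 h,h_0^{-1}),
\]
with the analogous formula for $\beta^{-1}$. Composing with $\tilde\alpha_{\Y,\nu}(h,z)_\Y=(\alpha_{\Y,\nu}(h),\,z\,v_\Y(h,\nu))_\Y$ and using the displayed identity of automorphisms yields
\[
 \tilde\alpha_{\Y',\nu}(h,z)_\Y = \bigl(\alpha_{\Y',\nu}(h),\; z\,\lambda(h)\,v_\Y(\alpha_0(h),\nu)\,\lambda(\alpha_{\Y',\nu}(h))^{-1}\bigr)_\Y.
\]
I then transport this to $\Mp(\V)_{\Y'}$ along the change-of-realization isomorphism $\phi\colon (h,z)_\Y\mapsto (h,z\mu(h))_{\Y'}$ recalled just above the lemma. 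Since $\phi$ is a group isomorphism and commutes with every abstract automorphism of $\Mp(\V)$, a direct substitution produces
\[
 v_{\Y'}(h,\nu) = \mu(h)^{-1}\,\lambda(h)\,v_\Y(\alpha_0(h),\nu)\,\lambda(\alpha_{\Y',\nu}(h))^{-1}\,\mu(\alpha_{\Y',\nu}(h)).
\]

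The crux of the argument is therefore the identity $\lambda\equiv\mu$. This is exactly the cocycle relation $z_\Y(a,b)\,z_\Y(ab,c)=z_\Y(a,bc)\,z_\Y(b,c)$ specialized to $(a,b,c)=(h_0,h,h_0^{-1})$:
\[
 z_\Y(h_0,h)\,z_\Y(h_0 h,h_0^{-1}) = z_\Y(h_0,hh_0^{-1})\,z_\Y(h,h_0^{-1}),
\]
so $\lambda(h)=\mu(h)$ for all $h\in\Sp(\V)$. The correction factors $\mu(h)^{-1}\lambda(h)$ and $\lambda(\alpha_{\Y',\nu}(h))^{-1}\mu(\alpha_{\Y',\nu}(h))$ both collapse to $1$, and we obtain $v_{\Y'}(h,\nu)=v_\Y(\alpha_0(h),\nu)$, as desired. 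The main obstacle is essentially the bookkeeping of the cocycle factors introduced by each conjugation step and by $\phi$; all of them are forced into alignment by the one cocycle identity above.
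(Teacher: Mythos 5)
Your proof is correct, but it takes a different route from the paper's. The paper's argument is a direct cocycle manipulation on $\Sp(\V)$: expand $z_{\Y'}(\alpha_{\Y',\nu}(h), \alpha_{\Y',\nu}(h'))$ using $\alpha_{\Y',\nu}=\alpha_0^{-1}\alpha_{\Y,\nu}\alpha_0$, the relation $z_{\Y'}(g,g')=z_{\Y}(\alpha_0(g),\alpha_0(g'))$, and the defining coboundary relation for $v_{\Y}$; the result is exactly the defining relation for $v_{\Y'}$ with $v_{\Y}(\alpha_0(\cdot),\nu)$ in its place, so the characterization of $v_{\Y'}$ finishes it in one stroke. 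You instead lift everything to $\Mp(\V)$, invoke Lemma \ref{lem:aut_Mp_triv} to identify $\tilde\alpha_{\Y',\nu}$ with $\beta^{-1}\circ\tilde\alpha_{\Y,\nu}\circ\beta$ for $\beta$ the conjugation by a lift of $h_0$, and then transport this through the change-of-model isomorphism $\phi$. That route is conceptually clean --- it makes the equality look like an instance of the functoriality of $\tilde\alpha_\nu$ under inner twist --- but it forces you to reconcile two $\C^1$-valued correction factors ($\lambda$ from $\beta$ and $\mu$ from $\phi$), which costs an explicit application of the cocycle identity $z_{\Y}(h_0,h)z_{\Y}(h_0h,h_0^{-1})=z_{\Y}(h_0,hh_0^{-1})z_{\Y}(h,h_0^{-1})$. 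The paper's proof needs no such reconciliation because it never separates the coboundary attached to $\alpha_0$ from the one attached to the change of polarization; it only ever uses the single identity $z_{\Y'}(g,g')=z_{\Y}(\alpha_0(g),\alpha_0(g'))$. Both arguments ultimately rest on the same uniqueness input (Lemma \ref{lem:aut_Mp_triv}), the paper's directly in the characterization of $v_{\Y'}$ and yours twice, first to pin down $\tilde\alpha_{\Y',\nu}$ as a conjugate and then implicitly via the canonicity of $\phi$. Your version is a valid alternative; the paper's is shorter and keeps the bookkeeping to a minimum.
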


\begin{proof}
We have
\begin{align*}
 z_{\Y'}(\alpha_{\Y', \nu}(h), \alpha_{\Y', \nu}(h')) 
 & = z_{\Y'}((\alpha_0^{-1} \circ \alpha_{\Y, \nu} \circ \alpha_0)(h), (\alpha_0^{-1} \circ \alpha_{\Y, \nu} \circ \alpha_0)(h')) \\
 & = z_{\Y}((\alpha_{\Y, \nu} \circ \alpha_0)(h), (\alpha_{\Y, \nu} \circ \alpha_0)(h')) \\
 & = z_{\Y}(\alpha_0(h), \alpha_0(h')) 
 \cdot v_{\Y}(\alpha_0(h) \cdot \alpha_0(h'), \nu)
 \cdot v_{\Y}(\alpha_0(h), \nu)^{-1} \cdot v_{\Y}(\alpha_0(h'), \nu)^{-1} \\
 & = z_{\Y'}(h, h') 
 \cdot v_{\Y}(\alpha_0(h h'), \nu)
 \cdot v_{\Y}(\alpha_0(h), \nu)^{-1} \cdot v_{\Y}(\alpha_0(h'), \nu)^{-1}.
\end{align*}
Thus the assertion follows from the characterization of $v_{\Y'}$.
\end{proof}

\begin{lem}
We have
\[
 z_{\Y'}(g, g') = z_{\Y}(\alpha_0(g), \alpha_0(g'))
\]
for $g, g' \in \GSp(\V)$.
\end{lem}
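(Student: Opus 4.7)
The plan is to reduce the $\GSp$-case to the $\Sp$-case already established, together with the lemma comparing $v_{\Y'}$ and $v_{\Y}$ proved just above.

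First I would write each $g, g' \in \GSp(\V)$ in its $\Y'$-decomposition: $g = (h, \nu)_{\Y'} = h \cdot d_{\Y'}(\nu)$ and $g' = (h', \nu')_{\Y'} = h' \cdot d_{\Y'}(\nu')$ with $h, h' \in \Sp(\V)$. By definition of the $\GSp$-cocycle,
\[
 z_{\Y'}(g, g') = z_{\Y'}^{\Sp}(h, \alpha_{\Y', \nu}(h')) \cdot v_{\Y'}(h', \nu).
\]

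Next I would translate to the $\Y$-side. The key identity is that, since $h_0 \in \Sp(\V)$ and $d_{\Y'}(\nu) = h_0^{-1} d_{\Y}(\nu) h_0$, we have
\[
 \alpha_0(d_{\Y'}(\nu)) = h_0 \cdot h_0^{-1} d_{\Y}(\nu) h_0 \cdot h_0^{-1} = d_{\Y}(\nu),
\]
and therefore $\alpha_0(g) = \alpha_0(h) \cdot d_{\Y}(\nu)$, $\alpha_0(g') = \alpha_0(h') \cdot d_{\Y}(\nu')$ give the $\Y$-decompositions. Applying the definition of the cocycle on $\GSp$ once more yields
\[
 z_{\Y}(\alpha_0(g), \alpha_0(g')) = z_{\Y}^{\Sp}(\alpha_0(h), \alpha_{\Y,\nu}(\alpha_0(h'))) \cdot v_{\Y}(\alpha_0(h'), \nu).
\]

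The two expressions will then be matched term-by-term. From the conjugation relation $\alpha_{\Y', \nu} = \alpha_0^{-1} \circ \alpha_{\Y, \nu} \circ \alpha_0$, I get $\alpha_0 \circ \alpha_{\Y', \nu} = \alpha_{\Y, \nu} \circ \alpha_0$, so the previous lemma (the $\Sp$-case $z_{\Y'}(k, k') = z_{\Y}(\alpha_0(k), \alpha_0(k'))$ for $k, k' \in \Sp(\V)$), applied to $k = h$ and $k' = \alpha_{\Y', \nu}(h')$, gives
\[
 z_{\Y'}^{\Sp}(h, \alpha_{\Y', \nu}(h')) = z_{\Y}^{\Sp}(\alpha_0(h), \alpha_{\Y, \nu}(\alpha_0(h'))).
\]
The just-proved lemma $v_{\Y'}(\cdot, \nu) = v_{\Y}(\alpha_0(\cdot), \nu)$ supplies $v_{\Y'}(h', \nu) = v_{\Y}(\alpha_0(h'), \nu)$, and multiplying the two identities finishes the proof.

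There is no real obstacle — the argument is entirely bookkeeping around the two polarizations. The only point that needs care is verifying that $\alpha_0$ sends the $\Y'$-standard-form of $g$ to the $\Y$-standard-form of $\alpha_0(g)$ (so that the definitions of $z_{\Y'}^{\GSp}$ and $z_{\Y}^{\GSp}$ can be applied symmetrically), which is the content of the identity $\alpha_0(d_{\Y'}(\nu)) = d_{\Y}(\nu)$ noted above.
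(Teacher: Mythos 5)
Your proof is correct and matches the paper's argument essentially step for step: both decompose $g = h \cdot d_{\Y'}(\nu)$ and $g' = h' \cdot d_{\Y'}(\nu')$, expand $z_{\Y'}^{\GSp}$ via the defining formula, apply the $\Sp$-case comparison $z_{\Y'}(h,h') = z_{\Y}(\alpha_0(h),\alpha_0(h'))$ together with the preceding lemma on $v_{\Y'}$ vs.\ $v_{\Y}$, use $\alpha_0 \circ \alpha_{\Y',\nu} = \alpha_{\Y,\nu} \circ \alpha_0$, and finish by noting $\alpha_0(g) = (\alpha_0(h),\nu)_{\Y}$. The only cosmetic difference is that you expand $z_{\Y}(\alpha_0(g),\alpha_0(g'))$ separately and match term by term, whereas the paper rewrites the left side into the right side in a single chain; the content is the same.
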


\begin{proof}
Let $g = (h, \nu)_{\Y'}, g' = (h', \nu')_{\Y'} \in \GSp(\V)$.
Then we have
\begin{align*}
 z_{\Y'}(g, g') & = z_{\Y'}(h, \alpha_{\Y', \nu}(h')) \cdot v_{\Y'}(h', \nu) \\
 & = z_{\Y}(\alpha_0(h), (\alpha_0 \circ \alpha_{\Y', \nu})(h'))
 \cdot v_{\Y}(\alpha_0(h'), \nu) \\
 & = z_{\Y}(\alpha_0(h), (\alpha_{\Y, \nu} \circ \alpha_0)(h'))
 \cdot v_{\Y}(\alpha_0(h'), \nu) \\
 & = z_{\Y}((\alpha_0(h), \nu)_{\Y}, (\alpha_0(h'), \nu')_{\Y}).
\end{align*}
Since 
\[
 \alpha_0(g) = h_0 \cdot h \cdot d_{\Y'}(\nu) \cdot h_0^{-1}
 = h_0 \cdot h \cdot h_0^{-1} \cdot d_{\Y}(\nu)
 = (\alpha_0(h), \nu)_{\Y},
\]
the assertion follows.
\end{proof}

Put
\[
 \mu(g) = z_{\Y}(g, h_0^{-1}) \cdot z_{\Y}(h_0, g h_0^{-1})
 = z_{\Y'}(h_0^{-1} g h_0, h_0^{-1}) \cdot z_{\Y'}(h_0^{-1}, g)^{-1}
\]
for $g \in \GSp(\V)$.
Note that $\mu$ depends on the choice of $h_0$.
By a direct calculation, one can see that
\[
 z_{\Y'}(g, g') =  z_{\Y}(g, g') \cdot \mu(gg') \cdot \mu(g)^{-1}
 \cdot \mu(g')^{-1}
\]
for $g, g' \in \GSp(\V)$.
Thus we obtain an isomorphism
\begin{align*}
 \GMp(\V)_{\Y} & \longrightarrow \GMp(\V)_{\Y'}. \\
 (g,z) & \longmapsto (g, z \cdot \mu(g))
\end{align*}

\section{Splittings: the case $\dim_B V = 2$ and $\dim_B W = 1$}
\label{sec:spl-B}

\subsection{Setup}
\label{ss:spl-setup}
Let $F$ be a number field.
Recall that
\begin{align*}
 E & = F + F \i, & B & = E + E \j, & B_1 & = E + E \j_1, & B_2 & = E + E \j_2, \\
 u & := \i^2, & J & := \j^2, & J_1 & := \j_1^2, & J_2 & := \j_2^2,
\end{align*}
where
\[
 J = J_1 J_2.
\]
Recall that
\[
 V = B_1 \otimes_E B_2 \qquad \text{and} \qquad W = B
\]
are a right skew-hermitian $B$-space and a left hermitian $B$-space respectively, and
\[
 \V = V \otimes_B W
\]
is an $F$-space with a symplectic form
\[
 \llangle \cdot, \cdot \rrangle = \frac{1}{2} \tr_{B/F}
 (\langle \cdot, \cdot \rangle \otimes \langle \cdot, \cdot \rangle^*).
\]
Recall that $\V = \X + \Y$ is a complete polarization, where
\[
 \X = F \e_1 + F \e_2  + F \e_3 + F \e_4, \qquad
 \Y = F \e_1^* + F \e_2^*  + F \e_3^*  + F \e_4^*.
\]
The actions of $B$, $B_1$, $B_2$ on $\V$ are given as follows:
\begin{itemize}
\item $B$-action
\begin{align*}
 \e_1 \i & = u \e_1^* &
 \e_2 \i & = -u J_1 \e_2^* &
 \e_3 \i & = -u J_2 \e_3^* &
 \e_4 \i & = u J \e_4^* \\
 \e^*_1 \i & = \e_1 &
 \e^*_2 \i & = - \frac{1}{J_1} \e_2 &
 \e^*_3 \i & = - \frac{1}{J_2} \e_3 &
 \e^*_4 \i & = \frac{1}{J} \e_4 \\
 \e_1 \j & = \e_4 &
 \e_2 \j & = J_1 \e_3 &
 \e_3 \j & = J_2 \e_2 &
 \e_4 \j & = J \e_1 \\
 \e^*_1 \j & = - J \e_4^* &
 \e^*_2 \j & = - J_2 \e_3^* &
 \e^*_3 \j & = - J_1 \e_2^* &
 \e^*_4 \j & = - \e_1^* \\
 \e_1 \i \j & = - uJ \e_4^* &
 \e_2 \i \j & = u J \e_3^* &
 \e_3 \i \j & = u J \e_2^* &
 \e_4 \i \j & = - u J \e_1^* \\
 \e^*_1 \i \j & = \e_4 &
 \e^*_2 \i \j & = - \e_3 &
 \e^*_3 \i \j & = - \e_2 &
 \e^*_4 \i \j & = \e_1
\end{align*}
\item $B_1$-action
\begin{align*}
 \i \e_1 & = u \e_1^* &
 \i \e_2  & = u J_1 \e_2^* &
 \i \e_3 & = -u J_2 \e_3^* &
 \i \e_4 & = - u J \e_4^* \\
 \i \e^*_1 & = \e_1 &
 \i \e^*_2 & = \frac{1}{J_1} \e_2 &
 \i \e^*_3 & = - \frac{1}{J_2} \e_3 &
 \i \e^*_4 & = - \frac{1}{J} \e_4 \\
 \j_1 \e_1 & = \e_2 &
 \j_1 \e_2 & = J_1 \e_1 &
 \j_1 \e_3 & = \e_4 &
 \j_1 \e_4 & = J_1 \e_3 \\
 \j_1 \e^*_1 & = - J_1 \e_2^* &
 \j_1 \e^*_2 & = - \e_1^* &
 \j_1 \e^*_3 & = - J_1 \e_4^* &
 \j_1 \e^*_4 & = - \e_3^* \\
 \i \j_1 \e_1 & = u J_1 \e_2^* &
 \i \j_1 \e_2 & = u J_1 \e_1^* &
 \i \j_1 \e_3 & = - uJ \e_4^* &
 \i \j_1 \e_4 & = -uJ \e_3^* \\
 \i \j_1 \e^*_1 & = - \e_2 &
 \i \j_1 \e^*_2 & = - \e_1 &
 \i \j_1 \e^*_3 & = \frac{1}{J_2} \e_4 &
 \i \j_1 \e^*_4 & = \frac{1}{J_2} \e_3
\end{align*}
\item $B_2$-action
\begin{align*}
 \i \e_1 & = u \e_1^* &
 \i \e_2  & = - uJ_1 \e_2^*&
 \i \e_3 & = u J_2 \e_3^* &
 \i \e_4 & = -uJ \e_4^* \\
 \i \e^*_1 & = \e_1 &
 \i \e^*_2 & = - \frac{1}{J_1} \e_2 &
 \i \e^*_3 & = \frac{1}{J_2} \e_3 &
 \i \e^*_4 & = - \frac{1}{J} \e_4 \\
 \j_2 \e_1 & = \e_3 &
 \j_2 \e_2 & = \e_4 &
 \j_2 \e_3 & = J_2 \e_1 &
 \j_2 \e_4 & = J_2 \e_2 \\
 \j_2 \e^*_1 & = - J_2 \e_3^* &
 \j_2 \e^*_2 & = - J_2 \e_4^* &
 \j_2 \e^*_3 & = - \e_1^* &
 \j_2 \e^*_4 & = - \e_2^* \\
 \i \j_2 \e_1 & = u J_2 \e_3^* &
 \i \j_2 \e_2 & = - uJ \e_4^* &
 \i \j_2 \e_3 & = u J_2 \e_1^* &
 \i \j_2 \e_4 & = - uJ \e_2^* \\
 \i \j_2 \e^*_1 & = - \e_3 &
 \i \j_2 \e^*_2 & = \frac{1}{J_1} \e_4 &
 \i \j_2 \e^*_3 & = - \e_1 &
 \i \j_2 \e^*_4 & = \frac{1}{J_1} \e_2
\end{align*}
\end{itemize}

Let $\ba_i \in B_i^{\times}$ and $\ba \in B^{\times}$.
We write
\[
 \ba_1 = a_1 + b_1 \i + c_1 \j_1 + d_1 \i \j_1, \qquad
 \ba_2 = a_2 + b_2 \i + c_2 \j_2 + d_2 \i \j_2, \qquad
 \ba = a + b \i + c \j + d \i \j,
\]
where $a_i, a, b_i, b, c_i, c, d_i, d \in F$.
Then we have
\begin{align*}
\begin{bmatrix}
 \ba_1 \e_1 \\
 \ba_1 \e_2 \\
 \ba_1 \e_3 \\
 \ba_1 \e_4 \\
 \ba_1 \e_1^* \\
 \ba_1 \e_2^* \\
 \ba_1 \e_3^* \\
 \ba_1 \e_4^* 
\end{bmatrix} 
& = \g_1
\begin{bmatrix}
 \e_1 \\
 \e_2 \\
 \e_3 \\
 \e_4 \\
 \e_1^* \\
 \e_2^* \\
 \e_3^* \\
 \e_4^* 
\end{bmatrix},
& \g_1 & =
\begin{pmatrix}
 a_1 & c_1 &  &  & b_1 u & d_1 u J_1 &  &  \\
 c_1 J_1 & a_1 &  &  & d_1 u J_1 & b_1 u J_1 &  &  \\
  &  & a_1 & c_1 &  &  & -b_1 u J_2 & -d_1 u J \\
  &  & c_1 J_1 & a_1 &  &  & -d_1 u J & -b_1 u J \\
 b_1 & -d_1 &  &  & a_1 & - c_1 J_1 &  &  \\
 -d_1 & \frac{b_1}{J_1} &  &  & - c_1 & a_1 &  &  \\
  &  & - \frac{b_1}{J_2} & \frac{d_1}{J_2} &  &  & a_1 & -c_1 J_1 \\
  &  & \frac{d_1}{J_2} & -\frac{b_1}{J} &  &  & -c_1 & a_1
\end{pmatrix},
 \\
\begin{bmatrix}
 \ba_2 \e_1 \\
 \ba_2 \e_2 \\
 \ba_2 \e_3 \\
 \ba_2 \e_4 \\
 \ba_2 \e_1^* \\
 \ba_2 \e_2^* \\
 \ba_2 \e_3^* \\
 \ba_2 \e_4^* 
\end{bmatrix} 
& = \g_2 
\begin{bmatrix}
 \e_1 \\
 \e_2 \\
 \e_3 \\
 \e_4 \\
 \e_1^* \\
 \e_2^* \\
 \e_3^* \\
 \e_4^* 
\end{bmatrix},
& \g_2 & =
\begin{pmatrix}
 a_2 &  & c_2 &  & b_2 u &  & d_2uJ_2 &  \\
  & a_2 &  & c_2 &  & -b_2 u J_1 &  & -d_2 u J \\
 c_2 J_2 &  & a_2 &  & d_2 u J_2 &  & b_2 u J_2 &  \\
  & c_2 J_2 &  & a_2 &  & -d_2 uJ &  & -b_2 uJ \\
 b_2 &  & -d_2 &  & a_2 &  & -c_2 J_2 &  \\
  & -\frac{b_2}{J_1} &  & \frac{d_2}{J_1} &  & a_2 &  & -c_2 J_2 \\
 -d_2 &  & \frac{b_2}{J_2} &  & -c_2 &  & a_2 &  \\
  & \frac{d_2}{J_1} &  & - \frac{b_2}{J} &  & -c_2 &  & a_2
\end{pmatrix},
 \\
\begin{bmatrix}
 \e_1 \ba \\
 \e_2 \ba \\
 \e_3 \ba \\
 \e_4 \ba \\
 \e_1^* \ba \\
 \e_2^* \ba \\
 \e_3^* \ba \\
 \e_4^* \ba
\end{bmatrix} 
& = \g
\begin{bmatrix}
 \e_1 \\
 \e_2 \\
 \e_3 \\
 \e_4 \\
 \e_1^* \\
 \e_2^* \\
 \e_3^* \\
 \e_4^* 
\end{bmatrix},
& \g & =
\begin{pmatrix}
  a &  &  & c & bu &  &  & -duJ \\
  & a & c J_1 &  &  & -buJ_1 & duJ &  \\
  & c J_2 & a &  &  & duJ & -buJ_2 &  \\
  c J &  &  & a & -duJ &  &  & buJ \\
  b &  &  & d & a &  &  & -cJ \\
  & - \frac{b}{J_1} & -d &  &  & a & -c J_2 &  \\
  & -d & -\frac{b}{J_2} &  &  & -cJ_1 & a &  \\
  d &  &  &  \frac{b}{J} & -c &  &  & a
\end{pmatrix}.
\end{align*}

For $\a \in \GL_4(F)$ and $\b \in \Sym_4(F)$, put
\[
 \m(\a) := \begin{pmatrix} \a & \\ & {}^t \a^{-1} \end{pmatrix},
 \qquad
 \n(\b) := \begin{pmatrix} \1_4 & \b \\ & \1_4 \end{pmatrix}.
\]

Fix a place $v$ of $F$.
In \S \S \ref{ss:B-spl}, \ref{ss:B1-spl},
we shall suppress the subscript $v$ from the notation.
Thus $F = F_v$ will be a local field of characteristic zero.

\subsection{The case $u \in (F_v^{\times})^2$ or $J \in (F_v^{\times})^2$}
\label{ss:B-spl}
First we explicate Morita theory.
Fix an isomorphism
\[
 \ii : B \longrightarrow \M_2(F)
\]
of $F$-algebras such that 
\[
 \ii(\ba^*) = \ii(\ba)^*
\]
for $\ba \in B$.
Put 
\[
 e := \ii^{-1} \begin{pmatrix} 1 & 0 \\ 0 & 0 \end{pmatrix}, \qquad
 e' := \ii^{-1} \begin{pmatrix} 0 & 1 \\ 0 & 0 \end{pmatrix}, \qquad
 e'' := \ii^{-1} \begin{pmatrix} 0 & 0 \\ 1 & 0 \end{pmatrix}.
\]
Then we have
\[
 e^* = \ii^{-1} \begin{pmatrix} 0 & 0 \\ 0 & 1 \end{pmatrix}
\]
and 
\begin{align*}
 e^2 & = e, & e e' & = e', & e e'' & = 0, & e e^* & = 0, \\
 e' e & = 0, & (e')^2 & = 0, & e' e'' & = e, & e' e^* & = e', \\
 e'' e & = e'', & e'' e' & = e^*, & (e'')^2 & = 0, & e'' e^* & = 0, \\
 e^* e & = 0, & e^* e' & = 0, & e^* e'' & = e'', & (e^*)^2 & = e^*.
\end{align*}
Thus we obtain
\[
 B = F e + F e' + F e'' + F e^*, \qquad e B = F e + F e', \qquad
 B e = F e + F e'', \qquad e B e = F e
\]
and
\[
 \begin{bmatrix} e \cdot \ba \\ e' \cdot \ba \end{bmatrix}
 = \ii(\ba) \cdot \begin{bmatrix} e \\ e' \end{bmatrix}
\]
for $\ba \in B$.

Now we consider an $F$-space $W^\dagger := e W$.
Since $e B e^* = F e'$ and $(e')^* = -e'$, we have
\[
 \langle x, y \rangle \in F e', \qquad
 \langle y, x \rangle = - \langle x, y \rangle
\]
for $x, y \in W^\dagger$.
Hence we can define a symplectic form
\[
 \langle \cdot, \cdot \rangle^{\dagger} : W^\dagger \times W^\dagger \longrightarrow F
\]
by
\[
 \langle x, y \rangle^* = \langle x, y \rangle^{\dagger} \cdot e'
\]
for $x, y \in W^\dagger$.
Moreover, we see that $\langle \cdot, \cdot \rangle^{\dagger}$ is non-degenerate.

We have $W^\dagger = F e + F e'$ and
\[
 \langle e, e \rangle^{\dagger} = \langle e', e' \rangle^{\dagger} = 0, \qquad
 \langle e, e' \rangle^{\dagger} = 1.
\]
Thus we may identify $W^\dagger$ with the space of row vectors $F^2$ so that
\[
 \langle x, y \rangle^{\dagger} = x_1 y_2 - x_2 y_1
\]
for $x = (x_1, x_2)$, $y = (y_1, y_2) \in W^\dagger$.

\begin{lem}

\label{lem:morita-w}
The restriction to $W^\dagger$ induces an isomorphism
\[
 \GU(W) \overset{\cong}\longrightarrow \GSp(W^\dagger).
\]
\end{lem}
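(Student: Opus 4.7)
The plan is to verify three things in sequence: that $\GU(W)$ preserves $W^\dagger$, that the restricted action preserves the symplectic form up to a similitude factor, and that the resulting homomorphism is a bijection.

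Since $W = B$ and $\langle w, w' \rangle = w(w')^*$, the group $\GU(W)$ is identified with $B^\times$ acting on $W$ by right multiplication, with similitude character equal to the reduced norm $\nu(g) = gg^*$. The subspace $W^\dagger = eB$ is visibly stable under right multiplication by $B$, so restriction to $W^\dagger$ gives a well-defined map. For $g \in \GU(W)$ and $x, y \in W^\dagger$ one then computes
\[
 \langle xg, yg \rangle^\dagger \cdot e' = \langle xg, yg \rangle^* = (xg(yg)^*)^* = (xgg^*y^*)^* = \nu(g) \cdot (xy^*)^* = \nu(g) \cdot \langle x, y \rangle^\dagger \cdot e',
\]
using that $\nu(g) \in F$ is fixed by the main involution. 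Hence the restriction $\rho \colon \GU(W) \to \GSp(W^\dagger)$ is a well-defined homomorphism of similitude groups.

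For bijectivity, I would use the basis $(e, e')$ of $W^\dagger$, with respect to which the alternating form is the standard one on $F^2$; in dimension two every element of $\GL_2(F)$ is a symplectic similitude with factor equal to its determinant, so $\GSp(W^\dagger) \cong \GL_2(F)$. Writing any $g \in B^\times$ as $g = \alpha e + \beta e' + \gamma e'' + \delta e^*$, so that $\ii(g) = \smat{\alpha}{\beta}{\gamma}{\delta}$, the multiplication table recorded above yields $eg = \alpha e + \beta e'$ and $e'g = \gamma e + \delta e'$. Therefore the matrix representing the right action of $\rho(g)$ in the basis $(e, e')$ is exactly $\ii(g)$, so $\rho$ is identified with the isomorphism $\ii \colon B^\times \xrightarrow{\sim} \GL_2(F)$ and is itself an isomorphism; as a byproduct, the similitude character $\nu$ matches $\det \circ \, \ii$.

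There is no real obstacle. The only subtlety worth flagging is the asymmetry between the left $B$-action (used to cut out $W^\dagger = eW$) and the right $\GU(W)$-action (used to define the restriction), which commute precisely because one is left and the other right, and this is exactly what makes the restriction well-posed.
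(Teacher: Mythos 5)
Your proof is correct, but it takes a different route from the paper. The paper argues abstractly: injectivity follows from the Morita-type identity $B \cdot W^\dagger = BeB \cdot W = W$ (so that a map is determined by its restriction to $W^\dagger$), and surjectivity is shown by writing $W = W^\dagger \oplus e''W$ and explicitly extending a given $h \in \GSp(W^\dagger)$ to $\tilde h \in \GU(W)$ via $\tilde h(x) = e'' h(e'x)$ on the second summand. Your argument instead exploits the low dimension directly: since $W^\dagger$ is $2$-dimensional over $F$, every element of $\GL(W^\dagger) \cong \GL_2(F)$ is already a symplectic similitude, so $\GSp(W^\dagger) = \GL_2(F)$; then the restriction map is read off in the basis $(e,e')$ to be $g \mapsto \ii(g)$ (using $eg = \alpha e + \beta e'$, $e'g = \gamma e + \delta e'$), which is bijective because $\ii$ is an algebra isomorphism. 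The paper's argument is the one that generalizes to $W$ of higher $B$-rank, where $\GSp$ is not all of $\GL$; yours is shorter here and has the pleasant byproduct of making the identification of similitude characters (reduced norm versus $\det \circ \ii$) transparent. Both are valid.
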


\begin{proof}
One can see that the restriction to $W^\dagger$ induces a homomorphism $\GU(W) \rightarrow \GSp(W^\dagger)$.
Since
\[
 B \cdot W^\dagger = B \cdot e W = B e B \cdot W = B \cdot W = W,
\]
this homomorphism is injective.
Let $h \in \GSp(W^\dagger)$.
Since $W = W^\dagger \oplus e'' W$, we can define a map $\tilde{h}: W \rightarrow W$ by
\[
 \tilde{h}(x) = 
 \begin{cases}
  h(x) & \text{if $x \in W^\dagger$,} \\
  e'' h(e'x) & \text{if $x \in e'' W$.}
 \end{cases}
\]
Then one can check that $\tilde{h} \in \GU(W)$.
This yields the lemma.
\end{proof}

Thus we may identify $\GU(W)$ with $\GSp(W^\dagger)$.
Similarly, we consider an $F$-space $V^\dagger := V e$ with a non-degenerate symmetric bilinear form
\[
 \langle \cdot, \cdot \rangle^{\dagger} : V^\dagger \times V^\dagger \longrightarrow F
\]
defined by
\[
 \frac{1}{2} \cdot \langle x, y \rangle = 
 \langle x, y \rangle^{\dagger} \cdot e''
\]
for $x, y \in V^\dagger$.
As in Lemma \ref{lem:morita-w}, the restriction to $V^\dagger$ induces an isomorphism
\[
 \GU(V) \overset{\cong}\longrightarrow \GO(V^\dagger).
\]
Thus we may identify $\GU(V)$ with $\GO(V^\dagger)$.

One can see that the natural map
\[
 V^\dagger \otimes_F W^\dagger \longrightarrow V \otimes_B W
\]
is an isomorphism.
Thus we may identify $\V$ with $V^\dagger \otimes_F W^\dagger$.

\begin{lem}
We have
\[
 \llangle \cdot, \cdot \rrangle = 
 \langle \cdot, \cdot \rangle^{\dagger} \otimes \langle \cdot, \cdot \rangle^{\dagger}.
\]
\end{lem}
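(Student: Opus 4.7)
The plan is to unwind the two defining identities for $\langle \cdot,\cdot \rangle^{\dagger}$ on $V^{\dagger}$ and $W^{\dagger}$ and feed them into the definition of $\llangle \cdot,\cdot \rrangle$, reducing everything to a one-line computation of $\tr_{B/F}(e^{*})$. No deep structure is involved; the work is purely bookkeeping with the idempotents $e, e', e'', e^{*}$.

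First, since $\V = V^{\dagger} \otimes_F W^{\dagger}$ as $F$-spaces and both sides of the claimed identity are $F$-bilinear in each of the four slots, it suffices to check the equation on pure tensors $v_1 \otimes w_1$ and $v_2 \otimes w_2$ with $v_i \in V^{\dagger}$ and $w_i \in W^{\dagger}$. By the definition of the symplectic form on $\V$ coming from the $B$-pairing,
\[
 \llangle v_1 \otimes w_1,\, v_2 \otimes w_2 \rrangle
 = \tfrac{1}{2}\tr_{B/F}\!\bigl( \langle v_1, v_2 \rangle \cdot \langle w_1, w_2 \rangle^{*} \bigr).
\]

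Next I would substitute the two relations
\[
 \langle w_1, w_2 \rangle^{*} = \langle w_1, w_2 \rangle^{\dagger}\cdot e', \qquad
 \tfrac{1}{2}\langle v_1, v_2 \rangle = \langle v_1, v_2 \rangle^{\dagger}\cdot e'',
\]
which hold since $w_i \in W^{\dagger} = eW$ and $v_i \in V^{\dagger} = Ve$. This collapses the right-hand side to
\[
 \llangle v_1 \otimes w_1,\, v_2 \otimes w_2 \rrangle = \langle v_1, v_2 \rangle^{\dagger}\,\langle w_1, w_2 \rangle^{\dagger}\cdot \tr_{B/F}(e'' e').
\]
Finally, the multiplication table recorded just before Lemma~\ref{lem:morita-w} gives $e'' e' = e^{*}$, and $\ii(e^{*}) = \smat{0}{0}{0}{1}$ has reduced trace $1$, so $\tr_{B/F}(e'' e') = 1$. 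This yields the desired identity.

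The only step requiring any care is making sure the factor of $\tfrac{1}{2}$ in the definition of $\llangle \cdot,\cdot \rrangle$ matches the $\tfrac{1}{2}$ built into the normalization of $\langle \cdot,\cdot\rangle^{\dagger}$ on $V^{\dagger}$; these two halves cancel, which is really the content of the normalization choice made when defining $\langle \cdot,\cdot\rangle^{\dagger}$ on $V^{\dagger}$. There is no genuine obstacle—the statement is a formal compatibility verifying that the Morita identifications on the $V$ and $W$ sides were chosen consistently.
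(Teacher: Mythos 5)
Your proof is correct and follows the same route as the paper: reduce to pure tensors, substitute the two defining identities for $\langle \cdot,\cdot\rangle^{\dagger}$ so that $\tfrac{1}{2}\langle v_1,v_2\rangle\cdot\langle w_1,w_2\rangle^{*}$ becomes $\langle v_1,v_2\rangle^{\dagger}\langle w_1,w_2\rangle^{\dagger}\, e''e'$, and finish with $e''e'=e^{*}$ and $\tr_{B/F}(e^{*})=1$. Nothing to add.
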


\begin{proof}
Let $a = \langle x, x' \rangle^{\dagger}$ and $b = \langle y, y' \rangle^{\dagger}$ for $x, x' \in V^\dagger$ and $y, y' \in W^\dagger$.
Then we have
\begin{align*}
 \llangle x \otimes y, x' \otimes y' \rrangle 
 & = \frac{1}{2} \cdot \tr_{B/F}
 (\langle x, x' \rangle \cdot \langle y, y' \rangle^*) \\
 & = \tr_{B/F}(a e'' \cdot b e') \\
 & = ab \cdot \tr_{B/F}(e^*) \\
 & = ab.
\end{align*}
\end{proof}

Thus we obtain a commutative diagram
\[
 \xymatrix{
  \GU(V) \times \GU(W) \ar@{->}[r] \ar@{->}[d] &
  \GSp(\V) \ar@{=}[d] \\
  \GO(V^\dagger) \times \GSp(W^\dagger) \ar@{->}[r] & \GSp(\V)
 }.
\]

Let $W^\dagger = X + Y$ be a complete polarization given by
\[
 X = F e, \qquad Y = F e'.
\]
Put
\[
 \X' = V^\dagger \otimes_F X, \qquad \Y' = V^\dagger \otimes_F Y.
\]
Then we have a complete polarization $\V = \X' + \Y'$.
Put
\[
 s'(h) := \gamma^{j(h)}
\]
for $h \in \GSp(W^\dagger)$, where
\[
 \gamma = 
 \begin{cases}
  1 & \text{if $B_1$ and $B_2$ are split,} \\
  -1 & \text{if $B_1$ and $B_2$ are ramified,}
 \end{cases}
\]
and 
\[
 j(h) = 
 \begin{cases}
  0 & \text{if $\ii(h) = \left( \begin{smallmatrix} * & * \\ 0 & * \end{smallmatrix} \right)$,} \\
  1 & \text{otherwise.}
 \end{cases}
\]

\begin{lem}
\label{lem:spl-GSp}
We have
\[
 z_{\Y'}(h, h') = s'(h h') \cdot s'(h)^{-1} \cdot s'(h')^{-1}
\]
for $h, h' \in \GSp(W^\dagger)$.
\end{lem}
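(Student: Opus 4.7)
The plan is to express the pullback of the Weil $2$-cocycle $z_{\Y'}^{\GSp(\V)}$ along the tensor embedding
$\GSp(W^{\dagger}) \hookrightarrow \GSp(\V)$, $h \mapsto \id_{V^{\dagger}} \otimes h$, purely in terms of invariants of the four-dimensional quadratic $F$-space $V^{\dagger}$, and then identify the resulting scalar with the coboundary of $s'$. The same argument applies verbatim in both sub-cases of the hypothesis ($u \in (F^{\times})^2$ vs.\ $J \in (F^{\times})^2$) since only the quadratic invariants of $V^{\dagger}$ enter.

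First I would reduce from $\GSp(W^{\dagger})$ to $\Sp(W^{\dagger}) = \SL_2(F)$. By Lemma \ref{lem:v_Y}, the discrepancy $v_{\Y'}(\cdot,\nu)$ between the $\Sp$- and $\GSp$-cocycles on $\GSp(\V)$ involves $x(\cdot) \bmod (F^{\times})^2$ and the Bruhat exponent $j(\cdot)$ of the ambient group $\Sp(\V)$; under the tensor embedding these pull back to $x_{W^{\dagger}}(h)^{\dim V^{\dagger}} = 1$ and $\dim V^{\dagger} \cdot j(h) = 4\, j(h)$, so $v_{\Y'}$ is trivial on the image. Since $s'(d(\nu)) = 1$, the claim on $\GSp(W^{\dagger})$ therefore reduces to the analogous identity on $\Sp(W^{\dagger})$. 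Next, since $z_{\Y'}(p, h) = z_{\Y'}(h, p) = 1$ for $p \in P$ and $s'|_{P} \equiv 1$, the Bruhat decomposition $\SL_2(F) = P \cup PwP$ reduces us to checking the identity on pairs $(h, h')$ with $h, h' \in PwP$.

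The main geometric input is the identification of the Leray invariant under the tensor product. Because $\Y' = V^{\dagger} \otimes_F Y$ with $Y$ a Lagrangian of $W^{\dagger}$, and because $\llangle \cdot, \cdot \rrangle = \langle \cdot,\cdot \rangle^{\dagger} \otimes \langle \cdot,\cdot \rangle^{\dagger}$ on $\V = V^{\dagger} \otimes_F W^{\dagger}$, one has a canonical isometry
\[
 q_{\V}\!\bigl(\Y',\, \Y' (\id \otimes h')^{-1},\, \Y'(\id\otimes h)\bigr) \;\cong\; V^{\dagger} \otimes_F q_{W^{\dagger}}(Y, Y h'^{-1}, Y h)
\]
of quadratic $F$-spaces whenever the one-dimensional Leray invariant on the right is nondegenerate. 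Substituting into the definition of $z_{\Y'}$ and applying \eqref{eqn:weil_index} to the four-dimensional tensor product, the pulled-back cocycle factors as
\[
 z_{\Y'}(h, h') = \gamma_F(\tfrac{1}{2}\psi)^{4} \cdot \gamma_F(\det V^{\dagger}, \tfrac{1}{2}\psi) \cdot h_F(V^{\dagger}) \cdot \bigl(\text{index depending only on the $W^{\dagger}$-data}\bigr),
\]
with the last factor matching $z_Y^{\Sp(W^{\dagger})}$ along the embedding. The explicit descriptions of $V^{\dagger}$ recalled in \S \ref{ssec:complete-pol} show that $V^{\dagger}$ is $\M_2(F)$ with the $\det$-form when $B_1, B_2$ are split, and the quaternion algebra $B_1$ with (a scalar multiple of) the reduced norm when $B_1, B_2$ are ramified. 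In both subcases $\det V^{\dagger} \in (F^{\times})^2$, so $\gamma_F(\det V^{\dagger}, \tfrac{1}{2}\psi) = 1$; the quartic Weil-index prefactor simplifies via $\gamma_F(\tfrac{1}{2}\psi)^4 = \gamma_F(-1,\tfrac{1}{2}\psi)^{-2} = 1$; and the Hasse invariant reduces to $h_F(V^{\dagger}) = \gamma$. Combined with $j(hh') \equiv j(h) + j(h') \pmod 2$ on the open Bruhat cell, this is exactly $s'(hh')\, s'(h)^{-1}\, s'(h')^{-1}$; the boundary cases where $hh' \in P$ while $h, h' \in PwP$ are accounted for by $\gamma^2 = 1$.

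The main obstacle will be twofold: (i) establishing the displayed isometry of Leray invariants as an isomorphism of genuine quadratic $F$-spaces (not merely an equality of Weil indices), which requires unpacking Rangarao's construction \cite[\S 2]{rangarao} in the presence of the tensor structure and tracking the quotient $\R^{\perp}/\R$ when $Y$, $Y h^{-1}$, $Y h'$ are not pairwise transverse; and (ii) computing $h_F(V^{\dagger})$ uniformly in the two sub-cases, which reduces to the standard fact that the norm form of a quaternion algebra has trivial (resp.\ nontrivial) Hasse invariant precisely when the algebra is split (resp.\ ramified). Once these are in hand, the cocycle identity is immediate from the coboundary computation above.
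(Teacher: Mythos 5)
Your reduction from $\GSp(W^\dagger)$ to $\Sp(W^\dagger)$ via Lemma \ref{lem:v_Y} is essentially identical to what the paper does: both compute $x_{\Y'}(h') \equiv 1$ and $j_{\Y'}(h') = 4\,j(h')$ under the tensor embedding, so $v_{\Y'}(\cdot,\nu) \equiv 1$. Where you diverge is in the $\Sp$-core: the paper simply cites \cite[Theorem~3.1, case $1_+$]{kudla-splitting} (together with \cite[Prop.~2.1]{roberts} for the similitude extension), whereas you propose to re-derive that splitting from scratch via Rao's Leray-invariant formalism and the factorization formula \eqref{eqn:weil_index}. That is a legitimate, more self-contained route, and correctly identifies that the whole content of the splitting constant is the Weil index of the $4$-dimensional form $V^\dagger$ scaled by the rank-one Leray form of $W^\dagger$ (with $\det V^\dagger = 1$ killing all cross-terms) — but you should be aware it is precisely what Kudla's theorem packages.

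Two concrete corrections to the plan. First, $\gamma_F(\tfrac12\psi)^4 = \gamma_F(-1,\tfrac12\psi)^{-2}$ is correct, but this does \emph{not} equal $1$: from the identity $\gamma_F(a,\psi)^2 = (-1,a)_F$ you get $\gamma_F(\tfrac12\psi)^4 = (-1,-1)_F$, which is nontrivial over $\R$ and over dyadic fields. Second, with the paper's normalization $h_F(q) = \prod_{i<j}(a_i,a_j)_F$, the Hasse invariant of $V^\dagger$ is \emph{not} $\gamma$: the hyperbolic form $\tr(xy^*)$ on $\M_2(F)$ has $h_F = (-1,-1)_F$, and the (rescaled) reduced-norm form on a ramified quaternion algebra has $h_F = -(-1,-1)_F$, so in both cases $h_F(V^\dagger) = (-1,-1)_F \cdot \gamma$. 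The two factors of $(-1,-1)_F$ you dropped cancel, so your final product $\gamma_F(\tfrac12\psi)^4 \cdot h_F(V^\dagger) = \gamma$ is right, but as written both intermediate assertions are false. Finally, the tensor-product factorization of the Leray invariant — which you flag as obstacle (i) — is not a side issue but the entire technical content that the citation to Kudla sidesteps: it genuinely requires tracking the radical quotient $\R^\perp/\R$ and showing that for a Lagrangian of the form $V^\dagger \otimes Y$ the reduction is compatible with the tensor structure. Until that isometry is established (not merely asserted), the plan does not constitute a proof.
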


\begin{proof}
The lemma follows from \cite[Theorem 3.1, case $1_+$]{kudla-splitting} and \cite[Proposition 2.1]{roberts}.
We shall give a proof for the sake of completeness.

Recall that $\dim_F V^\dagger = 4$ and $\det V^\dagger = 1$.
By \cite[Theorem 3.1, case $1_+$]{kudla-splitting}, we have
\begin{equation}
\label{eq:kudla-spl-1}
 z_{\Y'}(h, h') = s'(h h') \cdot s'(h)^{-1} \cdot s'(h')^{-1} 
\end{equation}
for $h, h' \in \Sp(W^\dagger)$.

Let $g, g' \in \GSp(W^\dagger)$.
For $\nu \in F^{\times}$, put
\[
 d(\nu) = \begin{pmatrix} 1 & \\ & \nu \end{pmatrix} \in \GSp(W^\dagger).
\]
We write
\[
 g = h \cdot d(\nu), \qquad g' = h' \cdot d(\nu')
\]
with $h, h' \in \Sp(W^\dagger)$ and $\nu, \nu' \in F^{\times}$.
Then we have
\[
 z_{\Y'}(g, g') = z_{\Y'}(h, h'') \cdot v_{\Y'}(h', \nu),
\]
where 
\[
 h'' = d(\nu) \cdot h' \cdot d(\nu)^{-1}.
\]
By \eqref{eq:kudla-spl-1}, we have
\[
 z_{\Y'}(h, h'') = s'(h h'') \cdot s'(h)^{-1} \cdot s'(h'')^{-1}.
\]
We have $s'(h) = s'(g)$, and since $j(h'') = j(h')$, we have $s'(h'') = s'(h') = s'(g')$.
Moreover, since $g g' = h h'' \cdot d(\nu \nu')$, we have $s'(h h'') = s'(g g')$.
Thus we obtain
\[
 z_{\Y'}(h, h'') = s'(g g') \cdot s'(g)^{-1} \cdot s'(g')^{-1}.
\]
By Lemma \ref{lem:v_Y}, we have
\[
 v_{\Y'}(h', \nu) =
 (x_{\Y'}(h'), \nu)_F \cdot \gamma_F(\nu, \frac{1}{2} \psi)^{-j_{\Y'}(h')},
\]
where $x_{\Y'}$ and $j_{\Y'}$ are as in \S \ref{ss:weil-GMp-setup}
with respect to the complete polarization $\V = \X' + \Y'$.
Since $\dim_F V^\dagger = 4$ and $\det V^\dagger = 1$, one can see that
$x_{\Y'}(h') \equiv 1 \bmod (F^{\times})^2$ and $j_{\Y'}(h') = 4 \cdot j(h')$.
Hence we have
\[
 v_{\Y'}(h', \nu) = 1.
\]
This completes the proof.
\end{proof}

\begin{lem}
\label{lem:spl-GSO}
We have
\[
 z_{\Y'}(g, g') = 1
\]
for $g, g' \in \GO(V^\dagger)^0$.
\end{lem}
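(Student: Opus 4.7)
\textbf{Proof proposal for Lemma \ref{lem:spl-GSO}.} The plan is to exploit the fact that $\GO(V^{\dagger})^{0} = \GSO(V^{\dagger})$ acts on $\V = V^{\dagger} \otimes_F W^{\dagger}$ only through its first tensor factor, so it preserves the complete polarization $\V = \X' \oplus \Y'$ with $\X' = V^{\dagger} \otimes X$, $\Y' = V^{\dagger} \otimes Y$. Consequently any $g \in \GSO(V^{\dagger})$, viewed inside $\GSp(\V)$, lies in the Siegel parabolic stabilizing $\Y'$. The whole computation then reduces to the structure results for the $2$-cocycle $z_{\Y'}$ and the function $v_{\Y'}$ established in Appendix \ref{sec:weil-gsp}.

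Concretely, given $g, g' \in \GSO(V^{\dagger})$ with similitude factors $\nu = \nu(g)$ and $\nu' = \nu(g')$, I would write them in the form $g = (h, \nu)_{\Y'}$ and $g' = (h', \nu')_{\Y'}$ as in \S \ref{ss:weil-gsp-metaplectic}, where $h = g \cdot d_{\Y'}(\nu)^{-1}$ and $h' = g' \cdot d_{\Y'}(\nu')^{-1}$ lie in $\Sp(\V)$. Since $d_{\Y'}(\nu)$ and $d_{\Y'}(\nu')$ preserve $\Y'$, the elements $h$ and $h'$ stabilize $\Y'$, i.e.\ they belong to the Siegel parabolic $P_{\Y'} \subset \Sp(\V)$; the same is therefore true of $\alpha_{\nu}(h') = d_{\Y'}(\nu) h' d_{\Y'}(\nu)^{-1}$. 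From the relation stated at the end of \S \ref{ss:weil-gsp-metaplectic},
\[
 z_{\Y'}(g, g') = z_{\Y'}(h, \alpha_{\nu}(h')) \cdot v_{\Y'}(h', \nu),
\]
and from the formula $z_{\Y}(p_1 h_1 p, p^{-1} h_2 p_2) = z_{\Y}(h_1, h_2)$ (specialized to $h_i = 1$) recalled in \S \ref{ss:weil-GMp-setup}, we obtain $z_{\Y'}(h, \alpha_{\nu}(h')) = 1$ since both arguments lie in $P_{\Y'}$.

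It remains to show $v_{\Y'}(h', \nu) = 1$. By Lemma \ref{lem:v_Y},
\[
 v_{\Y'}(h', \nu) = (x_{\Y'}(h'), \nu)_F \cdot \gamma_F(\nu, \tfrac{1}{2}\psi)^{-j_{\Y'}(h')}.
\]
As $h' \in P_{\Y'}$, we are in the open (length $0$) Bruhat cell, so $j_{\Y'}(h') = 0$. For the Hilbert symbol factor, with respect to the basis of $\X'$ and $\Y'$ adapted to the decomposition $\V = \X' \oplus \Y'$, the matrix of $h'$ is block diagonal $\mathrm{diag}(a, {}^t a^{-1})$ with $a$ the matrix of $g'|_{V^{\dagger}}$ on the factor $V^{\dagger}$; the condition that $h' \in \Sp(\V)$ reduces precisely to the similitude relation $g' {}^{t} g' = \nu' \cdot \mathbf{1}$. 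Thus $x_{\Y'}(h') \equiv \det(g') \bmod (F^{\times})^2$. The key observation is that since $\dim V^{\dagger} = 4$ and $g' \in \GO(V^{\dagger})^{0}$, we have $\det(g') = \nu(g')^{2} = \nu'^{2}$, so $x_{\Y'}(h') \equiv 1 \bmod (F^{\times})^2$ and $(x_{\Y'}(h'), \nu)_F = 1$. This gives $v_{\Y'}(h', \nu) = 1$ and hence $z_{\Y'}(g, g') = 1$.

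The main (but very mild) obstacle is bookkeeping: identifying $x_{\Y'}$ with $\det$ modulo squares via the block-form computation, and verifying that the $\GSO$-determinant-versus-similitude identity $\det = \nu^{2}$ holds (which relies on $\dim V^{\dagger} = 4$ being even and on working inside the identity component rather than all of $\GO$). No genuinely new local computation is required beyond the general machinery of Appendix \ref{sec:weil-gsp}.
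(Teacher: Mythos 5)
Your proof is correct and follows essentially the same route as the paper's: reduce $h', h$ to the Siegel parabolic $P_{\Y'}$, use the $P_{\Y'}$-biinvariance to kill the cocycle factor, and invoke Lemma \ref{lem:v_Y} together with $x_{\Y'}(h') \equiv \det g' \equiv \nu'^2 \equiv 1 \bmod (F^\times)^2$ to kill $v_{\Y'}(h',\nu)$. The only difference is that you spell out the intermediate steps (the $j_{\Y'}(h')=0$ observation and the identity $\det g' = \nu(g')^2$ for $\GSO$ in dimension $4$) that the paper leaves implicit.
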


\begin{proof}
For $g, g' \in \GO(V^\dagger)^0$, we have
\[
 z_{\Y'}(g, g') = z_{\Y'}(h, h'') \cdot v_{\Y'}(h', \nu),
\]
where 
\begin{align*}
 h & = g \cdot d_{\Y'}(\nu)^{-1}, & h' & = g' \cdot d_{\Y'}(\nu')^{-1}, &
 h'' & = d_{\Y'}(\nu) \cdot h' \cdot d_{\Y'}(\nu)^{-1}, \\
 \nu & = \nu(g), & \nu' & = \nu(g'). & &
\end{align*}
We have $h, h' \in P_{\Y'}$ and $z_{\Y'}(h, h'') = 1$.
Since $g' \in \GO(V^\dagger)^0$, we have
\[
 x_{\Y'}(h') \equiv \det g' \equiv 1 \bmod (F^{\times})^2,
\]
so that $v_{\Y'}(h', \nu) = 1$ by Lemma \ref{lem:v_Y}.
This completes the proof.
\end{proof}

\begin{lem}
\label{lem:GSp-GSO-commute}
We have
\[
 z_{\Y'}(g, h) = z_{\Y'}(h, g) = 1
\]
for $g \in \GO(V^\dagger)^0$ and $h \in \GSp(W^\dagger)$.
\end{lem}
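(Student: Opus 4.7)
The plan is to mimic the arguments of Lemmas \ref{lem:spl-GSp} and \ref{lem:spl-GSO}: reduce the computation of each $2$-cocycle value to its restriction on $\Sp(\V)$ together with a correction factor given by $v_{\Y'}$, and then show that both terms vanish separately.

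First I would record the key structural fact: since $g \in \GO(V^\dagger)^0$ acts on the $V^\dagger$-factor of $\V = V^\dagger \otimes_F W^\dagger$ and trivially on $W^\dagger$, it stabilises $\Y' = V^\dagger \otimes Y$; hence its $\Sp$-part $\tilde g := g \cdot d_{\Y'}(\nu_g)^{-1}$ (where $\nu_g := \nu(g)$) belongs to the Siegel parabolic $P_{\Y'}$. In particular $j_{\Y'}(\tilde g) = 0$. Moreover, on $\X' = V^\dagger \otimes X$ the element $\tilde g$ acts as $g|_{V^\dagger} \otimes \id_X$, so $x_{\Y'}(\tilde g) \equiv \det(g|_{V^\dagger}) \bmod (F^\times)^2$; since $g \in \GO(V^\dagger)^0$ with $\dim V^\dagger = 4$, we have $\det(g|_{V^\dagger}) = \nu_g^2 \equiv 1 \bmod (F^\times)^2$. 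Combined with Lemma \ref{lem:v_Y}, this yields $v_{\Y'}(\tilde g, \nu) = 1$ for every $\nu \in F^\times$.

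To compute $z_{\Y'}(g,h)$, write $g = \tilde g \cdot d_{\Y'}(\nu_g)$ and $h = \tilde h \cdot d_{\Y'}(\nu_h)$ with $\tilde g, \tilde h \in \Sp(\V)$. The formula for $z_{\Y'}$ on $\GSp(\V)$ from Appendix \ref{sec:weil-gsp} gives
\[
 z_{\Y'}(g,h) = z_{\Y'}(\tilde g, \tilde h'') \cdot v_{\Y'}(\tilde h, \nu_g),
 \qquad \tilde h'' := d_{\Y'}(\nu_g) \tilde h d_{\Y'}(\nu_g)^{-1}.
\]
The first factor equals $1$ because $\tilde g \in P_{\Y'}$, and the second factor equals $1$ by the computation already carried out in the proof of Lemma \ref{lem:spl-GSp}, namely $x_{\Y'}(\tilde h) \equiv 1 \bmod (F^\times)^2$ and $j_{\Y'}(\tilde h) = 4\,j(\tilde h)$, coupled with $\gamma_F(\cdot,\tfrac12\psi)^4 = 1$.

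For $z_{\Y'}(h, g)$ the same formula yields
\[
 z_{\Y'}(h,g) = z_{\Y'}(\tilde h, \tilde g'') \cdot v_{\Y'}(\tilde g, \nu_h),
 \qquad \tilde g'' := d_{\Y'}(\nu_h) \tilde g d_{\Y'}(\nu_h)^{-1}.
\]
Since $P_{\Y'}$ is normalised by $d_{\Y'}(\nu_h)$, we still have $\tilde g'' \in P_{\Y'}$, so the first factor vanishes. The second factor is $1$ by the structural observation in the first paragraph. Assembling, both $z_{\Y'}(g,h)$ and $z_{\Y'}(h,g)$ equal $1$. The argument is essentially bookkeeping; the only point requiring care is the verification that $x_{\Y'}(\tilde g)$ is a square, which relies crucially on the hypothesis $g \in \GO(V^\dagger)^0$ together with $\dim V^\dagger = 4$.
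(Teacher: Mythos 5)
Your proof is correct and is essentially identical to the one in the paper: both write each of $g$ and $h$ as an $\Sp(\V)$-element times $d_{\Y'}(\nu)$, invoke the $\GSp$-cocycle formula from Appendix~\ref{sec:weil-gsp}, use $\tilde g\in P_{\Y'}$ to kill the $z_{\Y'}^{\Sp}$-factor, and check $v_{\Y'}(\tilde h,\nu_g)=1$ and $v_{\Y'}(\tilde g,\nu_h)=1$ via Lemma~\ref{lem:v_Y} exactly as in the proofs of Lemmas~\ref{lem:spl-GSp} and~\ref{lem:spl-GSO}.
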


\begin{proof}
See \cite[Proposition 2.2.A]{barthel}.
We shall give a proof for the sake of completeness.

For $g \in \GO(V^\dagger)^0$ and $h \in \GSp(W^\dagger)$, we have
\[
 z_{\Y'}(g, h) = z_{\Y'}(g', h'') \cdot v_{\Y'}(h', \nu), \qquad
 z_{\Y'}(h, g) = z_{\Y'}(h', g'') \cdot v_{\Y'}(g', \nu'),
\]
where
\begin{align*}
 g' & = g \cdot d_{\Y'}(\nu)^{-1}, & g'' &= d_{\Y'}(\nu') \cdot g' \cdot d_{\Y'}(\nu')^{-1}, & \nu & = \nu(g), \\
 h' & = h \cdot d(\nu')^{-1}, & h'' & = d(\nu) \cdot h' \cdot d(\nu)^{-1}, &  \nu' & = \nu(h).
\end{align*}
Since $g', g'' \in P_{\Y'}$, we have $z_{\Y'}(g', h'') = z_{\Y'}(h', g'') = 1$.
As in the proof of Lemma \ref{lem:spl-GSp}, we have $v_{\Y'}(h', \nu) = 1$.
As in the proof of Lemma \ref{lem:spl-GSO}, we have $v_{\Y'}(g', \nu') = 1$.
This completes the proof.
\end{proof}

We define a map $s': \GO(V^\dagger)^0 \times \GSp(W^\dagger) \rightarrow \C^1$ by
\[
 s'(\g) = \gamma^{j(h)}
\]
for $\g = (g, h) \in \GO(V^\dagger)^0 \times \GSp(W^\dagger)$.
By Lemmas \ref{lem:spl-GSp}, \ref{lem:spl-GSO}, \ref{lem:GSp-GSO-commute}, we see that 
\[
 z_{\Y'}(\g, \g') = s'(\g \g') \cdot s'(\g)^{-1} \cdot s'(\g')^{-1}
\]
for $\g, \g' \in \GO(V^\dagger)^0 \times \GSp(W^\dagger)$.

Recall that we have two complete polarizations $\V = \X + \Y = \X' + \Y'$, where
\begin{align*}
 \X & = F \e_1 + F \e_2  + F \e_3  + F \e_4, &
 \Y & = F \e_1^* + F \e_2^*  + F \e_3^*  + F \e_4^*, \\
 \X' & = F \e_1 e + F \e_1 e'' + F \e_2 e  + F \e_2 e'', &
 \Y' & = F \e_1 e' + F \e_1 e^*  + F \e_2 e'  + F \e_2 e^*.
\end{align*}
Fix $\h_0 \in \Sp(\V)$ such that
$\X' = \X \h_0$ and $\Y' = \Y \h_0$, and put
\[
 s(\g) := s'(\g) \cdot \mu(\g),
\]
where
\[
 \mu(\g) := z_{\Y}(\h_0 \g \h_0^{-1}, \h_0) \cdot z_{\Y}(\h_0, \g)^{-1}
\]
for $\g \in \GU(V)^0 \times \GU(W)$.
Then we have
\[
 z_{\Y}(\g, \g') = s(\g \g') \cdot s(\g)^{-1} \cdot s(\g')^{-1}
\]
for $\g, \g' \in \GU(V)^0 \times \GU(W)$.

\subsubsection{The case $u \in (F_v^{\times})^2$}
Choose $t \in F^{\times}$ such that $u = t^2$.
We take an isomorphism $\ii : B \rightarrow \M_2(F)$ determined by
\[
 \ii(1) = \begin{pmatrix} 1 & \\ & 1 \end{pmatrix}, \qquad
 \ii(\i) = \begin{pmatrix} t & \\ & -t \end{pmatrix}, \qquad
 \ii(\j) = \begin{pmatrix} & 1 \\ J & \end{pmatrix}, \qquad
 \ii(\i \j) = \begin{pmatrix} & t \\ - tJ & \end{pmatrix}.
\]
Then we have
\[
 e = \frac{1}{2} + \frac{1}{2t} \i, \qquad  
 e' = \frac{1}{2} \j + \frac{1}{2t} \i \j, \qquad 
 e'' = \frac{1}{2J} \j - \frac{1}{2tJ} \i \j, \qquad 
 e^* = \frac{1}{2} - \frac{1}{2t} \i.
\]
Put 
\[
 \h_0 =
 \begin{pmatrix}
  - \frac{1}{2t} &  &  &  & - \frac{1}{2} &  &  & \\
  & \frac{1}{2 t J_1} & &  &  & - \frac{1}{2} &  &  \\
  &  & \frac{1}{2 t J_2} &  &  &  & -\frac{1}{2} &  \\
  &  &  & - \frac{1}{2 t J} &  &  &  & - \frac{1}{2} \\
  1 &  &  &  & -t &  &  & \\
  & 1 &  &  &  & t J_1 & &  \\
  &  & 1 &  &  &  & t J_2 & \\
  &  &  & 1 &  &  &  & -tJ
 \end{pmatrix} \in \Sp(\V).
\]
Then we have
\[
\begin{bmatrix}
 - \frac{1}{t} \e_1 e \\
 \frac{1}{t J_1} \e_2 e \\
 \frac{1}{t} \e_2 e'' \\
 - \frac{1}{t} \e_1 e'' \\
 2 \e_1 e^* \\
 2 \e_2 e^* \\
 \frac{2}{J_1} \e_2 e' \\
 2 \e_1 e'
\end{bmatrix}
= \h_0
\begin{bmatrix}
 \e_1 \\
 \e_2 \\
 \e_3 \\
 \e_4 \\
 \e_1^* \\
 \e_2^* \\
 \e_3^* \\
 \e_4^* 
\end{bmatrix}, 
\]
and hence $\X' = \X \h_0$ and $\Y' = \Y \h_0$.

\begin{lem}
\label{lem:mu-GU(V)-u}
Let $\g_i := \ba_i^{-1} \in \GU(V)^0$ with $\ba_i = a_i + b_i \i + c_i \j_i + d_i \i \j_i \in B_i^{\times}$.
Then we have
\[
 \mu(\g_i) =
 \begin{cases}
  1 & \text{if $b_i = d_i = 0$,} \\
 \gamma_F(J_j, \frac{1}{2} \psi)
 \cdot ((a_i b_i + c_i d_i J_i) \nu_i J_i, J_j)_F
  & \text{if $(b_i, d_i) \ne (0,0)$ and $b_i^2 - d_i^2 J_i = 0$,} \\
 (-(b_i^2 - d_i^2 J_i) \nu_i J_i, J_j)_F
 & \text{if $(b_i, d_i) \ne (0,0)$ and $b_i^2 - d_i^2 J_i \ne 0$,}
 \end{cases}
\]
where $\nu_i = \nu(\ba_i)$ and $\{ i, j \} = \{ 1, 2 \}$.
\end{lem}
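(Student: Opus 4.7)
The plan is to compute $\mu(\g_i)$ directly from its definition
\[
\mu(\g_i) = z_\Y(\h_0 \g_i \h_0^{-1}, \h_0) \cdot z_\Y(\h_0, \g_i)^{-1},
\]
by evaluating the Leray $2$-cocycle $z_\Y(h_1, h_2) = \gamma_F(\tfrac{1}{2}\psi \circ q(\Y, \Y h_2^{-1}, \Y h_1))$ on each pair. Since $\g_i \in \GU(V)^0$ and $s'$ depends only on the $\GSp(W^\dagger)$-component, one has $s'(\g_i) = 1$, so that $\mu(\g_i)$ equals $s(\g_i)$; thus computing $\mu(\g_i)$ amounts to writing down the canonical splitting of $\Mp(\V)$ over the image of $B_i^\times$ in $\Sp(\V)$.

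The first step is to write $\g_i = \ba_i^{-1}$ as an explicit $8 \times 8$ matrix in the basis $\e_1, \dots, \e_4, \e_1^*, \dots, \e_4^*$. The matrix of $\ba_i$ acting on $V$ is already tabulated in \S \ref{ss:spl-setup}, and inversion via $\ba_i^{-1} = \nu_i^{-1} \ba_i^*$ is immediate. The element $\h_0$ has a simple block structure, acting by an explicit $2 \times 2$ matrix on each pair $(\e_k, \e_k^*)$, so $\h_0 \g_i \h_0^{-1}$ can be computed blockwise and its Bruhat position with respect to the Siegel parabolic $P_\Y$ read off directly.

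Next, I would describe the Lagrangian triples $(\Y, \Y\h_0^{-1}, \Y\h_0\g_i\h_0^{-1})$ and $(\Y, \Y\g_i^{-1}, \Y\h_0)$ by explicit spanning sets, determine the radical $R$ in each, and compute the induced symmetric bilinear form $q$ on $R^\perp/R$. Writing $q$ in a normal-form basis, formula \eqref{eqn:weil_index} expresses $\gamma_F(\tfrac{1}{2}\psi \circ q)$ in terms of its rank $m$, discriminant $\det q$, and Hasse invariant $h_F(q)$. The case distinction in the statement reflects the rank of a $2 \times 2$ block of $\g_i$ (in a suitable basis) depending on $(b_i, d_i)$: the block has rank zero exactly when $b_i = d_i = 0$, rank one exactly when $(b_i, d_i) \neq (0,0)$ with $b_i^2 - d_i^2 J_i = 0$, and rank two otherwise. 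In the rank-zero case both $z_\Y$-factors collapse (using $z_\Y(h, p) = z_\Y(p, h) = 1$ for $p \in P_\Y$, since then $\g_i \in P_\Y$), giving $\mu(\g_i) = 1$ immediately.

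The main obstacle is the bookkeeping in the rank-one and rank-two cases. After the cancellation coming from the product $z_\Y(\cdot) \cdot z_\Y(\cdot)^{-1}$, what survives is a Weil index of a form whose discriminant and Hasse invariant must be tracked modulo $(F^\times)^2$. In the rank-one case the surviving discriminant works out to $(a_i b_i + c_i d_i J_i) \nu_i J_i J_j$ modulo $(F^\times)^2$; using $b_i^2 = d_i^2 J_i$ to rewrite this as a product and applying $\gamma_F(ab, \psi) = \gamma_F(a,\psi)\gamma_F(b,\psi)(a,b)_F$ from \S \ref{subsec:theta-prelim} produces the factor $\gamma_F(J_j, \tfrac{1}{2}\psi)$ together with a Hilbert symbol correction. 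In the rank-two generic case, two Weil-index factors combine via the same multiplicativity identity together with $\gamma_F(\psi)^2 = \gamma_F(-1,\psi)^{-1}$, collapsing into the single Hilbert symbol $(-(b_i^2 - d_i^2 J_i)\nu_i J_i, J_j)_F$. The appearance of $J_j$ rather than $J_i$ in the final formula traces back to how $\h_0$ mixes the two factors of the tensor product $V = B_1 \otimes_E B_2$: the change-of-polarization element $\h_0$ is built from Morita idempotents for $B = B_1 \cdot B_2$, so conjugation by $\h_0$ transports information about $B_i$ into the $B_j$-side of the calculation.
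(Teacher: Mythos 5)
Your overall strategy matches the paper's: compute $\mu(\g_i) = z_{\Y}(\h_0\g_i\h_0^{-1},\h_0)\cdot z_{\Y}(\h_0,\g_i)^{-1}$ directly by putting everything into Bruhat position and applying the Weil-index formula \eqref{eqn:weil_index}, with the case distinction governed by the rank of a $2\times 2$ block. The intuition at the end (why $J_j$ rather than $J_i$ appears, via $J=J_iJ_j$) is also sound. But there are two concrete gaps.

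First, the Leray formula $z_\Y(h_1,h_2)=\gamma_F(\tfrac12\psi\circ q(\Y,\Y h_2^{-1},\Y h_1))$ as you wrote it is the $\Sp(\V)$-cocycle, and $\g_i\in\GU(V)^0$ lies in $\GSp(\V)$ but not in $\Sp(\V)$ whenever $\nu_i\neq 1$. The cocycle used in the definition of $\mu$ is the $\GSp$-cocycle from Appendix~\ref{sec:weil-gsp}, which carries the extra factor $v_\Y(\cdot,\nu)$ from Lemma~\ref{lem:v_Y}. Accordingly, your rank-zero argument ``$\g_i\in P_\Y$ so both factors vanish'' is not quite right: when $b_i=d_i=0$ one has $\g_i\cdot \d^{-1}\in P_\Y$ with $\d=d_\Y(\nu_i)$, and one must still compute $v_\Y(\h_0,\nu_i)$. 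In this particular case ($u\in(F^\times)^2$) the paper verifies $x_\Y(\h_0)\equiv 1\bmod(F^\times)^2$ and $j_\Y(\h_0)=4$, so $v_\Y(\h_0,\nu_i)=1$; but in the companion Lemmas~\ref{lem:mu-GU(V)-J} and~\ref{lem:mu-GU(V)-B1-J1} this term is nontrivial, so it cannot simply be ignored.

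Second, you propose to run the rank analysis on both cocycle factors. The paper exploits a decisive simplification you do not mention: $\g_i\in\GU(V)^0$ preserves the alternate Lagrangian $\Y'=\X\h_0$, so $\h_0\g_i\h_0^{-1}\cdot\d^{-1}\in P_\Y$ \emph{for all} $\g_i$, not just in the rank-zero case. Hence $z_\Y(\h_0\g_i\h_0^{-1},\h_0)=v_\Y(\h_0,\nu_i)$ is a single constant independent of the detailed structure of $\ba_i$, and the entire three-way case analysis (on the rank of the block determined by $(b_i,d_i)$) takes place only in $z_\Y(\h_0,\g_i)=z_\Y(\h_0,\g_i\d^{-1})$. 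Without that observation, the computation doubles and the cancellations you anticipate between the two factors become much harder to track explicitly, especially because the two factors land in different Bruhat cells relative to $P_\Y$.
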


\begin{proof}
We only consider the case $i = 1$; the other case is similar.
Put $\d := d_{\Y}(\nu_1) \in \GSp(\V)$.
We have
\[
 z_{\Y}(\h_0 \g_1 \h_0^{-1}, \h_0) 
 = z_{\Y}(\h_0 \g_1 \h_0^{-1} \cdot \d^{-1}, \d \cdot \h_0 \cdot \d^{-1}) 
 \cdot v_{\Y}(\h_0, \nu_1).
\]
Since $\Y' \g_1 = \Y'$,
we have $\h_0 \g_1 \h_0^{-1} \cdot \d^{-1} \in P_{\Y}$ and hence
\[
 z_{\Y}(\h_0 \g_1 \h_0^{-1} \cdot \d^{-1}, \d \cdot \h_0 \cdot \d^{-1}) =1.
\]
We have $\h_0 = \n(\b_1) \cdot \tau_4 \cdot \n(\b_2)$, where
\[
 \b_1
 = \frac{1}{2tJ} \cdot
 \begin{pmatrix}
 -J & & & \\
 & J_2 & & \\
 & & J_1 & \\
 & & & -1
 \end{pmatrix}, \qquad
 \b_2
 = t \cdot 
 \begin{pmatrix}
 -1 & & & \\
 & J_1 & & \\
 & & J_2 & \\
 & & & -J
 \end{pmatrix},
\]
so that $x_{\Y}(\h_0) \equiv 1 \bmod (F^{\times})^2$ and $j_{\Y}(\h_0) = 4$.
Hence we have $v_{\Y}(\h_0, \nu_1) = 1$.
Thus we obtain 
\[
 z_{\Y}(\h_0 \g_1 \h_0^{-1}, \h_0) = 1.
\]

Now we compute $z_{\Y}(\h_0, \g_1)$.
We have
\[
 z_{\Y}(\h_0, \g_1) = z_{\Y}(\h_0, \g_1 \cdot \d^{-1}).
\]
First assume that $b_1 = d_1 = 0$. 
Then we have $\g_1 \cdot \d^{-1} \in P_{\Y}$ and hence
\[
 z_{\Y}(\h_0, \g_1 \cdot \d^{-1}) = 1.
\]
Next assume that $(b_1, d_1) \ne (0,0)$ and $b_1^2 - d_1^2 J_1 = 0$.
Then we have $b_1 \ne 0$, $d_1 \ne 0$, and $\nu_1 = a_1^2 - c_1^2 J_1 \ne 0$.
Since 
\begin{align*}
 (a_1 d_1 - b_1 c_1) \cdot (a_1 b_1 + c_1 d_1 J_1)
 & = a_1^2 b_1 d_1 + a_1 c_1 d_1^2 J_1 - a_1 b_1^2 c_1  - b_1 c_1^2 d_1 J_1 \\
 & = a_1^2 b_1 d_1 - b_1 c_1^2 d_1 J_1 \\
 & = \nu_1 b_1 d_1 \\
 & \ne 0,
\end{align*}
we have $a_1 d_1 - b_1 c_1 \ne 0$ and $a_1 b_1 + c_1 d_1 J_1 \ne 0$.
We have $\g_1 \cdot \d^{-1} \in \m(\a_1) \cdot \n(\b_3) \cdot \tau_2 \cdot P_{\Y}$, where
\[
 \a_1 = 
 \begin{pmatrix}
  b_1 & & & \\
  d_1 J_1 & & 1 & \\
  & b_1 &  & \\
  & d_1 J_1 & & 1
 \end{pmatrix}, \qquad
 \b_3 = \frac{a_1 d_1 - b_1 c_1}{b_1 d_1} \cdot 
 \begin{pmatrix}
  0 & & & \\
  & 0 & & \\
  & & J_1 & \\
  & & & -J
 \end{pmatrix}.
\]
Hence we have
\begin{align*}
 z_{\Y}(\h_0, \g_1 \cdot \d^{-1})
 & = z_{\Y}(\tau_4 \cdot \n(\b_2), \m(\a_1) \cdot \n(\b_3) \cdot \tau_2) \\
 & = z_{\Y}(\tau_4 \cdot \m(\a_1),  \m(\a_1)^{-1} \cdot \n(\b_2) \cdot \m(\a_1) \cdot \n(\b_3) \cdot \tau_2) \\
 & = z_{\Y}(\tau_4, \n(\b_4 + \b_3) \cdot \tau_2),
\end{align*}
where
\[
 \b_4 = \a_1^{-1} \cdot \b_2 \cdot {}^t \a_1^{-1}
 = \frac{t}{b_1^2} \cdot 
 \begin{pmatrix}
 -1 & & d_1 J_1 & \\
  & J_2 & & - d_1 J \\ 
  d_1 J_1 & & & \\
  & - d_1 J & &
 \end{pmatrix}.
\]
Since $\tau_2^{-1} \cdot \n(\b_4) \cdot \tau_2 \in P_{\Y}$, we have
\[
 z_{\Y}(\h_0, \g_1 \cdot \d^{-1}) = z_{\Y}(\tau_4, \n(\b_3) \cdot \tau_2)
 = \gamma_F(\frac{1}{2} \psi \circ q_1),
\]
where $q_1$ is a non-degenerate symmetric bilinear form associated to
\[
 \frac{a_1 d_1 - b_1 c_1}{b_1 d_1} \cdot 
 \begin{pmatrix}
  J_1 & \\
  & -J
 \end{pmatrix}.
\]
Since $\det q_1 \equiv - J_2 \bmod (F^{\times})^2$ and 
$h_F(q_1) = (\frac{a_1 d_1 - b_1 c_1}{b_1 d_1} \cdot J_1, J_2)_F$,
we have
\begin{align*}
 \gamma_F(\frac{1}{2} \psi \circ q_1)
 & = \gamma_F(\frac{1}{2} \psi)^2 \cdot \gamma_F(- J_2, \frac{1}{2} \psi) 
 \cdot (\frac{a_1 d_1 - b_1 c_1}{b_1 d_1} \cdot J_1, J_2)_F \\
 & = \gamma_F(J_2, \frac{1}{2} \psi)^{-1} \cdot (\frac{a_1 d_1 - b_1 c_1}{b_1 d_1} \cdot J_1, J_2)_F \\
 & = \gamma_F(J_2, \frac{1}{2} \psi)^{-1} \cdot (\frac{\nu_1}{a_1 b_1 + c_1 d_1 J_1} \cdot J_1, J_2)_F \\
 & = \gamma_F(J_2, \frac{1}{2} \psi)^{-1} \cdot ((a_1 b_1 + c_1 d_1 J_1) \nu_1 J_1, J_2)_F.
\end{align*}
Finally assume that $(b_1, d_1) \ne (0,0)$ and $b_1^2 - d_1^2 J_1 \ne 0$.
We have $\g_1 \cdot \d^{-1} \in \n(\b_5) \cdot \tau_4 \cdot P_{\Y}$, where
\[
 \b_5 = \frac{1}{b_1^2 - d_1^2 J_1} \cdot 
 \begin{pmatrix}
  a_1 b_1 + c_1 d_1 J_1 & (a_1 d_1 + b_1 c_1) J_1 & & \\
  (a_1 d_1 + b_1 c_1) J_1 & (a_1 b_1 + c_1 d_1 J_1) J_1 & & \\
  & & - (a_1 b_1 + c_1 d_1 J_1) J_2 & - \left(a_1 d_1 + b_1 c_1\right) J \\
  & & - (a_1 d_1 + b_1 c_1) J & - (a_1 b_1 + c_1 d_1 J_1) J
 \end{pmatrix}.
\]
Hence we have
\[
 z_{\Y}(\h_0, \g_1 \cdot \d^{-1}) = 
 z_{\Y}(\tau_4 \cdot \n(\b_2), \n(\b_5) \cdot \tau_4) =
 z_{\Y}(\tau_4, \n(\b_2) \cdot \n(\b_5) \cdot \tau_4) =
 \gamma_F(\frac{1}{2} \psi \circ q_2),
\]
where $q_2$ is a non-degenerate symmetric bilinear form associated to $\b_2 + \b_5$.
We have
\[
 \b_2 + \b_5 = 
 \begin{pmatrix}
  \b' & \\
  & -J_2 \cdot \b'
 \end{pmatrix},
\]
where 
\[
 \b' = t \cdot 
 \begin{pmatrix}
 -1 & \\
 & J_1
 \end{pmatrix}
 + \frac{1}{b_1^2 - d_1^2 J_1} \cdot 
 \begin{pmatrix}
  a_1 b_1 + c_1 d_1 J_1 & (a_1 d_1 + b_1 c_1) J_1 \\
  (a_1 d_1 + b_1 c_1) J_1 & (a_1 b_1 + c_1 d_1 J_1) J_1
 \end{pmatrix}.
\]
Since 
\[
 \det \b' = \frac{\nu_1 J_1}{b_1^2 - d_1^2 J_1} \ne 0,
\]
we have $\det q_2 \equiv 1 \bmod (F^{\times})^2$ and 
\[
 h_F(q_2) = (\det \b', J_2)_F \cdot (-1, -J_2)_F
 = (- \frac{\nu_1 J_1}{b_1^2 - d_1^2 J_1}, J_2)_F \cdot (-1, -1)_F.
\]
Hence we have
\[
 \gamma_F(\frac{1}{2} \psi \circ q_2) = 
 \gamma_F(\frac{1}{2} \psi)^4 \cdot 
 (- \frac{\nu_1 J_1}{b_1^2 - d_1^2 J_1}, J_2)_F \cdot (-1, -1)_F
 = (-(b_1^2 - d_1^2 J_1) \nu_1 J_1, J_2)_F.
\]
This completes the proof.
\end{proof}

\begin{lem}
\label{lem:mu-GU(W)-u}
Let $\g := \ba \in \GU(W)$ with $\ba = a + b \i + c \j + d \i \j \in B^{\times}$.
Then we have
\[
 \mu(\g) =
 \begin{cases}
  (\nu, J_1)_F & \text{if $b = d = 0$,} \\
  \gamma_F(J_1, \frac{1}{2} \psi) \cdot (ab-cdJ, J_1)_F
  & \text{if $(b,d) \ne (0,0)$ and $b^2 - d^2 J = 0$,} \\
  (- (b^2 - d^2J) J, J_1)_F 
  & \text{if $(b,d) \ne (0,0)$ and $b^2 - d^2 J \ne 0$,}
 \end{cases}
\]
where $\nu = \nu(\ba)$.
\end{lem}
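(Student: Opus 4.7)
The plan is to prove Lemma \ref{lem:mu-GU(W)-u} by directly computing the two factors $z_{\Y}(\h_0 \g \h_0^{-1}, \h_0)$ and $z_{\Y}(\h_0, \g)$ that define $\mu(\g)$, in close parallel with the proof of Lemma \ref{lem:mu-GU(V)-u}. Writing $\nu := \nu(\ba)$, $\d := d_{\Y}(\nu)$ and $\g_0 := \g \cdot \d^{-1} \in \Sp(\V)$, I first factor out the similitude. Using the cocycle expansion from \S \ref{ss:weil-gsp-outer} together with Lemma \ref{lem:v_Y}, and the facts $x_{\Y}(\h_0) \equiv 1 \bmod (F^{\times})^2$ and $j_{\Y}(\h_0) = 4$ (read off from the factorization $\h_0 = \n(\b_1)\tau_4 \n(\b_2)$ already exhibited in the proof of Lemma \ref{lem:mu-GU(V)-u}), every occurrence of $v_{\Y}(\h_0, \nu)$ is trivial, so $\mu(\g)$ reduces to two $z_{\Y}$-evaluations on $\Sp(\V)$.

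Next, I split into the three cases of the statement according to the Bruhat cell of $\g_0$ with respect to $P_{\Y}$. Reading off the $8 \times 8$ matrix of the right action of $\ba = a + b\i + c\j + d\i\j$ displayed in \S \ref{ss:spl-setup}, the lower-left $4 \times 4$ block of $\g_0$ is, up to the scalar $\nu^{-1}$,
\[
 \begin{pmatrix} b & 0 & 0 & d \\ 0 & -b/J_1 & -d & 0 \\ 0 & -d & -b/J_2 & 0 \\ d & 0 & 0 & b/J \end{pmatrix},
\]
which splits into two $2 \times 2$ blocks each of determinant $(b^2 - d^2 J)/J$. Thus its rank is $0$ when $b = d = 0$ (Case (i), $\g_0 \in P_{\Y}$), $2$ when $(b,d) \ne (0,0)$ and $b^2 - d^2 J = 0$ (Case (ii), which forces $J \in (F^{\times})^2$, and $\g_0 \in P_{\Y}\tau_2 P_{\Y}$), and $4$ otherwise (Case (iii), $\g_0 \in P_{\Y}\tau_4 P_{\Y}$).

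In each case I will perform the Bruhat decompositions $\g_0 = p_1 \tau_j p_2$ and $\h_0 \g_0 \h_0^{-1} = q_1 \tau_{j'} q_2$, plug them into $\h_0 = \n(\b_1) \tau_4 \n(\b_2)$, and use $z_{\Y}(p, h) = z_{\Y}(h, p) = 1$ for $p \in P_{\Y}$ to collapse each factor of $\mu(\g)$ to an expression of the form $z_{\Y}(\tau_4, \n(\b_*)\tau_4) = \gamma_F(\tfrac{1}{2} \psi \circ q_{\b_*})$ (with the obvious analogue at $\tau_2$ in Case (ii)). The identity $\gamma_F(\psi \circ q) = \gamma_F(\psi)^m \gamma_F(\det q, \psi) h_F(q)$ recalled in \S \ref{subsec:theta-prelim}, combined with the multiplicativity of $\det$ and $h_F$ on direct sums, converts these Weil indices into a product of Hilbert symbols and Weil indices of $\psi$; the former combine into the single symbol $(\cdot, J_1)_F$ appearing in the answer.

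The main obstacle will be the bookkeeping in Case (iii): I must invert the displayed $4 \times 4$ block to obtain an analogue of the matrix $\b_5$ from the proof of Lemma \ref{lem:mu-GU(V)-u}, add it to the diagonal piece coming from $\b_2$, and verify that the resulting $4 \times 4$ symmetric matrix is block-diagonal with the two $2 \times 2$ summands differing by the scalar $-J_1$ (rather than $-J_2$, as in Lemma \ref{lem:mu-GU(V)-u}). Once this is confirmed, the formula $(-(b^2 - d^2 J) J, J_1)_F$ follows from $h_F(q_1 \oplus (-J_1 q_1)) = (\det q_1, -J_1)_F \cdot (-J_1, -J_1)_F \cdot h_F(q_1)^2$ together with $\det q_1 \equiv -(b^2 - d^2 J) J \bmod (F^{\times})^2$, and the Weil index $\gamma_F(J_1, \tfrac{1}{2}\psi)$ of Case (ii) emerges from the $2 \times 2$ analogue. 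The appearance of $J_1$ rather than $J_2$ is not a broken symmetry between $B_1$ and $B_2$ but a consequence of the particular identification of $W^\dagger$ with $F^2$ coming from the embedding $B = E + E\j$ used in \S \ref{sssec:cpu}; tracking this consistently through the several changes of basis required for the Bruhat decompositions is where the real care is needed.
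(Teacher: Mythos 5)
Your high-level strategy is the paper's strategy --- factor out the similitude via the $\GSp$-cocycle identity from \S\ref{ss:weil-gsp-outer} and Lemma~\ref{lem:v_Y}, then collapse each $z_{\Y}$-evaluation through Bruhat decompositions using $\h_0 = \n(\b_1)\tau_4\n(\b_2)$. The rank computation for the lower-left block of $\g_0 = \g\d^{-1}$ is correct, and it does control the Bruhat cell of $\g_0$ and hence the structure of $z_{\Y}(\h_0,\g) = z_{\Y}(\h_0,\g_0)$. However, there is a structural gap: the \emph{first} factor $z_{\Y}(\h_0\g\h_0^{-1},\h_0)$ reduces, after applying the $\GSp$-cocycle formula, to $z_{\Y}(\h_0\g\h_0^{-1}\cdot\d^{-1},\,\d\h_0\d^{-1})$, and the relevant Bruhat cell here is that of $\h_0\g\h_0^{-1}\cdot\d^{-1}$, \emph{not} $\h_0\g_0\h_0^{-1}$ as you write (these differ, since $\h_0$ and $\d$ do not commute). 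More importantly, the lower-left block of $\h_0\g\h_0^{-1}\cdot\d^{-1}$ is a scalar multiple of $c-dt$ times an antidiagonal matrix, so its rank is $0$ or $4$ according to whether $c-dt$ vanishes --- a condition entirely independent of your three-way split on $(b,d)$ and $b^2-d^2J$. Your plan to organize the whole proof by the Bruhat cell of $\g_0$ therefore handles the second factor cleanly but silently conflates the case structure of the first, and you would in fact discover, upon carrying out the computation, that each of your three cases needs to be subdivided by $c-dt=0$ versus $c-dt\ne 0$, with each sub-case producing a different Weil-index/Hilbert-symbol expression that must then be recombined. That final recombination (which uses identities such as $(a-bt)^2-(c-dt)^2J = \nu - 2t(ab-cdJ)$) is where most of the actual work lies, and your sketch does not anticipate it.

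A smaller but nontrivial inaccuracy: in Case (iii), the matrix $\b_2 + \b_{13}$ you will obtain is not block-diagonal in the $\{1,2\}\times\{3,4\}$ blocking with the second block equal to $-J_1$ times the first. It is block-diagonal in the $\{1,4\}\times\{2,3\}$ pairing, and the two $2\times 2$ blocks are not scalar multiples of each other --- only after a further non-diagonal change of basis (an analogue of $\a_4$ from the proof of Lemma~\ref{lem:mu-GU(V)-u}) do the four diagonal entries appear in pairs related by $-J_1$, at which point your factorization of $h_F$ goes through. The $J_1$-versus-$J_2$ asymmetry you flag is real and you have the right explanation for it, but "verify block-diagonal" understates the algebra required.
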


\begin{proof}
Put $\d := d_{\Y}(\nu) \in \GSp(\V)$. 
We have
\[
 z_{\Y}(\h_0 \g \h_0^{-1}, \h_0) 
 = z_{\Y}(\h_0 \g \h_0^{-1} \cdot \d^{-1}, \d \cdot \h_0 \cdot \d^{-1}) 
 \cdot v_{\Y}(\h_0, \nu).
\]
As in the proof of Lemma \ref{lem:mu-GU(V)-u}, we have $v_{\Y}(\h_0, \nu) = 1$.
We have
\[
 \h_0 \g \h_0^{-1} = 
 \begin{pmatrix}
  a+bt & & & & & & & -\frac{c+dt}{2t} \\
  & a+bt & & & & & \frac{c+dt}{2t} & \\
  & & a+bt & & & \frac{c+dt}{2t} & & \\
  & & & a+bt & - \frac{c+dt}{2t} & & & \\
  & & & -2(c-dt)tJ & a-bt & & & \\
  & & 2 (c-dt)tJ & & & a-bt & & \\
  & 2 (c-dt)tJ & & & & & a-bt & \\
  - 2 (c-dt)tJ & & & & & & &  a-bt
 \end{pmatrix}.
\]
If $c-dt = 0$, then we have $\h_0 \g \h_0^{-1} \cdot \d^{-1} \in P_{\Y}$
and hence 
$z_{\Y}(\h_0 \g \h_0^{-1} \cdot \d^{-1}, \d \cdot \h_0 \cdot \d^{-1}) = 1$.
If $c-dt \ne 0$, then we have
$\h_0 \g \h_0^{-1} \cdot \d^{-1} \in P_{\Y} \cdot \tau_4 \cdot \n(\b_6)$,
where
\[
 \b_6 = \frac{a-bt}{2 \nu t J (c-dt)} \cdot 
 \begin{pmatrix}
  & & & - 1 \\
  & & 1 & \\
  & 1 & & \\
 -1 & & &
 \end{pmatrix}.
\]
We have $\d \cdot \h_0 \cdot \d^{-1} \in \n(\nu^{-1} \cdot \b_1) \cdot \tau_4 \cdot P_{\Y}$, where $\b_1$ is as in the proof of Lemma \ref{lem:mu-GU(V)-u}.
Hence we have
\[
 z_{\Y}(\h_0 \g \h_0^{-1} \cdot \d^{-1}, \d \cdot \h_0 \cdot \d^{-1})  
 = z_{\Y}(\tau_4 \cdot \n(\b_6), \n(\nu^{-1} \cdot \b_1) \cdot \tau_4)
 = z_{\Y}(\tau_4, \n(\b_7) \cdot \tau_4),
\]
where $\b_7 = \nu^{-1} \cdot \b_1 + \b_6$.
Put $r = \frac{a-bt}{c-dt}$.
We have
\[
 \b_7 = \frac{1}{2 \nu t J} \cdot
 \begin{pmatrix}
  -J & & & -r \\
  & J_2 & r & \\
  & r & J_1 & \\
  -r & & & -1
 \end{pmatrix}
 = \a_2 \cdot \b_8 \cdot {}^t \a_2,
\]
where
\[
 \a_2 = 
 \begin{pmatrix}
  1 & & & \\
  & & \frac{r}{J_1} & 1 \\
  & & 1 & \\
  \frac{r}{J} & 1 & &
 \end{pmatrix}, \qquad
 \b_8 = 
 \frac{1}{2 \nu t J} \cdot 
 \begin{pmatrix}
  -J & & & \\
  & \frac{r^2}{J} - 1 & & \\
  & & J_1 & \\
  & & & J_2 - \frac{r^2}{J_1}
 \end{pmatrix},
\]
and hence
\[
 z_{\Y}(\tau_4, \n(\b_7) \cdot \tau_4)
 = z_{\Y}(\tau_4, \m(\a_2) \cdot \n(\b_8) \cdot \m(\a_2^{-1}) \cdot \tau_4)
 = z_{\Y}(\tau_4, \n(\b_8) \cdot \tau_4).
\]
If $J - r^2 = 0$, then we have $z_{\Y}(\tau_4, \n(\b_8) \cdot \tau_4) = \gamma_F(\frac{1}{2} \psi \circ q_3)$,
where $q_3$ is a non-degenerate symmetric bilinear form associated to
\[
 \frac{1}{2 \nu t J} \cdot 
 \begin{pmatrix}
  -J & \\ 
  & J_1
 \end{pmatrix}.
\]
We have $\det q_3 \equiv -J_2 \bmod (F^{\times})^2$ and 
\[
 h_F(q_3) = (-\frac{1}{2 \nu t}, \frac{1}{2 \nu t J_2})_F = (-2 \nu t, J_2)_F.
\]
Hence we have 
\[
 \gamma_F(\frac{1}{2} \psi \circ q_3)
 = \gamma_F(\frac{1}{2} \psi)^2 \cdot \gamma_F(-J_2, \frac{1}{2} \psi)
 \cdot (-2 \nu t, J_2)_F
 = \gamma_F(J_2, \frac{1}{2} \psi) \cdot (2 \nu t, J_2)_F.
\]
Note that $\gamma_F(J_1, \frac{1}{2} \psi) = \gamma_F(J_2, \frac{1}{2} \psi)$
and $(2 \nu t, J_1)_F = (2 \nu t, J_2)_F$ since $J = r^2 \in (F^{\times})^2$.
If $J - r^2 \ne 0$, then we have $z_{\Y}(\tau_4, \n(\b_8) \cdot \tau_4) = \gamma_F(\frac{1}{2} \psi \circ q_4)$,
where $q_4$ is a non-degenerate symmetric bilinear form associated to $\b_8$.
We have $\det q_4 \equiv 1 \bmod (F^{\times})^2$ and 
\begin{align*}
 h_F(q_4) & = (\det q_4, \frac{1}{2 \nu t J})_F
 \cdot (-J, \frac{r^2}{J} - 1)_F \cdot (J_1, J_2 - \frac{r^2}{J_1})_F \cdot
 (-J (\frac{r^2}{J} - 1), J_1 (J_2 - \frac{r^2}{J_1}))_F \\
 & = (-J, J-r^2)_F \cdot (-J, -\frac{1}{J})_F \cdot 
 (J_1, J - r^2)_F \cdot (J_1, \frac{1}{J_1})_F \cdot (J-r^2, J-r^2)_F \\
 & = (-J, J-r^2)_F \cdot (-J, -1)_F \cdot 
 (J_1, J - r^2)_F \cdot (J_1, -1)_F \cdot (J-r^2, -1)_F \\
 & = (J J_1, J-r^2)_F \cdot (-JJ_1, -1)_F \\
 & = (J_2, J-r^2)_F \cdot (J_2, -1)_F \cdot (-1, -1)_F \\
 & = (J_2, r^2-J)_F \cdot (-1, -1)_F.
\end{align*}
Note that $(J_1, r^2-J)_F = (J_2, r^2-J)_F$ since $(J, r^2-J)_F = 1$.
Hence we have
\[
 \gamma_F(\frac{1}{2} \psi \circ q_4)
 = \gamma_F(\frac{1}{2} \psi)^4 \cdot (J_2, r^2-J)_F \cdot (-1, -1)_F
 = (J_2, r^2-J)_F.
\]
Thus we obtain
\[
 z_{\Y}(\h_0 \g \h_0^{-1}, \h_0) =
 \begin{cases}
  1 & \text{if $c - dt = 0$,} \\
 \gamma_F(J_1, \frac{1}{2} \psi) \cdot (2 \nu t, J_1)_F
  & \text{if $c - dt \ne 0$, $(a-bt)^2 - (c-dt)^2 J = 0$,} \\
  ((a-bt)^2 - (c-dt)^2 J, J_1) _F
  & \text{if $c - dt \ne 0$, $(a-bt)^2 - (c-dt)^2 J \ne 0$.}
 \end{cases}
\]

Now we compute $z_{\Y}(\h_0, \g)$.
We have
\[
 z_{\Y}(\h_0, \g) = z_{\Y}(\h_0, \g \cdot \d^{-1}).
\]
First assume that $b = d = 0$.
Then we have $\g \cdot \d^{-1} \in P_{\Y}$ and hence
\[
 z_{\Y}(\h_0, \g \cdot \d^{-1}) = 1.
\]
Next assume that $(b, d) \ne (0,0)$ and $b^2 - d^2 J = 0$.
Then we have $b \ne 0$ and $d \ne 0$.
We have $\g \cdot \d^{-1} \in \m(\a_3) \cdot \n(\b_9) \cdot \tau_2 \cdot P_{\Y}$,
where
\[
 \a_3 =
 \begin{pmatrix}
  b & & & \\
  & b & & \\
  & -dJ_2 & 1 & \\
  -d J & & & 1
 \end{pmatrix},
 \qquad
 \b_9 = \frac{ad+bc}{bd} \cdot 
 \begin{pmatrix}
  0 & & & \\
  & 0 & & \\
  & & -J_2 & \\
  & & & J
 \end{pmatrix}.
\]
Hence we have
\begin{align*}
 z_{\Y}(\h_0, \g \cdot \d^{-1}) & = 
 z_{\Y}(\tau_4 \cdot \n(\b_2), \m(\a_3) \cdot \n(\b_9) \cdot \tau_2) \\
 & = z_{\Y}(\tau_4 \cdot \m(\a_3), \m(\a_3)^{-1} \cdot \n(\b_2) \cdot \m(\a_3) \cdot \n(\b_9) \cdot \tau_2) \\
 & = z_{\Y}(\tau_4, \n(\b_{10} + \b_9) \cdot \tau_2),
\end{align*}
where $\b_2$ is as in the proof of Lemma \ref{lem:mu-GU(V)-u} and
\[
 \b_{10} = \a_3^{-1} \cdot \b_2 \cdot {}^t \a_3^{-1}
 = \frac{t}{b^2} \cdot 
 \begin{pmatrix}
  -1 & & & -dJ \\
  & J_1 & dJ & \\
  & dJ & 2 b^2 J_2 & \\
  -dJ & & & -2 b^2 J
 \end{pmatrix}.
\]
We write $\b_9 + \b_{10} = \b_{11} + \b_{12}$, where
\[
 \b_{11} = \frac{t}{b^2} \cdot 
 \begin{pmatrix}
  -1 & & & -dJ \\
  & J_1 & dJ & \\
  & dJ & & \\
  -dJ & & & 
 \end{pmatrix}, 
 \qquad \b_{12} = r' \cdot 
 \begin{pmatrix}
  0 & & & \\
  & 0 & & \\
  & & J_2 & \\
  & & & -J
 \end{pmatrix}, \qquad
 r' = 2t - \frac{ad+bc}{bd}.
\]
Since $\tau_2^{-1} \cdot \n(\b_{11}) \cdot \tau_2 \in P_{\Y}$, we have
\[
 z_{\Y}(\h_0, \g \cdot \d^{-1}) = z_{\Y}(\tau_4, \n(\b_{12}) \cdot \tau_2).
\]
If $r' = 0$, then we have
$z_{\Y}(\tau_4, \n(\b_{12}) \cdot \tau_2) = z_{\Y}(\tau_4, \tau_2) = 1$.
If $r' \ne 0$, then we have $z_{\Y}(\tau_4, \n(\b_{12}) \cdot \tau_2) = \gamma_F(\frac{1}{2} \psi \circ q_5)$,
where $q_5$ is a non-degenerate symmetric bilinear form associated to
\[
 r' \cdot 
 \begin{pmatrix}
  J_2 & \\
  & -J
 \end{pmatrix}.
\]
Since $\det q_5 \equiv - J_1 \bmod (F^{\times})^2$ and $h_F(q_5) = (r' \cdot J_2, J_1)_F$, we have
\[
 \gamma_F(\frac{1}{2} \psi \circ q_5) =
 \gamma_F(\frac{1}{2} \psi)^2 \cdot 
 \gamma_F(-J_1, \frac{1}{2} \psi) \cdot (r' \cdot J_2, J_1)_F
 = \gamma_F(J_1, \frac{1}{2} \psi)^{-1} \cdot (r' \cdot J_2, J_1)_F.
\]
Finally assume that $(b, d) \ne (0,0)$ and $b^2 - d^2 J \ne 0$.
We have $\g \cdot \d^{-1} \in \n(\b_{13}) \cdot \tau_4 \cdot P_{\Y}$, where
\[
 \b_{13} = \frac{1}{b^2 - d^2 J} \cdot 
 \begin{pmatrix}
  ab- cdJ & & & -(ad - bc)J \\
  & - (ab - cdJ) J_1 & (ad - bc)J & \\
  & (ad - bc)J & -(ab - cdJ)J_2 & \\
  -(ad - bc)J & & & (ab - cdJ)J
 \end{pmatrix}.
\]
Hence we have
\[
 z_{\Y}(\h_0, \g \cdot \d^{-1})
 = z_{\Y}(\tau_4 \cdot \n(\b_2), \n(\b_{13}) \cdot \tau_4)
 = z_{\Y}(\tau_4, \n(\b_2 + \b_{13}) \cdot \tau_4).
\]
Put
\[
 l = ab - cdJ - (b^2 - d^2J)t, \qquad
 l' = (ad-bc)J, \qquad
 r'' = \frac{l^2 J - l'^2}{(b^2 - d^2 J)^2}
 = \frac{ ((a-bt)^2 - (c-dt)^2 J)J}{b^2 - d^2 J}.
\]
We have
\[
 \b_2 + \b_{13} = \frac{1}{b^2 - d^2 J} \cdot
 \begin{pmatrix}
   l & & & -l' \\ 
   & -l J_1 & l' & \\ 
   & l' & -l J_2 & \\ 
  -l' & & & lJ
 \end{pmatrix},
\]
and if $l \ne 0$, then we have $\b_2 + \b_{13} = \a_4 \cdot \b_{14} \cdot {}^t \a_4$, where
\[
 \a_4 = \frac{1}{l} \cdot
 \begin{pmatrix}
  l & & & \\
  & & & l \\
  & & -\frac{1}{J_1} & -\frac{l'}{J_1} \\
  -l' & 1 & & 
 \end{pmatrix}, \qquad
 \b_{14} = \frac{1}{b^2 - d^2J} \cdot 
 \begin{pmatrix}
  l & & & \\
  & (l^2J-l'^2)l & & \\
  & & - (l^2J-l'^2) l J_1 & \\
  & & & -lJ_1
 \end{pmatrix}.
\]
If $l = l' = 0$, then we have $z_{\Y}(\tau_4, \n(\b_2 + \b_{13}) \cdot \tau_4) = z_{\Y}(\tau_4, \tau_4) = 1$.
If $(l,l') \ne (0,0)$ and $r'' = 0$, then we have $l \ne 0$ and $l' \ne 0$,
so that $z_{\Y}(\tau_4, \n(\b_2 + \b_{13}) \cdot \tau_4) = \gamma_F(\frac{1}{2} \psi \circ q_6)$,
where $q_6$ is a non-degenerate symmetric bilinear form associated to
\[
 \frac{l}{b^2 - d^2J} \cdot 
 \begin{pmatrix}
  1 & \\
  & -J_1
 \end{pmatrix}.
\]
We have $\det q_6 \equiv -J_1 \bmod (F^{\times})^2$ and
\[
 h_F(q_6) = (\frac{l}{b^2 - d^2J}, J_1)_F
 = (\frac{ab-cdJ}{b^2 - d^2J} - t, J_1)_F.
\]
Hence we have
\[
 \gamma_F(\frac{1}{2} \psi \circ q_6)
 = \gamma_F(\frac{1}{2} \psi)^2 \cdot \gamma_F(-J_1, \frac{1}{2} \psi)
 \cdot (\frac{ab-cdJ}{b^2 - d^2J} - t, J_1)_F
 = \gamma_F(J_1, \frac{1}{2} \psi)^{-1}
 \cdot (\frac{ab-cdJ}{b^2 - d^2J} - t, J_1)_F.
\]
Note that $\gamma_F(J_1, \frac{1}{2} \psi) = \gamma_F(J_2, \frac{1}{2} \psi)$
and $(\frac{ab-cdJ}{b^2 - d^2J} - t, J_1)_F = (\frac{ab-cdJ}{b^2 - d^2J} - t, J_2)_F$ since $r'' = 0$ and hence $J \in (F^{\times})^2$.
If $r'' \ne 0$, then we have
$z_{\Y}(\tau_4, \n(\b_2 + \b_{13}) \cdot \tau_4) = \gamma_F(\frac{1}{2} \psi \circ q_7)$,
where $q_7$ is a non-degenerate symmetric bilinear form associated to
$\b_2 + \b_{13}$.
We have $\det q_7 \equiv 1 \bmod (F^{\times})^2$.
Also, we have
\begin{align*}
 h_F(q_7) & =
 (\frac{l}{b^2 - d^2J}, \frac{(l^2J-l'^2)l}{b^2 - d^2J})_F \cdot
 (-\frac{lJ_1}{b^2 - d^2J}, -\frac{(l^2J-l'^2)lJ_1}{b^2 - d^2J})_F \cdot
 (\frac{(l^2J-l'^2)l^2}{(b^2 - d^2J)^2}, 
 \frac{(l^2J-l'^2)l^2 J_1^2}{(b^2 - d^2J)^2})_F \\
 & = (\frac{l}{b^2 - d^2J}, -(l^2J-l'^2))_F \cdot
 (-\frac{lJ_1}{b^2 - d^2J}, -(l^2J-l'^2))_F \cdot
 (l^2J-l'^2, l^2J-l'^2)_F \\
 & = (-J_1, -(l^2J-l'^2))_F \cdot (-1, l^2J-l'^2)_F \\
 & = (J_1, -(l^2J-l'^2))_F \cdot (-1, -1)_F \\
 & = (J_1, -r'') \cdot (-1, -1)_F 
\end{align*}
if $l \ne 0$, and 
\[
 h_F(q_7) = (-1, -1)_F = (J_1, -r'') \cdot (-1, -1)_F
\]
if $l = 0$.
Hence we have
\[
 \gamma_F(\frac{1}{2} \psi \circ q_7)
 = \gamma_F(\frac{1}{2} \psi)^4 \cdot (J_1, -r'') \cdot (-1, -1)_F
 = (J_1, -r'').
\]
Note that $(J_1, -r'') = (J_2, -r'')$ since $(J, -r'') = (J, l'^2 - l^2 J) = 1$.
Thus we obtain
\[
 z_{\Y}(\h_0, \g) = 
 \begin{cases}
  1 & \text{if $b = d = 0$,} \\
  1 & \text{if $(b,d) \ne (0,0)$, $b^2 - d^2 J = 0$, $ad + bc - 2bdt = 0$,} \\
 \gamma_F(J_1, \frac{1}{2} \psi)^{-1}
 \cdot ((2t- \frac{ad+bc}{bd}) \cdot J_2, J_1)_F
  & \text{if $(b,d) \ne (0,0)$, $b^2 - d^2 J = 0$, $ad + bc - 2bdt \ne 0$,} \\
  1 & \text{if $(b,d) \ne (0,0)$, $b^2 - d^2 J \ne 0$,} \\
  & \text{\qquad $ab - cdJ - (b^2-d^2 J)t = ad-bc = 0$,} \\
 \gamma_F(J_1, \frac{1}{2} \psi)^{-1}
 \cdot (\frac{ab-cdJ}{b^2 - d^2J} - t, J_1)_F.
  & \text{if $(b,d) \ne (0,0)$, $b^2 - d^2 J \ne 0$,} \\
  & \text{\qquad $(ab - cdJ - (b^2-d^2 J)t, ad-bc) \ne (0,0)$,} \\
  & \text{\qquad $(a-bt)^2 - (c-dt)^2J = 0$,} \\
 (- \frac{((a-bt)^2 - (c-dt)^2 J) J}{b^2 - d^2J}, J_1)_F
  & \text{if $(b,d) \ne (0,0)$, $b^2 - d^2 J \ne 0$}, \\
  & \text{\qquad $(a-bt)^2 - (c-dt)^2J \ne 0$.}
 \end{cases}
\]

Now we compute $\mu(\g) = z_{\Y}(\h_0 \g \h_0^{-1}, \h_0) \cdot z_{\Y}(\h_0, \g)^{-1}$.
Recall that $u = t^2$ and $\nu = a^2 - b^2 u - c^2 J + d^2 u J \ne 0$.
We have
\[
 (a-bt)^2 - (c-dt)^2 J = a^2 + b^2 u -c^2 J - d^2 uJ -2t(ab - cdJ).
\]
First assume that $b = d = 0$.
Then we have $c- dt = c$ and 
\[
 (a-bt)^2 - (c-dt)^2 J = a^2 - c^2 J = \nu \ne 0.
\]
Hence we have
\[
 \mu(\g) = 1 \cdot 1 = (\nu, J_1)_F
\]
if $c = 0$, and 
\[
 \mu(\g) = ((a-bt)^2 - (c-dt)^2 J, J_1)_F \cdot 1
 = (\nu, J_1)_F 
\]
if $c \ne 0$.
Next assume that $(b,d) \ne (0,0)$ and $b^2 - d^2 J = 0$.
Then we have $b \ne 0$, $d \ne 0$, $\nu = a^2 - c^2 J \ne 0$, and $J \in (F^{\times})^2$.
Since $(a-bt)^2 - (c-dt)^2 J = a^2 - c^2J - 2t(ab - cdJ)$ and
\begin{align*}
 & ( (a-bt)^2 - (c-dt)^2 J ) \cdot bd - (ad + bc - 2bdt) \cdot (ab - cd J) \\
 & = ( a^2 - c^2 J - 2abt + 2cdtJ ) \cdot bd
 - (a^2bd + ab^2c - 2ab^2dt - acd^2J - bc^2dJ + 2bcd^2tJ) \\
 & = -ab^2c + acd^2 J \\
 & = 0,
\end{align*}
we have
\[
 (a-bt)^2 - (c-dt)^2 J = 0 \Longleftrightarrow ad + bc - 2bdt = 0.
\]
If $c - dt = 0$, then we have $\nu = a^2 - b^2 u = (a+bt)(a-bt) \ne 0$,
so that $(a-bt)^2 - (c-dt)^2 J \ne 0$.
Hence we have
\begin{align*}
 \mu(\g) & = 1 \cdot \gamma_F(J_1, \frac{1}{2} \psi)
 \cdot ((2t- \frac{ad+bc}{bd}) \cdot J_2, J_1)_F \\
 & = \gamma_F(J_1, \frac{1}{2} \psi)
 \cdot ((\frac{2c}{d} - \frac{ad+bc}{bd}) \cdot J_2, J_1)_F \\
 & = \gamma_F(J_1, \frac{1}{2} \psi) \cdot (\frac{ad-bc}{bd}, J_1)_F \\
 & = \gamma_F(J_1, \frac{1}{2} \psi) \cdot (\frac{abd-b^2c}{d}, J_1)_F \\
 & = \gamma_F(J_1, \frac{1}{2} \psi) \cdot (ab-cdJ, J_1)_F.
\end{align*}
If $c - dt \ne 0$ and $(a-bt)^2 - (c-dt)^2 J = 0$, then we have
\[
 \nu - 2t(ab-cdJ) = (a-bt)^2 - (c-dt)^2 J = 0
\]
and hence 
\[
 \mu(\g)
 = \gamma_F(J_1, \frac{1}{2} \psi) \cdot (2 \nu t, J_1)_F \cdot 1
 = \gamma_F(J_1, \frac{1}{2} \psi) \cdot (ab-cdJ, J_1)_F.
\]
\begin{align*}
\end{align*}
If $c - dt \ne 0$ and $(a-bt)^2 - (c-dt)^2 J \ne 0$, then we have
\[
 (a-bt)^2 - (c-dt)^2 J = (\frac{ad+bd}{bd} -2t) \cdot (ab - cdJ)
\]
and hence
\begin{align*}
 \mu(\g) & = 
 ((a-bt)^2 - (c-dt)^2 J, J_1)_F \cdot 
 \gamma_F(J_1, \frac{1}{2} \psi)
 \cdot ((2t- \frac{ad+bc}{bd}) \cdot J_2, J_1)_F \\
 & = \gamma_F(J_1, \frac{1}{2} \psi) \cdot (-(ab-cdJ) \cdot J_2, J_1)_F \\
 & = \gamma_F(J_1, \frac{1}{2} \psi) \cdot (ab-cdJ, J_1)_F.
\end{align*}
Finally assume that $(b,d) \ne (0,0)$ and $b^2 - d^2 J \ne 0$.
Recall that
\[
 (ab-cdJ -(b^2-d^2J)t)^2 - (ad-bc)^2 J
 = ((a-bt)^2 - (c-dt)^2 J) \cdot (b^2 - d^2 J).
\]
If $c - dt = 0$, then we have $\nu = a^2 - b^2 u = (a+bt)(a-bt) \ne 0$
and 
\[
 (a-bt)^2 - (c-dt)^2 J = (a-bt)^2 \ne 0.
\]
Hence we have
\[
 \mu(\g) = 1 \cdot 
 (- \frac{((a-bt)^2 - (c-dt)^2 J) J}{b^2 - d^2J}, J_1)_F
 = (- \frac{(a-bt)^2 J}{b^2 - d^2J}, J_1)_F
 = (- (b^2 - d^2J) J, J_1)_F.
\]
If $c - dt \ne 0$ and $(a-bt)^2 - (c-dt)^2 J = 0$, then we have
\[
 \nu + 2(b^2 - d^2 J)u = a^2 + b^2 u - c^2 J - d^2 u J = 2(ab - cdJ)t,
\]
so that 
\[
 ab - cdJ - (b^2 - d^2 J) t \ne 0.
\]
Hence we have
\begin{align*}
 \mu(\g) & = 
 \gamma_F(J_1, \frac{1}{2} \psi) \cdot (2 \nu t, J_1)_F \cdot 
 \gamma_F(J_1, \frac{1}{2} \psi) \cdot (\frac{ab-cdJ}{b^2 - d^2J} - t, J_1)_F \\
 & = (-1, J_1)_F \cdot 
 (\frac{2 (ab-cdJ) t}{b^2 - d^2J} \cdot \nu - 2 \nu u, J_1)_F \\
 & = (-1, J_1)_F \cdot 
 (\frac{\nu + 2(b^2 - d^2 J)u}{b^2 - d^2J} \cdot \nu - 2 \nu u, J_1)_F \\
 & = (-1, J_1)_F \cdot (\frac{\nu^2}{b^2 - d^2J}, J_1)_F \\
 & = (- (b^2 - d^2J), J_1)_F \\
 & = (- (b^2 - d^2J) J, J_1)_F.
\end{align*}
If $c - dt \ne 0$ and $(a-bt)^2 - (c-dt)^2 J \ne 0$, then we have
\begin{align*}
 \mu(\g) & = ((a-bt)^2 - (c-dt)^2 J, J_1)_F
 \cdot (- \frac{((a-bt)^2 - (c-dt)^2 J) J}{b^2 - d^2J}, J_1)_F \\
 & = (- \frac{J}{b^2 - d^2J}, J_1)_F \\
 & = (- (b^2 - d^2J) J, J_1)_F.
\end{align*}
This completes the proof.
\end{proof}

\subsubsection{The case $J \in (F_v^{\times})^2$}

Choose $t \in F^{\times}$ such that $J = t^2$.
We take an isomorphism $\ii : B \rightarrow \M_2(F)$ determined by
\[
 \ii(1) = \begin{pmatrix} 1 & \\ & 1 \end{pmatrix}, \qquad
 \ii(\i) = \begin{pmatrix} & 1 \\ u & \end{pmatrix}, \qquad
 \ii(\j) = \begin{pmatrix} t & \\ & -t \end{pmatrix}, \qquad
 \ii(\i \j) = \begin{pmatrix} &  -t \\ tu & \end{pmatrix}.
\]
Then we have
\[
 e = \frac{1}{2} + \frac{1}{2t} \j, \qquad  
 e' = \frac{1}{2} \i - \frac{1}{2t} \i \j, \qquad 
 e'' = \frac{1}{2u} \i + \frac{1}{2tu} \i \j, \qquad 
 e^* = \frac{1}{2} - \frac{1}{2t} \j.
\]
Put 
\[
 \h_0 =
 \begin{pmatrix}
  \frac{1}{2} &  &  & \frac{t}{2J} &  &  &  & \\
  & \frac{1}{2} & \frac{t}{2 J_2} & &  &  & &  \\
  &  &  &  & -\frac{1}{2} &  &  & \frac{t}{2} \\
  &  &  &  &  & -\frac{1}{2} & \frac{t}{2 J_1} &  \\
  &  &  & & 1 &  &  & t \\
  &  & &  &  & 1 & \frac{t}{J_1} &  \\
  1 &  &  & -\frac{t}{J} &  &  &  & \\
  & 1 & -\frac{t}{J_2} &  &  &  &  & 
 \end{pmatrix} \in \Sp(\V).
\]
Then we have
\[
\begin{bmatrix}
 \e_1 e \\
 \e_2 e \\
 -\e_1 e'' \\
 \frac{1}{J_1} \e_2 e'' \\
 \frac{2}{u} \e_1 e' \\
 -\frac{2}{uJ_1} \e_2 e' \\
 2 \e_1 e^* \\
 2 \e_2 e^*
\end{bmatrix}
= \h_0
\begin{bmatrix}
 \e_1 \\
 \e_2 \\
 \e_3 \\
 \e_4 \\
 \e_1^* \\
 \e_2^* \\
 \e_3^* \\
 \e_4^* 
\end{bmatrix}, 
\]
and hence $\X' = \X \h_0$ and $\Y' = \Y \h_0$.

\begin{lem}
\label{lem:mu-GU(V)-J}
Let $\g_i := \ba_i^{-1} \in \GU(V)^0$ with $\ba_i = a_i + b_i \i + c_i \j_i + d_i \i \j_i \in B_i^{\times}$.
Then we have
\[
 \mu(\g_i) =
 \begin{cases}
 1 & \text{if $b_i = d_i = 0$,} \\
 \gamma_F(J_j, \frac{1}{2} \psi) \cdot ((a_i b_i + c_i d_i J_i) \nu_i J_i, J_j)_F
 & \text{if $(b_i, d_i) \ne (0,0)$ and $b_i^2 - d_i^2 J_i = 0$,} \\
 (-(b_i^2 - d_i^2 J_i) \nu_i J_i, J_j)_F
 & \text{if $(b_i, d_i) \ne (0,0)$ and $b_i^2 - d_i^2 J_i \ne 0$,}
 \end{cases}
\]
where $\nu_i = \nu(\ba_i)$ and $\{ i, j \} = \{ 1, 2 \}$.
\end{lem}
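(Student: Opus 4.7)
The proof will run in close parallel to that of Lemma \ref{lem:mu-GU(V)-u}, using the data of the present case $J \in (F^\times)^2$: the isomorphism $\ii:B \to \M_2(F)$ given by \eqref{eq:isomB-J}, the idempotents $e, e', e'', e^*$ listed just after \eqref{eq:isomB-J}, and the explicit $\h_0 \in \Sp(\V)$ exhibited in \S \ref{sssec:cpJ} realizing $\X' = \X\h_0$, $\Y' = \Y\h_0$. Note that $\g_i$ acts on $V$ by left multiplication through $B_i^\times$ and hence commutes with the right $B$-action, so in particular $\Y' \g_i = \Y'$; this is independent of whether $u$ or $J$ is a square.

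The plan is the following. Setting $\d := d_\Y(\nu_i) \in \GSp(\V)$, I will write
\[
\mu(\g_i) = z_\Y(\h_0 \g_i \h_0^{-1}, \h_0) \cdot z_\Y(\h_0, \g_i)^{-1}
\]
and treat the two factors separately. For the first factor, the cocycle identity gives
\[
z_\Y(\h_0 \g_i \h_0^{-1}, \h_0) = z_\Y(\h_0 \g_i \h_0^{-1} \d^{-1},\; \d \h_0 \d^{-1}) \cdot v_\Y(\h_0, \nu_i),
\]
and the first factor on the right vanishes from the cocycle because $\Y' \g_i = \Y'$ implies $\h_0 \g_i \h_0^{-1} \d^{-1} \in P_\Y$. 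A direct Bruhat decomposition $\h_0 = \n(\b_1) \tau_4 \n(\b_2)$ of the explicit $\h_0$ in \S \ref{sssec:cpJ} shows that $j_\Y(\h_0) = 4$ and $x_\Y(\h_0) \equiv 1 \bmod (F^\times)^2$, so by Lemma \ref{lem:v_Y} together with $\gamma_F(a,\psi)^4 = 1$ we obtain $v_\Y(\h_0, \nu_i) = 1$. Thus the entire contribution to $\mu(\g_i)$ comes from $z_\Y(\h_0, \g_i)^{-1}$, exactly as in the proof of Lemma \ref{lem:mu-GU(V)-u}.

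For the second factor I will case-split according to the statement. When $b_i = d_i = 0$, we have $\g_i \d^{-1} \in P_\Y$, so $z_\Y(\h_0, \g_i) = 1$ and $\mu(\g_i) = 1$. In the other two cases, one writes an explicit Bruhat decomposition of $\g_i \d^{-1}$ in the coordinates of the new $\h_0$; after absorbing the Levi and unipotent pieces through $\h_0 = \n(\b_1) \tau_4 \n(\b_2)$, the cocycle reduces to $z_\Y(\tau_4, \n(\b)\tau_j) = \gamma_F(\tfrac{1}{2}\psi \circ q_\b)$ for an explicit symmetric form $q_\b$ on $F^j$. The form $q_\b$ that appears has a block-diagonal shape $\operatorname{diag}(\b', -J_j \b')$ reflecting the fact that $V = B_1 \otimes_E B_2$ as an $E$-space splits into the $B_i$-component and its $\j_j$-translate; computing $\det q_\b$ and the Hasse invariant $h_F(q_\b)$, and applying the identity \eqref{eqn:weil_index} together with the standard multiplicativity of $\gamma_F(\cdot, \psi)$ and the Hilbert symbol, produces the stated expression involving $\gamma_F(J_j, \tfrac{1}{2}\psi)$ and $(\cdot, J_j)_F$.

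The main obstacle is purely the combinatorial bookkeeping of the Weil indices in the two non-trivial cases: one must verify that the Levi and unipotent conjugations needed to bring $\g_i \d^{-1}$ into the form $\n(\b) \tau_j$ (and then to combine with $\n(\b_2)$) do not introduce extra factors beyond those absorbed into the symmetric form $q_\b$, and that the resulting $\det q_\b$ and $h_F(q_\b)$ collapse, via the quadratic reciprocity identities on Hilbert symbols, to the clean expression in the statement. This is entirely analogous to the calculation in Lemma \ref{lem:mu-GU(V)-u}; the only structural check is that swapping the hypothesis ``$u$ is a square'' for ``$J$ is a square'' does not disturb the invariant that enters, which is transparent from the fact that the formula in the statement depends on $J_i, \nu_i$, and the constants $a_i, b_i, c_i, d_i$ of $\ba_i$, none of which involve the choice of $\ii$ used to set up the coordinates.
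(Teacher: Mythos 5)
Your plan breaks down at the Bruhat decomposition of $\h_0$. You write that ``a direct Bruhat decomposition $\h_0 = \n(\b_1)\tau_4\n(\b_2)$ of the explicit $\h_0$ in \S\ref{sssec:cpJ} shows that $j_\Y(\h_0)=4$ and $x_\Y(\h_0)\equiv 1\bmod(F^\times)^2$, so $v_\Y(\h_0,\nu_i)=1$.'' That is the structure of $\h_0$ in the $u\in(F^\times)^2$ case, not here. If you actually compute with the $\h_0$ from \S\ref{sssec:cpJ} (rows are $\e_1e,\e_2e,-\e_1e'',\frac{1}{J_1}\e_2e''$ and $\frac{2}{u}\e_1e',-\frac{2}{uJ_1}\e_2e',2\e_1e^*,2\e_2e^*$), you will see that its lower-left $4\times4$ block has rank $2$, not $4$; the paper exhibits a factorization $\h_0 = \m(\a_5)\,\n(\b_{15})\,\tau_2\,\m(\a_6)$, giving $j_\Y(\h_0)=2$ and $x_\Y(\h_0)\equiv -J_1\bmod(F^\times)^2$. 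Hence
\[
 v_\Y(\h_0,\nu_1) = (-J_1,\nu_1)_F\cdot\gamma_F(\nu_1,\tfrac12\psi)^{-2} = (J_1,\nu_1)_F,
\]
which is generally nontrivial. This factor, not $z_\Y(\h_0,\g_i)$, is the dominant contribution to $\mu(\g_i)$ in this case.

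This also inverts the second half of your plan. Because $\h_0$ factors through $\tau_2$ rather than $\tau_4$, the reduction you expect, to $z_\Y(\tau_4,\n(\b)\tau_j)$ with a block-diagonal $q_\b = \operatorname{diag}(\b',-J_j\b')$, never appears. In this case $z_\Y(\h_0,\g_i)$ reduces to expressions of the form $z_\Y(\tau_2\m(\a_6),\cdots)$, which after conjugation collapse to things like $z_\Y(\tau_2,\tau')$ or $z_\Y(\tau_2,\tau_4)$ and are equal to $1$ in every sub-case. The stated answer, $\gamma_F(J_j,\tfrac12\psi)\cdot(\cdots,J_j)_F$ or $(\cdots,J_j)_F$, is then recovered not by a Weil index computation of a quadratic form, but by the elementary observation that since $J\in(F^\times)^2$ one has $J_1\equiv J_2\bmod(F^\times)^2$, whence $\gamma_F(J_j,\tfrac12\psi)$ and the various Hilbert symbols simplify so that the stated formula reduces exactly to $(J_1,\nu_i)_F$. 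So while your general cocycle strategy is correct, the assumption that the $J$-case tracks the $u$-case structurally is exactly the false step, and as written the two computations you would carry out would each be wrong (one producing a spurious $1$, the other a spurious Weil index), with no guarantee of cancellation.
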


\begin{proof}
We only consider the case $i = 1$; the other case is similar.
Note that $J_1 \equiv J_2 \bmod (F^{\times})^2$ since $J \in (F^{\times})^2$.
Put $\d := d_{\Y}(\nu_1) \in \GSp(\V)$.
We have
\[
 z_{\Y}(\h_0 \g_1 \h_0^{-1}, \h_0) 
 = z_{\Y}(\h_0 \g_1 \h_0^{-1} \cdot \d^{-1}, \d \cdot \h_0 \cdot \d^{-1}) 
 \cdot v_{\Y}(\h_0, \nu_1).
\]
Since $\Y' \g_1 = \Y'$,
we have $\h_0 \g_1 \h_0^{-1} \cdot \d^{-1} \in P_{\Y}$ and hence
\[
 z_{\Y}(\h_0 \g_1 \h_0^{-1} \cdot \d^{-1}, \d \cdot \h_0 \cdot \d^{-1}) =1.
\]
We have $\h_0 = \m(\a_5) \cdot \n(\b_{15}) \cdot \tau_2 \cdot \m(\a_6)$, where
\[
 \a_5 = 
 \begin{pmatrix}
 1 & & & \\
 & 1 & & \\
 & & & -t \\
 & & - \frac{t}{J_1} &
 \end{pmatrix}, \qquad
 \b_{15} = \frac{1}{2t} \cdot 
 \begin{pmatrix}
 & & & 1 \\
 & & J_1 & \\
 & J_1 & & \\
 1 & & & 
 \end{pmatrix}, \qquad
 \a_6 = 
 \begin{pmatrix}
 1 & & & \\
 & 1 & & \\
 & - \frac{t}{J_1} & 1 & \\
 -t & & & 1
 \end{pmatrix},
\]
so that $x_{\Y}(\h_0) \equiv -J_1 \bmod (F^{\times})^2$ and $j_{\Y}(\h_0) = 2$.
Hence we have
\[
 v_{\Y}(\h_0, \nu_1) = (-J_1, \nu_1)_F \cdot \gamma_F(\nu_1, \frac{1}{2} \psi)^{-2} = (J_1, \nu_1)_F.
\]
Thus we obtain 
\[
 z_{\Y}(\h_0 \g_1 \h_0^{-1}, \h_0) = (J_1, \nu_1)_F = (J_2, \nu_1)_F.
\]
Moreover, if $b_1 = d_1 = 0$, then we have
\[
 (J_1, \nu_1)_F = (J_1, a_1^2 - c_1^2 J_1)_F = 1.
\]

Now we compute $z_{\Y}(\h_0, \g_1)$.
We have
\[
 z_{\Y}(\h_0, \g_1) = z_{\Y}(\h_0, \g_1 \cdot \d^{-1}).
\]
First assume that $b_1 = d_1 = 0$.
Then we have $\g_1 \cdot \d^{-1} \in P_{\Y}$ and hence
\[
 z_{\Y}(\h_0, \g_1 \cdot \d^{-1}) = 1.
\]
Next assume that $(b_1, d_1) \ne (0,0)$ and $b_1^2 - d_1^2 J_1 = 0$.
Then we have $b_1 \ne 0$ and $d_1 \ne 0$.
As in the proof of Lemma \ref{lem:mu-GU(V)-u},
we have $a_1 b_1 + c_1 d_1 J_1 \ne 0$.
We have $\g_1 \cdot \d^{-1} \in \m(\a_1) \cdot \n(\b_3) \cdot \tau_2 \cdot P_{\Y}$, where $\a_1$ and $\b_3$ are as in the proof of Lemma \ref{lem:mu-GU(V)-u}.
Hence we have
\[
 z_{\Y}(\h_0, \g_1 \cdot \d^{-1})
 = z_{\Y}(\tau_2 \cdot \m(\a_6), \m(\a_1) \cdot \n(\b_3) \cdot \tau_2)
 = z_{\Y}(\tau_2, \m(\a_6) \cdot \m(\a_1) \cdot \n(\b_3) \cdot \tau_2).
\]
We have
\[
 \m(\a_6) \cdot \m(\a_1) \cdot \n(\b_3) \cdot \tau_2 
 \in \m(\a_7) \cdot \n(\b_{16}) \cdot \tau' \cdot P_{\Y},
\]
where
\[
 \a_7 = 
 \begin{pmatrix}
  b_1 & & & \\
  d_1 J_1 & 1 & & \\
  & & b_1 & \\
  & & d_1 J_1 & 1
 \end{pmatrix}, \qquad
 \b_{16} = \frac{(a_1 d_1 - b_1 c_1) J_1}{b_1} \cdot 
 \begin{pmatrix}
  0 & & & \\
  & \frac{1}{d_1} & & \frac{t}{b_1} \\
  & & 0 & \\
  & \frac{t}{b_1} & & 0
 \end{pmatrix},
\]
and
\[
 \tau' = 
 \begin{pmatrix}
  1 & & & & & & & \\
  & 0 & & & & -1 & & \\
  & & 1 & & & & & \\
  & & & 0 & & & & -1 \\
  & & & & 1 & & & \\
  & 1 & & & & 0 & & \\
  & & & & & & 1 & \\
  & & & 1 & & & & 0
 \end{pmatrix}.
\]
Hence we have
\[
 z_{\Y}(\tau_2, \m(\a_6) \cdot \m(\a_1) \cdot \n(\b_3) \cdot \tau_2)
 = z_{\Y}(\tau_2, \m(\a_7) \cdot \n(\b_{16}) \cdot \tau')
 = z_{\Y}(\tau_2 \cdot \m(\a_7) \cdot \n(\b_{16}), \tau').
\]
Since $\tau_2 \cdot \m(\a_7) \cdot \n(\b_{16}) \cdot \tau_2^{-1} \in P_{\Y}$, we have
\[
 z_{\Y}(\tau_2 \cdot \m(\a_7) \cdot \n(\b_{16}), \tau')
 = z_{\Y}(\tau_2, \tau') = 1.
\]
On the other hand, since $J \in (F^{\times})^2$ and $J_1 \in (F^{\times})^2$,
we have $\gamma_F(J_2, \frac{1}{2} \psi) = 1$ and 
\[
 ((a_1 b_1 + c_1 d_1 J_1) J_1, J_2)_F = 1.
\]
Finally assume that $(b_1, d_1) \ne (0,0)$ and $b_1^2 - d_1^2 J_1 \ne 0$.
We have $\g_1 \cdot \d^{-1} \in \n(\b_5) \cdot \tau_4 \cdot P_{\Y}$,
where $\b_5$ is as in the proof of Lemma \ref{lem:mu-GU(V)-u}.
Hence we have
\begin{align*}
 z_{\Y}(\h_0, \g_1 \cdot \d^{-1})  
 & = z_{\Y}(\tau_2 \cdot \m(\a_6), \n(\b_5) \cdot \tau_4) \\
 & = z_{\Y}(\tau_2 \cdot \m(\a_6), \n(\b_5) \cdot \m(\a_6)^{-1} \cdot \tau_4) \\
 & = z_{\Y}(\tau_2 \cdot \m(\a_6) \cdot \n(\b_5) \cdot \m(\a_6)^{-1}, \tau_4).
\end{align*}
Since $\tau_2 \cdot \m(\a_6) \cdot \n(\b_5) \cdot \m(\a_6)^{-1} \cdot \tau_2^{-1} \in P_{\Y}$, we have
\[
 z_{\Y}(\h_0, \g_1 \cdot \d^{-1}) = z_{\Y}(\tau_2, \tau_4) = 1.
\]
On the other hand, we have
\[
 (-(b_1^2 - d_1^2 J_1) J_1, J_2)_F = (d_1^2 J_1^2 - b_1^2 J_1, J_1)_F = 1.
\]
This competes the proof.
\end{proof}

\begin{lem}
\label{lem:mu-GU(W)-J}
Let $\g := \ba \in \GU(W)$ with $\ba = a + b \i + c \j + d \i \j \in B^{\times}$.
Then we have
\begin{align*}
 \mu(\g) & =
 \begin{cases}
 (\nu, J_1)_F & \text{if $b = d = 0$,} \\
 \gamma_F(J_1, \frac{1}{2} \psi) \cdot (ab - cdJ, J_1)_F
  & \text{if $(b,d) \ne (0,0)$ and $b^2 - d^2 J = 0$,} \\
 (-(b^2 - d^2 J) J, J_1)_F
  & \text{if $(b,d) \ne (0,0)$ and $b^2 - d^2 J \ne 0$,}
 \end{cases} \\
 & \times 
 \begin{cases}
  1 & \text{if $b + dt = 0$,} \\
  (u, J_1)_F & \text{if $b + dt \ne 0$,} \\
 \end{cases} \\
\end{align*}
where $\nu = \nu(\ba)$.
\end{lem}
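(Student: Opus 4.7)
\medskip

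\noindent\textbf{Proof proposal for Lemma \ref{lem:mu-GU(W)-J}.}

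The plan is to follow the structure of the proof of Lemma \ref{lem:mu-GU(W)-u}, computing
$\mu(\g) = z_{\Y}(\h_0 \g \h_0^{-1}, \h_0) \cdot z_{\Y}(\h_0, \g)^{-1}$
by splitting each factor into a product of a Bruhat-type piece and a $v_{\Y}$-correction, but now using the $\h_0$ adapted to the case $J \in (F_v^\times)^2$ from \S \ref{ss:B-spl}. First I would record, as in the proof of Lemma \ref{lem:mu-GU(V)-J}, the Bruhat decomposition $\h_0 = \m(\a_5) \cdot \n(\b_{15}) \cdot \tau_2 \cdot \m(\a_6)$. This gives $x_{\Y}(\h_0) \equiv -J_1 \bmod (F^{\times})^2$ and $j_{\Y}(\h_0) = 2$, so by Lemma \ref{lem:v_Y} we have $v_{\Y}(\h_0, \nu) = (J_1, \nu)_F$, which is the sole source of the additional $(J_1,\nu)_F$-type factor absent in the $u \in (F^{\times})^2$ case.

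Next I would compute $z_{\Y}(\h_0 \g \h_0^{-1}, \h_0)$ by writing it as $z_{\Y}(\h_0 \g \h_0^{-1} \d^{-1}, \d \h_0 \d^{-1}) \cdot v_{\Y}(\h_0, \nu)$ with $\d = d_{\Y}(\nu)$. Using the explicit $B$-action tables of \S \ref{ss:spl-setup}, one works out the matrix of $\g$ in the basis $\e_1, \dots, \e_4, \e_1^*, \dots, \e_4^*$ and conjugates by $\h_0$. The analysis then forks on whether $(a - bt) \e_1 + (-c + dt) \e_4$ (and its analogue for $\e_2, \e_3$) determines an element of $P_{\Y}$ or a non-trivial Bruhat cell, yielding a product of Weil indices $\gamma_F(\tfrac{1}{2}\psi \circ q)$ of explicit symmetric forms $q$ which I would simplify via \eqref{eqn:weil_index}, keeping in mind $\gamma_F(J_1, \tfrac{1}{2}\psi) = \gamma_F(J_2, \tfrac{1}{2}\psi)$ and $(J_1, *)_F = (J_2, *)_F$ for $*$ in the image of $\N_{K/F}$.

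The computation of $z_{\Y}(\h_0, \g)$ is parallel: reduce to $z_{\Y}(\h_0, \g \d^{-1})$, insert the Bruhat decomposition of $\g \d^{-1}$ (the three sub-cases $b = d = 0$, $b^2 - d^2 J = 0$, $b^2 - d^2 J \neq 0$ producing respectively a Siegel-parabolic element, an $\m(\a_1) \n(\b_3) \tau_2$-element, and an $\n(\b_5)\tau_4$-element, in the notation of Lemma \ref{lem:mu-GU(V)-u}), and combine this with the $\m(\a_5) \n(\b_{15}) \tau_2 \m(\a_6)$ form of $\h_0$. The main new phenomenon compared with Lemma \ref{lem:mu-GU(W)-u} is that, because $\ii(\i)$ is now off-diagonal in \eqref{eq:isomB-J}, the entry $b + dt$ of $\g$ (rather than $c - dt$) governs whether one lands in the open Bruhat cell; tracking this asymmetry should produce the secondary case split on $b + dt$ and a Hilbert symbol $(u, J_1)_F$ from the Weil index of a $2 \times 2$ symmetric form whose determinant is, up to a square, equal to $-u J_1$.

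The hard part will be managing the many sub-cases of the outer trichotomy $\{b=d=0,\ b^2-d^2J=0,\ b^2-d^2J\neq 0\}$ crossed with $\{b+dt = 0,\ b+dt \neq 0\}$, while separately tracking (a) the $\gamma_F(J_1, \tfrac{1}{2}\psi)$-factors from non-degenerate Weil indices, (b) the $(J_1, \nu)_F$-factor from $v_{\Y}(\h_0, \nu)$, and (c) the new $(u, J_1)_F$-factor. The crucial algebraic identities to check are, in the generic case, that $((a-bt)^2 - (c-dt)^2 J, J_1)_F \cdot (J_1, \nu)_F = (-(b^2-d^2 J)J, J_1)_F$ modulo the $(u, J_1)$-correction (mirroring the analogous simplification carried out at the end of the proof of Lemma \ref{lem:mu-GU(W)-u}), and likewise in the boundary cases. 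Once these matches are verified, the lemma follows by multiplying the two factors and reading off the formula.
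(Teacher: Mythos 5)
Your proposal follows essentially the same approach as the paper's proof: split $\mu(\g)$ into $z_{\Y}(\h_0\g\h_0^{-1},\h_0)$ and $z_{\Y}(\h_0,\g)^{-1}$, use the Bruhat decomposition $\h_0=\m(\a_5)\n(\b_{15})\tau_2\m(\a_6)$ and Lemma~\ref{lem:v_Y} to extract $v_{\Y}(\h_0,\nu)=(\nu,J_1)_F$, and run the trichotomy on $(b,d)$ against the binary split on $b+dt$, combining Weil indices via \eqref{eqn:weil_index}. One small slip: in the $J\in(F^\times)^2$ case the relevant combinations are $a\pm ct$ and $b\pm dt$ (since $\ii(\j)$ is diagonal), so the expressions $(a-bt)$, $(c-dt)$ and the identity $((a-bt)^2-(c-dt)^2J,J_1)_F\cdots$ that you write down are carried over from the $u\in(F^\times)^2$ case; the actual simplification at the end is $\gamma_F(J_1,\tfrac12\psi)^2\cdot(u(b+dt)(b-dt),J_1)_F=(-(b^2-d^2J)J,J_1)_F\cdot(u,J_1)_F$, but this does not affect the correctness of the strategy.
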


\begin{proof}
Put $\d := d_{\Y}(\nu) \in \GSp(\V)$.
We have
\[
 z_{\Y}(\h_0 \g \h_0^{-1}, \h_0) 
 = z_{\Y}(\h_0 \g \h_0^{-1} \cdot \d^{-1}, \d \cdot \h_0 \cdot \d^{-1}) 
 \cdot v_{\Y}(\h_0, \nu).
\]
As in the proof of Lemma \ref{lem:mu-GU(V)-J}, we have $v_{\Y}(\h_0, \nu) = (\nu, J_1)_F$.
We have
\[
 \h_0 \g \h_0^{-1} = 
 \begin{pmatrix}
 a+ct & & & & \frac{(b-dt)u}{2} & & & \\
 & a+ct & & & & - \frac{(b-dt) u J_1}{2} & & \\
 & & a+ct & & & & - \frac{b-dt}{2} & \\
 & & & a+ct & & & & \frac{b-dt}{2 J_1} \\
 2(b+dt) & & & & a-ct & & & \\
 & - \frac{2(b+dt)}{J_1} & & & & a-ct & & \\
 & & -2(b+dt)u & & & &  a-ct & \\
 & & & 2 (b+dt) u J_1 & & & & a-ct 
 \end{pmatrix}.
\]
If $b+dt = 0$, then we have $\h_0 \g \h_0^{-1} \cdot \d^{-1} \in P_{\Y}$
and hence 
$z_{\Y}(\h_0 \g \h_0^{-1} \cdot \d^{-1}, \d \cdot \h_0 \cdot \d^{-1}) = 1$.
If $b+dt \ne 0$, then we have
$\h_0 \g \h_0^{-1} \cdot \d^{-1} \in P_{\Y} \cdot \tau_4 \cdot \n(\b_{17})$, where
\[
 \b_{17} = \frac{a-ct}{2 \nu (b+dt)} \cdot 
 \begin{pmatrix}
  1 & & & \\
  & -J_1 & & \\
  & & - \frac{1}{u} & \\
  & & & \frac{1}{u J_1}
 \end{pmatrix}.
\]
We have $\d \cdot \h_0 \cdot \d^{-1} \in \m(\a_5) \cdot \n(\nu^{-1} \cdot \b_{15}) \cdot \tau_2 \cdot P_{\Y}$,
where $\a_5$ and $\b_{15}$ are as in the proof of Lemma \ref{lem:mu-GU(V)-J}.
Hence we have
\begin{align*}
 z_{\Y}(\h_0 \g \h_0^{-1} \cdot \d^{-1}, \d \cdot \h_0 \cdot \d^{-1})  
 & = z_{\Y}(\tau_4 \cdot \n(\b_{17}), \m(\a_5) \cdot \n(\nu^{-1} \cdot \b_{15}) \cdot \tau_2) \\
 & = z_{\Y}(\tau_4 \cdot \m(\a_5), \m(\a_5)^{-1} \cdot \n(\b_{17}) \cdot \m(\a_5) \cdot \n(\nu^{-1} \cdot \b_{15}) \cdot \tau_2) \\
 & = z_{\Y}(\tau_4, \n(\b_{18}) \cdot \tau_2),
\end{align*}
where $\b_{18} = \nu^{-1} \cdot \b_{15} + \a_5^{-1} \cdot \b_{17} \cdot {}^t \a_5^{-1}$.
Put $r = \frac{a-ct}{b+dt}$.
We have
\[
 \b_{18} = \frac{1}{2 \nu t} \cdot 
 \begin{pmatrix}
  rt & & & 1 \\
  & -rt J_1 & J_1 & \\
  & J_1 & \frac{rJ_1}{tu} & \\
  1 & & & - \frac{r}{tu}
 \end{pmatrix}.
\]
We write $\b_{18} = \b_{19} + \b_{20}$, where
\[
 \b_{19} = \frac{1}{2 \nu t} \cdot 
 \begin{pmatrix}
  rt & & & 1 \\
  & -rt J_1 & J_1 & \\
  & J_1 & & \\
  1 & & &
 \end{pmatrix}, \qquad
 \b_{20} = \frac{r}{2 \nu u J} \cdot 
 \begin{pmatrix}
  0 & & & \\
  & 0 & & \\
  & & J_1 & \\
  & & & -1
 \end{pmatrix}.
\]
Since $\tau_2 ^{-1} \cdot \n(\b_{19}) \cdot \tau_2 \in P_{\Y}$, we have
\[
 z_{\Y}(\tau_4, \n(\b_{18}) \cdot \tau_2)
 = z_{\Y}(\tau_4, \n(\b_{20}) \cdot \tau_2).
\]
If $r=0$, then we have $z_{\Y}(\tau_4, \n(\b_{20}) \cdot \tau_2) = z_{\Y}(\tau_4, \tau_2) = 1$.
If $r \ne 0$, then we have $z_{\Y}(\tau_4, \n(\b_{20}) \cdot \tau_2) = \gamma_F(\frac{1}{2} \psi \circ q_8)$,
where $q_8$ is a non-degenerate symmetric bilinear form associated to
\[
 \frac{r}{2 \nu u J} \cdot 
 \begin{pmatrix}
  J_1 & \\
  & -1
 \end{pmatrix}.
\]
We have $\det q_8 \equiv -J_1 \bmod (F^{\times})^2$ and
\[
 h_F(q_8) = (\frac{r J_1}{2 \nu u J}, - \frac{r}{2 \nu u J})_F
 = (J_1, - 2 \nu r u)_F.
\]
Hence we have
\[
 \gamma_F(\frac{1}{2} \psi \circ q_8)
 = \gamma_F(\frac{1}{2} \psi)^2 \cdot
 \gamma_F(-J_1, \frac{1}{2} \psi) \cdot (J_1, - 2 \nu r u)_F
 = \gamma_F(J_1, \frac{1}{2} \psi) \cdot (J_1, 2 \nu r u)_F.
\]
Thus we obtain
\[
 z_{\Y}(\h_0 \g \h_0^{-1}, \h_0) =
 \begin{cases}
  (\nu, J_1)_F & \text{if $b + dt = 0$,} \\
  (\nu, J_1)_F & \text{if $b + dt \ne 0$, $a - ct = 0$,} \\
 \gamma_F(J_1, \frac{1}{2} \psi) \cdot (2u \cdot \frac{a-ct}{b+dt}, J_1)_F
  & \text{if $b + dt \ne 0$, $a - ct \ne 0$.}
 \end{cases}
\]

Now we compute $z_{\Y}(\h_0, \g)$.
We have
\[
 z_{\Y}(\h_0, \g) = z_{\Y}(\h_0, \g \cdot \d^{-1}).
\]
First assume that $b = d = 0$.
Then we have $\g \cdot \d^{-1} \in P_{\Y}$ and hence
\[
 z_{\Y}(\h_0, \g \cdot \d^{-1}) = 1.
\]
Next assume that $(b, d) \ne (0,0)$ and $b^2 - d^2 J = 0$.
Then we have $b \ne 0$, $d \ne 0$, and $\nu = a^2 - c^2 J \ne 0$.
Since 
\begin{align*}
 (ad + bc) \cdot (ab - cdJ)
 & = a^2 bd - ac d^2 J + a b^2 c  - b c^2 d J \\
 & = a^2 bd - b c^2 d J \\
 & = \nu bd \\
 & \ne 0,
\end{align*}
we have $ad + bc \ne 0$ and $ab - cd J \ne 0$.
We have $\g \cdot \d^{-1} \in \m(\a_3) \cdot \n(\b_9) \cdot \tau_2 \cdot P_{\Y}$, where $\a_3$ and $\b_9$ are as in the proof of Lemma \ref{lem:mu-GU(W)-u}.
Hence we have
\[
 z_{\Y}(\h_0, \g \cdot \d^{-1}) = 
 z_{\Y}(\tau_2 \cdot \m(\a_6), \m(\a_3) \cdot \n(\b_9) \cdot \tau_2)
 = z_{\Y}(\tau_2 \cdot \m(\a_8), \n(\b_9) \cdot \tau_2),
\]
where $\a_6$ is as in the proof of Lemma \ref{lem:mu-GU(V)-J} and 
\[
 \a_8 = \a_6 \cdot \a_3 = 
 \begin{pmatrix}
  b & & & \\
  & b & & \\
  & -\frac{(b + dt) t}{J_1} & 1 & \\
 - (b + dt) t & & & 1
 \end{pmatrix}.
\]
If $b+dt = 0$, then we have $\tau_2 \cdot \m(\a_8) \cdot \tau_2^{-1} \in P_{\Y}$ and hence
\[
 z_{\Y}(\tau_2 \cdot \m(\a_8), \n(\b_9) \cdot \tau_2)
 = z_{\Y}(\tau_2, \n(\b_9) \cdot \tau_2)
 = \gamma_F(\frac{1}{2} \psi \circ q_9),
\]
where $q_9$ is a non-degenerate symmetric bilinear form associated to
\[
 \frac{ad+bc}{bd} \cdot 
 \begin{pmatrix}
  -J_2 & \\
  & J
 \end{pmatrix}.
\]
We have $\det q_9 \equiv - J_1 \bmod (F^{\times})^2$ and
\[
 h_F(q_9) = (- \frac{ad+bc}{bd} \cdot J_2, \frac{ad+bc}{bd} \cdot J)_F 
 = (J_2, \frac{ad+bc}{bd})_F = (J_2, \frac{\nu}{ab - cd J})_F.
\]
Hence we have
\[
 \gamma_F(\frac{1}{2} \psi \circ q_9) =
 \gamma_F(\frac{1}{2} \psi)^2 \cdot 
 \gamma_F(-J_1, \frac{1}{2} \psi) \cdot (J_2, \frac{\nu}{ab - cd J})_F
 = \gamma_F(J_1, \frac{1}{2} \psi)^{-1} \cdot (J_1, (ab - cd J) \nu)_F.
\]
If $b+dt \ne 0$ ,then we have
\[
 \m(\a_8) \cdot \n(\b_9) \cdot \tau_2 \in
 \m(\a_9) \cdot \n(\b_{21}) \cdot \tau'' \cdot P_{\Y},
\]
where
\[
 \a_9 =
 \begin{pmatrix}
  1 & & & -1 \\
  & 1 & -1 & \\
  & & \frac{(b+dt)t}{bJ_1} & \\
  & & & \frac{(b+dt)t}{b}
 \end{pmatrix}, \qquad
 \b_{21} = \frac{(ad+bc) b}{(b+dt)^2 d} \cdot 
 \begin{pmatrix}
  1 & & & 1 \\
  & -J_1 & & \\
  & & 0 & \\
  1 & & & 0
 \end{pmatrix},
\]
and
\[
 \tau'' =
 \begin{pmatrix}
  & & -\1_2 & \\
  & \1_2 & & \\
  \1_2 & & & \\
  & & & \1_2
 \end{pmatrix}.
\]
Since $\tau_2 \cdot \m(\a_9) \cdot \n(\b_{21}) \cdot \tau_2^{-1} \in P_{\Y}$, we have
\[
 z_{\Y}(\tau_2 \cdot \m(\a_8), \n(\b_9) \cdot \tau_2) 
 = z_{\Y}(\tau_2, \m(\a_9) \cdot \n(\b_{21}) \cdot \tau'')
 = z_{\Y}(\tau_2, \tau'')
 = 1.
\]
Finally assume that $(b, d) \ne (0,0)$ and $b^2 - d^2 J \ne 0$.
We have $\g \cdot \d^{-1} \in \n(\b_{13}) \cdot \tau_4 \cdot P_{\Y}$,
where $\b_{13}$ is as in the proof of Lemma \ref{lem:mu-GU(W)-u}.
Hence we have
\[
 z_{\Y}(\h_0, \g \cdot \d^{-1})
 = z_{\Y}(\tau_2 \cdot \m(\a_6), \n(\b_{13}) \cdot \tau_4)
 = z_{\Y}(\tau_2, \n(\b_{22}) \cdot \tau_4),
\]
where
\begin{align*}
 \b_{22} & = \a_6 \cdot \b_{13} \cdot {}^t \a_6 \\
 & = \frac{1}{b^2 - d^2 J} \cdot 
 \begin{pmatrix}
 ab -cdJ & & & - (a - ct) (b + dt) t \\
 & - (ab - cdJ) J_1 & (a - ct) (b + dt) t & \\
 & (a - ct) (b + dt)  t & - 2 (a - ct) (b + dt) J_2 & \\
 - (a - ct) (b + dt) t & & & 2 (a - ct) (b + dt) J
 \end{pmatrix}.
\end{align*}
We write $\b_{22} = \b_{23} + \b_{24}$, where
\begin{align*}
 \b_{23} & = 
 \frac{1}{b^2 - d^2 J} \cdot 
 \begin{pmatrix}
 ab -cdJ & & & - (a - ct) (b + dt) t \\
 & - (ab - cdJ) J_1 & (a - ct) (b + dt) t & \\
 & (a - ct) (b + dt)  t & & \\
 - (a - ct) (b + dt) t & & & 
 \end{pmatrix}, \\
 \b_{24} & = 
 \frac{2 (a-ct)}{b-dt} \cdot 
 \begin{pmatrix}
 0 & & & \\
 & 0 & & \\
 & & - J_2 & \\
 & & & J
 \end{pmatrix}.
\end{align*}
Since $\tau_2 \cdot \b_{23} \cdot \tau_2^{-1} \in P_{\Y}$,
we have $z_{\Y}(\tau_2, \n(\b_{22}) \cdot \tau_4) = z_{\Y}(\tau_2, \n(\b_{24}) \cdot \tau_4)$.
If $a - ct = 0$,
then we have $z_{\Y}(\tau_2, \n(\b_{24}) \cdot \tau_4) = z_{\Y}(\tau_2, \tau_4) = 1$.
If $a - ct \ne 0$, then we have
$z_{\Y}(\tau_2, \n(\b_{24}) \cdot \tau_4) = \gamma_F(\frac{1}{2} \psi \circ q_{10})$,
where $q_{10}$ is a non-degenerate symmetric bilinear form associated to
\[
 \frac{2 (a-ct)}{b-dt} \cdot 
 \begin{pmatrix}
 - J_2 & \\
 & J
 \end{pmatrix}.
\]
We have $\det q_{10} \equiv -J_1 \bmod (F^{\times})^2$ and 
\[
 h_F(q_{10}) = (- \frac{2 (a-ct)}{b-dt} \cdot J_2, \frac{2 (a-ct)}{b-dt} \cdot J)_F = (J_2, \frac{2 (a-ct)}{b-dt})_F.
\]
Hence we have
\[
 \gamma_F(\frac{1}{2} \psi \circ q_{10}) 
 = \gamma_F(\frac{1}{2} \psi)^2 \cdot \gamma_F(-J_1, \frac{1}{2} \psi)
 \cdot (J_2, \frac{2 (a-ct)}{b-dt})_F
 = \gamma_F(J_1, \frac{1}{2} \psi)^{-1} \cdot (J_1, \frac{2 (a-ct)}{b-dt})_F.
\]
Thus we obtain
\[
 z_{\Y}(\h_0, \g) = 
 \begin{cases}
  1 & \text{if $b = d = 0$,} \\
  \gamma_F(J_1, \frac{1}{2} \psi)^{-1} \cdot (J_1, (ab - cd J) \nu)_F 
  & \text{if $(b,d) \ne (0,0)$, $b^2 - d^2 J = 0$, $b+dt = 0$,} \\
  1 & \text{if $(b,d) \ne (0,0)$, $b^2 - d^2 J = 0$, $b+dt \ne 0$,} \\
  1 & \text{if $(b,d) \ne (0,0)$, $b^2 - d^2 J \ne 0$, $a-ct = 0$,} \\
  \gamma_F(J_1, \frac{1}{2} \psi)^{-1} \cdot (J_1, \frac{2 (a-ct)}{b-dt})_F
  & \text{if $(b,d) \ne (0,0)$, $b^2 - d^2 J \ne 0$, $a+ct \ne 0$.}
 \end{cases}
\]

Now we compute $\mu(\g) = z_{\Y}(\h_0 \g \h_0^{-1}, \h_0) \cdot z_{\Y}(\h_0, \g)^{-1}$.
Recall that $J = t^2$ and $\nu = a^2 - b^2 u - c^2 J + d^2 u J \ne 0$.
First assume that $b = d = 0$.
Then we have 
\[
 \mu(\g) = (\nu, J_1)_F \cdot 1 = (\nu, J_1)_F.
\]
Next assume that $(b,d) \ne (0,0)$ and $b^2 - d^2 J = 0$.
Then we have $\nu = a^2 - c^2 J = (a+ct)(a-ct) \ne 0$.
Since $b^2 - d^2 J = (b + dt)(b-dt)$, we have
\[
 b+dt = 0 \Longleftrightarrow b-dt \ne 0.
\]
If $b+dt = 0$, then we have
\[
 \mu(\g) = (\nu, J_1)_F \cdot
 \gamma_F(J_1, \frac{1}{2} \psi) \cdot (J_1, (ab - cd J) \nu)_F
 = \gamma_F(J_1, \frac{1}{2} \psi) \cdot (J_1, ab - cd J)_F.
\]
If $b+dt \ne 0$, then we have
\[
 (a-ct)(b+dt) = 2 (a-ct) dt = 2 (adt - cd J) = 2(ab - cd J).
\]
Hence we have
\begin{align*}
 \mu(\g) & =
 \gamma_F(J_1, \frac{1}{2} \psi) \cdot (2u \cdot \frac{a-ct}{b+dt}, J_1)_F
 \cdot 1 \\
 & = \gamma_F(J_1, \frac{1}{2} \psi)
 \cdot (2 (a-ct)(b+dt), J_1)_F \cdot (u, J_1)_F \\
 & = \gamma_F(J_1, \frac{1}{2} \psi)
 \cdot (ab - cdJ, J_1)_F \cdot (u, J_1)_F.
\end{align*}
Finally assume that $(b,d) \ne (0,0)$ and $b^2 - d^2 J \ne 0$.
Then we have $b + dt \ne 0$.
If $a - ct = 0$, then we have $\nu = -b^2 u + d^2 uJ$ and hence 
\[
 \mu(\g) = (\nu, J_1)_F \cdot 1 = (-b^2 + d^2 J, J_1)_F \cdot (u, J_1)_F
= (-(b^2 - d^2 J) J, J_1)_F \cdot (u, J_1)_F.
\]
If $a - ct \ne 0$, then we have
\begin{align*}
\mu(\g) & = 
 \gamma_F(J_1, \frac{1}{2} \psi) \cdot (2u \cdot \frac{a-ct}{b+dt}, J_1)_F
 \cdot
 \gamma_F(J_1, \frac{1}{2} \psi) \cdot (J_1, \frac{2 (a-ct)}{b-dt})_F \\
 & = \gamma_F(J_1, \frac{1}{2} \psi)^2 \cdot (u (b+dt)(b-dt), J_1)_F \\
 & = (-1, J_1)_F \cdot (b^2 - d^2 J, J_1)_F \cdot (u, J_1)_F \\
 & = (-(b^2 - d^2 J) J, J_1)_F \cdot (u, J_1)_F.
\end{align*}
This completes the proof.
\end{proof}

\subsection{The case $J_i \in (F_v^{\times})^2$}
\label{ss:B1-spl}
We only consider the case $i=1$; the other case is similar.
Choose $t \in F^{\times}$ such that $J_1 = t^2$.
We take an isomorphism
\[
 \ii_1 : B_1 \longrightarrow \M_2(F)
\]
of $F$-algebras determined by
\[
 \ii_1(1) = \begin{pmatrix} 1 & \\ & 1 \end{pmatrix}, \qquad
 \ii_1(\i) = \begin{pmatrix} & 2 \\ \frac{u}{2} & \end{pmatrix}, \qquad
 \ii_1(\j_1) = \begin{pmatrix} t & \\ & -t \end{pmatrix}, \qquad
 \ii_1(\i \j_1) = \begin{pmatrix} & -2t \\ \frac{tu}{2} & \end{pmatrix}.
\]
Note that
\[
 \ii_1(\ba_1^*) = \ii_1(\ba_1)^*
\]
for $\ba_1 \in B_1$.
Let
\[
 \v := \frac{1}{2} \e_1 + \frac{1}{2 t} \e_2, \qquad
 \v^* := \e_1^* + t \e_2^*
 = \frac{1}{u} \e_1 \i - \frac{1}{tu} \e_2 \i.
\]
Then we have
\[
 V = \v B + \v^* B
\]
and 
\[
 \langle \v, \v \rangle = \langle \v^*, \v^* \rangle = 0, \qquad
 \langle \v, \v^* \rangle = 1.
\]
Moreover, we see that
\[
 \begin{bmatrix} \ba_1 \cdot \v & \ba_1 \cdot \v^* \end{bmatrix}
 = \begin{bmatrix} \v & \v^* \end{bmatrix} \cdot \ii_1(\ba_1)
\]
for $\ba_1 \in B_1$, and
\[
 \begin{bmatrix} \ba_2 \cdot \v & \ba_2 \cdot \v^* \end{bmatrix}
 = \begin{bmatrix} \v & \v^* \end{bmatrix} \cdot (\alpha + \frac{\beta}{t} \j)
\]
for $\ba_2 = \alpha + \beta \j_2 \in B_2$ with $\alpha, \beta \in E$.

We regard $V' := V$ as a left $B$-space by putting
\[
 \ba \cdot x' := (x \cdot \ba^*)'
\]
for $\ba \in B$ and $x' \in V'$.
Here we let $x'$ denote the element in $V'$ corresponding to $x \in V$.
We let $\GL(V')$ act on $V'$ on the right.
We define a skew-hermitian form
\[
 \langle \cdot, \cdot \rangle' : V' \times V' \longrightarrow B
\]
by 
\[
 \langle x', y' \rangle' := \langle x, y \rangle.
\]
Note that
\[
 \langle \ba x', \bb y' \rangle' = \ba \langle x', y' \rangle' \bb^*
\]
for $\ba, \bb \in B$.
For $x' \in V'$ and $g \in \GL(V)$, put
\[
 x' \cdot g := (g^{-1} \cdot x)'.
\]
Then we have an isomorphism
\begin{align*}
 \GL(V) & \longrightarrow \GL(V'), \\
 g & \longmapsto \left[ x' \mapsto x' \cdot g \right]
\end{align*}
so that we may identify $\GU(V)$ with $\GU(V')$ via this isomorphism.
Let $V' = X' + Y'$ be a complete polarization given by
\[
 X' = B \cdot \v', \qquad Y' = B \cdot (\v^*)'.
\]
Note that
\[
 \begin{bmatrix} \v' \cdot \ba \\ (\v^*)' \cdot \ba \end{bmatrix}
 = {}^t \ii_1(\ba)^{-1} \cdot
 \begin{bmatrix} \v' \\ (\v^*)' \end{bmatrix} 
\]
for $\ba \in B_1$.
We may identify $V'$ with the space of row vectors $B^2$ so that
\[
 \langle x', y' \rangle' = x_1 y_2^* - x_2 y_1^*
\]
for $x' = (x_1, x_2)$, $y' = (y_1, y_2) \in V'$.
Then we may write
\[
 \GU(V') = \left\{ 
 g \in \GL_2(B) \, \left| \, g 
 \begin{pmatrix} & 1 \\ -1 & \end{pmatrix} {}^t g^*
 = \nu(g) \cdot \begin{pmatrix} & 1 \\ -1 & \end{pmatrix} \right. \right\}.
\]

Similarly, we have a right $B$-space $W' := W$ with a hermitian form
\[
 \langle \cdot, \cdot \rangle': W' \times W' \longrightarrow B.
\]
We let $\GL(W')$ act on $W'$ on the left.
Now we consider an $F$-space
\[
 \V' := W' \otimes_B V'
\]
with a symplectic form
\[
 \llangle \cdot, \cdot \rrangle' := 
 \frac{1}{2} \tr_{B/F} (\langle \cdot, \cdot \rangle' \otimes \langle \cdot, \cdot \rangle'^*).
\]
We let $\GL(\V')$ act on $\V'$ on the right.
For $\mathbf{x} = x \otimes y \in \V$ and $\g \in \GL(\V)$, put
\[
 \mathbf{x}' := y' \otimes x' \in \V'
\]
and 
\[
 \mathbf{x}' \cdot \g := (\mathbf{x} \cdot \g)'.
\]

\begin{lem}
\label{lem:GSp(V)-GSp(V')-identify}
We have an isomorphism
\begin{align*}
 \GSp(\V) & \longrightarrow \GSp(\V'). \\
 \g & \longmapsto \left[ \mathbf{x}' \mapsto \mathbf{x}' \cdot \g \right]
\end{align*}
Moreover, this isomorphism induces a commutative diagram
\[
 \xymatrix{
  \GU(V) \times \GU(W) \ar@{->}[r] \ar@{->}[d] &
  \GSp(\V) \ar@{->}[d] \\
  \GU(W') \times \GU(V') \ar@{->}[r] & \GSp(\V')
 }.
\]
\end{lem}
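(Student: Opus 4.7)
The plan is to break the lemma into four steps: construct the underlying linear isomorphism of symplectic spaces, promote it to a group isomorphism, and verify the square commutes at the level of elementary tensors.

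First I would check that the assignment $\mathbf{x} = x \otimes y \mapsto \mathbf{x}' = y' \otimes x'$ gives a well-defined $F$-linear bijection $\V \to \V'$. Well-definedness requires compatibility with tensoring over $B$: under the prescribed left $B$-structure on $V'$ we have $(x \cdot \ba)' = \ba^* \cdot x'$ in $V'$, and analogously $(\ba \cdot y)' = y' \cdot \ba^*$ in $W'$, so that $(x \ba) \otimes y$ and $x \otimes (\ba y)$ both map to $y' \otimes (\ba^* x') = (y' \ba^*) \otimes x'$. Bijectivity is automatic by symmetry (applying the same construction gives an inverse).

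Next I would verify that this map is a symplectic isometry from $(\V, \llangle \cdot,\cdot \rrangle)$ to $(\V', \llangle \cdot,\cdot \rrangle')$. For elementary tensors,
\[
 \llangle y_1' \otimes x_1', y_2' \otimes x_2' \rrangle'
 = \tfrac{1}{2} \tr_{B/F}\bigl( \langle y_1, y_2 \rangle \langle x_1, x_2 \rangle^*\bigr),
\]
while $\llangle x_1 \otimes y_1, x_2 \otimes y_2 \rrangle = \tfrac{1}{2} \tr_{B/F}(\langle x_1, x_2 \rangle \langle y_1, y_2 \rangle^*)$. Equality follows from the identity $\tr_{B/F}(\ba \bb^*) = \tr_{B/F}((\ba \bb^*)^*) = \tr_{B/F}(\bb \ba^*)$, which uses both invariance of the reduced trace under the main involution and its cyclicity. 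Once the linear map is known to be an isometry, the rule $\g \mapsto [\mathbf{x}' \mapsto (\mathbf{x} \cdot \g)']$ tautologically transports right $\GL$-action and preserves $\llangle \cdot, \cdot \rrangle$ up to the same similitude factor, so it restricts to a group isomorphism $\GSp(\V) \xrightarrow{\sim} \GSp(\V')$.

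Finally, the commutativity of the square is a direct computation on a simple tensor. Given $(g, h) \in \GU(V) \times \GU(W)$ with image $(h', g') \in \GU(W') \times \GU(V')$ under the identifications recalled from \S\ref{sssec:cpJ1}, the identities $(g^{-1} v)' = v' \cdot g'$ and $(w h)' = h'^{-1} \cdot w'$ are built into the definitions of those identifications. Therefore
\[
 \bigl( (v \otimes w) \cdot (g, h) \bigr)'
 = (g^{-1} v \otimes wh)'
 = (wh)' \otimes (g^{-1} v)'
 = h'^{-1} w' \otimes v' g'
 = (w' \otimes v') \cdot (h', g'),
\]
which is exactly the composite through the bottom row applied to $(v \otimes w)'$. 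The only real subtlety is bookkeeping between left and right actions and the inverses that arise when converting a right $B$-module into a left $B$-module; this is the step most likely to harbor a sign or inversion error, but once the conventions from \S\ref{ss:B1-spl} are pinned down, each verification reduces to a one-line manipulation of the type displayed above.
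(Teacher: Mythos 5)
Your proposal is correct and follows essentially the same route as the paper: verify the symplectic isometry on elementary tensors via the trace identity $\tr_{B/F}(\ba\bb^*) = \tr_{B/F}(\bb\ba^*)$, then check the commutativity of the square by pushing the definition of the action through the linear identification $\V \cong \V'$. The only addition relative to the paper's proof is your explicit verification that $x \otimes y \mapsto y' \otimes x'$ is well-defined modulo the $B$-balanced relations, which the paper leaves implicit but which is indeed the correct thing to check.
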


\begin{proof}
For $x_1, x_2 \in V$ and $y_1, y_2 \in W$, we have
\begin{align*}
 \llangle y_1' \otimes x_1', y_2' \otimes x_2' \rrangle'
 & =
 \frac{1}{2} \tr_{B/F} (\langle y_1', y_2' \rangle' \cdot \langle x_1', x_2' \rangle'^*) \\
 & = \frac{1}{2} \tr_{B/F} (\langle y_1, y_2 \rangle \cdot \langle x_1, x_2 \rangle^*) \\
 & = \frac{1}{2} \tr_{B/F} (\langle x_1, x_2 \rangle \cdot \langle y_1, y_2 \rangle^*) \\
 & = \llangle x_1 \otimes y_1, x_2 \otimes y_2 \rrangle.
\end{align*}
Also, for $\g = (g, h) \in \GL(V) \times \GL(W)$ and $\mathbf{x} = x \otimes y \in \V$, we have
\[
 \mathbf{x}' \cdot \g = ( (x \otimes y) \cdot (g, h) )'
 = (g^{-1} x \otimes y h)' = (yh)' \otimes (g^{-1} x)' = h^{-1} y' \otimes x' g.
\]
This completes the proof.
\end{proof}

Thus we may identify $\GSp(\V)$ with $\GSp(\V')$ and $\GU(V) \times \GU(W)$ with $\GU(W') \times \GU(V')$ respectively.

Let $\V' = \X' + \Y'$ be a complete polarization given by
\[
 \X' = W' \otimes_B X', \qquad \Y' = W' \otimes_B Y'.
\]
Put
\[
 s'(g) := \gamma^{j(g)}
\]
for $g \in \GU(V')^0$, where
\[
 \gamma = 
 \begin{cases}
  1 & \text{if $B$ and $B_2$ are split,} \\
  -1 & \text{if $B$ and $B_2$ are ramified,}
 \end{cases}
\]
and
\[
 j(g) = 
 \begin{cases}
  0 & \text{if $g = \left( \begin{smallmatrix} * & * \\ 0 & * \end{smallmatrix} \right)$,} \\
  1 & \text{othersiwe.}
 \end{cases}
\]

\begin{lem}
\label{lem:spl-GU(V)}
We have
\[
 z_{\Y'}(g, g') = s'(g g') \cdot s'(g)^{-1} \cdot s'(g')^{-1}
\]
for $g, g' \in \GU(V)^0$.
\end{lem}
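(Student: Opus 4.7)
The plan is to mirror exactly the structure of the proof of Lemma \ref{lem:spl-GSp}: first establish the formula for the isometry group $\U(V')^0$ by invoking Kudla's splitting theorem \cite{kudla-splitting}, and then extend to the similitude group $\GU(V')^0$ by factoring through $d(\nu)$ and verifying the correction term $v_{\Y'}$ is trivial.

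More precisely, I would first observe that under the identifications of \S\ref{ss:B1-spl}, the dual reductive pair $(\U(V'),\U(W'))$ is quaternionic with $V'$ a $2$-dimensional skew-hermitian left $B$-space and $W'$ a $1$-dimensional hermitian right $B$-space. The polarization $\V' = \X' \oplus \Y'$ with $\X' = W' \otimes_B X'$, $\Y' = W' \otimes_B Y'$ places $\U(V')^0$ in the setting of Theorem 3.1 of \cite{kudla-splitting}; the relevant invariants of $W'$ (namely its discriminant and Hasse invariant, computed from the hermitian form on the $1$-dimensional $B$-space $W'$) produce the sign $\gamma$ defined in the statement: $\gamma = +1$ exactly when both $B$ and $B_2$ are split, and $-1$ when both are ramified. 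Applying Kudla's theorem yields
\[
z_{\Y'}(g,g') = s'(gg')\cdot s'(g)^{-1}\cdot s'(g')^{-1}
\]
for all $g,g' \in \U(V')^0$.

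For the passage to the similitude group, I would write $g = h\cdot d_{\Y'}(\nu)$ and $g' = h'\cdot d_{\Y'}(\nu')$ with $h,h'\in\U(V')^0$ and $\nu,\nu'\in F^\times$, and use the identity derived in Appendix \ref{sec:weil-gsp}:
\[
z_{\Y'}(g,g') = z_{\Y'}\bigl(h,\ d_{\Y'}(\nu)\,h'\,d_{\Y'}(\nu)^{-1}\bigr)\cdot v_{\Y'}(h',\nu).
\]
By Lemma \ref{lem:v_Y}, $v_{\Y'}(h',\nu) = (x_{\Y'}(h'),\nu)_F\cdot \gamma_F(\nu,\tfrac12\psi)^{-j_{\Y'}(h')}$. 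Since $\V'$ is an $8$-dimensional symplectic $F$-space and $h'\in \U(V')^0$ sits inside $\Sp(\V')$ through an embedding whose Bruhat length $j_{\Y'}(h')$ is a multiple of $4$ (because the polarization $\X' = W'\otimes_B X'$ is built by tensoring the $B$-polarization $V' = X'\oplus Y'$ with the $4$-dimensional $F$-space $W'$), and the spinor-norm-type invariant $x_{\Y'}(h')$ lies in $(F^\times)^2$ modulo squares (as $\det h' = 1$ on the underlying $F$-space), both factors in $v_{\Y'}(h',\nu)$ vanish. Combined with the observation that $s'$ is invariant under conjugation by $d_{\Y'}(\nu)$ (since $j$ is determined by whether the lower-left entry in the $\GL_2(B)$ realization is zero, a condition preserved by such conjugation) and satisfies $s'(gg') = s'(hh'')$ where $h'' = d_{\Y'}(\nu)h'd_{\Y'}(\nu)^{-1}$, the formula extends cleanly from the isometry group to the similitude group.

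The one genuinely nontrivial point is the verification that the Kudla invariant in the isometry case yields exactly the sign $\gamma$ given. I would carry this out by an explicit computation of Weil indices attached to the quadratic/symplectic form on $W'^\dagger$ (obtained by a Morita-style reduction when $B$ is split, or directly via the Hasse invariant of the $4$-dimensional quadratic $F$-space $V'^\dagger$ otherwise), identifying the resulting $\pm 1$ with the parity of the Hasse invariants of $B$ and $B_2$. The main obstacle is ensuring that the sign conventions from \cite{kudla-splitting} match the ones chosen here; this is a purely computational matching, most cleanly handled by checking on a single non-trivial Weyl element $w = \smat{}{-1}{1}{}$, where the cocycle reduces to a single Weil index that one evaluates directly.
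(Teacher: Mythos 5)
Your overall route is the same as the paper's: pass from the isometry case (Kudla's Theorem 3.1, with case $1_-$ plus Morita for $B$ split, case $2_+$ for $B$ ramified) to the similitude case by factoring $g = h\cdot d_{\Y'}(\nu)$ and showing the correction factor $v_{\Y'}(h',\nu)$ from Lemma \ref{lem:v_Y} is trivial. The structure is right, and your observation that $j_{\Y'}(h')$ is a multiple of $4$ because each $B$-block contributes a rank-$4$ $F$-block is essentially the paper's argument (which phrases it as ``either $c=0$ or $c\in B^\times$'' for $h'=\smat{a}{b}{c}{d}$, giving $j_{\Y'}(h')=4\,j(h')$).

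However, your justification for $x_{\Y'}(h')\equiv 1\bmod (F^\times)^2$ is wrong. You argue ``as $\det h'=1$ on the underlying $F$-space,'' but $\det h'=1$ holds for \emph{every} element of $\Sp(\V')$, while $x_{\Y'}$ is a genuine obstruction that ranges over $F^\times/(F^\times)^2$: recall from \S\ref{ss:weil-GMp-setup} that $x(p_1\tau_j p_2)=\det(a_1a_2)$ where $a_i$ are the $\GL$-blocks of the Siegel-parabolic parts $p_i$, and this is not forced to be a square by the vanishing of the global determinant. The correct reason, which the paper gives, is specific to the quaternionic structure: the Levi block of $P_{\Y'}$ acts on $\Y'\cong B$ (as a $4$-dimensional $F$-space) by right multiplication $x\mapsto x\cdot \ba$ with $\ba\in B^\times$, and the $F$-determinant of that map is $\nu(\ba)^2$, automatically a square. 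Without this observation the triviality of $v_{\Y'}(h',\nu)$ does not follow, so you should replace the $\det h'=1$ reasoning with the reduced-norm argument.

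A minor note: your proposed computational verification of the sign $\gamma$ via an explicit Weil-index evaluation at the Weyl element is unnecessary — Kudla's theorem (Theorem 3.1, the correct case among $1_\pm$, $2_\pm$) already produces $\gamma$ directly from the Hasse invariant of the quadratic/hermitian space, and the paper cites this rather than recomputing.
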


\begin{proof}
The proof is similar to that of Lemma \ref{lem:spl-GSp}.
If $B$ is ramified, then we have
\begin{equation}
\label{eq:kudla-spl-2}
 z_{\Y'}(g, g') = s'(g g') \cdot s'(g)^{-1} \cdot s'(g')^{-1} 
\end{equation}
for $g, g' \in \U(V)^0$ by \cite[Theorem 3.1, case $2_+$]{kudla-splitting}.
If $B$ is split, then we see that \eqref{eq:kudla-spl-2} also holds by using Morita theory as in \S \ref{ss:B-spl} and \cite[Theorem 3.1, case $1_-$]{kudla-splitting}.

Let $g, g' \in \GU(V)^0$.
For $\nu \in F^{\times}$, put
\[
 d(\nu) = \begin{pmatrix} 1 & \\ & \nu \end{pmatrix} \in \GU(V)^0.
\]
We write
\[
 g = h \cdot d(\nu), \qquad g' = h' \cdot d(\nu')
\]
with $h, h' \in \U(V)^0$ and $\nu, \nu' \in F^{\times}$.
Then we have
\[
 z_{\Y'}(g, g') = z_{\Y'}(h, h'') \cdot v_{\Y'}(h', \nu),
\]
where
\[
 h'' = d(\nu) \cdot h' \cdot d(\nu)^{-1}.
\]
By \eqref{eq:kudla-spl-2}, we have
\[
 z_{\Y'}(h, h'') = s'(h h'') \cdot s'(h)^{-1} \cdot s'(h'')^{-1}.
\]
We have $s'(h) = s'(g)$, and since $j(h'') = j(h')$, we have $s'(h'') = s'(h') = s'(g')$.
Moreover, since $g g' = h h'' \cdot d(\nu \nu')$, we have $s'(h h'') = s'(g g')$.
Thus we obtain
\[
 z_{\Y'}(h, h'') = s'(g g') \cdot s'(g)^{-1} \cdot s'(g')^{-1}.
\]
By Lemma \ref{lem:v_Y}, we have
\[
 v_{\Y'}(h', \nu) =
 (x_{\Y'}(h'), \nu)_F \cdot \gamma_F(\nu, \frac{1}{2} \psi)^{-j_{\Y'}(h')},
\]
where $x_{\Y'}$ and $j_{\Y'}$ are as in \S \ref{ss:weil-GMp-setup}
with respect to the complete polarization $\V' = \X' + \Y'$.
Since the determinant over $F$ of the automorphism $x \mapsto x \cdot \ba$ of $B$ is $\nu(\ba)^2$ for $\ba \in B^{\times}$, we have $x_{\Y'}(h') \equiv 1 \bmod (F^{\times})^2$.
Noting that either $c = 0$ or $c \in B^{\times}$ for $h' = \left( \begin{smallmatrix} a & b \\ c & d \end{smallmatrix} \right)$, one can see that $j_{\Y'}(h') = 4 \cdot j(h')$.
Hence we have
\[
 v_{\Y'}(h', \nu) = 1.
\]
This completes the proof.
\end{proof}

\begin{lem}
\label{lem:spl-GU(W)}
We have
\[
 z_{\Y'}(h, h') = 1
\]
for $h, h' \in \GU(W)$.
\end{lem}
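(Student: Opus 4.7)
The strategy is to show that under the identification $\GSp(\V) \cong \GSp(\V')$ of Lemma \ref{lem:GSp(V)-GSp(V')-identify}, the subgroup $\GU(W)$ sits inside the Siegel parabolic $P_{\Y'}$ stabilizing $\Y'$, and then to apply the vanishing properties of $z_{\Y'}$ on $P_{\Y'}$ together with Lemma \ref{lem:v_Y}.

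First, I would trace through the definitions. By construction $\V' = W' \otimes_B V'$ with the complete polarization
\[
 \X' = W' \otimes_B X', \qquad \Y' = W' \otimes_B Y',
\]
and the formula $\mathbf{x}' \cdot \g = h^{-1} y' \otimes x' g$ derived in the proof of Lemma \ref{lem:GSp(V)-GSp(V')-identify} shows that an element $h \in \GU(W) \cong \GU(W')$ acts on $\V'$ purely through the left tensor factor $W'$. Since $X'$ and $Y'$ are $B$-subspaces of $V'$, this action preserves both $\X'$ and $\Y'$, so $\GU(W) \subseteq P_{\Y'}$.

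Next, given $h, h' \in \GU(W)$, I would write $h = h_0 \cdot d(\nu)$ and $h' = h_0' \cdot d(\nu')$ with $h_0, h_0' \in \U(W)$ and $\nu, \nu' \in F^\times$, where $d(\cdot) = d_{\Y'}(\cdot)$. Using the definition of $z_{\Y'}^{\GSp}$ in terms of $z_{\Y'}^{\Sp}$ and $v_{\Y'}$ from \S\ref{ss:weil-gsp-metaplectic},
\[
 z_{\Y'}(h, h') = z_{\Y'}(h_0, d(\nu) h_0' d(\nu)^{-1}) \cdot v_{\Y'}(h_0', \nu).
\]
Since $d(\nu)$ normalizes $P_{\Y'}$ and $h_0, h_0' \in P_{\Y'}$, both $h_0$ and $d(\nu) h_0' d(\nu)^{-1}$ lie in $P_{\Y'}$, so the cocycle identity $z_{\Y'}(p_1, p_2) = 1$ for $p_1, p_2 \in P_{\Y'}$ gives $z_{\Y'}(h_0, d(\nu) h_0' d(\nu)^{-1}) = 1$.

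It remains to show $v_{\Y'}(h_0', \nu) = 1$. By Lemma \ref{lem:v_Y},
\[
 v_{\Y'}(h_0', \nu) = (x_{\Y'}(h_0'), \nu)_F \cdot \gamma_F(\nu, \tfrac{1}{2}\psi)^{-j_{\Y'}(h_0')}.
\]
Since $h_0' \in P_{\Y'}$ we have $j_{\Y'}(h_0') = 0$, killing the $\gamma_F$-factor. For the Hilbert symbol, $x_{\Y'}(h_0')$ is (the class of) the determinant over $F$ of the action of $h_0'$ on $\X' \cong W'$, which is just left multiplication on $W'$ by an element of $\U(W')$ of reduced norm $\nu(h_0') = 1$. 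Since $W'$ is a $4$-dimensional $F$-space obtained from the rank-one $B$-space $W'$, this $F$-determinant equals $\nu(h_0')^2 = 1$, hence $x_{\Y'}(h_0') \equiv 1 \bmod (F^\times)^2$ and the Hilbert symbol is trivial. The main subtlety is precisely this determinant computation: one must verify that left multiplication by $\ba \in B^\times$ on $B$, viewed as an $F$-linear map, has determinant $\nu(\ba)^2$, which follows from standard facts about reduced norms on quaternion algebras (and reduces immediately to a direct calculation once $B$ is identified with $\M_2(F)$ in the split case, and checked on a splitting field in the ramified case). Combining everything gives $z_{\Y'}(h,h') = 1$.
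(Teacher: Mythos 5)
Your proposal is correct and follows the same route as the paper's own proof: observe $\GU(W) \subseteq P_{\Y'}$, split off $d_{\Y'}(\nu)$, use that $z_{\Y'}$ is trivial on pairs of elements of $P_{\Y'}$, and kill $v_{\Y'}$ via Lemma \ref{lem:v_Y} together with the fact that (left or right) multiplication by $\ba \in B^{\times}$ on $B$ has $F$-determinant $\nu(\ba)^2$. One small imprecision: after writing $h = h_0 \cdot d_{\Y'}(\nu)$, the element $h_0$ lies in $P_{\Y'} \cap \Sp(\V')$ but \emph{not} in $\U(W)$, since $d_{\Y'}(\nu)$ is the Siegel element of $\GU(V')^0$ (not of $\GU(W)$); this does not affect your argument because $d_{\Y'}(\nu)^{-1}$ acts trivially on $\X'$, so the action of $h_0'$ on $\X'$ still agrees with that of $h' \in \GU(W)$, which is what the determinant computation actually uses.
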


\begin{proof}
The proof is similar to that of Lemma \ref{lem:spl-GSO}. 

For $g, g' \in \GU(W)$, we have
\[
 z_{\Y'}(g, g') = z_{\Y'}(h, h'') \cdot v_{\Y'}(h', \nu),
\]
where 
\begin{align*}
 h & = g \cdot d_{\Y'}(\nu)^{-1}, & h' & = g' \cdot d_{\Y'}(\nu')^{-1}, &
 h'' & = d_{\Y'}(\nu) \cdot h' \cdot d_{\Y'}(\nu)^{-1}, \\
 \nu & = \nu(g), & \nu' & = \nu(g'). & &
\end{align*}
We have $h, h' \in P_{\Y'}$ and $z_{\Y'}(h, h'') = 1$.
Since the determinant over $F$ of the automorphism $x \mapsto \ba \cdot x$ of $B$ is $\nu(\ba)^2$ for $\ba \in B^{\times}$, we have $x_{\Y'}(h') \equiv 1 \bmod (F^{\times})^2$, 
so that $v_{\Y'}(h', \nu) = 1$ by Lemma \ref{lem:v_Y}.
This completes the proof.
\end{proof}

\begin{lem}
\label{lem:GU(V)-GU(W)-commute}
We have
\[
 z_{\Y'}(g, h) = z_{\Y'}(h, g) = 1
\]
for $g \in \GU(V)^0$ and $h \in \GU(W)$.
\end{lem}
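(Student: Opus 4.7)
The strategy is to mirror the proof of the analogous Lemma \ref{lem:GSp-GSO-commute}, exploiting the fact that, with respect to the polarization $\V' = \X' \oplus \Y' = (W'\otimes_B X') \oplus (W'\otimes_B Y')$, the group $\GU(W)$ acts only on the left tensor factor $W'$ and therefore stabilizes the polarization, i.e.\ lies in the Levi of the Siegel parabolic $P_{\Y'}$, whereas $\GU(V)^0$ acts only on the right factor $V'$ and is the ``non-trivial'' partner in the see-saw.

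First, for $g\in\GU(V)^0$ and $h\in\GU(W)$, I would write
$g = g_1\,d_{\Y'}(\nu)$ and $h = h_1\,d_{\Y'}(\nu')$ with $g_1\in\U(V)^0$, $h_1\in\U(W)$, $\nu=\nu(g)$, $\nu'=\nu(h)$, and apply the extension formula of \S\ref{ss:weil-gsp-metaplectic}, giving
\[
 z_{\Y'}(g,h) = z_{\Y'}\bigl(g_1,\, d_{\Y'}(\nu)\,h_1\,d_{\Y'}(\nu)^{-1}\bigr)\cdot v_{\Y'}(h_1,\nu),
\]
\[
 z_{\Y'}(h,g) = z_{\Y'}\bigl(h_1,\, d_{\Y'}(\nu')\,g_1\,d_{\Y'}(\nu')^{-1}\bigr)\cdot v_{\Y'}(g_1,\nu').
\]
Because $h_1$ preserves both $\X'$ and $\Y'$, both $h_1$ and $d_{\Y'}(\nu)h_1 d_{\Y'}(\nu)^{-1}$ lie in $P_{\Y'}$; invoking $z_{\Y'}(p,\cdot)=z_{\Y'}(\cdot,p)=1$ for $p\in P_{\Y'}$ collapses the first factor in each identity to $1$. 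Thus it remains to verify $v_{\Y'}(h_1,\nu)=v_{\Y'}(g_1,\nu')=1$.

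For $v_{\Y'}(h_1,\nu)$, I would appeal to Lemma \ref{lem:v_Y}: since $h_1\in P_{\Y'}$ one has $j_{\Y'}(h_1)=0$, so only the Hilbert symbol $(x_{\Y'}(h_1),\nu)_F$ survives. The action of $h_1\in\U(W)$ on $\X'=W'\otimes_B X'$ coincides with multiplication on the $W'$-factor by the invertible element of $B$ corresponding to $h_1$, and the $F$-determinant of left multiplication by $\ba\in B^\times$ on $B$ equals $\nu(\ba)^2$; hence $x_{\Y'}(h_1)$ is a square and the Hilbert symbol is trivial. For $v_{\Y'}(g_1,\nu')$, I would repeat the computation already carried out in the proof of Lemma \ref{lem:spl-GU(V)}: writing $g_1=\smat{a}{b}{c}{d}$ as a matrix in $\GL_2(B)$, either $c=0$ or $c\in B^\times$, and the $B$-Bruhat decomposition refines to the $F$-Bruhat decomposition in such a way that $j_{\Y'}(g_1)=4\cdot j(g_1)\in\{0,4\}$, while $x_{\Y'}(g_1)\equiv 1\bmod (F^\times)^2$ because the $F$-determinant of a $B$-linear map on a free $B$-module is a square (being the square of a reduced norm). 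Combined with the identity $\gamma_F(\nu',\tfrac{1}{2}\psi)^4=1$ from \S\ref{subsec:theta-prelim}, this forces $v_{\Y'}(g_1,\nu')=1$.

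The only step with any real content is the book-keeping converting the $B$-Bruhat data of $g_1\in\U(V')^0$ into its $F$-Bruhat data on $\V'$, needed to see that $j_{\Y'}(g_1)$ is always a multiple of $4$; this is exactly the point that was handled in the proof of Lemma \ref{lem:spl-GU(V)}, so I would simply refer to that computation rather than redo it. Everything else is a formal manipulation of the $2$-cocycle identities and the Hilbert-symbol formula of Lemma \ref{lem:v_Y}.
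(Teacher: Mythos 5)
Your plan is correct and follows the same route as the paper's proof: decompose via $d_{\Y'}$, apply the similitude cocycle formula from \S\ref{ss:weil-gsp-metaplectic}, observe that the residual $z_{\Y'}$-factors vanish because one argument lies in $P_{\Y'}$, and evaluate the $v_{\Y'}$-factors via Lemma \ref{lem:v_Y} together with the determinant computations from the proofs of Lemmas \ref{lem:spl-GU(V)} and \ref{lem:spl-GU(W)}. One small slip: $h_1 = h\,d_{\Y'}(\nu')^{-1}$ does \emph{not} lie in $\U(W)$ --- here $d_{\Y'}(\nu')$ coincides with $d(\nu')\in\GU(V)^0$, so the product sits in neither similitude factor --- but it does lie in $P_{\Y'}\cap\Sp(\V)$, which is all you actually use: $j_{\Y'}(h_1)=0$, and $h_1$ acts on $\X'$ exactly as $h$ does, whence $x_{\Y'}(h_1)=\nu(h)^2\equiv 1\bmod (F^\times)^2$.
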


\begin{proof}
The proof is similar to that of Lemma \ref{lem:GSp-GSO-commute}.

For $g \in \GU(V)^0$ and $h \in \GU(W)$, we have
\[
 z_{\Y'}(g, h) = z_{\Y'}(g', h'') \cdot v_{\Y'}(h', \nu), \qquad
 z_{\Y'}(h, g) = z_{\Y'}(h', g'') \cdot v_{\Y'}(g', \nu'),
\]
where
\begin{align*}
 g' & = g \cdot d(\nu)^{-1}, & g'' & = d(\nu') \cdot g' \cdot d(\nu')^{-1}, & \nu & = \nu(g), \\
 h' & = h \cdot d_{\Y'}(\nu')^{-1}, & h'' & = d_{\Y'}(\nu) \cdot h' \cdot d_{\Y'}(\nu)^{-1}, &  \nu' & = \nu(h).
\end{align*}
Since $h', h'' \in P_{\Y'}$, we have $z_{\Y'}(g', h'') = z_{\Y'}(h', g'') = 1$.
As in the proof of Lemma \ref{lem:spl-GU(W)}, we have $v_{\Y'}(h', \nu) = 1$.
As in the proof of Lemma \ref{lem:spl-GU(V)}, we have $v_{\Y'}(g', \nu') = 1$.
This completes the proof.
\end{proof}

We define a map $s': \GU(V)^0 \times \GU(W) \rightarrow \C^1$ by
\[
 s'(\g) = \gamma^{j(g)}
\]
for $\g = (g, h) \in \GU(V)^0 \times \GU(W)$.
By Lemmas \ref{lem:spl-GU(V)}, \ref{lem:spl-GU(W)}, \ref{lem:GU(V)-GU(W)-commute}, we see that 
\[
 z_{\Y'}(\g, \g') = s'(\g \g') \cdot s'(\g)^{-1} \cdot s'(\g')^{-1}
\]
for $\g, \g' \in \GU(V)^0 \times \GU(W)$.

Recall that we may identify $\V$ with $\V'$, and we have two complete polarizations $\V = \X + \Y = \X' + \Y'$, where
\begin{align*}
 \X & = F \e_1 + F \e_2 + F \e_3  + F \e_4, &
 \Y & = F \e_1^* + F \e_2^*  + F \e_3^*  + F \e_4^*, \\
 \X' & = \v \cdot B, &
 \Y' & = \v^* \cdot B.
\end{align*}
Put
\[
 \h_0 =
 \begin{pmatrix}
  \frac{1}{2} & \frac{1}{2t} & & & & & & \\
  & & \frac{1}{2} & \frac{1}{2t} & & & & \\
  & & & & -\frac{1}{2} & \frac{t}{2} & & \\
  & & & & & & -\frac{1}{2} & \frac{t}{2} \\
  & & & & 1 & t & & \\
  & & & & & & 1 & t \\
  1 & - \frac{1}{t} & & & & & & \\
  & & 1 & - \frac{1}{t} & & & &
 \end{pmatrix} \in \Sp(\V).
\]
Then we have
\[
\begin{bmatrix}
 \v \\
 \frac{1}{t} \v \j \\
 - \frac{1}{u} \v \i \\
 - \frac{t}{uJ} \v \i \j \\
 \v^* \\
 - \frac{t}{J} \v^* \j \\
 \v^* \i \\
 - \frac{1}{t} \v^* \i \j 
\end{bmatrix} 
= \h_0
\begin{bmatrix}
 \e_1 \\
 \e_2 \\
 \e_3 \\
 \e_4 \\
 \e_1^* \\
 \e_2^* \\
 \e_3^* \\
 \e_4^* 
\end{bmatrix},
\]
and hence $\X' = \X \h_0$ and $\Y' = \Y \h_0$.
Put
\[
 s(\g) := s'(\g) \cdot \mu(\g),
\]
where
\[
 \mu(\g) := z_{\Y}(\h_0 \g \h_0^{-1}, \h_0) \cdot z_{\Y}(\h_0, \g)^{-1}
\]
for $\g \in \GU(V)^0 \times \GU(W)$.
Then we have
\[
 z_{\Y}(\g, \g') = s(\g \g') \cdot s(\g)^{-1} \cdot s(\g')^{-1}
\]
for $\g, \g' \in \GU(V)^0 \times \GU(W)$.

\begin{lem}
\label{lem:mu-GU(V)-B1-J1}
Let $\g_1 := \ba_1^{-1} \in \GU(V)^0$ with $\ba_1 = a_1 + b_1 \i + c_1 \j_1 + d_1 \i \j_1 \in B_1^{\times}$.
Then we have
\begin{align*}
 \mu(\g_1) & =
 \begin{cases}
 1 & \text{if $b_1 = d_1 = 0$,} \\
 \gamma_F(J_2, \frac{1}{2} \psi)
 \cdot ( (a_1 b_1 + c_1 d_1 J_1) \nu_1 J_1, J_2 )_F
  & \text{if $(b_1,d_1) \ne (0,0)$ and $b_1^2 - d_1^2 J_1 = 0$,} \\
 (-(b_1^2 - d_1^2 J_1) \nu_1 J_1, J_2)_F
  & \text{if $(b_1,d_1) \ne (0,0)$ and $b_1^2 - d_1^2 J_1 \ne 0$,}
 \end{cases} \\
 & \times 
 \begin{cases}
  1 & \text{if $b_1 - d_1 t = 0$,} \\
  (u, J)_F & \text{if $b_1 - d_1 t \ne 0$,} \\
 \end{cases} \\
\end{align*}
where $\nu_1 = \nu(\ba_1)$.
\end{lem}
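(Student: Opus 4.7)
The plan is to follow exactly the strategy used in the proofs of Lemmas \ref{lem:mu-GU(V)-u}, \ref{lem:mu-GU(W)-u}, \ref{lem:mu-GU(V)-J}, \ref{lem:mu-GU(W)-J}: decompose $\mu(\g_1) = z_{\Y}(\h_0 \g_1 \h_0^{-1}, \h_0) \cdot z_{\Y}(\h_0, \g_1)^{-1}$, and compute each factor by putting the relevant elements in Bruhat form with respect to the polarization $\V = \X \oplus \Y$ so that the $2$-cocycle $z_{\Y}$ is evaluated via Weil indices of explicit quadratic forms, using the formula \eqref{eqn:weil_index}.

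First, I would analyze the structure of $\h_0$ in the current setting. Writing $\h_0 = \m(\a) \cdot \n(\b) \cdot \tau_j \cdot \m(\a')$ via Bruhat decomposition (the matrix $\h_0$ given just before the statement has a very clean block form obtained from $2 \times 2$ diagonal $E$-blocks), one reads off $x_{\Y}(\h_0) \bmod (F^\times)^2$ and $j_{\Y}(\h_0)$, and then Lemma \ref{lem:v_Y} gives $v_{\Y}(\h_0, \nu_1)$. Since $\Y' \g_1 = \Y'$ for $\g_1 \in \GU(V)^0$, the argument already used in the previous lemmas shows $\h_0 \g_1 \h_0^{-1} d(\nu_1)^{-1} \in P_{\Y}$, so that $z_{\Y}(\h_0 \g_1 \h_0^{-1}, \h_0) = v_{\Y}(\h_0, \nu_1)$. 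This will contribute a factor that depends only on $\nu_1$ modulo squares; I expect it to be $(J_2, \nu_1)_F$ or a variant, matching the $J_2$-part of the prefactor in the statement.

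Next, I would compute $z_{\Y}(\h_0, \g_1) = z_{\Y}(\h_0, \g_1 d(\nu_1)^{-1})$ by splitting into the three cases of the statement: $b_1 = d_1 = 0$ (where $\g_1 d(\nu_1)^{-1} \in P_{\Y}$ so the cocycle is $1$); $(b_1, d_1) \ne 0$ and $b_1^2 - d_1^2 J_1 = 0$ (exactly the Bruhat cell $P \tau_2 P$, as in Lemma \ref{lem:mu-GU(V)-u}); and $b_1^2 - d_1^2 J_1 \ne 0$ (the cell $P \tau_4 P$, as in that same lemma). In each case, the matrix of $\g_1 d(\nu_1)^{-1}$ in the basis $\{\e_i, \e_i^*\}$ is given by $\g_1$ from the tables in \S\ref{ss:spl-setup}, so the decomposition $\g_1 d(\nu_1)^{-1} = \m(\a_1) \n(\b_3) \tau_2 \cdot (\cdot)$ or $\n(\b_5) \tau_4 \cdot (\cdot)$ is \emph{literally the same} as the one performed in Lemma \ref{lem:mu-GU(V)-u}. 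Combined with the different Bruhat form of $\h_0$, the resulting Leray quadratic form differs only in a controlled way, and I will compute its determinant and Hasse invariant to obtain $\gamma_F(\frac{1}{2}\psi \circ q)$ via \eqref{eqn:weil_index}.

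The hard part, and the reason there is an extra $(u,J)_F$ factor when $b_1 - d_1 t \ne 0$, is that the matrix $\h_0$ in this case carries a nontrivial ``$t$-twist'' ($J_1 = t^2$), so that the Bruhat form of $\h_0$ contains a top-block $\m(\a)$ with $\det \a$ involving $t$. When $b_1 - d_1 t = 0$, the interaction between the $\tau$-cells of $\h_0$ and of $\g_1$ causes cancellation and the extra symbol disappears; when $b_1 - d_1 t \ne 0$ a noncancellable Leray form of rank $\ge 1$ appears whose Hasse invariant, modulo squares, is $(u, J)_F$. Carrying out this case distinction cleanly, including verifying that in the first case of the prefactor (where $b_1 = d_1 = 0$) one automatically has $b_1 - d_1 t = 0$ so only the trivial second factor appears, will be the main bookkeeping obstacle. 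After these Weil-index computations, I will combine the factors and simplify using $J_1 \equiv 1$, $J_2 \equiv J \bmod (F^\times)^2$, and the standard Hilbert symbol identities already used in Lemmas \ref{lem:mu-GU(V)-u}--\ref{lem:mu-GU(W)-J}, to match the stated formula.
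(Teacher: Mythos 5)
Your proposal rests on a structural claim that is false in this specific case, and this undermines the whole computation. You write ``Since $\Y' \g_1 = \Y'$ for $\g_1 \in \GU(V)^0$, the argument already used in the previous lemmas shows $\h_0 \g_1 \h_0^{-1} d(\nu_1)^{-1} \in P_{\Y}$,'' but in the $J_1 \in (F^\times)^2$ setting the polarization $\Y' = (\v^*\cdot B)\otimes_B W$ is constructed from a $B$-basis $\v, \v^*$ of $V$ adapted to $B_1$, and $B_1^\times$ acts by left multiplication via $\ii_1(\ba_1) = \smat{a_1+c_1t}{2(b_1-d_1t)}{\frac{u}{2}(b_1+d_1t)}{a_1-c_1t}$; the upper-right entry is $2(b_1-d_1 t)$, so $\g_1$ preserves $\Y'$ if and only if $b_1 - d_1 t = 0$. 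That is precisely the condition governing the extra $(u,J)_F$ factor. The paper's proof of this lemma therefore does \emph{not} reduce $z_{\Y}(\h_0\g_1\h_0^{-1},\h_0)$ to $v_{\Y}(\h_0,\nu_1)$: it writes out the full $8\times 8$ matrix $\h_0\g_1\h_0^{-1}$, observes that the element lies in $P_\Y\tau_4\n(\b_{25})$ when $b_1 - d_1 t \ne 0$, and extracts a nontrivial Leray Weil index $\gamma_F(J_2,\frac{1}{2}\psi)\cdot(\frac{a_1-c_1t}{2\nu_1 u (b_1-d_1 t)},J_2)_F$ from it. This is where the $(u,J)_F$ factor actually originates; you have attributed it to the $z_\Y(\h_0,\g_1)$ term, which is a different and separate computation (and one where the case split turns out to be on $b_1^2 - d_1^2 J_1$ and on $a_1 - c_1 t$, not only on $b_1 - d_1 t$).

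A secondary error compounds the first: you expect $v_\Y(\h_0,\nu_1)$ to supply a symbol of the form $(J_2,\nu_1)_F$. In this case $\h_0 = \tau_2 \cdot \m(\a_{10})$, so $j_\Y(\h_0) = 2$ (not $4$) and $x_\Y(\h_0) \equiv -1\bmod(F^\times)^2$; Lemma \ref{lem:v_Y} then gives $v_\Y(\h_0,\nu_1) = (-1,\nu_1)_F\cdot\gamma_F(\nu_1,\tfrac12\psi)^{-2} = 1$. The $J_2$-symbols visible in the statement do not come from $v_\Y$ at all; they come from the Leray Weil indices, which is the part of the computation your outline shortcuts. Without redoing the Bruhat analysis of $\h_0\g_1\h_0^{-1}$ (not just of $\g_1 d(\nu_1)^{-1}$), the proof cannot be completed along the lines you describe.
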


\begin{proof}
Put $\d := d_{\Y}(\nu_1) \in \GSp(\V)$.
We have
\[
 z_{\Y}(\h_0 \g_1 \h_0^{-1}, \h_0) 
 = z_{\Y}(\h_0 \g_1 \h_0^{-1} \cdot \d^{-1}, \d \cdot \h_0 \cdot \d^{-1}) 
 \cdot v_{\Y}(\h_0, \nu_1).
\]
We have $\h_0 = \tau_2 \cdot \m(\a_{10})$, where
\[
 \a_{10} = 
 \begin{pmatrix}
  \frac{1}{2} & \frac{1}{2t} & & \\
  & & \frac{1}{2} & \frac{1}{2t} \\
  1 & -\frac{1}{t} & & \\
  & & 1 & -\frac{1}{t}
 \end{pmatrix},
\]
so that $x_{\Y}(\h_0) \equiv -1 \bmod (F^{\times})^2$ and $j_{\Y}(\h_0) = 2$.
Hence we have
\[
 v_{\Y}(\h_0, \nu_1) = (-1, \nu_1)_F \cdot \gamma_F(\nu_1, \frac{1}{2} \psi)^{-2} = 1.
\]
We have
\begin{align*}
 & \h_0 \g_1 \h_0^{-1} \\
 & = 
 \begin{pmatrix}
 a_1 + c_1 t & & & & \frac{(b_1 + d_1 t)u}{2} & & & \\
 & a_1 + c_1 t & & & & - \frac{(b_1 + d_1 t)u J_2}{2} & & \\
 & & a_1 + c_1 t & & & & - \frac{b_1 + d_1 t}{2} & \\
 & & & a_1 + c_1 t & & & & \frac{(b_1 + d_1 t)}{2 J_2} \\
 2(b_1 - d_1 t) & & & & a_1 - c_1 t & & & \\
 & - \frac{2 (b_1 - d_1 t)}{J_2} & & & & a_1 - c_1 t & & \\
 & & -2 (b_1 - d_1 t) u & & & & a_1 - c_1 t & \\
 & & & 2 (b_1 - d_1 t) u J_2 & & & & a_1 - c_1 t
 \end{pmatrix}.
\end{align*}
If $b_1 - d_1 t = 0$,
then we have $\h_0 \g_1 \h_0^{-1} \cdot \d^{-1} \in P_{\Y}$ and hence 
$z_{\Y}(\h_0 \g_1 \h_0^{-1} \cdot \d^{-1}, \d \cdot \h_0 \cdot \d^{-1}) = 1$.
If $b_1 - d_1 t \ne 0$, then we have
$\h_0 \g_1 \h_0^{-1} \cdot \d^{-1} \in P_{\Y} \cdot \tau_4 \cdot \n(\b_{25})$,
where
\[
 \b_{25} = \frac{a_1 - c_1 t}{2 \nu_1 (b_1 - d_1 t)} \cdot 
 \begin{pmatrix}
  1 & & & \\
  & -J_2 & & \\
  & & - \frac{1}{u} & \\
  & & & \frac{1}{uJ_2}
 \end{pmatrix}.
\]
Since $\d \cdot \h_0 \cdot \d^{-1} \in \tau_2 \cdot P_{\Y}$, we have
\[
 z_{\Y}(\h_0 \g_1 \h_0^{-1} \cdot \d^{-1}, \d \cdot \h_0 \cdot \d^{-1}) =
 z_{\Y}(\tau_4 \cdot \n(\b_{25}), \tau_2).
\]
If $a_1 - c_1 t = 0$, then we have
$z_{\Y}(\tau_4 \cdot \n(\b_{25}), \tau_2) = z_{\Y}(\tau_4, \tau_2) = 1$.
If $a_1 - c_1 t \ne 0$, then we have
$z_{\Y}(\tau_4 \cdot \n(\b_{25}), \tau_2) = \gamma_F(\frac{1}{2} \psi \circ q_{11})$,
where $q_{11}$ is a non-degenerate symmetric bilinear form associated to
\[
 \b_{25} = \frac{a_1 - c_1 t}{2 \nu_1 (b_1 - d_1 t)} \cdot 
 \begin{pmatrix}
  - \frac{1}{u} & \\
  & \frac{1}{uJ_2}
 \end{pmatrix}.
\]
We have $\det q_{11} \equiv -J_2 \bmod (F^{\times})^2$ and 
\[
 h_F(q_{11}) 
 = ( - \frac{a_1 - c_1 t}{2 \nu_1 u (b_1 - d_1 t)},
 \frac{a_1 - c_1 t}{2 \nu_1 u J_2 (b_1 - d_1 t)} )_F
 = ( - \frac{a_1 - c_1 t}{2 \nu_1 u (b_1 - d_1 t)}, J_2 )_F.
\]
Hence we have 
\begin{align*}
 \gamma_F(\frac{1}{2} \psi \circ q_{11}) 
 & = \gamma_F(\frac{1}{2} \psi)^2 \cdot \gamma_F(-J_2, \frac{1}{2} \psi)
 \cdot ( - \frac{a_1 - c_1 t}{2 \nu_1 u (b_1 - d_1 t)}, J_2 )_F \\
 & = \gamma_F(J_2, \frac{1}{2} \psi)
 \cdot ( \frac{a_1 - c_1 t}{2 \nu_1 u (b_1 - d_1 t)}, J_2 )_F.
\end{align*}
Thus we obtain
\[
 z_{\Y}(\h_0 \g_1 \h_0^{-1}, \h_0) =
 \begin{cases}
  1 & \text{if $b_1 - d_1 t = 0$,} \\
  1 & \text{if $b_1 - d_1 t \ne 0$, $a_1 - c_1 t = 0$,} \\
  \gamma_F(J_2, \frac{1}{2} \psi)
  \cdot ( \frac{a_1 - c_1 t}{2 \nu_1 u (b_1 - d_1 t)}, J_2 )_F
   & \text{if $b_1 - d_1 t \ne 0$, $a_1 - c_1 t \ne 0$.}
 \end{cases}
\]

Now we compute $z_{\Y}(\h_0, \g_1)$.
We have
\[
 z_{\Y}(\h_0, \g_1) = z_{\Y}(\h_0, \g_1 \cdot \d^{-1}).
\]
First assume that $b_1 = d_1 = 0$.
Then we have $\g_1 \cdot \d^{-1} \in P_{\Y}$ and hence
\[
 z_{\Y}(\h_0, \g_1 \cdot \d^{-1}) = 1.
\]
Next assume that $(b_1, d_1) \ne (0,0)$ and $b_1^2 - d_1^2 J_1 = 0$.
Then we have $b_1 \ne 0$ and $d_1 \ne 0$.
As in the proof of Lemma \ref{lem:mu-GU(V)-u}, we have
\[
 (a_1 d_1 - b_1 c_1) \cdot (a_1 b_1 + c_1 d_1 J_1) = \nu_1 b_1 d_1 \ne 0.
\]
We have $\g_1 \cdot \d^{-1} \in \m(\a_1) \cdot \n(\b_3) \cdot \tau_2 \cdot P_{\Y}$, where $\a_1$ and $\b_3$ are as in the proof of Lemma \ref{lem:mu-GU(V)-u}.
Hence we have
\[
 z_{\Y}(\h_0, \g_1 \cdot \d^{-1}) = 
 z_{\Y}(\tau_2 \cdot \m(\a_{10}), \m(\a_1) \cdot \n(\b_3) \cdot \tau_2)
 = z_{\Y}(\tau_2 \cdot \m(\a_{11}), \n(\b_3) \cdot \tau_2),
\]
where 
\[
 \a_{11} = \a_{10} \cdot \a_1 = 
 \begin{pmatrix}
 \frac{b_1 + d_1 t}{2} & & \frac{1}{2t} & \\
 & \frac{b_1 + d_1 t}{2} & & \frac{1}{2t} \\
 b_1 - d_1 t  & & - \frac{1}{t} & \\
 & b_1 - d_1 t & & - \frac{1}{t}
 \end{pmatrix}.
\]
If $b_1 - d_1 t = 0$, then we have $\tau_2 \cdot \m(\a_{11}) \cdot \tau_2^{-1} \in P_{\Y}$ and hence
\[
 z_{\Y}(\tau_2 \cdot \m(\a_{11}), \n(\b_3) \cdot \tau_2)
 = z_{\Y}(\tau_2, \n(\b_3) \cdot \tau_2)
 = \gamma_F(\frac{1}{2} \psi \circ q_{12}),
\]
where $q_{12}$ is a non-degenerate symmetric bilinear form associated to
\[
 \frac{a_1 d_1 - b_1 c_1}{b_1 d_1} \cdot 
 \begin{pmatrix}
  J_1 & \\
  & -J
 \end{pmatrix}.
\]
We have $\det q_{12} \equiv - J_2 \bmod (F^{\times})^2$ and
\[
 h_F(q_{12}) =  (  \frac{a_1 d_1 - b_1 c_1}{b_1 d_1} \cdot J_1,
 -\frac{a_1 d_1 - b_1 c_1}{b_1 d_1} \cdot J )_F \\
 = ( \frac{a_1 d_1 - b_1 c_1}{b_1 d_1}, J )_F
 = ( \frac{\nu_1}{a_1 b_1 + c_1 d_1 J_1}, J )_F.
\]
Hence we have
\begin{align*}
 \gamma_F(\frac{1}{2} \psi \circ q_{12})
 & = \gamma_F(\frac{1}{2} \psi)^2 \cdot 
 \gamma_F(-J_2, \frac{1}{2} \psi) \cdot
 ( \frac{\nu_1}{a_1 b_1 + c_1 d_1 J_1}, J )_F \\
 & = \gamma_F(J_2, \frac{1}{2} \psi)^{-1} \cdot
 ( (a_1 b_1 + c_1 d_1 J_1) \nu_1, J_2 )_F.
\end{align*}
If $b_1 - d_1 t \ne 0$, then we have
\[
 \m(\a_{11}) \cdot \n(\b_3) \cdot \tau_2 \in
 \n(\b_{26}) \cdot \tau'' \cdot P_{\Y},
\]
where
\[
 \b_{26} =  \frac{b_1}{2 (b_1 - d_1 t)} \cdot 
 \begin{pmatrix}
  1 & & & \\
  & -J_2 & & \\
  & & 0 & \\
  & & & 0
 \end{pmatrix}
\]
and $\tau''$ is as in the proof of Lemma \ref{lem:mu-GU(W)-J}.
Since $\tau_2 \cdot \n(\b_{26}) \cdot \tau_2^{-1} \in P_{\Y}$, we have
\[
 z_{\Y}(\tau_2 \cdot \m(\a_{11}), \n(\b_3) \cdot \tau_2)
 = z_{\Y}(\tau_2, \n(\b_{26}) \cdot \tau'')
 = z_{\Y}(\tau_2, \tau'')
 = 1.
\]
Finally assume that $(b_1, d_1) \ne (0,0)$ and $b_1^2 - d_1^2 J_1 \ne 0$.
We have $\g_1 \cdot \d^{-1} \in \n(\b_5) \cdot \tau_4 \cdot P_{\Y}$,
where $\b_5$ is as in the proof of Lemma \ref{lem:mu-GU(V)-u}.
Hence we have
\[
 z_{\Y}(\h_0, \g \cdot \d^{-1}) 
 = z_{\Y}(\tau_2 \cdot \m(\a_{10}), \n(\b_5) \cdot \tau_4)
 = z_{\Y}(\tau_2, \n(\b_{27}) \cdot \tau_4),
\]
where
\begin{align*}
 \b_{27} & = \a_{10} \cdot \b_5 \cdot {}^t \a_{10} \\
 & = \frac{1}{b_1^2 - d_1^2 J_1} \cdot 
 \begin{pmatrix}       	
 \frac{(a_1 + c_1 t) (b_1 + d_1 t)}{2} & & & \\
 & -\frac{(a_1 + c_1 t) (b_1 + d_1 t) J_2}{2} & & \\
 & & 2 (a_1 - c_1 t) (b_1 - d_1 t) & \\
 & & & - 2 (a_1 - c_1 t) (b_1 - d_1 t) J_2
\end{pmatrix}.
\end{align*}
We write $\b_{27} = \b_{28} + \b_{29}$, where
\[
 \b_{28} = \frac{a_1 + c_1 t}{2 (b_1 - d_1 t)} \cdot 
 \begin{pmatrix}       	
 1 & & & \\
 & - J_2 & & \\
 & & 0 & \\
 & & & 0
\end{pmatrix}, \qquad
 \b_{29} = \frac{2 (a_1 - c_1 t)}{b_1 + d_1 t} \cdot 
 \begin{pmatrix}       	
 0 & & & \\
 & 0 & & \\
 & & 1 & \\
 & & & - J_2
\end{pmatrix}.
\]
Since $\tau_2 \cdot \b_{28} \cdot \tau_2^{-1} \in P_{\Y}$,
we have $z_{\Y}(\tau_2, \n(\b_{27}) \cdot \tau_4) = z_{\Y}(\tau_2, \n(\b_{29}) \cdot \tau_4)$.
If $a_1 - c_1t = 0$,
then we have $z_{\Y}(\tau_2, \n(\b_{29}) \cdot \tau_4) = z_{\Y}(\tau_2, \tau_4) = 1$.
If $a_1 - c_1t \ne 0$, then we have
$z_{\Y}(\tau_2, \n(\b_{29}) \cdot \tau_4) = \gamma_F(\frac{1}{2} \psi \circ q_{13})$,
where $q_{13}$ is a non-degenerate symmetric bilinear form associated to
\[
 \frac{2 (a_1 - c_1 t)}{b_1 + d_1 t} \cdot 
 \begin{pmatrix}       	
 1 & \\
 & - J_2
\end{pmatrix}.
\]
We have $\det q_{13} \equiv -J_2 \bmod (F^{\times})^2$ and
\[
 h_F(q_{13}) = (\frac{2 (a_1 - c_1 t)}{b_1 + d_1 t}, - \frac{2 (a_1 - c_1 t)}{b_1 + d_1 t} \cdot J_2)_F
= (\frac{2 (a_1 - c_1 t)}{b_1 + d_1 t}, J_2)_F.
\] 
Hence we have
\[
 \gamma_F(\frac{1}{2} \psi \circ q_{13}) 
 = \gamma_F(\frac{1}{2} \psi)^2 \cdot \gamma_F(-J_2, \frac{1}{2} \psi)
 \cdot (\frac{2 (a_1 - c_1 t)}{b_1 + d_1 t}, J_2)_F
 = \gamma_F(J_2, \frac{1}{2} \psi)^{-1}
 \cdot (\frac{2 (a_1 - c_1 t)}{b_1 + d_1 t}, J_2)_F.
\]
Thus we obtain
\[
 z_{\Y}(\h_0, \g_1) =  
 \begin{cases}
  1 & \text{if $b_1 = d_1 = 0$,} \\
 \gamma_F(J_2, \frac{1}{2} \psi)^{-1} \cdot
 ( (a_1 b_1 + c_1 d_1 J_1) \nu_1, J_2 )_F
  & \text{if $(b_1,d_1) \ne (0,0)$, $b_1^2 - d_1^2 J_1 = 0$, $b_1-d_1t = 0$,} \\
  1 & \text{if $(b_1,d_1) \ne (0,0)$, $b_1^2 - d_1^2 J_1 = 0$, $b_1-d_1t \ne 0$,} \\
  1 & \text{if $(b_1,d_1) \ne (0,0)$, $b_1^2 - d_1^2 J_1 \ne 0$, $a_1-c_1t = 0$,} \\
 \gamma_F(J_2, \frac{1}{2} \psi)^{-1}
 \cdot (\frac{2 (a_1 - c_1 t)}{b_1 + d_1 t}, J_2)_F
  & \text{if $(b_1,d_1) \ne (0,0)$, $b_1^2 - d_1^2 J_1 \ne 0$, $a_1-c_1t \ne 0$.}
 \end{cases}
\]

Now we compute $\mu(\g_1) = z_{\Y}(\h_0 \g_1 \h_0^{-1}, \h_0) \cdot z_{\Y}(\h_0, \g_1)^{-1}$.
Recall that $J_1 = t^2$ and $\nu_1 = a_1^2 - b_1^2 u - c_1^2 J_1 + d_1^2 u J_1 \ne 0$.
First assume that $b_1 = d_1 = 0$.
Then we have
\[
 \mu(\g_1) = 1 \cdot 1 = 1.
\]
Next assume that $(b_1,d_1) \ne (0,0)$ and $b_1^2 - d_1^2 J_1 = 0$.
Then we have $\nu_1 = a_1^2 - c_1^2 J_1 = (a_1+c_1t)(a_1-c_1t) \ne 0$.
Since $b_1^2 - d_1^2 J_1 = (b_1 + d_1t)(b_1-d_1t)$, we have
\[
 b_1-d_1t = 0 \Longleftrightarrow b_1+d_1t \ne 0.
\]
If $b_1-d_1t = 0$, then we have
\[
 \mu(\g_1) = 1 \cdot
 \gamma_F(J_2, \frac{1}{2} \psi) \cdot
 ( (a_1 b_1 + c_1 d_1 J_1) \nu_1, J_2 )_F
 = \gamma_F(J_2, \frac{1}{2} \psi) \cdot
 ( (a_1 b_1 + c_1 d_1 J_1) \nu_1 J_1, J_2 )_F.
\]
If $b_1-d_1t \ne 0$, then we have
\[
 (a_1-c_1t)(b_1-d_1t) = - 2 (a_1-c_1t) d_1 t
 = 2 (-a_1 d_1 t + c_1 d_1 J_1 )
 = 2 ( a_1 b_1 + c_1 d_1 J_1 ).
\]
Hence we have
\begin{align*}
 \mu(\g_1) & =
  \gamma_F(J_2, \frac{1}{2} \psi)
  \cdot ( \frac{a_1 - c_1 t}{2 \nu_1 u (b_1 - d_1 t)}, J_2 )_F \cdot 1 \\
 & = \gamma_F(J_2, \frac{1}{2} \psi)
 \cdot ( 2 \nu_1 (a_1 - c_1 t) (b_1 - d_1 t), J_2 )_F
 \cdot ( u, J_2 )_F \\
 & = \gamma_F(J_2, \frac{1}{2} \psi)
 \cdot ( (a_1 b_1 + c_1 d_1 J_1) \nu_1, J_2 )_F \cdot (u, J_2)_F \\
 & = \gamma_F(J_2, \frac{1}{2} \psi)
 \cdot ( (a_1 b_1 + c_1 d_1 J_1) \nu_1 J_1, J_2 )_F \cdot (u, J)_F.
\end{align*}
Finally assume that $(b_1,d_1) \ne (0,0)$ and $b_1^2 - d_1^2 J_1 \ne 0$.
Then we have $b_1 - d_1t \ne 0$.
If $a_1 - c_1t = 0$, then we have
\[
 \mu(\g_1) = 1 \cdot 1 = 1.
\]
On the other hand, since $\nu_1 = -b_1^2 u + d_1^2 uJ_1$, we have
\[
 (-(b_1^2 - d_1^2 J_1) \nu_1 J_1, J_2)_F \cdot (u, J)_F
 = (\frac{\nu_1}{u} \cdot \nu_1 J_1, J_2)_F \cdot (u, J_2)_F
 = (\nu_1^2 J_1, J_2)_F
 = 1.
\]
If $a_1 - c_1t \ne 0$, then we have
\begin{align*}
\mu(\g_1) & = 
 \gamma_F(J_2, \frac{1}{2} \psi)
 \cdot ( \frac{a_1 - c_1 t}{2 \nu_1 u (b_1 - d_1 t)}, J_2 )_F
 \cdot \gamma_F(J_2, \frac{1}{2} \psi)
 \cdot (\frac{2 (a_1 - c_1 t)}{b_1 + d_1 t}, J_2)_F \\
 & = \gamma_F(J_2, \frac{1}{2} \psi)^2
 \cdot ( \nu_1 u (b_1 + d_1 t) (b_1 - d_1 t) , J_2 )_F \\
 & = (-1, J_2) \cdot ((b_1^2 - d_1^2 J_1) \nu_1, J_2)_F \cdot (u, J_2)_F \\
 & = (-(b_1^2 - d_1^2 J_1) \nu_1, J_2)_F \cdot (u, J_2)_F \\
 & = (-(b_1^2 - d_1^2 J_1) \nu_1 J_1, J_2)_F \cdot (u, J)_F.
\end{align*}
This completes the proof.
\end{proof}

\begin{lem}
\label{lem:mu-GU(V)-B2-J1}
Let $\g_2 := \ba_2^{-1} \in \GU(V)^0$ with $\ba_2 = a_2 + b_2 \i + c_2 \j_2 + d_2 \i \j_2 \in B_2^{\times}$.
Then we have
\[
 \mu(\g_2) =
 \begin{cases}
  1 & \text{if $b_2 = d_2 = 0$,} \\
 \gamma_F(J_1, \frac{1}{2} \psi)
 \cdot ( (a_2 b_2 + c_2 d_2 J_2) \nu_2 J_2, J_1 )_F
  & \text{if $(b_2,d_2) \ne (0,0)$ and $b_2^2 - d_2^2 J_2 = 0$,} \\
 (-(b_2^2 - d_2^2 J_2) \nu_2 J_2, J_1)_F
  & \text{if $(b_2,d_2) \ne (0,0)$ and $b_2^2 - d_2^2 J_2 \ne 0$,}
 \end{cases}
\]
where $\nu_2 = \nu(\ba_2)$.
\end{lem}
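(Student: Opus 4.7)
The proof follows the strategy of the proof of Lemma~\ref{lem:mu-GU(V)-B1-J1}. By the definition
\[
 \mu(\g_2) = z_\Y(\h_0 \g_2 \h_0^{-1}, \h_0) \cdot z_\Y(\h_0, \g_2)^{-1},
\]
one reduces to computing two values of the $2$-cocycle $z_\Y$, each of which is carried out by placing the relevant element of $\GSp(\V)$ in Bruhat form $\m(\a)\,\n(\b)\,\tau_j \cdot p$ with $p \in P_\Y$ and evaluating the resulting Weil index by means of \eqref{eqn:weil_index} and the standard identities for $\gamma_F(\cdot, \tfrac{1}{2}\psi)$ and the quadratic Hilbert symbol recalled in \S\ref{subsec:theta-prelim}.

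The first cocycle trivializes by the following observation. Since the left $B_2$-action on $V$ commutes with the right $B$-action (both factor through $V = B_1 \otimes_E B_2$), the element $\ba_2 \in B_2^\times$ acts as $\ba_2 \cdot (\v b) = \v \cdot \ii_2(\ba_2) b$ and $\ba_2 \cdot (\v^* b) = \v^* \cdot \ii_2(\ba_2) b$ for all $b \in B$, where $\ii_2$ is the embedding in \eqref{eq:isomB2-J1}. Therefore $\g_2 = \ba_2^{-1}$ preserves both Lagrangians $\X' = \v B$ and $\Y' = \v^* B$ in $\V = V$; by transport through $\h_0$, the element $\h_0 \g_2 \h_0^{-1}$ preserves both $\X$ and $\Y$. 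Consequently $\h_0 \g_2 \h_0^{-1} \cdot d_\Y(\nu_2)^{-1}$ lies in the Siegel parabolic $P_\Y$ of $\Sp(\V)$, which yields
\[
 z_\Y(\h_0 \g_2 \h_0^{-1}, \h_0) = v_\Y(\h_0, \nu_2) = 1,
\]
the last equality because $x_\Y(\h_0) \equiv -1 \bmod (F^\times)^2$ and $j_\Y(\h_0) = 2$, as established in the proof of Lemma~\ref{lem:mu-GU(V)-B1-J1}. In particular $\mu(\g_2) = z_\Y(\h_0, \g_2)^{-1}$.

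The remaining computation is $z_\Y(\h_0, \g_2)$. Writing $\g_2 = \ba_2^{-1}$ as an explicit $8 \times 8$ matrix in the basis $\e_1, \dots, \e_4, \e_1^*, \dots, \e_4^*$ via the $B_2$-action table of \S\ref{ss:spl-setup}, one decomposes $\g_2 \cdot d_\Y(\nu_2)^{-1} \in \Sp(\V)$ in Bruhat form according to the three cases of the statement; combining with the decomposition $\h_0 = \tau_2 \cdot \m(\a_{10})$ of \S\ref{ss:B1-spl} and absorbing $P_\Y$-factors, in each case $z_\Y(\h_0, \g_2)$ reduces to a single Weil index $\gamma_F(\tfrac{1}{2}\psi \circ q)$ of an explicit symmetric bilinear form $q$ on a two-dimensional $F$-space. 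Evaluating these Weil indices yields the three stated values.

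The main obstacle, exactly as in the proofs of Lemmas~\ref{lem:mu-GU(V)-u}, \ref{lem:mu-GU(W)-u}, \ref{lem:mu-GU(V)-J}, \ref{lem:mu-GU(W)-J} and \ref{lem:mu-GU(V)-B1-J1}, is the case-by-case bookkeeping of the Bruhat decompositions and the simplification of the resulting Hilbert symbols using $J = J_1 J_2$, $J_1 = t^2$, and $\nu_2 = a_2^2 - b_2^2 u - c_2^2 J_2 + d_2^2 u J_2$. A noteworthy qualitative feature, in contrast to Lemma~\ref{lem:mu-GU(V)-B1-J1}, is the absence of the factor $(u, J)_F$: the scalar nature of the $B_2$-action on $V = \v B \oplus \v^* B$ already forces $\h_0 \g_2 \h_0^{-1}$ into the Siegel parabolic, eliminating the secondary Bruhat case-split on $b_i \pm d_i t$ that produced the extra Hilbert symbol in the $B_1$-case.
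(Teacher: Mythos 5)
Your treatment of the first cocycle is correct and matches the paper: since $\g_2$ acts on $V = \v B \oplus \v^* B$ by multiplication by $\ii_2(\ba_2)^{-1} \in B^\times$, it stabilizes $\Y'$, so $\h_0 \g_2 \h_0^{-1} \cdot d_{\Y}(\nu_2)^{-1} \in P_{\Y}$, and $v_{\Y}(\h_0, \nu_2) = 1$ because $x_\Y(\h_0) \equiv -1$ and $j_\Y(\h_0) = 2$. So $\mu(\g_2) = z_\Y(\h_0, \g_2)^{-1}$.

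However, the description of the second cocycle computation is wrong and hides the key point. You claim that in each of the two nontrivial cases, $z_\Y(\h_0, \g_2)$ reduces to a Weil index $\gamma_F(\tfrac{1}{2}\psi \circ q)$ of an explicit two-dimensional form $q$, and that evaluating these Weil indices produces the stated values. That is not what happens. In the paper's computation, after absorbing the $\m$- and $\n$-factors into $P_\Y$, the cocycle $z_\Y(\h_0, \g_2 \cdot d_\Y(\nu_2)^{-1})$ collapses to $z_\Y(\tau_2, \tau')$ or $z_\Y(\tau_2, \tau_4)$, both of which are $1$; no two-dimensional quadratic form ever appears, and $\mu(\g_2) = 1$ identically in all three cases. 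The displayed formula agrees with this only because $J_1 = t^2 \in (F^\times)^2$, which forces $\gamma_F(J_1, \tfrac{1}{2}\psi) = 1$ and $(\ast, J_1)_F = 1$ for every argument. The observation that the stated formula is a disguised $1$, written in a form uniform with the other lemmas so as to feed cleanly into Proposition~\ref{prop:spl-B}, is precisely what the proof must contain, and it is missing from your proposal. As written, your claimed mechanism — nontrivial Weil-index evaluations matching the three branches — could not produce the answer, because the three branches are all trivial for the same reason ($J_1$ a square), not because three different two-dimensional Weil indices happen to land on those values.
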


\begin{proof}
Put $\d := d_{\Y}(\nu_2) \in \GSp(\V)$.
We have
\[
 z_{\Y}(\h_0 \g_2 \h_0^{-1}, \h_0) 
 = z_{\Y}(\h_0 \g_2 \h_0^{-1} \cdot \d^{-1}, \d \cdot \h_0 \cdot \d^{-1}) 
 \cdot v_{\Y}(\h_0, \nu_2).
\]
Since $\Y' \g_2 = \Y'$,
we have $\h_0 \g_2 \h_0^{-1} \cdot \d^{-1} \in P_{\Y}$ and hence
\[
 z_{\Y}(\h_0 \g_2 \h_0^{-1} \cdot \d^{-1}, \d \cdot \h_0 \cdot \d^{-1}) =1.
\]
As in the proof of Lemma \ref{lem:mu-GU(V)-B1-J1},
we have $v_{\Y}(\h_0, \nu_2) = 1$.
Thus we obtain
\[
 z_{\Y}(\h_0 \g_2 \h_0^{-1}, \h_0) = 1.
\]

Now we compute $z_{\Y}(\h_0, \g_2)$.
We have
\[
 z_{\Y}(\h_0, \g_2) = z_{\Y}(\h_0, \g_2 \cdot \d^{-1}).
\]
First assume that $b_2 = d_2 = 0$.
Then we have $\g_2 \cdot \d^{-1} \in P_{\Y}$ and hence
\[
 z_{\Y}(\h_0, \g_2 \cdot \d^{-1}) = 1.
\]
Next assume that $(b_2, d_2) \ne (0,0)$ and $b_2^2 - d_2^2 J_2 = 0$.
Then we have $b_2 \ne 0$ and $d_2 \ne 0$.
As in the proof of Lemma \ref{lem:mu-GU(V)-u}, we have $a_2 b_2 + c_2 d_2 J_2 \ne 0$.
We have $\g_2 \cdot \d^{-1} \in \m(\a_{12}) \cdot \n(\b_{30}) \cdot \tau_2 \cdot P_{\Y}$, where
\[
 \a_{12} = 
 \begin{pmatrix}
  d_2 & & & \\
  & d_2 & & \\
  b_2 & & 1 & \\
  & b_2 & & 1
 \end{pmatrix}, \qquad
 \b_{30} = \frac{a_2 d_2 - b_2 c_2}{b_2 d_2} \cdot 
 \begin{pmatrix}
  0 & & & \\
  & 0 & & \\
  & & J_2 & \\
  & & & -J
 \end{pmatrix}.
\]
Hence we have
\[
 z_{\Y}(\h_0, \g_2 \cdot \d^{-1})
 = z_{\Y}(\tau_2 \cdot \m(\a_{10}), \m(\a_{12}) \cdot \n(\b_{30}) \cdot \tau_2)
 = z_{\Y}(\tau_2, \m(\a_{10}) \cdot \m(\a_{12}) \cdot \n(\b_{30}) \cdot \tau_2),
\]
where $\a_{10}$ is as in the proof of Lemma \ref{lem:mu-GU(V)-B1-J1}.
We have
\[
 \m(\a_{10}) \cdot \m(\a_{12}) \cdot \n(\b_{30}) \cdot \tau_2 
 \in \m(\a_{13}) \cdot \n(\b_{31}) \cdot \tau' \cdot P_{\Y},
\]
where
\[
 \a_{13} = 
 \begin{pmatrix}
  d_2 & & & \\
  b_2 & 1 & & \\
  & & d_2 & \\
  & & b_2 & 1
 \end{pmatrix}, \qquad
 \b_{31} = \frac{(a_2 d_2 - b_2 c_2) J_2}{b_2 d_2} \cdot 
 \begin{pmatrix}
  0 & & & \\
  & 0 & & 1 \\
  & & 0 & \\
  & 1 & & 0
 \end{pmatrix},
\]
and $\tau'$ is as in the proof of Lemma \ref{lem:mu-GU(V)-J}.
Hence we have
\[
 z_{\Y}(\tau_2, \m(\a_{10}) \cdot \m(\a_{12}) \cdot \n(\b_{30}) \cdot \tau_2) 
 = z_{\Y}(\tau_2, \m(\a_{13}) \cdot \n(\b_{31}) \cdot \tau')
 = z_{\Y}(\tau_2 \cdot \m(\a_{13}) \cdot \n(\b_{31}), \tau').
\]
Since $\tau_2 \cdot \m(\a_{13}) \cdot \n(\b_{31}) \cdot \tau_2^{-1} \in P_{\Y}$, we have
\[
 z_{\Y}(\tau_2 \cdot \m(\a_{13}) \cdot \n(\b_{31}), \tau')
 = z_{\Y}(\tau_2, \tau') = 1.
\]
On the other hand, since $J_1 \in (F^{\times})^2$, 
we have $\gamma_F(J_1, \frac{1}{2} \psi) = 1$ and 
\[
 ( (a_2 b_2 + c_2 d_2 J_2) \nu_2 J_2, J_1 )_F = 1.
\]
Finally assume that $(b_2, d_2) \ne (0,0)$ and $b_2^2 - d_2^2 J_2 \ne 0$.
We have $\g_2 \cdot \d^{-1} \in \n(\b_{32}) \cdot \tau_4 \cdot P_{\Y}$, where
\[
 \b_{32} = \frac{1}{b_2^2 - d_2^2 J_2} \cdot
 \begin{pmatrix}
 a_2 b_2 + c_2 d_2 J_2 & & (a_2 d_2 + b_2 c_2) J_2 & \\
 & - (a_2 b_2 + c_2 d_2 J_2) J_1 & & -(a_2 d_2 + b_2 c_2) J \\
 (a_2 d_2 + b_2 c_2) J_2 & & (a_2 b_2 + c_2 d_2 J_2) J_2 & \\
 & -(a_2 d_2 + b_2 c_2) J & & -(a_2 b_2 + c_2 d_2 J_2) J
\end{pmatrix}.
\]
Hence we have
\begin{align*}
 z_{\Y}(\h_0, \g_2 \cdot \d^{-1})
 & = z_{\Y}(\tau_2 \cdot \m(\a_{10}), \n(\b_{32}) \cdot \tau_4) \\
 & = z_{\Y}(\tau_2 \cdot \m(\a_{10}), \n(\b_{32}) \cdot 
 \m(\a_{10})^{-1} \cdot \tau_4) \\
 & = z_{\Y}(\tau_2 \cdot \m(\a_{10}) \cdot \n(\b_{32}) \cdot 
 \m(\a_{10})^{-1}, \tau_4).
\end{align*}
Since $\tau_2 \cdot \m(\a_{10}) \cdot \n(\b_{32}) \cdot 
 \m(\a_{10})^{-1} \cdot \tau_2^{-1} \in P_{\Y}$, we have
\[
 z_{\Y}(\h_0, \g_2 \cdot \d^{-1}) = z_{\Y}(\tau_2, \tau_4) = 1.
\]
On the other hand, since $J_1 \in (F^{\times})^2$, we have
\[
 (-(b_2^2 - d_2^2 J_2) \nu_2 J_2, J_1)_F = 1.
\]
This completes the proof.
\end{proof}

\begin{lem}
\label{lem:mu-GU(W)-J1}
Let $\g := \ba \in \GU(W)$ with $\ba = a + b \i + c \j + d \i \j \in B^{\times}$.
Then, for any $i \in \{ 1, 2\}$, we have
\[
 \mu(\g) =
 \begin{cases}
  (\nu, J_i)_F & \text{if $b = d = 0$,} \\
 \gamma_F(J_i, \frac{1}{2} \psi) \cdot (ab - cdJ, J_i)_F
  & \text{if $(b,d) \ne (0,0)$ and $b^2 - d^2 J = 0$,} \\
  (- (b^2 - d^2J) J, J_i)_F
  & \text{if $(b,d) \ne (0,0)$ and $b^2 - d^2 J \ne 0$,}
 \end{cases}
\]
where $\nu = \nu(\ba)$.
\end{lem}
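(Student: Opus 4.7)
The overall strategy mirrors the computations of Lemmas \ref{lem:mu-GU(W)-u} and \ref{lem:mu-GU(W)-J} above: we split $\mu(\g) = z_{\Y}(\h_0 \g \h_0^{-1}, \h_0) \cdot z_{\Y}(\h_0, \g)^{-1}$, insert $\d := d_{\Y}(\nu) \in \GSp(\V)$ into each factor so as to reduce to $2$-cocycle computations involving standard Weyl elements $\tau_j$, and evaluate the resulting Weil indices via \eqref{eqn:weil_index}. The first step is to observe that from the Bruhat factorization $\h_0 = \tau_2 \cdot \m(\a_{10})$ recorded in the proof of Lemma \ref{lem:mu-GU(V)-B1-J1}, we have $x_{\Y}(\h_0) \equiv -1 \bmod (F^\times)^2$ and $j_{\Y}(\h_0) = 2$, so Lemma \ref{lem:v_Y} gives $v_{\Y}(\h_0, \nu) = 1$.

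For the first factor, I would write out the matrix of $\h_0 \g \h_0^{-1}$ acting on the basis $\{\e_1, \ldots, \e_4^*\}$ using the right action of $\ba = a + b \i + c \j + d \i\j$ on $W = B$ (transported through $\h_0$), producing a block pattern governed by $(a+ct)$, $(a-ct)$, $(b+dt)$, $(b-dt)$. The key observation is that $\h_0 \g \h_0^{-1} \cdot \d^{-1}$ lies in $P_{\Y}$ exactly when $b + dt = 0$, and otherwise admits a Bruhat decomposition of the form $P_{\Y} \cdot \tau_4 \cdot \n(\b)$ for an explicit diagonal $\b$. Combining this with the fact that $\d \cdot \h_0 \cdot \d^{-1} \in \tau_2 \cdot P_{\Y}$ reduces the computation of $z_{\Y}(\h_0 \g \h_0^{-1} \cdot \d^{-1}, \d \cdot \h_0 \cdot \d^{-1})$ to a $z_{\Y}(\tau_4 \cdot \n(\b), \tau_2)$, which by the last bullet of Lemma in \S\ref{ss:spl-setup} is a Weil index $\gamma_F(\frac{1}{2}\psi \circ q)$ of an explicit $2$-dimensional quadratic form $q$ depending on the data.

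For the second factor, I would repeat the analysis of $\g \cdot \d^{-1}$ exactly as in Lemmas \ref{lem:mu-GU(W)-u} and \ref{lem:mu-GU(W)-J}, splitting into the three cases $b = d = 0$; $(b,d) \ne (0,0)$ with $b^2 - d^2 J = 0$; and $(b,d) \ne (0,0)$ with $b^2 - d^2 J \ne 0$. The Bruhat factorizations involving $\m(\a_1)$, $\n(\b_3)$, $\n(\b_5)$ from the proof of Lemma \ref{lem:mu-GU(W)-u} carry over, but because our base-change matrix $\h_0$ (in the $J_1$-square case) is of shape $\tau_2 \cdot \m(\a_{10})$ rather than $\tau_4 \cdot \n(\b_2)$, many apparent contributions conjugate into $P_{\Y}$ and drop out. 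Taking the ratio of the two factors and computing the Weil indices and Hasse invariants explicitly in each subcase gives $\mu(\g)$ as a product of $\gamma_F$-factors and Hilbert symbols in $J$, $J_2$, $u$, $\nu$.

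The main obstacle, as in the previous lemmas of this appendix, is the careful bookkeeping of the Bruhat cells and the algebraic identities (e.g.~$(a-ct)(b+dt) = 2(ab-cdJ)$ when $b^2 = d^2J$) used to match the resulting expressions with the stated formula. A final, conceptual step specific to this lemma is verifying that the formula holds simultaneously for $i = 1$ and $i = 2$: since $J_1 \in (F^\times)^2$, every Hilbert symbol $(\cdot, J_1)_F$ is trivial and $\gamma_F(J_1, \frac{1}{2}\psi) = 1$, so the $i = 1$ form reads $\mu(\g) = 1$; one must then check separately that the $i = 2$ form, a priori a non-trivial product, is also identically $1$. This amounts to verifying, in each subcase, that the pertinent element of $F^\times$ is a norm from $K = F(\sqrt{J_2})$, which follows from the explicit formulas for $\nu(\ba)$, $ab - cdJ$, and $b^2 - d^2 J$ together with the relation $J \equiv J_2 \bmod (F^\times)^2$. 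These norm verifications are the most delicate part of the argument and will be handled case-by-case in parallel with the Bruhat computation.
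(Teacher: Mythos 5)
Your overall framework (split $\mu(\g) = z_{\Y}(\h_0 \g \h_0^{-1}, \h_0) \cdot z_{\Y}(\h_0, \g)^{-1}$, insert $\d = d_{\Y}(\nu)$, reduce to $z_{\Y}$ of Bruhat pieces) matches the paper, and your final observation about the $i=1$ vs.\ $i=2$ consistency is the right idea. But your treatment of the first factor $z_{\Y}(\h_0 \g \h_0^{-1}, \h_0)$ contains a genuine error. You claim a case analysis ``$\h_0 \g \h_0^{-1} \cdot \d^{-1}$ lies in $P_{\Y}$ exactly when $b + dt = 0$, and otherwise admits a Bruhat decomposition $P_{\Y} \cdot \tau_4 \cdot \n(\b)$,'' apparently transplanting the template from Lemma~\ref{lem:mu-GU(W)-J}. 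This misses the crucial structural difference in the $J_1 \in (F^\times)^2$ case: here the polarization is built as $\Y' = W' \otimes_B Y'$ with $Y'$ a $B$-submodule of $V'$, and $\GU(W)$ acts only on the $W$ tensor factor, so $\Y' \g = \Y'$ for \emph{every} $\g \in \GU(W)$. Consequently $\h_0 \g \h_0^{-1} \cdot \d^{-1} \in P_{\Y}$ unconditionally, and (together with $v_{\Y}(\h_0,\nu) = 1$) the first factor is identically $1$ with no case analysis or Weil-index computation whatsoever.

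In fact the paper's proof shows the second factor $z_{\Y}(\h_0, \g)$ is also identically $1$ (in cases~2 and~3 everything conjugates through $\tau_2$ into $P_{\Y}$ after the Bruhat manipulations, reducing to $z_{\Y}(\tau_2, \tau') = 1$ or $z_{\Y}(\tau_2, \tau_4) = 1$), so $\mu(\g) = 1$ outright. The remaining content is checking that the \emph{displayed formula} reduces to $1$: trivially for $i=1$ since $J_1 \in (F^\times)^2$, and for $i=2$ via $J \equiv J_2 \bmod (F^\times)^2$ (e.g.\ $(- (b^2 - d^2J) J, J_2)_F = (b^2 - d^2J, J)_F = 1$). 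You should also correct a small bookkeeping slip: the Bruhat data used for the second factor here are $\m(\a_3)$, $\n(\b_9)$, $\n(\b_{13})$ from the proof of Lemma~\ref{lem:mu-GU(W)-u}, not $\m(\a_1)$, $\n(\b_3)$, $\n(\b_5)$, which belong to Lemma~\ref{lem:mu-GU(V)-u}.
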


\begin{proof}
Put $\d := d_{\Y}(\nu) \in \GSp(\V)$.
We have
\[
 z_{\Y}(\h_0 \g \h_0^{-1}, \h_0) 
 = z_{\Y}(\h_0 \g \h_0^{-1} \cdot \d^{-1}, \d \cdot \h_0 \cdot \d^{-1}) 
 \cdot v_{\Y}(\h_0, \nu).
\]
Since $\Y' \g = \Y'$,
we have $\h_0 \g \h_0^{-1} \cdot \d^{-1} \in P_{\Y}$ and hence
\[
 z_{\Y}(\h_0 \g \h_0^{-1} \cdot \d^{-1}, \d \cdot \h_0 \cdot \d^{-1}) = 1.
\]
As in the proof of Lemma \ref{lem:mu-GU(V)-B1-J1},
we have $v_{\Y}(\h_0, \nu) = 1$.
Thus we obtain
\[
 z_{\Y}(\h_0 \g \h_0^{-1}, \h_0) = 1.
\]

Now we compute $z_{\Y}(\h_0, \g)$.
We have
\[
 z_{\Y}(\h_0, \g) = z_{\Y}(\h_0, \g \cdot \d^{-1}).
\]
First assume that $b = d = 0$.
Then we have $\g \cdot \d^{-1} \in P_{\Y}$ and hence
\[
 z_{\Y}(\h_0, \g \cdot \d^{-1}) = 1.
\]
On the other hand, since $J_1 \in (F^{\times})^2$, 
we have $(\nu, J_1)_F = 1$ and
\[
 (\nu, J_2)_F = (\nu, J)_F = (a^2 - c^2 J, J)_F = 1.
\]
Next assume that $(b, d) \ne (0,0)$ and $b^2 - d^2 J = 0$.
Then we have $b \ne 0$ and $d \ne 0$.
As in the proof of Lemma \ref{lem:mu-GU(W)-J}, we have $ab - cd J \ne 0$.
We have $\g \cdot \d^{-1} \in \m(\a_3) \cdot \n(\b_9) \cdot \tau_2 \cdot P_{\Y}$, where $\a_3$ and $\b_9$ are as in the proof of Lemma \ref{lem:mu-GU(W)-u}.
Hence we have
\[
 z_{\Y}(\h_0, \g \cdot \d^{-1})
 = z_{\Y}(\tau_2 \cdot \m(\a_{10}), \m(\a_3) \cdot \n(\b_9) \cdot \tau_2)
 = z_{\Y}(\tau_2, \m(\a_{10}) \cdot \m(\a_3) \cdot \n(\b_9) \cdot \tau_2),
\]
where $\a_{10}$ is as in the proof of Lemma \ref{lem:mu-GU(V)-B1-J1}.
We have
\[
 \m(\a_{10}) \cdot \m(\a_3) \cdot \n(\b_9) \cdot \tau_2
 \in \m(\a_{14}) \cdot \n(\b_{33}) \cdot \tau' \cdot P_{\Y},
\]
where
\[
 \a_{14} = 
 \begin{pmatrix}
  bt & & & \\
  -dJ & 1 & & \\
  & & bt & \\
  & & dJ & 1
 \end{pmatrix}, \qquad
 \b_{33} = - \frac{(ad + bc) J_2}{bd} \cdot 
 \begin{pmatrix}
  0 & & & \\
  & 0 & & 1 \\
  & & 0 & \\
  & 1 & & 0
 \end{pmatrix},
\]
and $\tau'$ is as in the proof of Lemma \ref{lem:mu-GU(V)-J}.
Hence we have
\[
 z_{\Y}(\tau_2, \m(\a_{10}) \cdot \m(\a_3) \cdot \n(\b_9) \cdot \tau_2)
 = z_{\Y}(\tau_2, \m(\a_{14}) \cdot \n(\b_{33}) \cdot \tau')
 = z_{\Y}(\tau_2 \cdot \m(\a_{14}) \cdot \n(\b_{33}), \tau').
\]
Since $\tau_2 \cdot \m(\a_{14}) \cdot \n(\b_{33}) \cdot \tau_2^{-1} \in P_{\Y}$, we have
\[
 z_{\Y}(\tau_2 \cdot \m(\a_{14}) \cdot \n(\b_{33}), \tau')
 = z_{\Y}(\tau_2, \tau') = 1.
\]
On the other hand, since $J_1 \in (F^{\times})^2$ and $J \in (F^{\times})^2$, we have $\gamma_F(J_1, \frac{1}{2} \psi) = \gamma_F(J_2, \frac{1}{2} \psi) = 1$ and
\[
 (ab - cdJ, J_1)_F =  (ab - cdJ, J_2)_F = 1.
\]
Finally assume that $(b, d) \ne (0,0)$ and $b^2 - d^2 J \ne 0$.
We have $\g \cdot \d^{-1} \in \n(\b_{13}) \cdot \tau_4 \cdot P_{\Y}$, where
$\b_{13}$ is as in the proof of Lemma \ref{lem:mu-GU(W)-u}.
Hence we have
\begin{align*}
 z_{\Y}(\h_0, \g \cdot \d^{-1})
 & = z_{\Y}(\tau_2 \cdot \m(\a_{10}), \n(\b_{13}) \cdot \tau_4) \\
 & = z_{\Y}(\tau_2 \cdot \m(\a_{10}), \n(\b_{13}) \cdot 
 \m(\a_{10})^{-1} \cdot \tau_4) \\
 & = z_{\Y}(\tau_2 \cdot \m(\a_{10}) \cdot \n(\b_{13}) \cdot 
 \m(\a_{10})^{-1}, \tau_4).
\end{align*}
Since $\tau_2 \cdot \m(\a_{10}) \cdot \n(\b_{13}) \cdot 
 \m(\a_{10})^{-1} \cdot \tau_2^{-1} \in P_{\Y}$, we have
\[
 z_{\Y}(\h_0, \g \cdot \d^{-1}) = z_{\Y}(\tau_2, \tau_4) = 1.
\]
On the other hand, since $J_1 \in (F^{\times})^2$, we have
$(- (b^2 - d^2J) J, J_1)_F = 1$ and 
\[
 (- (b^2 - d^2J) J, J_2)_F = (- (b^2 - d^2J) J, J)_F 
= (b^2 - d^2J, J)_F = 1.
\]
This completes the proof.
\end{proof}

\subsection{The product formula}
\label{ss:spl-B-prod}
Suppose that $F$ is a number field.
First we fix quaternion algebras $B_1$ and $B_2$ over $F$.
Next we fix a quadratic extension $E$ of $F$
such that $E$ embeds into $B_1$ and $B_2$.
Let $B$ be the quaternion algebra over $F$
which is the product of $B_1$ and $B_2$ in the Brauer group.
Then $E$ also embeds into $B$.

Fix a finite set $\Sigma$ of places of $F$ containing
\[
 \Sigma_{\infty} \cup \Sigma_2 \cup \Sigma_E
 \cup \Sigma_B \cup \Sigma_{B_1} \cup \Sigma_{B_2}.
\]
Here $\Sigma_{\infty}$ is the set of archimedean places of $F$,
$\Sigma_2$ is the set of places of $F$ lying above $2$,
and $\Sigma_{\bullet}$ is the set of places $v$ of $F$ such that $\bullet_v$ is ramified over $F_v$ for $\bullet = E$, $B$, $B_1$, $B_2$.

We write $B_i = E + E \j_i$.
Put $J_i = \j_i^2$ and $J = J_1 J_2$.
We may write $B = E + E \j$ such that $\j^2 = J$.
Then, for each place $v$ of $F$, we have
\begin{itemize}
 \item $J \in \N_{E_v/F_v}(E_v^{\times})$ if $v \notin \Sigma_B$,
 \item $J_1 \in \N_{E_v/F_v}(E_v^{\times})$ if $v \notin \Sigma_{B_1}$,
 \item $J_2 \in \N_{E_v/F_v}(E_v^{\times})$ if $v \notin \Sigma_{B_2}$.
\end{itemize}
By using the weak approximation theorem and replacing $\j_i$ by $\alpha_i \j_i$ with some $\alpha_i \in E^{\times}$ if necessary, we may assume that
\[
 J \in (F_v^{\times})^2 \quad \text{or} \quad
 J_1 \in (F_v^{\times})^2 \quad \text{or} \quad
 J_2 \in (F_v^{\times})^2
\]
for all $v \in \Sigma$.

\begin{lem}
We have 
\[
 u \in (F_v^{\times})^2 \quad \text{or} \quad
 J \in (F_v^{\times})^2 \quad \text{or} \quad
 J_1 \in (F_v^{\times})^2 \quad \text{or} \quad
 J_2 \in (F_v^{\times})^2
\]
for all $v \notin \Sigma$.
\end{lem}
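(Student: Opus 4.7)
The plan is to analyze the local situation at a place $v \notin \Sigma$ and deduce the claim from the Hilbert symbol and the structure of the unit group modulo squares at an unramified non-dyadic place.

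First I would split into cases according to whether $E_v/F_v$ is split or inert (recall that $v \notin \Sigma \supseteq \Sigma_E$, so $E_v/F_v$ is at worst unramified). In the split case, $u = \i^2$ is a norm from $E_v = F_v \times F_v$, and in fact $u \in (F_v^\times)^2$ directly (since $E_v = F_v(\sqrt{u})$ splits iff $u$ is a square), so there is nothing more to show.

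The main case is when $E_v/F_v$ is the unramified quadratic extension. Here I would use the assumption $v \notin \Sigma_B \cup \Sigma_{B_1} \cup \Sigma_{B_2}$: since $B_v$, $B_{1,v}$, $B_{2,v}$ are all split, the Hilbert symbols $(u,J)_{F_v} = (u,J_1)_{F_v} = (u,J_2)_{F_v} = 1$ all vanish, which means $J, J_1, J_2$ all lie in $\N_{E_v/F_v}(E_v^\times)$. For $E_v/F_v$ unramified, this norm group is exactly $\{ a \in F_v^\times : v(a) \text{ is even}\}$, so $v(J_1), v(J_2)$, and hence $v(J) = v(J_1)+v(J_2)$, are all even.

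The last step is the key observation: since $v \notin \Sigma_2$, the residue characteristic of $F_v$ is odd, so $\o_v^\times/(\o_v^\times)^2 \cong \Z/2\Z$. Writing $J_i = \pi^{2 a_i} u_i$ with $u_i \in \o_v^\times$, the class of $J_i$ in $F_v^\times/(F_v^\times)^2$ is determined by the class of $u_i$ in this two-element group. Among the three classes $[u_1], [u_2], [u_1 u_2]$ in $\Z/2\Z$, at least one must be trivial; correspondingly at least one of $J_1, J_2, J$ lies in $(F_v^\times)^2$. There is no real obstacle here --- the argument is essentially a counting statement in $F_v^\times/(F_v^\times)^2$ --- the only thing to be careful about is correctly invoking the hypothesis $v \notin \Sigma_2 \cup \Sigma_E$ to ensure both that units mod squares has order $2$ and that the norm group from $E_v$ is the full set of even-valuation elements.
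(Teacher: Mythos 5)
Your proof is correct and takes essentially the same approach as the paper: both reduce to the case where $E_v/F_v$ is unramified and inert, observe that $J_1, J_2$ lie in $\N_{E_v/F_v}(E_v^\times)$ (which modulo $(F_v^\times)^2$ is a group of order $2$ since $v$ is non-dyadic), and conclude that if $J_1, J_2$ are both non-squares they lie in the same nontrivial coset, forcing $J = J_1 J_2$ to be a square.
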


\begin{proof}
Let $v \notin \Sigma$.
We may assume that $v$ is inert in $E$.
Assume that $J_i \notin (F_v^{\times})^2$ for $i=1,2$.
Since $J_i \in \N_{E_v/F_v}(E_v^{\times})$, we have $J_i \in \varepsilon \cdot (F_v^{\times})^2$ for $i=1,2$, where $\varepsilon \in \o_{F_v}^{\times}$ but $\varepsilon \notin (F_v^{\times})^2$.
Hence we have
\[
 J = J_1 J_2 \in (F_v^{\times})^2.
\]
This yields the lemma.
\end{proof}

Thus, for each place $v$ of $F$, we can define a map
\[
 s_v : \GU(V_v)^0 \times \GU(W_v) \longrightarrow \C^1
\]
by $s_v := s_v' \cdot \mu_v$, where $s_v'$ and $\mu_v$ are as in \S \S \ref{ss:B-spl}, \ref{ss:B1-spl}.
Here, for $\bullet = u$, $J$, $J_1$, $J_2$ with $\bullet \in (F_v^{\times})^2$, we have chosen $t \in F_v^{\times}$ such that $\bullet = t^2$.
Recall that
\[
 z_{\Y_v}(\g, \g') = s_v(\g \g') \cdot s_v(\g)^{-1} \cdot s_v(\g')^{-1}
\]
for $\g, \g' \in \GU(V_v)^0 \times \GU(W_v)$.

\begin{prop}
\label{prop:spl-B}

\noindent $\mathrm{(i)}$
Let $\g_i := \ba_i^{-1} \in \GU(V_v)^0$ with $\ba_i = a_i + b_i \i + c_i \j_i + d_i \i \j_i \in B_{i,v}^{\times}$.
Then we have
\[
 s_v(\g_i) =
 \begin{cases}
  1 & \text{if $b_i = d_i = 0$,} \\
 \gamma_{F_v}(J_j, \frac{1}{2} \psi_v)
 \cdot ((a_i b_i + c_i d_i J_i) \nu_i J_i, J_j)_{F_v}
  & \text{if $(b_i, d_i) \ne (0,0)$ and $b_i^2 - d_i^2 J_i = 0$,} \\
 (-(b_i^2 - d_i^2 J_i) \nu_i J_i, J_j)_{F_v}
 & \text{if $(b_i, d_i) \ne (0,0)$ and $b_i^2 - d_i^2 J_i \ne 0$,}
 \end{cases}
\]
where $\nu_i = \nu(\ba_i)$ and $\{ i, j \} = \{ 1, 2 \}$.

\noindent $\mathrm{(ii)}$
Let $\g := \ba \in \GU(W_v)$ with $\ba = a + b \i + c \j + d \i \j \in B_v^{\times}$.
Then we have
\[
 s_v(\g) =
 \begin{cases}
  (\nu, J_1)_{F_v} & \text{if $b = d = 0$,} \\
  \gamma_{F_v}(J_1, \frac{1}{2} \psi_v) \cdot (ab-cdJ, J_1)_{F_v}
  & \text{if $(b,d) \ne (0,0)$ and $b^2 - d^2 J = 0$,} \\
  (- (b^2 - d^2J) J, J_1)_{F_v}
  & \text{if $(b,d) \ne (0,0)$ and $b^2 - d^2 J \ne 0$,}
 \end{cases}
\]
where $\nu = \nu(\ba)$.
\end{prop}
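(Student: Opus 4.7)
\smallskip

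The plan is to read off the proposition from the definition $s_v = s_v' \cdot \mu_v$, combining the explicit computations of $\mu_v$ done in Lemmas \ref{lem:mu-GU(V)-u}, \ref{lem:mu-GU(W)-u} (the $u$-square case), Lemmas \ref{lem:mu-GU(V)-J}, \ref{lem:mu-GU(W)-J} (the $J$-square case), and Lemmas \ref{lem:mu-GU(V)-B1-J1}, \ref{lem:mu-GU(V)-B2-J1}, \ref{lem:mu-GU(W)-J1} (the $J_i$-square case) with the very explicit formula $s_v'(\g) = \gamma^{j(\cdot)}$ given in \S\S \ref{ss:B-spl}, \ref{ss:B1-spl}. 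Since at least one of $u,J,J_1,J_2$ is a square at each place $v$, these four cases exhaust all possibilities. The main task is therefore to run through each case and verify that the answer reduces to the single uniform formula in the statement.

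First I would handle the case $u\in (F_v^\times)^2$. Here $E_v$ is split, which (since $E_v$ cannot embed into a quaternion division algebra) forces $B_{1,v}$ and $B_{2,v}$ to be split, so that $\gamma=1$; moreover $\g_i\in \GU(V)^0$ has trivial image in the $\GSp(W^\dagger)$-component of $\GO(V^\dagger)^0\times \GSp(W^\dagger)$, and elements $\g\in \GU(W)$ contribute the full $\GSp(W^\dagger)$-factor. Consequently $s_v'\equiv 1$, and Lemmas \ref{lem:mu-GU(V)-u}, \ref{lem:mu-GU(W)-u} directly give the desired formulas. An analogous (but even cleaner) analysis works when $J_1\in (F_v^\times)^2$ (or $J_2\in (F_v^\times)^2$): the formulas follow directly from Lemmas \ref{lem:mu-GU(V)-B1-J1}, \ref{lem:mu-GU(V)-B2-J1}, \ref{lem:mu-GU(W)-J1} combined with the triviality of $\gamma^{j(\cdot)}$ on the relevant factors.

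The subtle case, and the one I expect to be the main obstacle, is $J\in (F_v^\times)^2$. Here $B_v$ is automatically split, but $B_{1,v}$ and $B_{2,v}$ can both be ramified, so that $\gamma$ can be $-1$. The point is that the ``extra'' Hilbert-symbol factor $(u,J_1)_{F_v}$ that appears in Lemma \ref{lem:mu-GU(W)-J} (when $b+dt\ne 0$) must cancel against the sign $\gamma^{j(\g)}$. The key identity that makes this work is
\[
 \gamma = (u, J_1)_{F_v},
\]
which follows from the fact that $B_{1,v}$ is split if and only if $(u,J_1)_{F_v}=1$. With this identity in hand, one checks that in each sub-case of Lemma \ref{lem:mu-GU(W)-J} the product $\gamma^{j(\g)}\cdot \mu_v(\g)$ matches the formula in (ii); for $\g_i\in \GU(V_v)^0$, one similarly verifies that the map into $\GSp(W^\dagger)$ has image with $j=0$, so $s_v(\g_i)=\mu_v(\g_i)$ and Lemma \ref{lem:mu-GU(V)-J} applies directly. (Since $J\in (F_v^\times)^2$ forces $J_1\equiv J_2 \bmod (F_v^\times)^2$, the roles of $J_1$ and $J_2$ in the formula are interchangeable, which is consistent.)

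Finally, one should observe the compatibility between different cases: when two of $u,J,J_1,J_2$ are simultaneously squares (e.g., $u$ and $J$), the Hilbert symbols in the statement collapse in the expected way, so the uniform formula is well-defined independently of which square class is used to set up the polarization $\V_v = \X_v'\oplus \Y_v'$. Rather than redo these consistency checks a posteriori, I would simply observe that $s_v$ is intrinsically characterized by $\partial s_v = z_{\Y_v}$ up to a character, and pin down this ambiguity once and for all using the values on (say) diagonal elements. The bulk of the work is thus purely organizational — repackaging the long calculations of Appendix \ref{sec:spl-B} in the compact form of Proposition \ref{prop:spl-B}.
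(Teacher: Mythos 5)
Your overall plan matches the paper's proof almost exactly: compute $s_v = s_v' \cdot \mu_v$ by combining the explicit formula $s_v'(\g) = \gamma^{j(\cdot)}$ with the $\mu_v$ lemmas, and run through the three cases according to which of $u, J, J_1, J_2$ is a square. Your treatment of the $u$-square and $J$-square cases is right, and you correctly spot that the crucial mechanism in the $J$-square case is the identity $\gamma = (u,J_1)_{F_v}$ (equivalently, $B_{1,v}$ is split iff $(u,J_1)_{F_v}=1$) which absorbs the $(u,J_1)_{F_v}$ factor appearing in Lemma~\ref{lem:mu-GU(W)-J}.

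However, there is a real gap in your description of the $J_i$-square case. You claim this case is ``even cleaner'' and that the formulas follow from the $\mu$-lemmas ``combined with the triviality of $\gamma^{j(\cdot)}$ on the relevant factors.'' This is not what happens. In the setup of \S\ref{ss:B1-spl}, the index $j$ is computed on the $\GU(V')^0$ factor, and for $\g_1 = \ba_1^{-1}$ one has $j(\g_1)=1$ whenever $b_1 - d_1 t \ne 0$, where the relevant matrix is ${}^t\ii_1(\ba_1) = \smat{a_1+c_1t}{\frac{u}{2}(b_1+d_1t)}{2(b_1-d_1t)}{a_1-c_1t}$. Thus $s'_v(\g_1) = \gamma^{j(\g_1)}$ is genuinely nontrivial; correspondingly, Lemma~\ref{lem:mu-GU(V)-B1-J1} produces an extra factor $(u,J)_{F_v}$ exactly when $b_1 - d_1 t\ne 0$, and one must invoke the identity $\gamma = (u,J)_{F_v}$ ($B_v$ split iff $(u,J)_{F_v}=1$) to cancel it — the same mechanism you correctly diagnose in the $J$-square case, not a triviality. (For $\g_2$, $j(\g_2)=0$ holds, and Lemma~\ref{lem:mu-GU(V)-B2-J1} has no extra factor; for $\g\in\GU(W)$, $s'_v(\g)=1$ and Lemma~\ref{lem:mu-GU(W)-J1} is the formula on the nose. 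So it is only the $\g_1$-leg that requires care.) Finally, the suggestion in your last paragraph to bypass the computations by ``pinning down the ambiguity once and for all'' via the characterization $\partial s_v = z_{\Y_v}$ up to a character is not a shortcut: the content of the proposition is precisely the explicit value of that character, and it can only be extracted by the case-by-case evaluation you have already set up.
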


\begin{proof}
If $u \in (F_v^{\times})^2$, then $B_{i,v}$ is split and the assertion follows from Lemmas \ref{lem:mu-GU(V)-u} and \ref{lem:mu-GU(W)-u}.

Assume that $J \in (F_v^{\times})^2$.
Let $\ii: B_v \rightarrow \M_2(F_v)$ be the isomorphism as in \S \ref{ss:B-spl}.
Since
\[
 \ii(\ba) = 
 \begin{pmatrix}
  a + ct & b - dt \\
  u(b + dt) & a - ct
 \end{pmatrix},
\]
we have
\[
 j(\g) = 
 \begin{cases}
  0 & \text{if $b + dt = 0$,} \\
  1 & \text{if $b + dt \ne 0$.}
 \end{cases}
\]
Since 
\[
 (u, J_1)_{F_v} =
 \begin{cases}
  1 & \text{if $B_{1,v}$ is split,} \\
  -1 & \text{if $B_{1,v}$ is ramified,}
 \end{cases}
\]
the assertion follows from Lemmas \ref{lem:mu-GU(V)-J} and \ref{lem:mu-GU(W)-J}.

Assume that $J_i \in (F_v^{\times})^2$.
We only consider the case $i = 1$; the other case is similar.
Let $\ii_1: B_{1,v} \rightarrow \M_2(F_v)$ be the isomorphism as in \S \ref{ss:B1-spl}.
Since
\[
 {}^t \ii_1(\ba_1) =
 \begin{pmatrix}
  a_1 + c_1 t & \frac{u}{2} (b_1 + d_1 t) \\
  2(b_1 - d_1 t) & a_1 - c_1 t
 \end{pmatrix},
\]
we have
\[
 j(\g_1) = 
 \begin{cases}
  0 & \text{if $b_1 - d_1 t = 0$,} \\
  1 & \text{if $b_1 - d_1 t \ne 0$.}
 \end{cases}
\]
Also, we have $j(\g_2) = 0$.
Since 
\[
 (u, J)_{F_v} =
 \begin{cases}
  1 & \text{if $B_v$ is split,} \\
  -1 & \text{if $B_v$ is ramified,}
 \end{cases}
\]
the assertion follows from Lemmas \ref{lem:mu-GU(V)-B1-J1}, \ref{lem:mu-GU(V)-B2-J1}, and \ref{lem:mu-GU(W)-J1}.
\end{proof}

Recall that, for almost all $v$, we have a maximal compact subgroup $K_v$ of $\Sp(\V_v)$ and a map $s_{\Y_v}:K_v \rightarrow \C^1$ such that 
\[
 z_{\Y_v}(k, k') = s_{\Y_v}(k k') \cdot s_{\Y_v}(k)^{-1} \cdot s_{\Y_v}(k')^{-1}
\]
for $k, k' \in K_v$.
Put
\[
 \KK_v := \G(\U(V_v) \times \U(W_v))^0 \cap K_v.
\]
Then $\KK_v$ is a maximal compact subgroup of $\G(\U(V_v) \times \U(W_v))^0$ for almost all $v$.

\begin{lem}
\label{lem:spl-B-K}
We have
\[
 s_v|_{\KK_v} = s_{\Y_v}|_{\KK_v}
\]
for almost all $v$.
\end{lem}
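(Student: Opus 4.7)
The plan is to reduce the comparison of the two splittings to the observation that, at almost every place, both are characterized by the invariance of one and the same Schwartz function under the Weil representation.

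First, I would note that both $s_v$ and $s_{\Y_v}|_{\KK_v}$ trivialize the same $2$-cocycle $z_{\Y_v}$ on $\KK_v \times \KK_v$, so their ratio is a continuous character of $\KK_v$; it suffices to show this ratio is identically $1$. To do this, I would compare the action of $\KK_v$ on $\SS(\X_v)$ through the two splittings. On the one hand, by construction in \S \ref{ssec:schwartzX'} in case (ur), the Schwartz function $\varphi'_v \in \SS(\X'_v)$ is chosen so that $\omega_{\psi_v}(k,(k_1,k_2))\varphi'_v = \varphi'_v$ for $k \in \K_v$, $k_1 \in \K_{1,v}$, $k_2 \in \K_{2,v}$ with matching similitudes; for almost all $v$, the subgroup $\KK_v$ is exactly the set of such triples. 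Since the Weil representation of $\G(\U(V_v)^0 \times \U(W_v))$ is defined as $\omega_{\psi_v} \circ \iota_v$ with $\iota_v(g) = (g, s_v(g))$, this invariance unfolds to $\omega_{\psi_v}(g, 1) \varphi'_v = s_v(g)^{-1} \varphi'_v$ for $g \in \KK_v$, and the same identity holds for the partial Fourier transform $\varphi_v \in \SS(\X_v)$ by $\G(\U(V_v)^0 \times \U(W_v))$-equivariance.

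On the other hand, by the discussion in \S \ref{sssec:theta-weil}, the splitting $s_{\Y_v}$ on $K_v \supseteq \KK_v$ is characterized by $\omega_{\psi_v}(k, 1) \varphi^0_v = s_{\Y_v}(k)^{-1} \varphi^0_v$, where $\varphi^0_v = \I_{L^{\X}_v}$ is the characteristic function of the self-dual $\o_v$-lattice $L^{\X}_v := \o_v \e_1 + \cdots + \o_v \e_4$. The crux of the argument is then to verify that, for almost all $v$, the function $\varphi_v$ is a nonzero scalar multiple of $\varphi^0_v$; granted this, comparing the two equations above yields $s_v(g) = s_{\Y_v}(g)$ for all $g \in \KK_v$.

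For the key verification, I would inspect the explicit formulas of \S \ref{ssec:schwartzX} case by case. For almost all $v$, $v$ lies outside the finite set $\Sigma$ (so we are in case (ur) with $\psi_v$ of order zero and $2 \in \o_v^\times$) and moreover $u, J, J_1, J_2$ all lie in $\o_v^\times$, so we fall into the generic sub-case. In that sub-case the support of $\varphi_v$, read off from $\tilde\varphi_{\kappa_1}(z_1)\tilde\varphi_{\kappa_2}(z_2)$ (or its analogue in the $E$-inert sub-case), is precisely $L^{\X}_v$, while the Gaussian phase factor $\psi_v(\kappa t\,\N(z)/2)$ is identically $1$ on this support since its argument lies in $\o_v$ and $\psi_v$ has order zero. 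Thus $\varphi_v = c_v \cdot \I_{L^{\X}_v}$ with $c_v \in \C^\times$, as required.

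The main obstacle is organizational rather than substantive: one must juggle the various sub-cases of (ur) (split vs.\ inert $E$; which of $u, J, J_1, J_2$ is a square) and the integrality conditions from Proposition \ref{prop:choices1} to confirm that at almost every place the formula for $\varphi_v$ collapses as described. Since only the generic unramified sub-case contributes for almost all $v$, this reduces to a direct bookkeeping check of the explicit Schwartz functions computed in \S \ref{ssec:schwartzX}.
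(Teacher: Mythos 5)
Your argument is correct in substance, but it takes a more computational route than the paper's. The paper proceeds entirely formally: it decomposes $s_v = s'_v \cdot \mu_v$ by construction, invokes the uniqueness of the splitting of $K_v$ (from $[K_v,K_v]=K_v$ for almost all $v$) together with the change-of-model formula to conclude $s_{\Y_v} = s_{\Y'_v}\cdot\mu_v|_{K_v}$, and then reduces the lemma to the single check $s'_v|_{\KK_v} = s_{\Y'_v}|_{\KK_v}$, done in the $\Y'_v$-Schr\"odinger model. You instead test both $s_v$ and $s_{\Y_v}$ directly against a spherical vector in $\SS(\X_v)$. That is a valid strategy, but it costs you an \emph{extra} verification: not only do you need the $\KK_v$-invariance of $\varphi'_v$ (which is the same content as the paper's $s'_v|_{\KK_v} = s_{\Y'_v}|_{\KK_v}$), you must also confirm via the explicit formulas of \S\ref{ssec:schwartzX} that the partial Fourier transform $\varphi_v$ really is a scalar multiple of $\varphi^0_v = \I_{L^{\X}_v}$ at almost all $v$. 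The paper's uniqueness-of-splitting step sidesteps that PFT computation entirely. Your approach also relies on the Schwartz function constructions of Chapter~\ref{sec:schwartz}, which are set up under the more restrictive hypotheses of Proposition~\ref{prop:choices1} and appear logically after Appendix~\ref{sec:spl-B} is first invoked (in Chapter~\ref{sec:theta}); this is not circular, since those constructions are purely local, but it is a forward reference the paper avoids. Two small slips: the intermediate identity should read $\omega_{\psi_v,\Y'_v}(g,1)\varphi'_v = s'_v(g)^{-1}\varphi'_v$ (with $s'_v$, not $s_v$, since the $\SS(\X'_v)$-model is built from $s'_v$); the identity with $s_v$ is the one that holds for $\varphi_v$ after the PFT. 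And the observation that the ratio $s_v/s_{\Y_v}$ is a character of $\KK_v$ is true but unused, since your direct eigenvalue comparison already finishes the argument.
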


\begin{proof}
Recall that $s_v(\g) = s'_v(\g) \cdot \mu_v(\g)$ for $\g \in \GU(V_v)^0 \times \GU(W_v)$, where 
\[
 s'_v : \GU(V_v)^0 \times \GU(W_v) \longrightarrow \C^1
\]
is the map as in \S \S \ref{ss:B-spl}, \ref{ss:B1-spl} and
\[
 \mu_v(\g) = z_{\Y_v}(\h_0 \g \h_0^{-1}, \h_0) \cdot z_{\Y_v}(\h_0, \g)^{-1}
\]
for $\g \in \GSp(\V_v)$ with some $\h_0 \in \Sp(\V_v)$
such that $\X'_v = \X_v \h_0$ and $\Y'_v = \Y_v \h_0$.
By the uniqueness of the splitting, we have 
\[
 s_{\Y_v} =  s_{\Y'_v} \cdot \mu_v|_{K_v}
\]
for almost all $v$.
On the other hand, by definition, one can see that 
\[
 s'_v|_{\KK_v} = s_{\Y'_v}|_{\KK_v}
\]
for almost all $v$.
This yields the lemma.
\end{proof}

\begin{prop}
\label{prop:spl-B-prod}
Let $\gamma \in \GU(V)^0(F) \times \GU(W)(F)$.
Then we have $s_v(\gamma) = 1$ for almost all $v$ and
\[
 \prod_v s_v(\gamma) = 1.
\]
\end{prop}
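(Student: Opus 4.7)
The plan is to reduce the global product formula to three ``elementary'' product formulas, one for each type of generator coming from $B_1^\times$, $B_2^\times$, and $B^\times$, and in each case to read off the result from the explicit formulas of Proposition \ref{prop:spl-B} by invoking the classical product formulas for Hilbert symbols and Weil indices over $F$. A general $\gamma \in \GU(V)^0(F) \times \GU(W)(F)$ can be written in the form $\gamma = (\g_1 \g_2, h)$ where $\g_i$ arises from $\ba_i^{-1} \in B_i^\times(F)$ and $h = \ba \in B^\times(F)$. Iterating the cocycle identity $s_v(\gamma_1 \gamma_2) = z_{\Y_v}(\gamma_1,\gamma_2)\, s_v(\gamma_1)\, s_v(\gamma_2)$ yields
\[
 s_v(\gamma) = z_{\Y_v}(\g_1 \g_2, h) \cdot z_{\Y_v}(\g_1, \g_2) \cdot s_v(\g_1)\, s_v(\g_2)\, s_v(h).
\]
Since $\g_1\g_2, h, \g_1, \g_2 \in \Sp(\V)(F)$, the product formula for the symplectic $2$-cocycle (coming from the fact that the canonical map $i : \Sp(\V)(F) \to \Mp(\V)_{\A}$ is a homomorphism, cf.\ \S \ref{sssec:theta-metaplectic}) gives $\prod_v z_{\Y_v}(\g_1\g_2, h) = \prod_v z_{\Y_v}(\g_1, \g_2) = 1$, so the proof reduces to showing $\prod_v s_v(\g_i) = 1$ for $\g_i$ from $B_i^\times(F)$ and $\prod_v s_v(\g) = 1$ for $\g \in B^\times(F)$.

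For the finiteness assertion, I would note that any one of $b_i = d_i = 0$, $b_i^2 - d_i^2 J_i = 0$, $b = d = 0$, $b^2 - d^2 J = 0$ is a global algebraic condition on the chosen representatives, so the same case of Proposition \ref{prop:spl-B} is in force at every place. For almost all $v$, one has $v \notin \Sigma_\infty \cup \Sigma_2 \cup \Sigma_E \cup \Sigma_B \cup \Sigma_{B_1} \cup \Sigma_{B_2}$, the additive character $\psi_v$ is of order zero, and $u, J, J_1, J_2$ together with the coordinates of the fixed global elements $\ba_i, \ba$ and the quantities $\nu_i, \nu$ are units at $v$. Under these conditions each Hilbert symbol appearing in Proposition \ref{prop:spl-B} is $1$ and each Weil index $\gamma_{F_v}(J_j, \tfrac{1}{2}\psi_v)$ equals $1$; hence $s_v(\g_i) = s_v(\g) = 1$ at almost all $v$. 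Combined with the observation that $z_{\Y_v}$ is trivial on pairs in the standard compacts for almost all $v$ (Lemma \ref{lem:spl-B-K}), the cocycle identity then yields $s_v(\gamma) = 1$ at almost all $v$.

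For the three elementary product formulas, I would inspect each case of Proposition \ref{prop:spl-B}. In the ``trivial'' case ($b_i = d_i = 0$, resp.\ $b = d = 0$) there is nothing to do beyond one application of $\prod_v (\nu, J_1)_{F_v} = 1$. In the remaining cases, $s_v(\g_i)$ and $s_v(\g)$ are each expressed as a product of at most one Weil index of the form $\gamma_{F_v}(J_j, \tfrac{1}{2}\psi_v)$ and one Hilbert symbol $(x, y)_{F_v}$ whose arguments $x, y$ are elements of $F^\times$ built algebraically from the fixed global data (such as $(a_ib_i + c_id_iJ_i)\nu_i J_i$, $-(b_i^2 - d_i^2 J_i)\nu_i J_i$, $ab - cdJ$, and $-(b^2 - d^2 J)J$). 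The classical product formulas $\prod_v (x,y)_{F_v} = 1$ for $x, y \in F^\times$ and $\prod_v \gamma_{F_v}(a, \psi_v) = 1$ for $a \in F^\times$ and any global additive character (in particular $\tfrac{1}{2}\psi$) then give $\prod_v s_v(\g_i) = \prod_v s_v(\g) = 1$, completing the proof. The main obstacle here is really only the bookkeeping needed to confirm that, case by case, all the quantities appearing inside the Hilbert symbols are nonzero global elements of $F$, so that the global product formulas apply cleanly; the substantive computation has already been absorbed into Proposition \ref{prop:spl-B}.
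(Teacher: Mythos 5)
Your proposal is correct and matches the paper's (terser) argument: reduce to generators from $B_1^\times$, $B_2^\times$, $B^\times$, read off the product formula case-by-case from Proposition~\ref{prop:spl-B} via the global product formulas for the quadratic Hilbert symbol and the Weil index, and extend multiplicatively using the cocycle identity $s_v(\gamma_1\gamma_2)=z_{\Y_v}(\gamma_1,\gamma_2)\,s_v(\gamma_1)\,s_v(\gamma_2)$. One small citation slip: for concluding that $z_{\Y_v}(\gamma_1,\gamma_2)=1$ at almost all $v$ for $\gamma_1,\gamma_2 \in \Sp(\V)(F)$, the directly applicable reference is Lemma~\ref{lem:s_trivial} (which gives $s_{\Y_v}(\gamma)=1$ for almost all $v$) rather than Lemma~\ref{lem:spl-B-K}.
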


\begin{proof}
Let $\gamma_1, \gamma_2 \in \GU(V)^0(F) \times \GU(W)(F)$.
Suppose that $s_v(\gamma_i) = 1$ for almost all $v$ and 
$\prod_v s_v(\gamma_i) = 1$ for $i = 1,2$.
Since $s_v(\gamma_1 \gamma_2) = s_v(\gamma_1) \cdot s_v(\gamma_2) \cdot 
 z_{\Y_v}(\gamma_1, \gamma_2)$,
the product formulas for the quadratic Hilbert symbol and the Weil index
imply that $s_v(\gamma_1 \gamma_2) = 1$ for almost all $v$
and $\prod_v s_v(\gamma_1 \gamma_2) = 1$.
Hence the assertion follows from Proposition \ref{prop:spl-B}.
\end{proof}

\section{Splittings for the doubling method: quaternionic unitary groups}
\label{sec:spl-double-B}

\subsection{Setup}
\label{ss:spl-double-B-setup}

Let $F$ be a number field and $B$ a quaternion algebra over $F$.
Recall that
\begin{itemize}
\item $V$ is a $2$-dimensional right skew-hermitian $B$-space with $\det V = 1$,
\item $W$ is a $1$-dimensional left hermitian $B$-space,
\item $\V := V \otimes_B W$ is an $8$-dimensional symplectic $F$-space,
\item $\V = \X \oplus \Y$ is a complete polarization over $F$.
\end{itemize} 
We consider a $2$-dimensional left $B$-space $W^{\square} := W \oplus W$
equipped with a hermitian form
\[
 \langle (x, x'), (y, y') \rangle
 := \langle x, y \rangle - \langle x', y' \rangle
\]
for $x,x',y,y' \in W$.
Put $W_+ := W \oplus \{ 0\}$ and $W_- := \{ 0 \} \oplus W$.
We regard $\GU(W_{\pm})$ as a subgroup of $\GL(W)$ and identify it with $\GU(W)$ via the identity map.
Note that the identity map $\GU(W_-) \rightarrow \GU(W)$ is an anti-isometry.
We have a natural map
\[
 \iota : \G(\U(W) \times \U(W)) \longrightarrow \GU(W^{\square})
\]
and seesaw dual pairs
\[
 \xymatrix{
  \GU(W^{\square}) \ar@{-}[dr] \ar@{-}[d] &
  \G(\U(V) \times \U(V)) \ar@{-}[dl] \ar@{-}[d] \\
  \G(\U(W) \times \U(W)) & \GU(V)}.
\]
Put
\[
 W^{\triangle} := \{ (x,x) \in W^{\square} \, | \, x \in W \}, \qquad
 W^{\bigtriangledown} := \{ (x,-x) \in W^{\square} \, | \, x \in W \}.
\]
Then $W^{\square} = W^{\bigtriangledown} \oplus W^{\triangle}$ is a complete polarization over $B$.
Choosing a basis $\w, \w^*$ of $W^{\square}$ such that
\[
 W^{\bigtriangledown} = B \w, \qquad W^{\triangle} = B \w^*, \qquad
 \langle \w, \w^* \rangle = 1,
\]
we may write
\[
 \GU(W^{\square}) = \left\{ g \in \GL_2(B) \, \left| \,
 g \begin{pmatrix} & 1 \\ 1 & \end{pmatrix} {}^t g^*
 = \nu(g) \cdot \begin{pmatrix} & 1 \\ 1 & \end{pmatrix} \right. \right\}.
\]
For $\nu \in F^{\times}$, put
\[
 d(\nu) = d_{W^{\triangle}}(\nu) := \begin{pmatrix} 1 & \\ & \nu \end{pmatrix} \in \GU(W^{\square}).
\]

Similarly, we consider a $16$-dimensional $F$-space $\V^{\square} := V \otimes_B W^{\square} = \V \oplus \V$ equipped with a symplectic form
\begin{equation}
\label{eq:doubled-form}
 \llangle (x, x'), (y, y') \rrangle
 := \llangle x, y \rrangle - \llangle x', y' \rrangle
\end{equation}
for $x,x',y,y' \in \V$.
Put $\V_+ := \V \oplus \{ 0\}$ and $\V_- := \{ 0 \} \oplus \V$.
We regard $\Sp(\V_{\pm})$ as a subgroup of $\GL(\V)$ and identify it with $\Sp(\V)$ via the identity map.
Note that the identity map $\Sp(\V_-) \rightarrow \Sp(\V)$ is an anti-isometry.
We have a natural map
\[
 \iota : \Sp(\V) \times \Sp(\V) \longrightarrow \Sp(\V^{\square}).
\]
Put
\begin{align*}
 \V^{\triangle} & := V \otimes_B W^{\triangle} = \{ (x,x) \in \V^{\square} \, | \, x \in \V \}, & \X^{\square} & := \X \oplus \X, \\
 \V^{\bigtriangledown} & :=  V \otimes_B W^{\bigtriangledown} = \{ (x,-x) \in \V^{\square} \, | \, x \in \V \}, & \Y^{\square} & := \Y \oplus \Y.
\end{align*}
Then $\V^{\square} = \V^{\bigtriangledown} \oplus \V^{\triangle}
= \X^{\square} \oplus \Y^{\square}$ are complete polarizations over $F$.

For the rest of this section, we fix a place $v$ of $F$ and suppress the subscript $v$ from the notation.
Thus $F = F_v$ will be a local field of characteristic zero.
We may lift the natural map $\iota : \Sp(\V) \times \Sp(\V) \rightarrow \Sp(\V^{\square})$ to a unique homomorphism
\[
 \tilde{\iota} : \Mp(\V) \times \Mp(\V) \longrightarrow \Mp(\V^{\square})
\]
such that $\tilde{\iota}(z_1, z_2) = z_1 z_2^{-1}$ for $z_1, z_2 \in \C^1$.

\subsection{Splitting $z_{\V^{\triangle}}$}

First assume that $B$ is split.
Fix an isomorphism $\ii : B \rightarrow \M_2(F)$.
Put $e = \ii^{-1} \left( \begin{smallmatrix} 1 & 0 \\ 0 & 0 \end{smallmatrix} \right)$ and $e' = \ii^{-1} \left( \begin{smallmatrix} 0 & 1 \\ 0 & 0 \end{smallmatrix} \right)$.
Then $W^{\square \dagger} := e W^{\square}$ is a $4$-dimensional symplectic $F$-space and the restriction $\GU(W^{\square}) \rightarrow \GSp(W^{\square \dagger})$ is an isomorphism.
Using a basis $e \w, e' \w, e' \w^*, -e \w^*$ of $W^{\square \dagger}$, we write
\[
 \GSp(W^{\square \dagger}) = \left\{ 
 h \in \GL_4(F) \, \left| \, h 
 \begin{pmatrix} & \1_2 \\ -\1_2 & \end{pmatrix} {}^t h
 = \nu(h) \cdot \begin{pmatrix} & \1_2 \\ -\1_2 & \end{pmatrix} \right. \right\}.
\]
Then the restriction $\GU(W^{\square}) \rightarrow \GSp(W^{\square \dagger})$ is given by
\[
 \begin{pmatrix} a & b \\ c & d \end{pmatrix} \longmapsto
 \begin{pmatrix} \1_2 & \\ & \tau^{-1} \end{pmatrix} \cdot 
 \begin{pmatrix} \ii(a) & \ii(b) \\ \ii(c) & \ii(d) \end{pmatrix} \cdot 
 \begin{pmatrix} \1_2 & \\ & \tau \end{pmatrix},
\]
where $\tau = \tau_1 = \left( \begin{smallmatrix} & -1 \\ 1 & \end{smallmatrix} \right)$.
Note that $x^* = \tau \cdot {}^t x \cdot \tau^{-1}$ for $x \in \M_2(F)$.
Also, $W^\dagger := e W$ is a $2$-dimensional symplectic $F$-space and $V^{\dagger} := V e$ is a $4$-dimensional quadratic $F$-space.
We define a map
\[
 \hat{s} : \G(\U(V) \times \U(W^{\square})) \longrightarrow \C^1
\]
by 
\[
 \hat{s}(\g) = \gamma^{\hat{\jmath}(h)}
\]
for $\g = (g,h) \in \G(\U(V) \times \U(W^{\square}))$, where
\[
 \gamma = 
 \begin{cases}
  1 & \text{if $V^\dagger$ is isotropic,} \\
  -1 & \text{if $V^\dagger$ is anisotropic,}
 \end{cases}
\]
and
\[
 \hat{\jmath}(h) =
 \begin{cases}
  0 & \text{if $c=0$,} \\
  1 & \text{if $c \ne 0$ and $\det c = 0$,} \\
  2 & \text{if $\det c \ne 0$,}
 \end{cases} \qquad
 h = \begin{pmatrix} a & b \\ c & d \end{pmatrix} \in \GSp(W^{\square \dagger}).
\]
Since $\dim_F V^\dagger = 4$ and $\det V^\dagger = 1$, we have
\[
 z_{\V^{\triangle}}(h, h')
 = \hat{s}(h h') \cdot \hat{s}(h)^{-1} \cdot \hat{s}(h')^{-1} 
\]
for $h, h' \in \U(W^{\square})$ by \cite[Theorem 3.1, case $1_+$]{kudla-splitting}.

Next assume that $B$ is ramified.
We define a map
\[
 \hat{s} : \G(\U(V) \times \U(W^{\square})) \longrightarrow \C^1
\]
by 
\[
 \hat{s}(\g) = 1
\]
for $\g \in \G(\U(V) \times \U(W^{\square}))$.
Since $\dim_B V = 2$ and $\det V = 1$, we have
\[
 z_{\V^{\triangle}}(h, h')
 = \hat{s}(h h') \cdot \hat{s}(h)^{-1} \cdot \hat{s}(h')^{-1} 
\]
for $h, h' \in \U(W^{\square})$ by \cite[Theorem 3.1, case $2_-$]{kudla-splitting}.

\begin{lem}
\label{lem:kudla-spl-double-B}
We have 
\[
 z_{\V^{\triangle}}(\g, \g') = \hat{s}(\g \g') \cdot 
 \hat{s}(\g)^{-1} \cdot \hat{s}(\g')^{-1}
\]
for $\g, \g' \in \G(\U(V) \times \U(W^{\square}))$.
\end{lem}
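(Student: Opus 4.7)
The plan is to verify the cocycle identity on $\G(\U(V) \times \U(W^{\square}))$ by reducing to the isometry subgroup $\U(W^{\square})$, where it was just established via \cite[Theorem 3.1]{kudla-splitting}. The argument will parallel the extensions carried out in Lemmas \ref{lem:spl-GSp}, \ref{lem:spl-GSO}, and \ref{lem:GSp-GSO-commute} of Appendix \ref{sec:spl-B}. A key structural observation is that the matching-similitudes condition $\nu(g) = \nu(h)$ forces the image of $\G(\U(V) \times \U(W^{\square}))$ in $\GSp(\V^{\square})$ to lie in $\Sp(\V^{\square})$: for $\g = (g, h)$ with common similitude $\nu$, the induced similitude on $\V^{\square} = V \otimes_B W^{\square}$ is $\nu(g)^{-1} \nu(h) = 1$. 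Hence $z_{\V^{\triangle}}$ is just the Rao $\Sp$-cocycle throughout.

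The next step is to eliminate the $\U(V)$-factor. Because $\U(V)$ acts trivially on $W^{\square}$, its image in $\Sp(\V^{\square})$ preserves both summands of the polarization $\V^{\square} = \V^{\bigtriangledown} \oplus \V^{\triangle}$ (since $\V^{\bullet} = V \otimes_B W^{\bullet}$), so it lies in the Levi of the Siegel parabolic $P_{\V^{\triangle}}$. By \cite[Theorem 4.1, Corollary 4.2]{rangarao}, this implies $z_{\V^{\triangle}}(p, \ast) = z_{\V^{\triangle}}(\ast, p) = 1$ whenever $p$ is in the image of $\U(V)$. Combined with the commutation of $\U(V)$ and $\U(W^{\square})$ inside $\Sp(\V^{\square})$ and the fact that $\hat{s}(g, h) = \gamma^{\hat{\jmath}(h)}$ depends only on the $W^{\square}$-component, this reduces the identity on the isometry subgroup $\U(V) \times \U(W^{\square})$ to the already-established identity on $\U(W^{\square})$ alone.

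For the final step, the similitude extension, I would choose a section $\nu \mapsto \delta(\nu) \in \G(\U(V) \times \U(W^{\square}))$ with $\nu(\delta(\nu)) = \nu$, factor every $\g$ as $\g = \g_0 \cdot \delta(\nu)$ with $\g_0$ an isometry, and expand $z_{\V^{\triangle}}(\g, \g')$ using Rao's cocycle together with its $P_{\V^{\triangle}}$-biinvariance. The matched similitude $\delta(\nu)$ itself acts on $\V^{\square}$ via an element of $\Sp(\V^{\square})$ whose Bruhat cell relative to $\V^{\triangle}$ can be tracked explicitly; since $\hat{s}(\delta(\nu)) = 1$ for any reasonable choice of section (e.g.\ $\delta(\nu)$ acting by a scalar on $V$ and by $d_W(\nu)$ on $W^{\square}$, the latter of which lies in $P_{W^{\triangle}}$), one reduces to verifying a handful of explicit cocycle values $z_{\V^{\triangle}}(\delta(\nu), \g_0)$ and $z_{\V^{\triangle}}(\g_0, \delta(\nu))$ against the corresponding change in $\hat{s}$.

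The main obstacle will be this explicit Bruhat-cell computation — specifically, matching the index $\hat{\jmath}$ of the various matched-similitude products inside $\Sp(\V^{\square})$ with the value assigned by the definition of $\hat{s}$ via $\GSp(W^{\square \dagger})$. The required cancellations should ultimately be forced by the same dimension-and-determinant constraints on $V$ that underlie the Kudla isometry splitting: namely $\det V^\dagger = 1$ and $\dim_F V^\dagger = 4$ in the split case, and $\dim_B V = 2$ with $\det V = 1$ in the ramified case. Once these are in place, the verification is a direct (if lengthy) computation of the same flavor as those executed in Appendix \ref{sec:spl-B}.
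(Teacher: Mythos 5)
Your proposal is correct and is essentially the route the paper takes, though the paper packages it more economically. Both arguments hinge on the same observation: the images of $\GU(V)$ and of $d(\nu)$ in $\Sp(\V^{\square})$ preserve $\V^{\triangle}$, hence land in $P_{\V^{\triangle}}$. Where you invoke Rao's $P_{\V^{\triangle}}$-biinvariance of the cocycle together with a section $\delta(\nu)$, the paper cuts more directly to the chase: it factors only the $W^{\square}$-component as $h_i = h_i' \cdot d(\nu_i)$ with $h_i' \in \U(W^{\square})$, notes that $\V^{\triangle} \cdot \g_i^{-1} = \V^{\triangle} \cdot h_i'^{-1}$ and $\V^{\triangle} \cdot (\g_1\g_2)^{-1} = \V^{\triangle} \cdot h_2''^{-1}h_1'^{-1}$ (where $h_2'' = d(\nu_1)h_2'd(\nu_1)^{-1}$), and then uses $\Sp$-invariance of the Leray invariant to conclude $z_{\V^{\triangle}}(\g_1,\g_2) = z_{\V^{\triangle}}(h_1',h_2'')$, which is the case already handled by Kudla.

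The one thing to push back on is your assessment of the final step. You write that the ``main obstacle will be this explicit Bruhat-cell computation'' of matching $\hat{\jmath}$-indices, and that it will be ``direct (if lengthy).'' It is in fact a one-liner and involves no Bruhat decomposition. Since $\hat{\jmath}(h)$ depends only on the lower-left block $c$ of $h = \smat{a}{b}{c}{d}$, and since $d(\nu) = \smat{1}{}{}{\nu}$, you check at a glance that $h\mapsto hd(\nu)$ leaves $c$ fixed, while $h\mapsto d(\nu)hd(\nu)^{-1}$ scales $c$ by $\nu$ — neither of which affects whether $c=0$ or $\det c=0$. Hence $\hat{s}(h_1') = \hat{s}(\g_1)$, $\hat{s}(h_2'') = \hat{s}(h_2') = \hat{s}(\g_2)$, and $\hat{s}(h_1'h_2'') = \hat{s}(\g_1\g_2)$ immediately. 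No explicit tracking of Bruhat cells and no appeal to $\det V^\dagger = 1$ is needed at this point (that constraint has already done its work inside Kudla's Theorem 3.1 establishing the identity on $\U(W^{\square})$).
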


\begin{proof}
Let $\g_i = (g_i, h_i) \in \G(\U(V) \times \U(W^{\square}))$ and
put $h_i' = h_i \cdot d(\nu(h_i))^{-1} \in \U(W^{\square})$.
Then we have $h_1 h_2 = h_1' h_2'' \cdot d(\nu(h_1 h_2))$,
where $h_2'' = d(\nu(h_1)) \cdot h_2' \cdot d(\nu(h_1))^{-1}$.
Since
\[
 \V^{\triangle} \cdot g = \V^{\triangle}, \qquad
 \V^{\triangle} \cdot d(\nu) = \V^{\triangle}
\]
for $g \in \GU(V)$ and $\nu \in F^{\times}$, we have
\[
 \V^{\triangle} \cdot \g_1^{-1} = \V^{\triangle} \cdot h_1^{-1}
 = \V^{\triangle} \cdot h_1'^{-1}, \qquad
 \V^{\triangle} \cdot \g_2^{-1} \g_1^{-1}
 = \V^{\triangle} \cdot h_2^{-1} h_1^{-1}
 = \V^{\triangle} \cdot h_2''^{-1} h_1'^{-1}.
\]
Hence we have
\begin{align*}
 q(\V^{\triangle}, \V^{\triangle} \cdot \g_2^{-1}, \V^{\triangle} \cdot \g_1)
 & = q(\V^{\triangle} \cdot \g_1^{-1}, \V^{\triangle} \cdot \g_2^{-1} \g_1^{-1}, \V^{\triangle}) \\
 & = q(\V^{\triangle} \cdot h_1'^{-1}, \V^{\triangle} \cdot h_2''^{-1} h_1'^{-1}, \V^{\triangle}) \\
 & = q(\V^{\triangle}, \V^{\triangle} \cdot h_2''^{-1}, \V^{\triangle} \cdot h_1'),
\end{align*}
so that
\[
 z_{\V^{\triangle}}(\g_1, \g_2) = z_{\V^{\triangle}}(h_1', h_2'') 
 = \hat{s}(h'_1 h''_2) \cdot \hat{s}(h'_1)^{-1} \cdot \hat{s}(h''_2)^{-1}.
\]
By definition, we have $\hat{s}(h_1') = \hat{s}(\g_1)$,
$\hat{s}(h_2'') = \hat{s}(h_2') = \hat{s}(\g_2)$, and 
\[
 \hat{s}(h_1' h_2'') = \hat{s}(h_1 h_2 \cdot d(\nu(h_1 h_2))^{-1}) = \hat{s}(\g_1 \g_2). 
\]
This completes the proof.
\end{proof}

\subsection{Splitting $z_{\Y'^{\square}}$}
\label{ss:spl-double-B-Y'}

Let $\V = \X' \oplus \Y'$ be the complete polarization given in \S \ref{ss:B-spl}, \ref{ss:B1-spl}.
Put
\[
 \X'^{\square} := \X' \oplus \X', \qquad \Y'^{\square} := \Y' \oplus \Y'. 
\]
Then $\V^{\square} = \X'^{\square} \oplus \Y'^{\square}$ is a complete polarization.
Noting that the symplectic form on $\V^{\square} = \V \oplus \V$ is given by \eqref{eq:doubled-form}, we have
\[
 z_{\Y'^{\square}, \psi}(\iota(g_1, g_2), \iota(g_1',g_2'))
 = z_{\Y', \psi}(g_1, g_1') \cdot z_{\Y', \psi^{-1}}(g_2, g_2')
 = z_{\Y', \psi}(g_1, g_1') \cdot z_{\Y', \psi}(g_2, g_2')^{-1}
\]
for $g_i, g_i' \in \Sp(\V)$, where we write $z_{\Y'} = z_{\Y', \psi}$ to indicate the dependence of the $2$-cocycle on $\psi$.
The Weil representation $\omega_{\psi}^{\square}$ of $\Mp(\V^{\square})$ can be realized on the Schwartz space
\[
 \SS(\X'^{\square}) = \SS(\X') \otimes \SS(\X').
\]
As representations of $\Mp(\V)_{\Y'} \times \Mp(\V)_{\Y'}$, we have
\[
 \omega_{\psi}^{\square} \circ \tilde{\iota} = \omega_{\psi} \otimes (\omega_{\psi} \circ \tilde{\jj}_{\Y'}),
\]
where $\tilde{\jj}_{\Y'}$ is the automorphism of $\Mp(\V)_{\Y'} = \Sp(\V) \times \C^1$ defined by
\[
 \tilde{\jj}_{\Y'}(g, z) = (\jj_{\Y'}(g), z^{-1}), \qquad
 \jj_{\Y'}(g) = d_{\Y'}(-1) \cdot g \cdot d_{\Y'}(-1).
\]

Fix $\h_0' \in \Sp(\V^{\square})$ such that $\X'^{\square} = \V^{\bigtriangledown} \cdot \h_0'$ and $\Y'^{\square} = \V^{\triangle} \cdot \h_0'$.
Put
\[
 \mu'(g)
 = z_{\V^{\triangle}}(g, \h_0'^{-1})
 \cdot z_{\V^{\triangle}}(\h_0'^{-1}, \h_0' g \h_0'^{-1})^{-1}
\]
for $g \in \Sp(\V^{\square})$.
Then we have
\[
 z_{\Y'^{\square}}(g, g') = z_{\V^{\triangle}}(g,g') \cdot
 \mu'(g g') \cdot \mu'(g)^{-1} \cdot \mu'(g')^{-1}
\]
for $g, g' \in \Sp(\V^{\square})$.
Put
\[
 \Gc := \{ (g, h_1, h_2) \in \GU(V)^0 \times \GU(W) \times \GU(W)
 \, | \, \nu(g) = \nu(h_1) = \nu(h_2) \}.
\]
We have natural maps
\[
 \Gc \hookrightarrow \G(\U(V) \times \U(W^{\square})), \qquad
 \Gc \hookrightarrow \G(\U(V)^0 \times \U(W)) \times \G(\U(V)^0 \times \U(W)).
\]

\begin{lem}
\label{lem:spl-double-B-compare'}
We have
\[
 \hat{s} \cdot \mu' = s' \otimes (s' \circ \jj_{\Y'})
\]
on $\Gc$, where $s': \GU(V)^0 \times \GU(W) \rightarrow \C^1$ is the map defined in \S \ref{ss:B-spl}, \ref{ss:B1-spl}.
\end{lem}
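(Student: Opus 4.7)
The plan is to show both sides of the identity, as maps $\Gc\to\C^1$, split the same $2$-cocycle on $\Gc$, so that their ratio is a character of $\Gc$, and then to verify this character is trivial.

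First I would verify that $\partial(\hat{s}\cdot\mu')=z_{\Y'^\square}|_{\Gc}$. Lemma \ref{lem:kudla-spl-double-B} gives $\partial\hat{s}=z_{\V^\triangle}$ on $\G(\U(V)\times\U(W^\square))$, and the definition of $\mu'$ (displayed just before the statement) gives $\partial\mu'=z_{\Y'^\square}\cdot z_{\V^\triangle}^{-1}$; multiplying and pulling back along the embedding $\Gc\hookrightarrow\G(\U(V)\times\U(W^\square))$, $(g,h_1,h_2)\mapsto (g,\iota(h_1,h_2))$, yields the coboundary $z_{\Y'^\square}|_{\Gc}$. On the other side, from Appendix \ref{sec:spl-B} we have $\partial s'=z_{\Y'}$. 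Since $\tilde{\jj}_{\Y'}(g,z)=(\jj_{\Y'}(g),z^{-1})$ is a group automorphism of $\Mp(\V)_{\Y'}$, a direct calculation shows $z_{\Y'}(\jj_{\Y'}(g),\jj_{\Y'}(g'))=z_{\Y'}(g,g')^{-1}$, hence $\partial(s'\circ\jj_{\Y'})=z_{\Y'}^{-1}$. Combining this with the factorisation $z_{\Y'^\square}(\iota(g_1,g_2),\iota(g_1',g_2'))=z_{\Y'}(g_1,g_1')\cdot z_{\Y'}(g_2,g_2')^{-1}$ recorded in \S\ref{ss:spl-double-B-Y'} gives $\partial(s'\otimes(s'\circ\jj_{\Y'}))=z_{\Y'^\square}|_{\Gc}$ as well; the two embeddings of $\Gc$ into $\GSp(\V^\square)$ realise the same element because the decompositions $W^\square=W_+\oplus W_-$ and $\V^\square=\V_+\oplus\V_-$ are compatible. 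Hence $\chi:=(\hat{s}\cdot\mu')\cdot(s'\otimes(s'\circ\jj_{\Y'}))^{-1}$ is a character of $\Gc$.

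To show $\chi\equiv 1$, I would interpret both sides as describing the same $\Gc$-action on the Weil representation of $\Mp(\V^\square)$, realised on $\SS(\X'^\square)=\SS(\X')\otimes\SS(\X')$. The left-hand side arises by going through the partial Fourier transform $\SS(\V^\bigtriangledown)\cong\SS(\X'^\square)$ (which is $\G(\U(V)^0\times\U(W)\times\U(W))$-equivariant by \S\ref{ss:spl-double-B-Y'}) followed by the polarisation change encoded by $\mu'$; the right-hand side arises directly from the Schr\"odinger tensor-product model, with the twist by $\jj_{\Y'}$ compensating for the opposite sign of the symplectic form on $\V_-$. By Schur's lemma applied to the irreducible Weil representation, the two intertwiners must agree up to a constant, which I expect to pin down to $1$ by evaluating on a single natural Schwartz vector, for example the characteristic function of a self-dual lattice at a good finite place, or a unit-normalised Gaussian at an archimedean place.

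The main obstacle is this last scalar match, because $\hat{s}$, $s'$, and $\mu'$ involve several auxiliary choices (the intertwining element $\h_0'$, the isomorphism $\ii:B\to\M_2(F)$ in the split cases, the complete polarisations $\V=\X'\oplus\Y'$ and $\V^\square=\X'^\square\oplus\Y'^\square$), and these need to be arranged consistently. I expect a hands-on verification, computing $\chi$ on Bruhat representatives, will suffice: on the diagonal $\{(g,h,h)\}\subset\Gc$ the identity is essentially the already-established equivalence $\omega_\psi^\square\circ(\id_V\otimes\iota)\cong\omega_\psi\otimes\bar\omega_\psi$, and the complementary direction reduces to a sign-flip calculation governed by $\jj_{\Y'}$, of the same flavour as the explicit computations in Lemmas \ref{lem:mu-GU(V)-u}--\ref{lem:mu-GU(W)-J1}.
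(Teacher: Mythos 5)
Your structural observation is correct and is not made explicit in the paper: $\hat{s}\cdot\mu'$ and $s'\otimes(s'\circ\jj_{\Y'})$ both split $z_{\Y'^\square}$ when restricted to $\Gc$, so their ratio $\chi$ is a continuous character of $\Gc$; the paper instead verifies the identity by a direct case-by-case computation without making this reduction.

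However, the proposal does not actually prove $\chi\equiv1$, and the plan for doing so has gaps. The Schur-type covariance argument only shows that, for each \emph{fixed} $\g\in\Gc$, the two lifts of $\g$ to $\Mp(\V^\square)$ act by operators differing by a scalar; that scalar is $\chi(\g)$, so evaluating on a single Schwartz vector determines $\chi$ at one point, not everywhere. Since $\Gc$ is not perfect (the similitude character alone yields a nontrivial abelian quotient), $\chi\equiv1$ is not automatic and must be verified across a generating set. Worse, two of the inputs you invoke are circular: the equivalence $\omega_\psi^\square\circ(\id_V\otimes\iota)\cong\omega_\psi\otimes\bar\omega_\psi$ from \S\ref{subsec:rallissetup}, and the $\G(\U(V)^0\times\U(W)\times\U(W))$-equivariance of the partial Fourier transform from \S\ref{ss:spl-double-B-Y'}, are both downstream consequences of Lemma \ref{lem:spl-double-B-compare}, hence ultimately of the very lemma you are proving. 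Your fallback of checking on Bruhat representatives is exactly the paper's method: in the case $u\in(F^\times)^2$ or $J\in(F^\times)^2$ the key is the Morita identity $\iota(h_1,h_2)=h_0^{-1}\iota^\natural(h_1,\jj_Y(h_2))h_0$ combined with Lemma \ref{lem:kudla-spl-double-B}, and in the case $J_i\in(F^\times)^2$ it is a longer explicit matrix computation. The reduction to a character is a correct and clean reframing, but it does not by itself discharge those verifications, and as written the substantive work remains undone.
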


The proof of this lemma will be given in the next two sections.

\subsubsection{The case $u \in (F^{\times})^2$ or $J \in (F^{\times})^2$}

Recall that $\X' = V^\dagger \otimes_F X$ and $\Y' = V^\dagger \otimes_F Y$, where $X = Fe$ and $Y = Fe'$, and $W^\dagger = X + Y$ is a complete polarization over $F$.
We have
\[
 \X'^{\square} = V^\dagger \otimes_F X^{\square}, \qquad
 \Y'^{\square} = V^\dagger \otimes_F Y^{\square},
\]
where $X^{\square} = X \oplus X$ and $Y^{\square} = Y \oplus Y$.
We have $d_{\Y'}(-1) = \id \otimes d_Y(-1)$ and $\jj_{\Y'} = \id \otimes \jj_Y$, where
\[
 d_Y(\nu) = \begin{pmatrix} 1 & \\ & \nu \end{pmatrix} \in \GSp(W^\dagger)
\]
and $\jj_Y(h) = d_Y(-1) \cdot h \cdot d_Y(-1)$ for $h \in \GSp(W^\dagger)$.
In particular, we have 
\[
 \jj_{\Y'}(\G(\U(V)^0 \times \U(W))) = \G(\U(V)^0 \times \U(W)).
\]

Let $\iota : \G(\Sp(W^\dagger) \times \Sp(W^\dagger)) \rightarrow \GSp(W^{\square \dagger})$ be the natural map.
We may take
\[
 \w = \frac{1}{2}(1,-1), \qquad \w^* = (1,1). 
\]
Since
\[
 \begin{bmatrix}
  (e,0) \\
  (0,e) \\
  (e',0) \\
  (0,-e')
 \end{bmatrix}
 = h_0 \cdot 
 \begin{bmatrix}
  e \w \\
  e' \w \\
  e' \w^* \\
  -e \w^*
 \end{bmatrix}, \qquad
 h_0 = 
 \begin{pmatrix}
 1 & & & -\frac{1}{2} \\
 -1 & & & -\frac{1}{2} \\
 & 1 & \frac{1}{2} & \\
 & 1 & -\frac{1}{2} & 
 \end{pmatrix}
 \in \Sp(W^{\square \dagger}),
\]
we have
\[
 \iota(h_1, h_2) = h_0^{-1} \cdot 
 \iota^{\natural}(h_1, \jj_Y(h_2)) \cdot h_0.
\]
Here, using a basis $e, e'$ of $W^\dagger$, we identify $\GSp(W^\dagger)$ with $\GL_2(F)$ and put
\[
 \iota^{\natural}(h_1, h_2) =
 \begin{pmatrix}
  a_1 & & b_1 & \\
  & a_2 & & b_2 \\  
  c_1 & & d_1 & \\
  & c_2 & & d_2 
 \end{pmatrix}, 
 \qquad 
 h_i =
 \begin{pmatrix}
  a_i & b_i \\
  c_i & d_i
 \end{pmatrix}.
\]
Since $X^{\square} = e W^{\bigtriangledown} \cdot h_0$ and $Y^{\square} = e W^{\triangle} \cdot h_0$, we may take $\h_0' = \id \otimes h_0$.

\begin{proof}[Proof of Lemma \ref{lem:spl-double-B-compare'}]
Let $\g = (g,h_1,h_2) \in \Gc$ and $\nu = \nu(\g)$.
Put $\g_i = (g, h_i) \in \G(\U(V)^0 \times \U(W))$.
By definition, we have 
\[
 s'(\g_1) \cdot s'(\jj_{\Y'}(\g_2))
 = \gamma^{j(h_1)} \cdot \gamma^{j(\jj_Y(h_2))},
\]
where $j$ is as in \S \ref{ss:B-spl}.

Put $h = \iota(h_1, h_2) \in \GU(W^{\square})$.
We identify $\g$ with $(g, h) \in \G(\U(V)^0 \times \U(W^{\square}))$.
Since $\hat{\jmath}(h) = \hat{\jmath}(d(\nu)^{-1} \cdot h)$, we have
\[
 \hat{s}(\g) = \hat{s}(d(\nu)^{-1} \cdot h).
\]
Put $\g' = (g, d(\nu)) \in \G(\U(V)^0 \times \U(W^{\square}))$.
Then we have $\V^{\triangle} \cdot \g' = \V^{\triangle}$ and
\[
 \g = \g' \cdot d(\nu)^{-1} \cdot h, \qquad
 \h_0' \g \h_0'^{-1} = h_0 h h_0^{-1} \cdot g
 = h_0 h h_0^{-1} \cdot d(\nu)^{-1} \cdot \g'.
\]
Hence, by Lemma \ref{lem:kudla-spl-double-B}, we have
\begin{align*}
 z_{\V^{\triangle}}(\g, \h_0'^{-1})
 & = z_{\V^{\triangle}}(d(\nu)^{-1} \cdot h, h_0^{-1}) \\
 & = \hat{s}(d(\nu)^{-1} \cdot h h_0^{-1})
 \cdot \hat{s}(d(\nu)^{-1} \cdot h)^{-1}
 \cdot \hat{s}(h_0^{-1})^{-1}, \\
 z_{\V^{\triangle}}(\h_0'^{-1}, \h_0' \g \h_0'^{-1})
 & = z_{\V^{\triangle}}(h_0^{-1}, h_0 h h_0^{-1} \cdot d(\nu)^{-1}) \\
 & = \hat{s}(h h_0^{-1} \cdot d(\nu)^{-1}) \cdot \hat{s}(h_0^{-1})^{-1} \cdot \hat{s}(h_0 h h_0^{-1} \cdot d(\nu)^{-1})^{-1}.
\end{align*}
Since $\hat{\jmath}(d(\nu)^{-1} \cdot h h_0^{-1}) = \hat{\jmath}(h h_0^{-1} \cdot d(\nu)^{-1})$, we have
\[
 \hat{s}(d(\nu)^{-1} \cdot h h_0^{-1}) = \hat{s}(h h_0^{-1} \cdot d(\nu)^{-1}).
\]
Since $h_0 h h_0^{-1} = \iota^{\natural}(h_1, \jj_Y(h_2))$, we have
$\hat{\jmath}(h_0 h h_0^{-1} \cdot d(\nu)^{-1}) = j(h_1) + j(\jj_Y(h_2))$ and
\[
 \hat{s}(h_0 h h_0^{-1} \cdot d(\nu)^{-1}) = \gamma^{j(h_1) + j(\jj_Y(h_2))}.
\]
Thus we obtain
\begin{align*}
 \hat{s}(\g) \cdot \mu'(\g) & =
 \hat{s}(d(\nu)^{-1} \cdot h) \cdot 
 z_{\V^{\triangle}}(\g, \h_0'^{-1}) \cdot
 z_{\V^{\triangle}}(\h_0'^{-1}, \h_0' \g \h_0'^{-1})^{-1} \\
 & = \hat{s}(h_0 h h_0^{-1} \cdot d(\nu)^{-1}) \\
 & = \gamma^{j(h_1) + j(\jj_Y(h_2))}.
\end{align*}
This completes the proof.
\end{proof}

\subsubsection{The case $J_i \in (F^{\times})^2$}
We only consider the case $i = 1$; the other case is similar.
As in \S \ref{ss:B1-spl}, we regard $V$ and $W$ as left and right $B$-spaces, respectively.
Recall that $\X' = W \otimes_B X$ and $\Y' = W \otimes_B Y$, where $X = B \v$ and $Y = B \v^*$, and $V = X + Y$ is a complete polarization over $B$.
As in \S \ref{ss:spl-double-B-setup}, we define a $4$-dimensional left skew-hermitian $B$-space $V^{\square} = V \oplus V$ and a complete polarization $V^{\square} = V^{\bigtriangledown} \oplus V^{\triangle}$ over $B$.
Using a basis
\[
 \v_1 := \frac{1}{2}(\v, -\v), \qquad 
 \v_2 := \frac{1}{2}(\v^*, -\v^*), \qquad
 \v_1^* := (\v^*, \v^*), \qquad
 \v_2^* := (-\v, -\v) 
\]
of $V^{\square}$, we write
\[
 \GU(V^{\square}) = \left\{ g \in \GL_4(B) \, \left| \, g
 \begin{pmatrix} & \1_2 \\ -\1_2 & \end{pmatrix} {}^t g^*
 = \nu(g) \cdot \begin{pmatrix} & \1_2 \\ -\1_2 & \end{pmatrix} \right. \right\}.
\]
We may identify $\V^{\square}$ with $W \otimes_B V^{\square}$.
Under this identification, we have
\begin{align*}
 \V^{\bigtriangledown} & = W \otimes_B V^{\bigtriangledown}, & 
 \X'^{\square} & = W \otimes_B X^{\square}, \\
 \V^{\triangle} & = W \otimes_B V^{\triangle}, &
 \Y'^{\square} & = W \otimes_B Y^{\square},
\end{align*}
where $X^{\square} = X \oplus X$ and $Y^{\square} = Y \oplus Y$.
We have $d_{\Y'}(-1) = \id \otimes d_Y(-1)$ and $\jj_{\Y'} = \id \otimes \jj_Y$, where
\[
 d_Y(\nu) = \begin{pmatrix} 1 & \\ & \nu \end{pmatrix} \in \GU(V)^0
\]
and $\jj_Y(g) = d_Y(-1) \cdot g \cdot d_Y(-1)$ for $g \in \GU(V)$.
In particular, we have
\[
 \jj_{\Y'}(\G(\U(V)^0 \times \U(W))) = \G(\U(V)^0 \times \U(W)).
\]

Let $\iota : \G(\U(V) \times \U(V)) \rightarrow \GU(V^{\square})$ be the natural map.
Since
\[
 \begin{bmatrix}
  (\v,0) \\
  (0,\v) \\
  (\v^*,0) \\
  (0,-\v^*)
 \end{bmatrix}
 = g_0 \cdot 
 \begin{bmatrix}
 \v_1 \\
 \v_2 \\
 \v_1^* \\
 \v_2^*
 \end{bmatrix}, \qquad
 g_0 = 
 \begin{pmatrix}
 1 & & & -\frac{1}{2} \\
 -1 & & & -\frac{1}{2} \\
 & 1 & \frac{1}{2} & \\
 & 1 & -\frac{1}{2} & 
 \end{pmatrix}
 \in \U(V^{\square}),
\]
we have
\[
 \iota(g_1, g_2) = g_0^{-1} \cdot \iota^{\natural}(g_1, \jj_Y(g_2)) \cdot g_0.
\]
Here, regarding $V$ as a left $B$-space and using a basis $\v, \v^*$ of $V$, we identify $\GU(V)$ with a subgroup of $\GL_2(B)$ and put
\[
 \iota^{\natural}(g_1, g_2) =
 \begin{pmatrix}
  a_1 & & b_1 & \\
  & a_2 & & b_2 \\  
  c_1 & & d_1 & \\
  & c_2 & & d_2 
 \end{pmatrix}, 
 \qquad 
 g_i =
 \begin{pmatrix}
  a_i & b_i \\
  c_i & d_i
 \end{pmatrix}.
\]
Since $X^{\square} = V^{\bigtriangledown} \cdot g_0$ and $Y^{\square} = V^{\triangle} \cdot g_0$, we may take $\h_0' = \id \otimes g_0$.

When $B$ is split, we define a map
\[
 \hat{s}' : \U(V^{\square}) \longrightarrow \C^1
\]
by
\[
 \hat{s}'(g) = 1
\]
for $g \in \U(V^{\square})$.
Then we have
\[
 z_{\V^{\triangle}}(g, g')
 = \hat{s}'(g g') \cdot \hat{s}'(g)^{-1} \cdot \hat{s}'(g')^{-1} 
\]
for $g, g' \in \U(W^{\square})$ by \cite[Theorem 3.1, case $1_-$]{kudla-splitting}.
When $B$ is ramified, we define a map
\[
 \hat{s}' : \U(V^{\square}) \longrightarrow \C^1
\]
by
\[
 \hat{s}'(g) = (-1)^{\hat{\jmath}'(g)}
\]
for $g \in \U(V^{\square})$, where
\[
 \hat{\jmath}'(g) =
 \begin{cases}
  0 & \text{if $c=0$,} \\
  1 & \text{if $c \ne 0$ and $\nu(c) = 0$,} \\
  2 & \text{if $\nu(c) \ne 0$,}
 \end{cases} \qquad
 g = \begin{pmatrix} a & b \\ c & d \end{pmatrix} \in \U(V^{\square}),
\]
and $\nu:\M_2(B) \rightarrow F$ is the reduced norm.
Since $\dim_B W = 1$, we have
\[
 z_{\V^{\triangle}}(g, g')
 = \hat{s}'(g g') \cdot \hat{s}'(g)^{-1} \cdot \hat{s}'(g')^{-1} 
\]
for $g, g' \in \U(W^{\square})$ by \cite[Theorem 3.1, case $2_+$]{kudla-splitting}.

\begin{proof}[Proof of Lemma \ref{lem:spl-double-B-compare'}]
Let $\g = (g,h_1,h_2) \in \Gc$ and $\nu = \nu(\g)$.
Put $\g_i = (g, h_i) \in \G(\U(V)^0 \times \U(W))$.
By definition, we have $j(\jj_Y(g)) = j(g)$, where $j$ is as in \S \ref{ss:B1-spl}.
Hence we have
\[
 s'(\g_1) \cdot s'(\jj_{\Y'}(\g_2)) = \gamma^{j(g)} \cdot \gamma^{j(\jj_Y(g))} = 1.
\]
Here 
\[
 \gamma = 
 \begin{cases}
  1 & \text{if $B$ is split,} \\
  -1 & \text{if $B$ is ramified.}
 \end{cases}
\]
Also, by definition, we have 
\[
 \hat{s}(\g) = 1.
\]

Now we compute $z_{\V^{\triangle}}(\g, \h_0'^{-1})$.
We identify $\g$ with $(g, \iota(h_1, h_2)) \in \G(\U(V)^0 \times \U(W^{\square}))$, where $\iota : \G(\U(W) \times \U(W)) \rightarrow \GU(W^{\square})$ is the natural map.
Put $\g' = (g, \iota(h_2, h_2)) \in \G(\U(V)^0 \times \U(W^{\square}))$ and $h = h_1^{-1} h_2 \in \U(W)$.
Then we have $\g = \g' \cdot \iota(h^{-1}, 1)$.
Via the identification $\V^{\square} = V \otimes_B W^{\square} = W \otimes_B V^{\square}$, we identify $\g'$ with $(h_2, \iota(g,g)) \in \G(\U(W) \times \U(V^{\square}))$.
Since $\V^{\triangle} \cdot \g' = \V^{\triangle}$, we have
\[
 z_{\V^{\triangle}}(\g, \h_0'^{-1}) = z_{\V^{\triangle}}(\iota(h^{-1},1), g_0^{-1}).
\]
Put
\[
 \tau := 
 \begin{pmatrix}
  1 & & & \\
  & & & -1 \\
  & & 1 & \\
  & 1 & &
 \end{pmatrix}
 \in \U(V^{\square}).
\]
Then we have
\[
 g_0 \tau = 
 \begin{pmatrix}
  1 & -\frac{1}{2} & & \\
  -1 & -\frac{1}{2} & & \\
  & & \frac{1}{2} & -1 \\
  & & -\frac{1}{2} & -1
 \end{pmatrix}
\]
and $\V^{\triangle} \cdot \tau^{-1} g_0^{-1} = \V^{\triangle}$, so that
\[
 z_{\V^{\triangle}}(\iota(h^{-1},1), g_0^{-1})
 = z_{\V^{\triangle}}(\iota(h^{-1},1), \tau).
\]
Under the identification $\V^{\square} = \V \oplus \V = W \otimes_B V^{\square}$, we have
\begin{align*}
 x \otimes (y, \pm y) \cdot \iota(h^{-1},1)
 & = (x \otimes y, \pm x \otimes y) \cdot \iota(h^{-1},1) \\
 & = (h x \otimes y, \pm x \otimes y) \\
 & = \frac{1}{2} ((h \pm 1) x \otimes y, (h \pm 1) x \otimes y)
 + \frac{1}{2} ((h \mp 1) x \otimes y, - (h \mp 1) x \otimes y) \\
 & = \frac{1}{2} (h \pm 1) x \otimes (y, y)
 + \frac{1}{2} (h \mp 1) x \otimes (y, -y)
\end{align*}
for $x \in W$ and $y \in V$.
Thus we obtain
\begin{align*}
 x \otimes \v_1 \cdot \iota(h^{-1}, 1) & =
 \frac{1}{2} (h+1) x \otimes \v_1 - \frac{1}{4} (h-1) x \otimes \v_2^*, \\
 x \otimes \v_2 \cdot \iota(h^{-1}, 1) & = 
 \frac{1}{2} (h+1) x \otimes \v_2 + \frac{1}{4} (h-1) x \otimes \v_1^*, \\
 x \otimes \v_1^* \cdot \iota(h^{-1}, 1) & =
 \frac{1}{2} (h+1) x \otimes \v_1^*
 + (h-1) x \otimes \v_2, \\
 x \otimes \v_2^* \cdot \iota(h^{-1}, 1) & = 
 \frac{1}{2} (h+1) x \otimes \v_2^*
 - (h-1) x \otimes \v_1.
\end{align*}

First assume that $B$ is split.
Fix an isomorphism $\ii : B \rightarrow \M_2(F)$.
Put $e = \ii^{-1} \left( \begin{smallmatrix} 1 & 0 \\ 0 & 0 \end{smallmatrix} \right)$, $e' = \ii^{-1} \left( \begin{smallmatrix} 0 & 1 \\ 0 & 0 \end{smallmatrix} \right)$, and $e'' = \ii^{-1} \left( \begin{smallmatrix} 0 & 0 \\ 1 & 0 \end{smallmatrix} \right)$.
Then $W^\dagger := We$ is a $2$-dimensional symplectic $F$-space and the restriction $\GU(W) \rightarrow \GSp(W^\dagger)$ is an isomorphism.
Also, $V^{\square \dagger} := e V^{\square}$ is an $8$-dimensional quadratic $F$-space.
We identify $\V^{\square}$ with $W^\dagger \otimes_F V^{\square \dagger}$.
Put $f := 2 e''$ and
\begin{align*}
 \x_1 & := e \v_1, & \x_2 & := e' \v_1, &
 \x_3 & := e \v_2, & \x_4 & := e' \v_2, \\
 \y_1 & := e' \v_1^*, & \y_2 & := -e \v_1^*, &
 \y_3 & := e' \v_2^*, & \y_4 & := -e \v_2^*.
\end{align*}
Using a basis
\[
 e \otimes \x_1, f \otimes \x_1, \ldots, e \otimes \x_4, f \otimes \x_4,
 f \otimes \y_1, -e \otimes \y_1, \ldots, f \otimes \y_4, -e \otimes \y_4
\]
of $\V^{\square}$, we identify $\Sp(\V^{\square})$ with $\Sp_{16}(F)$.
We define $\h \in \GL_2(F)$ by
\[
 \begin{bmatrix}
  he \\
  hf
 \end{bmatrix}
 = \h \cdot 
 \begin{bmatrix}
  e \\
  f
 \end{bmatrix}.
\]
Then we have $\h \cdot \a \cdot {}^t \h = \a$, where
\[
 \a = \begin{pmatrix} & -1 \\ 1 & \end{pmatrix} \in \GL_2(F).
\]
Moreover, we have $\iota(h^{-1}, 1) = \d^{-1} \cdot \h' \cdot \d$ and
\[
 \tau = \d^{-1} \cdot
 \begin{pmatrix}
  \1_2 & & & & & & & \\
  & \1_2 & & & & & & \\
  & & \0_2 & & & & & \1_2 \\
  & & & \0_2 & & & -\1_2 & \\
  & & & & \1_2 & & & \\
  & & & & & \1_2 & & \\
  & & & \1_2 & & & \0_2 & \\
  & & -\1_2 & & & & & \0_2
 \end{pmatrix}
 \cdot \d
 = \tau_4 \cdot 
 \begin{pmatrix}
  \1_2 & & & & & & & \\
  & \1_2 & & & & & & \\
  & & & -\a & & & & \\
  & & \a & & & & & \\
  & & & & \1_2 & & & \\
  & & & & & \1_2 & & \\
  & & & & & & & -\a \\
  & & & & & & \a & 
 \end{pmatrix},
\]
where
\[
 \d =
 \begin{pmatrix}
  \1_2 & & & & & & & \\
  & \1_2 & & & & & & \\
  & & \1_2 & & & & & \\
  & & & \1_2 & & & & \\
  & & & & \a & & & \\
  & & & & & \a & & \\
  & & & & & & \a & \\
  & & & & & & & \a
 \end{pmatrix}, \qquad
 \h' = 
 \begin{pmatrix}
  \dot{\h} & & & & & & & \frac{1}{2} \ddot{\h} \\
  & \dot{\h} & & & & & -\frac{1}{2} \ddot{\h} & \\
  & & \dot{\h} & & & -\frac{1}{2} \ddot{\h} & & \\
  & & & \dot{\h} & \frac{1}{2} \ddot{\h} & & & \\
  & & & 2 \ddot{\h} & \dot{\h} & & & \\
  & & -2 \ddot{\h} & & & \dot{\h} & & \\
  & -2 \ddot{\h} & & & & & \dot{\h} & \\
  2 \ddot{\h} & & & & & & & \dot{\h}
 \end{pmatrix},
\]
$\dot{\h} = \frac{1}{2} (\h + \1_2)$,
$\ddot{\h} = \frac{1}{2} (\h - \1_2)$, and 
\[
 \tau_4 = 
 \begin{pmatrix}
  \1_4 & & & \\
  & & & -\1_4 \\
  & & \1_4 & \\
  & \1_4 & & \\
 \end{pmatrix}.
\]
If $h=1$, then we have $z_{\V^{\triangle}}(\iota(h^{-1},1), \tau) = z_{\V^{\triangle}}(1, \tau) = 1$.
Assume that $h \ne 1$ and $\det \ddot{\h} = 0$.
Since $\det \h = 1$, we have $\tr \ddot{\h} = 0$.
Hence we may take $\a_1 \in \SL_2(F)$ such that $\a_1 \cdot \ddot{\h} \cdot \a_1^{-1} = \x$, where
\[
 \x = \begin{pmatrix} 0 & x \\ 0 & 0 \end{pmatrix}
\]
with some $x \ne 0$.
Put
\[
 \m_1 = 
 \begin{pmatrix}
  \a_1 & & & & & & & \\
  & \a_1 & & & & & & \\
  & & \a_1 & & & & & \\
  & & & \a_1 & & & & \\
  & & & & {}^t \a_1^{-1} & & & \\
  & & & & & {}^t \a_1^{-1} & & \\
  & & & & & & {}^t \a_1^{-1} & \\
  & & & & & & & {}^t \a_1^{-1}
 \end{pmatrix}.
\]
Noting that $\a_1 \cdot \dot{\h} \cdot \a_1^{-1} = \x + \1_2$ and $\a \cdot {}^t \a_1^{-1} \cdot \a^{-1} = \a_1$, we have $\m_1 \cdot \iota(h^{-1}, 1) \cdot \m_1^{-1} = \d^{-1} \cdot \h'' \cdot \d$, where
\[
 \h'' =
 \begin{pmatrix}
  \x + \1_2 & & & & & & & \frac{1}{2} \x \\
  & \x + \1_2 & & & & & -\frac{1}{2} \x & \\
  & & \x + \1_2 & & & -\frac{1}{2} \x & & \\
  & & & \x + \1_2 & \frac{1}{2} \x & & & \\
  & & & 2 \x & \x + \1_2 & & & \\
  & & -2 \x & & & \x + \1_2 & & \\
  & -2 \x & & & & & \x + \1_2 & \\
  2 \x & & & & & & & \x + \1_2
 \end{pmatrix}.
\]
We have
\begin{align*}
 z_{\V^{\triangle}}(\iota(h^{-1},1), \tau)
 & = z_{\V^{\triangle}}(\iota(h^{-1},1), \tau_4) \\
 & = z_{\V^{\triangle}}(\m_1 \cdot \iota(h^{-1},1),
 \tau_4 \cdot \tau_4^{-1} \cdot \m_1^{-1} \cdot \tau_4) \\
 & = z_{\V^{\triangle}}(\m_1 \cdot \iota(h^{-1},1) \cdot \m_1^{-1}, \tau_4) \\
 & = z_{\V^{\triangle}}(\d^{-1} \cdot \h'' \cdot \d, \tau_4).
\end{align*}
Put
\[
 \e = \begin{pmatrix} 1 & \\ & 0 \end{pmatrix}, \qquad
 \e^* = \begin{pmatrix} 0 & \\ & 1 \end{pmatrix}, \qquad
 \a_2 = \begin{pmatrix} -x^{-1} & \\ & -x \end{pmatrix},
\]
and
\[
 \m_2 = 
 \begin{pmatrix}
  & & & 2 \a_2 & & & & \\
  & & -2 \a_2 & & & & & \\
  & -2 \a_2 & & & & & & \\
  2 \a_2 & & & & & & & \\
  & & & & & & & \frac{1}{2} {}^t \a_2^{-1} \\
  & & & & & & -\frac{1}{2} {}^t \a_2^{-1} & \\
  & & & & & -\frac{1}{2} {}^t \a_2^{-1} & & \\
  & & & & \frac{1}{2} {}^t \a_2^{-1} & & &
 \end{pmatrix}.
\]
Then we have $\a_2 \cdot \x \cdot \a = -\e$, ${}^t \a_2^{-1} \cdot \a^{-1} \cdot \x = \e^*$, and hence $\m_2 \cdot \d^{-1} \cdot \h'' \cdot \d = \h'''$, where
\[
 \h''' = 
 \begin{pmatrix}
  & & & 2 \x' & -\e & & & \\
  & & -2 \x' & & & -\e & & \\
  & -2 \x' & & & & & -\e & \\
  2 \x' & & & & & & & -\e \\
  \e^* & & & & & & & \frac{1}{2} \x'' \\
  & \e^* & & & & & -\frac{1}{2} \x'' & \\
  & & \e^* & & & -\frac{1}{2} \x'' & & \\
  & & & \e^* & \frac{1}{2} \x'' & & & 
 \end{pmatrix},
\]
$\x' = - \e \a^{-1} + \a_2$, and $\x'' = \e^* \a + {}^t \a_2^{-1}$.
We have 
\[
 z_{\V^{\triangle}}(\d^{-1} \cdot \h'' \cdot \d, \tau_4)
 = z_{\V^{\triangle}}(\m_2 \cdot \d^{-1} \cdot \h'' \cdot \d, \tau_4)
 = z_{\V^{\triangle}}(\h''', \tau_4).
\]
Put
\[
 \b = \begin{pmatrix} 0 & \\ & x^{-1} \end{pmatrix}
\]
and
\[
 \n = 
 \begin{pmatrix}
  \1_2 & & & & & & & \frac{1}{2} \b \\
  & \1_2 & & & & & - \frac{1}{2} \b & \\
  & & \1_2 & & & - \frac{1}{2} \b & & \\
  & & & \1_2 & \frac{1}{2} \b & & & \\
  & & & & \1_2 & & & \\
  & & & & & \1_2 & & \\
  & & & & & & \1_2 & \\
  & & & & & & & \1_2  
 \end{pmatrix}.
\]
We have
\[
 \h''' \cdot \n =
 \begin{pmatrix}
  & & & 2 \x' & \x' \b - \e & & & \\
  & & -2 \x' & & & \x' \b - \e & & \\
  & -2 \x' & & & & & \x' \b - \e & \\
  2 \x' & & & & & & & \x' \b - \e \\
  \e^* & & & & & & & \frac{1}{2} (\e^* \b + \x'') \\
  & \e^* & & & & & -\frac{1}{2} (\e^* \b + \x'') & \\
  & & \e^* & & & -\frac{1}{2} (\e^* \b + \x'') & & \\
  & & & \e^* & \frac{1}{2} (\e^* \b + \x'') & & &
 \end{pmatrix}.
\]
Since
\[
 \e^* \b + \x'' = 
 \begin{pmatrix}
  -x & 0 \\
  1 & 0
 \end{pmatrix},
\]
we have $\V^{\triangle} \cdot \h''' \cdot \n \cdot \boldsymbol{\tau}^{-1} = \V^{\triangle}$, where
\[
 \boldsymbol{\tau} = 
 \begin{pmatrix}
  \e & & & & -\e^* & & & \\
  & \e & & & & -\e^* & & \\
  & & \e & & & & -\e^* & \\
  & & & \e & & & & -\e^* \\
  \e^* & & & & \e & & & \\
  & \e^* & & & & \e & & \\
  & & \e^* & & & & \e & \\
  & & & \e^* & & & & \e
 \end{pmatrix}.
\]
Hence we have
\[
 z_{\V^{\triangle}}(\h''', \tau_4)
 = z_{\V^{\triangle}}(\h''', \tau_4 \cdot \tau_4^{-1} \cdot \n \cdot \tau_4)
 = z_{\V^{\triangle}}(\h''' \cdot \n, \tau_4)
 = z_{\V^{\triangle}}(\boldsymbol{\tau}, \tau_4)
 = 1.
\]
Assume that $h \ne 1$ and $\det \ddot{\h} \ne 0$.
We have
\begin{align*}
 & \iota(h^{-1}, 1) \\ & = 
 \begin{pmatrix}
  & & & \frac{1}{2} {}^t \a {}^t \ddot{\h}^{-1} & \dot{\h} & & & \\
  & & - \frac{1}{2} {}^t \a {}^t \ddot{\h}^{-1} & & & \dot{\h} & & \\
  & - \frac{1}{2} {}^t \a {}^t \ddot{\h}^{-1} & & & & & \dot{\h} & \\
  \frac{1}{2} {}^t \a {}^t \ddot{\h}^{-1} & & & & & & & \dot{\h} \\
  & & & & & & & 2 \a^{-1} \ddot{\h} \\
  & & & & & & -2 \a^{-1} \ddot{\h} & \\
  & & & & & -2 \a^{-1} \ddot{\h} & & \\
  & & & & 2 \a^{-1} \ddot{\h} & & & 
 \end{pmatrix}
 \cdot \tau_8 \cdot \n,
\end{align*}
where
\[
 \tau_8 =
 \begin{pmatrix}
  & -\1_8 \\
  \1_8 &
 \end{pmatrix},
 \qquad \n =
 \begin{pmatrix}
  \1_2 & & & & & & & \frac{1}{2} \ddot{\h}^{-1} \dot{\h} \a \\
  & \1_2 & & & & & -\frac{1}{2} \ddot{\h}^{-1} \dot{\h} \a & \\
  & & \1_2 & & & -\frac{1}{2} \ddot{\h}^{-1} \dot{\h} \a & & \\
  & & & \1_2 & \frac{1}{2} \ddot{\h}^{-1} \dot{\h} \a & & & \\
  & & & & \1_2 & & & \\
  & & & & & \1_2 & & \\
  & & & & & & \1_2 & \\
  & & & & & & & \1_2
 \end{pmatrix}.
\]
Hence we have
\[
 z_{\V^{\triangle}}(\iota(h^{-1},1), \tau)
 = z_{\V^{\triangle}}(\tau_8 \cdot \n, \tau_4)
 = z_{\V^{\triangle}}(\tau_8, \n \cdot \tau_4).
\]
Since $\V^{\triangle} \cdot \tau_4^{-1} \n \tau_4= \V^{\triangle}$, we have
\[
 z_{\V^{\triangle}}(\tau_8, \n \cdot \tau_4) = z_{\V^{\triangle}}(\tau_8, \tau_4) = 1.
\]
Thus we obtain
\[
 z_{\V^{\triangle}}(\g, \h_0'^{-1}) = 1.
\]

Next assume that $B$ is ramified.
Choose a basis $e_1, e_2, e_3, e_4$ of $W$ over $F$.
We may assume that $\frac{1}{2} \tr_{B/F}( \langle e_i, e_j \rangle ) = a_i \cdot \delta_{ij}$ with some $a_i \in F^{\times}$.
Put $e'_i := e_i \cdot a_i^{-1}$.
Using a basis
\[
  e_1 \otimes \v_1, \ldots, e_4 \otimes \v_1,
  e_1 \otimes \v_2, \ldots, e_4 \otimes \v_2,
  e_1' \otimes \v_1^*, \ldots, e_4' \otimes \v_1^*,
  e_1' \otimes \v_2^*, \ldots, e_4' \otimes \v_2^*
\]
of $\V^{\square}$, we identify $\Sp(\V^{\square})$ with $\Sp_{16}(F)$.
We define $\h \in \GL_4(F)$ by 
\[
 \begin{bmatrix}
  h e_1 \\
  h e_2 \\
  h e_3 \\
  h e_4
 \end{bmatrix}
 = \h \cdot 
 \begin{bmatrix}
  e_1 \\
  e_2 \\
  e_3 \\
  e_4
 \end{bmatrix}.
\]
Then we have $\h \cdot \a \cdot {}^t \h = \a$, where
\[
 \a :=
 \begin{pmatrix}
  a_1 & & & \\
  & a_2 & & \\
  & & a_3 & \\
  & & & a_4 
 \end{pmatrix}
 \in \GL_4(F).
\]
Moreover, we have $\iota(h^{-1}, 1) = \d^{-1} \cdot \h' \cdot \d$ and
\[
 \tau = \d^{-1} \cdot \tau_4 \cdot \d = \tau_4 \cdot
 \begin{pmatrix}
  \1_4 & & & \\
  & \a^{-1} & & \\
  & & \1_4 & \\
  & & & \a
 \end{pmatrix},
\]
where
\[
 \d =
 \begin{pmatrix}
  \1_4 & & & \\
  & \1_4 & & \\
  & & \a & \\
  & & & \a
 \end{pmatrix}, \qquad
 \h' = 
 \begin{pmatrix}
  \dot{\h} & & & -\frac{1}{2} \ddot{\h} \\
  & \dot{\h} & \frac{1}{2} \ddot{\h} & \\
  & 2 \ddot{\h} & \dot{\h} & \\
  -2 \ddot{\h} & & & \dot{\h} \\
 \end{pmatrix},
\]
$\dot{\h} = \frac{1}{2} (\h + \1_4)$,
$\ddot{\h} = \frac{1}{2} (\h - \1_4)$, and
\[
 \tau_4 = 
 \begin{pmatrix}
  \1_4 & & & \\
  & & & -\1_4 \\
  & & \1_4 & \\
  & \1_4 & & \\
 \end{pmatrix}.
\]
If $h=1$, then we have $z_{\V^{\triangle}}(\iota(h^{-1},1), \tau) = z_{\V^{\triangle}}(1, \tau) = 1$.
Assume that $h \ne 1$.
Since $B$ is ramified, $h-1$ is given by the automorphism $x \mapsto \ba \cdot x$ of $B$ with some $\ba \in B^{\times}$.
In particular, we have $\ddot{\h} \in \GL_4(F)$.
We have
\[
 \iota(h^{-1}, 1) = 
 \begin{pmatrix}
  & \frac{1}{2} \a {}^t \ddot{\h}^{-1} & \dot{\h} & \\
  - \frac{1}{2} \a {}^t \ddot{\h}^{-1} & & & \dot{\h} \\ 
  & & & 2 \a^{-1} \ddot{\h} \\
  & & -2 \a^{-1} \ddot{\h} &
 \end{pmatrix}
 \cdot \tau_8 \cdot \n,
\]
where
\[
 \tau_8 =
 \begin{pmatrix}
  & -\1_8 \\
  \1_8 &
 \end{pmatrix}, \qquad
 \n = 
 \begin{pmatrix}
  \1_4 & & & -\frac{1}{2} \ddot{\h}^{-1} \dot{\h} \a \\ 
  & \1_4 & \frac{1}{2} \ddot{\h}^{-1} \dot{\h} \a & \\
  & & \1_4 & \\
  & & & \1_4
 \end{pmatrix}.
\]
Hence we have
\[
 z_{\V^{\triangle}}(\iota(h^{-1},1), \tau)
 = z_{\V^{\triangle}}(\tau_8 \cdot \n, \tau_4)
 = z_{\V^{\triangle}}(\tau_8, \n \cdot \tau_4).
\]
Since $\V^{\triangle} \cdot \tau_4^{-1} \n \tau_4= \V^{\triangle}$, we have
\[
 z_{\V^{\triangle}}(\tau_8, \n \cdot \tau_4) = z_{\V^{\triangle}}(\tau_8, \tau_4) = 1.
\]
Thus we obtain
\[
 z_{\V^{\triangle}}(\g, \h_0'^{-1}) = 1.
\]

Now we compute $z_{\V^{\triangle}}(\h_0'^{-1}, \h_0' \g \h_0'^{-1})$.
Put $\g'' = (d_Y(\nu), \iota(h_1, h_2)) \in \G(\U(V)^0 \times \U(W^{\square}))$ and $g' = g \cdot d_Y(\nu)^{-1} \in \U(V)^0$.
Then we have $\g = g' \cdot \g''$.
Via the identification $\V^{\square} = V \otimes_B W^{\square} = W \otimes_B V^{\square}$, we identify $g'$ with $\iota(g',g') \in \U(V^{\square})$.
Since $\V^{\triangle} \cdot \h_0' = \Y'^{\square} = Y \otimes_B W^{\square}$,
we have $\V^{\triangle} \cdot \h_0' \g'' \h_0'^{-1} = \V^{\triangle}$ and hence
\begin{align*}
 z_{\V^{\triangle}}(\h_0'^{-1}, \h_0' \g \h_0'^{-1})
 & = z_{\V^{\triangle}}(g_0^{-1}, g_0 \cdot \iota(g',g') \cdot g_0^{-1}) \\
 & = \hat{s}'(\iota(g',g') \cdot g_0^{-1}) \cdot \hat{s}'(g_0^{-1})^{-1} \cdot
 \hat{s}'(g_0 \cdot \iota(g',g') \cdot g_0^{-1})^{-1}.
\end{align*}
Hence, if $B$ is split, then we have
\[
 z_{\V^{\triangle}}(\h_0'^{-1}, \h_0' \g \h_0'^{-1}) = 1.
\]
Assume that $B$ is ramified.
We write $g' = \left( \begin{smallmatrix} a & b \\ c & d \end{smallmatrix} \right)$.
Since 
\[
 g_0^{-1} =
 \begin{pmatrix}
  \frac{1}{2} & -\frac{1}{2} & & \\
  & & \frac{1}{2} & \frac{1}{2} \\
  & & 1 & -1 \\
  -1 & -1 & &
 \end{pmatrix}, \qquad
 g_0 \cdot \iota(g',g') \cdot g_0^{-1} = \iota^{\natural}(g', \jj_Y'(g')) =
 \begin{pmatrix}
  a & & b & \\
  & a & & -b \\
  c & & d & \\
  & -c & & d
 \end{pmatrix},
\]
and
\[
 \iota(g',g') \cdot g_0^{-1} =
 \begin{pmatrix}
  \frac{a}{2} & -\frac{a}{2} & \frac{b}{2} & \frac{b}{2} \\
  \frac{c}{2} & -\frac{c}{2} & \frac{d}{2} & \frac{d}{2} \\
  c & c & d & -d \\
  -a & -a & -b & b
 \end{pmatrix},
\]
we have
\[
 \hat{s}'(g_0^{-1}) = -1, \qquad
 \hat{s}'(g_0 \cdot \iota(g',g') \cdot g_0^{-1}) = 1, \qquad
 \hat{s}'(\iota(g',g') \cdot g_0^{-1}) = -1.
\]
Hence we have
\[
 z_{\V^{\triangle}}(\h_0'^{-1}, \h_0' \g \h_0'^{-1}) = 1.
\]

Thus we obtain
\[
 \mu'(\g)
 = z_{\V^{\triangle}}(\g, \h_0'^{-1})
 \cdot z_{\V^{\triangle}}(\h_0'^{-1}, \h_0' \g \h_0'^{-1})^{-1}
 = 1.
\]
This completes the proof.
\end{proof}

\subsection{Splitting $z_{\Y^{\square}}$}
\label{ss:spl-double-B-Y}

Let $\V = \X \oplus \Y$ be the complete polarization given in \S \ref{ss:spl-setup}.
Put
\[
 \X^{\square} := \X \oplus \X, \qquad \Y^{\square} := \Y \oplus \Y. 
\]
Then $\V^{\square} = \X^{\square} \oplus \Y^{\square}$ is a complete polarization.
As in \S \ref{ss:spl-double-B-Y'}, we have
\[
 z_{\Y^{\square}}(\iota(g_1, g_2), \iota(g_1',g_2'))
 = z_{\Y}(g_1, g_1') \cdot z_{\Y}(g_2, g_2')^{-1}
\]
for $g_i, g_i' \in \Sp(\V)$.
The Weil representation $\omega_{\psi}^{\square}$ of $\Mp(\V^{\square})$ can be realized on the Schwartz space
\[
 \SS(\X^{\square}) = \SS(\X) \otimes \SS(\X).
\]
As representations of $\Mp(\V)_{\Y} \times \Mp(\V)_{\Y}$, we have
\[
 \omega_{\psi}^{\square} \circ \tilde{\iota} = \omega_{\psi} \otimes (\omega_{\psi} \circ \tilde{\jj}_{\Y}),
\]
where $\tilde{\jj}_{\Y}$ is the automorphism of $\Mp(\V)_{\Y} = \Sp(\V) \times \C^1$ defined by
\[
 \tilde{\jj}_{\Y}(g, z) = (\jj_{\Y}(g), z^{-1}), \qquad
 \jj_{\Y}(g) = d_{\Y}(-1) \cdot g \cdot d_{\Y}(-1).
\]

Put $\J := ((\j_1, \j_2), \j)$.
Here we view $(\j_1, \j_2) \in \GU(V)^0$ and $\j \in \GU(W)$.

\begin{lem}
\label{lem:j_Y}
We have
\[
 \jj_{\Y}(\g) = \J \cdot \g \cdot \J^{-1}
\]
for $\g \in \GU(V)^0 \times \GU(W)$.
In particular, we have
\[
 \jj_{\Y}(\G(\U(V)^0 \times \U(W))) = \G(\U(V)^0 \times \U(W)).
\]
\end{lem}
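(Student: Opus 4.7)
The plan is to prove the stronger equality $\J = d_{\Y}(-1)$ in $\GSp(\V)$, from which the lemma follows at once: for every $\g \in \GSp(\V)$ we have $\jj_{\Y}(\g) = d_{\Y}(-1)\,\g\,d_{\Y}(-1)^{-1} = \J\g\J^{-1}$, and conjugation by $\J \in \GU(V)^0 \times \GU(W)$ obviously preserves the subgroup $\G(\U(V)^0 \times \U(W))$, since it alters neither factor's similitude character.

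It therefore suffices to compute the action of $\J$ on the basis $\e_1, \dots, \e_4, \e_1^*, \dots, \e_4^*$. Using the formula $(v\otimes w)\cdot(g,h) = g^{-1}v\otimes wh$ for the action of $\GU(V)^0 \times \GU(W)$ on $\V$, and identifying $\V$ with $\Res_{B/F}(V)$ via $v \mapsto v\otimes 1$, one sees that $\J$ acts by $v \mapsto (\j_1,\j_2)^{-1}\cdot v \cdot \j$. A direct case-by-case verification using the tables $\g, \g_1, \g_2$ at the beginning of \S\ref{ss:spl-setup} (together with $(\j_1,\j_2)^{-1} = (\j_1/J_1, \j_2/J_2)$) then yields
\[
 \e_i \cdot \J = \e_i, \qquad \e_i^* \cdot \J = -\e_i^*, \qquad i = 1,2,3,4,
\]
which are exactly the matrix entries of $d_{\Y}(-1) = \left( \begin{smallmatrix} \1_4 & \\ & -\1_4 \end{smallmatrix}\right)$ in this basis.

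To illustrate: $\e_1\cdot\j = \j_1\otimes\j_2 = \e_4$ by \eqref{eqn:jaction11}, and $(\j_1/J_1, \j_2/J_2)\cdot\e_4 = \e_1$ by the $B_1$- and $B_2$-action tables; similarly $\e_1^*\cdot\j = -J\e_4^*$, while $(\j_1/J_1, \j_2/J_2)\cdot\e_4^* = \e_1^*/J$, giving $\e_1^*\cdot\J = -\e_1^*$. The remaining six computations proceed identically, the main bookkeeping point being that right multiplication by $\j$ anticommutes with $\i$, so that one must pass through the description of $\e_i^*$ in \eqref{eq:basisY} before applying $\j$. No serious obstacle arises; as a consistency check, the similitude $\nu(\j)/\bigl(\nu_1(\j_1)\nu_2(\j_2)\bigr) = (-J)/(J_1 J_2) = -1$ of $\J$ on $\V$ matches that of $d_{\Y}(-1)$, reflecting that the basis \eqref{eq:basisX}--\eqref{eq:basisY} is precisely engineered so that the combined right action of $\j$ and left action of $(\j_1,\j_2)^{-1}$ collapse to the simple sign reversal on $\Y$.
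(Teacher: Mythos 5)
Your proof is correct, and it takes a genuinely different route from the paper's. You prove the stronger assertion that $\J$ and $d_{\Y}(-1)$ coincide as elements of $\GSp(\V)$, by computing the action of $\J$ on each basis vector and finding $\e_i \cdot \J = \e_i$ and $\e_i^* \cdot \J = -\e_i^*$ (your two sample computations check out, and the remaining six do as well); since also $\J^2 = 1$, both parts of the lemma fall out at once, with the second assertion automatic because conjugation by $\J \in \GU(V)^0 \times \GU(W)$ preserves both similitude characters and hence the fiber product $\G(\U(V)^0 \times \U(W))$. The paper instead argues at the level of quaternion coordinates: reading off the explicit $8 \times 8$ matrices $\g_1$, $\g_2$, $\g$ tabulated at the start of \S\ref{ss:spl-setup}, it observes that conjugation by $d_{\Y}(-1)$ negates exactly the off-diagonal blocks, i.e.\ sends $\ba_i = a_i + b_i\i + c_i\j_i + d_i\i\j_i \mapsto \bb_i = a_i - b_i\i + c_i\j_i - d_i\i\j_i$ and $\ba \mapsto \bb$, and then notes in a single line that $\j_i \ba_i \j_i^{-1} = \bb_i$ and $\j\ba\j^{-1} = \bb$ because $\j_i\i = -\i\j_i$ and $\j\i = -\i\j$. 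Both proofs rest on the same explicit tables; yours buys the cleaner packaged identity $\J = d_{\Y}(-1)$ in $\GSp(\V)$ and makes preservation of the subgroup structurally obvious rather than an observed byproduct, at the cost of an elementwise basis verification, while the paper's is shorter because the coefficient-level anticommutation observation sidesteps that bookkeeping and treats $\GU(V)^0$ and $\GU(W)$ in one stroke.
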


\begin{proof}
Let $\g = ((\ba_1^{-1}, \ba_2^{-1}), \ba) \in \GU(V)^0 \times \GU(W)$ with $\ba_i = a_i + b_i \i + c_i \j_i + d_i \i \j_i \in B_i^{\times}$ and $\ba = a + b \i + c \j + d \i \j \in B^{\times}$.
By \S \ref{ss:spl-setup}, we see that $\jj_{\Y}(\g) = ((\bb_1^{-1}, \bb_2^{-1}), \bb)$, where
\[
 \bb_i = a_i - b_i \i + c_i \j_i - d_i \i \j_i, \qquad
 \bb = a - b \i + c \j - d \i \j.
\]
On the other hand, since $\j_i \i = - \i \j_i$ and $\j \i = - \i \j$,
we have $\j_i \cdot \ba_i \cdot \j_i^{-1} = \bb_i$ and $\j \cdot \ba \cdot \j^{-1} = \bb$.
This yields the lemma.
\end{proof}

As in \S \ref{ss:B-spl}, \ref{ss:B1-spl}, fix $\h_0 \in \Sp(\V)$ such that $\X' = \X \h_0$ and $\Y' = \Y \h_0$, and define a map $s: \GU(V)^0 \times \GU(W) \rightarrow \C^1$ by $s := s' \cdot \mu$, where
\[
 \mu(g) = z_{\Y}(\h_0 g \h_0^{-1}, \h_0) \cdot z_{\Y}(\h_0, g)^{-1}.
\]
Put $\hat{\h}_0 := \h_0' \cdot \iota(\h_0, \h_0)^{-1} \in \Sp(\V^{\square})$.
Then we have
\[
 \V^{\bigtriangledown} \cdot \hat{\h}_0 
 = \X'^{\square} \cdot \iota(\h_0, \h_0)^{-1} = \X^{\square}, \qquad
 \V^{\triangle} \cdot \hat{\h}_0
 = \Y'^{\square} \cdot \iota(\h_0, \h_0)^{-1} = \Y^{\square}.
\]
Put
\[
 \hat{\mu}(g)
 = z_{\V^{\triangle}}(g, \hat{\h}_0^{-1})
 \cdot z_{\V^{\triangle}}(\hat{\h}_0^{-1}, \hat{\h}_0 g \hat{\h}_0^{-1})^{-1}
\]
for $g \in \Sp(\V^{\square})$.
Then we have
\[
 z_{\Y^{\square}}(g, g') = z_{\V^{\triangle}}(g,g') \cdot
 \hat{\mu}(g g') \cdot \hat{\mu}(g)^{-1} \cdot \hat{\mu}(g')^{-1}
\]
for $g, g' \in \Sp(\V^{\square})$.

\begin{lem}
\label{lem:spl-double-B-compare}
We have
\[
 \hat{s} \cdot \hat{\mu} = s \otimes (s \circ \jj_{\Y})
\]
on $\Gc$.
\end{lem}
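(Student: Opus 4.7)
The strategy is to deduce Lemma~\ref{lem:spl-double-B-compare} from Lemma~\ref{lem:spl-double-B-compare'} by tracking the change of polarization from $\Y'^{\square}$ to $\Y^{\square}$ on the doubled symplectic space $\V^{\square}$. The bridging element is $\k_0 := \iota(\h_0, \h_0) \in \Sp(\V^{\square})$, which acts diagonally on $\V^{\square} = \V \oplus \V$ and satisfies $\hat{\h}_0 = \h_0' \cdot \k_0^{-1}$. Because $\k_0$ preserves both $\V^{\bigtriangledown}$ and $\V^{\triangle}$, it lies in the Levi of the Siegel parabolic $P_{\V^{\triangle}}$, so cocycle contributions of the form $z_{\V^{\triangle}}(\k_0, -)$ and $z_{\V^{\triangle}}(-, \k_0)$ will vanish, which is the technical backbone of the reduction.

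First I would verify that both $\hat{s}\hat{\mu}$ and $s \otimes (s \circ \jj_\Y)$ split the same $2$-cocycle $z_{\Y^{\square}}|_{\Gc}$. For the left-hand side this holds by construction (via Lemma~\ref{lem:kudla-spl-double-B} for $\hat{s}$, together with the definition of $\hat{\mu}$ as the polarization-change correction from $\V^{\triangle}$ to $\Y^{\square}$). For the right-hand side I would combine the formula $z_{\Y^{\square}}(\iota(g_1,g_2), \iota(g_1',g_2')) = z_\Y(g_1,g_1') \cdot z_\Y(g_2,g_2')^{-1}$ with the identity
\[
 z_\Y(\jj_\Y(g), \jj_\Y(g')) = z_\Y(g, g')^{-1},
\]
which follows from the fact (noted in the excerpt) that $\omega_\psi \circ \tilde{\jj}_\Y$ realizes the complex-conjugate Weil representation, equivalently the Weil representation attached to $\psi^{-1}$ whose cocycle is $z_\Y^{-1}$. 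Consequently the ratio $F := (\hat{s}\hat{\mu})/(s \otimes (s \circ \jj_\Y))$ is a homomorphism $\Gc \to \C^1$.

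Writing $s = s' \cdot \mu$ and invoking Lemma~\ref{lem:spl-double-B-compare'}, the identity $F \equiv 1$ reduces to the explicit cocycle identity
\[
 \hat{\mu}(\g) \;=\; \mu'(\g) \cdot \mu(g, h_1) \cdot \mu(\jj_\Y(g, h_2)) \cdot C(g, h_1, h_2),
\]
where $\g = (g, h_1, h_2) \in \Gc$ and $C$ is the discrepancy factor $(s' \otimes (s' \circ \jj_{\Y'}))/(s' \otimes (s' \circ \jj_\Y))$. Using Lemma~\ref{lem:j_Y} (so that $\jj_\Y$ is conjugation by $\J = ((\j_1,\j_2),\j)$) and the Bruhat-cell definition of $s'$, a short computation shows that $C$ is in general nontrivial, taking explicit values in $\{\pm 1\}$ determined by the Bruhat position of the components of $\jj_\Y(h_2)$ versus $\jj_{\Y'}(h_2)$; this factor must be accounted for when matching the two sides.

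The main obstacle will be to verify this refined cocycle identity. The plan is to expand $\hat{\mu}(\g)$ via the factorization $\hat{\h}_0^{-1} = \k_0 \h_0'^{-1}$ using the Rao cocycle relation
\[
 z_{\V^{\triangle}}(a, bc) \cdot z_{\V^{\triangle}}(b, c) \;=\; z_{\V^{\triangle}}(ab, c) \cdot z_{\V^{\triangle}}(a, b),
\]
whereupon the fact that $\k_0 \in P_{\V^{\triangle}}$ eliminates all terms directly involving $\k_0$ and leaves a bookkeeping expression for $\hat\mu/\mu'$ in terms of cocycle quantities pulled back to $\Sp(\V) \times \Sp(\V)$. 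Identifying this expression with $\mu \otimes (\mu \circ \jj_\Y) \cdot C$ then requires a case-by-case verification according to whether $u$, $J$, or $J_i$ is a square in $F_v^\times$, paralleling the explicit computation used to prove Lemma~\ref{lem:spl-double-B-compare'}. This last step is the technical heart of the argument and will be where most of the work lies.
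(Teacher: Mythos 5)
Your overall strategy — deduce the lemma from Lemma~\ref{lem:spl-double-B-compare'} by tracking the change of polarization $\Y'^{\square}\to\Y^{\square}$ via $\k_0 = \iota(\h_0,\h_0)$, noting both sides split the same cocycle — is exactly the paper's. But the execution diverges, and the divergence creates a gap.

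You try to match $\hat{s}\hat{\mu}$ to $s\otimes(s\circ\jj_\Y)$ directly, which produces a ``discrepancy factor'' $C$ that you declare ``in general nontrivial'' and plan to attack by a case-by-case Bruhat computation according to which of $u,J,J_i$ is a square. The paper avoids this entirely by splitting the argument into two cleaner pieces. First it shows
\[
 \hat{\mu}(\g)\cdot\mu'(\g)^{-1} = \mu(\g_1)\cdot\mu(\g_2)^{-1},
\]
with $\mu(\g_2)^{-1}$ rather than $\mu(\jj_\Y(\g_2))$. This is a one-line consequence of rewriting the ratio as a conjugation formula in $z_{\Y^\square}$ and using the factorization $z_{\Y^\square}(\iota(g_1,g_2),\iota(g_1',g_2')) = z_\Y(g_1,g_1')\,z_\Y(g_2,g_2')^{-1}$; no Rao-cocycle bookkeeping with $\k_0\in P_{\V^\triangle}$ is needed. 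Combined with Lemma~\ref{lem:spl-double-B-compare'}, this gives $\hat{s}\cdot\hat{\mu} = (s'\mu)\otimes((s'\circ\jj_{\Y'})\mu^{-1})$, and then the key observation — which you do not record — is that $s'\circ\jj_{\Y'}=s'$ (conjugation by $d_{\Y'}(-1)$ preserves the Bruhat cell defining $j$) and $s'=s'^{-1}$ (it is $\{\pm1\}$-valued). That collapses everything to $s\otimes s^{-1}$. What remains is the separate, self-contained identity $s\circ\jj_\Y=s^{-1}$, which is proved by checking on generators (via Proposition~\ref{prop:spl-B} and Lemma~\ref{lem:j_Y}) and then propagating using $z_\Y\circ(\jj_\Y\times\jj_\Y)=z_\Y^{-1}$ and the cocycle law for $s$.

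So the concrete gap: you have not identified $s'\circ\jj_{\Y'}=s'=s'^{-1}$, and you have folded the would-be-separate step $s\circ\jj_\Y=s^{-1}$ into a single coupled computation where its role is invisible, leaving you with an ill-understood $C$-factor and a vague case-by-case plan. The case analysis you anticipate is not needed: once the two ingredients above are isolated, the argument is uniform, with the explicit formulas of Proposition~\ref{prop:spl-B} entering only in the generator check.
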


\begin{proof}
For $\g = (g, h_1, h_2) \in \Gc$, we identify $\g$ with $\iota(\g_1, \g_2) \in \Sp(\V^{\square})$, where $\g_i = (g, h_i) \in \G(\U(V)^0 \times \U(W)) \subset \Sp(\V)$.
Then, by a direct calculation, one can see that
\begin{align*}
 \hat{\mu}(\g) \cdot \mu'(\g)^{-1}
 & = z_{\Y^{\square}}(\iota(\h_0, \h_0) \cdot \g \cdot \iota(\h_0, \h_0)^{-1}, \iota(\h_0, \h_0)) \cdot z_{\Y^{\square}}(\iota(\h_0, \h_0), \g)^{-1} \\
 & = z_{\Y}(\h_0 \g_1 \h_0^{-1}, \h_0)
 \cdot z_{\Y}(\h_0 \g_2 \h_0^{-1}, \h_0)^{-1}
 \cdot z_{\Y}(\h_0, \g_1)^{-1} \cdot z_{\Y}(\h_0, \g_2) \\
 & = \mu(\g_1) \cdot \mu(\g_2)^{-1}.
\end{align*}
Hence, by Lemma \ref{lem:spl-double-B-compare'}, we have
\[
 \hat{s} \cdot \hat{\mu} = \hat{s} \cdot \mu' \cdot (\mu \otimes \mu^{-1})
 = (s' \cdot \mu) \otimes ((s' \circ \jj_{\Y'}) \cdot \mu^{-1})
\]
on $\Gc$.
Since $s' \circ \jj_{\Y'} = s' = s'^{-1}$, we have
\[
 \hat{s} \cdot \hat{\mu} = s \otimes s^{-1}
\]
on $\Gc$.
By Proposition \ref{prop:spl-B} and Lemma \ref{lem:j_Y}, we have $s(\jj_{\Y}(\g)) = s(\g)^{-1}$ for $\g = \ba_i^{-1} \in \GU(V)^0$ with $\ba_i \in B_i^{\times}$ and $\g = \ba \in \GU(W)$ with $\ba \in B^{\times}$.
Also, we have $z_{\Y}(\jj_{\Y}(g_1), \jj_{\Y}(g_2)) = z_{\Y}(g_1, g_2)^{-1}$ for $g_1, g_2 \in \GSp(\V)$.
Since $s(\g_1 \g_2) = s(\g_1) \cdot s(\g_2) \cdot z_{\Y}(\g_1, \g_2)$ for $\g_1, \g_2 \in\GU(V)^0 \times \GU(W)$, we have
\[
 s \circ \jj_{\Y} = s^{-1}
\]
on $\GU(V)^0 \times \GU(W)$.
This completes the proof.
\end{proof}

\end{document}